\title{Effective Kan fibrations in simplicial sets}
\date{\today}
\author{Benno van den Berg and Eric Faber} 
\tikzset{
    position/.style args={#1:#2 from #3}{
        at=(#3.#1), anchor=#1+180, shift=(#1:#2)
    }
  }
\newcolumntype{C}{>{$}c<{$}} 
\newsavebox{\mybox}
\NewDocumentEnvironment{textbox}{m}{
\begin{afloatingbox}
\centering
\begin{lrbox}{\mybox}
\begin{minipage}{\textwidth-3em}
}{
\end{minipage}
\end{lrbox}
\framebox[\textwidth]{\usebox{\mybox}}
\caption{#1}
\end{afloatingbox}
}
\newcommand{\varco}{\thinspace{} | \thinspace}
\newcommand{\co}{\thinspace{} \colon{} \thinspace}
\newcommand{\op}{\ensuremath{^{\mathrm{op}}}}
\newcommand{\cat}[1]{\ensuremath{\mathbf{#1}}}
\newcommand{\pullback}[2]{{}_{#1}\kern-\scriptspace{\times}_{#2}}
\newcommand{\E}{\ensuremath{\mathcal{E}}}
\newcommand{\dom}{\operatorname{dom}}
\newcommand{\cod}{\operatorname{cod}}
\newcommand{\ev}{\operatorname{ev}}
\newcommand{\llp}[1]{{}^\pitchfork{#1}}
\newcommand{\rlp}[1]{#1^{\pitchfork}}
\newcommand{\sq}[1]{\ensuremath{\mathbf{Sq}\left( #1 \right)}}
\newcommand{\cube}[1]{\ensuremath{\mathbf{Cube}\left( #1 \right)}}
\renewcommand{\coalg}[1]{#1\operatorname{-}\mathbb{C}\mathbf{oalg}}
\renewcommand{\alg}[1]{#1\operatorname{-}\mathbb{A}\mathbf{lg}}
\newcommand{\coalgcat}[1]{#1\operatorname{-}\mathbf{Coalg}}
\newcommand{\algcat}[1]{#1\operatorname{-}\mathbf{Alg}}
\newcommand{\coalgcofibr}[1]{#1\operatorname{-}\mathbf{coalg}}
\newcommand{\algfibr}[1]{#1\operatorname{-}\mathbf{alg}}
\newcommand{\freedbl}[1]{{#1}_{\text{dbl}}}
\newcommand{\C}{\ensuremath{\mathcal{C}}}
\newcommand{\Abb}{\ensuremath{\mathbb{A}}}
\renewcommand{\Bbb}{\ensuremath{\mathbb{B}}}
\newcommand{\Lbb}{\ensuremath{\mathbb{L}}}
\newcommand{\Rbb}{\ensuremath{\mathbb{R}}}
\newcommand{\Drl}[1]{{#1}^{\pitchfork\mskip-9.2mu\pitchfork}}
\newcommand{\Dll}[1]{{}^{\pitchfork\mskip-9.2mu\pitchfork}{#1}}
\renewcommand{\L}{\ensuremath{\mathcal{L}}}
\newcommand{\R}{\ensuremath{\mathcal{R}}}
\newcommand{\DBL}{\ensuremath{\mathbf{DBL}}}
\newcommand{\Cat}{\ensuremath{\mathbf{Cat}}}
\newcommand{\EEffRFib}{\ensuremath{\mathbb{E}\mathbf{ffFib}}}
\newcommand{\EffRFib}{\ensuremath{\mathbf{EffFib}}}
\newcommand{\effRFib}{\ensuremath{\mathbf{effFib}}}
\begin{document}

\begin{abstract}
  We introduce the notion of an effective Kan fibration, a new mathematical structure that can be used to study simplicial homotopy theory. Our main motivation is to make simplicial homotopy theory suitable for homotopy type theory. Effective Kan fibrations are maps of simplicial sets equipped with a structured collection of chosen lifts that satisfy certain non-trivial properties. This contrasts with the ordinary, unstructured notion of a Kan fibration.  We show that fundamental properties of Kan fibrations can be extended to explicit constructions on effective Kan fibrations. In particular, we give a constructive (explicit) proof showing that effective Kan fibrations are stable under push forward, or fibred exponentials. This is known to be impossible for ordinary Kan fibrations. We further show that effective Kan fibrations are local, or completely determined by their fibres above representables. We also give an (ineffective) proof saying that the maps which can be equipped with the structure of an effective Kan fibration are precisely the ordinary Kan fibrations. Hence implicitly, both notions still describe the same homotopy theory. By showing that the effective Kan fibrations combine all these properties, we solve an open problem in homotopy type theory. In this way our work provides a first step in giving a constructive account of Voevodsky's model of univalent type theory in simplicial sets.
\end{abstract}  

\keywords{simplicial sets, homotopy theory, Kan fibration, type theory, constructive proofs}
\subjclass{55U10, 55U35, 03B38, 18A32}
\maketitle

\tableofcontents

\section{Introduction}

\subsection{Main contribution and motivation}
This paper starts to redevelop the foundations of simplicial homotopy theory, in particular around the Kan-Quillen model structure on simplicial sets, in a more effective or ``structured'' style.  Our motivation comes from \emph{homotopy type theory}\index{homotopy type theory} (HoTT) and Voevodsky's construction of a model of HoTT in simplicial sets \cite{kapulkinlumsdaine18}, which relies heavily on the existence and properties of the Kan-Quillen model structure.

Type theory\index{type theory} refers to a family of formal systems which can act both as foundations for (constructive) mathematics and functional programming languages. Recently, it has become apparent that there exist many connections between type theory on the one hand and homotopy theory and higher category theory on the other. Besides Voevodsky's fundamental contributions, other key steps have been the groupoid model by Hoffman and Streicher \cite{hofmannstreicher98}, the interpretation of Martin-L\"of's identity types in categories equipped with a weak factorisation system \cite{awodeywarren09} and the proof that types in type theory carry the structure of an $\infty$-groupoid \cite{vdBerg-Garner-11,lumsdaine10}. As a result of these contributions, homotopy type theory has become an active area of research which keeps on developing at a quick pace, with implications for both type theory and homotopy theory. (For an overview, see the HoTT book~\cite{univalent13}.)

However, for type theory to fully benefit from the rich treasure chest of homotopy theory and higher category theory, a computational understanding of the relevant results from these areas is crucial. Indeed, we would like to think of type theory as a framework for computation. Then to fully exploit homotopy-theoretic ideas in this framework, one must be able to computationally reduce them. So a natural question is how constructive Voevodsky's model in simplicial sets is, or the proofs of the properties of the Kan-Quillen model structure on which it relies.

One fundamental obstacle with building Voevodsky's model in simplicial sets in a constructive framework was identified by Bezem, Coquand and Parmann \cite{bezem-coquand-parmann}. To interpret $\Pi$-types in simplicial sets, one uses that the category of simplicial sets is locally cartesian closed: that is, that the pullback functor along any map has a right adjoint, which we call \emph{pushforward}\index{pushforward}. Since the type families are interpreted as Kan fibrations in Voevodsky's model, we need to show that Kan fibrations are closed under pushforward along Kan fibrations. This is true classically, but as the authors of \cite{bezem-coquand-parmann} show, this result is unprovable constructively. We will refer to this as the BCP-obstruction and, given the importance of $\Pi$-types in type theory, it is quite a serious problem.

That this problem is not insurmountable was shown by Gambino and Sattler \cite{Gambino-Sattler}: the key idea here is to treat being a Kan fibration not as a property, but as structure. Inspired by the work in HoTT on cubical sets\index{cubical sets} (\cite{CCHM17} in particular), they define a structured notion of a \emph{uniform Kan fibration}\index{uniform Kan fibration} and give a constructive proof that uniform Kan fibrations are closed under pushforward. They also show that their definition is ``classically correct'' in that a map can be equipped with the structure of a uniform Kan fibration if and only if it has the right lifting property against the horn inclusions (is a Kan fibration in the usual sense).

In this paper we will introduce another solution to this problem: the \emph{effective Kan fibrations}\index{effective Kan fibration}. The reason for introducing a new solution is that Gambino and Sattler ran into trouble with another type constructor: universes. Indeed, the only known method for constructing universal fibrations in presheaf categories is via the Hofmann-Streicher construction \cite{hofmannstreicher97}, and this method can only be applied to notions of fibred structure which are \emph{local} (see \refdefi{localfibredstructure} below). The problem is that the uniform Kan fibration are not local, whereas we are able to show this for our notion of an effective Kan fibration. (That uniform Kan fibrations are not local was shown by Christian Sattler; his proof can be found in Appendix~\ref{sec:ongambinosattler} to this paper.)

So, to summarise, our main contribution is the introduction of the notion of an effective Kan fibration, a structured notion of Kan fibration for which we will prove the following results:
\begin{enumerate}
  \item Effective Kan fibrations are closed under pushforward\index{pushforward}.
  \item The notion of an effective Kan fibration is local and hence universal effective Kan fibrations exist.
  \item Effective Kan fibrations have the right lifting property against horn inclusions.
  \item A map which has the right lifting property against horn inclusions can be equipped with the structure of an effective Kan fibration.
\end{enumerate}
We will give constructive proofs of (1) -- (3), whereas the proof of (4) will necessarily be ineffective (due to the BCP-obstruction). As a result, the effective Kan fibrations are the first, and so far only, structured notion of fibration for which these properties have been shown.

Besides having a clear computational content, another advantage of constructive proofs is that they can be internalised to arbitrary Grothendieck toposes (not just $\Sets$). In fact, our arguments can be formalised in any locally cartesian closed category with finite colimits and a natural numbers object. For those who prefer to think in terms of set theory, our arguments can be performed in (a subsystem of) Aczel's constructive set theory {\bf CZF} (for which see \cite{aczelrathjen01}), which in turn is a subsystem of classical {\bf ZF}, Zermelo-Fraenkel set theory (without choice).

But however this may be, we feel that laying too great an emphasis on the metamathematical aspects of our work may be misleading. The task of reworking some of the fundamental concepts in simplicial homotopy theory in a more explicit or structured style is an interesting undertaking in itself, whatever one's foundational convictions, and we hope that any homotopy theorists reading this work will come to see it that way as well. Indeed, any mathematician who wishes to skip the occasional foundational aside on our part should feel free to do so, and can read this paper as just another piece of new mathematics.

\subsection{Related work} Besides the work of Gambino and Sattler we already mentioned, there are two strands of research with which our approach should be compared.

In response to the BCP-obstruction, most researchers in HoTT have abandoned simplicial sets and switched to cubical sets. In doing so people have managed to constructively prove the existence of a model structure and a model of HoTT in cubical sets. In addition, their cubical models can be seen as interpreting a cubical type theory, in which principles like univalence can be derived, and which enjoys (homotopy) canonicity (see \cite{CCHM17,CoquandHS19,huber19,BezemCH19}).

These are impressive results and our approach is by no means that far advanced. However, we still feel that analogous results for simplicial sets would be preferable: indeed, simplicial techniques pervade modern homotopy theory, much more than cubical approaches do, and in order to connect to most of the ongoing work in homotopy theory and higher category theory, a simplicial approach is more likely to be successful. In addition, it is at present not entirely clear whether any of the constructive model structures that people have developed in cubical sets model the world of homotopy types or $\infty$-groupoids.

The other approach with which our work should be compared is that of Gambino, Henry, Sattler and Szumi\l o, who, in face of the BCP-obstruction, decide to bite the bullet (see \cite{henry19,gambinohenry19,gambinoetal19}). Their starting point was the constructive proof by Simon Henry of the existence of the Kan-Quillen model structure on simplicial sets, using the standard definitions of the Kan and trivial Kan fibrations (having the right lifting property against the horn inclusions and boundary inclusions, respectively). Based on this work, Henry in collaboration with Gambino managed to construct a model of HoTT, modulo some tricky coherence problems. Their work has the advantage that it is based on the usual definitions of the (trivial) Kan fibrations, so in that sense it looks, at least at first glance, more familiar than our structured approach. In addition, their work is definitely more advanced than ours.

However, we still think that a structured approach looks more appealing. Due to the BCP-obstruction, they only have a weak form of $\Pi$-types. In comparision, our approach should give us genuine $\Pi$-types with definitional $\eta$- and $\beta$-rules. Also, it seems that to obtain a genuine model of homotopy type theory based on their work forces one to solve some quite difficult coherence problems, for which at present no solutions are known. In contrast, we expect that a more structured approach will be helpful in solving any coherence problems we would encounter if we would start to turn our work into a model of type theory.

\subsection{Fibrations as structure} So what is our notion of an effective Kan fibration? Before we answer that question, let us first discuss what, in general, we mean by a structured notion of fibration.

A common situation in homotopy theory is that we are working in some category \ct{E} equipped with a pullback stable class of fibrations; by pullback stability we mean that in a pullback square like
\begin{displaymath}
  \begin{tikzcd}
    Y' \ar[r] \ar[d, "p'"] & Y \ar[d, "p"] \\
    X' \ar[r] & X
  \end{tikzcd}
\end{displaymath}
the map $p'$ will be a fibration whenever $p$ is. If one conceptualises matters like this, being a fibration is a \emph{property} of a map; in this paper, however, we will think of being a fibration as \emph{structure}. In particular, the setting of a category \ct{E} with a pullback stable class of fibrations will be replaced by a \emph{structured notion of a fibration} or \emph{a notion of fibred structure}\index{notion of fibred structure} on \ct{E}, which we will define as a presheaf
\[ {\rm Fib}: (\ct{E}^\to_{\rm cart})^{op} \to Sets, \]
where $\ct{E}^\to_{\rm cart}$ denotes the category of arrows in \ct{E} and pullback squares between them. Given such a structured notion of fibration, an element $\sigma \in {\rm Fib}(p)$ will be called a \emph{fibration structure} on $p: Y \to X$; and if such an element $\sigma$ exists, we may call $p$ a \emph{fibration}\index{fibration}. These fibrations will form a pullback stable class, as before. Indeed, we can think of a pullback stable class as a degenerate notion of fibred structure where ${\rm Fib}(p)$ always contains at most one element, signalling whether the map $p$ is a fibration or not.

But in many examples being a fibration is quite naturally thought of as additional structure on a map. For instance, a common way of defining a class of fibrations is by saying that they are \emph{cofibrantly generated}\index{cofibrantly generated} by a class of maps $\smallmap{A}$. That is, a map $p: Y \to X$ is a fibration precisely when for any $m: B \to A \in \smallmap{A}$ and commutative square
\begin{displaymath}
  \begin{tikzcd}
    B \ar[r, "f"] \ar[d, "m"] & Y \ar[d, "p"] \\
    A \ar[r, "g"] \ar[ur, dotted] & X
  \end{tikzcd}
\end{displaymath}
there exists a dotted filler as shown making both triangles commute; one also says that $p$ has the \emph{right lifting property} against $\smallmap{A}$. In this situation we can define a structured notion of fibration ${\rm Fib}$ by declaring the elements of ${\rm Fib}(p)$ to be \emph{lifting structures} on $p$: that is, functions which assign to each square like the one above with $m \in \smallmap{A}$ a filler $\sigma_{m,f,g}: A \to Y$ making both triangles commute. Let us for the moment write this notion of fibred structure as ${\rm RLP}(\mathcal{A})$.

One thing which happens if one shifts to a structured style is that notions of fibration which are \emph{logically equivalent} as properties are no longer \emph{isomorphic} as structures. Take the trivial Kan fibrations in simplicial sets as an example. They can be defined as the maps which are cofibrantly generated by the monomorphisms; or as those which are cofibrantly generated by the monomorphisms $S \subseteq \Delta^n$ with representable codomain; or as those which are cofibrantly generated by the boundary inclusions $\partial \Delta^n \subseteq \Delta^n$. These may all be equivalent as properties, but as structures, they are all different. Indeed, there are ``forgetful'' morphisms of fibred structure (presheaves)
\[ {\rm RLP}({\rm monos}) \to {\rm RLP}({\rm sieves}) \to {\rm RLP}(\mbox {boundary inclusions}), \]
but they are not monomorphisms, let alone isomorphisms. So as structured notions of fibration they need to be carefully distinguished.

This leads us to another important point for this paper: one can try to repair this by imposing \emph{compatibility conditions}\index{compatibility condition} on the lifting structure (also known as \emph{uniformity conditions} in the literature). Indeed, in the way we have defined ${\rm RLP}(\smallmap{A})$ its elements $\sigma$ choose solutions for a class of lifting problems, but there are no conditions saying how these solutions should be related. For instance, suppose we have in simplicial sets a solid diagram of the form
\begin{displaymath}
  \begin{tikzcd}
    D \ar[d, "n"] \ar[r] & C \ar[d, "m"] \ar[r] & Y \ar[d, "p"] \\
    C \ar[r, "k"'] \ar[urr, dotted, "l_2", near start] & A \ar[r] \ar[ur, dotted, "l_1"'] & X
  \end{tikzcd}
\end{displaymath}
in which $p$ is a trivial Kan fibration and the left hand square is a pullback involving monomorphisms $n$ and $m$. Then any element in ${\rm RLP}({\rm monos})(p)$ must, among other things, choose dotted arrows $l_1$ and $l_2$ as shown; we could define a notion of fibred structure ${\rm RLP}_c({\rm monos})$ which would require that in such circumstances we must have $k.l_1 = l_2$. And if ${\rm RLP}_c({\rm sieves})$ would be the restriction of ${\rm RLP}({\rm sieves})$ to those lifting structures which in a situation like
\begin{displaymath}
  \begin{tikzcd}
    \alpha^* S \ar[d, "n"] \ar[r] & S \ar[d, "m"] \ar[r] & Y \ar[d, "p"] \\
    \Delta^m \ar[r, "\alpha"'] \ar[urr, dotted, "l_2", near start] & \Delta^n \ar[r] \ar[ur, dotted, "l_1"'] & X
  \end{tikzcd}
\end{displaymath}
would choose lifts $l_1$ and $l_2$ satisfying $\alpha.l_1 = l_2$, then the forgetful morphism
\[ {\rm RLP}_c({\rm monos}) \to {\rm RLP}_c({\rm sieves}) \]
would be an isomorphism of notions of fibred structure. The reader may wonder if any trivial Kan fibration can still be equipped with such a structure: that is, whether lifts against monos or sieves can always be chosen in such a way that these compatibility conditions are met. That is indeed the case (see \cite{Gambino-Sattler}).

Both ${\rm RLP}_c({\rm monos})$ and ${\rm RLP}_c({\rm sieves})$ are examples of right lifting structures defined by lifts against \emph{categories} rather than \emph{classes} of maps, and a rich variety of lifting structures is quite characteristic of our structured approach. Indeed, we will also consider \emph{double categorical} and even \emph{triple categorical} notions of lifting structure. To motivate this, let us consider the forgetful map
\[ {\rm RLP}({\rm monos}) \to {\rm RLP}(\mbox {boundary inclusions}). \]
This will not be a monomorphism even when we restrict to ${\rm RLP}_c({\rm monos})$, but there is a further (double-categorical) compatibility condition we could imagine imposing which would have this effect. Suppose we have a solid diagram in simplicial sets
\begin{displaymath}
  \begin{tikzcd}
    C \ar[r, "g"] \ar[d, "m"] & Y \ar[dd, "p"] \\
    B \ar[d, "n"] \\
    A \ar[r, "f"] & X
  \end{tikzcd}
\end{displaymath}
in which $p$ is a trivial Kan fibration and $m$ and $n$ are monomorphisms. Then a lifting structure $\sigma$ on $p$ will give rise to a filler $A \to Y$ in two different ways: we can use that monomorphisms are closed under composition and take $\sigma_{n.m,g,f}$. But we could also first construct a map $l: B \to Y$ by taking $l = \sigma_{m,g,f.n}$ and then use that to construct $\sigma_{n,l,g}$. A natural requirement would be that these two lifts should always coincide. If we write ${\rm RLP}_{dc}({\rm monos})$ for the notion of fibred structure where the lifts satisfy this condition on top of the previous one, then we will prove in this paper that
\[ {\rm RLP}_{dc}({\rm monos}) \to {\rm RLP}(\mbox {boundary inclusions}). \]
is a monomorphism of notions of fibred structure. We will also characterise the image of this map and show that every trivial Kan fibration can be equipped with such a double-categorical lifting structure. Indeed, with one further (constructive) twist, this will be our preferred structured notion of a trivial Kan fibration (an \emph{effective trivial Kan fibration}).

\subsection{Effective Kan fibrations} As said, the core of our paper is the definition of an effective Kan fibration, our preferred structured notion of a Kan fibration. To motivate this definition, let us recall the classical result (from \cite{gabrielzisman67}) that says that the Kan fibrations are cofibrantly generated by maps of the form $m \hat{\otimes} \partial_i$, where $m: A \to B$ is a cofibration, $\hat{\otimes}$ is the pushout-product and $\partial_i: 1 \to \mathbb{I}$ is one of the two endpoint inclusions into the interval $\mathbb{I} = \Delta^1$. This can be reformulated as follows: let us say that a map $p: Y \to X$ has the right lifting property against a commutative square
\begin{displaymath}
  \begin{tikzcd}
    D \ar[r] \ar[d] & B \ar[d] \\
    C \ar[r] & A
  \end{tikzcd}
\end{displaymath}
if for any solid diagram
\begin{displaymath}
  \begin{tikzcd}
    D \ar[r] \ar[d] & B \ar[d] \ar[r] & Y \ar[d, "p"] \\
    C \ar[r] \ar[urr, dotted] & A \ar[r] \ar[ur, dotted] & X
  \end{tikzcd}
\end{displaymath}
and dotted arrow $C \to Y$ making the diagram commute, there exists a dotted arrow $A \to Y$ making the whole picture commute. (Note that this is equivalent to having the right lifting property against the inscribed map from the pushout $B \coprod_D C$ to $A$.) Then a map is a Kan fibration if and only if it has the right lifting property against the left hand square in a double pullback diagram of the form
\begin{displaymath}
  \begin{tikzcd}
    A \ar[r, "{(1,\partial_i)}"] \ar[d, "m"] & A \times \mathbb{I} \ar[d, "m \times \mathbb{I}"] \ar[r, "\pi_1"] & A \ar[d, "m"] \\
    B \ar[r, "{(1, \partial_i)}"] & B \times \mathbb{I} \ar[r, "\pi_1"] & B.
  \end{tikzcd}
\end{displaymath}
The usual definition of a Kan fibration in terms of horn inclusions can also be stated as a lifting problem against a square, namely the left hand square in another double pullback diagram:
\begin{displaymath}
  \begin{tikzcd}
    \partial \Delta^n \ar[r] \ar[d] & s^* \partial \Delta^n \ar[d] \ar[r] & \partial \Delta^n \ar[d] \\
    \Delta^n \ar[r, "d"] & \Delta^{n+1} \ar[r, "s"] & \Delta^n,
  \end{tikzcd}
\end{displaymath}
where $s = s_i$ is one of the degeneracies and $d = d_i/d_{i+1}$ is one of its sections. (The reason being that the inclusion $\Delta^n \cup s^* \partial \Delta^n \to \Delta^{n+1}$ is the horn inclusion $\Lambda^{n+1}_{i/i+1}$.) We will call such left hand squares \emph{horn squares}.

These two situations have something in common, namely that they are both lifting conditions against a left hand square in a double pullback diagram of the form
\begin{displaymath}
  \begin{tikzcd}
    C \ar[r] \ar[d, "m"] & r^*C \ar[d] \ar[r] & C \ar[d, "m"] \\
    A \ar[r, "i"] & B \ar[r, "r"] & A
  \end{tikzcd}
\end{displaymath}
in which $m$ is a cofibration and $(i,r)$ is a deformation retract of some kind. The first step towards our definition of an effective Kan fibration is the identification of the right kind of deformation retracts. Our solution is the notion of a \emph{hyperdeformation retract}\index{hyperdeformation retract} (HDR), and to define these HDRs we use the simplicial Moore path functor defined by the first author in collaboration with Richard Garner~\cite{vdBerg-Garner}.

Once we have the concept of an HDR, we can define the \emph{mould squares}\index{mould square} as those pullback squares
\begin{displaymath}
  \begin{tikzcd}
    A' \ar[d, "m"] \ar[r, "{(i',r')}"] & B' \ar[d] \\
    A \ar[r, "{(i,r)}"] & B
  \end{tikzcd}
\end{displaymath}
in which $m$ is a cofibration, $(i,r)$ and $(i',r')$ are HDRs and the square (read from top to bottom) is what we will call a cartesian morphism of HDRs. The idea, then, is to define the effective Kan fibrations as those maps which come equipped with lifts against mould squares.

What is missing from this definition, however, are the correct compatibility conditions\index{compatibility condition}. It turns out that mould squares can be composed both horizontally and vertically (they naturally fit into a double category), and this leads to two natural compatibility conditions. In fact, there is a further ``perpendicular'' condition, because mould squares can be pulled back along morphisms of HDRs, leading to a third (triple-categorical) compatibility condition. Indeed, our main reason for introducing the concept of a mould square is that they allow us to express these compatibility conditions in such an elegant way.

\begin{textbox}{From uniform to effective Kan fibrations\label{box:fromuniformtoeffectives}}\index{uniform Kan fibration|textbf}
  One way of understanding our notion of an effective Kan fibration is as a modifcation of the notion of a uniform Kan fibration by Gambino and Sattler. They work with a category of cofibrations and an interval object $\mathbb{I}$: in the category of simplicial sets these would be a category of monomorpisms and pullback squares between them and the representable $\mathbb{I} = \Delta^1$. The interval comes equipped with two maps $\partial_0, \partial_1: 1 \to \mathbb{I}$ which yields two natural transformations $s_Y, t_Y: Y^\mathbb{I} \to Y$. A map $p: Y \to X$ is a uniform Kan fibration if $(s/t, p^\mathbb{I}): Y^\mathbb{I} \to Y \times_X X^\mathbb{I}$ has the right lifting property against the category of cofibrations. We modify this definition by replacing the path object $X^\mathbb{I}$ by the simplical Moore path object $MX$ and adding a uniformity condition demanding that lifts behave well with respect to concatenation of Moore paths, which implies that lifts of general simplicial Moore paths are determined by those of length 1. This modification can be understood as a consequence of our desire to make the definition local. Indeed, note that an $n$-simplex in $Y^\mathbb{I}$ corresponds to a prism $\Delta^n \times \mathbb{I} \to Y$. In contrast with cubical sets, the representable simplicial sets are not closed under products and therefore the domain of this map is not representable. This is ultimately what causes non-locality of the uniform Kan fibrations. However, the map $\Delta^n \times \mathbb{I} \to Y$ can be understood as a Moore path of length $n+1$, composed of Moore paths of length one of the form $\Delta^{n+1} \to Y$, reflecting the fact that the prism $\Delta^n \times \mathbb{I}$ is the union of $(n+1)$-many $(n+1)$-simplices. Therefore for effective Kan fibrations all the structure is determined by the lifts for maps with representable domain and this is what allows us to regain locality.

  We will also add a further compatibility condition for our effective Kan fibrations which relates to the fact that cofibrations are closed under composition. This additional requirement is not strictly necessary for our purposes, but it allows us to prove in Section 12 that the structure of being an effective Kan fibration is completely determined by the the lifts against the horn inclusions (or horn squares, to be more precise). In addition, as we will show in future work, it will have the consequence that the effective trivial fibrations and the effective Kan fibrations interact nicely.
  \end{textbox}

Once we have this in place, and we have checked that horn squares are mould squares, it follows immediately that effective Kan fibrations have the right lifting property against horn inclusions (it will also not be too hard to see that our effective Kan fibrations are uniform Kan fibrations in the sense of Gambino-Sattler). In fact, quite a lot of pages will be spent on proving that the lifts against the mould squares are completely determined by the lifts against the horn squares, or, in other words, that the forgetful map
\[ {\rm RLP}_{tc}(\mbox{mould squares}) \to {\rm RLP}(\mbox{horn squares}) \]
is a monomorphism of fibred structures. We will also characterise its image, which will be crucial for proving both that our notion of an effective Kan fibration is local and that it is classically correct. The paper will be consist of two parts and these two results will form the main achievements of the second part of this paper.

The first part will be devoted to proving that the effective Kan fibrations are closed under pushforward. We find it convenient to do this axiomatically, using an axiomatic setup reminiscent of the work of Orton and Pitts \cite{ortonpitts18}. The idea of Orton and Pitts (but see also \cite{Gambino-Sattler,Frumin-vdBerg}) was to develop the basic theory of the cubical sets model in the setting of a suitable category equipped with a class of cofibrations forming a dominance\index{dominance} and an interval object $\mathbb{I}$. In our setup we will keep the dominance, but replace the interval object by a Moore path functor $M$ satisfying certain equations (these can be found in an Appendix~\ref{sec:axiomsformoore} to this paper). The example we have in mind is, of course, the simplicial Moore path functor from \cite{vdBerg-Garner}. As our dominance, we take the monomorphisms in simplicial sets which are ``pointwise decidable'' (this is an additional constructive requirement that we impose on the cofibrations, which can be ignored by our classical readers). As we will show, this axiomatic setting is sufficiently powerful to define a suitable notion of mould square and effective Kan fibration, and prove that the effective Kan fibrations are closed under pushforward.

The main drawback of the notion of an effective Kan fibration might be that we are unable to constructively prove that they are closed under retracts. That is, we do not know how to effectively equip maps which are retracts of effective Kan fibration with the structure of an effective Kan fibration. In fact, we suspect that this is impossible, but we do not know this for sure.

The other drawback is that the theory is currently not complete. It should be the case that the effective Kan fibrations are the right class in an algebraic weak factorisation system and that they underlie an algebraic model structure as in~\cite{Riehl-11}. We plan to take this up in future work.

\subsection{Summary of contents} The contents of this paper are therefore as follows.

We start Part 1 with a recap of the theory of algebraic weak factorisation systems (AWFSs), a structured analogue of the notion of a weak factorisation system. In this structured notion the left maps are replaced by coalgebras for a comonad on the arrow category, while the right maps are replaced by the algebras for a monad on the arrow category. Our main reference for this theory is an important paper by Bourke and Garner \cite{Bourke-Garner}, which also explains the connection to double categories. There are two (related) points here which are perhaps worth stressing for those who are already familiar with this theory: first of all, for us the distributive law is important and we will always assume it. Secondly, we will exclusively work with the (co)algebras for the (co)monad, never with the (co)algebras for the (co)pointed endofunctor. The reason is that we will not assume that the effective Kan fibrations are closed under retracts and therefore we cannot express them as algebras for a pointed endofunctor. It also means that we cannot expect them to be cofibrantly generated by a small category: the best we can hope for is that they are cofibrantly generated by a small double category. (We believe this to be true, but we will not prove this in this paper.) In any case, the connection of algebraic weak factorisation systems to double categories is quite important for us. But to make that work, the distributive law is crucial and that is the reason we will always assume it.

We will then go on to explain how both dominances and Moore structures give rise to AWFSs (for dominances this can already be found in \cite{Bourke-Garner}). We will refer to these as the (effective cofibration, effective trivial fibration) and (HDR, naive fibration)-AWFS, respectively. Using these two ingredients we will then define the notions of mould square and effective fibration. Assuming that the Moore structure is symmetric, we will then show that these effective fibrations are closed under pushforward. An important intermediate step for this is the proof of the Frobenius property for the (HDR, naive fibration)-AWFS, which is related to an argument that can also be found in \cite{vdBerg-Garner}.

We will start the second part by showing that the category of simplicial sets can be equipped with both a dominance and a symmetric Moore structure. This will show that the theory of part 1 applies to simplicial sets. Then we will proceed to show that effective (Kan) fibrations can be completely characterised by their lifts against horn squares, which will prove both that this notion of fibred structure is local and classically correct.

To our surprise it turns out that the machinery we develop here can also be used to give effective (structured) analogues of the notions of left and right fibration in simplicial sets. Indeed, also these can be defined by a right lifting property against a class of mould squares, using the same dominance of effective cofibrations, but a different Moore structure. When our results have implications for an effective theory of left and right fibrations, we will comment on that as well.

Finally, we will finish this paper with a conclusion outlining directions for future research and four appendices. In the first appendix we give our version of the Orton-Pitts axioms. In the second appendix we compare the axioms for Moore structure with the counterpart notions of connections and diagonals in cubical sets. The third appendix proves a result on horn fillers that we need for the proofs that our different effective notions of fibration are classically correct. Finally, the fourth appendix contains Christian Sattler's proof that the notion of a \emph{uniform} Kan fibration in the category of simplicial sets is not local. 

\subsection{Acknowledgements} We thank Richard Garner for some very useful conversations on polynomial functors, which had a major influence on Section 9. We also thank Christian Sattler for allowing us to include his proof about the non-locality of the uniform Kan fibrations. Finally, we are also grateful to the referees for very helpful feedback.

\chapter{\texorpdfstring{\(\Pi\)}{Pi}-types from Moore paths}

 \section{Preliminaries}\label{sec:preliminaries}
In this section we introduce the main theoretical framework in which our theory
of effective fibrations is embedded. Abstractly put, we are studying and
constructing new notions of \emph{fibred structure} and \emph{cofibred
structure} on a category \(\E\).\index{notion of fibred structure} 

Throughout the paper, we will assume that \(\E\) is a locally cartesian closed
category with finite limits and colimits.  Certain results in this paper may
also be obtained for a wider class of categories \(\E\). For example, those
results which only use that the codomain functor is a bifibration satisfying
the Beck-Chevalley condition, hold more generally than just for locally
cartesian categories with finite colimits, because the latter notion is
stronger (\reflemm{lcc-beck-chevalley}). Yet in different sections we do resort
to the fact that \(\E\) is locally cartesian closed, and hence stating results
which combines the theory from different sections quickly becomes unwieldy when
working in full generality.  In short, the theory presented in this paper is
best understood when one is not so much worried about its level of generality.

We first recall the definition of a locally cartesian closed category.
\begin{defi}{lcc}\index{locally cartesian closed category}
  Suppose \(\E\) is a category with pullbacks. Then \(\E\) is
  \emph{locally cartesian closed} if for every arrow \(f: X \to Y\),
  the pullback functor
  \begin{equation}
    f^* : \E / Y \to \E / X
  \end{equation}
  has a right adjoint:
  \begin{equation}
    \Pi_f : \E / X \to \E / Y.
  \end{equation}
  Instead of $\Pi_f$ we will also often write $f_*$ and we refer to this
  functor as the \emph{pushforward along} $f$\index{pushforward}.
\end{defi}

\begin{nameddefi}{bifibration}{\cite{benabou-roubaud}}
 A functor \(F: \mathcal{F} \to \mathcal{E}\) is \emph{bifibration}
 \index{bifibration} when it is a Grothendieck fibration as well as an
 opfibration.
\end{nameddefi}

In this paper, most notably in Section~\ref{sec:AWFSFromM}, we use the
following important property of a locally cartesian closed category with finite
colimits.  The functor
\begin{equation}\label{eq:bifibration-condition}
 \cod : \E^{\to} \to \E
\end{equation}
from the arrow category of \(\E\) to \(\E\) which sends arrows to their
codomain is a bifibration satisfying the \emph{Beck-Chevalley
condition}\index{Beck-Chevalley condition|textbf} (see
Box~\ref{box:beck-chevalley}). Besides pullbacks (which we already have), this
also requires the existence of arbitrary pushouts, together with a
compatibility condition between them.  For~\eqref{eq:bifibration-condition},
this condition is as follows. Given a commutative cube:
\begin{equation}\label{eq:beck-chevalley-condition}
  \begin{tikzpicture}[baseline={([yshift=-.5ex]current bounding box.center)}]
    \matrix (m) [matrix of math nodes, row sep=3em,
    column sep=3em]{ 
	    |(c')| {C'} & |(d')| {D'} \\
	    |(c)| {C} & |(d)| {D}
    \\};
    \matrix (n) [matrix of math nodes, row sep=3em,
    column sep=3em, position=30:-1.7 from m]{ 
	    |(a')| {A'} & |(b')| {B'} \\
	    |(a)| {A} & |(b)| {B}
    \\};
    \begin{scope}[every node/.style={midway,auto,font=\scriptsize}]
      \path[->]
        (c) edge (d)
        (c') edge (c)
        (d') edge (d)
        (c') edge (d');
      \path[->]
        (a') edge (b')
        (a') edge (a)
        (b') edge (b)
        (a) edge (b);
      \path[->]
        (b) edge (d)
        (a) edge (c)
        (a') edge (c')
        (b') edge (d');
      \end{scope}
  \end{tikzpicture} 
\end{equation}
such that
\begin{enumerate}[(i)]
  \item The bottom square \(ABCD\) is a pullback; 
  \item The right square \(B'D'BD\) is a pushout;
  \item The back square \(A'B'AB\) is a pullback;
\end{enumerate}
then the left square \(A'C'AC\) is a pushout if and only if the front square
\(C'D'CD\) is a pullback. In many cases, we obtain this condition from the
following lemma:
\begin{lemm}{lcc-beck-chevalley}
  If \(\E\) is locally cartesian closed with finite colimits, then the codomain
  bifibration satisfies the Beck-Chevalley condition. That is, for all
  cubes~\eqref{eq:beck-chevalley-condition} satisfying (i)--(iii), the above
  compatibility condition holds.
\end{lemm}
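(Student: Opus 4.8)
The plan is to deduce the Beck-Chevalley condition for the codomain bifibration directly from local cartesian closure, by reformulating the cube in terms of slice categories. Recall that for the codomain fibration over $\E$, the cartesian arrows over a map $f \colon A \to B$ are precisely the pullback squares, the reindexing functor along $f$ is $f^*\colon \E/B \to \E/A$, and dually the opcartesian arrows are the pushout squares, with the cobase-change functor being the left adjoint $f_!\colon \E/A \to \E/B$ given by postcomposition with $f$. So the cube in~\eqref{eq:beck-chevalley-condition} presents: a map $u\colon A \to B$ in the bottom together with a pullback giving $v\colon C \to D$ over the bottom square (the objects $A'\to A$, $C'\to C$ living in $\E/A$, $\E/C$, and the vertical maps of the cube being the structure maps to $A,B,C,D$). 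Reading the front and back faces as morphisms in the relevant slices, the claim becomes the Beck-Chevalley statement: for the commuting square of functors relating $\E/A, \E/B, \E/C, \E/D$ built from the two postcomposition functors and the two pullback functors, the canonical natural transformation comparing ``pull back then push forward along the postcomposition'' with ``push forward then pull back'' is an isomorphism.

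First I would set up notation carefully: name the bottom pullback square, call the vertical legs of the cube the ``display maps'' into $A,B,C,D$, and identify the back face $A'B'AB$ as an object $a' \in \E/A$ together with the specification that $b' \in \E/B$ is obtained from it — but which direction? Here one must be careful: condition (iii) says the back square is a \emph{pullback}, so $a'$ is the pullback of $b'$ along $u$, i.e. $a' = u^*(b')$; and condition (ii) says the right square is a \emph{pushout}, so $d' = v_!(c')$ where I am writing $v\colon C \to D$ for the top map of the bottom pullback. The content to be proved is then exactly: the left face is a pushout ($c' = (\text{left vertical})_! $ applied appropriately to $a'$, i.e. $c'$ is a pushout) \emph{iff} the front face is a pullback ($c' = (\text{bottom-front map})^*(d')$). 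So the two conditions I want to show equivalent are (a) $c'$ is the pushout of $a'$ along the map $A \to C$, and (b) $c'$ is the pullback of $d'$ along $D \to C$... — and the cleanest route is to show both are equivalent to a single explicit description of $c'$ as an object of $\E/C$.

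The key step is the following: using that $\E$ is locally cartesian closed, pullback functors between slices preserve all colimits (being left adjoints to the $\Pi$ functors), in particular pushouts; and postcomposition functors $f_!$ always preserve pushouts (they are themselves left adjoints). Combining these: take the back pullback square as giving $a' = u^* b'$, apply the pullback functor along $A \to C$... wait, the direction runs the other way — the relevant square of base categories is $\E/C \xrightarrow{} \E/D$ and $\E/A \xrightarrow{} \E/B$ via postcomposition, with downward pullback functors $\E/C \to \E/A$, $\E/D \to \E/B$. So I would instead argue: start from $c'$, and chase it around the two ways through the square, showing the comparison map is iso. One direction of the square is $c' \mapsto (c' \text{ postcomposed to } D) \mapsto$ pull back to $\E/B$; the other is $c' \mapsto$ pull back to $\E/A \mapsto$ postcompose to $\E/B$. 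The Beck-Chevalley \emph{isomorphism} here is precisely the statement that in a pullback square of sets/objects, postcomposition and pullback commute — and this holds because in $\E$, for a pullback square, the two evident composites agree. Once that natural isomorphism of functors $\E/C \to \E/B$ is established, the biconditional ``(left face pushout) $\iff$ (front face pullback)'' follows by a general nonsense argument: given the other three faces as specified, each of the two remaining conditions pins down the same object of $\E/C$ up to canonical iso, via the isomorphism just proved together with preservation of pushouts by $f_!$ and by pullback functors.

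The main obstacle I anticipate is purely bookkeeping rather than conceptual: getting every variance right — which faces are pullbacks versus pushouts, which slice each object lives in, and in which direction the canonical comparison map points — so that ``the comparison is an iso'' genuinely says what conditions (i)--(iii) plus the desired equivalence require. A secondary subtlety is making sure the argument only invokes $f^*$ preserving \emph{pushouts} (which is what finite colimits + local cartesian closure buys: $f^*$ has a right adjoint $\Pi_f$, hence is cocontinuous), and that postcomposition functors preserve pushouts (immediate, since $f_! \dashv f^*$), so that no hidden use of extra structure creeps in; this is what keeps the proof constructive and valid at the stated level of generality.
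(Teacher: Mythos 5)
There is a genuine gap here, and it sits exactly at the point you label ``general nonsense''. Your announced key step --- the natural isomorphism between ``postcompose to \(\E/D\), then pull back along \(B \to D\)'' and ``pull back along \(A \to C\), then postcompose along \(A \to B\)'' --- is just pullback pasting: it holds in any category with pullbacks, uses neither finite colimits nor local cartesian closure, and is not what the lemma asserts. The content of the lemma concerns the \emph{pushout} faces of the cube~\eqref{eq:beck-chevalley-condition}, and that is precisely where local cartesian closure must enter. You do name the right ingredient (pullback functors preserve pushouts because they have right adjoints \(\Pi_f\)), but the deduction of the biconditional from it is left to the claim that ``each of the two remaining conditions pins down the same object of \(\E/C\) up to canonical iso''. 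That claim is false for the pushout condition: the front face being a pullback does characterise \(C' \to C\) as \(f^*(D' \to D)\) for \(f\colon C \to D\), but the left face being a pushout is a property of a square whose universal corner is \(C\), which is already given --- it does \emph{not} determine \(C'\) up to isomorphism (e.g.\ in \(\Sets\), with \(A' = \{x,y\}\to A = 1 = C\), both \(C' = 1\) and \(C' = \{x,y\}\) make that square a pushout). So the equivalence cannot be obtained by a uniqueness argument of this shape; the converse implication is exactly the part that still needs a proof. There are also face/variance slips: condition (ii) concerns the square \(B'D'BD\), i.e.\ it relates \(B' \to B\) and \(D' \to D\) over \(B \to D\), not \(C'\) and \(D'\); and declaring the cocartesian arrows to be pushout squares while simultaneously taking the pushforward to be postcomposition \(f_!\) is internally inconsistent.

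For comparison, the paper's argument is short and avoids all of this. First, by the general remark in Box~\ref{box:beck-chevalley} (factorising the diagonal through a cocartesian, respectively a cartesian, arrow), it suffices to prove a single implication: if the front square is a pullback, the left square is a pushout. For that implication, conditions (i) and (iii) together with the front square being a pullback exhibit the left square as the pullback along \(f\colon C \to D\) of the right square, regarded as a (pushout) square in \(\E/D\); since \(\E\) is locally cartesian closed, \(f^*\) preserves colimits, so the left square is a pushout in \(\E/C\), hence in \(\E\). If you want to rescue your outline, replace the uniqueness step by (a) this explicit ``pull back the pushout face'' computation for one direction, and (b) an actual argument for the other direction, e.g.\ the diagonal-factorisation reduction for bifibrations.
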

\begin{proof}
  With pullbacks and pushouts present, it is easy to see that \(\cod\) is in
  fact a bifibration. For bifibrations, it is enough to only show one direction
  of the compatibility assertion, for which see Box~\ref{box:beck-chevalley} on
  page~\pageref{box:beck-chevalley}. Hence it suffices to show that when the
  front square is a pullback square, the left square is a pushout.  

  In a locally cartesian closed category, pullback along any \(f: C \to D\) has
  a right adjoint and therefore preserves colimits.  Since the right-hand
  square is a pushout, it is also a pushout in \(\mathcal{E}/D\) and hence
  preserved by pullback along \(f\). Hence, as the front square is in fact a
  pullback, the left square is a pushout in \(\mathcal{E}/C\). It follows that
  it is also a pushout square in \(\E\). This proves the assertion.
\end{proof}

\begin{textbox}{The Beck-Chevalley condition for bifibrations\label{box:beck-chevalley}}
The following definition of the \emph{Beck-Chevalley condition}
\index{Beck-Chevalley condition} for 
bifibrations originates from B\'{e}nabou-Roubaud~\cite{benabou-roubaud}.
For a bifibration \(\mathcal{F} \to \E\), the condition is satisfied
when, as drawn in the diagram below,
for every commutative square in the fibre
above a pullback square
 	\begin{equation}\label{eq:defn:beck-chevalley}
  \begin{tikzpicture}[baseline={([yshift=-.5ex]current bounding box.center)}]
    \matrix (m) [matrix of math nodes, row sep=3em,
    column sep=3em]{ 
	    |(ua)| {\bullet} & |(ub)| {\bullet} \\
	    |(uc)| {\bullet} & |(ud)| {\bullet}
    \\};
    \node (c) [position=-75:1.2 from uc] {$\bullet$};
    \node (d) [position=-75:1.2 from ud] {$\bullet$};
    \node (a) [position=40:0.8 from c] {$\bullet$};
    \node (b) [position=40:0.8 from d] {$\bullet$};
    \begin{scope}[every node/.style={midway,auto,font=\scriptsize}]
	    \path[-open triangle 45]
	      (ua) edge node [above] {$f'$} (ub);
      \path[open triangle 45 reversed->]
	      (ub) edge node [right] {$g$} (ud);
      \path[->]
	      (a) edge node [above] {$k'$} (b)
	      (uc) edge node [below] {$f$} (ud)
	      (a) edge node [above left]  {$l'$} (c)
	      (c) edge node [below] {$k$} (d)
	      (b) edge node [right] {$l$} (d)
	      (ua) edge node [left] {$g'$}  (uc);
      \path[-]
	      (ua) edge [dashed] (a)
	      (ub) edge [dashed] (b)
	      (uc) edge [dashed] (c)
	      (ud) edge [dashed] (d);
      \end{scope}
  \end{tikzpicture} 
\end{equation} 
such that \(f'\) is cartesian and \(g\) is cocartesian, one has that \(f\) is
cartesian if and only if \(g'\) is cocartesian. Note that in fact it is equivalent
that only one of these two directions hold: this can be seen by factorizing the
diagonal either as a precomposition with a cocartesian arrow,
or a postcomposition with a cartesian arrow.

If the bifibration comes with a choice of cartesian and cocartesian lifts in the form
of fibrewise pullback \({(-)}^*\) and pushforward \({(-)}_*\), this can be written as an
isomorphism:
\[
	l'_*k'^* \cong k^* l_*
\]
for every pullback square in the base as drawn.
\end{textbox}

Another property that we need later (specifically in the proof of
\refprop{trivFib->effTrivFib}) is that any locally cartesian closed category
with finite colimits is \emph{coherent}\index{coherent (category)}. Recall that
a \emph{coherent category} is a category where subobjects have finite unions
preserved by pullback.  For this to hold, it is enough that this category has
finite coproducts and is \emph{regular} (see
Johnstone~\cite{JOHNSTONE-ELEPHANT1}, A1.4).
\begin{lemm}{lccFinCoImpliesCoherent}
  A locally cartesian closed category \(\E\) with finite colimits
  is coherent.
\end{lemm}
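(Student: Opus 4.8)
The plan is to reduce to the criterion already recalled just above the statement: a category is coherent as soon as it has finite coproducts and is regular (Johnstone~\cite{JOHNSTONE-ELEPHANT1}, A1.4). Since \(\E\) has finite colimits it certainly has finite coproducts, so the whole task reduces to showing that \(\E\) is \emph{regular}, i.e.\ that it has finite limits, that every kernel pair has a coequalizer, and that regular epimorphisms are stable under pullback. The first two requirements are immediate: \(\E\) has finite limits by assumption, and it has all coequalizers (in particular of kernel pairs) because it has finite colimits.

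The only point requiring an argument is the pullback-stability of regular epimorphisms, and this is where local cartesian closedness enters. Let \(e \colon A \to B\) be a regular epimorphism, say \(e = \mathrm{coeq}(u,v)\) for a parallel pair \(u,v \colon R \to A\), and let \(g \colon B' \to B\) be arbitrary. First I would recast the coequalizer inside the slice \(\E/B\): since \(e u = e v\), the pair \(u,v\) together with \(e\) forms a diagram in \(\E/B\) whose codomain is the terminal object \(\mathrm{id}_B\), and because colimits in a slice are created by the domain functor \(\E/B \to \E\), the identity \(e = \mathrm{coeq}(u,v)\) continues to hold in \(\E/B\). Now apply the pullback functor \(g^* \colon \E/B \to \E/B'\). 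By local cartesian closedness \(g^*\) has a right adjoint \(\Pi_g\), hence preserves all colimits, so \(g^* e = \mathrm{coeq}(g^* u, g^* v)\) in \(\E/B'\); transporting back along the (colimit-creating) domain functor \(\E/B' \to \E\), the map \(A \times_B B' \to B'\) obtained by pulling \(e\) back along \(g\) is a coequalizer in \(\E\), hence a regular epimorphism. This establishes regularity.

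Finally, to deduce coherence from the cited criterion one also uses that the finite coproducts of \(\E\) are \emph{universal} (stable under pullback), which again follows at once since each \(g^*\) preserves colimits; unions of subobjects are then computed as the image of the canonical map out of a coproduct, and these images are pullback-stable because \(\E\) is regular and coproducts are universal. None of this is hard; the only step with genuine content is the pullback-stability of regular epimorphisms, and the adjunction \(g^* \dashv \Pi_g\) makes even that routine. The one subtlety to keep straight is the passage back and forth between \(\E\) and its slices — i.e.\ the fact that the domain functor creates colimits — but once that is in hand the whole argument is formal.
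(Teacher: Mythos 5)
Your proof is correct and takes essentially the same route as the paper: reduce coherence to regularity (plus finite coproducts) and obtain pullback-stability of regular epimorphisms from the fact that the pullback functors \(g^*\), being left adjoints by local cartesian closedness, preserve colimits. The paper simply outsources the regularity criterion to Johnstone A1.3.5, whereas you spell out the slice-category argument (and the universality of coproducts needed for unions) by hand.
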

\begin{proof}
  By the preceding paragraph, it is enough to show that \(\E\) is a regular
  category. This follows from Johnstone~\cite{JOHNSTONE-ELEPHANT1}, A1.3.5,
  where we use that pullback is a left adjoint and hence preserves colimits.
\end{proof}

\subsection{Fibred structure}

\begin{defi}{fibredstructure}
  Let \ct{E} be a category with finite limits and write $\ct{E}^{\to}_{\rm
  cart}$ for the category of arrows in \ct{E} and pullback squares between
  them. A presheaf \(\cat{fib}\) on $\ct{E}^{\to}_{\rm cart}$
 \[ \cat{fib}: (\ct{E}^{\to}_{\rm cart})^{\op} \to \Sets \] will also be called
 a \emph{notion of fibred structure}\index{notion of fibred structure|textbf}
 \index{fibred structure|seeonly {notion of fibred structure}}. A morphism of
 notions of fibred structure is simply of a morphism of presheaves and and two
 notions of fibred structure will be called \emph{equivalent} if they are
 naturally isomorphic as presheaves.

 Lastly, a notion of \emph{cofibred structure}\index{notion of cofibred
 structure|textbf}\index{cofibred structure|seeonly {notion of cofibred
 structure}} refers to the dual, which is the same as a presheaf
 \[ \cat{cofib}: \ct{E}^{\to}_{\rm cocart} \to \Sets \]
 where \(\ct{E}^{\to}_{\rm cocart}\) denotes the category of
 arrows in \(\E\) with pushout squares between them.
 \end{defi}

 \begin{defi}{localfibredstructure}\index{local notion of fibred structure|textbf}
 Let \(\cat{fib}\) be a notion of fibred structure on a category \ct{E}. We will call the notion of fibred structure \(\cat{fib}\) \emph{local} (or \emph{locally representable}) if the following holds for any small diagram $D: \ct{I} \to \ct{E}^{\to}_{\rm cart}$ with colimit $f$ and colimiting cocone $(\sigma_i: Di \to f \, : \, i \in {\rm Ob}(\ct{I}))$:
\begin{quote}
If we can choose fibration structures $x_i \in {\cat{fib}}(Di)$ for any $i \in {\rm Ob}(\ct{I})$ such that ${\cat{fib}}(D\alpha)(x_i) = x_j$ for any $\alpha: j \to i$ in $\ct{I}$, then there exists a unique fibration structure $x \in {\cat{fib}}(f)$ such that ${\cat{fib}}(D\sigma_i)(x) = x_i$.
\end{quote}
\end{defi}

We will mainly be interested in notions of fibred structure on presheaf
categories (in fact, on the category of simplicial sets), in which case the
notion of locality can be defined in a different way. In the following
proposition, the Yoneda embedding for a category of presheaves \(\mathcal{E}\)
on \(\mathbb{C}\) is denoted \(y_{(-)} : \mathbb{C} \to \E\).

\begin{prop}{localityforpresh}
 Suppose \ct{E} is the category of presheaves on $\mathbb{C}$, and let
 ${\cat{fib}}$ be a notion of fibred structure on \ct{E}. Then ${\cat{fib}}$ is
 local if and only if the following holds for any morphism $f: Y \to X$ in
 \ct{E}: given, for any $x \in X(C)$, a chosen fibration structure $s_x$ on a
 (chosen) pullback $f_x: Y_x \to y_C$ as in
 \begin{displaymath}
  \begin{tikzcd}
   Y_x \ar[d, "f_x"] \ar[r] & Y \ar[d, "f"] \\
    y_C  \ar[r, "x"] & X
  \end{tikzcd}
 \end{displaymath}
  such that for any $x \in X(C)$ and $\alpha: D \to C$ in $\mathbb{C}$, the fibration structure $s_x$ on $f_x$ pulls back to the one chosen on $f_{x \cdot \alpha}$ for the pullback square
 \begin{displaymath}
  \begin{tikzcd}
   Y_{x \cdot \alpha} \ar[r] \ar[d, "f_{x \cdot \alpha}"] & Y_x \ar[d, "f_x"] \\
   y_D \ar[r, "\alpha"] & y_C,
  \end{tikzcd}
 \end{displaymath}
  over $f$, then there exists a unique fibration structure $s \in {\cat{fib}}(f)$ which pulls back to $s_x$ for any pullback square of the first type.
\end{prop}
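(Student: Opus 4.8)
The plan is to exhibit every object $f\colon Y\to X$ of $\ct{E}^{\to}_{\rm cart}$ as a canonical colimit of its pullbacks along representables, and then to recognise the condition in the proposition as precisely the locality condition of \refdefi{localfibredstructure} specialised to these canonical diagrams.

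First I would prove the following lemma: writing $\int X$ for the category of elements of $X$, and $D_f\colon\int X\to\ct{E}^{\to}_{\rm cart}$ for the evident diagram sending $(C,x)$ to the chosen pullback $f_x$ of $f$ along $x\colon y_C\to X$ and a morphism to the induced pullback square, the cocone $(\sigma_x\colon f_x\to f)$ is colimiting in $\ct{E}^{\to}_{\rm cart}$. It is already colimiting in $\ct{E}^{\to}$, since colimits there are computed componentwise: on codomains this is the density theorem $X\cong\operatorname{colim}_{(C,x)}y_C$, and on domains $Y\cong Y\times_X X\cong Y\times_X\operatorname{colim}_{(C,x)}y_C\cong\operatorname{colim}_{(C,x)}(Y\times_X y_C)$ because colimits in $\ct{E}$ are stable under pullback. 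To upgrade this to $\ct{E}^{\to}_{\rm cart}$, take any cocone of pullback squares $\tau_x\colon f_x\to g$ with $g\colon Z\to W$; the map $h\colon f\to g$ induced in $\ct{E}^{\to}$ is automatically a pullback square, because $X\times_W Z\cong(\operatorname{colim}_{(C,x)}y_C)\times_W Z\cong\operatorname{colim}_{(C,x)}(y_C\times_W Z)\cong\operatorname{colim}_{(C,x)}Y_x\cong Y$ — the middle step by pullback-stability of colimits, the next using the pullback squares $\tau_x$ — and a routine diagram chase identifies this isomorphism with the comparison map, so $h$ lies in $\ct{E}^{\to}_{\rm cart}$.

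Granting the lemma, the implication ``local $\Rightarrow$ the stated condition'' is immediate: a family $(s_x)$ with the stated compatibility is exactly a compatible family for the diagram $D_f$ in the sense of \refdefi{localfibredstructure}, and the $s$ asserted to exist is the one produced by locality from the colimiting cocone $(\sigma_x)$. For the converse, assume the stated condition; by the lemma it says that for every object $g$ of $\ct{E}^{\to}_{\rm cart}$ the canonical comparison $\cat{fib}(g)\to\lim_{(C,q)\in\int(\cod g)}\cat{fib}(g_q)$ is a bijection. Now let $D\colon\ct{I}\to\ct{E}^{\to}_{\rm cart}$ be small, write $Di\colon P_i\to Q_i$, let $(\mu_i\colon Di\to f)$ be the colimiting cocone, and let $(x_i)$ with $x_i\in\cat{fib}(Di)$ be a compatible family. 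To apply the stated condition to $f$ I must produce, for each $x'\in X(C)$, a structure $s_{x'}\in\cat{fib}(f_{x'})$. Since $X\cong\operatorname{colim}_{\ct{I}}Q_i$ pointwise, choose $i$ and $q\in Q_i(C)$ mapping to $x'$ in $X(C)$; pasting the chosen pullback $(Di)_q\to Di$ of $Di$ along $q$ with $\mu_i$ exhibits $(Di)_q$ as a pullback of $f$ along $x'$, hence canonically $(Di)_q\cong f_{x'}$ in $\ct{E}^{\to}_{\rm cart}$, and I let $s_{x'}$ be the transport of $\cat{fib}((Di)_q\to Di)(x_i)$ along this isomorphism.

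The main obstacle is to check that $s_{x'}$ does not depend on the choice of $(i,q)$ representing $x'$. By the usual description of colimits of sets this reduces to comparing $(i,q)$ with $(j,Q(\beta)(q))$ for a morphism $\beta\colon i\to j$ of $\ct{I}$, and here the key point is that $D\beta\colon Di\to Dj$ is itself a pullback square, which lets one identify $(Dj)_{Q(\beta)(q)}\cong(Di)_q$ compatibly with the isomorphisms onto $f_{x'}$; the compatibility relation $\cat{fib}(D\beta)(x_j)=x_i$, together with functoriality of $\cat{fib}$ and the pasting lemma, then forces the two candidates for $s_{x'}$ to coincide. A similar, easier computation shows the $s_{x'}$ are compatible with restriction along maps $\alpha\colon D\to C$ of $\mathbb{C}$. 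The stated condition now yields a unique $s\in\cat{fib}(f)$ with $\cat{fib}(\sigma_{x'})(s)=s_{x'}$ for all $x'$, and applying the stated condition once more to each object $Di$ reduces both the remaining equations $\cat{fib}(\mu_i)(s)=x_i$ and the uniqueness of $s$ among their solutions to a verification on restrictions along the maps $q\colon y_C\to Q_i$, where everything unwinds from the definition of the $s_{x'}$. Finally, that $s$ ``pulls back to $s_x$ for any pullback square of the first type'', and not merely the chosen ones, is automatic, since any two pullbacks of $f$ along a fixed $x$ are canonically isomorphic as arrows and the presheaf $\cat{fib}$ transports structures along such isomorphisms; this completes the proof that $\cat{fib}$ is local.
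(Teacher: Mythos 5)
Your proof is correct and takes essentially the same route as the paper's: you exhibit each arrow as the colimit in $\ct{E}^{\to}_{\rm cart}$ of its chosen pullbacks along representables (density plus pullback-stability of colimits), so that the stated condition becomes the special case of locality, and for the converse you factor each $y_C \to X$ through some $Di$, transport the given structures to define the $s_{x'}$, and recover the cocone compatibilities and uniqueness from the uniqueness clause applied to each $Di$ and to $f$. The only difference is that you spell out details the paper leaves implicit, such as upgrading the colimit from $\ct{E}^{\to}$ to $\ct{E}^{\to}_{\rm cart}$ and the well-definedness of $s_{x'}$ under change of representative.
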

\begin{proof}
 The equivalence uses standard properties of presheaf categories: every object
 is a colimit of representables, and since pullback along \(f\) preserves
 colimits, the condition becomes a special case of
 \refdefi{localfibredstructure}.

 For the other direction, one can reduce \refdefi{localfibredstructure} to the
 special case by taking pullbacks along representables:
            \[
          \begin{tikzpicture}[baseline={([yshift=-.5ex]current bounding box.center)}]
            \matrix (m) [matrix of math nodes, row sep=2em,
            column sep=3.5em]{
              |(a)| {Y_{i,x}} & |(b)| {Y_i} & |(y)| {Y} \\
              |(c)| {y_C} & |(d)| {X_i} & |(x)| {X} \\
            };
            \node (i) at (barycentric cs:b=1,d=1,y=1,x=1) {$\overset{\sigma_i}{\Rightarrow}$};
            \begin{scope}[every node/.style={midway,auto,font=\scriptsize}]
            \path[->]
                (a) edge (a -| b.west)
                (c) edge node [below] {$x$} (c -| d.west)
                (a) edge (c)
                (b) edge node [left] {$D(i)$} (d)
                (y) edge node [right] {$f$} (x)
                (b) edge (b -| y.west)
                (d) edge (d -| x.west);
            \end{scope}\end{tikzpicture}
            \]
 Since \(\cod : \E^{\to} \to \E\) is a left adjoint, it preserves colimits, so
 every \(x': y_C \to X\) factors through some \(X_i\) since we are in a
 category of presheaves.  Hence the fibred structure determines precisely the
 input data for the special case.  The pullback property for the unique induced
 fibration follows from the uniqueness condition on each of the \(D(i)\) that
 follows from the special case.
\end{proof}

\begin{rema}{similarsources}
  A local notion of fibred structure is a structured analogue of a local class
  of maps as in \cite[Remark 4.4]{sattler18} and \cite[Definition
  3.7]{cisinski14}, for instance. An earlier structured analogue appears in
  Shulman's paper \cite[Proposition 3.18]{shulman19}. The definition we gave
  here is a special case of his, because we demand that the fibration
  structures are strictly functorial under pullback, rather than
  pseudofunctorial.
\end{rema}

A motivation behind a local notion fibred structure that is of interest to type
theory is that it allows to give a \emph{universal} fibration\index{universal
fibration}. For this to work we assume that we are given some notion of
``smallness'' in the metatheory (like being finite, or countable, or being
bound in size by some regular cardinal, or belonging to some Grothendieck
universe). If \ct{E} is the category of presheaves on a \emph{small} category
$\mathbb{C}$, then we will call a morphism of presheaves $p: Y \to X$
\emph{small} if each fibre of $p_C: Y(C) \to X(C)$ is small. A presheaf $X$
will be called small if $X \to 1$ is a small morphism of presheaves.

Using the same notation as in \refprop{localityforpresh}, we have the following
theorem:
\begin{theo}{localfsimpliesuniverse}
 Suppose \ct{E} is the category of presheaves on a small category $\mathbb{C}$,
 and let ${\cat{fib}}$ be a local notion of fibred structure on \ct{E}.

 Then, there is a small morphism of presheaves \(\pi: E \to U\) equipped with a
 unique fibration structure \(s_\pi \in \cat{fib}(\pi)\) such that every small
 \(p: Y \to X \) equipped with a structure of fibration \(s_p \in
 \cat{fib}(p)\) can be uniquely presented as a pullback of \(\pi\) with this
 structure.
\end{theo}
\begin{proof}
  The idea is to modify the Hofmann-Streicher construction as in \cite{hofmannstreicher97}. That is, we define 
  \begin{align*}
    U(C) &= \{(p, s_p) \varco p : X \to y_C 
    \text{ small, and }s_p \in \cat{fib}(p) \} \\
    E(C) &= \{(x, p, s_p) \varco p : X \to y_C 
    \text{ small, }s_p \in \cat{fib}(p) \text{ , } x \in X(C) \} \\
    \end{align*}
  and for any \(\alpha: D \to C\), maps
  \begin{align*}
    U(\alpha) : U(C) \to U(D) \\
    E(\alpha) : E(C) \to E(D) 
  \end{align*}
  by pullback:
  \begin{align*}
    U(\alpha)(p, s_p) &= ({y_\alpha}^*p, \cat{fib}({y_\alpha}^*p)(s_p))\\
    E(\alpha)(x, p, s_p) &= (X(\alpha)(x), {y_\alpha}^*p, \cat{fib}({y_\alpha}^*p)(s_p)).
  \end{align*}
  We claim that the projection \(\pi : E \to U\) has a fibration structure. Indeed,
  this follows from the local property of \(\cat{fib}\): for every pullback diagram
 \[
\begin{tikzpicture}[baseline={([yshift=-.5ex]current bounding box.center)}]
 \matrix (m) [matrix of math nodes, row sep=2em,
 column sep=3.5em]{
   |(p)| {X} & |(e)| {E} \\
   |(c)| {y_C} & |(u)| {U} \\
 };
 \begin{scope}[every node/.style={midway,auto,font=\scriptsize}]
 \path[->]
 (p) edge (e)
 (c) edge node [above] {$(p, s_p)$} (u.west |- c)
 (e) edge node [right] {$\pi$} (u)
 (p) edge node [left] {$p$} (c);
 \end{scope}\end{tikzpicture}
 \]
 \(s_p\) is a natural choice of fibration structure on \(p\) that satisfies
 the condition in \refprop{localityforpresh}. By the Yoneda lemma, every
 arrow \(u : y_C \to U\) is of this type. Hence there is a unique fibration
 structure on \(\pi\) such that for every \(p : X \to y_C\), any fibration
 structure on \(p\) can be  presented as a pullback of \(\pi\) in the way
 shown. This immediately generalises to any small morphism $p: Y \to X$, using again that \(\cat{fib}\) is local.

 The construction of $E$ and $U$ as we have just presented them does not quite work, because the action $U(\alpha)$ and $E(\alpha)$ is not strictly functorial. However, we can solve this issue by replacing the codomain fibration over \ct{E} by an equivalent strict fibration. For instance, we can use the fact that the slice category of the category of presheaves over $P$ is equivalent to the category of presheaves over $y \downarrow P$, the category of elements of $P$; this allows us to replace the codomain fibration over \ct{E} by an equivalent split fibration whose fibre over a presheaf $P$ in the base is the category of presheaves over $y \downarrow P$.
\end{proof}

\subsection{Double categories of left and right lifting structures}
\label{ssec:double-categories}
We recall the definition of a \emph{double category}:
\begin{defi}{double-category}\index{double category|textbf}
	A \emph{double category} \(\Abb\) consists of:
    \begin{enumerate}[(i)]
      \item A collection of objects together with two separate category
        (morphism) structures on it, called \emph{horizontal} and
        \emph{vertical} morphisms.\index{horizontal morphism}
        \index{vertical morphism}
  \item A special category structure whose objects are the vertical morphisms,
    and whose arrows are called \emph{squares}.\index{square (between vertical
    morphisms)} The special property is that every \emph{square} from a
    vertical morphism \(u\) to another vertical morphism \(v\) has a
    `pointwise' domain, and a `pointwise' codomain given by horizontal
    morphisms:
            \[
                f : \dom u \to \dom v \text{, } g: \cod u \to \cod v
            \]
	    Moreover, composition of squares respects composition of these horizontal
	    morphisms and identity squares have identity horizontal morphisms for their
	    pointwise domain and codomain. 
	    
	    Further, there is a `pointwise' or `vertical' composition operation
	    of squares with matching pointwise domains and codomains, which
	    extends composition of vertical morphisms.  We often think of a
	    square \(s: u\to v\) as filling in a diagram of horizontal and
	    vertical arrows:
            \[
          \begin{tikzpicture}[baseline={([yshift=-.5ex]current bounding box.center)}]
            \matrix (m) [matrix of math nodes, row sep=2em,
            column sep=3.5em]{
                |(a)| {A} & |(b)| {B} \\
                |(c)| {C} & |(d)| {D} \\
            };
	    \node (i) at (barycentric cs:a=1,b=1,c=1,d=1) {$\overset{s}{\Rightarrow}$};
            \begin{scope}[every node/.style={midway,auto,font=\scriptsize}]
            \path[->]
                (a) edge node [above] {$f$} (b)
                (c) edge node [below] {$g$} (d);
            \path[-{Stealth[open]}]
                (a) edge node [left] {$u$} (c)
                (b) edge node [right] {$v$} (d);
            \end{scope}\end{tikzpicture}
            \]
	    These diagrams can be composed both horizontally (ordinary composition of
	    squares) and vertically (pointwise composition).
    \end{enumerate}
    Alternatively, one can verify that a double category is the same thing as an
    internal category in the `category' of large categories:
	\[
\begin{tikzpicture}[baseline={([yshift=-.5ex]current bounding box.center)}]
    \matrix (m) [matrix of math nodes, row sep=2em,
    column sep=3.5em]{
	    |(c)| {\Lbb_1 \times_{\Lbb_0} \Lbb_1} &  |(a)| {\Lbb_1} 
	    & |(b)| {\Lbb_0} \\
    };
    \begin{scope}[every node/.style={midway,auto,font=\scriptsize}]
    \path[->]
	    (c) edge node [anchor=center, fill=white] {$\circ$} (a)
	    (a) edge [transform canvas={yshift=+7pt}] node [above] {$\dom$} (b)
	    (b) edge node [anchor=center, fill=white] {id} (a)
	    (a) edge [transform canvas={yshift=-7pt}] node [below] {$\cod$} (b);
    \end{scope}\end{tikzpicture}
    \]
    where `\(\circ\)' denotes vertical, or pointwise,
    composition, whilst horizontal composition is the
    composition of the two categories \(\Lbb_1\) (vertical morphisms and squares) and
    \(\Lbb_0\) (objects and horizontal morphisms).

    For double categories \Abb, \Bbb, a \emph{double functor}\index{double
    functor} \(F : \Abb \to \Bbb\) is a compatible triple of functors (which we
    can denote by the same \(F\)) between categories of horizontal morphisms,
    vertical morphisms, and squares, which in addition respects pointwise
    composition.  Here \emph{compatible} means that the image of a square as
    drawn above looks like:
    \[
          \begin{tikzpicture}[baseline={([yshift=-.5ex]current bounding box.center)}]
            \matrix (m) [matrix of math nodes, row sep=2em,
            column sep=3.5em]{
                |(a)| {F(A)} & |(b)| {F(B)} \\
                |(c)| {F(C)} & |(d)| {F(D)} \\
            };
            \begin{scope}[every node/.style={midway,auto,font=\scriptsize}]
            \path[->]
                (a) edge node [above] {$F(f)$} (b)
                (c) edge node [above] {$F(g)$} (d);
	\path[-{Stealth[open]}]
                (a) edge node [left] {$F(u)$} (c)
                (b) edge node [right] {$F(v)$} (d);
            \end{scope}\end{tikzpicture}
            \]
     Again, one can verify that a double functor between double categories is
     the same thing as an internal functor between the corresponding internal
     categories in the category of large categories.
\end{defi}
\begin{exam}{double-cat-of-arrows}
    The typical example of a double category is the category of arrows
    of any category \(\E\). The horizontal and vertical arrows are
    both given by the category structure, and squares are given
    by commutative squares. We will denote this double category
    by \(\sq{\E}\).\index{squares (double category)}
\end{exam}

In this paper, we often work with double categories over the category of squares
of some category \(\E\), i.e.\ double functors \(\Abb \to \sq{\E}\). For the next
construction, we begin with such a double functor, denoted \(I : \Lbb \to
\sq{\E}\). For a morphism \(p : Y \to X\) in \(\E\), a 
\emph{right lifting structure} with respect to \(I\) consists of:\index{right lifting structure|textbf}
\begin{enumerate}[(i)]
    \item A family \(\phi_{-,-}(-)\) of arrows in \(\E\) consisting of the following. 
        For every vertical morphism \(v\) in \(\Lbb\), and every
        commutative square in \(\E\), as in the solid part of the
        following diagram:
        \[
          \begin{tikzpicture}[baseline={([yshift=-.5ex]current bounding box.center)}]
            \matrix (m) [matrix of math nodes, row sep=2em,
            column sep=3.5em]{
                |(a)| {A} & |(b)| {Y} \\
                |(c)| {B} & |(d)| {X} \\
            };
            \begin{scope}[every node/.style={midway,auto,font=\scriptsize}]
            \path[->]
                (a) edge node [above] {$f$} (b)
                (c) edge node [above] {$g$} (d)
                (a) edge node [left] {$I(v)$} (c)
                (b) edge node [right] {$p$} (d)
                (c) edge [dashed] node [anchor=center, fill=white]
                {$\phi_{f,g}(v)$} (b);
            \end{scope}\end{tikzpicture}
            \]
            the arrow \(\phi_{f,g}(v) : B \rightarrow Y\) is a diagonal filler
            as drawn which makes the diagram commute. A commutative square like
            the above is called a \emph{lifting problem}\index{lifting problem}.
          \item  The \textbf{`horizontal' condition}\index{compatibility condition}: the compatibility condition that for every such
          \(\phi_{f,g}(v)\), and every square \(v' \rightarrow v\) whose image
          under \(I\) is given by the left-hand commutative square in the solid
          part of the diagram below:\index{horizontal condition}
       \[
          \begin{tikzpicture}[baseline={([yshift=-.5ex]current bounding box.center)}]
            \matrix (m) [matrix of math nodes, row sep=3em,
            column sep=6em]{
                |(a)| {A'} & & |(c)| {A} & |(c')| {Y} \\
                |(b)| {B'} & & |(d)| {B} & |(d')| {X} \\
            };
            \begin{scope}[every node/.style={midway,auto,font=\scriptsize}]
            \path[->]
                (a) edge node [left] {$I(v')$} (b)
                (a) edge node [above] {$I(f')$} (c)
                (b) edge node [below] {$I(g')$} (d)
                (c) edge node [pos=0.5, fill=white, anchor=center] {$I(v)$} (d)
                (c) edge node [above] {$f$} (c')
                (d) edge node [below] {$g$} (d')
                (c') edge node [right] {$p$} (d');
            \path[->]
         (b) edge [dashed] node [pos=0.3, anchor=center, fill=white]
         {$\phi_{f.I(f'), g.I(g')}(v')$} (c')
         (d) edge [dashed] node [anchor=center, fill=white] 
         {$\phi_{f, g} (v)$} (c');
 \end{scope}\end{tikzpicture}\text{,}
 \]
            the drawn morphisms make the diagram commute, i.e.
            \begin{equation*}
                \phi_{f,g}(v) . I(g') = \phi_{f. I(f'), g . I(g')} (v')
            \end{equation*}
        \item The \textbf{`vertical' condition}: the compatibility  condition that when \(v,
          w\) is a composable pair of vertical arrows, i.e.  \(\cod v = \dom
          w\), we have:\index{vertical condition}
            \begin{equation*}
                \phi_{\phi_{f,I(w).g}(v), g}(w) = \phi_{f,g}(w.v) 
            \end{equation*}
            In diagrammatic notation, this means that the two ways to fill the
            below diagram, either in two steps or in one go, are the same:
            \[
          \begin{tikzpicture}[baseline={([yshift=-.5ex]current bounding box.center)}]
            \matrix (m) [matrix of math nodes, row sep=2em,
            column sep=4.5em]{
                |(a)| {A} & |(b)| {Y} \\
                |(n)| {A'} &  {} \\ 
                |(c)| {B} & |(d)| {X} \\
            };
            \begin{scope}[every node/.style={midway,auto,font=\scriptsize}]
            \path[->]
                (a) edge node [above] {$f$} (b)
                (c) edge node [above] {$g$} (d)
                (a) edge node [left] {$I(v)$} (n)
                (n) edge node [left] {$I(w)$} (c)
                (b) edge node [right] {$p$} (d)
                (n) edge [dashed] (b)
                (c) edge [dashed] (b);
            \end{scope}\end{tikzpicture}
            \]
\end{enumerate}
When \((u,v) : p' \to p\) is a pullback square as drawn in the diagram below,
and \(\phi\) is a right lifting structure on \(p\) with respect to \(I\), the
universal property of the pullback induces a right lifting structure
\({(u,v)}^*\phi\) for \(p'\) with respect to \(I\):
\[
  \begin{tikzpicture}[baseline={([yshift=-.5ex]current bounding box.center)}]
    \matrix (m) [matrix of math nodes, row sep=3em,
    column sep=6em]{
	|(a)| {A} & |(c)| {Y'} & |(c')| {Y} \\
	|(b)| {B} & |(d)| {X'} & |(d')| {X} \\
    };
    \begin{scope}[every node/.style={midway,auto,font=\scriptsize}]
    \path[->]
	(a) edge node [left] {$I(v)$} (b)
	(a) edge node [above] {$f$} (c)
	(b) edge node [below] {$g$} (d)
	(c) edge node [pos=0.5, fill=white, anchor=center] {$p'$} (d)
	(c) edge node [above] {$u$} (c')
	(d) edge node [below] {$v$} (d')
	(c') edge node [right] {$p$} (d');
    \path[->]
	    (b) edge [dashed] node [anchor=center, fill=white] {${(u,v)}^*\phi$} (c)
 (b) edge [dashed, bend right=10] node [pos=0.8,anchor=center, fill=white] 
 {$\phi_{u.f, v.g} (v)$} (c');
\end{scope}\end{tikzpicture}\text{,}
 \]
It is easy to verify that this is a right lifting structure for \(p'\).
 This conclusion is summarised as follows:
\begin{prop}{right-lifting-is-fibred-structure}
	With this pullback action, there is, for every \(I : \Lbb \to \sq{\E}\)
	as above, a fibred structure on \(\E\) which sends each arrow to the
	set of right lifting structures with respect to \(I\) on it.
	We denote this fibred structure by the functor:
	\[
	\Drl{I}(-): {(\ct{E}^{\to}_{\rm cart})}^{op} \to \Sets
\]\(\hfill \square\)
\end{prop}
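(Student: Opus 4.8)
The plan is to verify directly the two things a presheaf on \(\ct{E}^{\to}_{\rm cart}\) requires: that \(\Drl{I}(p)\) is a well-defined set for each arrow \(p\) of \(\E\), and that the pullback action \({(u,v)}^{*}\) on the morphisms of \(\ct{E}^{\to}_{\rm cart}\) (which are precisely the pullback squares) is well-defined and strictly functorial. That \(\Drl{I}(p)\) is a set is immediate: a right lifting structure on \(p\colon Y \to X\) is a choice, for each vertical arrow \(v\) of \(\Lbb\) and each lifting problem \((f,g)\) against \(I(v)\), of an arrow \(\phi_{f,g}(v) \in \E(\cod I(v), Y)\) subject to equational constraints, so these structures form a subset of a product of hom-sets of \(\E\) indexed by the (small) collection of vertical arrows of \(\Lbb\) and by lifting problems. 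No choices enter on morphisms either, since a morphism \(p' \to p\) in \(\ct{E}^{\to}_{\rm cart}\) already carries the full data of a commutative square \((u,v)\) that happens to be a pullback; the universal property of that pullback then produces \({(u,v)}^{*}\phi\) canonically, exactly as spelled out before the statement.

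First I would check that \({(u,v)}^{*}\phi\) is genuinely a right lifting structure on \(p'\). For a lifting problem \((f,g)\) against \(I(v)\) with \(p'\) on the right, the composite square \((u f, v g)\) is a lifting problem against \(I(v)\) with \(p\) on the right, since \(p\,u f = v\,p'\,f = v\,g\,I(v)\); hence \(\phi_{uf,vg}(v)\) exists, and because \(p\,\phi_{uf,vg}(v) = v\,g\) the pair \((g,\phi_{uf,vg}(v))\) is a cone over \(v\colon X' \to X\) and \(p\colon Y \to X\). We let \({({(u,v)}^{*}\phi)}_{f,g}(v)\) be the map \(B \to Y'\) it induces. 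That this is a filler — its composite with \(I(v)\) is \(f\), its composite with \(p'\) is \(g\) — is checked by comparing the two projections to \(Y'\cong X'\times_X Y\), using the commutativity of the pullback square, the equation \(p'f = g\,I(v)\), and the filler equations for \(\phi_{uf,vg}(v)\). The horizontal condition for \({(u,v)}^{*}\phi\) likewise reduces, via these two projections, to the horizontal condition for \(\phi\); and the vertical condition reduces similarly to the vertical condition for \(\phi\), the only extra observation being that the intermediate lift used for \({(u,v)}^{*}\phi\) projects along \(u\) to the intermediate lift used for \(\phi\), so the two equalities match up on both components. All of this is a routine diagram chase whose single recurring principle is that a map into a pullback is determined by its two composites.

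Finally I would check functoriality. For the identity morphism \((\mathrm{id}_Y,\mathrm{id}_X)\colon p \to p\) one may take the pullback of \(p\) along \(\mathrm{id}_X\) to be \(p\) itself with identity first projection, whence the induced map is \(\phi_{f,g}(v)\) on the nose and \({(\mathrm{id},\mathrm{id})}^{*}\phi = \phi\). For composition, given pullback squares \((u',v')\colon p'' \to p'\) and \((u,v)\colon p'\to p\), the pasting lemma for pullbacks makes the composite \((u u', v v')\colon p''\to p\) a pullback square, and the canonical identification \(Y'' \cong X''\times_{X'}Y' \cong X''\times_X Y\) together with uniqueness in the universal property gives \({(u u', v v')}^{*}\phi = {(u',v')}^{*}\bigl({(u,v)}^{*}\phi\bigr)\): applied to a lifting problem \((f,g)\), both sides are the map into \(Y''\) determined by \(g\) and \(\phi_{u u' f,\, v v' g}(v)\). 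This establishes that \(\Drl{I}\colon {(\ct{E}^{\to}_{\rm cart})}^{op} \to \Sets\) is a presheaf, i.e.\ a notion of fibred structure. The whole argument is bookkeeping; if there is any ``hard part'' it is merely keeping the horizontal and especially the vertical compatibility conditions straight as one passes through the pullback, but nothing beyond the universal property of pullbacks and the pasting lemma is required — which is presumably why it is ``easy to verify.''
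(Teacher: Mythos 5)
Your proposal is correct and follows the same route as the paper: the paper constructs \({(u,v)}^{*}\phi\) via the universal property of the pullback just before the statement and declares the verification easy, and your argument simply writes out that verification — the filler equations and the horizontal/vertical conditions checked against the two jointly monic pullback projections, and strict functoriality via uniqueness and the pasting lemma for pullbacks. (Only your parenthetical that \(\Lbb\) is small is an added assumption the paper does not make, but it plays no role in the substance of the argument.)
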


When \((q : Z \rightarrow Y, \phi)\), \((q: Y \rightarrow X, \psi)\)
are two composable arrows in \(\E\) together with a right lifting structure
with respect to \(I\) on them, there is a candidate right lifting structure on the
composition \(q.p\) defined by step-wise lifts:
\[
  \begin{tikzpicture}[baseline={([yshift=-.5ex]current bounding box.center)}]
    \matrix (m) [matrix of math nodes, row sep=2em,
    column sep=4.5em]{
        |(a)| {A} & |(b)| {Z} \\
                  & |(o)| {Y} \\ 
        |(c)| {B} & |(d)| {X} \\
    };
    \begin{scope}[every node/.style={midway,auto,font=\scriptsize}]
    \path[->]
        (a) edge node [above] {$f$} (b)
        (c) edge node [above] {$g$} (d)
        (a) edge node [left] {$I(v)$} (c)
        (b) edge node [right] {$p$} (o)
        (o) edge node [right] {$q$} (d)
        (c) edge [dashed] node [fill=white, anchor=center] 
        {$\phi_{p. f, g} (v)$} (o)
        (c) edge [bend left=10, dashed] node [fill=white, anchor=center]
        {$\psi_{f, \phi_{p. f , g}(v)}(v)$} (b);
    \end{scope}\end{tikzpicture}
            \]
To verify that this is a right-lifting structure, we have to verify
two conditions. For (ii), this is: 
\begin{align*}
    \psi_{f, \phi_{p. f , g}(v)}(v).I(g') &= \psi_{f . I(f'), 
    \phi_{p. f , g}(v) . I(g')} (v') \\
                                          &=\psi_{f . I(f'), 
                                          \phi_{p. f. I(f'), g. I(g')}(v')}(v')
\end{align*}
For condition (iii), this is better done diagrammatically:

\begin{minipage}{0.4\textwidth}\centering
\[
  \begin{tikzpicture}[baseline={([yshift=-.5ex]current bounding box.center)}]
    \matrix (m) [matrix of math nodes, row sep=2em,
    column sep=4.5em]{
        |(a)| {A} & |(z)| {Z} \\
        |(a')| {A'} &|(y)|  {Y} \\ 
        |(b)| {B} & |(x)| {X} \\
    };
    \begin{scope}[every node/.style={midway,auto,font=\scriptsize}]
    \path[->]
        (a) edge node [left] {$I(v)$} (a')
        (a') edge node [left] {$I(w)$} (b)
        (z) edge node [right] {$p$} (y)
        (y) edge node [right] {$q$} (x)
        (a) edge node [above] {$f$} (z)
        (b) edge node [below] {$g$} (x)
        (b) edge [dashed] node [fill=white, anchor=center] {$(a')$} (y)
        (b) edge [dashed] node [fill=white, anchor=center] {$(c')$} (z)
        (a') edge [dashed] node [fill=white, anchor=center] {$(b')$} (z);
    \end{scope}\end{tikzpicture}
\]
\end{minipage}
\begin{minipage}{0.4\textwidth}\centering
\[
  \begin{tikzpicture}[baseline={([yshift=-.5ex]current bounding box.center)}]
    \matrix (m) [matrix of math nodes, row sep=2em,
    column sep=4.5em]{
        |(a)| {A} & |(z)| {Z} \\
        |(a')| {A'} &|(y)|  {Y} \\ 
        |(b)| {B} & |(x)| {X} \\
    };
    \begin{scope}[every node/.style={midway,auto,font=\scriptsize}]
    \path[->]
        (a) edge node [left] {$I(v)$} (a')
        (a') edge node [left] {$I(w)$} (b)
        (z) edge node [right] {$p$} (y)
        (y) edge node [right] {$q$} (x)
        (a) edge node [above] {$f$} (z)
        (b) edge node [below] {$g$} (x)
	(a') edge [dashed] node [pos=0.3, anchor=center, fill=white] {$(a)$} (y)
        (a') edge [dashed] node [anchor=center, fill=white] {$(b)$} (z)
        (b) edge [dashed] node [anchor=center, fill=white] {$(c)$} (y)
        (b) edge [bend right=10, dashed] 
        node [pos=0.75, anchor=center, fill=white] {$(d)$} (z);
    \end{scope}\end{tikzpicture}
\]
\end{minipage}

Consider the two diagrams above. On the left-hand side, 
a lift is obtained in two steps $(a')$ and $(c')$, but we immediately note
that the second step could have been done in two steps by first finding
$(b')$ and then finding the same $(c')$. On the right-hand side,
a lift is found in four steps $(a)$-$(d)$. Note that
\begin{equation*}
    (b') = \psi_{f, (a'). I(w)}(v) 
    = \psi_{f, (a)}(v) = (b) 
\end{equation*}
And similarly:
\begin{equation*}
	(c) = \phi_{p.(b), g}(w) = \phi_{(a), g} (w)
    = \phi_{\phi_{p.f, I(w) . g}(v), g}(w) = \phi_{p.f, g}(w.v) =  (a')
\end{equation*}
Therefore it follows that $(d)$ and $(c')$ are the same. So the composed
lifting structure indeed satisfies the third property above. As a consequence,
the following definition is just:
\begin{defi}{rlp-double-cat}
    Suppose \(I : \Lbb \to \sq{\E}\) is a double category over \(\sq{\E}\). We
    can define a new double category \(\Drl{I} : \Drl{\Lbb} \to \sq{\E}\) as
    follows:
    \begin{enumerate}[(i)]
	    \item Objects are objects of \(\E\), and horizontal morphisms are morphisms in 
		    \(\E\)
	    \item A vertical morphism \(Y \to X\) is a pair \((p : Y \to X , \phi)\) where
		    \(\phi\) is a right lifting structure for \(p\) with respect to \(I\);
		    that is, \(\phi \in \Drl{I}(p)\) in the notation of 
		    \refprop{right-lifting-is-fibred-structure}.
		    Composition of vertical morphisms is defined as above.
	    \item A square \((p', \phi') \to (p, \phi')\) between vertical morphisms is a
		    commutative square \(p' \to p\) in \(\E\) as on the right hand side in
		    the diagram below, such that whenever 
		    there is a lifting problem:
	          \[
		  \begin{tikzpicture}[baseline={([yshift=-.5ex]current bounding box.center)}]
		    \matrix (m) [matrix of math nodes, row sep=3em,
		    column sep=6em]{
			|(a)| {A} & |(y')| {Y'} & |(y)| {Y} \\
			|(b)| {B} & |(x')| {X'} & |(x)| {X} \\
		    };
		    \begin{scope}[every node/.style={midway,auto,font=\scriptsize}]
		    \path[->]
			(a) edge node [left] {$I(v)$} (b)
			(a) edge node [above] {$f$} (y')
			(b) edge node [below] {$g$} (x')
			(y') edge node [pos=0.5, fill=white, anchor=center] {$p'$} (x')
			(x') edge node [above] {$k$} (x)
			(y') edge node [above] {$l$} (y)
			(y) edge node [right] {$p$} (x)
		 (b) edge [dashed, bend right=8] node [pos=0.8, anchor=center, fill=white]
		 {$\phi_{l.f, k.g}(v)$} (y)
		 (b) edge [dashed] node [anchor=center, fill=white] 
		 {$\phi'_{f, g} (v)$} (y');
	 \end{scope}\end{tikzpicture}, \]
	 	     the induced diagram as drawn commutes, that is:
		     \[
			\phi_{l.f, k.g}(v) = l . \phi'_{f,g}(v).
		     \]
\end{enumerate}
Note that it needs to be checked that vertical composition of squares is compatible
with the composition operation on vertical morphisms, but this follows easily from
the definition of a square.
\end{defi}
Similarly, there is a notion of \emph{left lifting structure}\index{left
lifting structure|textbf} for an arrow \(f : A \to B\) with respect to a double
category \(J : \Rbb \to \sq{\E}\). This consists of a family of fillers for
every commutative square:
\[
\begin{tikzpicture}[baseline={([yshift=-.5ex]current bounding box.center)}]
    \matrix (m) [matrix of math nodes, row sep=2em,
    column sep=3.5em]{
	|(a)| {A} & |(b)| {Y} \\
	|(c)| {B} & |(d)| {X} \\
    };
    \begin{scope}[every node/.style={midway,auto,font=\scriptsize}]
    \path[->]
	(a) edge node [above] {$f$} (b)
	(c) edge node [above] {$g$} (d)
	(a) edge node [left] {$f$} (c)
	(b) edge node [right] {$J(v)$} (d)
	(c) edge [dashed] node [anchor=center, fill=white]
	{$\phi_{f,g}(v)$} (b);
    \end{scope}\end{tikzpicture}
\]
such that three analogous conditions (i)-(iii) hold. For the sake of brevity, we will not
repeat those here. In fact, the definition is completely dual to a right lifting
structure, as follows. Define the \emph{opposite} of a double category \(\Abb\) to be the
double category \(\Abb^{\op}\) with horizontal and vertical arrows as well as squares
reversed, e.g.  \(\sq{\E}^{\op} \cong \sq{\E^{\op}}\), with the obvious extension to double
functors. Then a left lifting structure for an arrow \(f\) with respect to \(J: \Rbb \to
\sq{\E}\) is the same thing as a right lifting structure for \(f^{\op} : B \to A\) in
\(\E^{\op}\) with respect to \(J^{\op}: \Rbb^{\op} \to \sq{\E^{\op}}\).
From \refprop{right-lifting-is-fibred-structure}, it follows that there is a functor
\[
	\Dll{J} : \E^{\to}_{\text{cocart}} \to \Sets
\]
where \(\E^{\to}_{\text{cocart}}\) is the category of arrows in \(\E\)
and pushout squares between them, which sends an arrow to the set
of left lifting structures on it. In other words, left lifting structure
is a notion of \emph{cofibred structure} (\refdefi{fibredstructure}).
The following definition follows \refdefi{rlp-double-cat}:
\begin{defi}{llp-double-cat}
    Suppose \(J : \Rbb \to \sq{\E}\) is a double category over \(\sq{\E}\). We
    can define a new double category \(\Dll{J} : \Dll{\Rbb} \to \sq{\E}\) analogous
    to \refdefi{rlp-double-cat}, but where the vertical morphisms
    have a left lifting structure with respect to \(J\).
\end{defi}

Recall that the category of arrows of a category \(\E\) and squares
between them is denoted \(\E^{\to}\).
This is the same as the category structure that exists on the vertical morphisms in
\(\sq{\E}\).  The following proposition allows us to define left and right lifting
structures for mere `categories of arrows' \(\mathcal{A} \to \E^{{\to}}\) as a special case
in \refdefi{simple-right-lifting-structure} below.  In the following proposition, the
category of (small) double categories and double functors is denoted by \DBL\@.
\begin{prop}{vertical-part-left-adj}
	The functor
	\[
		{(-)}_1 : \DBL/\sq{\E} \to \Cat / \E^{{\to}}
	\]
	which takes a double functor \(I : \Lbb \to \sq{\E}\) to the category
	of vertical arrows and squares \(I_1 : \Lbb_1 \to \E^{{\to}}\) over \(\E^{{\to}}\)
        has a fully faithful left adjoint, i.e.\ it is a coreflection.	
\end{prop}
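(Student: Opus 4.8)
The plan is to exhibit the left adjoint $L\colon \Cat/\E^{\to} \to \DBL/\sq{\E}$ explicitly, as a ``free double category over $\sq{\E}$ generated by a category of vertical arrows'' construction, and then to read off the coreflection statement from that description. The organising observation (already noted after \refdefi{double-category}) is that a double category over $\sq{\E}$ is the same thing as an internal category $\Lbb_1 \rightrightarrows \Lbb_0$ in $\Cat$ together with an internal functor to $\E^{\to} \rightrightarrows \E$, and that under this identification the functor ${(-)}_1$ remembers only the ``object of morphisms'' $\Lbb_1 \to \E^{\to}$, discarding the ``object of objects'' $\Lbb_0 \to \E$. So $L$ has to reconstruct a suitable $\Lbb_0$ and freely generate the identity vertical morphisms and vertical composites that an internal category is obliged to carry.

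Concretely, given $a\colon \mathcal{A} \to \E^{\to}$, post-composing with $\dom$ and $\cod$ yields two functors $\mathcal{A}\rightrightarrows\E$, i.e.\ an internal graph in $\Cat$ lying over the internal graph underlying $\sq{\E}$, and I would let $L\mathcal{A}$ be the free internal category on it. Its object of objects is a category $\Lbb_0$ with a functor to $\E$ assembled from the domains and codomains of the objects and morphisms of $\mathcal{A}$; its object of morphisms $\Lbb_1$ has as objects the finite $\dom/\cod$-composable strings of objects of $\mathcal{A}$ — the empty strings providing the forced identity vertical morphisms, the horizontal morphisms of $\Lbb_0$ the degenerate squares on them — and as morphisms the matching strings of morphisms of $\mathcal{A}$; the functor $\Lbb_1 \to \E^{\to}$ sends a string to the composite arrow of $\E$ it names, and the source, target, unit and vertical-composition functors are dictated by the free-category recipe. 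One then checks routinely that $L\mathcal{A}$ is indeed an internal category in $\Cat$ over $\sq{\E}$. The unit $\mathcal{A}\to\Lbb_1$ is the inclusion of the length-one strings.

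For the adjunction, I would verify that for any double category $\Bbb$ over $\sq{\E}$ a double functor $F\colon L\mathcal{A}\to\Bbb$ over $\sq{\E}$ is uniquely determined by the composite $\mathcal{A}\to\Lbb_1\xrightarrow{F_1}\Bbb_1$: the object component of $F$ is forced by compatibility with $\dom$ and $\cod$, the value of $F_1$ on a formal identity is forced by compatibility with the unit functor, and its value on a longer string by compatibility with vertical composition; conversely, any functor $\mathcal{A}\to\Bbb_1$ over $\E^{\to}$ extends along exactly these formulas to a double functor, the unit and associativity laws holding in the genuine internal category $\Bbb$ guaranteeing well-definedness. This gives a natural bijection $\operatorname{Hom}_{\DBL/\sq{\E}}(L\mathcal{A},\Bbb)\cong\operatorname{Hom}_{\Cat/\E^{\to}}(\mathcal{A},\Bbb_1)$, hence $L\dashv{(-)}_1$.

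The remaining point — that $L$ is fully faithful, so that ${(-)}_1$ is a coreflection — is the step I expect to require real care, since it says the unit is an isomorphism and forces the construction above to be pinned down precisely: in particular one must treat formal identities and composites in $L\mathcal{A}$ so that a double functor $L\mathcal{A}\to L\mathcal{B}$ over $\sq{\E}$ cannot ``escape'' into those when $\mathcal{A}$ already contains objects lying over identity or over decomposable arrows of $\E$. The content is then a bookkeeping argument to the effect that any such double functor is forced to carry generating vertical morphisms to generating vertical morphisms and generating squares to generating squares, hence restricts to a functor $\mathcal{A}\to\mathcal{B}$ over $\E^{\to}$ which is inverse to the action of $L$ on hom-sets; getting this to go through cleanly may well demand a more careful set-up of the construction than the naive free description suggests. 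Naturality of the isomorphism and the triangle identities then follow formally from the uniqueness clause already established in the universal property.
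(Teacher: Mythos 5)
Your construction of the left adjoint is not the paper's, and as set up it does not satisfy the required universal property. You build $L\mathcal{A}$ as the free internal category on the graph $\mathcal{A} \rightrightarrows \E$, so that vertical arrows of $L\mathcal{A}$ are strings of objects of $\mathcal{A}$ composable \emph{over} $\E$; in particular the formal codomain of one generator gets identified with the formal domain of the next whenever their underlying $\E$-objects agree. That identification is fatal: for a general double category $\Bbb$ over $\sq{\E}$, a functor $\mathcal{A} \to \Bbb_1$ over $\E^{\to}$ may send two such generators to vertical arrows of $\Bbb$ whose codomain and domain are \emph{different} objects of $\Bbb_0$ lying over the same object of $\E$ (the projection $\Bbb_0 \to \E$ is in no way injective), so the prescribed extension of $F$ to the glued endpoint and to the composite string does not exist. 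Concretely, take $\mathcal{A}$ discrete with two objects $u,v$ lying over composable arrows $f\colon A \to B$ and $g\colon B \to C$, and take $\Bbb = \freedbl{\mathcal{A}}$ (the paper's construction): the evident functor $\mathcal{A} \to (\freedbl{\mathcal{A}})_1$ over $\E^{\to}$ does not extend to your $L\mathcal{A}$, because in $\freedbl{\mathcal{A}}$ the codomain $(1,u)$ of $u$ and the domain $(0,v)$ of $v$ are distinct objects over $B$, so $u$ and $v$ are not composable there. Hence with your $L$ the bijection $\operatorname{Hom}_{\DBL/\sq{\E}}(L\mathcal{A},\Bbb) \cong \operatorname{Hom}_{\Cat/\E^{\to}}(\mathcal{A},\Bbb_1)$ already fails; the trouble you anticipate in your last paragraph (double functors ``escaping'' into formal identities and composites, which you locate only in the full-faithfulness step) is a symptom of this more basic failure of the adjunction itself.

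The paper's proof avoids this precisely by \emph{not} gluing and \emph{not} adding composites: $\freedbl{\mathcal{L}}$ has objects the pairs $(i,v)$ with $i \in \{0,1\}$, horizontal arrows the morphisms of $\mathcal{L}$ (with the index $i$ fixed), the generators $v\colon (0,v)\to(1,v)$ as the only non-identity vertical arrows, and squares generated by the morphisms of $\mathcal{L}$; since every non-identity vertical arrow ends at an object of index $1$ and starts at one of index $0$, no two of them are ever composable, so the only structure added freely is identities and degenerate squares, and these are forced for any double functor. This is exactly what makes the object component, the identities and all vertical composites of a double functor $\freedbl{\mathcal{L}} \to \Bbb$ over $\sq{\E}$ uniquely determined by a functor $\mathcal{L} \to \Bbb_1$ over $\E^{\to}$, yielding the adjunction, with counit $\freedbl{(I_1)} \to I$ collapsing $(0,v) \mapsto \dom v$ and $(1,v) \mapsto \cod v$. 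If you want to keep the ``free internal category'' phrasing, the underlying graph must have object-of-objects $\mathcal{L}+\mathcal{L}$ (formal, pairwise distinct domains and codomains), not $\E$ nor any quotient in which endpoints over the same $\E$-object are merged; the free category on that graph is exactly $\freedbl{\mathcal{L}}$, and your string construction over $\E$ is not equivalent to it.
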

\begin{proof}
  We will define the left adjoint
  \[
    \freedbl{(-)} : \Cat / \E^{\to} \to \DBL / \sq{\E}
  \]
  as follows.
	Suppose \(I : \mathcal{L} \to \E^{{\to}}\) is a functor. Then we can define
	the double category \(\freedbl{I} : \freedbl{\mathcal{L}} \to \sq{\E}\) as:
	\begin{enumerate}[(i)]
		\item Objects of \(\freedbl{\mathcal{L}}\) are pairs \((i, v)\) where
			\(v\) is an object of \(\mathcal{L}\) and \(i \in \{0,1\}\)	
      \item Horizontal arrows \((i, s) : (i, u) \to (j, v)\)
		require that \(i = j\) and are given by morphisms 
	      \(s : u \to v\) in \(\Lbb\)
      \item The only non-identity vertical arrows are given by
		  \[
			v : (0, v) \to (1, v)
		  \]
	  \item   For every arrow \(s : u \to v\) in \(\mathcal{L}\), there is a
		  square \(s: 1_{(i, u)} \to 1_{(i, v)}\) for every
		  \(i \in \{0,1\}\), and a square \(s : u \to v\) between the
		  non-identity morphisms of (ii). Vertical composition of
		  squares is trivial, in that \(s. s = s\).
  \end{enumerate}
Alternatively, it can be presented by the following internal category in the
category of (large) categories:
\[
\begin{tikzpicture}[baseline={([yshift=-.5ex]current bounding box.center)}]
    \matrix (m) [matrix of math nodes, row sep=2em,
    column sep=7em]{
	    |(c)| {(\mathcal{L}+\mathcal{L}) 
	    + (\mathcal{L} + \mathcal{L})} &  |(a)| {\mathcal{L}+(\mathcal{L}+\mathcal{L})} 
						  & |(b)| {\mathcal{L}+\mathcal{L}} \\
    };
    \begin{scope}[every node/.style={midway,auto,font=\scriptsize}]
    \path[->]
	    (c) edge node [above] 
	    {$[[1_{\mathcal{L}}, 1_{\mathcal{L}}], 1_{\mathcal{L}+\mathcal{L}}]$} (a)
	    (a) edge [transform canvas={yshift=+7pt}] node [above] 
	    {$[\textsf{inl}, 1_{\mathcal{L}+\mathcal{L}}]$} (b)
	    (b) edge node [anchor=center, fill=white] {$\textsf{inr}$} (a)
	    (a) edge [transform canvas={yshift=-7pt}] node [below] 
	    {$[\textsf{inr}, 1_{\mathcal{L}+\mathcal{L}}]$} (b);
    \end{scope}\end{tikzpicture}
    \]
    The functor \(\freedbl{I}\) then sends a square \(s : u \rightarrow v\) to:
\[
          \begin{tikzpicture}[baseline={([yshift=-.5ex]current bounding box.center)}]
            \matrix (m) [matrix of math nodes, row sep=2em,
            column sep=3.5em]{
                |(a)| {A} & |(b)| {B} \\
                |(c)| {C} & |(d)| {D} \\
            };
	    \node (i) at (barycentric cs:a=1,b=1,c=1,d=1) {\textit{I(s)}};
            \begin{scope}[every node/.style={midway,auto,font=\scriptsize}]
            \path[->]
  	    (a) edge (b)
                (c) edge (d)
		(a) edge node [left] {$I(u)$} (c)
		(b) edge node [right] {$I(v)$} (d);
            \end{scope}\end{tikzpicture}
            \]
It is easy to see that this construction is functorial and fully faithful. The unit is
trivial, and the counit
\(\epsilon_I : \freedbl{(I_1)} \to I\) sends vertical arrows to vertical arrows, squares to
squares, and sends objects \((0, v)\) to \(\dom v\) and \((1, v)\) to \(
\cod v\), and the same for horizontal morphisms and their pointwise domain/codomain.
It is easy to see that this constitutes a counit and that \({(-)}_1\) is a coreflection.
\end{proof}

\begin{defi}{simple-right-lifting-structure}
	Suppose \(I : \mathcal{L} \to \E^{{\to}}\) is a functor. Then
	we can define a new functor \(\rlp{I} : \rlp{\mathcal{L}} \to
	\E^{{\to}}\) as: \[
	{\left(\Drl{(\freedbl{I})}\right)}_1 : 
	{\left(\Drl{(\freedbl{\mathcal{L}})}\right)}_1 \to \E^{{\to}}.
	\]
	Essentially, objects of \(\rlp{\mathcal{\L}}\) are pairs \((p : Y \to X, \phi)\)
	where \(p\) is an arrow in \(\E\) and \(\phi\) is a right lifting structure
	with respect to the arrows in the image of \(I\), but satisfying only the
	conditions (i) and (ii) since there is no non-trivial vertical composition.
	
	For a functor \(J : \mathcal{R} \to \E^{{\to}}\), there is similarly a category
	\(\llp{J} : \llp{\mathcal{R}} \to \E^{{\to}}\) of left lifting structures satisfying
	only conditions (i) and (ii).
	
	Note that in this construction, we have explicitly forgotten composition of
	lifting structures: even for a mere functor \(I : \mathcal{L} \to \E^{{\to}}\),
	the category of right-lifting structures
	\[
		\Drl{\left(\freedbl{I}\right)} : \Drl{ \left( \freedbl{\mathcal{L}} \right)}
		\to \sq{\E}
	\]
	is a double category with non-trivial vertical composition.
\end{defi}

To conclude this section, it may be worth noting that taking double categories of
left and right lifting structures are functorial constructions, and fact adjoint ones:
\[
\begin{tikzpicture}[baseline={([yshift=-.5ex]current bounding box.center)}]
    \matrix (m) [matrix of math nodes, row sep=2em,
    column sep=3.5em]{
	    |(a)| {\left(\DBL / \sq{\E}\right)^{\op}} 
	    & |(b)| { \DBL / \sq{\E}} \\
    };
    \begin{scope}[every node/.style={midway,auto,font=\scriptsize}]
    \path[->]
	    (a) edge [color=white] node [color=black, anchor=center, fill=white] {$\bot$} (b)
	    (b) edge [transform canvas={yshift=+5pt}] node [above] {$\Dll{(-)}$} (a)
	    (a) edge [transform canvas={yshift=-5pt}] node [below] {$\Drl{(-)}$} (b);
    \end{scope}\end{tikzpicture}
\]
for which see Proposition 18 of Bourke and Garner~\cite{Bourke-Garner}. It follows
from \refprop{vertical-part-left-adj} that in that case also
\[
\begin{tikzpicture}[baseline={([yshift=-.5ex]current bounding box.center)}]
    \matrix (m) [matrix of math nodes, row sep=2em,
    column sep=3.5em]{
	    |(a)| {\left(\Cat / \E^{{\to}}\right)^{\op}} 
	    & |(b)| { \Cat / \E^{{\to}}} \\
    };
    \begin{scope}[every node/.style={midway,auto,font=\scriptsize}]
    \path[->]
	    (a) edge [color=white] node [color=black, anchor=center, fill=white] {$\bot$} (b)
	    (b) edge [transform canvas={yshift=+5pt}] node [above] {$\llp{(-)}$} (a)
	    (a) edge [transform canvas={yshift=-5pt}] node [below] {$\rlp{(-)}$} (b);
    \end{scope}\end{tikzpicture}
\]
is an adjunction.  In this paper, we make extensive use of the notions of left
and right lifting structures in the context of algebraic weak factorisation
systems, which are defined next.

\subsection{Algebraic weak factorisation systems}\index{algebraic weak factorisation system}
The main content of this section is a pair of equivalent definitions of an
\emph{algebraic weak factorisation system} (\refdefi{awfs} and
\refprop{awfs-equivalent}). The definition of algebraic weak factorisation
system (AWFS) is of foundational importance to this paper. In
Sections~\ref{sec:dominances} and \ref{sec:AWFSFromM}, we demonstrate how two
AWFSs can be constructed in a category \ct{E} equipped with a \emph{dominance}
and a \emph{Moore structure}, respectively. In the later sections, these two
are combined to present the theory of effective fibrations. For the purposes of
reading Sections~\ref{sec:dominances} and beyond, the definitions of AWFS
referred to can be taken as a starting point.

Yet, as a starting point, these definitions involve quite a lot of assumptions
on structure. The reader who would prefer to start on a basis of necessity and
find out more about the ideas behind algebraic weak factorisation systems, we
would encourage to study this section and the next in more depth. Indeed,
as a foundation of this paper, it may be worth saying a little more about 
the motivation behind this structure.

The main reference for the theory of AWFSs is the paper by Bourke and
Garner~\cite{Bourke-Garner}, which contains the most important results
off-the-shelf. Another important source is Riehl~\cite{Riehl-11}, who addresses
some aspects in more depth, such as the subtleties around the distributive law.
For the present purposes, we have already given the most important definitions
in the previous section, namely that of a (left/right) lifing structure with
respect to a double category. We now add to this the notion of a \emph{functorial
factorisation}.
\begin{defi}{functorial-factorisation}\index{functorial factorisation}
    A \emph{functorial factorisation} for a category \(\E\) is a section of the
    composition functor
  \[\E^{\to} {}_{\dom} \times_{\cod} \E^{\to} \to \E^{\to}.\]
  Spelling this out, it consists of a triple of functors \(L, R : \E^{{\to}}
  \to \E^{{\to}}\), \(E : \E^{{\to}} \to \E\), subject to two conditions.
  First, when \(f, f'\), \(h, k\) are morphisms in \(C\) with \(f'. h = k .
  f\), the following diagram commutes: 
\begin{equation}\label{eq:factorisation-functors}
  \begin{tikzpicture}[baseline={([yshift=-.5ex]current bounding box.center)}]
    \matrix (m) [matrix of math nodes, row sep=2em,
    column sep=3.5em]{
        |(a)| {A} & |(ef)| {Ef} & |(b)| {B} \\
        |(a')| {A'} & |(ef')| {Ef'} & |(b')| {B'} \\
    };
    \begin{scope}[every node/.style={midway,auto,font=\scriptsize}]
    \path[->]
        (a) edge node [left] {$h$} (a')
        (b) edge node [right] {$k$} (b')
        (a) edge node [above] {$Lf$} (ef)
        (a') edge node [below] {$Lf'$} (a' -| ef'.west)
        (ef) edge node [above] {$Rf$} (ef -| b.west)
        (ef') edge node [below] {$Rf'$} (ef' -| b'.west)
        (ef) edge node [anchor=center, fill=white] {$E(h,k)$} (ef');
    \end{scope}\end{tikzpicture}
\end{equation}
    Second, the top and bottom composites should compose to \(f\), \(f'\).
    This decomposition yields natural transformations \(\eta : 1 \Rightarrow R\),
    \(\epsilon: L \Rightarrow 1\) given (at \(f\)) by the commutative squares:
    
    \begin{minipage}{0.4\textwidth}
        \centering
    \begin{equation}\label{eq:awfs-unit}
  \begin{tikzpicture}[baseline={([yshift=-.5ex]current bounding box.center)}]
    \matrix (m) [matrix of math nodes, row sep=2em,
    column sep=3.5em]{
        |(a)| {A} & |(ef)| {Ef} \\
        |(b)| {B} & |(b2)| {B} \\
    };
    \node (i) at (barycentric cs:a=1,b=1,ef=1,b2=1) {$\overset{\eta}{\Rightarrow}$};
    \begin{scope}[every node/.style={midway,auto,font=\scriptsize}]
    \path[-]
        (b) edge [double distance=2pt] (b2);
    \path[->]
        (a) edge node [above] {$Lf$} (ef)
        (ef) edge node [right] {$Rf$} (b2)
        (a) edge node [left] {$f$} (b);
    \end{scope}\end{tikzpicture}
        \end{equation}
        \end{minipage}\begin{minipage}{0.4\textwidth}\centering
        \begin{equation}\label{eq:awfs-counit}
  \begin{tikzpicture}[baseline={([yshift=-.5ex]current bounding box.center)}]
    \matrix (m) [matrix of math nodes, row sep=2em,
    column sep=3.5em]{
        |(a)| {A} & |(a2)| {A} \\
        |(ef)| {Ef} & |(b2)| {B} \\
    };
    \node (i) at (barycentric cs:a=1,b2=1,ef=1,a2=1) {$\overset{\epsilon}{\Rightarrow}$};
    \begin{scope}[every node/.style={midway,auto,font=\scriptsize}]
    \path[-]
        (a) edge [double distance=2pt] (a2);
    \path[->]
        (a) edge node [left] {$Lf$} (ef)
        (a2) edge node [right] {$f$} (b2)
        (ef) edge node [below] {$Rf$} (b2);
    \end{scope}\end{tikzpicture}\end{equation}\end{minipage}
\end{defi}

An \emph{algebraic weak factorisation system}
\index{algebraic weak factorisation system} (AWFS) can be informally
described as a functorial factorisation into a composite of two kinds of
arrows, as follows. First, there are double categories \(I : \Lbb \to \sq{\E}\),
\(J : \Rbb \to \sq{\E}\) induced by each other in the following way:
\[
	\Dll{J} \cong I \text{ and } \Drl{I} \cong J \text{ in } \DBL/\sq{\E}.
\]
Note that also \(\Dll{\left(\Drl{I}\right)} \cong I\) and
\(\Drl{\left(\Dll{J}\right)} \cong J\). Second, it is required that the functor
\(L\) of the factorisation factors through the vertical part \(I_1 : \Lbb_1 \to
\E^{{\to}}\) (see \refprop{vertical-part-left-adj}) of
\(I\), and that \(R\) factors through the vertical part \(J_1 : \Rbb_1 \to
\E^{{\to}}\).

Taking the above as starting point, we can work towards the formal definition
of an AWFS (\refdefi{awfs}). First, observe that if the above holds, the
lifting structure \(\phi\) on the arrows of the form \(Lf\) induces a family of
maps \(\delta_f = \phi_{1_{E_f}, LLf}(RLf)\):
\begin{equation}\label{eq:comult-lift}
  \begin{tikzpicture}[baseline={([yshift=-.5ex]current bounding box.center)}]
    \matrix (m) [matrix of math nodes, row sep=2em,
    column sep=3.5em]{
        |(a)| {A} & |(elf)| {ELf} \\
        |(ef)| {Ef} & |(ef2)| {Ef} \\
    };
    \begin{scope}[every node/.style={midway,auto,font=\scriptsize}]
    \path[->]
        (ef) edge node [below] {$1_{Ef}$} (ef -| ef2.west)
        (a) edge node [above] {$LLf$} (a -| elf.west) 
        (a) edge node [left] {$Lf$} (ef)
        (elf) edge node [right] {$RLf$} (ef2)
        (ef) edge [dashed] node [anchor=center, fill=white] {$\delta_f$} (elf);
    \end{scope}\end{tikzpicture}\end{equation}
 Since \(L\) factors through \(\Lbb_1\) and \(R\) factors through
 \(\Rbb_1\), this family defines a natural transformation:
\[
	\delta: L \Rightarrow LL.
\]
Note that commutativity of the bottom triangle in \eqref{eq:comult-lift} implies
a counit law:
\[
	\epsilon_{Lf}. \delta_f = 1_{Lf}.
\]
This counit law expresses that \(Lf\) is a coalgebra for the `mere co-pointed
endofuntor' \((L, \epsilon)\). Similarly, there is a family of maps \(\mu_f :
RRf \to Rf\):
\begin{equation}\label{eq:mult-lift}
  \begin{tikzpicture}[baseline={([yshift=-.5ex]current bounding box.center)}]
    \matrix (m) [matrix of math nodes, row sep=2em,
    column sep=3.5em]{
        |(ef)| {Ef} & |(ef2)| {Ef} \\
        |(erf)| {E R f} & |(b)| {B} \\
    };
    \begin{scope}[every node/.style={midway,auto,font=\scriptsize}]
    \path[->]
        (ef) edge  node [above] {$1_{Ef}$} (ef -| ef2.west)
        (erf) edge node [below] {$RRf$} (erf -| b.west) 
        (ef) edge node [left] {$LRf$} (erf)
        (ef2) edge node [right] {$Rf$} (b)
       (erf) edge [dashed] node [anchor=center, fill=white] {$\mu_f$} (ef2);
  \end{scope}\end{tikzpicture}\end{equation}
which yields a natural transformation
\[
	\mu : RR \Rightarrow R
\]
which satisfies a unit law:
\[
	\mu_f . \eta_{Rf} = 1_{Rf}.
\]
Again, this law just expresses that \(Rf\) is an algebra for the `mere
pointed endofunctor' \((R, \eta)\). Next, there is for any vertical arrow
\(v\) in the double category \(\Rbb\)
a filler for the diagram:
\[
          \begin{tikzpicture}[baseline={([yshift=-.5ex]current bounding box.center)}]
            \matrix (m) [matrix of math nodes, row sep=3em,
            column sep=6em]{
                |(a)| {A} & |(b)| {A} \\
		|(c)| {EI(v)} & |(d)| {B} \\
            };
            \begin{scope}[every node/.style={midway,auto,font=\scriptsize}]
            \path[->]
		    (a) edge node [above] {$1_A$} (b)
		    (c) edge node [above] {$RI(v)$} (d)
		    (a) edge node [left] {$LI(v)$} (c)
		    (b) edge node [right] {$I(v)$} (d)
		    (c) edge [dashed] node [anchor=center, fill=white] {$\alpha$}
		    (b);
            \end{scope}\end{tikzpicture}
\]
since \(L\) factors through \(I_1 : \Lbb_1 \to \E^{\to}\). 
Because the top triangle commutes, this filler defines an algebra structure
\(\alpha: RI(v) \to I(v)\) for the pointed endofunctor \(R : \E^{\to} \to \E^{\to}\), 
\(\eta : 1 \Rightarrow R\). Similarly,
a vertical arrow \(u\) in the category \(\Lbb\) induces a coalgebra structure
\(\beta: I(u) \to LI(u)\) for the co-pointed endofunctor \(L : \E^{\to} \to \E^{\to}\),
\(\epsilon: L \Rightarrow 1\).

Observe that now for any commutative square 
\[
          \begin{tikzpicture}[baseline={([yshift=-.5ex]current bounding box.center)}]
            \matrix (m) [matrix of math nodes, row sep=2em,
            column sep=3.5em]{
                |(a)| {A} & |(b)| {B} \\
                |(c)| {C} & |(d)| {D} \\
            };
            \begin{scope}[every node/.style={midway,auto,font=\scriptsize}]
            \path[->]
                (a) edge node [above] {$f$} (b)
                (c) edge node [above] {$g$} (d)
		    (a) edge node [left] {$I(u)$} (c)
		    (b) edge node [right] {$J(v)$} (d);
            \end{scope}\end{tikzpicture}
\]
a filler can be given as a composite:
\begin{equation}\label{eq:filler-general-awfs}
          \begin{tikzpicture}[baseline={([yshift=-.5ex]current bounding box.center)}]
            \matrix (m) [matrix of math nodes, row sep=2em,
            column sep=3.5em]{
		    	      &                 & |(b)| {B}      & |(b')| {B} \\
		    |(a)| {A} & |(eu)| {E I(u)} & |(ev)| {E I(v)} & |(d)| {D} \\
		    |(c)| {C} & |(c')| {C} &  &  \\
            };
            \begin{scope}[every node/.style={midway,auto,font=\scriptsize}]
	    \path[->]
		    (a) edge node [pos=0.7, above] {$LI(u)$} (eu)
		    (a) edge [bend left = 10]  node [above] {$f$} (b)
		    (eu) edge node [right] {$RI(u)$} (c')
		    (c) edge node [above] {$1_C$} (c')
	            (eu) edge node [above] {$E(f,g)$} (ev)
		    (c') edge [bend right=10] node [below] {$g$} (d)
		    (b) edge node [left] {$LI(v)$} (ev)
		    (ev) edge node [above] {$RI(v)$} (d)
		    (b) edge node [above] {$1_B$} (b')
		    (c) edge [dashed] node [anchor=center, fill=white] {$\beta$} (eu)
	            (ev) edge [dashed] node [anchor=center, fill=white] {$\alpha$} (b')
		    (a) edge node [left] {$I(u)$} (c)
		    (b') edge node [right] {$I(v)$} (d);
            \end{scope}\end{tikzpicture}.
            \end{equation}

We now turn to the definition of an algebraic weak factorisation system (AWFS)
(\cite{Bourke-Garner}, section~2.2).  A consequence of this definition is that
the lifts for `canonical' diagrams, such as~\eqref{eq:mult-lift}
and~\eqref{eq:comult-lift} or more generally for any \(L\)-coalgebra and
\(R\)-algebra, agree with the general shape~\eqref{eq:filler-general-awfs}.
\begin{defi}{awfs}\index{algebraic weak factorisation system|textbf}
\index{AWFS|seeonly {algebraic weak factorisation system}}
  Suppose \((L, R, \epsilon, \eta)\) is a functorial factorisation on a
  category \(\E\) and \(\delta : L \Rightarrow LL\), \(\mu: RR \Rightarrow R\)
  are natural transformations. By conditions (i) and (ii) given below, we may
  assume that they are given by diagonals as in diagrams~\eqref{eq:comult-lift}
  and~\eqref{eq:mult-lift} (see~\cite{Bourke-Garner}, section~2.2, and the
  observations above).
  Then \((L, R, \delta, \mu, \epsilon, \eta)\) is an
  \emph{algebraic weak factorisation system} (AWFS) when the following
  conditions hold:
  \begin{enumerate}[(i)]
	\item The triple \((L, \delta, \epsilon)\) satisfies the conditions for
		a comonad structure on \(\E^{\to}\)
	\item The triple \((R, \mu, \eta)\) satisfies the conditions for a monad
		structure on \(\E^{\to}\)
	\item The commutative square
	\begin{equation}\label{eq:distr-law-square}
          \begin{tikzpicture}[baseline={([yshift=-.5ex]current bounding box.center)}]
            \matrix (m) [matrix of math nodes, row sep=2em,
            column sep=3.5em]{
                |(a)| {Ef} & |(b)| {ELf} \\
                |(c)| {ERf} & |(d)| {Ef} \\
            };
            \begin{scope}[every node/.style={midway,auto,font=\scriptsize}]
            \path[->]
                (a) edge node [above] {$\delta_f$} (b)
                (c) edge node [above] {$\mu_f$} (d)
		    (a) edge node [left] {$LRf$} (c)
		    (b) edge node [right] {$RLf$} (d);
            \end{scope}\end{tikzpicture}
        \end{equation}
	    whose diagonal is the identity \(1_{Ef} : Ef \to Ef\),
	    constitutes a \emph{distributive law} \(LR \Rightarrow RL\) of
      \(L\) over \(R\).\index{distributive law}
	    We have put some background on this in Box~\ref{box:distr-law}, but
	    we will be mostly concerned with the alternative formulation of
	    this condition formulated in \refprop{awfs-equivalent}. The significance
	    of the distributive law for an AWFS is pointed out in
	    \reftheo{distrlaw<->iso-of-cofibred-str} below. The fact that it
	    constitutes a `distributive law' in some broader sense will also
	    become apparent from the AWFS we construct in this paper.
\end{enumerate}
\end{defi}

\begin{textbox}{Distributive laws combining monads and comonads\label{box:distr-law}}
\index{distributive law|textbf}
\emph{Distributive laws} between monads were introduced by Beck
in~\cite{beck-69} as natural transformations \(\lambda : TS \Rightarrow ST\)
for monads \(S\), \(T\) on some category, subject to certain conditions
relating to the monad structures of \(S\) and \(T\). These are equivalent to a
monad structure on the composition \(TS\), subject to compatibility conditions
with \(S\) and \(T\). As remarked by Power and
Watanabe~\cite{power-watanabe-02}, there are many more distributive laws to
consider, including one for a comonad over a monad, or the other way around.
These are all different choices. What is meant by a distributive law for a
comonad \(L\) over a monad \(R\) in this paper is the dual of Definition 6.1
in~\cite{power-watanabe-02}. Spelled out, this is a natural transformation
\(\lambda : LR \Rightarrow RL\) subject to the following conditions:
\begin{equation}\label{eq:distr-law-comonad-monad}
\begin{array}{rr}
	R\delta . \lambda = \lambda L . L \lambda . \delta_R \text{ , } & R \epsilon . \lambda = \epsilon_R\\
	\mu_L . R \lambda . \lambda R = \lambda . L \mu \text{ , } & \eta_L = \lambda . L \eta.
 \end{array}
 \end{equation}
 \end{textbox}

The definition of an AWFS as given above might seem to demand an overwhelming
amount of additional structure on a functorial factorisation together with the
natural transformations \(\delta,\mu\). The following proposition shows that in
fact, the definition of an AWFS contains some redundancy and can be reduced to a
couple of equational identities. For instance, the distributive law can be
expressed using a single equational identity which combines \(\delta\) and
\(\mu\). This observation is due to Richard Garner, and since it will be quite
important for our subsequent verifications, we will refer to this equation as
the \emph{Garner equation}.
\begin{prop}{awfs-equivalent}
Suppose \((L, R, \epsilon, \eta)\) is a functorial factorisation, and suppose
\(\delta: L \Rightarrow LL\) is a natural transformation over \(\dom : \E^{\to}
\to \E\), and\footnote{These last two conditions mean that \(\delta\), \(\mu\)
	are given by arrows \(\delta_f\), \(\mu_f\) as above such that
\(\delta_f . Lf = LLf\) and \(Rf . \mu_f = RRf\)}\(\mu : RR \Rightarrow R\) is
a natural transformation over \(\cod : \E^{\to} \to \E\). Then the following
statements are equivalent:
\begin{enumerate}[(i)]
  \item The triples \((L, \delta, \epsilon)\), \((R, \mu, \eta)\) satisfy the
	  conditions for a comonad over \(\dom: \E^\to \to \E\) and a monad over
	  \(\cod: \E^\to \to \E \) respectively.
	  i.e.\ the following equations
	  are satisfied:
    \begin{equation}
        \begin{array}{rr}
        RLf. \delta_f = 1 & \mu_f . LRf = 1 \\
        E(1, Rf). \delta_f = 1 & \mu_f . E(Lf, 1) = 1 \\
        \delta_{Lf} . \delta_f = E(1, \delta_f) . \delta_f & \mu_f . \mu_{Rf} = \mu_f . E(\mu_f, 1) 
        \end{array}
    \end{equation}
    And the \emph{Garner equation}\index{Garner equation|textbf} holds:
    \begin{equation}\label{eq:garner-equation-awfs}
        \delta_f . \mu_f = \mu_{Lf} . E(\delta_f, \mu_f) . \delta_{Rf}
    \end{equation}

    \item The diagram~\eqref{eq:distr-law-square} commutes, its diagonal is the
	    identity, and constitutes a distributive
	    law for \(L\) over \(R\), in the sense that the
	    equations~\eqref{eq:distr-law-comonad-monad} are satisfied.

    \item The tuple \((L, R, \delta, \mu, \epsilon, \eta)\) is an AWFS\@.
  \end{enumerate}
\end{prop}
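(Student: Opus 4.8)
The plan is to reduce everything to the single equivalence $(i) \Leftrightarrow (iii)$, since $(ii) \Leftrightarrow (iii)$ is essentially definitional: if $(L,R,\delta,\mu,\epsilon,\eta)$ is an AWFS then by \refdefi{awfs} the triple $(L,\delta,\epsilon)$ is a comonad, $(R,\mu,\eta)$ is a monad, and \eqref{eq:distr-law-square} is a distributive law of $L$ over $R$ --- and unwinding that last phrase, which by definition is the dual of Power--Watanabe's Definition~6.1 as recorded in Box~\ref{box:distr-law}, is precisely the assertion that the equations \eqref{eq:distr-law-comonad-monad} hold. Conversely, the phrase ``$\lambda$ is a distributive law of $L$ over $R$'' only makes sense when $L$ is a comonad and $R$ a monad, so $(ii)$ carries us back to all three clauses of \refdefi{awfs}. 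Thus the work is in showing $(i) \Leftrightarrow (iii)$, and this splits into a routine translation (the comonad and monad conditions) and one genuine computation (the distributive law versus the Garner equation, which is Garner's observation).

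For the translation I would use the dictionary that a natural transformation over $\dom$ (respectively $\cod$) is completely determined by its component maps $Ef \to Ef'$ in $\E$ --- for $\delta$ these are the $\delta_f : Ef \to ELf$ with $\delta_f . Lf = LLf$ from \eqref{eq:comult-lift}, for $\mu$ the $\mu_f : ERf \to Ef$ from \eqref{eq:mult-lift} --- so that every categorical identity between such transformations is faithfully recorded as an equation in $\E$, and conversely. Under this dictionary I would check line by line that the two counit laws for $(L,\delta,\epsilon)$ become $RLf . \delta_f = 1$ and $E(1,Rf) . \delta_f = 1$ (using that $L$ sends the square $\epsilon_f$ of \eqref{eq:awfs-counit} to $E(1,Rf)$), that coassociativity becomes $\delta_{Lf} . \delta_f = E(1, \delta_f) . \delta_f$, and dually for $(R,\mu,\eta)$. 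Since $\delta$ and $\mu$ are assumed natural from the outset, this shows that the two columns of equations in $(i)$ are exactly the comonad and monad axioms, matching clauses (i) and (ii) of \refdefi{awfs}.

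It then remains to match the distributive-law clause: assuming the comonad and monad structures, show that \eqref{eq:distr-law-square} commutes with identity diagonal and satisfies \eqref{eq:distr-law-comonad-monad} if and only if the Garner equation \eqref{eq:garner-equation-awfs} holds. First I would observe that the square together with its identity diagonal contributes nothing new: its two triangles are $RLf . \delta_f = 1$ and $\mu_f . LRf = 1$, already among the equations of $(i)$, and once both composites around the square equal $1_{Ef}$ it commutes trivially; this also pins down the value of the would-be distributive law $\lambda$. Of the four equations \eqref{eq:distr-law-comonad-monad}, the two on the right (the co-pointed and pointed compatibilities $R\epsilon . \lambda = \epsilon_R$ and $\eta_L = \lambda . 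L\eta$) then hold automatically, being forced by the functorial factorisation together with the comonad/monad laws; and each of the two remaining equations is, modulo those laws and the naturality of $E$, $\delta$ and $\mu$, a rearrangement of \eqref{eq:garner-equation-awfs}. Carrying out that rearrangement --- inserting counit and unit identities to pass between $\delta_f . \mu_f$ and $\mu_{Lf} . E(\delta_f, \mu_f) . \delta_{Rf}$ --- is the one substantive calculation, and I would either do it by hand or refer to Bourke--Garner~\cite{Bourke-Garner} for the detailed bookkeeping. The reverse direction $(i) \Rightarrow (iii)$ runs the same dictionary backwards. The main obstacle is precisely this last point: verifying that the two ``middle'' distributive-law equations collapse to, and are recovered from, the single Garner equation without any circular use of the equations one is trying to prove --- everything else is mechanical translation.
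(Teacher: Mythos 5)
Your overall route is essentially the paper's: the paper likewise treats (iii) as nothing more than the conjunction of the (co)monad clauses with the distributive-law clause, reduces the whole statement to matching (i) against (ii), and then simply leaves the componentwise unfolding of the four equations \eqref{eq:distr-law-comonad-monad} to the reader -- exactly the bookkeeping you defer to a hand computation or to Bourke--Garner. So in substance you and the paper are doing the same thing, at the same level of detail.

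One caveat, because it is the only place where your plan leans on something the paper does not: item (ii) is stated purely equationally (``\ldots constitutes a distributive law \emph{in the sense that} the equations \eqref{eq:distr-law-comonad-monad} are satisfied''), so for the direction (ii) $\Rightarrow$ (i)/(iii) you cannot presuppose the comonad and monad laws on the grounds that the word ``distributive law'' only makes sense for a comonad over a monad -- under the equational reading that is circular. The repair is precisely the computation you postpone: writing $\lambda_f = (\delta_f,\mu_f) : LRf \to RLf$ and splitting each equation into its domain and codomain components in $\E$, one finds that $R\epsilon.\lambda = \epsilon_R$ unfolds to $E(1,Rf).\delta_f = 1$ together with $Rf.\mu_f = RRf$, that $\eta_L = \lambda.L\eta$ unfolds to $\mu_f.E(Lf,1)=1$ together with $\delta_f.Lf = LLf$, that the two triangles of the identity diagonal give $RLf.\delta_f = 1$ and $\mu_f.LRf = 1$, and that each of the two remaining equations splits into a (co)associativity law in one component and the Garner equation \eqref{eq:garner-equation-awfs} in the other (so they are not merely ``rearrangements of the Garner equation''; they also carry $\delta_{Lf}.\delta_f = E(1,\delta_f).\delta_f$ and $\mu_f.\mu_{Rf} = \mu_f.E(\mu_f,1)$). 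In particular the equations of (ii) already contain all of (i) and conversely, which is exactly what the paper's proof asks the reader to verify; your argument is fine once this unfolding is actually carried out rather than argued away terminologically.
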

\begin{proof}
  The only thing to prove is the equivalence of (i) and (ii), since these two
  complement each other to an AWFS\@. We leave it to the reader to spell out
  the axioms~\eqref{eq:distr-law-comonad-monad}
  for~\(\lambda=\)\eqref{eq:distr-law-square} and conclude that these contain
  precisely the counit, unit, coassociativity and associativity conditions as
  well as the Garner equation. 
\end{proof}

\subsection{A double category of coalgebras}
This section further works out the relationship between the above definition of
an AWFS and the informal description of two double categories of left and
right arrows which determine each other. For the purposes of this paper,
the most important thing to keep is that these two double categories are
double categories of coalgebras and algebras with respect to the comonad \(L\)
and the monad \(R\). Sections \ref{sec:dominances} and \ref{sec:AWFSFromM} study
these coalgebras and algebras in depth for two different AWFSs.

We assume that \[(L, R, \epsilon, \eta, \delta, \mu)\] is an AWFS on a category
\(\E\) as in \refdefi{awfs}. We would like to define a double category
\(\coalg{L}\) whose objects are objects of \(\E\), whose vertical arrows are
coalgebras for the \emph{comonad} \((L, \delta, \epsilon)\) and whose squares
are morphisms of algebras. For this to make sense, it is needed to define a
vertical composition of coalgebras. 

We can regard \(R\) and \(L\) as either mere (co)pointed endofunctors, or
(co)monads. In both cases, (co)algebras for them can be represented as diagonal
fillers for diagrams in \(\E\). Indeed, recall that a \emph{coalgebra} \(\beta
: f \to Lf\) for the mere co-pointed endofunctor \(L\) is the same thing as a
filler for the square:\index{coalgebra (for an AWFS)}
\[
          \begin{tikzpicture}[baseline={([yshift=-.5ex]current bounding box.center)}]
            \matrix (m) [matrix of math nodes, row sep=3em,
            column sep=6em]{
                |(a)| {A} & |(ef)| {E_f} \\
        		|(b)| {B} & |(b')| {B} \\
            };
            \begin{scope}[every node/.style={midway,auto,font=\scriptsize}]
            \path[->]
		    (a) edge node [left] {$f$} (b)
		    (a) edge node [above] {$Lf$} (ef)
		    (b) edge node [below] {$1_B$} (b')
		    (ef) edge node [right] {$Rf$} (b')
		    (b) edge [dashed] node [anchor=center, fill=white] {$\beta$} (ef);
            \end{scope}\end{tikzpicture}
\]
Indeed, one can check that the co-unit condition dictates that the top arrow of
the square \(\beta: f \to Lf\) must be the identity, hence the top triangle
commutes, and that \(R_f.\beta = 1_B\), hence the bottom triangle commutes.
When we are interested in coalgebras for the \emph{comonad} \((L, \delta,
\epsilon)\), this arrow is subject to the co-associativity condition \(\delta_f
. \beta = L(\beta) . \beta\).  This boils down to the equational identity:
\begin{align}\label{eq:coass-coalgebra}
	\delta_f . \beta = E(1_A, \beta) . \beta
\end{align}

Similarly, an \emph{algebra}\index{algebra (for an AWFS)} structure \(Rf \to
f\) (for either the mere pointed endofunctor or the monad) is defined entirely
by an underlying arrow \(\beta : E_f \to \dom f \). We will hence refer to
algebra or coalgebra structures by their underlying map.  For the comonad \(L\)
and the monad \(R\), we denote the category of (co)algebras and morphisms of
(co)algebras by \(\coalgcat{L}\) and \(\algcat{R}\) respectively.  As we have
seen in~\eqref{eq:filler-general-awfs}, every \(L\)-coalgebra bears a left
lifing structure with respect to every \(R\)-algebra. This takes the form of a
functor
\[
	\Phi: \coalgcat{L} \to \llp{\algcat{R}}
\]
for which we can describe the image in the form of an extra condition on
lifting structures in the following proposition. This proposition
is the dual of Lemma 1 in Bourke-Garner~\cite{Bourke-Garner}.
\begin{prop}{coalg-lifts}
	The functor \(\Phi: \coalgcat{L} \to \llp{\algcat{R}}\) is injective on objects
	and fully faithful, and its image consists
	of those arrows \((i, \phi)\) with a left lifting structure \(\phi\) for which
	the following diagrams commute: 
	\begin{equation}\label{eq:coalgebra-lift-condition}
          \begin{tikzpicture}[baseline={([yshift=-.5ex]current bounding box.center)}]
            \matrix (m) [matrix of math nodes, row sep=4em,
            column sep=6em]{
		    |(a)| {A} & |(eh)| {Eh} & |(elh)| {ELh} \\
			      &  & |(eh')| {Eh} \\
		    |(b)| {B} & |(c')| {C}  & |(c)| {C} \\
            };
            \begin{scope}[every node/.style={midway,auto,font=\scriptsize}]
            \path[->]
		(a) edge node [left] {$i$} (b)
		(a) edge node [above] {$u$} (eh)
		(b) edge node [below] {$v$} (c')
		(c') edge node [below] {$1_C$} (c)
		(eh) edge node [above] {$\delta_h$} (elh)
		(eh) edge node [pos=0.2, right] {$Rh$} (c')
		(elh) edge node [right] {$RLh$} (eh')
		(eh') edge node [right] {$Rh$} (c)
		(b) edge [dashed] node [sloped, pos=0.5,fill=white, anchor=center]
		{$\phi_{u,v}(Rh)$} (eh)
		(b) edge [dashed] node [sloped, fill=white, anchor=center] 
		{$v' := \phi_{u, v}(Rh)$} (eh')
		(b) edge [dashed] node [sloped, fill=white, anchor=center]
		{$\phi_{\delta_h.u, v'}(RLh)$} (elh)
		;
            \end{scope}\end{tikzpicture}
\end{equation}
\end{prop}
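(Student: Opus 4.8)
This proposition is dual to Lemma~1 of Bourke and Garner~\cite{Bourke-Garner}, and the plan is to prove it by exhibiting explicitly the two mutually inverse passages between coalgebra structures and lifting structures that underlie that lemma. To an $L$-coalgebra $(i\colon A\to B,\ \beta\colon B\to E_i)$ I associate the left lifting structure $\phi^\beta$ read off from the canonical filler~\eqref{eq:filler-general-awfs}: for an $R$-algebra $(p\colon Y\to X,\ \alpha\colon E_p\to Y)$ and a lifting problem $(f,g)\colon i\to p$ one sets $\phi^\beta_{f,g}(\alpha)=\alpha\circ E(f,g)\circ\beta$. That this is an element of $\llp{\algcat{R}}(i)$ is routine: condition (i) holds because $\beta$ makes the evident triangles commute, and the horizontal condition (ii) is, after using functoriality of $E$, exactly the requirement that a morphism $(a,b)$ of $R$-algebras satisfies $a\circ\alpha=\alpha'\circ E(a,b)$. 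Conversely, to $\phi\in\llp{\algcat{R}}(i)$ I associate $\beta^\phi:=\phi_{Li,\,1_B}(\mu_i)\colon B\to E_i$, the $\phi$-lift of $i$ against the free $R$-algebra $(Ri,\mu_i)$ along the square with legs $Li$ and $1_B$; since $\phi$ produces a genuine diagonal, $Ri\circ\beta^\phi=1_B$ and $\beta^\phi\circ i=Li$, so $\beta^\phi$ is at least a coalgebra for the copointed endofunctor $(L,\epsilon)$.

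First I would show that these two passages are mutually inverse on underlying data. That $\beta^{\phi^\beta}=\beta$ is immediate from the AWFS identity $\mu_f\circ E(Lf,1)=1$ of \refprop{awfs-equivalent}. For the reverse round trip I would prove that $\phi^{\beta^\phi}=\phi$ for \emph{every} $\phi\in\llp{\algcat{R}}(i)$, by reducing an arbitrary lift to the canonical one in two applications of condition (ii). Given $(f,g)\colon i\to p$ against $(p,\alpha)$: the pair $(\alpha,1_X)$ is a morphism of $R$-algebras $(Rp,\mu_p)\to(p,\alpha)$ --- its defining equation is precisely the associativity law for $\alpha$ --- and condition (ii) for it, together with $\alpha\circ Lp=1_Y$, gives $\phi_{f,g}(\alpha)=\alpha\circ\phi_{Lp\circ f,\,g}(\mu_p)$; next, $R(f,g)$ is a morphism of free algebras $(Ri,\mu_i)\to(Rp,\mu_p)$, and condition (ii) for it, using the naturality identity $E(f,g)\circ Li=Lp\circ f$, gives $\phi_{Lp\circ f,\,g}(\mu_p)=E(f,g)\circ\beta^\phi$. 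Composing yields $\phi_{f,g}(\alpha)=\alpha\circ E(f,g)\circ\beta^\phi=\phi^{\beta^\phi}_{f,g}(\alpha)$. In particular $\Phi$ is injective on objects; and since $\Phi$ commutes with the forgetful functors to $\E^{\to}$, full faithfulness comes down to the observation that, via the formula for $\phi^\beta$, a square $(h,k)\colon i\to i'$ is a morphism $(i,\phi^\beta)\to(i',\phi^{\beta'})$ of left lifting structures iff $\phi^\beta_{fh,\,gk}(\alpha)=\phi^{\beta'}_{f,g}(\alpha)\circ k$ for all lifting problems, which --- testing against the free algebra and the canonical square --- holds iff $E(h,k)\circ\beta=\beta'\circ k$, i.e.\ iff $(h,k)$ is a morphism of $L$-coalgebras.

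It remains to identify the image. By the previous step, a given $\phi\in\llp{\algcat{R}}(i)$ lies in the image of $\Phi$ precisely when its copointed coalgebra $\beta^\phi$ is in fact a coalgebra for the \emph{comonad} $(L,\delta,\epsilon)$, i.e.\ satisfies the coassociativity identity~\eqref{eq:coass-coalgebra}, $\delta_i\circ\beta^\phi=E(1_A,\beta^\phi)\circ\beta^\phi$. The substance of this part is to recognise the diagrams~\eqref{eq:coalgebra-lift-condition} as exactly this identity transcribed into the lifting structure. Specialising~\eqref{eq:coalgebra-lift-condition} to $h=i$, $u=Li$, $v=1_B$ makes $v'=\beta^\phi$; using $RLi\circ\delta_i=1$ and $\delta_i\circ Li=LLi$ one checks the square shown there is a legitimate lifting problem of $i$ against the free algebra $(RLi,\mu_{Li})$, whose $\phi$-lift --- since $\phi=\phi^{\beta^\phi}$, and $\mu_{Li}\circ E(LLi,1)=1$ by \refprop{awfs-equivalent} applied at $Li$ --- computes to $E(1_A,\beta^\phi)\circ\beta^\phi$; commutativity of~\eqref{eq:coalgebra-lift-condition} then reads $\delta_i\circ\beta^\phi=E(1_A,\beta^\phi)\circ\beta^\phi$, which is~\eqref{eq:coass-coalgebra}. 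Conversely, when $\beta^\phi$ is coassociative,~\eqref{eq:coalgebra-lift-condition} follows for all $h,u,v$ by the same computation combined with naturality of the factorisation and the distributive law.

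The step I expect to be the main obstacle is this last transcription: tracking the AWFS identities ($\mu_f\circ E(Lf,1)=1$, $RLf\circ\delta_f=1$, $\delta_f\circ Lf=LLf$) and the naturality of $L$, $R$ and $E$ through the various reindexings carefully enough to see that the displayed diagrams say neither more nor less than coassociativity of $\beta^\phi$. Everything else is formal manipulation of the (co)monad and lifting-structure axioms, and can be imported essentially unchanged (up to dualisation) from~\cite{Bourke-Garner}.
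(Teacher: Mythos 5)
Your argument is correct, and it is worth noting how it sits relative to the paper: the paper's own ``proof'' of this proposition is simply a citation of Lemma~1 of Bourke--Garner, together with the remark that the argument hinges on the distributive law in the form of the Garner equation. What you have written is essentially a self-contained reconstruction of that cited proof (dualised): the bijection $\beta\mapsto\phi^\beta$, $\phi\mapsto\beta^\phi:=\phi_{Li,1_B}(\mu_i)$ between copointed coalgebra structures and left lifting structures against $\algcat{R}$, the identification of the image with the coassociative $\beta^\phi$, and the recognition that the specialisation $h=i$, $u=Li$, $v=1_B$ of~\eqref{eq:coalgebra-lift-condition} is literally~\eqref{eq:coass-coalgebra} -- all of this is the content of the reference, and your computations (the two round trips, the fullness test against the free algebra, the reduction of the specialised condition via $\mu_{Li}.E(LLi,1)=1$) are sound. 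The one step you leave as a sketch, namely that coassociativity of $\beta^\phi$ gives back~\eqref{eq:coalgebra-lift-condition} for arbitrary $h,u,v$, does go through exactly with the ingredients you name: writing $\beta=\beta^\phi$ and $v'=\mu_h.E(u,v).\beta$, factor the square $(\delta_h.u,\,v'):i\to RLh$ as $(1_A,\beta):i\to Li$, followed by $(u,E(u,v)):Li\to LRh$, followed by $(\delta_h,\mu_h):LRh\to RLh$; then functoriality of $E$, coassociativity $E(1_A,\beta).\beta=\delta_i.\beta$, naturality of $\delta$ at $(u,v):i\to Rh$ (giving $E(u,E(u,v)).\delta_i=\delta_{Rh}.E(u,v)$), and finally the Garner equation~\eqref{eq:garner-equation-awfs} in the form $\mu_{Lh}.E(\delta_h,\mu_h).\delta_{Rh}=\delta_h.\mu_h$ yield $\phi_{\delta_h.u,v'}(RLh)=\delta_h.\phi_{u,v}(Rh)$. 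So your diagnosis that the distributive law enters only in this converse direction matches precisely the paper's remark; the benefit of your route is that it makes the proposition independent of the external reference, at the cost of the bookkeeping you anticipated.
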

\begin{proof}
	See Lemma 1 of Bourke-Garner~\cite{Bourke-Garner}. The proof relies on the distributive law,
    or more precisely, the Garner equation.
 \end{proof}

\begin{coro}{coalgebras-cofibred}
	Suppose we have a pushout square:
\[
  \begin{tikzpicture}[baseline={([yshift=-.5ex]current bounding box.center)}]
    \matrix (m) [matrix of math nodes, row sep=2em,
    column sep=3.5em]{
	|(a)| {A} & |(a')| {A'} \\
	|(b)| {B} & |(b')| {B'} \\
    };
    \begin{scope}[every node/.style={midway,auto,font=\scriptsize}]
    \path[->]
	    (a) edge (a')
	    (a) edge node [left] {$f$} (b)
	    (a') edge node [right] {$f'$} (b')
	    (b) edge (b');
    \end{scope}\end{tikzpicture}
\]
and suppose that \(\beta\) is a coalgebra structure on \(f\). Then there is a unique
coalgebra structure \(\beta'\) on \(f'\) which makes the diagram into a morphism of
coalgebras. Hence there is a cofibred structure (see the remarks above
\refdefi{llp-double-cat}) on \(\E\):
\begin{equation}\label{eq:coalg-cofibred}
	\coalgcofibr{L}: \E^{\to}_{\text{cocart}} \to \Sets
\end{equation}
which associates to every morphism the set of coalgebra structures on it.
\end{coro}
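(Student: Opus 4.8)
The plan is to build $\beta'$ by hand from the universal property of the pushout and then to verify the coalgebra axioms by testing them against the two legs of the pushout. Write the pushout square as
\[
  \begin{tikzcd}
    A \ar[r, "u"] \ar[d, "f"] & A' \ar[d, "f'"] \\
    B \ar[r, "v"] & B'
  \end{tikzcd}
\]
so that $B'$ is the pushout of $B \xleftarrow{f} A \xrightarrow{u} A'$, with coprojections $v\colon B \to B'$ and $f'\colon A' \to B'$. Recall from the discussion preceding \eqref{eq:coass-coalgebra} that an $L$-coalgebra structure on $f$ is a single arrow $\beta\colon B \to Ef$ with $\beta.f = Lf$, $Rf.\beta = 1_B$ and $\delta_f.\beta = E(1_A,\beta).\beta$, and that a morphism of coalgebras $(u,v)\colon (f,\beta) \to (f',\beta')$ amounts to the single extra equation $\beta'.v = E(u,v).\beta$. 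These equations already pin down $\beta'\colon B' \to Ef'$: its restriction along $v$ is forced to be $E(u,v).\beta$, and its restriction along $f'$ is forced to be $Lf'$ (this is the content of $\beta'.f' = Lf'$). So I would \emph{define} $\beta'$ to be the copairing of these two maps; they agree on $A$ because $E(u,v).\beta.f = E(u,v).Lf = Lf'.u$, where the first equality is $\beta.f = Lf$ and the second is the codomain component of the square obtained by applying $L$ to $(u,v)$. Uniqueness of $\beta'$ is then automatic from this analysis.

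It remains to check that $\beta'$ really is an $L$-coalgebra, and again the strategy is to compose each required identity with $v$ and with $f'$ and invoke the pushout. The equation $\beta'.f' = Lf'$ holds by construction. For $Rf'.\beta' = 1_{B'}$: along $f'$ one gets $Rf'.Lf' = f' = 1_{B'}.f'$ by the factorisation axiom, and along $v$ one gets $Rf'.E(u,v).\beta = v.Rf.\beta = v = 1_{B'}.v$, using $R$ applied to the square $(u,v)$ and $Rf.\beta = 1_B$; the two sides agree. The remaining identity $\delta_{f'}.\beta' = E(1_{A'},\beta').\beta'$ is the one needing care. Composed with $f'$, both sides become $LLf'$ (on the left via $\delta_{f'}.Lf' = LLf'$, on the right via the codomain component of $L$ applied to $(1_{A'},\beta')$, which gives $E(1_{A'},\beta').Lf' = LLf'$). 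Composed with $v$, the left side is
\[
  \delta_{f'}.E(u,v).\beta = E\bigl(u,E(u,v)\bigr).\delta_f.\beta = E\bigl(u,E(u,v)\bigr).E(1_A,\beta).\beta = E\bigl(u,E(u,v).\beta\bigr).\beta,
\]
using the codomain component of the naturality square of $\delta$ at $(u,v)$, coassociativity of $\beta$, and functoriality of $E$ on the composable squares $(1_A,\beta)$ and $(u,E(u,v))$; while the right side is $E(1_{A'},\beta').E(u,v).\beta = E(u,E(u,v).\beta).\beta$ again by functoriality of $E$ (now on $(u,v)$ followed by $(1_{A'},\beta')$, whose composite has codomain component $\beta'.v = E(u,v).\beta$). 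So both sides agree on the two legs, hence on $B'$, and $\beta'$ is a coalgebra making $(u,v)$ a coalgebra morphism.

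With the main claim established, the rule sending a pushout square together with a coalgebra structure on its domain arrow to the structure $\beta'$ on its codomain arrow is well defined, and it is functorial (it preserves identity pushout squares and composites of pushout squares) by the uniqueness clause, since in each case the composite rule again produces a coalgebra making the relevant square a coalgebra morphism, of which there is exactly one; this yields the cofibred structure $\coalgcofibr{L}$. The only genuine obstacle is bookkeeping in the coassociativity step: one has to keep straight how the functor $L$ and the comultiplication $\delta$ act on morphisms of $\E^{\to}$ — in particular that the codomain component of $L(u,v)$ is $E(u,v)$, that of $LL(u,v)$ is $E(u,E(u,v))$, and so on — and which square is being fed to $E$ at each stage; once the relevant naturality and functoriality squares are written out explicitly, the identities drop out mechanically. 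Alternatively one could argue via \refprop{coalg-lifts}, checking that the defining condition \eqref{eq:coalgebra-lift-condition} is preserved by the pushout action already available on $\Dll{\algcat{R}}$, but the direct argument above is shorter and keeps the uniqueness statement transparent.
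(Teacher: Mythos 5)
Your proof is correct, but it takes a different route from the paper. The paper's own argument is the one you relegate to your final sentence: it first pushes the associated \emph{left lifting structure} forward along the pushout (this is automatic, since lifting structures with respect to $\algcat{R}$ form a cofibred structure), then checks that the resulting lifting structure still satisfies the extra condition of \refprop{coalg-lifts}, and finally converts back to a coalgebra structure via that proposition — so it leans on the identification $\coalgcofibr{L}\cong\Dll{\alg{R}}$, and in particular (through \refprop{coalg-lifts}) on the distributive law/Garner equation. Your argument instead builds $\beta'$ directly as the copairing of $E(u,v).\beta$ and $Lf'$ over the pushout and verifies the counit and coassociativity equations by testing against the jointly epimorphic legs $v$ and $f'$, using only functoriality of $E$, naturality of $\delta$ over $\dom$, the identities $\delta_{f'}.Lf'=LLf'$, and the coalgebra axioms for $\beta$. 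What your approach buys is that it is more elementary and slightly more general — it never invokes the monad $R$, the lifting-structure characterisation, or the distributive law, and it makes both the uniqueness clause and the functoriality of the resulting cofibred structure completely explicit; the cost is the equational bookkeeping in the coassociativity step, which you have carried out correctly (the key identities $\delta_{f'}.E(u,v)=E(u,E(u,v)).\delta_f$ and $E(1_{A'},\beta').E(u,v)=E(u,E(u,v).\beta)$ are exactly right). The paper's route is shorter given the machinery already on the table and fits its running theme of deriving everything from lifting structures, but your direct verification is a perfectly valid, self-contained alternative.
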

\begin{proof}
	There is a unique left lifting structure with respect to \(\algcat{R}\)
	on \(f'\) such that the square is a morphism of lifting structures. It
	is easy to see that this lifting structure satisfies the condition of
	\refprop{coalg-lifts}. Therefore it also defines a coalgebra structure
	on \(f'\) in such a way that the diagram is a morphism of coalgebras.
	This defines the functor~\eqref{eq:coalg-cofibred} on morphisms.
\end{proof}

With the construction introduced in \refprop{vertical-part-left-adj}, we
can improve a bit on the above Proposition, by regarding \(\Phi\) as a double functor
\begin{equation}\label{eq:llp-coalg-double}
	\Phi : \freedbl{\coalgcat{L}} \to \Dll{\freedbl{\algcat{R}}},
\end{equation}
which is injective and fully faithful on vertical morphisms.
One can check that the condition on left lifting structures of
 \refprop{coalg-lifts} is closed under composition of left lefting structures.
 So we can inherit vertical composition from this category,
 and define a double category of coalgebras as follows:\index{vertical composition!of coalgebras}

\begin{defi}{double-cat-coalg}
	Define the double category \(\coalg{L}\) as the double image of the
	functor~\eqref{eq:llp-coalg-double}. This is called the double
	category of coalgebras for the algebraic weak factorisation system
	\((L, R, \epsilon, \eta, \delta, \mu)\). Similarly, the double
	category \(\alg{R}\) of algebras is defined as the double image of the transpose
	\[
		\widetilde{\Phi} : \freedbl{\algcat{R}} \to \Drl{\freedbl{\coalgcat{L}}}.
	\]
	as per \refprop{vertical-part-left-adj}.
\end{defi}

It will be useful for the rest of this paper to record an expression for the
vertical composition of coalgebras induced by the previous definition.  For
instance, it will be used in \refcoro{vert-co-HDRs} below to show that vertical
composition of hyperdeformation retracts (HDRs) is the same as vertical
composition of coalgebras for a certain AWFS\@.
Also, we can use it to show where the distributive law for AWFSs is actually used
in the theory of Bourke and Garner~\cite{Bourke-Garner}.

So suppose that \(f : A \to B\), \(g: B \to C\) come with coalgebra structures
\(\beta: f \to Lf\), \(\gamma : g \to Lg\), and write \(h := g.f\).
By~\eqref{eq:filler-general-awfs}, both \(f\) and \(g\) have the left-lifting
property with respect to \(Rh\) -- and indeed it turns out that their 
composition coalgebra structure is given by first lifting with respect to \(f\)
and then \(g\) according to this recipe.
Spelling out~\eqref{eq:filler-general-awfs}, this filler is the diagonal in the below diagram:
\begin{equation}\label{eq:coalgebra-composition}
          \begin{tikzpicture}[baseline={([yshift=-.5ex]current bounding box.center)}]
            \matrix (m) [matrix of math nodes, row sep=2em,
            column sep=6em]{
		    |(a)| {A} &   & |(eh')| {Eh}   & |(a')| {Eh} \\
		    |(b)| {B} & |(ef)| {Ef}  & |(eh)| {ERh}   &   \\
			      & |(eg)| {Eg}  & |(erh)| {ERh} & \\
		    |(c)| {C} & |(c')| {C}  &  &  |(c2)| {C}   \\
            };
            \begin{scope}[every node/.style={midway,auto,font=\scriptsize}]
	    \path[->]
		    (a) edge node [above] {$Lh$} (eh')
		    (eh') edge node [above] {$1_{Eh}$} (a')
		    (eh') edge node [right] {$LRh$} (eh)
		    (eh) edge node [anchor=center, fill=white] {$1_{ERh}$} (erh)
		    (a) edge node [anchor=center, fill=white] {$Lf$} (ef)
		    (b) edge node [above] {$\beta$} (ef)
		    (b) edge node [anchor=center, fill=white] {$Lg$} (eg)
		    (a) edge node [left] {$f$} (b)
		    (b) edge node [left] {$g$} (c)
		    (ef) edge node [above] {$E(Lh, g)$} (eh)
		    (eh) edge node [above] {$\mu_h$} (a')
		    (c') edge node [below] {$1_C$} (c2)
		    (eg) edge node [anchor=center, fill=white] {$Rg$} (c')
		    (eg) edge node [above] {$E(\mu_h.E(Lh, g).\beta, 1_C)$} (erh)
		    (c) edge node [anchor=center, fill=white] {$\gamma$} (eg)
		    (erh) edge node [anchor=center, fill=white] {$\mu_h$} (a')
		    (erh) edge node [anchor=center, fill=white] {$RRh$} (c2)
		    (c) edge node [below] {$1_C$} (c')
		    (a') edge node [right] {$Rh$} (c2);
            \end{scope}\end{tikzpicture}.
\end{equation}
Hence the candidate coalgebra structure for the composite \(h = g.f\) is given by
\begin{align}\label{eq:coalgebra-composition-fla}
	\kappa &:= \mu_h. E(\mu_h.E(Lh, g).\beta,1_C) . \gamma : h \to Lh \\
	       &= \mu_h . E(E(1_A, g) . \beta, 1_C) . \gamma \nonumber
\end{align}
where the latter condition follows from one the unit law for \(\mu\).  The fact
that this candidate is a coalgebra structure is a result that follows from
\refprop{coalg-lifts}.

Similarly, if \(f\), \(g\) are \emph{algebras} with algebra structures
\(\beta\), \(\gamma\), their composition \(h := g.f\) has algebra structure:
\begin{equation}\label{eq:algebra-composition-fla}
	\beta . E(1_A , \gamma . E(f, Rh) . \delta_h) . \delta_h
	= \beta . E(1_A, \gamma . E(f, 1_C)) . \delta_h
\end{equation}\index{vertical composition!of algebras}

\begin{lemm}{distr-law<->morphism-of-algebras}
Suppose \((L, R, \epsilon, \eta)\) is a functorial factorisation
and \(\delta: L \Rightarrow L L\), \(\mu : RR \Rightarrow R\) are
natural transformations which make the diagrams~\eqref{eq:comult-lift}
and~\eqref{eq:mult-lift} commute, i.e.\ with \(\delta_f\), every arrow \(L f\) is a coalgebra
for the mere co-pointed endofunctor \((L, \epsilon)\) and with \(\mu_f\), every \(R f\) is an algebra
for the mere pointed endofunctor \((R, \eta)\).

Then for every \(h: A \to C\), the composition \(Rh.RLh\) has an algebra
structure for the pointed endofunctor \((R,\eta)\) given
by~\eqref{eq:algebra-composition-fla}, or explicitly:
\[
    \kappa := \mu_{Lh} . E(1_{ELh}, \mu_h . E(RLh, 1_C)) . \delta_{Rh.RLh} \thinspace: \thinspace E_{Rh . RLh} \to E_{Lh}
\]

Further, the \emph{Garner equation}~\eqref{eq:garner-equation-awfs} is satified
if and only if the following square is a morphism of algebras for the given
structures:
\[
\begin{tikzpicture}[baseline={([yshift=-.5ex]current bounding box.center)}]
            \matrix (m) [matrix of math nodes, row sep=2em,
            column sep=4em]{
		     |(eh)| {Eh} & |(elh)| {ELh} \\
		                 & |(eh')| {Eh} \\
		     |(c')| {C}  & |(c)| {C} \\
            };
            \begin{scope}[every node/.style={midway,auto,font=\scriptsize}]
            \path[->]
		(c') edge node [below] {$1_C$} (c)
		(eh) edge node [above] {$\delta_h$} (elh)
		(eh) edge node [right] {$Rh$} (c')
		(elh) edge node [right] {$RLh$} (eh')
		(eh') edge node [right] {$Rh$} (c);
            \end{scope}\end{tikzpicture}
    \]
\end{lemm}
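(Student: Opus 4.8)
The plan is to treat the two assertions in turn; neither needs the monad, comonad, or distributive‑law axioms, so both live purely at the level of a functorial factorisation equipped with $\delta$ and $\mu$. For the first assertion I would simply observe that $\kappa$ is the composite‑of‑algebras expression~\eqref{eq:algebra-composition-fla} instantiated at the two arrows $p := RLh$ and $q := Rh$ carrying the structure maps $\mu_{Lh}$ and $\mu_h$; these are algebra structures for the pointed endofunctor $(R,\eta)$ precisely because the two triangles in~\eqref{eq:mult-lift} read $\mu_f\cdot LRf = 1$ and $Rf\cdot\mu_f = RRf$. One then checks that the general composite $\kappa$ of two $(R,\eta)$‑algebra structures is again an $(R,\eta)$‑algebra structure on $q\cdot p$, i.e.\ that $\kappa\cdot L(q\cdot p) = 1$ and $(q\cdot p)\cdot\kappa = R(q\cdot p)$. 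This is a short diagram chase using only functoriality of $L$ and $E$ together with the triangle identities $\delta_f\cdot Lf = LLf$, $RLf\cdot\delta_f = 1$ from~\eqref{eq:comult-lift} and the two identities from~\eqref{eq:mult-lift} just mentioned; in particular, unlike the analogous claim for genuine monad algebras, it makes no use of the Garner equation — which is what makes it legitimate to feed $\kappa$ into the second, Garner‑sensitive half of the lemma.

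For the second assertion I would first unwind the meaning of ``morphism of algebras''. The underlying square $(\delta_h,1_C)\colon Rh \to Rh\cdot RLh$ is a map in $\E^{\to}$ because $(Rh\cdot RLh)\cdot\delta_h = Rh\cdot(RLh\cdot\delta_h) = Rh = 1_C\cdot Rh$ by the bottom triangle of~\eqref{eq:comult-lift}; and since the codomain‑component of the structure map of an $(R,\eta)$‑algebra is forced by the unit law to be an identity, the defining square for a morphism of $(R,\eta)$‑algebras commutes automatically on codomain‑components. Hence ``$(\delta_h,1_C)$ is a morphism of algebras from $(Rh,\mu_h)$ to $(Rh\cdot RLh,\kappa)$'' is equivalent to the single equation of maps $E_{Rh}\to E_{Lh}$, namely $\delta_h\cdot\mu_h = \kappa\cdot E(\delta_h,1_C)$.

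The heart of the argument is then to show that the right‑hand side $\kappa\cdot E(\delta_h,1_C)$ is \emph{identically} equal to $\mu_{Lh}\cdot E(\delta_h,\mu_h)\cdot\delta_{Rh}$, which is the right‑hand side of the Garner equation~\eqref{eq:garner-equation-awfs} at $f=h$; since the left‑hand side $\delta_h\cdot\mu_h$ is already the left‑hand side of that equation, the equation of the previous paragraph holds for a given $h$ exactly when the Garner equation holds at $h$, and quantifying over $h$ gives the biconditional. To prove the identity I would: substitute the definition of $\kappa$; push $E(\delta_h,1_C)$ past $\delta_{Rh\cdot RLh}$ using naturality of $\delta\colon L\Rightarrow LL$ at the morphism $(\delta_h,1_C)\colon Rh\to Rh\cdot RLh$, which replaces $\delta_{Rh\cdot RLh}\cdot E(\delta_h,1_C)$ by $E\bigl(\delta_h,E(\delta_h,1_C)\bigr)\cdot\delta_{Rh}$; and then merge the two now‑adjacent applications of $E$ via functoriality, reducing their combined argument to $(\delta_h,\mu_h)$ because the composite of codomain‑components is $\mu_h\cdot E(RLh,1_C)\cdot E(\delta_h,1_C) = \mu_h\cdot E(1_{Rh}) = \mu_h$, using that $(RLh,1_C)\cdot(\delta_h,1_C) = (1_{E_h},1_C) = 1_{Rh}$ (once more by $RLh\cdot\delta_h = 1$). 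What is left is exactly $\mu_{Lh}\cdot E(\delta_h,\mu_h)\cdot\delta_{Rh}$.

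I expect the only genuine obstacle to be bookkeeping: there are many $E$‑objects in play ($E_{Rh}$, $E_{Lh}$, $E_{RLh}$, $E_{LRh}$, $E_{Rh\cdot RLh}$, $E_{L(Rh\cdot RLh)}$), and at each step one must be careful which arrow a given instance of $E(-,-)$, $L$, $\delta$ or $\mu$ is taken at and which (co)domains it connects. There is no conceptual difficulty beyond that — morally the lemma just records that the Garner equation is the assertion that the comultiplication square $(\delta_h,1_C)$ respects the canonical algebra structures.
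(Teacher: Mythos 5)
Your proposal is correct and follows essentially the same route as the paper's proof: the morphism-of-algebras condition is reduced to the single equation $\kappa.E(\delta_h,1_C)=\delta_h.\mu_h$, and the computation via naturality of $\delta$ at $(\delta_h,1_C)$, functoriality of $E$, and $RLh.\delta_h=1$ identifying $\kappa.E(\delta_h,1_C)$ with $\mu_{Lh}.E(\delta_h,\mu_h).\delta_{Rh}$ is exactly the paper's argument. Your explicit check that $\kappa$ is an $(R,\eta)$-algebra structure just spells out what the paper obtains by dualizing the stepwise-filler discussion, so there is no substantive difference.
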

\begin{proof}
The first claim follows from dualizing the preceding discussion on the composition of
coalgebras for the mere co-pointed endofunctors -- the filler is again the diagonal in the  
dual counterpart of~\eqref{eq:coalgebra-composition}, which is~\eqref{eq:algebra-composition-fla}.

So we focus on the second claim.  As one can readily check, this comes down to
the identity:
\begin{equation}\label{eq:algebra-morphism-distr-law}
    \kappa . E(\delta_h, 1_C) = \delta_h . \mu_h
\end{equation}
For this we have:
\begin{align*}
&\kappa . E(\delta_h, 1_C) = \\
&\mu_{Lh} . E(1_{ELh}, \mu_h . E(RLh , 1_C)) . \delta_{Rh . RLh} . E(\delta_h, 1_C) =^1\\ 
&\mu_{Lh} . E(1_{ELh}, \mu_h . E(RLh, 1_C)) . E(\delta_h , E(\delta_h, 1_C)) . \delta_{Rh} =\\
&\mu_{Lh} . E(\delta_h , \mu_h . E(RLh . \delta_h, 1_C)) . \delta_{Rh} = \\
&\mu_{Lh} . E(\delta_h, \mu_h) . \delta_{Rh}
\end{align*}
Where we used the identities
\[
E(u,v). E(s,t) = E(u.s, v.t)
\]
throughout, naturality of \(\delta\) at \(=^1\) and at the last step the identity
\(RLh . \delta_h = 1\). Hence~\eqref{eq:algebra-morphism-distr-law} precisely
states the distributive law.
\end{proof}

From the above definitions, observe that 
\begin{equation}\label{eq:coalgebra-image-llp}
	\Dll{\alg{R}} \to \Dll{(\freedbl{\algcat{R}})}
\end{equation}
is an inclusion and that \(\Phi\) factors through it via the transpose of
\[\alg{R} \hookrightarrow \Drl{(\freedbl{\coalgcat{L}})}.\]
We have the following lemma:
\begin{lemm}{double-cat-distr-law}
  Under the distributive law, the image of a vertical morphism in \(\Dll{\alg{R}}\)
  along the functor~\eqref{eq:coalgebra-image-llp} satisfies the condition of
  \refprop{coalg-lifts}. 
\end{lemm}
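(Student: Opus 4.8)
The plan is to unwind the definition of a vertical morphism of $\Dll{\alg{R}}$ on the one hand and the condition of \refprop{coalg-lifts} on the other, and then to deduce the latter from the horizontal and vertical compatibility conditions built into the former; the one substantial input is that the distributive law turns each $\delta_h$ into a \emph{square} of the double category $\alg{R}$. Concretely, a vertical morphism of $\Dll{\alg{R}}$ is a pair $(i,\phi)$ with $i\colon A\to B$ a map of $\E$ and $\phi$ a left lifting structure with respect to $\alg{R}$ (in the sense of \refdefi{llp-double-cat}): for each $R$-algebra $(p,\alpha)$ and each lifting problem of $i$ against $p$ it chooses a filler $\phi_{a,b}(p,\alpha)$, and these choices are compatible with every morphism of $R$-algebras (the horizontal condition) and with every composable pair of $R$-algebras, composed as in \eqref{eq:algebra-composition-fla} (the vertical condition). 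Its image along \eqref{eq:coalgebra-image-llp} is this same data regarded as a left lifting structure against $\freedbl{\algcat{R}}$, where the vertical condition has become vacuous. By \refprop{coalg-lifts}, the condition to be checked is that, for every $h$ and every lifting problem $(u,v)$ of $i$ against the canonical $R$-algebra $(Rh,\mu_h)$, the diagram \eqref{eq:coalgebra-lift-condition} commutes; discarding the commutativities that hold automatically (those merely expressing that a chosen solution of a lifting problem is a solution), this amounts to the single identity
\[ \delta_h \circ \phi_{u,v}(Rh) = \phi_{\delta_h \circ u,\, \phi_{u,v}(Rh)}(RLh). \]

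The verification rests on two observations. First, $(RLh,\mu_{Lh})$ and $(Rh,\mu_h)$ form a composable pair of vertical morphisms of $\alg{R}$, since $\cod(RLh)=E_h=\dom(Rh)$, and by \eqref{eq:algebra-composition-fla} their composite is $(Rh\circ RLh,\kappa)$ with $\kappa$ the algebra structure computed in \reflemm{distr-law<->morphism-of-algebras}. Second --- and this is the step where the distributive law is genuinely used --- $\delta_h$ is the top edge of a square of $\alg{R}$, namely the square with bottom edge $1_C$, left leg $(Rh,\mu_h)$ and right leg $(Rh\circ RLh,\kappa)$: by \reflemm{distr-law<->morphism-of-algebras}, and because the Garner equation \eqref{eq:garner-equation-awfs} holds in any AWFS (\refprop{awfs-equivalent}), this square is a morphism of $R$-algebras.

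It then remains to chain the two compatibility conditions. Applying the vertical condition of $\phi$ to the composable pair $(RLh,\mu_{Lh})$, $(Rh,\mu_h)$ on the lifting problem $(\delta_h\circ u,\, v)$ of $i$ against their composite --- and noting that $RLh\circ\delta_h\circ u = u$, so that the first step of the step-wise lift against $Rh\circ RLh$, namely the lift against $Rh$, is exactly $\phi_{u,v}(Rh)$ --- gives
\[ \phi_{\delta_h\circ u,\, v}(Rh\circ RLh) = \phi_{\delta_h\circ u,\, \phi_{u,v}(Rh)}(RLh). \]
Applying the horizontal condition of $\phi$ to the morphism of $R$-algebras $(\delta_h,1_C)\colon (Rh,\mu_h)\to(Rh\circ RLh,\kappa)$ on the lifting problem $(u,v)$ gives
\[ \phi_{\delta_h\circ u,\, v}(Rh\circ RLh) = \delta_h\circ\phi_{u,v}(Rh). \]
Comparing these two equalities yields precisely the displayed identity above, hence the condition of \refprop{coalg-lifts}.

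I expect the only real obstacle to be the bookkeeping around $\delta_h$: one must recognise it not as a bare map of $\E$ but as a square of $\alg{R}$, for which the distributive law --- via \reflemm{distr-law<->morphism-of-algebras} --- is precisely what is needed, and one must match the step-wise description of a lift against the composite $R$-algebra $Rh\circ RLh$ with the step-wise lift appearing in \eqref{eq:coalgebra-lift-condition}. The rest --- verifying that the squares and triangles invoked are indeed the ones claimed, and that the remaining content of \eqref{eq:coalgebra-lift-condition} is automatic --- is routine. This argument also pinpoints, as the discussion preceding \refdefi{double-cat-coalg} anticipates, exactly where the distributive law enters the Bourke--Garner machinery.
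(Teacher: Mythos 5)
Your argument is correct and is exactly the paper's proof: the paper also reduces everything to the fact that, by \reflemm{distr-law<->morphism-of-algebras} (i.e.\ the Garner equation), \((\delta_h,1_C)\) is an algebra morphism from \(Rh\) to the composite algebra \(Rh.RLh\), and then obtains the condition of \refprop{coalg-lifts} from the horizontal and vertical compatibilities of the left lifting structure. You have merely written out in full the step the paper dismisses as ``easy to see'', and your bookkeeping (the step-wise lift against the composite, and the push-forward of the problem along the square \((\delta_h,1_C)\)) is the intended one.
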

\begin{proof}
	It is easy to see that when \((\delta_h, 1_C)\) is an algebra morphism between
	\(Rh\) and the composed algebra, then the property of 
	 \refprop{coalg-lifts} holds. So the statement follows from 
	 \reflemm{distr-law<->morphism-of-algebras}.
\end{proof}

Since \(\Dll{\alg{R}}\) can also be taken to define a cofibred structure
\[
	\Dll{\alg{R}} : \E^{\to}_{\text{cocart}} \to \Sets \text{ , }
\]
\refprop{coalg-lifts} can be rewritten as a theorem on cofibred structures:
\begin{theo}{distrlaw<->iso-of-cofibred-str}
Suppose \((L, R, \epsilon, \eta, \delta, \mu)\) is an AWFS (\refdefi{awfs}).  
Then:
\begin{enumerate}[(i)]
\item The natural transformation between cofibred structures
\[
	 \varphi: \coalgcofibr{L} \to \Dll{\alg{R}}
 \]
 induced by \(\Phi\) is an isomorphism
 \item The natural transformation between fibred structures
	 \[
		 \widetilde{\varphi}: \algfibr{R} \to \Drl{\coalg{L}}
	 \]
	 induced by \(\widetilde{\Phi}\) is an isomorphism.
 \end{enumerate}
\end{theo}
\begin{proof}
	The two statements are dual. To prove (i), it is enough to show that
that for each morphism \(v\) in \(\E\), \(\varphi_v\) is
an isomorphism, i.e.\ that coalgebra structures on \(v\) correspond precisely to
left lifting structures on \(v\) with respect to \(\alg{R}\).  This follows from
\reflemm{double-cat-distr-law} and \refprop{coalg-lifts}.
\end{proof}

As another consequence of the above theorem, the double functor
\[
	\freedbl{\coalgcat{L}} \to \Dll{\alg{R}}
\]
through which \(\Phi\) factors is surjective and full on vertical morphisms and
squares apart from being injective and fully faithful on horizontal morphisms.
Hence this, and similarly dual reasoning, induces equivalences of images:
\[
	\coalg{L} \cong \Dll{\alg{R}} \text{ and } \alg{R} \cong \Drl{\coalg{L}}
	\text{ (over } \sq{\E} \text{)}
\]
which is the desired property of an AWFS\@. 

In the terminology of Bourke and Garner (see Section~2.8
of~\cite{Bourke-Garner}), the two double categories of coalgebras and algebras
are \emph{concrete} double categories over \(\E\):
\begin{defi}{concretedoublecat}\index{concrete double category}
  A double functor \(I: \mathbb{I} \to \sq{\E}\) is called a \emph{concrete}
  double category over \(\E\) when its functor on objects (and horizontal
  arrows) is the identity (the objects of \(\mathbb{I}\) are the objects of
  \(\E\)) and its functor on morphisms and squares (see
  \refprop{vertical-part-left-adj})
  \[
    I_1: \mathbb{I}_1 \to \E^{\to}
  \]
  is faithful.
\end{defi}

To summarise the previous, we have thus found two concrete double categories
over \(\E\) which are the categories of left or right lifting structures with
respect to each other.  These are the double categories of coalgebras and
algebras for the AWFS, respectively.

Before moving on, we address the natural question of whether there could
be a different vertical composition of algebras or coalgebras than the one
used above. As shown by Bourke and Garner in Proposition~4 of \cite{Bourke-Garner},
this is not the case, in the the sense that vertical composition of algebras,
for a certain given monad \(R\), completely determines an AWFS which induces
that composition.
\begin{prop}{alg-vertical-composition}
Suppose \(R : \E^{\to} \to \E^{\to}\) is a monad over \(\cod : \E^\to \to \E\).
Then there is a bijection between extensions of \(R\) to an AWFS and extensions
of \(\algcat{R} \to \E^\to\) to a concrete double category over \(\sq{\E}\).
Under this bijection, the vertical composition of algebras coincides with the
vertical composition induced by the AWFS\@.
\end{prop}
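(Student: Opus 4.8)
The plan is to follow the argument of Bourke and Garner (\cite{Bourke-Garner}, Proposition~4), organised around the machinery assembled above. The first step is to pin down precisely what data each side of the asserted bijection amounts to. A monad $R$ over $\cod$ already comes with the functorial factorisation $(L,R,\epsilon,\eta)$ — take $E = \dom \circ R$ and read $L$ off from $\eta$ — together with the multiplication $\mu : RR \Rightarrow R$; so by \refprop{awfs-equivalent}, an extension of $R$ to an AWFS is exactly a natural transformation $\delta : L \Rightarrow LL$ over $\dom$ making $(L,\delta,\epsilon)$ a comonad and satisfying the Garner equation \eqref{eq:garner-equation-awfs}, the monad identities for $R$ being already given. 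On the other side, since a concrete double category is faithful over $\sq{\E}$ (\refdefi{concretedoublecat}), an extension of $\algcat{R} \to \E^{\to}$ to one is exactly a unital, associative vertical composition $\bullet$ on $R$-algebras lying over composition in $\E$ and compatible with the forgetful functor. So everything reduces to exhibiting mutually inverse passages between such $\delta$'s and such $\bullet$'s.

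The direction from an AWFS to a concrete double category requires no new work: \refdefi{double-cat-coalg} already produces $\alg{R}$, whose vertical morphisms are the $R$-algebras (by \reftheo{distrlaw<->iso-of-cofibred-str}), whose forgetful functor to $\E^{\to}$ is $\algcat{R} \to \E^{\to}$, and whose vertical composition is that of \eqref{eq:algebra-composition-fla}. For the converse I would recover $\delta$ from $\bullet$ as follows. Both $Rf$ and $RLf$ carry canonical $R$-algebra structures, $\mu_f$ and $\mu_{Lf}$, and they are composable ($Rf.RLf : ELf \to \cod f$); write $c_f$ for the underlying map of their $\bullet$-composite (this is the $\kappa$ of \reflemm{distr-law<->morphism-of-algebras} at $f$ once one knows $\bullet$ agrees with \eqref{eq:algebra-composition-fla}). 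Since $RLf.LLf = Lf$, the pair $(LLf, 1_{\cod f})$ is a morphism $f \to Rf.RLf$ in $\E^{\to}$, so $E(LLf, 1_{\cod f}) : Ef \to E(Rf.RLf)$ is defined, and I set
\[
\delta_f \ := \ c_f \, . \, E(LLf, 1_{\cod f}) \ : \ Ef \to ELf .
\]
Inside an honest AWFS this formula returns the original $\delta_f$: using $\delta_f . Lf = LLf$ and the functoriality of $E$ to write $E(LLf,1) = E(\delta_f,1).E(Lf,1)$, then the identity \eqref{eq:algebra-morphism-distr-law} of \reflemm{distr-law<->morphism-of-algebras} and finally the monad identity $\mu_f . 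E(Lf,1) = 1$ of \refprop{awfs-equivalent}, one gets $c_f . E(LLf,1) = \delta_f . \mu_f . E(Lf,1) = \delta_f$. This already supplies one half of the ``mutually inverse'' claim.

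It then remains to check, for $\delta$ reconstructed from an abstract $\bullet$, that $\delta$ is natural over $\dom$, that $(L,\delta,\epsilon)$ is a comonad, and that the Garner equation holds; and conversely that building the AWFS from $\bullet$ and re-forming $\alg{R}$ returns the original double category — which follows from concreteness, since a concrete double category over $\sq{\E}$ with vertical category $\algcat{R}$ is determined by its vertical composition. The coincidence of the two vertical compositions asserted at the end of the statement is then immediate. I expect the main obstacle to be the verification of the comonad axioms and especially the Garner equation for the reconstructed $\delta$: here \reflemm{distr-law<->morphism-of-algebras} is the decisive tool, as it translates the Garner equation into the statement that the square displayed there is a morphism of the algebras $Rh$ and $c_h$, which should then fall out of the associativity and unitality of $\bullet$. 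Checking coassociativity and the compatibility of the two composition laws in full detail is where the bookkeeping lives.
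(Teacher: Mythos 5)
Your proposal is correct and follows essentially the same route as the paper: the paper's proof simply cites Bourke--Garner, Proposition~4, and records precisely your key idea, namely that $\delta_f$ is recovered as the top component of the unique algebra morphism $Rf \to Rf.RLf$ induced from $(LLf,1): f \to Rf.RLf$ by freeness of $Rf$, i.e.\ the map $\kappa_f . E(LLf,1)$ in your notation. The verifications you defer (naturality, the comonad axioms, the Garner equation, and the second round trip) are exactly the parts the paper also leaves to the cited reference.
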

\begin{proof}
	See \cite{Bourke-Garner}, Proposition~4. The idea is that the unit of
	\(R\) determines \(L\), and \(\delta\) is determined by the unique
	morphism of algebras \(Rf \to Rf . RLf\) induced from \((LLf, 1) : f
	\to Rf . RLf\), since \(Rf\) has the free algebra structure on \(f\).
\end{proof}

\subsection{Cofibrant generation by a double category}
In Section~\ref{ssec:small-double-category} and further, we will 
compare different algebraic weak factorisation systems, or prove that they are
the same (e.g.,~\reftheo{HgenMoorefib}). A way to do this is to look at
generating double categories. The following is from Bourke-Garner
\cite{Bourke-Garner}:
\begin{defi}{cofibrantly-generated}\index{cofibrantly generated}
Suppose \(J : \mathbb{J} \to \sq{\E}\) is a double functor for a \emph{small}
double category \(\mathbb{J}\).  An AWFS is \emph{cofibrantly generated} by
\(\mathbb{J}\) if \(\Drl{J} \cong \alg{R}\) over \(\sq{\E}\).
\end{defi}
When \(\mathbb{J}\) is large, Bourke and Garner call this property
\emph{class-cofibrantly generated}. The dual property, when \(\Dll{I} \cong
\coalg{L}\), is called \emph{(class)-fibrantly generated}.
The conclusion reached in the previous section can be summarised as:
\begin{corollary}[\cite{Bourke-Garner}, Proposition~20]\label{coro:awfs-cof-generated}
	An AWFS is class-cofibrantly generated by its double category of coalgebras, and
	class-fibrantly generated by its double category of algebras.
\end{corollary}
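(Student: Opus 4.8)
The plan is to observe that this corollary is an immediate consequence of the equivalences of concrete double categories
\[
  \coalg{L} \cong \Dll{\alg{R}} \quad\text{and}\quad \alg{R} \cong \Drl{\coalg{L}} \quad\text{(over } \sq{\E}\text{)}
\]
established in the discussion following \reftheo{distrlaw<->iso-of-cofibred-str}, together with the definition of (class-)cofibrant generation. Indeed, unfolding \refdefi{cofibrantly-generated} with \(\mathbb{J}\) taken to be the double category \(\coalg{L}\) equipped with its double functor to \(\sq{\E}\) (whose underlying categories need not be small, whence the qualifier ``class''), to say that the AWFS is class-cofibrantly generated by \(\coalg{L}\) is \emph{by definition} to say that \(\Drl{\coalg{L}} \cong \alg{R}\) over \(\sq{\E}\); and this is the second displayed equivalence. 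The first assertion of the corollary --- class-fibrant generation by \(\alg{R}\) --- is dual, using the first displayed equivalence \(\coalg{L} \cong \Dll{\alg{R}}\).

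So the only real work is to recall why those equivalences hold, and this is already carried out in the excerpt. First I would invoke \reftheo{distrlaw<->iso-of-cofibred-str}(ii): the natural transformation \(\widetilde{\varphi} : \algfibr{R} \to \Drl{\coalg{L}}\) of fibred structures induced by \(\widetilde{\Phi}\) is an isomorphism. This rests in turn on \refprop{coalg-lifts} and \reflemm{double-cat-distr-law}, and hence on the distributive law --- concretely, on the Garner equation~\eqref{eq:garner-equation-awfs}: it is precisely the distributive law that guarantees that every left lifting structure on a map \(v\) with respect to the double category \(\alg{R}\) satisfies the coassociativity-type condition~\eqref{eq:coalgebra-lift-condition} and therefore arises from a (unique) \(L\)-coalgebra structure, and dually for \(\widetilde{\Phi}\). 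Then I would upgrade this fibrewise isomorphism to an equivalence of double categories over \(\sq{\E}\): both \(\alg{R}\) and \(\Drl{\coalg{L}}\) are concrete double categories (\refdefi{concretedoublecat}), with the same objects and horizontal morphisms (namely those of \(\E\)), so an isomorphism over \(\sq{\E}\) amounts to a bijection on vertical morphisms and squares compatible with both compositions; \(\widetilde{\varphi}\) supplies the bijection on vertical morphisms, fullness and faithfulness on squares come from the double-image construction of \refdefi{double-cat-coalg}, and compatibility with vertical composition is built into that same definition, the vertical composite of algebras having been defined so as to match the one on lifting structures (formula~\eqref{eq:algebra-composition-fla}). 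The dual reasoning yields \(\coalg{L} \cong \Dll{\alg{R}}\).

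The proof contains essentially no obstacle beyond bookkeeping: all the substance was discharged in \reftheo{distrlaw<->iso-of-cofibred-str} and the paragraph after it. If anything needs care, it is the passage from the isomorphism of fibred structures \(\widetilde{\varphi}\) to an equivalence of the associated double categories over \(\sq{\E}\) --- that is, checking that \(\widetilde{\varphi}\) transports not merely individual lifting structures but also the full double-categorical data of squares and vertical composites --- but this is exactly what \refdefi{double-cat-coalg} was arranged to ensure, since \(\alg{R}\) and \(\coalg{L}\) were defined there as the double images through which \(\widetilde{\Phi}\) and \(\Phi\) factor. For the rest, one simply cites \cite{Bourke-Garner}, Proposition~20.
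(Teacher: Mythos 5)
Your proposal is correct and follows the paper's own route: the paper likewise treats the corollary as a direct restatement of the equivalences \(\coalg{L} \cong \Dll{\alg{R}}\) and \(\alg{R} \cong \Drl{\coalg{L}}\) over \(\sq{\E}\) obtained from \reftheo{distrlaw<->iso-of-cofibred-str} and the discussion after it, read against \refdefi{cofibrantly-generated}, citing Bourke--Garner, Proposition~20. (Only a cosmetic slip: what you call the ``first assertion'' — class-fibrant generation by \(\alg{R}\) — is the second clause of the corollary; the mathematics is unaffected.)
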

We refer to Bourke-Garner to results of the type that say that under
appropriate conditions (i.e. \(\E\) locally presentable), the AWFS generated by
any small double category \(\mathbb{J} \to \sq{\E}\) exists
(\cite{Bourke-Garner}, Proposition~23). These results rely on some kind of
small object argument~\cite{Garner-09}. We will not rely on these results
for our construction of an AWFS, since we work constructively from the outset.
But they can be useful for boiling down a constructive theory to a classical
theory for comparison.

\subsection{Fibred structure revisited}\label{ssec:fibredstructurerevisited}
In this preliminary chapter we have studied collections of arrows, in a category
\(\E\), equipped with structure. These collections have taken different forms,
namely, that of:
\begin{enumerate}
  \item a \emph{notion of fibred structure} \(\cat{fib} : \E^{\to}_{\rm cart} \to \Sets\) (or \emph{co-fibred} structure);
  \item a \emph{category} \(I \to \E^\to\) over \(\E^\to\);
  \item a \emph{concrete double category} \(\mathbb{I} \to \sq{\E}\) over \(\E\);
\end{enumerate}
where we recall the definition of a \emph{concrete} double category
(\refdefi{concretedoublecat}).  We have also already seen that in many cases,
the three forms are in fact three different presentations of the same thing,
e.g.,:
\begin{enumerate}
  \item The \emph{notion of fibred structure} given by \(R\)-algebras 
    \(\algfibr{R}\);
  \item The \emph{category} of \(R\)-algebras \(\algcat{R}\);
  \item The \emph{concrete double category} of \(R\)-algebras \(\alg{R}\).
\end{enumerate}

The following definition would capture these three definitions
into one, for the given example:
\begin{defi}{discretelyfibreddoublecat}\index{discretely (co)fibred concrete double category|textbf}
  A (concrete) double category \(I: \mathbb{I} \to \sq{\E}\) over \(\sq{\E}\)
  is called \emph{discretely fibred} when for its 1-categorical restriction
  (\refprop{vertical-part-left-adj})
  \[
    I_1: \mathbb{I}_1 \to \E^{\to},
  \]
  there is, for every object \(i\) of \(\mathbb{I}_1\) and every
  cartesian square \(\alpha: f \to I_1(i)\) in \(\E^{\to}\), a unique object
  and morphism \(\alpha^*: j \to i\) in \(\mathbb{I}_1\), such that \(I_1(\alpha^*)
  = \alpha\):
\[
\begin{tikzpicture}[baseline={([yshift=-.5ex]current bounding box.center)}]
            \matrix (m) [matrix of math nodes, row sep=1.5em,
            column sep=1.5em]{
		     |(a)| {} & |(c)| {} \\
		     |(b)| {}  & |(d)| {} \\
       };
            \matrix (n) at (0,-2cm) [matrix of math nodes, row sep=1.5em,
            column sep=1.5em]{
		     |(a')| {} & |(c')| {} \\
         |(b')| {}  & |(d')| {} \\
       };
            \node (i1) at (2,0) {$\mathbb{I}_1$};
            \node (eto) at (2,-2cm) {$\E^{\to}$};
            \node (alpha) at (-0.8cm, -2cm) {$\alpha:$};
            \node (alpha) at (-0.8cm, 0) {$\alpha^*:$};
            \begin{scope}[every node/.style={midway,auto,font=\scriptsize}]
              \path[->, thick] (m) edge (n) (i1) edge node [right] {$I_1$} (eto);
              \path[->] (a') edge node [right] {$f$} (b')
                        (a') edge (c')
                        (b') edge (d')
                        (c') edge node [right] {$I_1(i)$} (d');
                      \path[->] (a) edge [dashed] (b)
                        (a) edge [dashed] (c)
                        (b) edge [dashed] (d)
                        (c) edge node [right] {$i$} (d);
            \end{scope}\end{tikzpicture}
    \]
\end{defi}
This definition has a dual definition, that of a discretely cofibred concrete
double category, which comes with an induced notion of cofibred structure.

Every discretely fibred concrete double category \(I: \mathbb{I} \to \sq{\E}\)
defines a unique notion of fibred structure \(\cat{fib}(I) : {(\E^\to_{\rm
cart})}^{\op} \to \Sets\) on \(\E\) with the action induced by the above property:
\begin{align*}
  \cat{fib}(I)(g) &= I_1^{-1}(g) \\
  \cat{fib}(I)(\alpha)(i) &= j.
\end{align*}
Further, this operation is functorial, in that a double functor \(F: \mathbb{I}
\to \mathbb{J}\) between discretely fibred concrete double categories induces
a natural transformation between fibred structures:
\[
   \cat{fib}(F): \cat{fib}(I) \to \cat{fib}(J).
\]

We have the following proposition:
\begin{prop}{fibreddoubleiso}
 Suppose \(F : \mathbb{I} \to \mathbb{J}\) is a double functor over \(\sq{\E}\)
 between discretely fibred concrete double categories \(I\) and \(J\) over \(\E\). 
 Then \(F\) is an isomorphism of double categories precisely when
 the following two conditions are satisfied:

 \begin{enumerate}[(i)]
   \item  \(F\) induces a natural isomorphism of fibred structures
 \[
   \cat{fib}(F): \cat{fib}(I) \cong \cat{fib}(J).
 \]

 \item \(F\) is full on squares\index{full on squares}, i.e., the restriction
   on morphisms and squares
 \[
    F_1 : \mathbb{I}_1 \to \mathbb{J}_1
 \]
 is full.
 \end{enumerate}
\end{prop}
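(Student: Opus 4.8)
The plan is to push everything down to the single ordinary functor $F_1 : \mathbb{I}_1 \to \mathbb{J}_1$ over $\E^{\to}$ and to observe that the only property of $F_1$ that is not already forced by the hypotheses is surjectivity on objects and fullness. First I would note that, since $F$ is a double functor over $\sq{\E}$ between \emph{concrete} double categories, its action on objects and on horizontal morphisms is forced to be the identity: from $J\circ F = I$ with $I,J$ both the identity on objects and on horizontal arrows it follows that $F$ must be as well. Hence $F$ is completely determined by $F_1$, which satisfies $J_1\circ F_1 = I_1$. From this I would deduce that $F$ is an isomorphism of double categories if and only if $F_1$ is an isomorphism of categories: if $G_1$ is an inverse of $F_1$, then $G_1$ is automatically over $\E^{\to}$ (since $I_1 G_1 = J_1 F_1 G_1 = J_1$), and the double functor which is the identity on objects and horizontal morphisms and is $G_1$ on vertical morphisms and squares is a genuine double functor — it respects horizontal composition because $G_1$ is a functor, and vertical composition because $F$ respects vertical composition and $F_1$ is a bijection on the squares — and it is inverse to $F$; the converse implication is trivial.

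Next I would observe that $F_1$ is \emph{automatically faithful}. This is the only place where one might expect difficulty, but it is immediate: if $s,t$ are parallel squares of $\mathbb{I}_1$ with $F_1(s) = F_1(t)$, then $I_1(s) = J_1(F_1(s)) = J_1(F_1(t)) = I_1(t)$, and $I_1$ is faithful by the very definition of a concrete double category, whence $s = t$. So $F_1$ is an isomorphism of categories precisely when it is bijective on objects and full.

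Finally I would identify these two remaining conditions with (i) and (ii). Fullness of $F_1$ is exactly condition (ii). For condition (i): the objects of $\mathbb{I}_1$ are partitioned into the fibres $I_1^{-1}(g) = \cat{fib}(I)(g)$, with $g$ ranging over the objects of $\E^{\to}$; since $J_1 F_1 = I_1$, the functor $F_1$ sends $I_1^{-1}(g)$ into $J_1^{-1}(g)$, acting there as the component $\cat{fib}(F)_g$ (naturality of $\cat{fib}(F)$ is automatic, since the image under $F_1$ of a cartesian lift is again a cartesian lift by the uniqueness clause in \refdefi{discretelyfibreddoublecat}). Hence $\cat{fib}(F)$ is an isomorphism of presheaves iff every $\cat{fib}(F)_g$ is a bijection iff $F_1$ restricts to a bijection on each fibre iff $F_1$ is bijective on objects. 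Combining the three reductions: $F$ is an isomorphism of double categories $\iff$ $F_1$ is an isomorphism of categories $\iff$ $F_1$ is bijective on objects and full (faithfulness coming for free) $\iff$ (i) and (ii) hold. I expect no serious obstacle here; the main thing to get right is the bookkeeping showing that the object/horizontal parts of $F$ are trivial and that faithfulness is a consequence of concreteness rather than of discrete fibredness.
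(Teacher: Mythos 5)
Your proposal is correct and follows essentially the same route as the paper: condition (i) handles the objects of $\mathbb{I}_1$ (the vertical morphisms), condition (ii) gives fullness of $F_1$, and faithfulness comes for free from concreteness of $I_1$, so $F_1$ — and hence $F$, whose object/horizontal parts are forced to be identities — is invertible. The paper packages this by explicitly constructing the inverse $G_1$ via $\cat{fib}(F)^{-1}$ and fullness, while you phrase it as "$F_1$ is bijective on objects and fully faithful"; the ingredients and the argument are the same, and your extra check that the inverse respects vertical (pointwise) composition just makes explicit what the paper leaves as "clearly inverses".
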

\begin{proof}
  When \(F\) is invertible, the two conditions follow easily.
  For sufficiency, assume that the two conditions hold. The natural transformation
  \(\cat{fib}(F)\) has a natural inverse \({\cat{fib}(F)}^{-1}\). We define
   the functor \(G_1 : \mathbb{J}_1 \to \mathbb{I}_1\) on vertical morphisms
   by:
   \[
     G_1(j) = {\cat{fib}(F)}^{-1}_{J_1(j)}(j).
   \]
   It follows from the second condition that \(G_1\) as defined on objects
   above uniquely extends to a functor between categories over \(\E^{\to}\).
   Uniqueness follows here since \(\mathbb{I}\) and \(\mathbb{J}\) are concrete
   double categories.  Clearly, the two functors are inverses.
\end{proof}

We will use the above proposition when reasoning about different double
categories of arrows in an algebraic weak factorisation system, regarded as a
notion of fibred structure. In the latter form, it is sometimes easier to see
that they are isomorphic. To show that the corresponding double categories are
isomorphic, it is only needed to show that the isomorphism is induced by a
double functor which is full on squares. Note that the proposition clearly
has a dual, which would hold for the dual notion of discretely cofibred
concrete double categories and notions of cofibred structure.

The following result from Bourke and Garner may also be useful for the reader
interested in extending double functors between double categories of algebras
to morphisms between algebraic weak factorisation systems. As we have not
defined AWFSs as a category above, we refer to~\cite{Bourke-Garner} for the
details. We adopt the notation from the paper, \(\cat{AWFS}_{\rm (op)lax}\),
for the category of algebraic weak factorisation systems and lax (resp.\ oplax)
morphisms between them. Recall that \(\cat{DBL}\) denotes the category of
double categories and double functors between them.
\begin{namedprop}{morphismofawfsfromdoublefunctor}
  {\cite{Bourke-Garner}, Proposition~2}\index{algebraic weak factorisation system!category of AWFSs}
  The 2-functors
  \begin{align*}
    \alg{(-)} &: \cat{AWFS}_{\rm lax} \to \cat{DBL}^{\to}\\
    \coalg{(-)} &: \cat{AWFS}_{\rm oplax} \to \cat{DBL}^{\to}
  \end{align*}
 sending an AWFS to their concrete double categories of algebras,
 resp.\ coalgebras, are 2-fully faithful. \(\hfill \square\)
\end{namedprop}
The proposition says as much as that every double functor between double
categories of (co)-algebras induces a unique (op)lax morphism of algebraic weak
factorisation systems. It may be used in combination with \refprop{fibreddoubleiso}
to conclude that a double functor between categories of algebras, which is full
on squares and induces an isomorphism of notions of fibred structure, also induces
an isomorphism of algebraic weak factorisation systems.

\subsubsection{Concluding remark on notation}\label{sssec:notationofdoublecats}
As shown in the beginning of this section, we may have to deal with the same
thing in this paper, but now as a fibred structure, then as a double category,
or sometimes even just a category. To deal with this overhead, we will make
sometimes make use of the notion of a discretely (co)fibred concrete double
category. Yet there are also cases where we need to specifically look at this
structure as a mere fibred structure or mere category.  In those cases, we
denote the three types of structures as \(\cat{fib}\), \(\cat{Fib}\), and
\(\mathbb{F}\mathbf{ib}\) -- for (co)fibred structure, category and (concrete)
double category respectively. Indeed, we have already used the notation
\(\coalgcofibr{L}\), \(\coalgcat{L}\), \(\coalg{L}\), for example.

The following examples, to occur in upcoming sections, serve as a further
illustration of this notation:
\begin{itemize}
  \item The double category of effective trivial fibrations \(\mathbb{E}\cat{ffTrivFib}\), as a category \(\cat{EffTrivFib}\), and as a notion of fibred structure
    \(\cat{effTrivFib}\) (Section~\ref{sec:dominances});
  \item The double category of hyperdeformation retractions
    \(\mathbb{H}\cat{DR}\), as a category \(\cat{HDR}\), and as a notion of
    cofibred structure \(\cat{hdr}\) (Section~\ref{ssec:HDRs});
  \item The double category of naive fibrations \(\mathbb{N}\cat{Fib}\),
    as a category \(\cat{NFib}\), and as notion of fibred structure
    \(\cat{nFib}\) (Section~\ref{ssec:naivefibrations});
  \item The double category of effective fibrations \EEffRFib{}, as a category
    \EffRFib{}, and as a notion of fibred structure \effRFib{}
     (Section~\ref{sec:effective-fibration}).
\end{itemize}

 \section{Dominances}\label{sec:dominances}
\subsection{Algebraic weak factorisation systems from dominances} In this
section we show how the notion of a \emph{dominance} gives rise to an algebraic
weak factorisation system. The left class (coalgebras) of this algebraic weak
factorisation system will be shown to be the class of \emph{effective
cofibrations} defined by the dominance, while the right class (algebras) is
called the class of \emph{effective trivial fibrations}.
~\refprop{WFSfromdominance} can also be found in Bourke and
Garner~\cite{Bourke-Garner}. The rest of the section studies the (double)
category of effective cofibrations a bit more closely and in terms of
(co)fibred structure. Throughout this section, \(\E\) is a category satisfying
the conditions stated at the beginning of Section~\ref{sec:preliminaries}.
\begin{defi}{dominance} A class of monomorphisms $\Sigma$ in \(\E\) is called a 
  \emph{dominance}\index{dominance|textbf}
  \footnote{As far as the authors are aware, this terminology is due to
  Rosolini in the context of topos theory (\cite{rosolini86}, Chapter 3). In
Bourke and Garner, it is called a \emph{stable class of monics}
(\cite{Bourke-Garner}, 4.4).}
  on \(\E\) if
\begin{enumerate}
\item every isomorphism belongs to $\Sigma$ and $\Sigma$ is closed under composition.
\item every pullback of a map in $\Sigma$ again belongs to $\Sigma$.
\item the category $\Sigma_\text{cart}$ of morphisms in $\Sigma$ and pullback squares between them has a terminal object.
\end{enumerate}
A monomorphism which is an element of \(\Sigma\) will be called a
\emph{effective cofibration}\index{effective cofibration|textbf}\index{cofibration|seeonly {effective cofibration}}.
\end{defi}
Since taking the domain of a monomorphism in \(\Sigma\) has a left adjoint
\(\mathrm{id} : \E \to \Sigma_{\text{cart}}\), sending an object to the
identity on it, the domain of the terminal object in $\Sigma_{\text{cart}}$ is
the terminal object in \(\E\). We will denote this map by $\top: 1 \to \Sigma$.
The following proposition uses that \(\E\) is a locally cartesian closed
category and so admits local exponentials. Strictly speaking, we only use that
\(\top: 1 \to \Sigma\) is exponentiable.
\begin{prop}{WFSfromdominance} 
  Let \(\Sigma\) be a dominance. Then the functorial factorisation\index{functorial factorisation!from a dominance} given by
\diag{ B \ar[rr]_(.35){Lf} & & M_f = \Sigma_{a \in A} \Sigma_{\sigma \in \Sigma} B_a^\sigma \ar[rr]_(.65){Rf} & & A, }
with \(Lf(b) = (f(b), \top, \lambda x.b)\) and \(Rf(a, \sigma, \tau) = a\) can be
extended to an algebraic weak factorisation system.\index{algebraic weak factorisation system!from a dominance}
\end{prop}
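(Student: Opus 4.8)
The plan is to obtain the required algebraic structure by a single appeal to \refprop{alg-vertical-composition}: a monad $R$ over $\cod : \E^{\to} \to \E$ extends to an algebraic weak factorisation system exactly when $\algcat{R} \to \E^{\to}$ extends to a concrete double category over $\sq{\E}$, and the vertical composition of algebras is then forced. So it suffices (i) to make the object $M_f$ precise and to check that $(L, R, E)$ is a functorial factorisation, (ii) to equip $R$ with a multiplication $\mu : RR \Rightarrow R$ over $\cod$ making it a monad --- its unit $\eta : 1 \Rightarrow R$ being already part of the functorial factorisation --- and (iii) to describe the vertical composition of $R$-algebras and check that it makes $\algcat{R}$ into a concrete double category. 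The comultiplication $\delta$, the comonad $L$ and the distributive law then come for free from \refprop{alg-vertical-composition}. (Alternatively one may construct $\delta$ and $\mu$ directly and verify the identities of \refprop{awfs-equivalent}, including the Garner equation~\eqref{eq:garner-equation-awfs}; the computations are the same.)

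For (i): writing $\top : 1 \to \Sigma$ for the generic $\Sigma$-subobject --- which exists by condition~(3) of \refdefi{dominance} and, by assumption, is exponentiable --- the object $M_f$ is the total space over $A$ of the bundle over $A \times \Sigma$ obtained by pulling $f : B \to A$ back to $A \times \Sigma$, restricting along the generic $\Sigma$-subobject of $A \times \Sigma$, and taking the pushforward. A point of $M_f$ is then a triple $(a, \sigma, \tau)$ with $a \in A$, $\sigma$ a $\Sigma$-subterminal and $\tau$ a partial section of $f$ over $a$ with domain of definition $\sigma$, exactly as in the statement. That $(L, R, E)$ satisfies the two clauses of \refdefi{functorial-factorisation} is then a routine unwinding, with $E(h,k)$ reindexing the base point along $k$ and transporting the partial section along $h$, and the two composites equal to $f$ and $f'$ by construction.

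For (ii): a point of $M_{Rf}$ is a triple $(a, \sigma, \tau)$ in which $\tau$ assigns to each point of $\sigma$ a further $\Sigma$-subterminal and a partial section of $f$ over $a$ on it, and $\mu_f : M_{Rf} \to M_f$ collapses these two layers into a single partial section: closure of $\Sigma$ under composition (condition~(1) of \refdefi{dominance}) together with pullback stability (condition~(2)) is what produces the composite domain of definition, while pullback stability is also what lets this be carried out coherently, as a morphism over $A \times \Sigma$, rather than merely on points. The unit laws $\mu_f \cdot LRf = 1$ and $\mu_f \cdot E(Lf, 1) = 1$ are immediate, and the associativity law $\mu_f \cdot \mu_{Rf} = \mu_f \cdot E(\mu_f, 1)$ reduces to associativity of composition of the three $\Sigma$-subobjects involved.

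For (iii): an $R$-algebra structure on $f$ unwinds to a coherent choice --- natural in the lifting problem --- of a diagonal filler of $f$ against every map of $\Sigma$ (these are the \emph{effective trivial fibrations} mentioned above), and given such structures on $f : B \to A$ and $g : A \to D$, the filler structure on $g . f$ obtained by first lifting against $g$ and then against $f$ is again coherent; this operation is strictly unital and associative, so it makes $\algcat{R}$ into a concrete double category over $\sq{\E}$, and invoking \refprop{alg-vertical-composition} completes the proof. I expect the one genuinely delicate point to be the verifications in (ii) and (iii) that the layer-composition defining $\mu_f$, and the composite-filler operation on $R$-algebras, are honest natural morphisms: the operation ``combine two nested $\Sigma$-subobjects into one'' must be expressed through the classifier $\top : 1 \to \Sigma$ and the stability and composition-closure axioms of a dominance, not as a pointwise recipe, and it is precisely here --- together with condition~(3), which makes $\Sigma$ exist at all --- that all three conditions of \refdefi{dominance} get used. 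Once that is set up the remaining identities are mechanical.
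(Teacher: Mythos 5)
Your proposal is correct in outline but takes a genuinely different route from the paper. The paper's proof is a hands-on verification of the equational characterisation in \refprop{awfs-equivalent}(i): it uses the fact that $M_f$ classifies $\Sigma$-partial maps into $B$ over $A$ to define $\mu_f$ (exactly as in your step (ii)), but it also writes down the comultiplication $\delta_f: M_f \to M_{Lf}$, $(a,\sigma,\tau) \mapsto ((a,\sigma,\tau),\sigma,\tau)$, explicitly, and then checks the Garner equation \eqref{eq:garner-equation-awfs} by a direct computation with these formulas. You instead construct only the monad and then outsource the comonad and the distributive law to \refprop{alg-vertical-composition}, reducing everything to exhibiting a strictly unital, associative vertical composition of $R$-algebras compatible with algebra morphisms. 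That is a legitimate strategy (it is essentially how Bourke and Garner treat this example), and it buys you freedom from the Garner-equation computation; what it costs you is that the real content migrates into your step (iii), which you leave at the level of ``unwinds''.

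The one point where you must be careful is the identification of $R$-algebras with uniform filler structures against $\Sigma$-monomorphisms. In the paper this is \refcoro{algformonadfordominance}, but that corollary is a consequence of \reftheo{distrlaw<->iso-of-cofibred-str} applied to the AWFS whose existence is the very statement being proved, so you cannot quote it (or the terminology of \refdefi{trivialfibrations}) here without circularity. You would have to prove the identification directly from the classifying property of $M_f$: pullback-naturality of the fillers corresponds to giving $\alpha: M_f \to B$ over $A$ with $\alpha.Lf = 1$ via the generic lifting problem, and --- the part that needs an explicit check --- the monad-algebra associativity condition corresponds precisely to the ``vertical'' compatibility of fillers under composition of $\Sigma$-monos, which is also what guarantees that your stepwise-lift composition of two algebra structures is again an algebra. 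Together with the (easy) observations that identities carry canonical algebra structures acting as units and that the composition is compatible with squares, this completes the hypotheses of \refprop{alg-vertical-composition}; with that spelled out, your argument goes through.
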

\begin{proof} 
Note that $Mf$ classifies $\Sigma$-partial maps into $B$ over $A$. Let us spell out what this means. By a $\Sigma$-partial map $X \rightharpoonup B$ over $A$ we mean a pair consisting of a subobject $m: X' \to X$ with $m \in \Sigma$ (which does not depend on the choice of representative) and a map $n: X' \to B$ making
\diag{ X' \ar[r]^n \ar[d]_m & B \ar[d]^f \\
X \ar[r] & A }
commute. Note that such $\Sigma$-partial maps $X \rightharpoonup B$ over $A$ can be pulled back along arbitrary maps $Y \to X$. Saying that $Mf$ classifies $\Sigma$-partial maps into $B$ over $A$ means that any such map can be obtained by pulling back the $\Sigma$-partial map $(Lf, 1_B): M_f \rightharpoonup B$ along a unique map $X \to M_f$ over $A$.

For the monad structure, we need to define a map $\mu_f$ making
\diag{ M_{Rf} \ar[r]^{\mu_f} \ar[d]_{RRf} & M_f \ar[d]^{Rf} \\
A \ar[r]_1 & A }
commute. Maps $X \to M_{Rf}$ over $A$ correspond to diagrams of the form
\diag{ X'' \ar[r] \ar[d] & B \ar[dd]^f \\
X' \ar[d] \\
X \ar[r] & A }
with both inclusions $X'' \to X'$ and $X' \to X$ belonging to $\Sigma$ and
given by pullback along $\sigma' : X' \to \Sigma$ and $\sigma: X \to \Sigma$
respectively. By considering the composition, with a corresponding map $\sigma
\land \sigma' : X \to \Sigma$, we get a map $X \to M_f$, naturally in $X$, so
by Yoneda we obtain a map $M_{Rf} \to M_f$ as desired.  Explicitly, this map
looks like
\begin{align*}
  \mu_f &\co 
  \Sigma_{a \in A} \Sigma_{\sigma \in \Sigma} \left(
    \Sigma_{a' \in A} \Sigma_{\sigma' \in \Sigma} B_{a'}^{\sigma'}
  \right)_a^\sigma
\to \Sigma_{a \in A} \Sigma_{\sigma \in \Sigma} B_a^\sigma \\
  \mu_f &(a , \sigma , \chi) = \left( 
  a ,
  \sigma \land \sigma' ,
  \star \in \sigma \land \sigma' \mapsto (p_3 . \chi . p_1 (\star)) (p_2 (\star)
\right)
\end{align*}
Here, the proposition \(\sigma \land \sigma'\) is defined as the set of pairs
\((\star_1, \star_2)\) where \(\star_1 \in \sigma\) and \(\star_2 \in p_2 .
\chi(\star_1)\). One can now easily verify the unit law and associativity.

For the comonad structure, we need to define a map $\delta_f$ making
\diag{ B \ar[r]^1 \ar[d]_{Lf} & B \ar[d]^{LLf} \\
M_f \ar[r]_{\delta_f} & M_{Lf} }
commute. Note that
\[ M_{Lf} = \sum_{(a,\sigma,\tau) \in M_f} \sum_{\sigma' \in \Sigma} (B_{(a,\sigma,\tau)})^{\sigma'}. \]
So if $((a,\sigma,\tau),\sigma',\tau') \in M_{Lf}$ and $\star \in \sigma'$, then $(a,\sigma,\tau) = (f(b),\top,\lambda x.b)$ for $b = \tau'(\star)$; hence $\star \in \sigma$ and $\tau(\star) = \tau'(\star)$. In other words,
\[ M_{Lf} = \{ ((a \in A, \sigma \in \Sigma, \tau \in (B_a)^\sigma), \sigma' \in 
\Sigma, \tau' \in (B_a)^\sigma) \, : \, \sigma' \leq \sigma, \tau \upharpoonright \sigma' = \tau' \}. \]
So we can define a map $\delta_f: M_f \to M_{Lf}$ by sending $(a, \sigma, \tau)$ to $((a, \sigma, \tau), \sigma, \tau)$. Counit laws and coassociativity are easily verified.

The distributive law (Garner equation) can be easily verified using the
explicit notation for \(\delta_f\) and \(\mu_f\): 
\begin{align*}
  \mu_{Lf}.E(\delta_f, \mu_f) . \delta_{Rf} (a, \sigma , \chi) &= 
  \mu_{Lf}.E(\delta_f, \mu_f) . ((a, \sigma, \chi), \sigma , \chi) \\
&=\mu_{Lf} . ( \delta_f . (a, \sigma , \chi), \sigma, \mu_f . \chi ) \\
&= (\mu_f . (a, \sigma , \chi), \sigma \land \sigma', 
\star \in \sigma \land \sigma' \mapsto (p_3 . \delta_f . \chi . p_1 (\star))) \\
&= (\mu_f . (a, \sigma , \chi), \sigma \land \sigma', 
\star \in \sigma \land \sigma' \mapsto (p_3 . \chi . p_1 (\star))) \\
&= \delta_f . \mu_f (a , \sigma , \chi)
\end{align*}
\end{proof}

\begin{namedprop}{coalgebrasfordominance}{Effective cofibrations are precisely coalgebras}
A coalgebra structure for \(f: B \rightarrow A\) is unique, and it exists
if and only if \(f\) belongs to \(\Sigma\).

Hence, there is a cofibred structure 
  \begin{equation}\label{eq:dominancecofibred}
    \sigma: \E_{\mathrm{cocart}} \to \Sets
  \end{equation}
  where \(\sigma(f)\) contains a single element when \(f \in \Sigma\), and is
  empty otherwise. Moreover, there is an isomorphism of cofibred structures
  between \(\sigma\) and the cofibred structure of coalgebras.
\end{namedprop}
\begin{proof} 
 We show that every \(f: B \to A\) can be equipped with a coalgebra structure
 for the copointed endofunctor \(M\) if and only if it belongs to \(\Sigma\),
 and that the coalgebra structure is indeed unique and always satisfies
 the coassociativity condition. From this, it is easy to derive an isomorphism
 of cofibred structures in light of~\eqref{eq:dominancecofibred}.

Suppose $\gamma: A \to M_f$ is a map exhibiting $f$ as a coalgebra for the
copointed endofunctor $M$. In other words, we have $Rf.\gamma = 1_A$ and
$\gamma$ makes
\diag{ B \ar[d]_f \ar[r]^1 & B \ar[d]^{Lf} \\
A \ar[r]_{\gamma} & M_f }
commute. These data correspond to a $\Sigma$-partial map $A \rightharpoonup B$:
\diag{ A' \ar[r]^m \ar[d]_s & B \ar[d]^f \\
A \ar[r]_1 & A }
where \(s \in \Sigma\). Further, \(s\) fits in a pullback square, namely the middle
square of the following diagram:
\[
  \begin{tikzpicture}[baseline={([yshift=-.5ex]current bounding box.center)}]
    \matrix (m) [matrix of math nodes, row sep=2em,
    column sep=3.5em]{
      |(b)| {B} & |(a')| {A'} & |(b2)| {B}   & |(t)| {1} \\
      |(a)| {A} & |(a1)| {A}  & |(mf)| {M_f} & |(s)| {\Sigma} \\
    };
    \begin{scope}[every node/.style={midway,auto,font=\scriptsize}]
    \path[->]
      (b) edge node [left] {$f$} (a)
      (b) edge [dashed] node [above] {$n$} (a')
      (a') edge node [left] {$s$} (a1)
      (b2) edge node [left] {$L_f$} (mf)
      (b2) edge (t)
      (t) edge (s)
      (a') edge node [above] {$m$} (b2)
      (a1) edge node [below] {$\gamma$} (mf)
      (a) edge node [below] {$1$} (a1)
      (mf) edge (s)
      (b2) edge (mf); 
    \end{scope}\end{tikzpicture}
\]
whence we find \(n : B \to A'\) such that \(m.n = 1\).
 Because $s.n.m = f.m = s$ and $s$ is monic, we also have $n.m = 1_{A'}$. So
 $A'$ and $B$ are isomorphic over $A$. It follows $f \in \Sigma$ and that
 \(\gamma\) classifies the map $(f,1_B)$. From this it is clear that
 $\gamma$ must be unique whenever it exists and that it will always satisfy the
 coassociativity condition. It also follows, incidentally, that the square at
 the beginning of the proof is a pullback.

Conversely, if $f \in \Sigma$ we can choose $s = f$ and $m = 1$ and this gives
us the coalgebra structure for the copointed endofunctor we want.
The second part of the proposition follows immediately.
\end{proof}

Lastly, we briefly stop at algebras for the monad.  From
\reftheo{distrlaw<->iso-of-cofibred-str}, we know that the fibred structure of
algebras is isomorphic to the fibred structure of right lifting structures with
respect to the \emph{double} category of coalgebras. It remains to characterize
this category. To that end, we use the following:
\begin{lemm}{morfofcofpbks}
 If $f: B \to A$ and $f': B' \to A'$ are coalgebras, then a pair of maps $(m: B' \to B,n: A' \to A)$ making 
 \diag{ B' \ar[d]_{f'} \ar[r]^m & B \ar[d]^f \\
A' \ar[r]_n & A }
commute is a morphism of coalgebras if and only if the square is a pullback.
\end{lemm}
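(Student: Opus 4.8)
The plan is to reduce the statement to the universal property of the object $M_f$ recorded in the proof of \refprop{WFSfromdominance}: maps $X \to M_f$ over $A$ classify $\Sigma$-partial maps $X \rightharpoonup B$ over $A$. By \refprop{coalgebrasfordominance} the (unique) coalgebra structures on $f$ and $f'$ are the maps $\gamma_f : A \to M_f$ and $\gamma_{f'} : A' \to M_{f'}$ classifying the $\Sigma$-partial maps $(f, 1_B)$ and $(f', 1_{B'})$ respectively; and since coassociativity is automatic there, being a morphism of coalgebras is the same whether one works with the comonad or merely with the copointed endofunctor. Spelling out the condition $L(m,n) \circ \beta_{f'} = \beta_f \circ (m,n)$ for the functorial factorisation of \refprop{WFSfromdominance}, the top (domain) components are forced to agree (both equal $m : B' \to B$), so the requirement that the commutative square $(m,n) : f' \to f$ be a morphism of coalgebras comes down to the single equation $E(m,n) \circ \gamma_{f'} = \gamma_f \circ n$ of maps $A' \to M_f$.

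First I would check that both sides of this equation are maps over $A$, i.e.\ that postcomposing with $Rf$ yields $n$; this uses $Rf \circ E(m,n) = n \circ Rf'$ (commutativity of $R(m,n)$) and $Rf' \circ \gamma_{f'} = 1_{A'}$. By the uniqueness clause in the classifying property, the equation then holds if and only if the two sides classify the same $\Sigma$-partial map $A' \rightharpoonup B$ over $A$, so it remains to identify these two partial maps. Since precomposition with $n$ corresponds to pullback of $\Sigma$-partial maps, $\gamma_f \circ n$ classifies the pullback of $(f, 1_B)$ along $n$, namely the partial map whose domain of definition is $A' \times_A B \hookrightarrow A'$ with value the projection to $B$. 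On the other side, $E(m,n)$ sends a $\Sigma$-partial map into $B'$ over $A'$ with domain $\sigma$ and value $v$ to the $\Sigma$-partial map into $B$ over $A$ with the same domain $\sigma$ and value $m \circ v$; applied to $\gamma_{f'}$ this produces the partial map with domain of definition $f' : B' \hookrightarrow A'$ and value $m : B' \to B$.

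Finally I would compare the two: they are equal precisely when there is an isomorphism $B' \to A' \times_A B$ commuting with the maps to $A'$ and to $B$, i.e.\ precisely when the canonical comparison map $B' \to A' \times_A B$ induced by $f'$ and $m$ is an isomorphism — which is exactly the assertion that the given square is a pullback. Assembling these equivalences proves the lemma. I do not anticipate a genuine obstacle here; the only points that require a little care are the bookkeeping of the ``over $A$'' conditions so that the uniqueness half of the classifying property can be invoked, and observing that the forced agreement of the top components imposes no extra constraint.
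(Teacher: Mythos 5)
Your proof is correct, and it is organised somewhat differently from the paper's. The paper splits the statement: the direction ``pullback $\Rightarrow$ morphism of coalgebras'' is left as an easy check, while for the converse it only tracks the \emph{domain} of the partial map, composing the coalgebra structures with the projection $M_f \to \Sigma$ and concluding by pasting of pullbacks (using the observation, recorded just before the lemma, that $\gamma_f$ followed by this projection classifies $f$ as a $\Sigma$-subobject). You instead keep the full partial-map classifier $M_f$ in play: you reduce ``morphism of coalgebras'' to the single equation $E(m,n)\circ\gamma_{f'} = \gamma_f\circ n$ of maps $A' \to M_f$ over $A$ (correct, since the top components agree automatically and coalgebra structures are unique by \refprop{coalgebrasfordominance}), then use the uniqueness clause of the classifying property from \refprop{WFSfromdominance} to translate this equation into equality of the two classified $\Sigma$-partial maps $A' \rightharpoonup B$, which you correctly identify as $(A'\times_A B \hookrightarrow A', \mathrm{proj}_B)$ and $(f' : B' \hookrightarrow A', m)$, so that equality is exactly invertibility of the comparison map $B' \to A'\times_A B$, i.e.\ the pullback condition. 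This buys you both directions in one chain of equivalences, at the mild cost of having to know how $E(m,n)$ acts on classified partial maps (it keeps the domain and postcomposes the value with $m$); that step is asserted rather than proved in your write-up, but it follows routinely from the explicit description of $M_f$ and $E(m,n)$ in the proof of \refprop{WFSfromdominance}, so there is no real gap. The paper's route is marginally lighter for the hard direction, since forgetting the value and working only with $\Sigma$ suffices there; your route is more uniform and makes the easy direction come for free.
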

\begin{proof} It is not hard to check that such a pullback square constitutes a morphism of coalgebras.

For the converse, let us first make the following observation. Suppose $\gamma: A \to M_f$ is a coalgebra structure on $f$. Then $\gamma$ fits into a diagram of the form
\diag{ B \ar[d]_f \ar[r]^1 & B \ar[d]^{Lf} \ar[r] & 1 \ar[d]^{\top} \\
A \ar[r]_{\gamma} & M_f \ar[r] & \Sigma }
where $M_f \to \Sigma$ is the obvious projection. Note that the right hand square is always a pullback and that the left hand square is as well, as we saw in the previous proof. So the outer rectangle is a pullback.

So if
\diag{ B' \ar[d]_{f'} \ar[r]^m & B \ar[d]^f \\
A' \ar[r]_n & A }
is a morphism of coalgebras, then this fits into a commutative diagram of the form
\diag{ B' \ar[d]_{f'} \ar[r]^m & B \ar[d]^f \ar[r] & 1 \ar[d]^\top \\
A' \ar[r]_n & A \ar[r] & \Sigma}
in which the right hand square and the outer rectangle are pullbacks. Therefore the left hand square is a pullback as well.
\end{proof}

Hence we can deduce:
\begin{coro}{algformonadfordominance}\index{effective cofibration!double category of \(\sim\)s}
 Let \(\mathbf{\Sigma}\) be the double category whose horizontal arrows are
 arbitrary arrows from \ct{E}, whose vertical arrows are maps from $\Sigma$ and
 whose squares are pullbacks squares. Then:
 \begin{enumerate}[(i)]
   \item There is an isomorphism between the following notions of fibred structure:
         \begin{itemize}
          \item Algebras for the AWFS induced by \(\Sigma\);
          \item Right lifting structures with respect to \(\mathbf{\Sigma}\).
         \end{itemize}
  \item There is a functor
        \[
          \algcat{R} \to \rlp{\mathbf{\Sigma}}
        \]
        given on objects by (i) which is fully faithful.
  \item There is an equivalence of double categories
        \[
          \alg{R} \cong \Drl{\mathbf{\Sigma}}
        \]
        whose vertical restriction is prescribed by (ii).
    \end{enumerate}
\end{coro}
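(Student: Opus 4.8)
The plan is to reduce all three parts to results already established for a general AWFS --- principally \reftheo{distrlaw<->iso-of-cofibred-str} and the dual of \refprop{coalg-lifts} --- once we have identified the double category of coalgebras $\coalg{L}$ of the AWFS of \refprop{WFSfromdominance} with the double category $\mathbf{\Sigma}$.

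First I would prove that $\coalg{L} \cong \mathbf{\Sigma}$ as concrete double categories over $\sq{\E}$. Both have the objects of $\E$ as objects and arbitrary maps of $\E$ as horizontal arrows, so there is nothing to check at that level. On vertical arrows: by \refprop{coalgebrasfordominance} a map carries a coalgebra structure for this AWFS if and only if it lies in $\Sigma$, and then uniquely; so the vertical arrows of $\coalg{L}$ are precisely the maps of $\Sigma$. On squares: by \reflemm{morfofcofpbks} a commuting square between two coalgebras is a morphism of coalgebras if and only if it is a pullback; so the squares of $\coalg{L}$ are precisely the pullback squares between maps of $\Sigma$. Finally the two vertical compositions agree: the composite of two maps of $\Sigma$ is again in $\Sigma$ by the dominance axioms, hence carries a \emph{unique} coalgebra structure, so the composite-coalgebra structure of~\eqref{eq:coalgebra-composition-fla} necessarily coincides with the composite monomorphism. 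This yields the isomorphism of double categories, and in particular an isomorphism $\coalgcat{L} \cong \mathbf{\Sigma}_1$ of categories over $\E^{\to}$ on vertical parts, where $\mathbf{\Sigma}_1$ is the category of maps of $\Sigma$ and pullback squares.

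Part (i) now follows by composing the isomorphism of fibred structures $\widetilde{\varphi}\colon \algfibr{R} \to \Drl{\coalg{L}}$ from \reftheo{distrlaw<->iso-of-cofibred-str}(ii) with the isomorphism $\Drl{\coalg{L}} \cong \Drl{\mathbf{\Sigma}}$ of fibred structures induced by $\coalg{L}\cong\mathbf{\Sigma}$ through the contravariant functoriality of $\Drl{(-)}$ (\refprop{right-lifting-is-fibred-structure}). Part (iii) follows similarly by composing the equivalence of double categories $\alg{R} \cong \Drl{\coalg{L}}$ recorded just after \reftheo{distrlaw<->iso-of-cofibred-str} with the induced isomorphism $\Drl{\coalg{L}} \cong \Drl{\mathbf{\Sigma}}$. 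For part (ii) I would invoke the dual of \refprop{coalg-lifts}: the transpose $\widetilde{\Phi}$ restricts to a functor $\algcat{R} \to \rlp{\coalgcat{L}}$ that is injective on objects and fully faithful; transporting along $\coalgcat{L}\cong\mathbf{\Sigma}_1$ turns this into a fully faithful functor $\algcat{R} \to \rlp{\mathbf{\Sigma}}$, whose action on objects is exactly the bijection of (i) --- now viewed as landing in the larger category $\rlp{\mathbf{\Sigma}}$ of lifting structures satisfying only conditions (i) and (ii) --- and which is the vertical restriction of the equivalence of (iii) followed by the evident forgetful functor $(\Drl{\mathbf{\Sigma}})_1 \to \rlp{\mathbf{\Sigma}}$ that retains only conditions (i) and (ii).

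The single point requiring care --- and the one I would flag as the main obstacle, though it amounts only to bookkeeping --- is keeping straight which of the three compatibility conditions of a right lifting structure are imposed where: $\Drl{\mathbf{\Sigma}}$ as a double category carries all of (i)--(iii), whereas $\rlp{\mathbf{\Sigma}}$ in the sense of \refdefi{simple-right-lifting-structure} records only (i) and (ii), which is precisely why (ii) asserts only full faithfulness rather than an equivalence. Everything else is immediate from \refprop{coalgebrasfordominance}, \reflemm{morfofcofpbks} and \reftheo{distrlaw<->iso-of-cofibred-str}; in particular the agreement of vertical compositions needs no unwinding of~\eqref{eq:coalgebra-composition-fla}, only the uniqueness of coalgebra structures.
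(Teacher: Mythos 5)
Your proposal is correct and follows essentially the same route as the paper: the paper's (one-line) proof is precisely that $\mathbf{\Sigma}$ is the double category of coalgebras for the comonad — which you establish via \refprop{coalgebrasfordominance}, \reflemm{morfofcofpbks} and uniqueness of coalgebra structures — after which (i)--(iii) follow from \reftheo{distrlaw<->iso-of-cofibred-str} and the dual of \refprop{coalg-lifts} exactly as you say. Your write-up simply makes explicit the bookkeeping (agreement of vertical compositions, and which lifting-structure conditions are carried by $\Drl{\mathbf{\Sigma}}$ versus $\rlp{\mathbf{\Sigma}}$) that the paper leaves implicit.
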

\begin{proof}
 Because $\mathbf{\Sigma}$ is, essentially, the double category of coalgebras for the comonad.
\end{proof}

\begin{defi}{trivialfibrations}
  The algebras or right lifting structure of~\refcoro{algformonadfordominance}
  are called \emph{effective trivial fibrations}\index{effective trivial fibration|textbf}\index{trivial fibration|seeonly {effective trivial fibration}}.
  The discretely fibred concrete double category of effective trivial
  fibrations is denoted \(\mathbb{E}\cat{ffTrivFib}\), but we may also refer to
  it as category (\(\cat{EffTrivFib}\)) or notion of fibred structure
  (\(\cat{effTrivFib}\)).\index{discretely (co)fibred concrete double
  category!of effective trivial fibrations}
\end{defi}

We will return to effective trivial fibrations in
Section~\ref{ssec:eff-trivial-fibrations}, where we show that they are also
effective fibrations (introduced in Section~\ref{sec:unifKanFibr}). Effective
trivial fibrations are important for developing a theory of effective Kan
fibrations in simplicial sets. For example, they allow us to relate our notion
of effective Kan fibration to other notions of fibration (see
\refcoro{comparisonwithunifKanfibrGS} and below).

 \section{AWFS from Moore structure}\label{sec:AWFSFromM}
\subsection{A category with Moore structure}
In this section we construct an algebraic weak factorisation system on a
\emph{category with Moore structure} \E. We will then study its coalgebras and
algebras more closely. We show that the structure of a coalgebra is equivalent
to the structure of a \emph{hyperdeformation retract} (HDR). Algebras, on
the other hand, will be identified as \emph{naive fibrations}.

Categories with Moore structure are a modification of the \emph{path object
categories}\index{path object category} introduced by Van den
Berg-Garner~\cite{vdBerg-Garner}. In that paper, a \emph{cloven weak
factorisation system} (see Section~3.2 loc.\ cit.) is constructed for an
arbitrary path object category.  Here, we use a new method to show that it is
also possible to construct an \emph{algebraic} weak factorisation system. To
that end, we need to modify the axioms of a Moore structure relative to the
original definition -- but note that ours is neither weaker nor stronger.

\begin{nameddefi}{moorestructure}{Moore structure}\index{Moore structure|textbf}
  \index{category with Moore structure}
  Let \(\E\) be a category with finite limits. A \emph{Moore structure} for
  \(\E\) is a pullback-preserving endofunctor \(M : \E \to \E\) together with:
  \begin{enumerate}[(i)]
    \item
  Natural transformations \(\mu, r, s, t\) with domain and codomains defined by
  the diagram:
  \[
\begin{tikzpicture}[baseline={([yshift=-.5ex]current bounding box.center)}]
    \matrix (m) [matrix of math nodes, row sep=2em,
    column sep=3.5em]{
      |(c)| {MX \times_{(t,s)} MX} &  |(a)| {MX} 
	    & |(b)| {X} \\
    };
    \begin{scope}[every node/.style={midway,auto,font=\scriptsize}]
    \path[->]
	    (c) edge node [anchor=center, fill=white] {$\mu_X$} (c -| a.west)
	    (a) edge [transform canvas={yshift=+6pt}] node [above] {$t_X$} (b)
	    (b) edge node [anchor=center, fill=white] {$r_X$} (a)
	    (a) edge [transform canvas={yshift=-5pt}] node [below] {$s_X$} (b);
    \end{scope}\end{tikzpicture}\text{,}
    \]
  Further, this data gives every object \(X\) in \(\E\) the structure of an
  internal category with arrow object \(MX\).  The object \(MX\) is to be
  interpreted as the object of \emph{paths} in \(X\) with \emph{source} and
  \emph{target} maps \(s, t\), as well as a trivial path \(r\). The object \(M1\)
  is the object of \emph{path lengths} and hence \(M! \co MX \to M1\) takes a path to
  its length.
  \item A natural transformation \(\Gamma: M \Rightarrow MM\), which turns
    \((M,s,\Gamma)\) into a comonad. The idea is that \(\Gamma_X\) sends a path
    \(\gamma\) in \(MX\) to a `path of paths' in \(MMX\), of the same length as
    \(\gamma\), which contracts \(\gamma\) to the trivial path at \(t_X(\gamma)\)
    (see Box~\ref{box:Gamma}).\index{contraction (of paths)}
  \item A natural transformation \(\alpha\) given by a family \(\alpha_X: X
    \times M1 \to MX\) called \emph{strength} (but see (2) below and
    \refdefi{pathobjcat}). This map sends a point and a length to the constant
    path of that length.
\end{enumerate}
These conditions are subject to further conditions on interactions between
them.  The full definition can be found in the Appendix, \refdefi{pathobjcat}.
There we make each of the conditions (i)--(iii) more precise and we present the
further conditions on interaction between these elements.  The most important
one is the \emph{distributive law}, which we also spell out in (2) below. 
\end{nameddefi}

\begin{textbox}{The path contraction \(\Gamma\)\label{box:Gamma}}
\begin{center}
\begin{tikzpicture}
  \draw [white, pattern = vertical lines, pattern color = lightgray] (0,0) -- (3,0) -- (0,2) -- (0,0); 
  \node (x0) at (0,0) {$x_0$};
  \node (x1) at (3,0) {$x_1$};
  \node (x1') at (0,2) {$x_1$};
  \node  (G) at (1,0.75) {$\Gamma$};
  \node (G) at (1,0.35) {$\Rightarrow$};
  \path[->, draw, thick] (x0) to node [below left] {$\gamma$} (x1);
  \path[->, draw, thick] (x0) to node [left] {$\gamma$} (x1');
  \path[->, draw, thick] (x1') to (x1);
  \path[->, draw, thick, dotted] (1.5, 0) to (1.5,1);
\end{tikzpicture}
\end{center}
The natural transformation \(\Gamma\) applied to a path \(\gamma\) can be
viewed as a contraction of \(\gamma\) to the trivial path on its end point. The
contraction has the same length as the path itself. Each transversal `fibre' of
consists of a final segment of \(\gamma\) (illustrated by the dotted arrow),
namely that which remains after one has contracted along the segment before.
This intuition is contained in the distributive law (see
\refdefi{pathobjcat}~(5) in the Appendix). The top path is obtained by taking
the pointwise target of \(\Gamma(\gamma)\), which is the constant path of the
same length as \(\gamma\).
\end{textbox}

The differences between \refdefi{moorestructure} (and
\refdefi{pathobjcat}) and the one in~\cite{vdBerg-Garner} are the following: 
\begin{enumerate}
\item The coassociativity condition:
\[ M\Gamma.\Gamma = \Gamma M.\Gamma: MX \to MMX. \]
which turns \(\Gamma\) into a comonad;
\item The distributive law:\index{distributive law!for a Moore structure}
\begin{align*}
\Gamma.\mu = &
\mu.(M \mu.\nu.(\Gamma.p_1, \alpha.(p_2, M !.p_1)), \Gamma.p_2)
: MX \times_X MX \to MMX.
\end{align*}
where we have defined the natural transformation \(\alpha : X \times M1 \to
X\) by \(\alpha_X := \theta_{MX}.\alpha_{MX,1}\) with \(\theta, \alpha_{-,-}\)
as in \refrema{onthestrength} in the Appendix. 
A diagrammatic illustration of this condition can be found in
equation~\eqref{eq:distributive-law-viz} in the same place. 
\item The `twist map' \(\tau : M \Rightarrow M\) is dropped, only to 
  return in Section~\ref{sec:frobenius} as \emph{symmetric} Moore structure.
\end{enumerate}
In addition, we will also introduce a notion of \emph{symmetric} and
\emph{two-sided} Moore structure in Section~\ref{sec:frobenius}.  In
\refexam{examplesofsymmMoorecats} in the Appendix we have given several
examples of categories with (symmetric) Moore structure. In
Chapter~\ref{ch:simplicial-sets} of this paper, we will show that the category
of simplicial sets is a category with Moore structure, for the simplicial Moore
path functor defined in
\cite{vdBerg-Garner}.

\begin{namedrema}{contractionvsconnection}{Path contraction vs.~connections}
  The `path contraction' \(\Gamma\) of \refdefi{moorestructure} can be compared
  to the notion of \emph{connection} in the category of Cubical
  Sets~\cite{CCHM17}, and more abstractly in categories with a functorial
  cylinder~\cite{Gambino-Sattler}.  In Appendix~\ref{sec:cubicalsets}, we have
  worked out that indeed, connections in cubical sets\index{cubical sets} can
  be used to define a path contraction \(\Gamma\) for a standard choice of path
  object. However, since all paths in this object have the same length, it is
  not possible to define a suitable notion of path composition for this object.
  \index{connection (cubical sets)}
\end{namedrema}

\subsection{An algebraic weak factorisation system}\index{algebraic weak factorisation system!from a Moore structure}
This subsection shows how every Moore structure bears an algebraic weak
factorisation system. A similar result, in absence of distributive laws, can
also be found in North (\cite{North}, Theorem~3.28). Here, we improve on this
by showing that our distributive law for Moore structures implies the
distributive law for an AWFS\@. In the short
subsections that follow, we address each one of the requirements of
\refprop{awfs-equivalent} (i).

\subsubsection{Functorial factorisation.}\label{sssec:functorial-factorisation}
First of all, if $f: A \to B$ is a morphism, we can factor it as
\diag{A \ar[r]^{Lf} & Ef \ar[r]^{Rf} & B }
by putting $Ef = MB \times_B A$ (pullback of $t$ and $f$), $Lf = (r.f, 1)$ and
$Rf = s.p_1$. In the obvious way \(E\), \(L\) and \(R\) extend to functors,
and the whole factorisation is readily seen to be functorial.

\subsubsection{The comonad.}
The comultiplication $\delta_f$ needs to be a mapping filling:
\diag{ A \ar[d]_{Lf = (r.f, 1)} \ar[rr]^1 & & A \ar[d]^{LLf = (r.(r.f, 1), 1)} \\
MB \times_B A \ar[rr]_(.3){\delta_f} & &
M(MB \times_B A) \times_{MB \times A} A }
where the object in the lower right-hand corner is the pullback of \(t_{MB
\times_B A}\) and \(Lf\).  Note that there is a mediating natural isomorphism
\begin{gather*}
  (MMB \times_{MB} MA) \times_{MB \times_B A} A 
  \xrightarrow{(\nu, 1)}
  M(MB \times_B A) \times_{MB \times_B A} A 
 \end{gather*}
where \(\nu\) is the natural isomorphism induced by pullback-preservation
of \(M\). Hence we can put:
\begin{equation}\label{eq:defn:comultiplication}
  \delta_f =  (\nu.(\Gamma.p_1, \alpha.(p_2, M !.p_1)), p_2).
\end{equation}
Curiously, \(\delta_f\) in no way refers to $f$. 
We can check it is well-defined.
For the part mapping into \(MMB \times_{MB} MA\), we have:
\begin{align*}
Mt.\Gamma.p_1  = & \alpha.(t,M!).p_1 \\
 = & \alpha.(t.p_1,M!.p_1) \\
 = &  \alpha.(f.p_2,M!.p_1) \\
 = & Mf.\alpha.(p_2,M!.p_1).
\end{align*}
Further, \(t.\Gamma.p_1  =  r.t.p_1 = r.f.p_2\). In addition, the square commutes, as one easily checks.
  
From here it remains to check the comonad laws given in \refprop{awfs-equivalent}.
First of all:
\begin{align*}
s.p_1.\delta_f = & (s.\Gamma.p_1, s.\alpha.(p_2, M !.p_1))\\
 = & (p_1,p_2) \\
 = & 1
\end{align*}
and
\begin{align*}
(M(s.p_1).p_1, p_2).\delta_f  = & (M s.\Gamma.p_1, p_2)\\
 = & (p_1,p_2) \\
 = & 1,
\end{align*}
and hence the counit laws are satisfied.
Second, we check coassociativity:
\begin{align*}
E(1, \delta_f) = &
(M \delta_f.p_1, p_2).\delta_f
= \\ 
= &
(\nu.(\nu.(M \Gamma.M p_1.p_1, M\alpha.\nu.(Mp_2.p_1, MM !.M p_1.p_1)), M p_2. p_1)
  , p_2).\delta_f \\
= & 
(\nu. (\nu.(M \Gamma.\Gamma.p_1, 
   M\alpha.(\alpha.(p_2, M !.p_1),MM!.\Gamma.p_1)), \\
  & \qquad M \alpha.(p_2, M !.p_1)), p_2)
\\
=^1 & (\nu.(\nu.(\Gamma.\Gamma.p_1, \Gamma.\alpha.(p_2, M !.p_1)), \alpha.(p_2, M !.p_1)), p_2) \\
=^2 & (\nu. (\Gamma.p_1, \alpha.(p_2, M !.p_1)), p_2).\delta_f\\
= & \delta_{L_f}.\delta_f 
\end{align*}
where we have used the strength of \(\Gamma\) with respect to \(\alpha\)
at \(=^1\) and the identity
\[
  M!.\nu.(\Gamma.p_1,\alpha.(p_2,M!.p_1)) = M!.p_1
\]
at \(=^2\).

\subsubsection{The monad.}\label{sssec:the-monad} The multiplication is given by:
\begin{equation}\label{eq:defn:multiplication}
  \begin{tikzpicture}[baseline={([yshift=-.5ex]current bounding box.center)}]
    \matrix (m) [matrix of math nodes, row sep=3.5em,
    column sep=4em]{ 
      |(a)| {MB \times_{(t,s.p_1)} (MB \times_B A)} & |(a2)| {MB \times_B A} \\
    	|(b1)|  {B}  & |(b2)|  {B}
    \\};
    \begin{scope}[every node/.style={midway,auto,font=\scriptsize}]
      \path[->]
        (a) edge node [above] {$\mu_f$} (a2)
        (a) edge node [left] {$s.p_1$} (b1)
        (a2) edge node [right] {$s.p_1$} (b2)
        (b1) edge node [above] {$1$} (b2);
      \end{scope}
  \end{tikzpicture}
\end{equation}
with \(\mu_f= (\mu. (p_1, p_1.p_2), p_2.p_2)\), reminding the reader that we
write the arguments to \(\mu\) in sequential order rather than order of
categorical composition (\(\mu\) is to be thought of as path concatenation).
Also note that again, the definition of the multiplication formula in no way
refers to \(f\). The monad laws are easy given that \(MB\) is an internal
category.

\subsubsection{The distributive law.}\index{distributive law}\index{Garner equation}
We should check the Garner equation~\eqref{eq:garner-equation-awfs},
given by the identity
\begin{align}\label{eq:garner-equation}
    \delta_f . \mu_f &= \mu_{Lf} . E (\delta_f, \mu_f) . \delta_{R f}
\end{align}
for every \(f\).
We unfold~\eqref{eq:garner-equation} for the definitions presented above.
We do so by (as before) explicitly by writing \(\nu\) for the (family of)
mediating isomorphisms:
\begin{equation}\label{eq:MpreservesPullback}
    \nu : M A \times_{M C} M B \rightarrow M ( A \times_C B)
\end{equation}
For this, the expression for the left-hand side 
of~\eqref{eq:garner-equation} amounts to:
\begin{align*}
&\delta_f . \mu_f &= \\ 
   & 
(\nu.(\Gamma.p_1,\alpha.(p_2, M !.p_1)), p_2).(\mu.(p_1, p_1.p_2), p_2.p_2)
   &= \\
   & 
(\nu.(\Gamma.\mu.(p_1, p_1.p_2),\alpha.(p_2.p_2, M !.\mu.(p_1.p_2, p_1))), p_2.p_2)
\end{align*}
The right-hand side amounts to:
\begin{align*}
    &\mu_{Lf} .  E (\delta_f, \mu_f) . \delta_{R f} & =  \\
    &(\mu.(p_1, p_1.p_2), p_2.p_2).\\
    & (M (\mu.(p_1, p_1.p_2), p_2.p_2).p_1, (\nu.(\Gamma.p_1,\alpha.(p_2, M !.p_1)), p_2).p_2) . \\
    & (\nu.(\Gamma.p_1,\alpha.(p_2, M !.p_1)), p_2) &= \\
    & (\mu.(p_1, p_1.p_2), p_2.p_2) .  &\\
    & (M (\mu.(p_1, p_1.p_2), p_2.p_2).\nu.(\Gamma.p_1,\alpha.(p_2, M !.p_1)), (\nu.(\Gamma.p_1,\alpha.(p_2, M !.p_1)), p_2).p_2) &= \\
    &   
(\mu.(M (\mu.(p_1, p_1.p_2), p_2.p_2).\nu.(\Gamma.p_1,\alpha.(p_2, M !.p_1)), \\
    & \qquad p_1.(\nu.(\Gamma.p_1,\alpha.(p_2, M !.p_1)), p_2).p_2), p_2.p_2)
\end{align*}
Since the second components of the two expressions are the same, we can focus on the first component. 
We proceed by reducing this first component for the right-hand side as follows:
\begin{align*}
\mu.(M (\mu.(p_1, p_1.p_2), p_2.p_2).\nu.(\Gamma.p_1,\alpha.(p_2, M !.p_1)),
  \quad\\  \qquad p_1.(\nu.(\Gamma.p_1,\alpha.(p_2, M !.p_1)), p_2).p_2) = \\
\mu.(M (\mu.(p_1, p_1.p_2), p_2.p_2).\nu.(\Gamma.p_1,\alpha.(p_2, M !.p_1)),
  \quad\\  \qquad \nu.(\Gamma.p_1,\alpha.(p_2, M !.p_1)).p_2) =^1 \\
\mu.(\nu.(M \mu.\nu.(p_1, M(p_1).p_2), M(p_2).p_2).(\Gamma.p_1,\alpha.(p_2, M !.p_1)), \quad\\
  \qquad \nu.(\Gamma.p_1,\alpha.(p_2, M !.p_1)).p_2) =\\
\mu.(\nu.(M \mu.\nu.(\Gamma.p_1, M(p_1).\alpha.(p_2, M !.p_1)), M(p_2).\alpha.(p_2, M !.p_1)), \quad\\ 
 \qquad \nu.(\Gamma.p_1,\alpha.(p_2, M !.p_1)).p_2) =^2 \\
\mu.(\nu.(M \mu.\nu.(\Gamma.p_1,\alpha.(p_1.p_2, M !.p_1)),\alpha.(p_2.p_2, M !.p_1)), \quad\\
\qquad \nu.(\Gamma.p_1.p_2,\alpha.(p_2.p_2, M !.p_1.p_2))) =^3 \\
\nu.(\mu.(M \mu.\nu.(\Gamma.p_1,\alpha.(p_1.p_2, M !.p_1)), \Gamma.p_1.p_2),
  \quad\\ 
\qquad \mu.(\alpha.(p_2.p_2, M !.p_1),\alpha.(p_2.p_2, M !.p_1.p_2))) = \\
\nu.(\Gamma.\mu.(p_1, p_1.p_2),\alpha.(p_2.p_2, M !.\mu.(p_1.p_2, p_1)))
\end{align*}
Here \(=^1\) uses the identity 
\[
\nu.(M \mu.(p_1, M p_1.p_2), M p_2.p_2) = 
M (\mu.(p_1, p_1.p_2), p_2.p_2).\nu
\]
which is just naturality of \(\nu\).  At \(=^2\) we have twice used naturality
of \(\alpha\) and absorbed a projection term in the right term of the
composite.  At that point, we are left with a nice expression of the form:
\[
    \mu . \nu \times \nu \thinspace : \thinspace  (M A  \times_{MB} MM B) \times_{A \times_B MB} (M A \times_{MB} MMB) \rightarrow (M A \times_{MB} MM B)
\]
Naturality of \(\mu\) and \(\nu\) implies that this map can be rewritten as
\begin{equation}\label{eq:mu-nu-commutes}
\nu.(\mu.(p_1.p_1, p_1.p_2), \mu.(p_2.p_1, p_2.p_2))
\end{equation}
which we have done at \(=^3\). In the resulting expression, we recognize in the
right component the composition of two constant paths, which is the constant
path on the composition, and in the left component the distributed term of the
distributive law between \(\mu\) and \(\Gamma\), which yields the desired
equality.

We can summarise the result of this section as follows:
\begin{prop}{MooreStructure->AWFS}\index{algebraic weak factorisation system!from a Moore structure}
  Suppose \(\E\) is a category with Moore structure. 
  Then the functorial factorisation\index{functorial factorisation!from a Moore structure} \((L, R, \epsilon, \eta)\) given by
	\[
\begin{tikzpicture}[baseline={([yshift=-.5ex]current bounding box.center)}]
    \matrix (m) [matrix of math nodes, row sep=2em,
    column sep=4.5em]{
	    |(a)| {A} &  |(e)| {MB \times_B A} 
	    & |(b)| {B} \\
    };
    \begin{scope}[every node/.style={midway,auto,font=\scriptsize}]
    \path[->]
      (a) edge node [above] {$Lf := (r.f, 1)$}  (e)
      (e) edge node [above] {$Rf := s.p_1$} (b);
    \end{scope}\end{tikzpicture}
    \]
    together with the natural transformations \(\delta: L \Rightarrow LL\)
    defined by~\eqref{eq:defn:comultiplication} and \(\mu: RR \Rightarrow R\)
    defined by~\eqref{eq:defn:multiplication} constitutes an algebraic weak factorisation system (AWFS)
    on \(\E\) in the sense of \refdefi{awfs}.
\(\hfill \square\)
\end{prop}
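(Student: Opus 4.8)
The plan is to invoke \refprop{awfs-equivalent}(i), which reduces the claim to checking a list of equational identities: the three counit/coassociativity laws for $(L,\delta,\epsilon)$, the three unit/associativity laws for $(R,\mu,\eta)$, and the Garner equation \eqref{eq:garner-equation-awfs}. By the footnote in \refprop{awfs-equivalent}, I must also verify that $\delta$ lies over $\dom$ and $\mu$ lies over $\cod$, i.e.\ that $\delta_f . Lf = LLf$ and $Rf . \mu_f = RRf$; these two hold by direct inspection of the formulas \eqref{eq:defn:comultiplication} and \eqref{eq:defn:multiplication} together with the definitions of $L$ and $R$ in \S\ref{sssec:functorial-factorisation}. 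The bulk of the argument is then organised exactly along the four subsections \S\ref{sssec:functorial-factorisation}--\S\ref{sssec:the-monad} and the distributive-law subsection that precede the statement.

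First I would record the functorial factorisation: set $Ef = MB \times_B A$, $Lf = (r_B.f, 1_A)$, $Rf = s_B . p_1$, and check functoriality of $E$, $L$, $R$ and that $Rf . Lf = f$ — all immediate from naturality of $r,s$ and the universal property of the pullback. Next, for the comonad part, I would confirm that $\delta_f$ as defined in \eqref{eq:defn:comultiplication} is well-typed (the computation $Mt . \Gamma . p_1 = Mf . \alpha . (p_2, M! . p_1)$ and $t . \Gamma . p_1 = r . f . p_2$ given above), then verify the two counit laws using $s . \Gamma = 1$ and $s . \alpha = \pi$ (the projection), and coassociativity $\delta_{Lf} . \delta_f = E(1, \delta_f) . \delta_f$. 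The coassociativity reduces, after unfolding, to the comonad law $M\Gamma . \Gamma = \Gamma M . \Gamma$ for the path contraction (one of the new axioms of \refdefi{moorestructure}) together with the strength compatibility $M! . \nu . (\Gamma . p_1, \alpha.(p_2, M!.p_1)) = M! . p_1$; both are available from the axioms in the Appendix. Dually, the monad laws for $\mu_f = (\mu.(p_1, p_1.p_2), p_2.p_2)$ are the unit and associativity laws for the internal-category structure on each $MB$, which hold since $(MB, s, t, r, \mu)$ is an internal category by \refdefi{moorestructure}(i).

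The main obstacle, and the part deserving the most care, is the Garner equation \eqref{eq:garner-equation-awfs}, $\delta_f . \mu_f = \mu_{Lf} . E(\delta_f, \mu_f) . \delta_{Rf}$. Here I would follow the calculation sketched in the paragraph preceding the statement: expand both sides in terms of the mediating isomorphisms $\nu$ of \eqref{eq:MpreservesPullback}, observe that the second components agree trivially, and reduce the first components. The right-hand side simplifies in five steps, using naturality of $\nu$ (step $=^1$), naturality of $\alpha$ (step $=^2$), the observation \eqref{eq:mu-nu-commutes} that $\mu.(\nu \times \nu) = \nu . (\mu \times \mu)$ on the relevant fibre product (step $=^3$), and finally — the crucial input — the distributive law $\Gamma . \mu = \mu . (M\mu . \nu . (\Gamma.p_1, \alpha.(p_2, M!.p_1)), \Gamma.p_2)$ between $\mu$ and $\Gamma$ from \refdefi{moorestructure}(2). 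This last axiom is precisely engineered to make the remaining term collapse to $\nu . (\Gamma . \mu . (p_1, p_1.p_2), \alpha.(p_2.p_2, M! . \mu.(p_1.p_2, p_1)))$, which is the expanded left-hand side. The delicate bookkeeping is matching projections through the nested pullbacks, so I would state clearly at each step which fibre product each expression lives over. Having verified all the identities in \refprop{awfs-equivalent}(i), the conclusion that $(L, R, \delta, \mu, \epsilon, \eta)$ is an AWFS in the sense of \refdefi{awfs} follows immediately from the equivalence (i)$\Leftrightarrow$(iii) in that proposition.
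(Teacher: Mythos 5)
Your proposal is correct and follows essentially the same route as the paper: the paper's proof is precisely the sequence of verifications in the subsections preceding the statement (functorial factorisation, comonad laws via the coassociativity of $\Gamma$ and the strength, monad laws via the internal category structure, and the Garner equation via the distributive law between $\Gamma$ and $\mu$), organised exactly as an appeal to \refprop{awfs-equivalent}(i). Your identification of the distributive law as the crucial input for the Garner equation, and of the bookkeeping with $\nu$ and the nested pullbacks as the delicate part, matches the paper's calculation.
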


\subsection{Coalgebras}\label{ssec:HDRs}
We start with defining \emph{hyperdeformation retractions} and
\emph{hyperdeformation retracts}\index{hyperdeformation retraction|textbf}\index{HDR|seeonly {hyperdeformation retraction}}.
\begin{defi}{HDR} Let $i: A \to B$ be a map. To equip $i$ with a \emph{hyperdeformation retraction} means specifying a map $j: B \to A$ and a homotopy $H: B \to MB$ such that the following hold:
\[ j.i = 1_A, \quad s.H = 1_B, \quad t.H = i.j, \quad MH.H=\Gamma.H. \]
If such a structure can be specified for $i$, we will call it a \emph{hyperdeformation retract}, or \emph{HDR}\index{hyperdeformation retract}.
\end{defi}

The intuition behind a hyperdeformation retract \(i: A \to B\) is as follows.
First, the family of paths \(H : B \to M B\) forms a \emph{deformation
retraction} of \(B\) onto \(A\), contracting every element in \(B\) to an
element in \(A\) along the path \(H(b)\). Second, every other element in \(B\)
lying on this path \(H(b)\) retracts along the remaining segment of \(H(b)\)
defined by it. This is expressed by the last condition involving \(\Gamma\).
See Figure~\ref{fig:HDR} for an illustration of this condition.

The notion of a hyperdeformation retraction is a strengthening of the notion of
a \emph{strong} deformation retraction (see Van den
Berg-Garner~\cite{vdBerg-Garner}, Definition~6.1.3). This notion is also used
by Gambino and Sattler in the context of (algebraic) weak factorisation systems
(See~\cite{Gambino-Sattler}, Section~3 and~\cite{sattler18}) and by Cisinski in
the context of homotopical algebra (Cisinski~\cite{cisinski19}, Section~2.4). A
deformation retraction is \emph{strong} if (in the notation of \refdefi{HDR})
\(H.i = r.i\), i.e., the retraction restricted to \(A\) is trivial. Every
hyperdeformation retraction is automatically strong, as is formally shown in
the proof of \refprop{HDRiscoalgebra} below.

\begin{figure}
  \centering
\begin{tikzpicture}
  \draw[thick, lightgray, fill = lightgray] (0,0) -- (3,2) -- (3,0);
  \draw[thick] (0,0) --  (3,0);
  \node (a0) at (2.5,0) {};
  \node (b0) at (3,{2/3*(3 - 2.5)}) {};
  \node (a1) at (0.5,0) {};
  \node (b2) at (3,{2 - 1/3}) {};
  \node (b1) at (1.75, {2/3*(1.75 - 0.5)}) {};
  \draw [fill=black] (a0) circle [radius=1pt];
  \draw [fill=black] (b0) circle [radius=1pt];
  \draw [fill=black] (a1) circle [radius=1pt];
  \draw [fill=black] (b2) circle [radius=1pt];
  \draw [fill=black] (b1) circle [radius=1pt];
  \path [<-, draw, dashed] (a0) to (b0);
  \path [<-, draw, dashed] (a1) to (b1);
  \path [<-, draw, dashed] (b1) to (b2);
  \node at (b0) [right] {$b_0$};
  \node at (b2) [right] {$b_2$};
  \node at (a0) [below] {$a_0$};
  \node at (a1) [below] {$a_1$};
  \node at (b1) [below right] {$b_1$};
  \node (A) at (1.5,0) [below] {$A$};
  \node (A) at (1.8,1.7) {$B$};
\end{tikzpicture}
\caption{A \emph{hyperdeformation retract} (HDR) \(i: A \to B\).  If an element \(b_2\) is retracted along a path to an element \(a_1\), then any intermediate element \(b_1\) is retracted along the remainder of that path.\label{fig:HDR}}
\end{figure}
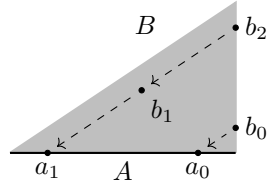

\begin{rema}{connisHDR} The maps $t$ and $\Gamma$ equip any $r: X \to MX$ with the structure of a hyperdeformation retraction.
\end{rema}

\begin{prop}{HDRiscofibredstructure}
The function which associates to every \(i: A \to B\) the set
\[\{(H, j) \varco (i, j, H) \text{ is an HDR}\}\]
can be extended to a presheaf on the category of
arrows of \(\E\) and cocartesian (pushout) squares:
\[
        \cat{hdr} \co \E^{\to}_\mathrm{cocart} \to \Sets.
\]
So HDRs form a cofibred structure on \(\E\).
\end{prop}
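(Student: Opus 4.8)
The plan is to show that $\cat{hdr}$ is really the same as the cofibred structure of coalgebras for the AWFS constructed in \refprop{MooreStructure->AWFS}, and then invoke \refcoro{coalgebras-cofibred}, which already tells us that coalgebra structures form a cofibred structure $\coalgcofibr{L} : \E^{\to}_{\rm cocart} \to \Sets$. So the real content is a bijection, natural in pushout squares, between HDR-structures on $i : A \to B$ and $L$-coalgebra structures on $i$; the functoriality on pushout squares then comes for free from the corollary. (Strictly speaking, this is exactly what \refprop{HDRiscoalgebra}, referenced in the surrounding text, is expected to establish, so the present statement is an immediate consequence of it; but let me sketch how I would prove the correspondence directly in case one wants a self-contained argument.)

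First I would unwind what an $L$-coalgebra structure on $i : A \to B$ is for the functorial factorisation $Lf = (r.f, 1) : A \to MB \times_B A$, $Rf = s.p_1$. By the discussion around \eqref{eq:coass-coalgebra}, a coalgebra structure for the comonad $(L,\delta,\epsilon)$ is an arrow $\beta : B \to MB \times_B i^{-1}\!\ldots$; more precisely, $\beta = (H, j)$ where $H : B \to MB$ and $j : B \to A$, subject to: the counit condition $Ri.\beta = 1_B$, which forces $s.H = 1_B$; the requirement that $\beta$ fill the coalgebra square, which forces $t.H = i.j$ together with $j.i = 1_A$ and $H.i = r.i$ (strongness); and finally the coassociativity condition $\delta_i . \beta = E(1_A, \beta).\beta$. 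Plugging in the explicit formula~\eqref{eq:defn:comultiplication} for $\delta_i$ and computing both sides, the first ($MB$-)component of coassociativity should reduce precisely to $MH.H = \Gamma.H$ — using that the strength term $\alpha.(p_2, M!.p_1)$ contributes the constant-path piece that matches $M!.H$ being determined by $H$ — while the remaining components are automatic. This is the calculation I expect to be the main obstacle: matching the mediating isomorphism $\nu$ and the strength $\alpha$ in $\delta_i$ against the bare equation $MH.H = \Gamma.H$ of \refdefi{HDR}, and checking that strongness $H.i = r.i$ is not an extra hypothesis but follows (as the text of \refprop{HDRiscoalgebra} asserts it does).

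Conversely, given an HDR $(i,j,H)$ I would define $\beta := (H, j) : B \to MB \times_B A$ and verify it lands in the right pullback (because $t.H = i.j = f.j$ where $f = i$, so $(H,j)$ factors through $MB \times_B A$), satisfies the counit law ($s.H = 1_B$), fills the coalgebra square (the top arrow is $1_A$ since $\beta.i = (H.i, j.i) = (r.i, 1_A) = Li$, using strongness), and satisfies coassociativity by the computation above read in reverse. These two assignments are visibly mutually inverse and manifestly natural with respect to morphisms over $B$, hence in particular with respect to pushout squares once we transport along \refcoro{coalgebras-cofibred}. Putting it together: $\cat{hdr} \cong \coalgcofibr{L}$ as presheaves on $\E^{\to}_{\rm cocart}$, and since the right-hand side is a cofibred structure, so is $\cat{hdr}$. $\hfill\square$
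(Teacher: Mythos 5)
Your proposal is correct and takes essentially the route the paper intends: the paper's own (one-line) proof defers pushout-stability of HDRs to the proof of \refprop{HDRiscoalgebra}, i.e.\ exactly to the objectwise identification of HDR structures with $L$-coalgebra structures together with \refcoro{coalgebras-cofibred}, which is what you spell out. Citing the forward correspondence causes no circularity, since the objectwise bijection (and the transported pushout action) does not presuppose that $\cat{hdr}$ is already a presheaf, and your flagged "main obstacle" (deriving $H.i = r.i$ and the $\alpha$-compatibility from the HDR axioms) is precisely the computation the paper carries out there.
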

\begin{proof}
It is sufficient to show that HDRs are closed under pushouts in a way compatible
with composition of pushout squares. The reader is invited to do this as an exercise,
as it will also be clear from the proof of \refprop{HDRiscoalgebra}.
\end{proof}
\begin{prop}{HDRiscoalgebra}
The following notions of cofibred structure are isomorphic:
\begin{itemize}
\item Having a coalgebra structure with respect to \(L = (r.f, 1)\),
\item Having the structure of an HDR\@.
\end{itemize}
\end{prop}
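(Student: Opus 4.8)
The plan is to show that, once unfolded, both notions of cofibred structure are carried by literally the same data — a pair $(j,H)$ with $j : B \to A$ and $H : B \to MB$ — subject to the same conditions, so that the isomorphism is (essentially) the identity. First I would make a coalgebra structure for the comonad $L$ on a map $i : A \to B$ explicit. Since $Ei = MB \times_B A$ is the pullback of $t : MB \to B$ along $i$ (Section~\ref{sssec:functorial-factorisation}), the underlying map of a coalgebra is an arrow $\beta = (H,j) : B \to Ei$; the counit law forces the top component of the square $\beta : i \to Li$ to be $1_A$, and then commutativity of that square together with $Ri.\beta = 1_B$ translates into exactly
\[ j.i = 1_A, \qquad s.H = 1_B, \qquad t.H = i.j, \qquad H.i = r.i, \]
i.e.\ $(j,H)$ is a \emph{strong} deformation retraction. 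Next I would compute the coassociativity identity \eqref{eq:coass-coalgebra}, $\delta_i.\beta = E(1_A,\beta).\beta$, using the explicit formula \eqref{eq:defn:comultiplication} for $\delta_i$ and the evident action of $E$ on the morphism $(1_A,\beta)$. Writing $M\beta = \nu.(MH, Mj)$ via the pullback‑preservation isomorphism $\nu$ of \eqref{eq:MpreservesPullback}, both sides land in $M(Ei)\times_{Ei} A$ with equal (trivial) $A$-component, so the identity reduces, after cancelling $\nu$, to the pair
\[ \Gamma.H = MH.H, \qquad Mj.H = \alpha.(j, M!.H). \]
The first of these is exactly the remaining clause of \refdefi{HDR}.

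So an $L$-coalgebra structure on $i$ is the same as a strong deformation retraction $(j,H)$ additionally satisfying $\Gamma.H = MH.H$ and $Mj.H = \alpha.(j,M!.H)$. To close the loop I would show that the bare axioms of \refdefi{HDR} already force the extra conditions $H.i = r.i$ and $Mj.H = \alpha.(j,M!.H)$. Both follow by post‑composing $MH.H = \Gamma.H$ with the two maps $MMB \to MB$: applying the outer target $t$ and using naturality of $t : M \Rightarrow 1$ together with the Moore‑structure identity $t.\Gamma = r.t$ (already used in constructing $\delta_f$) gives $H.i.j = r.i.j$, whence $H.i = r.i$ after precomposing with $i$ and using $j.i = 1_A$; applying $Mt$ and using functoriality of $M$, the equation $t.H = i.j$, the Moore‑structure identity $Mt.\Gamma = \alpha.(t,M!)$, and naturality of $\alpha$ gives $Mi.Mj.H = Mi.\alpha.(j,M!.H)$, and since $j.i = 1_A$ makes $Mi$ a split monomorphism (retraction $Mj$) it is left‑cancellable, yielding $Mj.H = \alpha.(j,M!.H)$. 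Hence the sets of $L$-coalgebra structures and of HDR structures on $i$ coincide, via the identity on the data $(j,H)$.

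Finally, because this bijection is the identity on underlying data it is automatically natural in $i$ with respect to pushout squares, so it upgrades to an isomorphism of cofibred structures between $\cat{hdr}$ and the coalgebra cofibred structure $\coalgcofibr{L}$ of \refcoro{coalgebras-cofibred}; this simultaneously supplies the proof of \refprop{HDRiscofibredstructure} left open above. I expect the only real obstacle to be the bookkeeping in the coassociativity computation — pinning down $\nu$ and the $E$-action on $(1_A,\beta)$ precisely enough that \eqref{eq:coass-coalgebra} decomposes into exactly the two displayed identities and nothing more — whereas the two target‑map verifications are short diagram chases.
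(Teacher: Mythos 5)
Your proposal is correct and follows essentially the same route as the paper: unfold the $L$-coalgebra data into a strong deformation retraction plus the coassociativity pair $\Gamma.H = MH.H$ and $Mj.H = \alpha.(j,M!.H)$, and then derive the redundant conditions $H.i = r.i$ and $Mj.H = \alpha.(j,M!.H)$ from the bare HDR axioms by the same two computations (postcomposing $MH.H = \Gamma.H$ with $t$ using $t.\Gamma = r.t$, and cancelling the split mono $Mi$ after using $Mt.\Gamma = \alpha.(t,M!)$). The paper likewise leaves the functoriality and the cofibred-structure isomorphism at the level of a remark, so your treatment matches it in both substance and detail.
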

\begin{proof}
Suppose \(i: A \to B\) is an arrow in \(\E\).
The map $i: A \to B$ carries a coalgebra structure if there is a map $(H, j)$ making 
\diag{ A \ar[r]^1 \ar[d]_i & A \ar[d]^{(1,r.i)} \\
B \ar[r]_{(H,j)} & MB \times_B A }
commute (which means $i.j = t.H, j.i = 1, H.i = r.i$) and such that $s.p_1.(H,j) = 1$ (that is, $s.H = 1$) and 
\(
\delta.(H, j) = (p_1.M (H, j), p_2).(H, j)
\). The latter condition means that 
\[ 
\delta.(H, j) = (\nu.(\Gamma.p_1,\alpha.(p_2, M !.p_1)), p_2).(H, j)
= (\nu.(\Gamma. H, \alpha. (j, M! . H)), j)
\]
should equal
\[ 
(M (H, j).p_1, p_2).(H, j) = (\nu.(MH.H, Mj.H), j)
\]
(using naturality of \(\nu\)). In other words, that $\alpha.(j,M!.H) =
Mj.H$ and $\Gamma.H = MH.H$. To summarise, a coalgebra structure is a
hyperdeformation retraction satisfying, additionally, $H.i = r.i$ and
$Mj.H=\alpha.(j,M!.H)$. We now show that these two conditions are always
satisfied.

First of all, we have 
\[ H.i = H.i.j.i = H.t.H.i = t.MH.H.i = t.\Gamma.H.i = r.t.H.i=r.i.j.i =r.i, \]
showing that a hyperdeformation retraction is automatically a strong deformation retraction.

Secondly, to show $Mj.H=\alpha.(j,M!.H)$, we calculate:
\begin{eqnarray*}
Mi.Mj.H & = & M(i.j).H \\
& = & M(t.H).H \\
& = & Mt.MH.H \\
& = & Mt.\Gamma.H \\
& = &\alpha.(t,M!).H \\
& = &\alpha.(t.H,M!.H) \\
& = &\alpha.(i.j,M!.H) \\
& = & Mi.\alpha.(j,M!.H).
\end{eqnarray*}
Since $Mi$ is monic (even split monic), this proves the claim.
We leave it to the reader to verify that this construction is functorial
and induces an isomorphism of cofibred structures.
\end{proof}

\begin{coro}{vert-co-HDRs}
HDRs admit a vertical composition\index{vertical composition!of HDRs}, given
by vertical composition of coalgebras for an AWFS\@.  Explicitly, the
composition of two HDRs \(i_0: A \to B\), \(i_1, B \to C\) is given by \(i_1
. i_0\) with inverse map \(j_0 . j_1\) and deformation
\begin{equation}\label{eq:composition-of-hdrs}
H_1 * H_0 := \mu.(H_1, M i_1.H_0.j_1)
\end{equation}
\end{coro}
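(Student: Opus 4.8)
The plan is to deduce the corollary from \refprop{HDRiscoalgebra}, which gives an isomorphism of cofibred structures between HDRs and coalgebras for the comonad $L = (r.f,1)$ of the AWFS of \refprop{MooreStructure->AWFS}. The $L$-coalgebras of an AWFS are the vertical arrows of the double category $\coalg{L}$ (\refdefi{double-cat-coalg}), and as such carry a canonical vertical composition, computed by the general formula~\eqref{eq:coalgebra-composition-fla}; transporting this across the isomorphism of \refprop{HDRiscoalgebra} both equips HDRs with a vertical composition and --- since the distributive law ensures the composite is again a coalgebra (cf.\ \refprop{coalg-lifts}) --- certifies that the result is again an HDR. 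So the only thing left to prove is the explicit description, i.e.\ to evaluate~\eqref{eq:coalgebra-composition-fla} in the concrete functorial factorisation $Ef = MB \times_B A$, $Lf = (r.f,1)$, $Rf = s.p_1$ with the explicit $\mu_f$ of~\eqref{eq:defn:multiplication}.

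I would do this by direct substitution. Put $f := i_0$, $g := i_1$, $h := i_1.i_0$, and let $(H_0,j_0) : B \to MB \times_B A$, $(H_1,j_1) : C \to MC \times_C B$ be the coalgebra structures corresponding to the two HDRs. Using that the functorial action of $E$ on a square $(p,q) : f \to f'$ is $E(p,q) = (Mq.p_1,\ p.p_2)$, one computes $E(1_A,i_1).(H_0,j_0) = (M i_1.H_0,\ j_0)$, then
\[
  E\bigl(E(1_A,i_1).(H_0,j_0),\ 1_C\bigr).(H_1,j_1)
    = \bigl(H_1,\ (M i_1.H_0.j_1,\ j_0.j_1)\bigr),
\]
checking at each step that the composite lands in the appropriate pullback (the identities used are $s.H_0 = 1_B$, $t.H_1 = i_1.j_1$ and naturality of $s$). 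Finally, applying $\mu_h = (\mu.(p_1,p_1.p_2),\ p_2.p_2)$ collapses this to
\[
  \kappa = \bigl(\mu.(H_1,\ M i_1.H_0.j_1),\ j_0.j_1\bigr) : C \to MC \times_C A,
\]
whose underlying HDR data is exactly $(i_1.i_0,\ j_0.j_1,\ H_1 * H_0)$ with $H_1 * H_0 = \mu.(H_1,\ M i_1.H_0.j_1)$.

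I do not expect a serious obstacle here; the one step requiring attention is the reduction of the unreduced composite $\mu_h.E(Lh,g).\beta$ of diagram~\eqref{eq:coalgebra-composition} to the form $E(1_A,g).\beta$ used in~\eqref{eq:coalgebra-composition-fla}, which I would justify by the right unit law for $\mu$ (as indicated beneath that equation) or, failing that, by rerunning the computation with the unreduced expression. As an independent check --- and an alternative proof of the closure statement that sidesteps the AWFS machinery --- one can verify directly that $(i_1.i_0,\ j_0.j_1,\ H_1*H_0)$ satisfies the four axioms of \refdefi{HDR}; the only nontrivial axiom, $M(H_1*H_0).(H_1*H_0) = \Gamma.(H_1*H_0)$, is where the distributive law between $\Gamma$ and $\mu$ from \refdefi{moorestructure} gets used, and working through it is a good sanity test of the formula for $H_1 * H_0$.
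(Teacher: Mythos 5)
Your proposal is correct and follows essentially the same route as the paper: identify HDRs with $L$-coalgebras via \refprop{HDRiscoalgebra}, appeal to the coalgebra vertical composition~\eqref{eq:coalgebra-composition-fla} (with the unit law for $\mu$ handling the reduction from the unreduced composite), and evaluate it by direct substitution in the explicit factorisation to obtain $(\mu.(H_1, Mi_1.H_0.j_1),\ j_0.j_1)$, exactly as in the paper's computation. The optional direct verification of the HDR axioms is a reasonable extra sanity check but is not needed.
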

\begin{proof}
 We only need to verify that the given formula and inverse represents composition of coalgebras,
 as defined by the formula \eqref{eq:coalgebra-composition-fla} above. Expanding this
 formula for \(h := i_1 . i_0\) yields:
 \begin{align*}
& (H_1, j_1) * ( H_0, j_0 ) \\
&= \mu_h . E(E(1_A, i_1) . (H_0, j_0), 1_C) . (H_1, j_1) \\
&= (\mu.(p_1, p_1.p_2), p_2.p_2).(M 1_C.p_1, (M i_1.p_1, p_2).(H_0, j_0).p_2).(H_1, j_1) \\
&= (\mu.(p_1, p_1.p_2), p_2.p_2).(H_1, (M i_1.H_0, j_0).j_1) \\
&= (\mu.(H_1, M i_1.H_0.j_1), j_0.j_1)
 \end{align*}
 whence the statement follows.
\end{proof}

Similarly, we have the rest of the structure of a double category, whose vertical
morphisms are HDRs. In the rest of this section, we will study this structure in more
depth and using HDRs rather than coalgebras. If $(i': A' \to B', j', H')$ and $(i: A \to
B, j, H)$ are HDRs, then a \emph{morphism of HDRs} is
defined as a pair of maps $f: A' \to A$ and $g: B' \to B$ such that
\diag{ A' \ar[d]_{i'} \ar[r]^f & A \ar[d]^i \\
B' \ar[r]_g \ar[d]_{j'} & B \ar[d]^j \\
A' \ar[r]_f & A }
commutes and $Mg.H' = H.g: B' \to MB$.

Using the dual of \refdefi{discretelyfibreddoublecat}, we can summarise this section:
\begin{coro}{hdrscoalgsdoubleiso}
  Hyperdeformation retracts define a discretely cofibred double category
  \(\mathbb{H}\cat{DR}\) over \(\E\) which is isomorphic to the double category
  of coalgebras.\index{hyperdeformation retract!double category of \(\sim\)s}\index{discretely (co)fibred concrete double category!of HDRs}
\end{coro}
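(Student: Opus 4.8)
The plan is to exhibit the double functor $F \co \mathbb{H}\cat{DR} \to \coalg{L}$ which sends a hyperdeformation retract to the $L$-coalgebra it determines, and to prove it is an isomorphism by invoking the dual of \refprop{fibreddoubleiso}. First I would assemble $\mathbb{H}\cat{DR}$ explicitly: its objects are the objects of $\E$, its horizontal morphisms are arbitrary maps of $\E$, its vertical morphisms are the HDRs $(i,j,H)$, its squares are the morphisms of HDRs as defined just above the statement, and its vertical composition is the one provided by \refcoro{vert-co-HDRs}. The double-category axioms are immediate: horizontal composition is simply composition in $\E$, identities are identities, and a square is literally a pair of maps of $\E$ subject to equational conditions, so vertical composition of squares is forced and compatible with vertical composition of vertical morphisms. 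In particular the assignment to $\sq{\E}$ is the identity on objects and on horizontal morphisms and is faithful on the category of vertical morphisms and squares (a morphism of HDRs is determined by its underlying commutative square in $\E$), so $\mathbb{H}\cat{DR}$ is a concrete double category over $\E$.

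Next I would define $F$. On objects and horizontal morphisms it is the identity; on vertical morphisms it sends an HDR to the coalgebra structure for $L = (r.f,1)$ which it determines under the bijection of \refprop{HDRiscoalgebra}; on squares it sends a morphism of HDRs to the same underlying commutative square of $\E$. That this assignment is functorial and lands in the category of $L$-coalgebras is precisely the functoriality clause asserted in \refprop{HDRiscoalgebra}, compatibility with vertical composition is \refcoro{vert-co-HDRs}, and compatibility with horizontal composition is trivial; hence $F$ is a double functor over $\sq{\E}$ between concrete double categories.

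To conclude that $F$ is an isomorphism I would check the two hypotheses of the dual of \refprop{fibreddoubleiso}. First, $F$ induces an isomorphism of the associated notions of cofibred structure $\cat{hdr} \cong \coalgcofibr{L}$: this is exactly the isomorphism of cofibred structures established in \refprop{HDRiscoalgebra} (with the presheaf structure on $\cat{hdr}$ recorded in \refprop{HDRiscofibredstructure}). Second, $F$ is full on squares: I must show that a commutative square between two HDRs which is a morphism of $L$-coalgebras is automatically a morphism of HDRs. Unwinding the definition of a morphism of coalgebras for the comonad $L$, and substituting the correspondence $\beta = (H,j)$ from \refprop{HDRiscoalgebra}, the coalgebra-morphism condition on a square $(f,g)$ is seen to be exactly the commuting three-rung ladder together with the identity $Mg.H' = H.g$, i.e. the definition of a morphism of HDRs. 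So both hypotheses hold and $F$ is an isomorphism of double categories. Finally, $\coalg{L}$ is discretely cofibred in the sense of the dual of \refdefi{discretelyfibreddoublecat}: given an $L$-coalgebra on $f$ and a pushout square out of $f$, \refcoro{coalgebras-cofibred} supplies a unique coalgebra structure on the pushout making the square a morphism of coalgebras, which is precisely the unique cocartesian lift required. Transporting along the isomorphism $F$, $\mathbb{H}\cat{DR}$ is discretely cofibred as well, completing the proof.

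The only step with any content — everything else having been settled in \refprop{HDRiscoalgebra} and \refcoro{vert-co-HDRs} — is the fullness-on-squares verification, where one must carefully match the single coalgebra-morphism equation against the pair of conditions packaged into a morphism of HDRs (the ladder and $Mg.H' = H.g$) and confirm they coincide under the bijection between coalgebra structures and HDR structures; the remainder is routine bookkeeping about concrete double categories.
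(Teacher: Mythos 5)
Your proof is correct and takes essentially the same approach as the paper: both arguments invoke the dual of \refprop{fibreddoubleiso} together with \refprop{HDRiscoalgebra} and \refcoro{vert-co-HDRs}, with the substantive check being fullness on squares, i.e.\ that morphisms of HDRs and morphisms of $L$-coalgebras coincide under the correspondence $\beta = (H,j)$; the only cosmetic difference is that the paper's double functor runs $\coalg{L} \to \mathbb{H}\cat{DR}$ rather than in your direction, which merely swaps which containment counts as well-definedness and which as fullness. One small ordering point: since the hypotheses of \refprop{fibreddoubleiso} ask for both double categories to be discretely cofibred, it is cleaner to note up front that $\mathbb{H}\cat{DR}$ is discretely cofibred (this follows from \refprop{HDRiscofibredstructure}, the pushed-forward $H'$ being uniquely determined by the pushout) rather than transporting this along the isomorphism afterwards.
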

\begin{proof}
  Using the dual of \refprop{fibreddoubleiso} and \refprop{HDRiscoalgebra}, it
  is enough to show that the correspondence of \refprop{HDRiscoalgebra} is
  induced by a double functor \(\coalg{L} \to \mathbb{H}\cat{DR}\) over
  \(\sq{\E}\) which is full on squares. The functor exists by construction of
  the vertical composition, and the fact that a morphism of coalgebras is a
  morphism of HDRs. To see that it is full on squares, means to see that a
  morphism of HDRs is automatically a morphism of coalgebras, which also
  follows from the above definition.
\end{proof}

\subsubsection{HDRs are comonadic}
In this subsection and onwards, we denote by \(\cat{HDR}\) the vertical part, as
in~\refprop{vertical-part-left-adj}, of the double category of HDRs, i.e.\ the
category of HDRs and morphisms of HDRs. We first consider the \emph{codomain}
functor:
\begin{equation}\label{HDRcodfunctor}
	\cod : \cat{HDR} \to \E.
\end{equation}
The following facts will be helpful in \refprop{HDRsareMcoalgebras} below.
\begin{lemm}{pbksqforsqofHDRs}
If $(i,j,H)$ is an HDR, then 
\diag{ A \ar[d]_i \ar[r]^i & B \ar[d]^H  \\
B \ar[r]_r & MB } 
is a pullback. In particular, \(i\) is the equalizer of the pair \(r, H : B \to MB\).
\end{lemm}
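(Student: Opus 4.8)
The plan is a direct diagram chase using the four defining identities of an HDR from \refdefi{HDR}, namely $j.i = 1_A$, $s.H = 1_B$, $t.H = i.j$, $MH.H = \Gamma.H$, together with the structural identities $s.r = t.r = 1_B$ and $t.\Gamma = r.t$ coming from the Moore structure.

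\emph{Commutativity.} First I would check that the square commutes, i.e.\ that $H.i = r.i$. This is exactly the assertion that a hyperdeformation retraction is a \emph{strong} deformation retraction, which was already verified inside the proof of \refprop{HDRiscoalgebra}:
\[
H.i = H.i.j.i = H.t.H.i = t.MH.H.i = t.\Gamma.H.i = r.t.H.i = r.i.j.i = r.i .
\]
So I would simply cite that computation.

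\emph{Pullback property.} Next I would verify the universal property directly. Suppose $u, v \colon Z \to B$ satisfy $H.u = r.v$. Composing with $s$ and using $s.H = 1_B = s.r$ gives $u = v$; composing with $t$ and using $t.H = i.j$, $t.r = 1_B$ gives $i.j.u = u$. Hence $w := j.u \colon Z \to A$ satisfies $i.w = u = v$, and it is the unique such map because $i.w' = u$ forces $w' = j.i.w' = j.u = w$ by $j.i = 1_A$. (Note $i$ is monic, being split by $j$.) This is precisely the universal property of the pullback of $H$ along $r$, so the square is a pullback.

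\emph{Equalizer.} Finally, writing $P$ for the pullback of $H$ and $r$ with projections $\pi_1,\pi_2 \colon P \to B$, I would apply the previous paragraph with $Z = P$, $u = \pi_1$, $v = \pi_2$ to conclude $\pi_1 = \pi_2 =: \pi$; then $H.\pi = r.\pi$ together with the pullback property of $P$ shows $\pi$ is the equalizer of $H$ and $r$. Since the same paragraph identifies $P$ with $A$ over $B$ (the comparison map $A \to P$ determined by $(i,i)$ has inverse $(b,b)\mapsto j(b)$), it follows that $i \colon A \to B$ is the equalizer of $r, H \colon B \to MB$. The whole argument is routine; the only step requiring a little attention is the commutativity, which is where the contraction identity $MH.H = \Gamma.H$ (and the Moore-structure identity $t.\Gamma = r.t$) enters — the rest uses only the three ``retraction-like'' equations of \refdefi{HDR}.
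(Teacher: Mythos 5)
Your proof is correct, but it takes a different route from the paper. The paper's own proof is shorter and more structural: it observes that the diagram
\[
\begin{array}{ccc}
A \to B \to A\\
\downarrow \quad \downarrow \quad \downarrow\\
B \to MB \to B
\end{array}
\]
(with horizontal composites $j.i=1_A$ and $t.r=1_B$) exhibits $i$ as a retract of $H$, notes that $H$ is (split) monic since $s.H=1$, and then invokes the general retract lemma \reflemm{swanslemma} (a retract of a monomorphism yields a pullback square on the left), whose proof is deferred to its dual in the appendix. You instead verify the universal property by hand, using $s.H=1=s.r$ to force $u=v$, $t.H=i.j$ and $t.r=1$ to produce the mediating map $w=j.u$, and $j.i=1_A$ for uniqueness; this amounts to inlining, with the specific structure maps $s$, $t$, $j$, what the abstract retract lemma does in general. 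What your version buys is a self-contained, elementary chase that avoids citing \reflemm{swanslemma}; what the paper's version buys is brevity and reuse of a lemma it needs elsewhere anyway. Both arguments rest on the same non-trivial input, namely the strongness identity $H.i=r.i$ (where $MH.H=\Gamma.H$ and $tM.\Gamma=r.t$ enter), which you correctly source from the proof of \refprop{HDRiscoalgebra}; note the paper also needs this identity, implicitly, for the left-hand square of its retract diagram to commute. Your explicit treatment of the equalizer claim is fine as well — once both legs of the pullback square are the single map $i$, it follows at once.
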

\begin{proof}
If $i: A \to B$ is an HDR, as witnessed by $j: B \to A$ and $H: B \to MB$, then
\diag{ A \ar[d]_i \ar[r]^i & B \ar[d]^H \ar[r]^j & A \ar[d]^i \\
B \ar[r]_r & MB \ar[r]_t & B} 
exhibits $i$ as a retract of $H$. Since $H$ is monic ($s.H = 1$),
\reflemm{swanslemma} (see below) tells us that the left hand square is a
pullback.
\end{proof}
\begin{lemm}{swanslemma} 
 If the commutative diagram \diag{ A \ar[r] \ar[d]^m & C \ar[r] \ar[d]^n & A \ar[d]^m \\
B \ar[r] & D \ar[r] & B }
exhibits $m$ as a retract of $n$ and $n$ is a monomorphism, then the left hand square is a pullback.
\end{lemm}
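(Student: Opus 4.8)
The statement is the standard fact that a retract of a monomorphism, sitting inside a ``retract diagram'' of the shape displayed, is recovered as a pullback. The plan is to exploit the monomorphism $n$ to reduce a commuting square to an honest pullback by a direct universal-property argument. Write the diagram as
\[
  \begin{tikzcd}
    A \ar[r, "i"] \ar[d, "m"'] & C \ar[r, "p"] \ar[d, "n"] & A \ar[d, "m"] \\
    B \ar[r, "j"'] & D \ar[r, "q"'] & B
  \end{tikzcd}
\]
with $p.i = 1_A$, $q.j = 1_B$, and $n.i = j.m$, $m.p = q.n$. I claim the left-hand square is a pullback. First I would set up a candidate cone: suppose given $u: Z \to B$ and $v: Z \to C$ with $j.u = n.v$. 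I need a unique $w: Z \to A$ with $m.w = u$ and $i.w = v$. The natural candidate is $w := p.v$.

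The two verifications are then: (a) $i.w = i.p.v = v$, and (b) $m.w = m.p.v = q.n.v = q.j.u = u$. For (a) the point is to show $i.p.v = v$, which is where monicity of $n$ enters: compute $n.(i.p.v) = (n.i).p.v = (j.m).p.v = j.(m.p).v = j.(q.n).v = (j.q).(n.v)$; but I would rather argue $n.i.p.v = j.m.p.v = j.q.n.v$, and separately $n.v = $? — here one wants $n.i.p = n$, equivalently (since $n$ is monic) $i.p = 1_C$, which is \emph{not} assumed. So instead I would take $w := p.v$ and verify $i.w = v$ directly from $j.u = n.v$: we have $n.(i.p.v) = j.m.p.v = j.q.n.v = j.u.$ \emph{Wait} — we also have $n.v$, and $j.u = n.v$, so $n.(i.p.v) = n.v$, and since $n$ is monic, $i.p.v = v$, i.e. $i.w = v$. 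For (b), $m.w = m.p.v = q.n.v = q.j.u = u$. Uniqueness: if $w'$ also satisfies $i.w' = v$ and $m.w' = u$, then $w' = p.i.w' = p.v = w$. This establishes the pullback property.

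The main (and only) obstacle is the bookkeeping to see which composite is monic-cancelled: one must resist the temptation to conclude $i.p = 1_C$ (false in general) and instead cancel $n$ only after pairing $n.(i.p.v)$ with $n.v = j.u$ using the hypothesis on the test cone. Once that is seen, everything is a one-line diagram chase. No deeper input (no limits, no properties of $\E$ beyond having the relevant composites, which exist since everything is already given) is needed; in particular this is fully constructive and works in any category. I would present it exactly as the cone/uniqueness argument above, perhaps with the remark that the ``in particular'' clause of \reflemm{pbksqforsqofHDRs} — that $i$ is the equalizer of $r,H$ — then follows because a pullback of $r: B \to MB$ along the monomorphism $H$ against which $r$ is a section is precisely such an equalizer.
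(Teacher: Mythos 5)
Your proof is correct: with the retract identities $p.i=1_A$, $q.j=1_B$ and the commutativities $n.i=j.m$, $m.p=q.n$, the computation $n.i.p.v = j.m.p.v = j.q.n.v = j.q.j.u = j.u = n.v$ followed by cancellation of the monomorphism $n$ gives $i.(p.v)=v$, and $m.(p.v)=q.n.v=q.j.u=u$, with uniqueness from $p.i=1_A$; that is a complete pullback verification. However, it is a genuinely different route from the paper's. The paper does not argue directly: it observes that the lemma is dual to \reflemm{swanslemmadual} in the appendix, which is quoted (with a reference to van den Berg--Moerdijk, Lemma~5.6) as saying that in such a retract diagram, if the middle map has a section then the right-hand square is an \emph{absolute} pushout. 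Dualised, that argument needs the middle map to be a \emph{split} monomorphism and delivers an absolute pullback, i.e.\ one preserved by every functor; in the paper's application (\reflemm{pbksqforsqofHDRs}) the map $H$ is indeed split monic, since $s.H=1$. Your argument is more elementary and self-contained, uses only the bare monomorphism hypothesis exactly as the lemma is stated, and is fully constructive; what it does not give is absoluteness, which the citation-by-duality route provides and which can be convenient when one wants the square to remain a pullback after applying $M$ or other functors. One small caveat: your closing aside about recovering the ``in particular'' clause of \reflemm{pbksqforsqofHDRs} is loosely phrased ($r$ is not a section associated to $H$; the equalizer claim follows from the pullback square there together with $r.i=H.i$ and monicity of $i$), but this is tangential and does not affect the proof of the lemma itself.
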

\begin{proof}
  This is dual to \reflemm{swanslemmadual} in the Appendix. A reference is given there. 
\end{proof}

\begin{coro}{retractscommute}
	Suppose \((i', j', H')\), \((i,j,H)\) are HDRs and \(g: B' \to B\)
	satisfies \(Mg . H' = H.g\). Then there is a unique \(f: A' \to A\)
	such that \((f, g)\) is a morphism of HDRs.
\end{coro}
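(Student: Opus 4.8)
The plan is to use \reflemm{pbksqforsqofHDRs}, which exhibits $i: A \to B$ as the equalizer of the pair $r, H : B \to MB$ (equivalently, as the pullback of $r$ along $H$). Given $g : B' \to B$ with $Mg.H' = H.g$, I want to produce a map $f : A' \to A$ into this equalizer, so it suffices to check that the composite $g.i' : B' \to B$ equalizes $r$ and $H$. First I would compute $H.g.i' = Mg.H'.i'$ using the hypothesis on $g$; then, since $(i',j',H')$ is an HDR and every HDR is strong (as established in the proof of \refprop{HDRiscoalgebra}), we have $H'.i' = r.i'$, so $H.g.i' = Mg.r.i' = r.g.i'$ by naturality of $r$. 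Hence $g.i'$ equalizes $r$ and $H$, and by the universal property of the equalizer there is a unique $f : A' \to A$ with $i.f = g.i'$.

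It then remains to check that this $f$ together with $g$ actually forms a morphism of HDRs, i.e.\ that the two squares $i.f = g.i'$ and $j.g = f.j'$ commute (the equation $Mg.H' = H.g$ being assumed), and that $f$ is the unique such map. The first square is exactly how $f$ was defined. For the second square, I would compare $i.f.j'$ and $i.j.g$ after postcomposing with $i$ (which is monic, being split by $j$): on one side $i.f.j' = g.i'.j'$, and on the other $i.j.g = t.H.g = t.Mg.H' = g.t.H' = g.i'.j'$, using $t.H = i.j$, the hypothesis, naturality of $t$, and $t.H' = i'.j'$. Since $i$ is monic this gives $f.j' = j.g$.

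Uniqueness of $f$ is immediate from uniqueness in the universal property of the equalizer: any $f$ making $(f,g)$ a morphism of HDRs must in particular satisfy $i.f = g.i'$, and \reflemm{pbksqforsqofHDRs} says such a map is unique. I do not anticipate a serious obstacle here; the only point requiring a little care is invoking the strength of the deformation retraction ($H'.i' = r.i'$), which is not part of the definition of an HDR but was derived inside the proof of \refprop{HDRiscoalgebra}, so I would cite that explicitly.
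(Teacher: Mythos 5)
Your proposal is correct and follows essentially the same route as the paper: both define $f$ via \reflemm{pbksqforsqofHDRs} (the equalizer/pullback property of $i$ with respect to $r$ and $H$) and then verify the retract square $j.g = f.j'$ by a short computation using $Mg.H' = H.g$, $t.H = i.j$ and naturality of $t$. Your explicit verification of the equalizing condition via the strength equation $H'.i' = r.i'$ (citing the proof of \refprop{HDRiscoalgebra}) makes precise a step the paper leaves implicit, and your use of split monicity of $i$ instead of the paper's direct chain starting from $j.g = j.i.j.g$ is an inessential variation.
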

\begin{proof}
The unique \(f\) is defined by virtue of \reflemm{pbksqforsqofHDRs}.
It remains to verify that square \(j' \to j\) between the retracts
commutes:
\begin{align*}
    j . g &= j . i . j . g  = j . t . H . g = t . M j. H . g \\
          &= t . M j . M g . H' = t . M (j . g) . H' \\ 
          &= j . g . t . H' = j. g. i' . j' = j . i . f . j' = f . j'
\end{align*}
So the conclusion follows.
\end{proof}

The following proposition combines the previous two observations and will be
used for developing the Frobenius construction
(\reflemm{frobeniusStableUnderComposition}) and for developing the theory of
HDRs in simplicial sets (e.g., \reftheo{HDRpreshofT}).
\begin{prop}{HDRsareMcoalgebras}
The functor~\eqref{HDRcodfunctor} is comonadic, where the corresponding
comonad is just the Moore functor \(M: \E \to \E\) with comonad structure
\((s, \Gamma)\). Specifically
\begin{equation}\label{HDRcodequivalence}
	\cod : \cat{HDR} \to \coalgcat{M},
\end{equation}
which sends \((i, j, H)\) to \((B, H : B \to MB)\), is an equivalence of categories.
\end{prop}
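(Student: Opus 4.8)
The plan is to show directly that \eqref{HDRcodequivalence} is an equivalence of categories, and then invoke Beck's (co)monadicity theorem—or rather, deduce comonadicity from the fact that an equivalence composed with the canonical forgetful functor $\coalgcat{M} \to \E$ is of course comonadic, with the comonad $M$ together with its counit $s$ and comultiplication $\Gamma$ (which we have assumed turns $(M,s,\Gamma)$ into a comonad in \refdefi{moorestructure}(ii)). So the real content is to check that the functor sending an HDR $(i,j,H)$ to the $M$-coalgebra $(B, H\colon B \to MB)$ is a well-defined equivalence, and that it commutes with the forgetful functors down to $\E$.

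First I would check the functor is well-defined on objects: given an HDR $(i,j,H)$, the map $H\colon B \to MB$ is an $M$-coalgebra structure on $B$. The counit law $s.H = 1_B$ is immediate from the defining conditions of an HDR, and the coassociativity law $MH.H = \Gamma.H$ is precisely the fourth condition in \refdefi{HDR}. On morphisms I would send a morphism of HDRs $(f,g)\colon (i',j',H') \to (i,j,H)$ to the underlying map $g\colon B' \to B$; the condition $Mg.H' = H.g$ built into the definition of a morphism of HDRs is exactly the statement that $g$ is a morphism of $M$-coalgebras. Functoriality is then trivial, and it is clear by construction that the triangle over $\E$ (i.e.\ $\cod$ on the left, the forgetful functor $\coalgcat{M} \to \E$ on the right) commutes.

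Next I would establish that this functor is full and faithful. Faithfulness is immediate since a morphism of HDRs is determined by its underlying pair $(f,g)$, and $f$ is in turn determined by $g$ via \reflemm{pbksqforsqofHDRs} (the component $A' \to A$ is forced). Fullness is precisely \refcoro{retractscommute}: given HDRs $(i',j',H')$ and $(i,j,H)$ and a map $g\colon B' \to B$ satisfying $Mg.H' = H.g$ (i.e.\ a morphism of the associated coalgebras), there is a unique $f$ making $(f,g)$ a morphism of HDRs. Finally, for essential surjectivity, I would take an arbitrary $M$-coalgebra $(B, h\colon B \to MB)$ and reconstruct an HDR from it: by \reflemm{pbksqforsqofHDRs} the candidate $A$ should be the equalizer of $r_B, h \colon B \to MB$—equivalently the pullback of $r_B$ along $h$—with $i\colon A \to B$ the inclusion; then set $j := (t_B.h).i$ (well-defined on the equalizer since $t_B.r_B = 1$ forces $t_B.h.i = i$ there... let me phrase it as: $j$ is induced so that $i.j = t_B.h$) and $H := h$. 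One then verifies $j.i = 1_A$, $s_B.H = 1_B$, $t_B.H = i.j$, and $MH.H = \Gamma.H$; the first three are formal consequences of the coalgebra laws and the equalizer property, and the last is exactly coassociativity $Mh.h = \Gamma.h$.

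The main obstacle I expect is the essential surjectivity step—specifically verifying that the equalizer $A$ of $(r_B, h)$ really does carry the structure making $(i,j,H)$ into an HDR, and in particular that $j.i = 1_A$. This requires knowing that $h$ restricted along $i$ equals $r_B.i$ (the "strongness" of the deformation retraction), which in the HDR setting is derived inside the proof of \refprop{HDRiscoalgebra} from the interplay of $\Gamma$ and $t$; here one needs the analogous computation starting purely from the coalgebra laws $s.h = 1$ and $Mh.h = \Gamma.h$, together with naturality of $s$, $t$, $r$, $\Gamma$ and the unit/counit interactions recorded in \refdefi{pathobjcat}. Once this identity $h.i = r_B.i$ is in hand, everything else is routine diagram chasing. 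An alternative, slicker route would be to bypass essential surjectivity by hand and instead verify Beck's precise comonadicity conditions for $\cod\colon \cat{HDR} \to \E$ directly (the functor has a right adjoint sending $X$ to the HDR from \refrema{connisHDR}, i.e.\ $r_X\colon X \to MX$ with retraction data $(t_X, \Gamma_X)$, and one checks it creates equalizers of $\cod$-split pairs); but given that \reflemm{pbksqforsqofHDRs} and \refcoro{retractscommute} are already available, the equivalence-of-categories argument above is the most economical.
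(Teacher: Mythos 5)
Your proof is correct and follows essentially the same route as the paper: well-definedness is read off from \refdefi{HDR}, fully faithfulness is exactly \refcoro{retractscommute} (with \reflemm{pbksqforsqofHDRs} forcing the map on domains), and essential surjectivity is obtained by reconstructing the HDR from a coalgebra $(B,h)$ via the equalizer of $(r_B,h)$, which is precisely how the paper invokes \reflemm{pbksqforsqofHDRs} and its proof. The only remark: the identity $h.i = r_B.i$ that you flag as the main obstacle holds by definition once $A$ is the equalizer of $(r_B,h)$; the genuinely nontrivial point is that $t.h$ equalizes $(r_B,h)$ so that $j$ exists, and this follows formally from naturality of $t$, coassociativity $Mh.h=\Gamma.h$, and the axiom $t.\Gamma = r.t$.
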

\begin{proof}
By \refdefi{HDR}, the functor~\eqref{HDRcodequivalence} has the correct codomain, i.e.
it maps into \(M\)-coalgebras.
By \refcoro{retractscommute}, the functor is full and faithful.
Further, \reflemm{pbksqforsqofHDRs} and its proof imply that that it is essentially
surjective. So we have an equivalence of categories.
\end{proof}

We can use the previous result to prove the following:
\begin{coro}{HDRhaspullbacks}
The category of HDRs has pullbacks.
\end{coro}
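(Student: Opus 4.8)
The plan is to transport the existence of pullbacks across the equivalence of categories established in \refprop{HDRsareMcoalgebras}. Since $\cat{HDR} \simeq \coalgcat{M}$, it suffices to show that the category of $M$-coalgebras has pullbacks, and then note that having pullbacks is a property preserved and reflected by any equivalence of categories.

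For the category $\coalgcat{M}$ of coalgebras for the comonad $(M, s, \Gamma)$, the key observation is that $M$ preserves pullbacks (this is part of the definition of a Moore structure, \refdefi{moorestructure}) --- equivalently, $M$ preserves all finite limits, since it obviously preserves the terminal object up to the canonical comparison and $\E$ has finite limits. It is a standard fact that if a comonad $G$ on a category $\E$ with finite limits is given by a limit-preserving endofunctor, then the forgetful functor $\coalgcat{G} \to \E$ creates those limits which $G$ preserves; in particular $\coalgcat{M}$ has pullbacks, computed as in $\E$. Concretely, given a cospan of coalgebras $(A, a\co A \to MA) \to (C, c\co C \to MC) \leftarrow (B, b\co B \to MB)$ over $\E$, one forms the pullback $A \times_C B$ in $\E$; since $M$ preserves this pullback, the maps $a$, $b$, $c$ assemble (using the universal property of $M(A \times_C B) \cong MA \times_{MC} MB$) into a unique coalgebra structure on $A \times_C B$, and one checks the counit and coassociativity laws hold because they hold componentwise for $a$, $b$, $c$ and $M$ reflects the relevant equalities through the isomorphism $\nu$. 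This makes $A \times_C B$ with its induced coalgebra structure the pullback in $\coalgcat{M}$.

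The main (and only real) obstacle is the bookkeeping with the mediating isomorphism $\nu\co MA \times_{MC} MB \xrightarrow{\sim} M(A \times_C B)$ from pullback-preservation of $M$: one must verify that the coalgebra axioms for the induced structure $\nu^{-1}.(a \times_C b)\co A \times_C B \to M(A \times_C B)$ follow from those of $a$ and $b$ together with naturality of $s$, $\Gamma$ and $\nu$. This is entirely routine and of the same flavour as the verifications in \refprop{MooreStructure->AWFS}, so I would leave it to the reader or dispatch it in one line. Finally, since $\cod\co \cat{HDR} \to \coalgcat{M}$ is an equivalence by \refprop{HDRsareMcoalgebras}, and equivalences create pullbacks, $\cat{HDR}$ has pullbacks as well; explicitly, the pullback of a cospan of HDRs has underlying map the pullback of the underlying maps, with retraction and homotopy obtained from the coalgebra structure transported back along \refprop{HDRiscoalgebra}.
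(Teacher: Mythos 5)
Your argument is correct and is essentially the paper's own proof: both rest on \refprop{HDRsareMcoalgebras} (comonadicity of $\cod$) together with the standard fact that the forgetful functor from coalgebras creates those limits the comonad preserves, applied to the pullback-preservation of $M$. One small caveat: your parenthetical claim that $M$ preserves all finite limits because it ``obviously preserves the terminal object'' is false (in simplicial sets $M1$ is the object of path lengths, not terminal), but this aside is never used, since only pullback-preservation enters the argument.
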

\begin{proof}
  By \refprop{HDRsareMcoalgebras}, the functor~\eqref{HDRcodfunctor} creates limits
  preserved by \(M\). So it follows because \(M\) preserves pullbacks.
\end{proof}

To end this section, we record the following fact about HDRs, which holds when
the unit \(r_X : X \to MX\) is a \emph{cartesian} natural transformation, i.e.\
all naturality squares are pullbacks.  Note this happens to be true for
simplicial sets -- but it is not an assumption of our theory.\index{cartesian natural transformation}
\begin{prop}{sqofHDRspbk}
When \(r_X : X \to MX\) is a \emph{cartesian} natural transformation,
 every morphism of HDRs is a pullback square.
\end{prop}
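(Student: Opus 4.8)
The plan is to show that given a morphism of HDRs $(f,g)$ from $(i',j',H')$ to $(i,j,H)$, the square
\[
\begin{tikzpicture}[baseline={([yshift=-.5ex]current bounding box.center)}]
\matrix (m) [matrix of math nodes, row sep=2em, column sep=3.5em]{
|(a)| {A'} & |(b)| {A} \\
|(c)| {B'} & |(d)| {B} \\
};
\begin{scope}[every node/.style={midway,auto,font=\scriptsize}]
\path[->]
(a) edge node [above] {$f$} (b)
(c) edge node [below] {$g$} (d)
(a) edge node [left] {$i'$} (c)
(b) edge node [right] {$i$} (d);
\end{scope}\end{tikzpicture}
\]
is a pullback. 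The key idea is to exploit \reflemm{pbksqforsqofHDRs}, which (using that $r$ is cartesian) identifies $i$ and $i'$ as equalizers. Concretely, \reflemm{pbksqforsqofHDRs} says the square with corners $A, B, B, MB$ formed by $i, i, r, H$ is a pullback, and similarly for the primed data; moreover $i$ is the equalizer of $r_B, H : B \to MB$ and $i'$ the equalizer of $r_{B'}, H' : B' \to MB'$.

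First I would set up the comparison. Since $r$ is a cartesian natural transformation, the naturality square for $r$ at $g : B' \to B$ is a pullback:
\[
\begin{tikzpicture}[baseline={([yshift=-.5ex]current bounding box.center)}]
\matrix (m) [matrix of math nodes, row sep=2em, column sep=3.5em]{
|(a)| {B'} & |(b)| {B} \\
|(c)| {MB'} & |(d)| {MB} \\
};
\begin{scope}[every node/.style={midway,auto,font=\scriptsize}]
\path[->]
(a) edge node [above] {$g$} (b)
(c) edge node [below] {$Mg$} (d)
(a) edge node [left] {$r_{B'}$} (c)
(b) edge node [right] {$r_B$} (d);
\end{scope}\end{tikzpicture}
\]
Also, by the definition of a morphism of HDRs we have $Mg . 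H' = H . g$. Now form the pullback $P$ of $i : A \to B$ along $g : B' \to B$, with projections $p : P \to A$ and $q : P \to B'$. Using \reflemm{pbksqforsqofHDRs} for the unprimed HDR, $i$ is the equalizer of $r_B$ and $H$; pulling this equalizer back along $g$ exhibits $q : P \to B'$ as the equalizer of $r_B . g$ and $H . g$ (equalizers are stable under pullback). But $r_B . g = Mg . r_{B'}$ by naturality and $H . g = Mg . H'$ by the morphism condition, so $q$ equalizes $Mg . r_{B'}$ and $Mg . H'$. Since $Mg$ is a monomorphism — because $g$ is split monic ($j' . i' = 1$ gives $g$ a section only after precomposition, so more carefully: $i'$ is split monic hence $Mi'$ is split monic; and in fact we can use that $M$ preserves the monomorphism $H'$, or simply that $s_{B'} . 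Mg . H' = s_{B'}.H.g$... ) — one deduces that $q$ equalizes $r_{B'}$ and $H'$ already, hence factors through $i'$. This, together with the universal property, yields the comparison map $A' \to P$ and its inverse.

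The cleanest route, which I would actually carry out, avoids the monomorphism digression: use \refprop{HDRsareMcoalgebras}. That proposition gives an equivalence $\cat{HDR} \simeq \coalgcat{M}$ via the codomain functor sending $(i,j,H)$ to $(B, H)$. Under this equivalence, a morphism of HDRs corresponds to a morphism of $M$-coalgebras, i.e. a map $g : B' \to B$ with $Mg . H' = H . g$. By \refcoro{HDRhaspullbacks} and its proof, $\cod : \cat{HDR} \to \coalgcat{M}$ creates the limits that $M$ preserves, and $M$ preserves pullbacks; so pullbacks in $\cat{HDR}$ are computed on the $B$-component. Now I claim the square above is a pullback in $\cat{HDR}$: its image under $\cod$ is the square $B' \xrightarrow{g} B$ over itself, which is trivially a pullback of $B \xrightarrow{\mathrm{id}} B$ along $g$ — wait, more precisely, I compute the pullback of the HDR-morphism $i$ against itself along $g$ in $\cat{HDR}$ and compare. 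Since the codomain of $i : (A,\dots) \to (B,\dots)$ is $(B,H)$ and pullbacks are created on codomains, and $\cod(i') = (B', H')$ with $\cod$ of the comparison being $g$, the pullback in $\cat{HDR}$ of $i$ along the morphism $g : (B',H') \to (B,H)$ has apex exactly the HDR whose underlying codomain is $B'$; by \reflemm{pbksqforsqofHDRs} applied to both HDRs one checks its domain is $A'$ and the projection is $i'$. Hence the given square is the pullback square, so it is in particular a pullback in $\E$ on underlying objects, as $\cod$ reflects such pullbacks.

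The main obstacle I anticipate is bookkeeping: making precise that ``$\cod$ creates the pullbacks preserved by $M$'' lets one conclude that the specific square in question — not merely some abstract pullback — is a pullback, which requires checking that the HDR structure on the pullback object genuinely restricts to the given HDR structure $(i', j', H')$ on $A'$. This is where \reflemm{pbksqforsqofHDRs} does the real work: it pins down $A'$ as the equalizer of $r_{B'}$ and $H'$, so there is no ambiguity in the HDR structure on the apex, and the uniqueness clause of \refcoro{retractscommute} handles the retraction map $j'$. Once that identification is in place the statement is immediate.
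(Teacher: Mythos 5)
There is a genuine gap, and it is the same gap in both of your routes: nowhere do you actually use the hypothesis that $r$ is a cartesian natural transformation, yet this hypothesis is exactly what the proof needs (all the other ingredients you invoke --- \reflemm{pbksqforsqofHDRs}, comonadicity of $\cod$, creation of pullbacks --- hold without it, so an argument that never touches cartesianness cannot be complete). Concretely, writing the HDRs as $i\colon A\to B$, $i'\colon A'\to B'$, the point to prove is $A'\cong B'\times_B A$. By \reflemm{pbksqforsqofHDRs}, $A'=\mathrm{Eq}(r_{B'},H')$ while, as you correctly observe, $B'\times_B A=\mathrm{Eq}(r_B.g,\,H.g)=\mathrm{Eq}(Mg.r_{B'},\,Mg.H')$. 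Identifying these two equalizers is the whole content of the proposition. In your first route you try to do it by cancelling $Mg$, which requires $Mg$ monic; but $g$ is an arbitrary map between the codomains of the HDRs (nothing in a morphism of HDRs forces it to be monic --- the split mono is $i'$, not $g$), so this step fails, as your trailing parenthetical already suggests. Your second route does not ``avoid the monomorphism digression'': it buries the same step in the phrase ``one checks its domain is $A'$''. Moreover, ``the square is a pullback in $\cat{HDR}$'' is a category error --- the square in question is a single morphism of $\cat{HDR}$, and the fact that $\cod\colon\cat{HDR}\to\E$ creates pullbacks concerns pullbacks of cospans of HDR-morphisms; it says nothing about the underlying square (domains and codomains) of one HDR-morphism being a pullback in $\E$. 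Likewise ``the pullback of the HDR $i$ along the morphism $g$'' is not a construction available here except via \refprop{domHDRfibration}, whose cartesian morphisms are by definition those where the square is a pullback, so appealing to it would be circular.

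The repair is essentially your first route, with the $Mg$-cancellation replaced by a use of cartesianness, and this is how the paper argues: the square of \reflemm{pbksqforsqofHDRs} for $i'$ (a pullback) pasted with the naturality square of $r$ at $g\colon B'\to B$ (a pullback, precisely because $r$ is cartesian) gives a pullback from $i'$ to $r_B$; by $Mg.H'=H.g$ and $i.f=g.i'$ this composite coincides with the pasting of the given square $(f,g)\colon i'\to i$ with the lemma square for $i$; since the latter lemma square and the composite are pullbacks, pullback cancellation shows $(f,g)$ is a pullback. So your overall strategy was close, but as written the decisive step is missing in route 1 and asserted without proof in route 2.
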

\begin{proof}
 If \diag{ B' \ar[d]_{i'} \ar[r]^f & B \ar[d]^i \\
 A' \ar[r]_g & A }
 is (the top part of) a morphism of HDRs, then it fits into a commutative cube:
 \diag{ & B \ar'[d]^i[dd] \ar[rr]^i & & A \ar[dd]^H \\
B' \ar[rr]^(.7){i'} \ar[dd]_{i'} \ar[ur]^f & & A' \ar[ur]^g \ar[dd]^(.3){H'} \\
& A \ar'[r]^r[rr] & & MA \\
A' \ar[rr]_r \ar[ur]^g & & MA' \ar[ur]_{Mg} }
In this cube front and back faces are pullbacks (by~\reflemm{pbksqforsqofHDRs}), as is the bottom face (because $r$ is a cartesian natural transformation). Therefore the top face is a pullback as well.
\end{proof}

\subsubsection{HDRs are bifibred}
In the previous section, we have studied the codomain functor \(\cod: \cat{HDR}
\to \E\) and obtained the main result that it is comonadic. Next, we will turn
to the \emph{domain} functor \(\dom: \cat{HDR} \to \E\). 
\begin{defi}{pbksqofHDRs}\index{cartesian morphism of HDRs}
A morphism of HDRs will be called a \emph{cartesian morphism of HDRs} if also
 the bottom part, i.e.\ the square for $j'$ and $j$, is a pullback.
\end{defi}

\begin{defi}{cocartsqofHDRs}
	A morphism of HDRs will be called a \emph{cocartesian morphism of HDRs} if the
	square for \(i'\), \(i\) and the square for \(j'\), \(j\) are pushouts.
\end{defi}

The main result of this subsection is the following:
\begin{prop}{domHDRfibration}
  The domain functor \[\dom : \cat{HDR} \to \E\] is a \emph{bifibration}
  \index{bifibration} (see Section~\ref{sec:preliminaries}), whose cartesian
  morphisms are given by \refdefi{pbksqofHDRs}, and whose cocartesian morphisms
  are given by \refdefi{cocartsqofHDRs}. Moreover, this bifibration satisfies
  the Beck-Chevalley condition (see Box~\ref{box:beck-chevalley} on
  page~\pageref{box:beck-chevalley}).\index{Beck-Chevalley condition}
\end{prop}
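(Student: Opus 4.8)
The plan is to establish the three assertions in turn: first that $\dom : \cat{HDR} \to \E$ is a fibration with the stated cartesian morphisms, then that it is an opfibration with the stated cocartesian morphisms, and finally that the Beck–Chevalley condition holds. For the fibration part, I would start from an HDR $(i : A \to B, j, H)$ and an arrow $f : A' \to A$ in $\E$, and construct the cartesian lift by pullback: set $B' := A' \times_A B$ with $i' : A' \to B'$ the evident map, and $j' : B' \to A'$ the composite of the projection $B' \to B$ with $j$ followed by... actually $j'$ should be the other projection $B' \to A'$. The homotopy $H' : B' \to MB'$ is obtained using that $M$ preserves pullbacks: one needs $H.(\text{proj}_B) : B' \to MB$ to factor through $MB' \cong MA' \times_{MA} MB$, which requires providing a compatible map $B' \to MA'$; the natural candidate is $r_{A'}.j'$ paired appropriately, but the cleaner route is to invoke \refcoro{HDRhaspullbacks}, which already tells us $\cat{HDR}$ has pullbacks, together with \refprop{sqofHDRspbk}-style reasoning. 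Concretely, I would argue: given the cospan in $\cat{HDR}$ consisting of $(i,j,H)$ and the identity HDR structure on $A' \xrightarrow{\ \mathrm{id}\ } A'$ (mapping into $A$ via $f$ on both objects — wait, that does not typecheck). The correct move is to observe that $\dom$ factors as $\cat{HDR} \xrightarrow{\cod} \coalgcat{M}$ composed with... no; instead, I would directly verify that the pullback square $j' \to j$ (which is a pullback by the construction of $B'$ as $A' \times_A B$, since $j' = \mathrm{proj}_{A'}$) gives a cartesian morphism of HDRs in the sense of \refdefi{pbksqofHDRs}, and that this square is cartesian for $\dom$ by checking the universal property: any HDR-morphism into $(i,j,H)$ whose domain component factors through $f$ factors uniquely through $(i',j',H')$, which reduces to the universal property of the pullback $B' = A' \times_A B$ together with uniqueness from \refcoro{retractscommute}.

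For the opfibration part, given $(i : A \to B, j, H)$ and $g : A \to A''$, I would form the pushout $B'' := B \coprod_A A''$ with $i'' : A'' \to B''$ and use that the AWFS $(L,R)$ from the Moore structure makes HDRs into coalgebras (\refprop{HDRiscoalgebra}), so that HDRs form a cofibred structure (\refprop{HDRiscofibredstructure}, \refcoro{coalgebras-cofibred}): coalgebra structures push forward uniquely along pushout squares. This directly gives the cocartesian lift, and the cocartesian morphisms are exactly those where both the $i$-square and the $j$-square are pushouts, matching \refdefi{cocartsqofHDRs} — the $j$-square being a pushout follows because the coalgebra pushforward from \refcoro{coalgebras-cofibred} is realized on the retraction data as well. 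The universal property of the opcartesian lift is then the universal property of the pushout $B''$ together with uniqueness of coalgebra morphisms.

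For the Beck–Chevalley condition, I would appeal to the general framework: by \reflemm{lcc-beck-chevalley} the codomain bifibration on $\E$ itself satisfies Beck–Chevalley, and the point is to transfer this through the comonadic functor $\cod : \cat{HDR} \to \coalgcat{M}$ and the forgetful functor $\coalgcat{M} \to \E$. More directly: take a pullback square in $\E$ (in the base of $\dom$) and the induced square of HDRs; I want to show the composite of a cartesian lift followed by an opcartesian lift equals the opcartesian followed by the cartesian. On the $B$-components, a cartesian lift is a pullback and an opcartesian lift is a pushout in $\E$, and these interact via Beck–Chevalley for $\cod : \E^{\to} \to \E$ exactly as in Box~\ref{box:beck-chevalley} — so the $B$-level square commutes up to the canonical iso. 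The $A$-components are handled by the base square itself. The homotopies agree because $M$ preserves pullbacks and (being a left adjoint on the relevant slices, or by the explicit pushout-transport formula) the pushouts are compatible; uniqueness from \refcoro{retractscommute} then forces the two comparison HDR-morphisms to coincide.

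The main obstacle I anticipate is the opfibration part, specifically checking that the pushed-forward homotopy $H''$ genuinely satisfies the HDR identities — in particular $MH''.H'' = \Gamma.H''$ — on the pushout $B'' = B \coprod_A A''$. Pushouts are not preserved by $M$ in general, so one cannot simply transport $H$; instead one must use the coalgebra pushforward of \refcoro{coalgebras-cofibred}, which packages exactly the verification that the coassociativity (equivalently the $\Gamma$-identity) survives, and then unwind \refprop{HDRiscoalgebra} to read this back as an HDR structure. Getting the bookkeeping right between the pushout in $\E$, the coalgebra pushforward, and the explicit HDR data is the delicate step; everything else is a diagram chase using pullback/pushout universal properties and the uniqueness statements in \reflemm{pbksqforsqofHDRs} and \refcoro{retractscommute}.
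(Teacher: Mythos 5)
Your overall route is the same as the paper's: build the $\dom$-cartesian lift by pulling back the retract pair and transporting the homotopy using that $M$ preserves pullbacks, build the cocartesian lift by the coalgebra/cofibred pushforward, and inherit Beck--Chevalley from the codomain-level condition on $\E$ (\reflemm{lcc-beck-chevalley}) via the retract-pair fibration. The genuine gap is on the cartesian side, which you treat as the easy half but which is the computational core of the paper's argument (\reflemm{onppksqofHDRs}(i)). You never produce a valid homotopy on $B' = A' \times_A B$: under $MB' \cong MA' \times_{MA} MB$, the pair $(r_{A'}.j',\, H.g)$ does not even land in the pullback, since $Mj.H.g = \alpha.(j, M!.H).g$ is the \emph{constant path of the same length as} $H.g$, not the trivial path; the correct $MA'$-component is $\alpha.(j', M!.H.g)$, so $H' = (\alpha.(j', M!.H.g),\, H.g)$. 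Your proposed shortcut via \refcoro{HDRhaspullbacks} cannot repair this: pullbacks in $\cat{HDR}$ are created by $\cod$ (they are limits of $M$-coalgebras), whereas the lift you need is cartesian over $\dom$, and, as you noticed yourself, there is no cospan in $\cat{HDR}$ whose pullback computes it. Even with the correct formula one must still verify $t.H' = i'.j'$ and $MH'.H' = \Gamma.H'$, which requires the non-routine computation (postcomposing with the jointly monic $MMg$ and $MMj'$ and using the strength axioms for $\Gamma$) that your proposal omits entirely.

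The universal-property step is also asserted rather than proved: to see that the constructed square is $\dom$-cartesian you must check that the comparison map obtained from the pullback in $\E$ is compatible with the homotopies, i.e.\ $Mc'.H_2 = H'.c'$ for the induced $c'$; \refcoro{retractscommute} runs in the opposite direction (from homotopy-compatibility of the codomain-level map to existence of the domain-level one), so it does not deliver this --- one needs the identity $Mj.H = \alpha.(j, M!.H)$ from \refprop{HDRiscoalgebra} and a short calculation, which is exactly the ``detection'' part of the paper's proof. By contrast, the cocartesian half, which you single out as the anticipated main obstacle, is precisely where general machinery does all the work: coalgebra structures push forward uniquely along pushouts (\refprop{HDRiscofibredstructure}, \refcoro{coalgebras-cofibred}), the $j$-square is automatically a pushout by pasting of pushouts, and no preservation of pushouts by $M$ is needed. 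Your Beck--Chevalley sketch is fine in outline and agrees with the paper's reduction to the retract-pair level.
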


Before proving the proposition, we prove the following two lemmas:
\begin{lemm}{retractfibration}
    Suppose \cat{r} is the universal retract, in other words the category
    \diag{\bullet_0 \ar[r]^{i} & \bullet_1  \ar[r]^{j} & \bullet_0}
    with \(j.i = 1\). Then the functor \[\ev_0: \E^{\cat{r}} \to \E\]
    which sends a retract pair \((i,j)\) to \(\dom i\)
    is a bifibration satisfying the Beck-Chevalley condition.
\end{lemm}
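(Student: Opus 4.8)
The plan is to describe explicitly the (co)cartesian lifts for $\ev_0 : \E^{\cat{r}} \to \E$ and then verify the universal properties by hand; since $\cat{r}$ is a very small category, all the bookkeeping is light. An object of $\E^{\cat{r}}$ is a pair $(i : X \to Y,\, j : Y \to X)$ with $j.i = 1_X$, and a morphism $(i',j') \to (i,j)$ is a pair $(f : X' \to X,\, g : Y' \to Y)$ with $g.i' = i.f$ and $f.j' = j.g$; the functor $\ev_0$ sends this to $f : X' \to X$. First I would treat the fibration (cartesian) part. Given $(i,j)$ over $X$ and an arrow $f : X' \to X$ in $\E$, form the pullback $Y' := X' \times_X Y$ with projections $f : Y' \to Y$ (abusing notation) and $\pi : Y' \to X'$; the section $i' : X' \to Y'$ is induced by $(1_{X'}, i.f)$, and $j' := \pi$. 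One checks $j'.i' = 1$ and that $(f, \text{proj})$ is a morphism $(i',j') \to (i,j)$ lying over $f$. This square is cartesian: given any $(h, k) : (i'',j'') \to (i,j)$ over a map factoring through $f$, the component into $Y'$ is forced by the pullback universal property, and one verifies it commutes with the sections and retractions. So $\ev_0$ is a Grothendieck fibration.

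Next the opfibration (cocartesian) part. Given $(i,j)$ over $X$ and $f : X \to X'$, I would take the pushout $Y' := X' +_X Y$ along $i : X \to Y$ and $f : X \to X'$, with coprojections $\iota : Y \to Y'$ and $f' : X' \to Y'$. The section $i' : X' \to Y'$ is $f'$ itself; the retraction $j' : Y' \to X'$ is induced by the cocone $(1_{X'} : X' \to X',\; f.j : Y \to X')$, which is well-defined since $f.j.i = f = 1_{X'}.f$. One checks $j'.i' = 1_{X'}$, so $(i',j')$ is an object of $\E^{\cat{r}}$, and $(f, \iota)$ is a morphism $(i,j) \to (i',j')$ over $f$. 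Cocartesianness follows dually to the cartesian case: a morphism out of $(i,j)$ whose base factors through $f$ factors uniquely through this pushout, and the factorisation respects the sections and retractions because the relevant cones agree after composing with the pushout coprojections. Hence $\ev_0$ is also an opfibration, and therefore a bifibration.

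Finally I would verify the Beck--Chevalley condition, which by the remark in Box~\ref{box:beck-chevalley} requires checking only one implication. Given a pullback square in $\E$ (the base) and the cartesian and cocartesian lifts constructed above, one must show that the induced comparison square in $\E^{\cat{r}}$ is again (co)cartesian; concretely this reduces to showing that a certain canonical square of domains is a pushout and a certain square of codomains is a pullback. Here the $Y$-components are built from iterated pullbacks and pushouts of the base square, so the condition amounts to a pasting statement: pushing out along $i$ and then pulling back along a base map agrees with first pulling back and then pushing out. I expect this step to be the main obstacle, since unlike the earlier lemmas it is not literally an instance of \reflemm{lcc-beck-chevalley} (the relevant colimit is a pushout along $i$, not along a monomorphism coming from a factorisation). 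The cleanest route I see is to invoke \reflemm{lcc-beck-chevalley} for the codomain bifibration $\cod : \E^{\to} \to \E$, after observing that the $Y$-component of a morphism in $\E^{\cat{r}}$ sits inside a cube of the shape \eqref{eq:beck-chevalley-condition} whose faces are exactly the pullbacks and pushout we have just constructed; the desired compatibility then transfers from the statement already proved for $\cod$. The section and retraction data impose no further constraint once the $Y$-component is pinned down, since they are uniquely induced in each case, so the Beck--Chevalley square for $\ev_0$ follows from the one for $\cod$.
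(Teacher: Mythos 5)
Your proposal is correct and follows essentially the same route as the paper: cartesian lifts by pulling back the retraction $j$ along the base map, cocartesian lifts by pushing out the section $i$ with the uniquely induced retraction, and the Beck--Chevalley condition obtained by applying the cube statement of \reflemm{lcc-beck-chevalley} (pullback preserving pushouts) to the codomain components, the section/retraction data being uniquely induced. Your hedge that this last step is ``not literally an instance'' of \reflemm{lcc-beck-chevalley} is unnecessary---that lemma carries no monomorphism or factorisation hypothesis and applies verbatim to the cube of codomain components, which is exactly how the paper concludes.
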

\begin{proof}
	To prove that it is a fibration, suppose \(i : A \to B\), \(j: B \to
	A\) is a retract pair and suppose \(f: A' \to A\) is any morphism. Then
	we can form a double pullback 
    \diag{A' \ar[r]^{f} \ar[d]_{i':=(1,i.f)}&  A \ar[d]^{i} \\
        B' \ar[r]^{g} \ar[d]_{j'} & B \ar[d]^j \\
      A' \ar[r]^{f} & A}
      resulting in a morphism of retract pairs \((i', j') \to (i, j)\).
      It is enough to verify that this morphism is cartesian over \(f\),
      which is very easy.

      Similarly, for a pair \((i' : A' \to A, j' : B'\to B)\) and a morphism 
      \(f: A' \to A\), the double pushout diagram
    \diag{A' \ar[r]^{f} \ar[d]_{i'}&  A \ar[d]^{i} \\
	    B' \ar[r]^{g} \ar[d]_{j'} & B \ar[d]^{[f.j', 1]} \\
      A' \ar[r]^{f} & A}
      yields a retract pair \((i, [f.j', 1])\) and it is easy to see that this morphism
      of retract pairs satisfies the universal property of a cocartesian morphism over
      \(f\).

      For the Beck-Chevalley condition we prove the `\(\Rightarrow\)' direction
      of the definition explained in Box~\ref{box:beck-chevalley} on
      page~\pageref{box:beck-chevalley}. So consider the situation
      of~\eqref{eq:defn:beck-chevalley} for the case at hand, i.e.\ a diagram:
\[
  \begin{tikzpicture}[baseline={([yshift=-.5ex]current bounding box.center)}]
    \matrix (m) [matrix of math nodes, row sep=3em,
    column sep=3em]{ 
	    |(a')| {A'} & |(a)| {A} \\
	    |(b')| {B'} & |(b)| {B} \\
	    |(c')| {A'} & |(c)| {A}
    \\};
    \matrix (n) [matrix of math nodes, row sep=3em,
    column sep=3em, position=30:-1.7 from m]{ 
	    |(ba')| {C'} & |(ba)| {C} \\
	    |(bb')| {D'} & |(bb)| {D} \\
	    |(bc')| {C'} & |(bc)| {C}
    \\};
    \begin{scope}[every node/.style={midway,auto,font=\scriptsize}]
      \path[->]
	      (ba) edge (bb)
	      (ba') edge (ba)
	      (ba') edge [dashed] (bb')
	      (bb') edge [dashed] (bb)
	      (bc') edge [dashed] node [pos=0.6, anchor=center, fill=white] {$f'$} (bc)
	      (bb') edge [dashed] (bc')
	      (bb) edge [dashed] (bc);
      \path[->]
	      (a') edge node [anchor=center, fill=white] {\(f\)} (a)
	      (a') edge (b')
	      (a) edge (b)
	      (b') edge (b)
	      (c') edge node [below] {$f$} (c)
	      (b') edge (c')
	      (b) edge (c);
      \path[->]
	      (bc) edge node [below right] {$g$} (c)
	      (bc') edge node [anchor=center, fill=white] {$g'$} (c')
	      (bb) edge (b)
	      (bb') edge [dashed] (b')
	      (ba) edge node [below right] {$g$} (a)
	      (ba') edge (a');
      \end{scope}
  \end{tikzpicture} 
\]
where the bottom and top squares pullbacks, the back (vertical) squares form a
double pullback, and the right-hand vertical squares form a double pushout.  It
needs to be shown that if the front vertical squares are a double pullback,
then the left vertical squares are a double pushout. But this follows
immediately from the assumption on \(\E\) stated at the beginning of
Section~\ref{sec:preliminaries}.
\end{proof}

\begin{lemm}{onppksqofHDRs}
    Suppose we have a morphism of retract pairs \((i', j') \to (i,j)\) given by
    \(f: A' \to A\), \(g : B' \to B\). Then if this morphism is
    \begin{enumerate}[(i)]
 \item a cartesian morphism of retract pairs, and \(H : B \to M B\) gives
    \((i, j)\) the structure of an HDR, then there is a unique HDR structure
    on the pair \((i', j')\) such that the cartesian morphism is a cartesian morphism of HDRs.
 \item a cocartesian morphism of retract pairs, and \(H : B' \to M B'\) gives
    \((i', j')\) the structure of an HDR, then there is a unique HDR structure
    on the pair \((i, j)\) such that the cocartesian morphism is a cocartesian morphism of HDRs.
 \end{enumerate}
\end{lemm}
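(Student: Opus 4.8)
The two parts are structurally parallel, so I would treat them in tandem. Throughout I would use that a cartesian (resp.\ cocartesian) morphism of retract pairs in the sense of \reflemm{retractfibration} is one in which \emph{both} the $i$-square and the $j$-square are pullbacks (resp.\ pushouts): since $j.i = 1$ and $j'.i' = 1$, the outer rectangle obtained by stacking the two squares is the identity square, so by the pasting lemma the $j$-square is a pullback (resp.\ pushout) as soon as the $i$-square is. Consequently, once I have produced an HDR structure making the given square a morphism of HDRs, it is automatically a cartesian (resp.\ cocartesian) morphism of HDRs in the sense of \refdefi{pbksqofHDRs} (resp.\ \refdefi{cocartsqofHDRs}), and the only real work is to build the HDR structure and verify its four defining equations.

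For (i), write the cartesian morphism as $f\colon A' \to A$, $g\colon B' \to B$, so that $B'$ is the pullback of $j\colon B \to A$ along $f$ with legs $g$ and $j'$, and $i'$ is the induced section. As $M$ preserves pullbacks, $MB' = MB \times_{MA} MA'$ with legs $Mg, Mj'$. The key preliminary observation is that for \emph{any} HDR $(i,j,H)$ one has
\[ Mj.H = \alpha_A.(j,\, M!.H), \]
i.e.\ $j$ carries the deformation $H$ to a constant path at the retraction: applying $Mt$ to $MH.H = \Gamma.H$ and using $Mt.MH = Mi.Mj$ and the axiom $Mt.\Gamma = \alpha.(t,M!)$ (Box~\ref{box:Gamma}, \refdefi{pathobjcat}), together with naturality of $\alpha$, gives $Mi.Mj.H = Mi.\alpha_A.(j,M!.H)$, and $Mi$ is split monic. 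This identity forces the $MA'$-component of any candidate HDR structure $H'$ making $g$ a morphism of HDRs, so $H'$ must be $\langle H.g,\; \alpha_{A'}.(j',M!.H.g)\rangle\colon B' \to MB \times_{MA} MA'$ (uniqueness), and conversely this formula does land in the pullback, by the displayed identity and naturality of $\alpha$. It then remains to check $s.H' = 1_{B'}$, $t.H' = i'.j'$ and $MH'.H' = \Gamma.H'$. The first two are immediate componentwise computations using $s.H=1$, $t.H=i.j$, $g.i'=i.f$ and the strength axioms $s.\alpha = t.\alpha = \mathrm{pr}$. For the third, the $MB$-component reduces on both sides to $\Gamma_B.H.g$ via the $\Gamma$-condition for $H$ and naturality of $\Gamma$, while the $MA'$-component is where one actually invokes the Moore-structure axioms governing the interaction of $\Gamma$ with the strength $\alpha$ — essentially that $\Gamma$ carries a constant path to the constant path of constant paths.

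For (ii), write the cocartesian morphism again as $(f,g)$, so $B = B' \sqcup_{A'} A$ is the pushout of $i'\colon A' \to B'$ along $f$, with coprojections $g$ and $i$, and $j = [f.j',\,1_A]$. Since every HDR is a strong deformation retract (\refprop{HDRiscoalgebra}), any HDR structure $H$ on $(i,j)$ making $g$ a morphism of HDRs must satisfy $H.g = Mg.H'$ and $H.i = r_B.i$; as $B$ is their pushout this pins $H$ down (uniqueness). For existence I would set $H := [\,Mg.H',\; r_B.i\,]\colon B \to MB$, which is well defined because the two components agree on $A'$: $Mg.H'.i' = Mg.r_{B'}.i' = r_B.g.i' = r_B.i.f = (r_B.i).f$, using strongness of $H'$, naturality of $r$, and $g.i'=i.f$. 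Then $s.H = 1_B$ and $t.H = i.j$ are componentwise, using $j.i=1$ on the $A$-part; and the $\Gamma$-condition holds componentwise too — on the $B'$-part $MH.H.g = M(Mg.H').H' = MMg.\Gamma_{B'}.H' = \Gamma_B.Mg.H' = \Gamma_B.H.g$ by the $\Gamma$-condition for $H'$ and naturality of $\Gamma$, and on the $A$-part both sides equal $r_{MB}.r_B.i$ since $H.i = r_B.i$ and $\Gamma$ fixes trivial paths (cf.\ \refrema{connisHDR}).

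The main obstacle is thus entirely bookkeeping with the Moore-structure axioms from the Appendix: in part (i), the $MA'$-component of the coassociativity clause for $H'$, which needs the identity expressing $\Gamma$ on constant paths; part (ii) is comparatively painless, needing only that $\Gamma$ fixes trivial paths. I would also want to state explicitly at the outset the pasting-lemma remark above, so that the $j$-square hypotheses in \refdefi{pbksqofHDRs} and \refdefi{cocartsqofHDRs} are seen to be met automatically; everything else is formal manipulation with the pullback/pushout and the defining equations of an HDR.
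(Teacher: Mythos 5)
Your proposal is correct in substance, and part (i) is essentially the paper's own argument: the same formula $H' = (\alpha.(j',M!.H.g),\,H.g)$, uniqueness from $M$ preserving the pullback $MB' \cong MB \times_{MA} MA'$, and the coassociativity check done componentwise, with the $MA'$-component resting on the strength axiom for $\Gamma$ from \refdefi{pathobjcat} combined once more with the identity $Mj.H=\alpha.(j,M!.H)$ from \refprop{HDRiscoalgebra} --- exactly the computation the paper carries out in full, which you only assert; it does go through, but be aware that this is the one genuinely nontrivial calculation and your gloss (``$\Gamma$ of a constant path is the constant path of constant paths'') is not quite what the axiom says. Where you genuinely diverge is part (ii): the paper disposes of it by citing \refprop{HDRiscofibredstructure}, i.e.\ by routing through the coalgebra/cofibred-structure machinery (cf.\ \refcoro{coalgebras-cofibred}), whereas you give a direct hands-on construction $H=[Mg.H',\,r_B.i]$ on the pushout, using strongness of $H'$ for well-definedness and $\Gamma.r = rM.r$ on the $A$-summand. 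Your version is more explicit and self-contained; the paper's is shorter but leans on machinery developed elsewhere.

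One caveat on your preliminary remark: the pasting/cancellation argument (``the $j$-square is a pullback/pushout as soon as the $i$-square is'') is valid for pushouts --- the $i$-square and the identity rectangle being pushouts forces the $j$-square to be one, which is exactly what you need to conclude ``cocartesian morphism of HDRs'' in \refdefi{cocartsqofHDRs} --- but the pullback half is false in general: knowing the top square and the composite are pullbacks does not make the bottom one a pullback (an easy counterexample exists in $\Sets$ with $A=A'=1$). This does not damage your proof, because a cartesian morphism of retract pairs in the sense of \reflemm{retractfibration} has the $j$-square a pullback by the very construction of the cartesian lifts (and that is also what your construction of $H'$ actually uses, and what \refdefi{pbksqofHDRs} asks for); but the justification you give for it should be replaced by this observation rather than by pasting.
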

\begin{proof}
	(i): Since any HDR structure $H'$ that would be a witness to the claim makes
\diag{ B' \ar[dr]^{H'} \ar@/^/[drr]^{H.g} \ar@/_/[ddr]_{\alpha.(j',M!.H')} \\
& MB' \ar[d]^{Mj'} \ar[r]^{Mg} & MB \ar[d]^{Mj} \\
& MA' \ar[r]_{Mf} & MA }
commute (see the proof of \refprop{HDRiscoalgebra}), it must be unique because
\(M\) preserves pullback squares.  It remains to see that \(H'\) can be defined
in this way by setting \[H' = (\alpha . (j', M!.H.g) , H.g).\] To check that
$\Gamma.H' = MH'.H'$, it suffices to prove that both sides become equal upon
postcomposing with both $MMg$ and $MMj'$. But
we have:
\begin{eqnarray*} 
MMg.MH'.H' & = & M(Mg.H').H' \\
& = & M(H.g).H' \\
& = & MH.Mg.H' \\
& = & MH.H.g \\
& = & \Gamma.H.g \\
& = & \Gamma.Mg.H' \\
& = & MMg.\Gamma.H'
\end{eqnarray*}
and
\begin{align*}
MMj.\Gamma.H' & = \Gamma.Mj.H' \\
& =\Gamma.\alpha(j',M!.H.g) \\
& =M\alpha.(\alpha.(j', M!.H.g), \Gamma.M!.H.g) \\
& =M\alpha.(Mj.H', MM!.\Gamma.H.g) \\
& =M\alpha.(Mj.H', MM!.MH.H.g) \\
& =M\alpha.(Mj.H', MM!.MH.Mg.H') \\
& = M(\alpha(j,M!.H.g)).H' \\
& = M(Mg.H').H' \\
& = MMg.MH'.H'.
\end{align*}
Note we have used an identity from the proof of \refprop{HDRiscoalgebra} here.
Lastly, we verify:
\begin{align*}
    t . H' & = t . (\alpha . (j', M!.H.g) , H.g) \\
           & = (t .\alpha . (j', M!.H.g) , t.H.g) \\
           & = (j', i.j.g) \\
           & = (1, i.f).j'
\end{align*}

(ii): This is \refprop{HDRiscofibredstructure}.
\end{proof}

We can now prove the above stated proposition:
\begin{proof}[Proof of \refprop{domHDRfibration}]
By the previous two lemmas, it remains to show that when \((i_0,j_0,H_0)\),
\((i_1,j_1,H_1)\) and \((i_2, j_2, H_2)\) are HDRs such that we have
a composite
    \diag{A_2 \ar[d]_{i_2} \ar[r]^{f'} & A_1 \ar[d]_{i_1} \ar[r]^{f} & A_0
        \ar[d]_{i_0}\\
          B_2 \ar[d]_{j_2} \ar[r]^{g'} & B_1 \ar[d]_{j_1} \ar[r]^{g} & B_0
          \ar[d]_{j_0}\\
      A_2 \ar[r]^{f'} & A_1 \ar[r]^{f} & A_0}
      which is a morphism of HDRs, then:
\begin{enumerate}[(i)]
\item If the right morphism of retract pairs is a cartesian morphism of HDRs,
	the left one is automatically a morphism of HDRs
\item If the left morphism is a cocartesian morphism of HDRs, then the
	right one is automatically a morphism of HDRs.
\end{enumerate}
(i) follows again by taking projections: 
\begin{align*}
    Mg.Mg'.H_2 = M(g.g').H_2 = H_0.(g.g') = (H_0.g).g' = Mg.(H_1.g')
\end{align*}
and
\begin{align*}
    Mj_1.Mg'.H_2 & = M(f'.j_2).H_2 \\
                 & = M(f'). \alpha. (j_2, M!. H_2) \\ 
                 & =\alpha. (f'.j_2, M!. H_2) \\
                 & =\alpha. (f'.j_2, M!. H_0.g.g') \\
                 & =\alpha. (j_1 , M!. H_0.g) . g' \\
                 &=\alpha . (j_1 , M!. H_1) . g'\\
                 &= Mj_1. H_1. g'
\end{align*}

(ii) follows again from \refprop{HDRiscoalgebra}, since the property is easy to verify
for coalgebras using \refprop{coalg-lifts}.

Observe that the Beck-Chevalley condition is now inherited from 
\reflemm{retractfibration}.
\end{proof}

The following is a first consequence of the more abstract approach we have
taken so far:
\begin{coro}{pbsqofHDRalongcartsq}
    In the category of HDRs, the pullback of a cartesian morphism of HDRs
    along a morphism of HDRs exists and is a cartesian morphism of HDRs.
\end{coro}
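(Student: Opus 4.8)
The plan is to obtain the statement from two facts already available: that $\cat{HDR}$ has pullbacks (\refcoro{HDRhaspullbacks}), and that $\dom : \cat{HDR}\to\E$ is a bifibration whose cartesian morphisms are exactly the cartesian morphisms of HDRs (\refprop{domHDRfibration}). Granting these, the corollary reduces to a single new observation, namely that $\dom$ \emph{preserves} the pullbacks that $\cat{HDR}$ has; once that is in place the conclusion follows from the general principle that the pullback of a $p$-cartesian morphism along an arbitrary morphism is again $p$-cartesian whenever the functor $p$ preserves the pullback square in question.

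First I would make the computation of pullbacks in $\cat{HDR}$ explicit. By \refprop{HDRsareMcoalgebras} the functor $\cod$ exhibits $\cat{HDR}$ as equivalent to $\coalgcat{M}$, and since $M$ preserves pullbacks the forgetful functor $\coalgcat{M}\to\E$ creates them; hence the pullback of two HDRs $(i_1,j_1,H_1)$ and $(i_2,j_2,H_2)$ over $(i_0,j_0,H_0)$ has underlying object the pullback $B_3 := B_1\times_{B_0}B_2$ in $\E$, equipped with the unique HDR structure $H_3 : B_3\to MB_3$ for which the two projections become morphisms of HDRs. To see that $\dom$ carries this pullback cone to a pullback cone in $\E$, I would use \reflemm{pbksqforsqofHDRs}: each $i_k$ is the equalizer of $r_{B_k}$ and $H_k$, and likewise $i_3$ is the equalizer of $r_{B_3}$ and $H_3$. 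Because $M$ preserves pullbacks we have $MB_3 = MB_1\times_{MB_0}MB_2$ with jointly monic projections, and by construction $H_3$ followed by these projections is $H_k$ precomposed with $B_3\to B_k$, while naturality of $r$ gives the same for $r_{B_3}$. A short diagram chase with these equalizer descriptions — using that the $i_k$ are monic and that the given morphisms of HDRs commute with the inclusions $i$ — then identifies $\dom(i_3)$, compatibly with the two projection legs, with the pullback $A_1\times_{A_0}A_2$ in $\E$, which is precisely the assertion that $\dom$ preserves this pullback.

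With this established, let $\phi\colon (i_1,j_1,H_1)\to(i_0,j_0,H_0)$ be a cartesian morphism of HDRs and $\chi\colon (i_2,j_2,H_2)\to(i_0,j_0,H_0)$ an arbitrary morphism of HDRs, and let $\alpha$ be the pullback of $\phi$ along $\chi$ in $\cat{HDR}$, which exists by \refcoro{HDRhaspullbacks}; the goal is that $\alpha$ is a cartesian morphism of HDRs. By \refprop{domHDRfibration} this is the same as saying $\alpha$ is $\dom$-cartesian, and $\phi$ is $\dom$-cartesian by hypothesis. Now I would invoke the general principle mentioned above, whose verification is routine: given a test morphism into the source of $\alpha$ together with a compatible morphism on domains, one first uses cartesianness of $\phi$ to lift through $\phi$ (checking, via preservation of the pullback by $\dom$, that the domain component lands where required), then uses the pullback square in $\cat{HDR}$ to assemble the required factorization, and uniqueness follows from cartesianness of $\phi$ together with the universal property of the pullback. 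Hence $\alpha$ is $\dom$-cartesian, i.e.\ a cartesian morphism of HDRs.

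The step I expect to be the real work is the middle one: verifying that $\dom$ preserves pullbacks. It is tempting to try to read this off the bifibration structure of $\dom$, but being a bifibration does not by itself force preservation of pullbacks, and the argument genuinely uses the concrete description of HDRs — specifically the equalizer presentation of the inclusion from \reflemm{pbksqforsqofHDRs} together with the hypothesis that $M$ preserves pullbacks. I would be careful to phrase this chase entirely in terms of universal properties rather than elements, so that it remains valid in an arbitrary locally cartesian closed $\E$ with finite colimits.
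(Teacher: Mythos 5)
Your argument is correct, but it is organised differently from the paper's. The paper disposes of this corollary in one line: since \refprop{domHDRfibration} exhibits $\dom\colon\cat{HDR}\to\E$ as a Grothendieck fibration whose cartesian arrows are exactly the cartesian morphisms of HDRs, and $\E$ has pullbacks, one invokes the standard fact that in a fibration over a base with pullbacks the pullback of a cartesian arrow along an arbitrary arrow can be \emph{constructed} directly — pull back in the base, take the cartesian lift of one projection, and factor through the cartesian arrow — and the resulting square is a pullback in the total category with the new arrow cartesian. You instead take the pullback as already given by \refcoro{HDRhaspullbacks} (created by the comonadic $\cod$, since $M$ preserves pullbacks), prove the additional fact that $\dom$ preserves these pullbacks via the equalizer presentation of \reflemm{pbksqforsqofHDRs} and commutation of limits, and then apply the general principle that a pullback of a $p$-cartesian arrow is $p$-cartesian whenever $p$ preserves the square; that principle is indeed routine (note the preservation hypothesis is really needed when checking that the assembled factorization lies over the prescribed base map, via the jointly monic projections in $\E$, rather than at the lifting-through-$\phi$ step where you mention it). The trade-off: the paper's route is shorter and needs neither the existence of all pullbacks in $\cat{HDR}$ nor their preservation by $\dom$, while your route isolates the preservation statement — a genuine extra fact not recorded in the paper, proved exactly as you indicate from $M$ preserving pullbacks — and identifies the pulled-back HDR concretely, which could be independently useful.
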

\begin{proof}
    This is a direct consequence of the fact that \(\dom : \cat{HDR} \to \E \) is
    a Grothendieck fibration (and that \(\E\) has pullbacks).
\end{proof}

\subsection{Algebras and naive fibrations}\label{ssec:naivefibrations}
In this subsection, we will derive an alternative characterisation of the
\(R\)-algebras associated to a category with Moore structure. We will show that
\(R\)-algebras can be identified, as a notion of fibred structure, with maps
that come equipped with a property that resembles a path-lifting property.
This is analogous to the explicit characterisation of \emph{cloven
\(\mathcal{R}\)-maps} found by Van den Berg and Garner
~\cite{vdBerg-Garner} for path object categories (see
Proposition~6.1.5 loc.\ cit.).

Recall (from
Sections~\ref{sssec:functorial-factorisation},~\ref{sssec:the-monad}) that the
monad for the AWFS defined in this section is given by $R p = s.p_1$, so that
algebras are fillers:
\[\begin{tikzpicture}[baseline={([yshift=-.5ex]current bounding box.center)}]
    \matrix (m) [matrix of math nodes, row sep=3.5em,
    column sep=4em]{ 
        |(a)| {Y} & |(a2)| {Y} \\
	|(ef)|  {MX \times_{X} Y}  & |(b)|  {X}
    \\};
    \begin{scope}[every node/.style={midway,auto,font=\scriptsize}]
      \path[->]
      (a) edge node [left] {$(r.p, 1)$} (ef)
      (a) edge (a2)
      (a2) edge node [right] {$p$} (b)
      (ef) edge node [below] {$s.p_1$} (b)
      (ef) edge node [anchor=center, fill=white] {$\beta$} (a2);
      \end{scope}
  \end{tikzpicture}
\]
satisfying an additional unit and associativity condition.

Since \(R\)-algebras have more structure than cloven \(\mathcal{R}\)-maps
(namely, the unit and associativity condition), the alternative
characterisation in terms of a path lifting property can be expected to meet
more structural conditions.  Yet the idea of the correspondence, as well as the
further characterisation as \emph{naive
fibrations} originates from Van den Berg
and Garner.

We have the following definition:
\begin{defi}{transport}\index{transport structure}
Let $p: Y \to X$ be any map. To equip $p$ with \emph{transport structure} means specifying a map
\[ T: MX \times_X Y \to Y \]
where \(MX \times_X Y\) is the pullback of \(t\) and \(p\), with $p.T=s.p_1$, $T.(r.p,1) = 1$ and such that
\[
  T.(\mu.(p_1.p_1, p_2.p_1), p_2)= T.(p_1.p_1, T.(p_2.p_1, p_2)) \thinspace : \thinspace
  (MX \times_X MX) \times_X Y \rightarrow Y.
\]
where the first pullback is the pullback of \(t\) and \(s\) (the domain of \(\mu\)) and
the second of \(t.p_1\) and \(p\).
\end{defi}

The above definition is illustrated in Figure~\ref{fig:transport}. Intuitively,
an endpoint \(y\) in a space \(Y\) lying above a base space \(X\) is
\emph{transported} back along a path \(\gamma\) in the base space.
\begin{figure}
  \centering
\begin{tikzpicture}
  \path [fill=lightgray] (1.5,0) ellipse [x radius=2, y radius=1];
  \path [fill=lightgray] (1.5,3) ellipse [x radius=2.3, y radius=1];
  \node (X) at (4.5,0) {$X$};
  \node (Y) at (4.5,3) {$Y$};
  \node (y0) at (0,3) {$T(\gamma, y)$};
  \node (y1) at (3,3) {$y$};
  \node (x0) at (0,0) {$x_0$};
  \node (x1) at (3,0) {$x_1$};
  \path[->, draw] (Y) to node [left] {$p$} (X);
  \path[-, draw, dotted] (y0) to (x0);
  \path[-, draw, dashed] (y1) to (x1);
  \path[->, draw, thick] (x0) [out=45, in=225] to node [above] {$\gamma$} (x1);
\end{tikzpicture}
\caption{Transport structure for \(p\)\label{fig:transport}}
\end{figure}
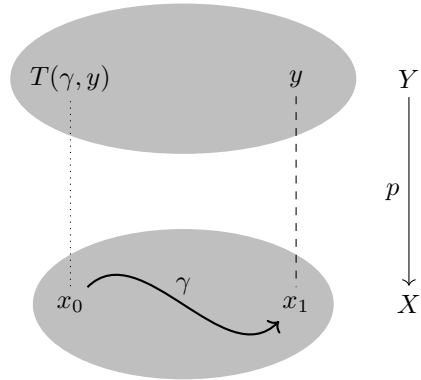

\begin{prop}{transportisfibredstructure}
The function which associates to every \(p: Y \to X\)
the set
\[
 \{T : MX \times_X Y \to Y \varco T \text{ is a transport structure for } p\}
\]
can be extended to a fibred structure
\[
(\ct{E}^{\to}_{\rm cart})^{op} \to \Sets 
\]
\end{prop}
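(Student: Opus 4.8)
The plan is to show that the assignment $p \mapsto \{T : MX \times_X Y \to Y \varco T \text{ is a transport structure for }p\}$ extends to a presheaf on $\ct{E}^{\to}_{\rm cart}$. Since the object part is already given, what remains is to define the action on morphisms of $\ct{E}^{\to}_{\rm cart}$, i.e.\ on pullback squares, and to check functoriality. So first I would take a pullback square
\[
\begin{tikzcd}
Y' \ar[r, "g"] \ar[d, "p'"'] & Y \ar[d, "p"] \\
X' \ar[r, "f"'] & X
\end{tikzcd}
\]
and, given a transport structure $T$ on $p$, produce a transport structure $(f,g)^* T$ on $p'$. The natural candidate is obtained from the universal property of the pullback $Y' \cong X' \times_X Y$. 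Since $M$ preserves pullbacks, $MX' \times_{X'} Y'$ maps canonically into $MX \times_X Y$ (using $Mf$ on the first factor and $g$ on the second, which agree over $X$ because the original square commutes and $t$ is natural), so we can form $T$ composed with this map, landing in $Y$; one checks this map into $Y$ together with the projection to $X'$ factors through the pullback $Y' \cong X' \times_X Y$, yielding $(f,g)^* T : MX' \times_{X'} Y' \to Y'$.

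Next I would verify the three defining equations for $(f,g)^* T$. The equation $p' . (f,g)^*T = s . p_1$ follows by composing with $p'$ and using that $T$ satisfies $p.T = s.p_1$ together with naturality of $s$ and the pullback-preservation isomorphism for $M$. The unit law $(f,g)^*T . (r.p', 1) = 1$ follows from $T . (r.p,1) = 1$ together with naturality of $r$. The associativity (internal-category composition) law is the most bookkeeping-heavy: it requires pushing the pullback-preservation isomorphism $\nu$ for $M$ past the canonical maps and using naturality of $\mu$, but it is a routine diagram chase once one is careful that all the relevant squares are pullbacks (and here one uses that $M$ preserves pullbacks, so $M(MX' \times_{X'} MX')$ and the various iterated pullbacks behave as expected).

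Finally I would check functoriality: that $(1_X, 1_Y)^* T = T$, which is immediate since the canonical comparison map is then the identity; and that for a composable pair of pullback squares the induced actions compose, $(f',g')^*((f,g)^*T) = (f.f', g.g')^*T$, which follows from uniqueness in the universal property of the (outer) pullback together with functoriality of $M$ on the comparison maps. I expect the main obstacle to be purely notational: keeping track of the mediating isomorphisms $\nu$ witnessing that $M$ preserves pullbacks and ensuring the associativity identity transports correctly. No genuinely new idea is needed beyond pullback-preservation of $M$ and naturality of $r, s, t, \mu$; indeed this is the same pattern already used implicitly in \refprop{HDRiscofibredstructure} and \refprop{HDRiscoalgebra}, and one could alternatively deduce the result abstractly once transport structures are identified with $R$-algebra structures (which is presumably done just after this statement), since $R$-algebras form a fibred structure by \refprop{right-lifting-is-fibred-structure} and \reftheo{distrlaw<->iso-of-cofibred-str}.
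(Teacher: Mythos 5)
Your proof is correct, but it takes a different route from the paper. The paper dispatches this proposition in one line: it observes that a transport structure is literally the same data as an $R$-algebra structure for the monad $R = s.p_1$ of the AWFS built from the Moore structure (this is \refprop{transpvsalgebras}, stated immediately afterwards), and algebra structures are already known to pull back along cartesian squares by the general AWFS theory (\refprop{right-lifting-is-fibred-structure} together with \reftheo{distrlaw<->iso-of-cofibred-str}). You instead verify the fibred structure by hand: define $(f,g)^*T$ as the map into $Y' \cong X' \times_X Y$ with components $s.p_1$ and $T.(Mf.p_1, g.p_2)$, check the three axioms using naturality of $r,s,t,\mu$, and get functoriality from uniqueness in the universal property of the pullback. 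This is a perfectly sound argument, and arguably more self-contained since it does not lean on the algebra identification; the paper's route buys brevity and makes the statement a formal consequence of machinery already in place. One small correction to your account: the mediating isomorphism $\nu$ for pullback-preservation of $M$ never actually enters, because the transport axioms only involve pullbacks formed in $\E$ (such as $(MX \times_X MX)\times_X Y$), not $M$ applied to a pullback; the associativity check is just naturality of $\mu$ and $s$ plus the associativity of $T$, so the "main obstacle" you anticipate is absent. You also correctly note the abstract alternative at the end — that alternative is exactly the paper's proof.
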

\begin{proof}
This is easy when considering that a transport structure
amounts to the same thing as an algebra, so we leave this
as part of \refprop{transpvsalgebras}.
\end{proof}

\begin{prop}{transpvsalgebras} 
The following notions of fibred structure are isomorphic:
\begin{itemize}
     \item Transport structure, in the sense of \refdefi{transport},
     \item The structure of an algebra with respect to \(R = s.p_1\).
\end{itemize}
\end{prop}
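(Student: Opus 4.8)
The plan is to observe that, once one unwinds the definition of an algebra for the monad $R = s.p_1$ associated to the Moore structure, the data and the defining equations coincide, up to the evident re-association of iterated pullbacks, with those of a transport structure; so the desired isomorphism of notions of fibred structure will simply be the identity on underlying sets. This will also take care of \refprop{transportisfibredstructure}, since the fact that transport structures form a fibred structure at all was deferred to here.

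First I would make the $R$-algebra conditions explicit. Recall from \refprop{MooreStructure->AWFS} and Sections~\ref{sssec:functorial-factorisation}--\ref{sssec:the-monad} that for $p : Y \to X$ we have $E p = MX \times_X Y$ (the pullback of $t$ and $p$), $Lp = (r.p,1) : Y \to E p$ and $Rp = s.p_1 : E p \to X$; since $R$ is a monad over $\cod$, an $R$-algebra structure on $p$ is just a morphism $a : Rp \to p$ in $\E^{\to}$, whose codomain component is forced by the unit law to be $1_X$, so $a$ is determined by a single underlying map $\beta : E p \to Y$. The requirement that $a$ be a morphism of $\E^{\to}$ reads $p.\beta = s.p_1$; the unit law $a.\eta_p = 1_p$ reads $\beta.Lp = 1_Y$, i.e.\ $\beta.(r.p,1) = 1_Y$. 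For the associativity law $a.\mu_p = a.Ra$, I would use the formula $\mu_p = (\mu.(p_1,p_1.p_2),\,p_2.p_2)$ from Section~\ref{sssec:the-monad} together with the description $E(h,k) = (Mk.p_1,\,h.p_2)$ of $E$ on morphisms from \refdefi{functorial-factorisation}, whence the domain component of $Ra = R(\beta,1_X)$ is $(p_1,\,\beta.p_2) : MX\times_X E p \to E p$, so that on underlying maps the law becomes
\[
\beta.\bigl(\mu.(p_1,p_1.p_2),\,p_2.p_2\bigr) \;=\; \beta.\bigl(p_1,\,\beta.p_2\bigr),
\]
an identity of maps out of $MX \times_{(t,\,s.p_1)} (MX \times_X Y)$.

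The second step is to match these three conditions against \refdefi{transport}. Writing $T := \beta$ and using the canonical isomorphism between $MX \times_{(t,\,s.p_1)} (MX \times_X Y)$ and the iterated pullback occurring in \refdefi{transport} --- both objects classifying a composable pair of paths in $X$ together with a point of $Y$ lying over the far endpoint --- the conditions $p.\beta=s.p_1$, $\beta.(r.p,1)=1_Y$ and the displayed associativity identity become, term for term, the conditions $p.T = s.p_1$, $T.(r.p,1)=1$ and the associativity identity of a transport structure. Hence for each $p$ the set of $R$-algebra structures on $p$ equals the set of transport structures on $p$ as subsets of the set of maps $MX\times_X Y \to Y$.

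Finally I would check naturality under cartesian (pullback) squares. For a pullback square $(u,v) : p' \to p$ with $p' : Y' \to X'$, the fact that $M$ preserves pullbacks identifies $E p'$ with $MX'\times_{X'}Y'$ and supplies the canonical comparison $E p' \to E p$; the pullback $R$-algebra structure ${(u,v)}^*\beta$ is by definition the unique map $E p'\to Y'$ whose composite with $u$ is $\beta$ precomposed with this comparison and whose composite with $p'$ is $s.p_1$, and because $M$ preserves pullbacks the relevant square is again a pullback, so this coincides with the pullback of $T$ viewed as a transport structure. Therefore the identity on underlying sets is an isomorphism between the notion of fibred structure of transport structures and $\algfibr{R}$. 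There is no conceptual obstacle; the only real work is the bookkeeping in the associativity computation --- reconciling the argument order of $\mu$ in the monad multiplication and the indexing of the iterated pullbacks used here with those of \refdefi{transport} --- plus the repeated use of the fact that $M$ preserves pullbacks to identify the comparison maps in the naturality check.
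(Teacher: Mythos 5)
Your proposal is correct and coincides with the paper's approach: the paper's proof of this proposition is literally the remark that it is "straightforward by unfolding the definition of an algebra" (with the fibred-structure/naturality part of \refprop{transportisfibredstructure} likewise deferred to this unwinding), and your explicit identification of the algebra equations with the transport equations, plus the pullback-compatibility check using that $M$ preserves pullbacks, is exactly that intended unwinding.
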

\begin{proof} 
This is straightforward by unfolding the definition of an algebra.
\end{proof}

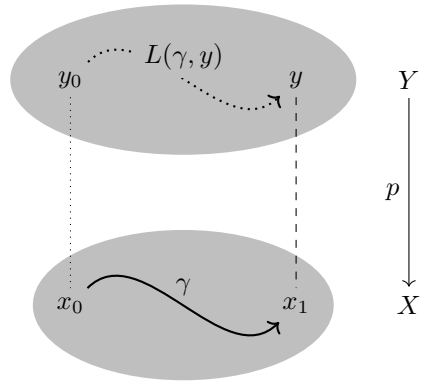
\begin{figure}
  \centering
\begin{tikzpicture}
  \path [fill=lightgray] (1.5,0) ellipse [x radius=2, y radius=1];
  \path [fill=lightgray] (1.5,3) ellipse [x radius=2.3, y radius=1];
  \node (X) at (4.5,0) {$X$};
  \node (Y) at (4.5,3) {$Y$};
  \node (y0) at (0,3) {$y_0$};
  \node (y1) at (3,3) {$y$};
  \node (x0) at (0,0) {$x_0$};
  \node (x1) at (3,0) {$x_1$};
  \path[->, draw] (Y) to node [left] {$p$} (X);
  \path[-, draw, dotted] (y0) to (x0);
  \path[-, draw, dashed] (y1) to (x1);
  \path[->, draw, thick] (x0) [out=45, in=225] to node [above] {$\gamma$} (x1);
  \path[->, draw, thick, dotted] (y0) [out=45, in=225] to node [above, fill=lightgray] {$L(\gamma, y)$} (y1);
\end{tikzpicture}
  \caption{A naive fibration \(p\)\label{fig:naive-fibration}}
\end{figure}

As remarked in the paper by Van den Berg and Garner, the lifting property of
\refdefi{transport} does not a priori specify that it is possible to lift a
path to a full path lying above it. Yet, they show that it is possible to use
the path contraction to lift entire paths `pointwise' -- and this turns out to
work also in the setting presented here. We first give a definition of this
path-lifting property for our setting, which again takes more structural axioms
than in the former paper.
\begin{defi}{strlift}
  A map $p: Y \to X$ together with an arrow \[ L: MX \times_X Y \to MY \]in
  \(\E\) is said to be a \emph{naive
  fibration}\index{naive fibration|textbf}when it satisfies the
  conditions:
\begin{enumerate}[(i)]
\item $(M p, t).L = 1$;
\item $ L.(r.p,1) = r$; 
\item $L.(\mu.(p_1.p_1, p_2.p_1), p_2) = \mu.(L.(p_1.p_1, s.L.(p_2.p_1, p_2)), L.(p_2.p_1, p_2))$;
\item $\Gamma. L = M L.p_1.\delta_p$.
\end{enumerate}
\end{defi}
A naive fibration \(p : Y \to X\)
can be thought of as a map between spaces, such that whenever one has a path
\(\gamma\) in the base \(X\) and a point \(y\) in \(Y\) which lies its end
point \(x_1 := t(\gamma)\), then one can construct a full path \(\gamma^*\) in
\(MY\) lying above \(\gamma\). This is illustrated in
Figure~\ref{fig:naive-fibration}.

Looking at Figure~\ref{fig:naive-fibration}, (i) ensures that \(L(\gamma,y)\)
lies above \(\gamma\) under \(p\); (ii) that trivial paths are lifted to
trivial paths; (iii) that lifting respects path composition; and (iv) that the
contraction of \(\gamma\) specified by \(\Gamma\) lifts, pointwise under
\(ML\), to the contraction of \(L(\gamma,y)\).  The properties (iii) and (iv)
differentiate naive fibrations from the path-lifting property established in
the Proposition~6.1.5 of Van den Berg-Garner~\cite{vdBerg-Garner} mentioned at
the beginning of this section.

\begin{prop}{nrfibisfibredstructure}
The function which associates to every \(p: Y \to X\)
the set
\[
        \{L : MX \times_X Y \to MY \varco (p,L) \text{ is a naive fibration }\}
\]
can be extended to a fibred structure
\[
        \cat{nFib} \co (\ct{E}^{\to}_{\rm cart})^{\op} \to \Sets
\]
\end{prop}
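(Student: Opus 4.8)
The plan is to mimic the structure of the earlier results such as \refprop{transportisfibredstructure} and \refprop{HDRiscofibredstructure}, reducing the whole statement to the observation that the naive-fibration data are \emph{definable from finite limits and the natural transformations of the Moore structure}, and that all pullbacks in sight are preserved by $M$. Concretely, given a morphism $p: Y \to X$ together with $L: MX \times_X Y \to MY$ witnessing a naive fibration, and given a pullback square
\[
  \begin{tikzcd}
    Y' \ar[r, "b"] \ar[d, "p'"] & Y \ar[d, "p"] \\
    X' \ar[r, "a"] & X
  \end{tikzcd}
\]
I want to produce a canonical $L': MX' \times_{X'} Y' \to MY'$ and check the four axioms. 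Since $M$ preserves pullbacks, $M X' \times_{X'} Y'$ maps canonically (via $(Ma, b)$ and the universal property) to $MX \times_X Y$, and $MY' \cong MX' \times_{MX} MY$ over the evident maps because the square is a pullback and $M$ preserves it; so one defines $L'$ as the unique arrow into $MY'$ whose composite with the projection to $MX'$ is $p_1$ and whose composite with $Mb: MY' \to MY$ is $L.(Ma, b).p_1$-composite — I will write this out with the mediating isomorphism $\nu$ exactly as in the treatment of $\delta_f$ in \refprop{MooreStructure->AWFS}.

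The steps, in order, would be: (1) define the restriction action $\cat{nFib}(a,b)(p, L) := (p', L')$ as above, using that $\cod: \E^\to \to \E$ together with $M$ preserving pullbacks gives the needed mediating isomorphisms; (2) verify that $(p', L')$ again satisfies conditions (i)--(iv) of \refdefi{strlift} — this is where the bulk of the (routine) computation lives, but each identity for $L'$ follows from the corresponding identity for $L$ by postcomposing with the monic $Mb$ (or its iterates $MMb$, as in the proof of \refprop{HDRiscoalgebra} and \reflemm{onppksqofHDRs}) and using naturality of $s, t, r, \mu, \Gamma, \delta, \nu$; (3) check functoriality of this action: $\cat{nFib}(1,1)$ is the identity and $\cat{nFib}$ respects composition of pullback squares, which again follows from the uniqueness clause in the universal property defining $L'$, so that two a priori different restrictions along a composite agree because they agree after composing with the relevant monic. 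Assembling (1)--(3) gives the presheaf $\cat{nFib}: (\E^\to_{\mathrm{cart}})^{\op} \to \Sets$.

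The cleanest route, and the one I would actually prefer, is to avoid most of step (2) altogether by invoking \refprop{transpvsalgebras} and \reftheo{distrlaw<->iso-of-cofibred-str}: naive fibrations will be shown (in the results following this proposition — indeed this is exactly what the text announces) to be \emph{equivalent as a notion of fibred structure} to transport structures, hence to $R$-algebras, and $\algfibr{R}: (\E^\to_{\mathrm{cart}})^{\op} \to \Sets$ is already a notion of fibred structure by \refprop{right-lifting-is-fibred-structure} together with \reftheo{distrlaw<->iso-of-cofibred-str}. So in the write-up I would either (a) give the direct construction sketched above, or (b) defer, remarking that the fibred-structure action is the one transported along the forthcoming isomorphism $\cat{nFib} \cong \algfibr{R}$, under which restriction along a pullback square is the canonical pullback action on algebras from \refprop{right-lifting-is-fibred-structure}. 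The main obstacle, if one insists on the self-contained argument, is purely bookkeeping: carefully threading the pullback-preservation isomorphism $\nu$ through conditions (iii) and (iv), since (iv) involves $\delta_p$ and hence a second application of $M$, so one must know that the restriction of $\delta_p$ along the pullback square is $\delta_{p'}$ — but this is exactly the content already established when the AWFS was built, so no genuinely new difficulty arises.
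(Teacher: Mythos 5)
Your preferred route (b) is exactly what the paper does: its proof of this proposition is a one-line deferral, leaving the claim to be established as part of the characterisation in \refprop{liftvstransport}, so that the pullback action on naive fibration structures is the one transported along the isomorphism with transport structures and hence with $R$-algebras. Your direct sketch (a) would also work (with the small correction that one checks equations into $MY'\cong MX'\times_{MX}MY$ against the jointly monic pair of pullback projections rather than against a single monic $Mb$), but since you defer as the paper does, the approaches essentially coincide.
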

\begin{proof}
        Again, we leave this as part of their characterisation in
        \refprop{liftvstransport}.
\end{proof}

\begin{prop}{liftvstransport}\index{transport structure!implies lifting} 
Let $p: Y \to X$ be a map. If $L$ specifies a naive fibration structure
on $p$, then $T = s.L$ is a transport structure on $p$.  And if $T$ is a
transport structure on $p$, then $L = MT.p_2.\delta_p$ turns \(p\) into a naive
fibration. These operations are mutually inverse and define an isomorphism
between the following notions of fibred structure:
\begin{itemize}
        \item Transport structure,
        \item Naive fibrations.
\end{itemize}
\end{prop}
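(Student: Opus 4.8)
The plan is to establish the bijection by explicit computation, using the two formulas $T \mapsto L = MT.p_2.\delta_p$ and $L \mapsto T = s.L$, and checking that (a) each of these lands in the correct set of structures, and (b) they are mutually inverse and compatible with pullback. Since Proposition~\ref{prop:transpvsalgebras} already identifies transport structures with $R$-algebras as notions of fibred structure, it suffices to work with transport structures throughout; the naturality in pullback squares will then come for free once we know the assignments are given by pointwise formulas built from the Moore structure data (which is natural) and the AWFS comultiplication $\delta_p$ (also natural), so I would dispatch that bookkeeping at the end in one sentence.

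First I would take a transport structure $T$ and verify that $L := MT.p_2.\delta_p$ satisfies conditions (i)--(iv) of \refdefi{strlift}. Here I would unfold $\delta_p$ using~\eqref{eq:defn:comultiplication}, so that $L = MT.\nu.(\Gamma.p_1, \alpha.(p_2, M!.p_1))$ after identifying $p_2.\delta_p$ with the appropriate projection. Condition (ii), $L.(r.p,1) = r$, follows from $\Gamma.r = r.r$ (a Moore-structure identity) together with $T.(r.p,1)=1$ and naturality of $r$. Condition (i), $(Mp,t).L = 1$, uses $Mp.MT = M(p.T) = M(s.p_1) = s.Mp_1$ and the fact that $t.\delta_p$-component gives back the identity, combined with $p.T = s.p_1$. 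Conditions (iii) and (iv) are the substantive ones: (iii) is the compatibility of lifting with path composition, which should follow from the associativity axiom for $T$ together with the distributive law for $\mu$ and $\Gamma$ in the Moore structure (\refdefi{pathobjcat}(5)); and (iv), $\Gamma.L = ML.p_1.\delta_p$, should follow from the coassociativity of $\Gamma$ together with naturality of $\Gamma$ applied to $T$. I expect (iii) and (iv) to require the same kind of $\nu$-threading computations that appear in the verification of the Garner equation earlier in this section.

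Conversely, given a naive fibration structure $L$, set $T := s.L$. That $p.T = s.p_1$ is immediate from (i) (taking $s$ of $(Mp,t).L=1$ gives $s.Mp.L = s$, i.e.\ $p.s.L = s.p_1$ after using $s.Mp = p.s$). That $T.(r.p,1)=1$ follows from (ii) and $s.r = 1$. The associativity axiom for $T$ follows from (iii) by applying $s$ and using $s.\mu = s.p_2$ (source of a concatenation is the source of the second path). Then I would check the round trips: starting from $T$, forming $L = MT.p_2.\delta_p$, and taking $s.L$, we get $s.MT.p_2.\delta_p = T.s.p_2.\delta_p$; using the counit law $s.p_1.\delta_f = 1$ from the comonad structure (established in \refprop{MooreStructure->AWFS}) appropriately restricted, this collapses to $T$. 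Starting from $L$ and forming $MT.p_2.\delta_p$ with $T = s.L$, I would use condition (iv), $\Gamma.L = ML.p_1.\delta_p$, rewritten so that $M(s.L).p_2.\delta_p = Ms.ML.p_1.\delta_p = Ms.\Gamma.L$, and then $Ms.\Gamma = 1$ (a comonad/unit identity for the Moore structure) recovers $L$.

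The main obstacle will be verifying conditions (iii) and (iv) for $L = MT.p_2.\delta_p$, since these encode exactly the extra structure (compatibility with concatenation and with contraction) that a mere cloven $\mathcal{R}$-map need not have; they are precisely where the distributive law and coassociativity axioms of the Moore structure must be invoked, and the computations involve carefully tracking the mediating isomorphisms $\nu$ for $M$ preserving pullbacks, as in~\eqref{eq:MpreservesPullback}. Since Van den Berg--Garner proved the analogue without these axioms for the weaker notion, the novelty here is confined to those two verifications, and I would present them as the heart of the proof while treating conditions (i), (ii), the algebra-axiom translations, and the round-trip identities as routine applications of the Moore-structure equations and the comonad counit law. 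Functoriality and compatibility with Cartesian squares then follow because every map in sight ($M$, $s$, $\Gamma$, $\alpha$, $\mu$, $\nu$, $\delta_p$) is natural, so the assignments $T \leftrightarrow L$ commute with the pullback action on fibred structures, giving the claimed isomorphism $\cat{nFib} \cong (\text{transport structures})$ and, via \refprop{transpvsalgebras}, with $\algfibr{R}$.
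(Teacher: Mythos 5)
Your proposal follows essentially the same route as the paper: the same mutually inverse formulas $T = s.L$ and $L = MT.\nu.(\Gamma.p_1,\alpha.(p_2,M!.p_1))$ (the first component of $\delta_p$), the same round-trip arguments via the counit laws $s.p_1.\delta_p = 1$ and $Ms.\Gamma = 1$, and the same identification of conditions (iii) and (iv) as the substantive verifications resting on the distributive law, the transport axioms, and naturality. The only small inaccuracy is your ingredient list for (iv): besides coassociativity and naturality, the paper's computation also invokes the strength compatibility of $\alpha$ with $\Gamma$ to handle the $\alpha$-component, but this does not change the approach.
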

\begin{proof}
Suppose \(L\) satisfies the conditions (i)-(iv) of \refdefi{strlift}, and
let \(T = s.L\). Then \(p.T = p.s.L = s.Mp.L = s.p_1\), \(T.(r.p,1) =
s.L.(r.p, 1) = s.r = 1\), and
\begin{align*}
T.(\mu.(p_1.p_1, p_2.p_1), p_2)
 = & s.L.(\mu.(p_1.p_1, p_2.p_1), p_2) \\
 = & s.\mu.(L.(p_1.p_1, s.L.(p_2.p_1, p_2)), L.(p_2.p_1, p_2)) \\
 = & s.L.(p_1.p_1, s.L.(p_2.p_1, p_2))\\
 = & T.(p_1.p_1, T.(p_2.p_1, p_2)),
\end{align*}
so $T$ is a transport structure. In addition,
\[ MT.p_1.\delta_p = Ms.ML.p_1.\delta_p =Ms.\Gamma.L= L, \]
so $L$ can be reconstructed from $T$.

Conversely, suppose $T$ is a transport structure on $p$, and define
\[ L = MT.\nu.(\Gamma.p_1,\alpha.(p_2,M!.p_1)) \]
where \(\nu\) is given by the same mediating isomorphism as~\eqref{eq:MpreservesPullback}. This definition is illustrated in Figure~\ref{fig:liftvstransport}.
Our first aim is to show (i) -- (iv). First:
\begin{align*}
t.L   & = t.M T.\nu.(\Gamma.p_1, \alpha.(p_2, M !.p_1))\\
      & = T.t.\nu.(\Gamma.p_1, \alpha.(p_2, M !.p_1)) \\
      & = T.(t.\Gamma.p_1, t.\alpha.(p_2, M !.p_1))\\
      & = T.(r.t.p_1, p_2) \\
      & = T.(r.p.p_2, p_2) \\
      & = T.(r.p, 1).p_2 \\
      & = p_2
\end{align*}
and
\begin{align*}
Mp.L 
& = M p.M T.\nu.(\Gamma.p_1,\alpha.(p_2, M !.p_1)) \\
& = M(p.T).\nu.(\Gamma.p_1,\alpha.(p_2, M !.p_1)) \\
& = M(s.p_1).\nu.(\Gamma.p_1,\alpha.(p_2, M !.p_1)) \\
& = M s.M p_1.\nu.(\Gamma.p_1,\alpha.(p_2, M !.p_1)) \\
& = M s.p_1.(\Gamma.p_1,\alpha.(p_2, M !.p_1)) \\
& = M s.\Gamma.p_1 \\
& = p_1,
\end{align*}
and hence $(Mp,t).L = 1$.

Furthermore,
\begin{align*}
L.(r.p,1) 
& = M T.\nu.(\Gamma.p_1,\alpha.(p_2, M !.p_1)).(r.p, 1) \\
& = M T.\nu.(\Gamma.r.p,\alpha.(1, M !.r.p)) \\
& = M T.\nu.(r.r.p,\alpha.(1, r.!.p)) \\
& = M T.\nu.(r.r.p,\alpha.(1, r.!)) \\
& = M T.\nu.(r.r.p, r) \\
& = M T.r.(r.p, 1) \\
& = r.T.(r.p, 1) \\
& = r,
\end{align*}
so also the second condition for a lift is satisfied.

The following calculation shows the third condition:
\begin{align*}
L.(\mu.(p_1.p_1, p_2.p_1), p_2)
& = M T.\nu.(\Gamma.p_1,\alpha.(p_2, M !.p_1)).(\mu.(p_1.p_1, p_2.p_1), p_2) \\
& = M T.\nu.(\Gamma.\mu.(p_1.p_1, p_2.p_1),\alpha.(p_2, M !.\mu.(p_1.p_1, p_2.p_1))) \\
& =^1 M T.\nu.(\mu.(M \mu.\nu.(\Gamma.p_1.p_1,\alpha.(p_2.p_1, M !.p_1.p_1)), \Gamma.p_2.p_1), \\
&\qquad \mu.(\alpha.(p_2, M !.p_1.p_1),\alpha.(p_2, M !.p_2.p_1))) \\
& =^2 MT. \mu.(\nu.(M \mu.\nu.(\Gamma.p_1.p_1,\alpha.(p_2.p_1, M !.p_1.p_1)),\alpha.(p_2, M !.p_1.p_1)), \\
&\qquad \nu.(\Gamma.p_2.p_1,\alpha.(p_2, M !.p_2.p_1))) \\
& =^3 \mu.(M T.\nu.(M \mu.\nu.(\Gamma.p_1.p_1,\alpha.(p_2.p_1, M !.p_1.p_1)),\alpha.(p_2, M !.p_1.p_1)), \\
&\qquad M T.\nu.(\Gamma.p_2.p_1,\alpha.(p_2, M !.p_2.p_1))) \\
& =^4 \mu.(M T.\nu.(\Gamma.p_1.p_1, M T.\nu.(\alpha.(p_2.p_1, M !.p_1.p_1),\alpha.(p_2, M !.p_1.p_1))), \\ &\qquad L.(p_2.p_1, p_2)) \\
& =^5 \mu.(M T.\nu.(\Gamma.p_1.p_1,\alpha.(T.(p_2.p_1,p_2), M !.p_1.p_1)), L.(p_2.p_1, p_2)) \\
& = \mu.(M T.\nu.(\Gamma.p_1,\alpha.(p_2, M !.p_1)).(p_1.p_1, T.(p_2.p_1, p_2)), \\
& \qquad L.(p_2.p_1, p_2)) \\
& = \mu.(L.(p_1.p_1, T.(p_2.p_1, p_2)), L.(p_2.p_1, p_2)) \\
&= \mu.(L.(p_1.p_1, s.L.(p_2.p_1, p_2)), L.(p_2.p_1, p_2)) 
\end{align*}
where at \(=^1\) we have used the distributive law, at \(=^2\) we have
rewritten the equation of the form~\eqref{eq:mu-nu-commutes}, at \(=^3\) we have
used naturality of \(\mu\). At \(=^4\), we have used the definition of \(L\) and further that 
\begin{align*}
  &MT.\nu.(M\mu.\nu.(Mp_1.Mp_1,Mp_2.Mp_1),Mp_2) = \\
  &MT.\nu.(Mp_1.Mp_1, MT.\nu.(Mp_2.Mp_1,Mp_2))
\end{align*}
which is the image under \(M\) of the requirement on \(T\) with respect to \(\mu\).
The step \(=^5\) uses naturality of \(\alpha\) (for the square with \(T\)). Then
it is a matter of rewriting, and in the last step we use the equation established at the end of 
this proof.

For the fourth condition, we again calculate:
\begin{align*}
M L.\nu.(\Gamma.p_1, \alpha.(p_2, M !.p_1))
& = M M T.\nu.M (\Gamma.p_1, \alpha.(p_2, M !.p_1)).\nu.(\Gamma.p_1, \alpha.(p_2, M !.p_1)) \\
& = M M T.\nu.(M \Gamma. M p_1, \alpha.(M p_2, M M !. M p_1)).\nu.(\Gamma.p_1, \alpha.(p_2, M !.p_1)) \\
& =^1 M M T.\nu.(M \Gamma.\Gamma.p_1, \alpha.(\alpha.(p_2, M !.p_1), M M !.\Gamma.p_1 )) \\
& = M M T.\nu.(\Gamma.\Gamma.p_1, \Gamma.\alpha.(p_2, M !.p_1)) \\
& = M M T.\Gamma.\nu.(\Gamma.p_1, \alpha.(p_2, M !.p_1)) \\
& = \Gamma.M T.\nu.(\Gamma.p_1, \alpha.(p_2, M !.p_1)) \\
& =  \Gamma.L
\end{align*}
where \(=^1\) uses the axioms of the strength \(\alpha\) with respect to \(\Gamma\),
and the rest are naturality conditions.
This shows that \(L\) yields the structure of a naive fibration.

Finally, we have
\begin{align*}
s.L 
& = s.M T.\nu.(\Gamma.p_1,\alpha.(p_2, M !.p_1)) \\
& = T.s.\nu.(\Gamma.p_1,\alpha.(p_2, M !.p_1)) \\
& = T.(s.\Gamma.p_1, s.\alpha.(p_2, M !.p_1)) \\
& = T.(p_1, p_2) \\
& =  T,
\end{align*}
showing that the operations are mutually inverse.
\end{proof}
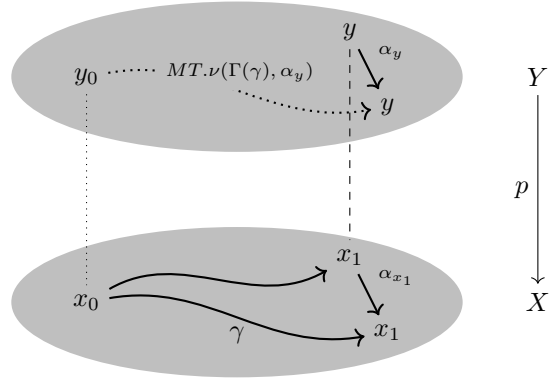
\begin{figure}
  \centering
\begin{tikzpicture}
  \path [fill=lightgray] (2,0) ellipse [x radius=3, y radius=1];
  \path [fill=lightgray] (2,3) ellipse [x radius=3, y radius=1];
  \node (X) at (6,0) {$X$};
  \node (Y) at (6,3) {$Y$};
  \node (y0) at (0,3) {$y_0$};
  \node (y1) at (4,2.6) {$y$};
  \node (y1') at (3.5,3.6) {$y$};
  \path[->, draw, thick] (y1') to node [above right] {\scriptsize $\alpha_y$} (y1);
  \path[->, draw, thick, dotted] (y0) [out=10, in=190] to node [above, fill=lightgray] {\scriptsize $MT.\nu(\Gamma(\gamma), \alpha_y)$} (y1);
  \node (x0) at (0,0) {$x_0$};
  \node (x1) at (4,-0.4) {$x_1$};
  \node (x1') at (3.5,0.6) {$x_1$};
  \path[->, draw, thick] (x0) [out=10, in=190] to node [below] {$\gamma$} (x1);
  \path[->, draw, thick] (x0) [out=30, in=210] to (x1');
  \path[->, draw, thick] (x1') to node [above right] {\scriptsize $\alpha_{x_1}$} (x1);
  \path[->, draw] (Y) to node [left] {$p$} (X);
  \path[-, draw, dotted] (y0) to (x0);
  \path[-, draw, dashed] (y1') to (x1');
\end{tikzpicture}
\caption{Deriving lifting from (pointwise) transport. The paths \(\alpha_y\) and
\(\alpha_{x_1}\) are constant paths of the same length as \(\gamma\).  The
bottom triangle is the path \(\Gamma(\gamma)\), contracting \(\gamma\) to the
trivial path on \(x_1\). Each `copy' of \(y\) on \(\alpha_y\) is
transported along a final segment of \(\gamma\) lying transversal to 
\(\Gamma(\gamma)\) (see Box~\ref{box:Gamma}).\label{fig:liftvstransport}}
\end{figure}

To conclude this section, it will be helpful to spell out the notion of
morphism between naive fibrations explicitly, where the definition is fixed by
the notion of morphism between the underlying algebras.

If \((p: E \to B, L)\), \((p : E' \to B', L')\) are naive fibrations, then a
\emph{morphism of naive fibrations} \((p, L) \to (p', L')\) is a commutative
square
\begin{equation}\label{eq:morphismOfNaiveRightFibDiag}
          \begin{tikzpicture}[baseline={([yshift=-.5ex]current bounding box.center)}]
            \matrix (m) [matrix of math nodes, row sep=2em,
            column sep=3.5em]{
                |(e')| {E'} & |(e)| {E} \\
                |(b')| {B'} & |(b)| {B} \\
            };
            \begin{scope}[every node/.style={midway,auto,font=\scriptsize}]
            \path[->]
		    (e') edge node [below] {$g$} (e)
		    (b') edge node [below] {$f$} (b)
		    (e') edge node [left] {$p'$} (b')
		    (e) edge node [right] {$p$} (b);
            \end{scope}\end{tikzpicture}
            \end{equation}
such that
\begin{equation}\label{eq:morphismOfNaiveRightFib}
	L . (g . p_1, Mf . p_2 ) = M g . L'
\end{equation}

The following corollary then summarises this section (cf.
\refcoro{hdrscoalgsdoubleiso}):
\begin{coro}{rfibCatFibredStructure}\index{naive fibration!double category of \(\sim\)s}
With the inherited vertical composition of algebras, the discretely fibred
concrete double category (over \(\E\)) of naive fibrations \(\mathbb{N}\cat{Fib}\) is
isomorphic to the double category of algebras.\index{discretely (co)fibred concrete double category!of naive fibrations}
\end{coro}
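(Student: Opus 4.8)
The plan is to mirror exactly the strategy used for the analogous statement about HDRs, namely \refcoro{hdrscoalgsdoubleiso}, invoking \refprop{fibreddoubleiso} in place of its dual. So first I would observe that both $\mathbb{N}\cat{Fib}$ and $\alg{R}$ are discretely fibred concrete double categories over $\E$: for $\mathbb{N}\cat{Fib}$ this requires checking that naive fibration structures pull back uniquely along cartesian squares, which is the content of \refprop{nrfibisfibredstructure} together with the fact (from \refprop{liftvstransport}) that $L = MT.p_2.\delta_p$ is natural in the relevant sense; for $\alg{R}$ this is part of the general AWFS formalism of Section~\ref{ssec:double-categories}. By the (primal form of the) criterion \refprop{fibreddoubleiso}, it then suffices to exhibit a double functor $F : \mathbb{N}\cat{Fib} \to \alg{R}$ over $\sq{\E}$ such that (i) $F$ induces an isomorphism of the underlying notions of fibred structure, and (ii) $F$ is full on squares.

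For (i) I would simply cite \refprop{liftvstransport} and \refprop{transpvsalgebras}: these give an isomorphism of notions of fibred structure $\cat{nFib} \cong \cat{transport} \cong \algfibr{R}$ (where the middle term is the transport structures of \refdefi{transport}). Concretely, a naive fibration structure $L$ on $p$ corresponds to the transport structure $T = s.L$, which in turn corresponds to the algebra structure $\beta$ on $p$ via unfolding the definition of an $R$-algebra; conversely an algebra structure yields $T$, and then $L = MT.p_2.\delta_p$. This isomorphism is the one that will be realised on objects by $F$, and since the vertical composition on $\mathbb{N}\cat{Fib}$ is \emph{defined} to be the one inherited from algebras, $F$ respects vertical composition by construction; functoriality on horizontal morphisms and on squares follows from naturality of all the structure maps involved ($s$, $M$, $\nu$, $\delta$, $\Gamma$).

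For (ii), being full on squares means: whenever $(p',L')$, $(p,L)$ are naive fibrations and a commutative square $(g,f) : p' \to p$ in $\E$ is a morphism of the corresponding $R$-algebras, then $(g,f)$ is already a morphism of naive fibrations, i.e.\ satisfies equation~\eqref{eq:morphismOfNaiveRightFib}. This is the step I expect to be the only one requiring actual work: it amounts to showing that the algebra-morphism condition on $\beta$, $\beta'$ implies $L.(g.p_1, Mf.p_2) = Mg.L'$. I would deduce this from the formula $L = MT.p_2.\delta_p$ together with $T = s.L$ and the compatibility of $T$ with $f,g$ that follows from the algebra-morphism condition, using naturality of $\delta$ (over $\dom$) and of $M$ — a computation in the same spirit as, but dual to, \refcoro{retractscommute}, which established the analogous fact for HDRs. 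Granting this, \refprop{fibreddoubleiso} delivers that $F$ is an isomorphism of double categories, which is exactly the assertion.
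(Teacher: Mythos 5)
Your proposal is correct and follows essentially the same route as the paper: the paper's own proof is literally ``dual and analogous to \refcoro{hdrscoalgsdoubleiso}'', i.e.\ it applies \refprop{fibreddoubleiso} to a double functor between \(\mathbb{N}\cat{Fib}\) and \(\alg{R}\), using \refprop{transpvsalgebras} and \refprop{liftvstransport} for the isomorphism of notions of fibred structure and checking fullness on squares, exactly as you do. Your identification of the fullness step (algebra morphisms are already morphisms of naive fibrations, via \(L = MT.\nu.(\Gamma.p_1,\alpha.(p_2,M!.p_1))\) and naturality, with the converse obtained by postcomposing with \(s\)) is precisely the computation the paper leaves implicit.
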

\begin{proof}
  Dual and analogous to the proof of \refcoro{hdrscoalgsdoubleiso}.
\end{proof}
As remarked in Section~\ref{sssec:notationofdoublecats}, we may also refer to
the mere category of naive fibrations \(\cat{NFib}\), or the notion of fibred
structure \(\cat{nFib}\).

 \section{The Frobenius construction}\label{sec:frobenius}
The goal of this section is to prove a \emph{Frobenius property} (see
Box~\ref{box:frobenius} below) for the AWFS constructed in the previous section
in the case that the Moore structure comes equipped with a certain additional
symmetry.\index{symmetric Moore structure}

Such a symmetry was assumed throughout by Van den Berg and
Garner~\cite{vdBerg-Garner}, who show an analogous Frobenius property for
their cloven weak factorisation systems (see Definition~3.3.3.\ loc.cit.).
Like our previous results above, the symmetry imposed here needs to
satisfy additional structure to yield an analogous result for categories with
Moore structure.

As in~\cite{vdBerg-Garner}, the symmetry comes down to the fact that it is
possible to reverse paths by means of a `twist map'\index{twist map}: a natural
transformation \(\tau : M \Rightarrow M\) subject to axioms which we will now
elaborate.  First, the natural transformation reverses paths as announced:
\begin{itemize}
  \item[(1)] For every \(X\), \(\tau_X : MX \to MX\) is an internal, idempotent
    identity-on-objects functor between the category \(MX\) (given by the Moore
    structure) and the opposite category on \(MX\). In particular \(s. \tau =
    t\) and \(t . \tau = s\).
\end{itemize}
Further, we require compatibility with \(\alpha\):
\begin{equation}\label{eq:twistalphamu}
  \tau . \alpha = \alpha(1 , \tau)
\end{equation}

So far, these requirements are the same as in~\cite{vdBerg-Garner}. Our
additional axioms can be motivated as follows. The twist map can be used to
create a new, reversed path contraction:
\begin{equation}\label{eq:Gammastar}
\Gamma^* = \tau_M . M(\tau) . \Gamma . \tau
\end{equation}

\begin{figure}[t]
  \centering 

  \begin{subfigure}[t]{0.4\textwidth}
    \centering
\begin{tikzpicture}
  \draw [white, pattern = vertical lines, pattern color = lightgray] (0,0) -- (3,0) -- (0,2) -- (0,0); 
  \node (x0) at (0,0) {$x_0$};
  \node (x1) at (3,0) {$x_1$};
  \node (x1') at (0,2) {$x_1$};
  \node  (G) at (1,0.75) {$\Gamma$};
  \node (G) at (1,0.35) {$\Rightarrow$};
  \path[->, draw, thick] (x0) to node [below left] {$\gamma$} (x1);
  \path[->, draw, thick] (x0) to node [left] {$\gamma$} (x1');
  \path[->, draw, thick] (x1') to node [above right] {$\alpha(x_1, M!(\gamma))$} (x1);
  \path[->, draw, thick, dotted] (1.5, 0) to (1.5,1);
\end{tikzpicture}
\caption{\(\Gamma\)}
\end{subfigure}
~
 \begin{subfigure}[t]{0.4\textwidth}
   \centering
\begin{tikzpicture}
  \draw [white, pattern = vertical lines, pattern color = lightgray] (0,0) -- (3,2) -- (0,2) -- (0,0); 
  \node (x0) at (0,0) {$x_0$};
  \node (x0') at (3,2) {$x_0$};
  \node (x1) at (0,2) {$x_1$};
  \node  (G) at (1,1.65) {$\Gamma$};
  \node (G) at (1,1.25) {$\Rightarrow$};
  \path[<-, draw, thick] (x0) to node [below right] {$\alpha(x_0, M!.\tau (\gamma))$} (x0');
  \path[<-, draw, thick] (x0) to node [left] {$\tau (\gamma)$} (x1);
  \path[->, draw, thick] (x1) to node [above] {$\tau (\gamma)$} (x0');
  \path[->, draw, thick, dotted] (1.5, 2) to (1.5,1);
\end{tikzpicture}
\caption{\(\Gamma . \tau (\gamma)\)}
\end{subfigure}
 
\begin{subfigure}[t]{0.4\textwidth}
  \centering
\begin{tikzpicture}
  \draw [white, pattern = vertical lines, pattern color = lightgray] (0,0) -- (3,2) -- (0,2) -- (0,0); 
  \node (x0) at (0,0) {$x_0$};
  \node (x0') at (3,2) {$x_0$};
  \node (x1) at (0,2) {$x_1$};
  \node  (G) at (1,1.65) {\scriptsize $M(\tau) . \Gamma$};
  \node (G) at (1,1.25) {$\Rightarrow$};
  \path[<-, draw, thick] (x0) to node [below right] {$\alpha(x_0, M!.\tau (\gamma))$} (x0');
  \path[->, draw, thick] (x0) to node [left] {$\gamma$} (x1);
  \path[->, draw, thick] (x1) to node [above] {$\tau (\gamma)$} (x0');
  \path[<-, draw, thick, dotted] (1.5, 2) to (1.5,1);
\end{tikzpicture}
\caption{\(M(\tau).\Gamma . \tau (\gamma)\)}
\end{subfigure}
~
  \begin{subfigure}[t]{0.4\textwidth}
    \centering
\begin{tikzpicture}
  \draw [white, pattern = vertical lines, pattern color = lightgray] (3,0) -- (0,2) -- (3,2) -- (3,0); 
  \node (x0) at (3,0) {$x_0$};
  \node (x0') at (0,2) {$x_0$};
  \node (x1) at (3,2) {$x_1$};
  \node  (G) at (2.2,1.65) {\scriptsize $\tau . M(\tau) . \Gamma$};
  \node (G) at (2.2,1.25) {$\Rightarrow$};
  \path[<-, draw, thick] (x0) to node [below left] {$\alpha(x_0, M!(\gamma))$}(x0');
  \path[->, draw, thick] (x0) to node [right] {$\gamma$} (x1);
  \path[<-, draw, thick] (x1) to node [above] {$\gamma$} (x0');
  \path[<-, draw, thick, dotted] (1.5, 2) to (1.5,1);
\end{tikzpicture}
\caption{\(\tau.M(\tau).\Gamma . \tau (\gamma)\)}
\end{subfigure}
\caption{Constructing the `cocontraction'\index{cocontraction} \(\Gamma^*\) using a twist map.\label{fig:twistingGamma}}
\end{figure}
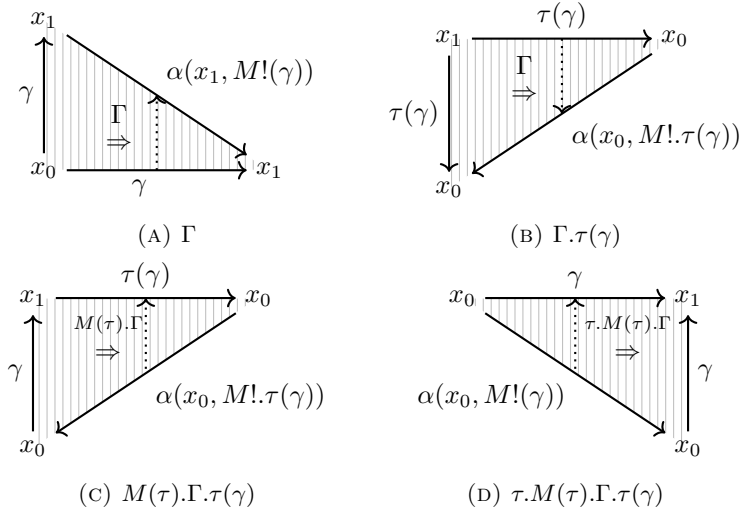

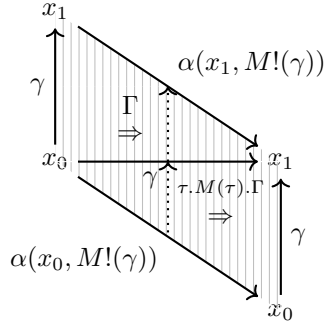
\begin{figure}
  \centering
\begin{tikzpicture}
  \draw [white, pattern = vertical lines, pattern color = lightgray] (0,2) -- (3,2) -- (0,4) -- (0,2); 
  \node (y1') at (0,4) {$x_1$};
  \node (x0) at (3,0) {$x_0$};
  \node (x0') at (0,2) {$x_0$};
  \node (x1) at (3,2) {$x_1$};
  \node  (G) at (1,2.75) {$\Gamma$};
  \node (G) at (1,2.35) {$\Rightarrow$};
  \draw [white, pattern = vertical lines, pattern color = lightgray] (3,0) -- (0,2) -- (3,2) -- (3,0); 
  \node  (G) at (2.2,1.65) {\scriptsize $\tau . M(\tau) . \Gamma$};
  \node (G) at (2.2,1.25) {$\Rightarrow$};
  \path[<-, draw, thick] (x0) to node [below left] {$\alpha(x_0, M!(\gamma))$}(x0');
  \path[->, draw, thick] (x0) to node [right] {$\gamma$} (x1);
  \path[<-, draw, thick, dotted] (1.5, 2) to (1.5,1);
  \path[->, draw, thick] (x0') to node [below left] {$\gamma$} (x1);
  \path[->, draw, thick] (x0') to node [left] {$\gamma$} (y1');
  \path[->, draw, thick] (y1') to node [above right] {$\alpha(x_1, M!(\gamma))$} (x1);
  \path[->, draw, thick, dotted] (1.5, 2) to (1.5,3);
\end{tikzpicture}
\caption{Pointwise composition of the `cocontraction' \(\Gamma^*\) with \(\Gamma\) \label{fig:Gammastarcomposition}}
\end{figure}

This is illustrated in Figure~\ref{fig:twistingGamma}.
Using~\eqref{eq:twistalphamu}, one can work out that the constant sides of the
triangle can be identified as shown. Similar, one can work out that the two
diagrams can be composed pointwise (or vertically), as illustrated in
Figure~\ref{fig:Gammastarcomposition}.  Of this pointwise composition, we will
ask that it can be identified with the constant path from \(\gamma\) to itself
of the same underlying length as itself:
\[ M\mu.(\Gamma^*, \Gamma) =\alpha.(1,M!): M \Rightarrow MM. \]
We will refer to this axiom as the \emph{sandwich equation}\index{sandwich equation},
and it is a new requirement. Last, we need an axiom that is actually unrelated
to the twist map, namely that composition is left (and right) cancellative: that
is, 
\diag{ MX_t \times_s MX \ar[r]_(.35){(1, \Delta)} & MX_t \times_s (MX_{(t,s)} \times_{(t,s)} MX) \ar@<1ex>[rr]^(.7){\mu.(p_1,p_1.p_2)} \ar@<-1ex>[rr]_(.7){\mu.(p_1,p_2.p_2)}  &  & MX }
is an equalizer (and similarly for right cancellative).\index{left
cancellative}\index{right cancellative} Since $M$ preserves pullbacks, this
remains an equalizer after applying $M$ (so we can apply left cancellation
pointwise). This axiom could have been part of the definition of Moore
structure, but it so happened that we did not need it until this section.

Although the motivation of this section is symmetric Moore structures, the
proofs presented are completely axiomatic with regard to \(\Gamma^*\) and
\(\Gamma\). Therefore we can merely assume a given natural transformation
\(\Gamma^*: M \Rightarrow MM\) with the following properties:
\begin{enumerate}
  \item[(1')] \(\Gamma^*: M \Rightarrow MM\) is a comonad satisfying the Moore
    structure axioms dual to to $\Gamma$ (with $s$ and $t$ reversed);
  \item[(2)] which satisfies the \emph{sandwich equation};
  \item[(3)] and composition is left and right cancellative.
 \end{enumerate}

This slight abstraction will be called a \emph{two-sided} Moore
structure\index{two-sided Moore structure}.  The two
definitions, of a two-sided Moore structure and a symmetric Moore structure,
are summarised in \refdefi{twosidedpathobjcat} and
\refdefi{symmetricpathobjcat}.

\subsection{Naive left fibrations}

 A two-sided Moore structure gives rise to a `dual' AWFS on \(\E\) induced by
 \(\Gamma^*\) instead of \(\Gamma\).  This yields a definition of \emph{naive
 left fibration} (cf. \refdefi{strlift}):
\begin{defi}{colift}
A map $p: Y \to X$ together with an arrow
\[ L^*: MX \times_X Y \to MY, \]
where $MX \times_X Y$ refers to the pullback of $p$ and $s$ (instead of \(t\)),
is said to be a \emph{naive left fibration}\index{naive left fibration} when it
satisfies the conditions:
\begin{enumerate}[(i)]
\item $(Mp, s).L^* = 1$;
\item $L^*.(r.p, 1) = r$;
\item $L^*.(\mu.(p_2.p_1, p_1.p_1), p_2) = \mu.(L^*.(p_1.p_1, t.L^*.(p_2.p_1, p_2)), L^*.(p_2.p_1, p_2))$;
\item $\Gamma^*. L^* = ML^*.(\Gamma^*.p_1,\alpha.(p_2,M!.p_1))$.
\end{enumerate}
\end{defi}
\begin{rema}{terminology-left-right}
The terminology of left fibrations is adopted from the corresponding notions
(due to Joyal) of left (and right) fibrations in the category of simplicial
sets, see e.g.\ \cite{Lurie}, chapter 2.  Our notion of \emph{effective}
left and right fibration (developed in Section~\ref{sec:unifKanFibr}) for
categories with Moore structure coincides with left and right fibrations in
simplicial sets in this sense.
\end{rema}
It follows from the previous sections that naive left fibrations are
\(R\)-algebras for the AWFS induced by the dual comonad \(\Gamma^*\). In
particular, \refcoro{rfibCatFibredStructure} gives:
\begin{coro}{naiveLeftFibCatFibred}
  Naive left fibrations have the structure of a discretely fibred double
  category \(\mathbb{N}\cat{LFib}\) over \(\E\), whose squares are
  given by commutative squares as in~\eqref{eq:morphismOfNaiveRightFibDiag}.\index{discretely (co)fibred concrete double category!of naive left fibrations} 
\end{coro}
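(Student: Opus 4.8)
The statement \refcoro{naiveLeftFibCatFibred} asserts that naive left fibrations organise into a discretely fibred concrete double category over \(\E\). The plan is to obtain this purely formally by dualising the machinery already developed for naive (right) fibrations, using the fact that a two-sided Moore structure carries, alongside \(\Gamma\), a comonad \(\Gamma^*: M \Rightarrow MM\) satisfying the dual Moore structure axioms (condition (1') above).

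First I would observe that, by the symmetry of the axioms, the construction of \refprop{MooreStructure->AWFS} applies verbatim with \(\Gamma\) replaced by \(\Gamma^*\) and \(s, t\) interchanged: one obtains an AWFS \((L^*, R^*, \epsilon^*, \eta^*, \delta^*, \mu^*)\) on \(\E\), where now \(E^*f = MX \times_X Y\) is the pullback of \(p\) along \(s\) (rather than \(t\)), \(L^*p = (r.p, 1)\), and \(R^*p = t.p_1\). The only point needing attention here is that the verification of the Garner equation in Section~\ref{sssec:functorial-factorisation}--\ref{sssec:the-monad} used the distributive law between \(\mu\) and \(\Gamma\); for the dual AWFS one needs the distributive law between \(\mu\) and \(\Gamma^*\), but this is exactly what condition (1') (the dual Moore structure axioms) provides, together with the fact that \(M\) still preserves pullbacks.

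Next I would note that \refdefi{colift} is precisely the transcription of \refdefi{strlift} under this dualisation, so that \refprop{nrfibisfibredstructure}, \refprop{transportisfibredstructure}, \refprop{transpvsalgebras} and \refprop{liftvstransport} all go through with \(\Gamma\) replaced by \(\Gamma^*\): a naive left fibration structure \(L^*\) on \(p\) is equivalent to a (dual) transport structure \(T^* = t.L^*\), which is in turn equivalent to an algebra structure for the monad \(R^* = t.p_1\). Hence naive left fibrations are exactly the \(R^*\)-algebras as a notion of fibred structure, with morphisms the commutative squares satisfying \eqref{eq:morphismOfNaiveRightFib} (with \(\Gamma^*\) in place of \(\Gamma\)). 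Applying \refcoro{rfibCatFibredStructure} to the dual AWFS then yields that the discretely fibred concrete double category of \(R^*\)-algebras is isomorphic to \(\mathbb{N}\cat{LFib}\), which is the claim; the squares are as described in \eqref{eq:morphismOfNaiveRightFibDiag} because morphisms of algebras for \(R^* = t.p_1\) unfold to exactly that shape.

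The main obstacle, such as it is, lies in checking that the \emph{dual} distributive law (the Garner equation for the \(\Gamma^*\)-AWFS) really does follow from the two-sided Moore structure axioms rather than requiring some genuinely new computation. This reduces to the observation that conditions (1')--(3) of a two-sided Moore structure were chosen precisely so that \(\Gamma^*\) satisfies all the structural identities that \(\Gamma\) does with \(s\) and \(t\) swapped — in particular its own version of the distributive law with \(\mu\). Once that is granted, everything else is a mechanical relabelling of the arguments in Sections~\ref{sec:AWFSFromM}, and there is nothing further to prove; indeed this is why the corollary can be stated as an immediate consequence of ``the previous sections''.
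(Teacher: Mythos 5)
Your proposal is correct and follows the paper's own route: the paper obtains this corollary precisely by noting that the two-sided Moore structure axioms (including the dual distributive law for \(\Gamma^*\)) make the construction of \refprop{MooreStructure->AWFS} and its consequences apply with \(\Gamma^*\) in place of \(\Gamma\) and \(s,t\) interchanged, so that naive left fibrations are the algebras for the dual AWFS and \refcoro{rfibCatFibredStructure} yields the discretely fibred concrete double category \(\mathbb{N}\cat{LFib}\). Your extra care about the dual Garner equation is exactly the point the paper treats as built into condition (1'), so nothing further is needed.
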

Like previously, we may also refer to its structure as a category
\(\cat{NLFib}\) or notion of fibred structure \(\cat{nLFib}\).

For \emph{symmetric} Moore structure, it turns out that the notion of left
naive fibration is no different from the notion of naive fibration.  This
follows from the following proposition, which states that HDRs are the same for
the two Moore structures given by \(\Gamma\) and \(\Gamma^*\).
The second part of the following proposition is stated so as to apply
\refprop{fibreddoubleiso} immediately.
\begin{prop}{coHDRisHDR}
 In a category with symmetric Moore structure, HDRs coincide for both
 Moore structures. That is, there is a double functor over \(\sq{\E}\)
 between the two (concrete) double categories of HDRs which is an isomorphism.

 Hence, there is an induced double functor between discretely fibred concrete
 double categories over \(\E\):
 \[
   \mathbb{N}\cat{Fib} \to \mathbb{N}\cat{LFib}.
 \]
 This functor is full on squares, and induces an isomorphism between notions of
 fibred structure. Hence, this functor is an isomorphism. 
\end{prop}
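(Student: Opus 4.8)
The plan is to realise the path-reversal (twist) map \(\tau\) as an isomorphism carrying \(\Gamma\)-HDRs to \(\Gamma^*\)-HDRs, and then to propagate this along the identifications presenting naive (left) fibrations as right lifting structures against HDRs. Write \(\mathbb{H}\cat{DR}^*\) for the double category of HDRs for the Moore structure whose contraction comonad is \((M,t,\Gamma^*)\). I would define a double functor \(F\co\mathbb{H}\cat{DR}\to\mathbb{H}\cat{DR}^*\) that is the identity on objects, on horizontal morphisms and on the underlying squares, and that sends a vertical morphism \((i,j,H)\) to \((i,j,\tau.H)\). The three equational conditions for \((i,j,\tau.H)\) to be a \(\Gamma^*\)-HDR are immediate from \(s.\tau=t\), \(t.\tau=s\) and \(j.i=1\); the contraction condition \(M(\tau.H).(\tau.H)=\Gamma^*.(\tau.H)\) is the one genuine computation, and using functoriality of \(M\), naturality of \(\tau\) (at \(H\), and at \(\tau\) itself, which gives \(M\tau.\tau_M=\tau_M.M\tau\)), the HDR coassociativity \(MH.H=\Gamma.H\), the definition \(\Gamma^*=\tau_M.M\tau.\Gamma.\tau\) and the idempotence \(\tau.\tau=1\), both sides reduce to \(\tau_M.M\tau.\Gamma.H\). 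Well-definedness of \(F\) on squares is naturality of \(\tau\) applied to the equation \(Mg.H'=H.g\) defining a morphism of HDRs (which, using \(\tau.\tau=1\), also exhibits \(F\) as fully faithful on squares), and compatibility with the vertical composition of \refcoro{vert-co-HDRs} follows from \(\tau\) being an internal functor into the opposite category, so \(\tau.\mu=\mu.(\tau.p_2,\tau.p_1)\), together with naturality. On (co)fibred structures \(F\) acts by the bijection \((H,j)\mapsto(\tau.H,j)\), hence induces an isomorphism \(\cat{hdr}\cong\cat{hdr}^*\); being also full on squares, the dual of \refprop{fibreddoubleiso} gives that \(F\) is an isomorphism of discretely cofibred concrete double categories over \(\E\). (Conceptually this just says that \(\tau\) is an invertible morphism of comonads \((M,s,\Gamma)\to(M,t,\Gamma^*)\), inducing an isomorphism between the categories of coalgebras for \((M,s,\Gamma)\) and for \((M,t,\Gamma^*)\), which lifts through the comonadicity of \refprop{HDRsareMcoalgebras} to \(\cat{HDR}\cong\cat{HDR}^*\).)

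For the second part I would use the chain \(\mathbb{N}\cat{Fib}\cong\alg{R}\cong\Drl{\coalg{L}}\cong\Drl{\mathbb{H}\cat{DR}}\), where \((L,R)\) is the AWFS of \refprop{MooreStructure->AWFS}, coming from \refcoro{rfibCatFibredStructure}, \reftheo{distrlaw<->iso-of-cofibred-str} and \refcoro{hdrscoalgsdoubleiso}; dually, \(\mathbb{N}\cat{LFib}\cong\Drl{\mathbb{H}\cat{DR}^*}\) by \refcoro{naiveLeftFibCatFibred} and the same results applied to the AWFS induced by \(\Gamma^*\). Applying the functor \(\Drl{(-)}\) to the inverse of \(F\) then yields the asserted double functor \(\mathbb{N}\cat{Fib}\to\mathbb{N}\cat{LFib}\); being inverse to an isomorphism of double categories, it is in particular full on squares and induces an isomorphism of the underlying notions of fibred structure \(\cat{nFib}\cong\cat{nLFib}\), so \refprop{fibreddoubleiso} applies and yields that it is an isomorphism of double categories, as claimed.

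The only place I expect real work is the contraction identity in the first part — checking that path-reversal converts the coassociativity axiom of a \(\Gamma\)-HDR into that of a \(\Gamma^*\)-HDR — since this is where the precise interaction axioms for the twist map (its idempotence and its relation to \(\Gamma\) through the definition of \(\Gamma^*\)) are used; everything downstream is formal bookkeeping with \(\Drl{(-)}\) and results already established.
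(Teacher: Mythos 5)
Your proposal is correct and follows essentially the same route as the paper: the core is the same twist-map computation showing that $H\mapsto\tau.H$ converts the $\Gamma$-coassociativity axiom into the $\Gamma^*$-one (together with the same naturality arguments for squares and vertical composition), giving an isomorphism of the two HDR double categories. The only difference is presentational: for the second part you transport this isomorphism formally through $\Drl{(-)}$ and the identifications $\mathbb{N}\cat{Fib}\cong\Drl{\mathbb{H}\cat{DR}}$, whereas the paper additionally identifies the induced action on transport/algebra structures concretely as precomposition with $\tau\times_X Y$; both are fine.
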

\begin{proof}
For the first part, we define a double functor as follows.

If \((i : A \to B, j, H^*)\) is a `left-HDR', so for the dual structure given
by \(\Gamma^*\), then let \(H := \tau_B . H^*\). We claim that \((i, j , H)\)
is an HDR\@.  The first requirements are easy to check.  For the condition on
\(\Gamma\), we have:
\begin{align*}
       \Gamma . H &= \Gamma . \tau . H^* = M(\tau) . \tau . \Gamma^* . H^*
                  = M(\tau) . \tau . M(H^*) . H^* \\
                  &= M(\tau) . M(H^*) . \tau . H^*
                  = M(\tau. H^*) . \tau . H^* \\
                  &= M(H). H  
\end{align*}
Functoriality with
respect to squares is easy to see, and for vertical composition as
given by~\eqref{eq:composition-of-hdrs} we have: 
\begin{align*}
        (\tau . H_1) * (\tau . H_0) &= \mu . ( \tau. H_1,Mi_1 . \tau . H_0. j_1) \\
                                    &= \mu . ( \tau . H_1,\tau . Mi_1 . H_0 . j_1) \\
                                    &= \tau . \mu. ( H_1,Mi_1 . H_0 . j_1 ) \\
                                    &= \tau . (H_1 * H_0) \\
\end{align*}
Note that this proposition uses the first condition on \(\tau\) entirely.  Of
course this argument dualizes, and clearly the two operations are inverse. 

For the second part, it is a little exercise to show that the induced double
functor between the categories of algebras satisfies the conditions, where
algebra morphisms \(MX \times_X \to Y\) for a naive fibration \(p: Y \to X\)
are sent to the precomposition with with \[\tau \times_{X} Y : MX \times_{X} Y
\to MX \times_{X} Y\] (note, these are two different pullbacks, for \(s\) and
\(t\) respectively or vice-versa). To do the exercise, one can apply the
isomorphism to the HDR 
\[
  (r.p , 1) : Y \to MX \times_X Y
\]
with retraction given by the comultiplication. Then one can see what happens to
the algebra morphism, which is defined as the diagonal filler with respect to
this map.
\end{proof}

\subsection{The Frobenius construction}
The equations (2)--(3) from the beginning of this section establish a relation
between the two `dual' AWFSs on \(\E\) given by \(\Gamma\) and \(\Gamma^*\).
The following lemma exploits this in a way we will need for our formulation of
the Frobenius construction in \refprop{pbkstabilityofSDRunderfibr} below. In
the presence of a two-sided Moore structure, we will now sometimes refer to
naive fibrations as naive \emph{right} fibrations, to emphasize which one we
are talking about.
\begin{lemm}{lemma-coconnection}
        Suppose \(p : Y \to X\) has the the structure of a
        naive right fibration \(L : MX \times_X Y \to MY\),
        i.e.\ they satisfy the conditions of \refdefi{strlift}. 
        Then we also have:
\[
        \Gamma^*.L = ML.(\Gamma^*.p_1, L).
\] for the dual \(\Gamma^*\). 

Dually, a naive left fibration \((p: Y \to X, L^*)\), satisfies
\[
        \Gamma . L^* = M L^* . (\Gamma . p_1, L^*).
\]
\end{lemm}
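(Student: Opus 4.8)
The plan is to prove the first identity $\Gamma^*.L = ML.(\Gamma^*.p_1, L)$ directly, and then obtain the dual by symmetry. The key observation is that by \refprop{liftvstransport}, the naive right fibration structure $L$ on $p$ is determined by its transport structure $T = s.L$ via $L = MT.\nu.(\Gamma.p_1, \alpha.(p_2, M!.p_1))$; so everything should reduce to properties of $T$ together with the two-sided Moore structure axioms $(1')$, $(2)$ (the sandwich equation) and $(3)$ (left/right cancellativity). I would first record that the two sides of the desired equation both map into $MY$ in a way compatible with $Mp$ and the source/target maps, so that it suffices to check equality after composing with $M$ of something monic or after applying left cancellation pointwise (using axiom $(3)$, noting $M$ preserves the relevant equalizers). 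This is the device that will let me avoid verifying the identity "on the nose''.

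**Main computation.** The core step is to rewrite both sides using the explicit formula for $L$ in terms of $T$, and the definitions of $\delta_p$ from~\eqref{eq:defn:comultiplication} and of $\Gamma^*$ from~\eqref{eq:Gammastar}, namely $\Gamma^* = \tau_M.M(\tau).\Gamma.\tau$. Concretely, condition (iv) of \refdefi{strlift} already gives $\Gamma.L = ML.p_1.\delta_p$, which unwinds to $\Gamma.L = ML.\nu.(\Gamma.p_1, \alpha.(p_2, M!.p_1))$. The target identity is the "cocontraction'' analogue: I expect that applying $\tau$, then $M(\tau)$, then $\tau_M$ to the equation $\Gamma.L = ML.p_1.\delta_p$, and using naturality of $\tau$ together with the compatibility $\tau.\alpha = \alpha.(1,\tau)$ from~\eqref{eq:twistalphamu}, transforms it into a statement about $\Gamma^*$. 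The sandwich equation $M\mu.(\Gamma^*, \Gamma) = \alpha.(1, M!)$ enters to handle the constant-path terms $\alpha.(p_2, M!.p_1)$ that appear when comparing $\delta_p$ with its $\Gamma^*$-counterpart: pointwise composition of a $\Gamma$-contraction with a $\Gamma^*$-cocontraction collapses to a constant path, and this is exactly what lets the mixed expression on the right of the desired identity simplify to $ML.(\Gamma^*.p_1, L)$ rather than $ML.(\Gamma^*.p_1, \text{(constant path)})$.

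**The dual and the main obstacle.** Once the right-fibration identity is established, the left-fibration identity $\Gamma.L^* = ML^*.(\Gamma.p_1, L^*)$ follows by the exact same argument with the roles of $\Gamma$ and $\Gamma^*$, and of $s$ and $t$, interchanged — this is legitimate because, as remarked at the start of Section~\ref{sec:frobenius}, a two-sided Moore structure is by definition self-dual in this sense (one could even invoke \refprop{coHDRisHDR} in the symmetric case, though the lemma is stated for general two-sided structures so the bare duality suffices). I expect the main obstacle to be bookkeeping: tracking the several nested pullbacks (the domain of $\delta_p$, the domain of $\mu_p$, and the pullbacks hidden inside $MX \times_X Y$ for source versus target) and the mediating isomorphisms $\nu$ through the $\tau$-conjugation, making sure that naturality of $\nu$, $\mu$, $\alpha$ and $\tau$ is invoked correctly at each stage. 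The conceptual content is light — it is the same "lift the contraction'' phenomenon as condition (iv) of \refdefi{strlift}, just read through the twist map — but the calculation is of the same flavour and length as the verification in the proof of \refprop{liftvstransport}, so I would organise it as a chain of equalities with each step annotated by the axiom used (naturality, the sandwich equation, left cancellation, or the strength axioms for $\alpha$ with respect to $\Gamma$/$\Gamma^*$).
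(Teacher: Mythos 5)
Your overall instincts---pointwise cancellation plus the sandwich equation---are the right ones, but the route you propose for the core computation has a genuine gap. The lemma is stated for a merely \emph{two-sided} Moore structure, where $\Gamma^*$ is an abstract datum satisfying (1'), (2), (3); the section deliberately abstracts away from symmetry, and there is no twist map $\tau$ available. Your "core step" conjugates condition (iv) of \refdefi{strlift} by $\tau$, using $\Gamma^*=\tau_M.M\tau.\Gamma.\tau$ and $\tau.\alpha=\alpha.(1,\tau)$, so it simply does not apply in the stated generality---and this generality matters, since the lemma feeds the Frobenius construction for naive \emph{left} fibrations and, in simplicial sets, the two-sided structures $M_+$ and $M_-$ (effective right/left fibrations) are not symmetric: $\tau$ interchanges them rather than preserving them. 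Moreover, even when $\tau$ exists, conjugating (iv) does not give the assertion: it gives the dual condition (iv) of \refdefi{colift} for the transferred structure $\tau_Y.L.(\tau_X\times_X 1)$ (the transfer of \refprop{coHDRisHDR}), i.e.\ a statement mixing $\Gamma^*$ with the \emph{twisted} lifting structure, whereas the lemma mixes $\Gamma^*$ with $L$ itself---note that the second argument on the right-hand side is $L$, not a constant path. Your remark that the sandwich equation should then repair the constant-path discrepancy is exactly where the real argument has to happen, and it is not supplied.

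For comparison, the paper's proof is a short equational argument entirely within the two-sided axioms. Postcomposing (iv) with $Ms$ gives $L = Ms.ML.(\Gamma.p_1,\alpha.(p_2,M!.p_1))$; then, since $\Gamma.L = ML.(\Gamma.p_1,\alpha.(p_2,M!.p_1))$ is known from (iv), cancellativity (applied pointwise, using that $M$ preserves the defining equalizer) reduces the claim to
\[
M\mu.(\Gamma^*.L,\;\Gamma.L) \;=\; M\mu.\bigl(ML.(\Gamma^*.p_1,L),\; ML.(\Gamma.p_1,\alpha.(p_2,M!.p_1))\bigr),
\]
and both sides are computed to equal $\alpha.(L,M!.p_1)$: the left directly from the sandwich equation (plus $Mp.L=p_1$), the right by first pulling $M\mu$ inside $ML$ via the $M$-image of condition (iii)---this use of (iii) is essential and absent from your sketch---and then applying the sandwich equation and the strength of $\alpha$. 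Nothing uses $\tau$, and your duality argument for the second identity is then unproblematic, since a two-sided structure is self-dual under swapping $s\leftrightarrow t$, $\Gamma\leftrightarrow\Gamma^*$. If you replace the $\tau$-conjugation by this multiply-by-$\Gamma.L$-and-cancel manoeuvre, your outline becomes the paper's proof; as written, the central step is unavailable in the stated generality and, where it is available, it proves a different statement.
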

\begin{proof}
Note that by postcomposing (iv) in \refdefi{strlift} with $Ms$ we obtain:
\[ L = Ms.ML.(\Gamma.p_1,\alpha.(p_2,M!.p_1)). \]

 By (pointwise) left cancellation and equation (iv) in \refdefi{strlift}, it suffices to prove:
\[ M\mu.(\Gamma^*.L, \Gamma.L) = 
M \mu.(M L.(\Gamma^*.p_1, L), M L.(\Gamma.p_1,\alpha.(p_2, P !.p_1)))
\]

But we have
\begin{eqnarray*}
M\mu.(\Gamma^*.L,\Gamma.L) & = & M\mu.(\Gamma^*,\Gamma).L \\
& = &\alpha.(1,M!).L \\
& = &\alpha.(L,M!.p_1),
\end{eqnarray*}
as well as
\begin{center}
\begin{displaymath}
\begin{array}{lc}
M \mu.(M L.(\Gamma^*.p_1, L), M L.(\Gamma.p_1,\alpha.(p_2, M !.p_1))) & = \\
M \mu.(M L.(\Gamma^*.p_1, M s.M L.(\Gamma.p_1,\alpha.(p_2, M !.p_1))), M L.(\Gamma.p_1,\alpha.(p_2, M !.p_1))) & = \\
M L.(M \mu.(\Gamma^*.p_1, \Gamma.p_1),\alpha.(p_2, M !.p_1)) & = \\
M L.(\alpha.(p_1, M !.p_1),\alpha.(p_2, M !.p_1))& = \\
ML.\alpha.(1,M!.p_1) & = \\
\alpha.(L,M!.p_1).
\end{array}
\end{displaymath}
\end{center}

The second statement is dual to the first.
\end{proof}

The following proposition contains our definition of the \emph{Frobenius
construction} in the current context. As a property of an AWFS resulting from a
Moore structure, the proposition is comparable to the \emph{Frobenius property}
of Van den Berg-Garner~\cite{vdBerg-Garner}. See also Box~\ref{box:frobenius}.
\begin{textbox}{The Frobenius property\label{box:frobenius}}\index{Frobenius property}
In recent literature, the Frobenius property is often formulated as `the
pullback of an \(\mathcal{L}\)-map along an \(\mathcal{R}\)-map is an
\(\mathcal{L}\)-map'~\cite{vdBerg-Garner-11}\cite{Gambino-Sattler}.
Here \(\mathcal{L}\), \(\mathcal{R}\) are the left and right classes of maps
for an algebraic or non-algebraic weak factorisation system:
\[
\begin{tikzpicture}[baseline={([yshift=-.5ex]current bounding box.center)}]
            \matrix (m) [matrix of math nodes, row sep=2em,
            column sep=3.5em]{
                |(d)| {D} & |(a)| {A} \\
                |(e)| {E} & |(b)| {B} \\
            };
            \begin{scope}[every node/.style={midway,auto,font=\scriptsize}]
            \path[{Hooks[right]}->]
                    (a) edge node [right] {$i \in \mathcal{L}$} (b)
                    (d) edge [dashed] node [right] {$\in \mathcal{L}$} (e);
            \path[->>]
                    (e) edge node [below] {$p \in \mathcal{R}$} (b)
                    (d) edge [dashed] (a);
            \end{scope}\end{tikzpicture}
\]
The property is of interest because it is closely related to
\emph{right-properness} of a model structure. This is the property that the
factorisation system of trivial cofibrations and fibrations, as part of the
model structure, satisfies the Frobenius property (see~\cite{Gambino-Sattler}).
Further, it relates to dependent products (pushforward) for a model of type
theory (as in Section~\ref{sec:Pi-types} below). The Frobenius property for
two-sided Moore structures is more abstract, since the class of right maps
\(\R\) in the above diagram is replaced by the right maps from a different, but
closely related, AWFS\@. The actual Frobenius property is extracted as a
consequence for
\emph{symmetric} Moore structures in \refcoro{symmetricPocFrobenius}.  
\end{textbox}
\begin{prop}{pbkstabilityofSDRunderfibr}{\rm (Frobenius construction)}\index{Frobenius construction}
	Suppose \((i, j, H)\) is an HDR and
\diag{ A' \ar[d]_{i'} \ar[r]^{p'} & A \ar[d]^i \\
E \ar[r]_p & B}
is a pullback square in which $p: E \to B$ is a naive left fibration.
Then $i'$ can be extended to an HDR such that the square becomes a
morphism of HDRs.
\end{prop}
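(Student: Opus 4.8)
The plan is to build the HDR structure on $i'$ by transporting the homotopy $H$ along the naive left fibration $p$ using its lifting operation $L^* \colon MB\times_B E \to ME$ (where the pullback is that of $s$ and $p$), and then to extract the required coassociativity condition from \reflemm{lemma-coconnection}.

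First the construction. Since $s.H = 1_B$, the map $\langle H.p,\, 1_E\rangle \colon E \to MB\times_B E$ into this pullback is well-defined, and I set
\[ H' := L^*.\langle H.p,\, 1_E\rangle \colon E \to ME. \]
By axiom (i) of a naive left fibration, $Mp.H' = p_1.\langle H.p,1_E\rangle = H.p$ and $s.H' = p_2.\langle H.p,1_E\rangle = 1_E$. Hence $p.(t.H') = t.Mp.H' = t.H.p = i.j.p$, so $\langle j.p,\, t.H'\rangle$ factors through the pullback $A'$; let $j'\colon E\to A'$ be this factorization, so that $p'.j' = j.p$ and $i'.j' = t.H'$, which gives $t.H' = i'.j'$ for free.

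Next, the easy axioms and the morphism condition. Using $p.i' = i.p'$ one has $\langle H.p,1_E\rangle.i' = \langle H.i.p',\, i'\rangle$, and since an HDR is automatically a strong deformation retract ($H.i = r.i$, as in the proof of \refprop{HDRiscoalgebra}), this equals $(r.p,1_E).i'$; applying axiom (ii) of a naive left fibration yields $H'.i' = r.i'$, hence $t.H'.i' = i'$ and therefore $j'.i' = \langle j.i.p',\, i'\rangle = \langle p',\, i'\rangle = 1_{A'}$. Together with $s.H' = 1_E$ and $t.H' = i'.j'$, this is everything except the coassociativity axiom. Moreover $p'.j' = j.p$, $Mp.H' = H.p$, and $p.i' = i.p'$ is given, so once the last axiom is checked the square is a morphism of HDRs.

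The substantive step is $MH'.H' = \Gamma.H'$. On the left, $MH'.H' = ML^*.M\langle H.p,1_E\rangle.H'$; using that $M$ preserves pullbacks (write $\nu$ for the mediating isomorphism) together with $Mp.H' = H.p$, this is $ML^*.\nu.(MH.H.p,\, H')$, which by the HDR coassociativity $MH.H = \Gamma.H$ equals $ML^*.\nu.(\Gamma.H.p,\, H')$. On the right, $\Gamma.H' = \Gamma.L^*.\langle H.p,1_E\rangle$, and \reflemm{lemma-coconnection} applied to the naive left fibration $p$ gives $\Gamma.L^* = ML^*.\nu.(\Gamma.p_1,\, L^*)$, whence $\Gamma.H' = ML^*.\nu.(\Gamma.H.p,\, L^*.\langle H.p,1_E\rangle) = ML^*.\nu.(\Gamma.H.p,\, H')$; the two sides coincide. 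The main obstacle is exactly this matching of the contractions: the HDR axiom is phrased with $\Gamma$, while the structure $L^*$ of a naive \emph{left} fibration is a priori compatible only with $\Gamma^*$, and it is \reflemm{lemma-coconnection} — which itself rests on the sandwich equation and left cancellation built into a two-sided Moore structure — that bridges the gap. Everything else is routine diagram chasing with the (co)unit laws for $\Gamma$ and $r$ and the naturality of $\nu$.
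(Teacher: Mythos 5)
Your proposal is correct and follows essentially the same route as the paper: the same definition $H' = L^*.(H.p,1)$, the same induced retraction $j'$, and the same use of \reflemm{lemma-coconnection} to obtain $MH'.H' = \Gamma.H'$. The only cosmetic difference is in checking $j'.i' = 1_{A'}$, where you first derive $H'.i' = r.i'$ and invoke the joint monicity of the pullback projections, while the paper cancels the (split) mono $i'$ after computing $i'.j'.i' = i'$; both rest on the same identities $H.i = r.i$ and $L^*.(r.p,1) = r$.
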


\begin{rema}{notapbksqofHDRs}
 The diagram above will not be a cartesian morphism of HDRs, in general.
\end{rema}

\begin{proof}
Let $L^*$ be the naive left fibration structure on $p$. We have a map $j: B \to A$ and a homotopy $H: B \to MB$ with $j.i = 1, s.H = 1, t.H = i.j$ and $\Gamma.H = MH.H$. In addition, we have a pullback diagram of the form
\diag{ A' \ar[d]_{i'} \ar[r]^{p'} & A \ar[d]^i \\
E \ar[r]_p & B.}
Write $H' = L^*.(H.p, 1): E \to ME$.
Then $s.H' = s.L^*.(H.p, 1) = p_2.(H.p,1) = 1$ and 
\[ p.t.H' = p.t.L^*.(H.p, 1) = t.Mp.L^*.(H.p,1) = t.p_1.(H.p, 1)=t.H.p=i.j.p, \]
and therefore there is a map $j': E \to A'$ with $p'.j' = j.p$ and $i'.j' = t.H'$. We will first show that $j'.i' = 1$ and $MH'.H' = \Gamma.H'$. 

To see $j'.i' =1$, we calculate
\begin{eqnarray*}
i'.j'.i' & = & t.H'.i' \\ & = & t.L^*.(H.p, 1).i' \\ & = & t.L^*.(H.p.i',i') \\ & = & t.L^*.(H.i.p', i') \\ & = & t.L^*.(r.i.p',i') \\ & = & t.L^*.( r.p.i',i') \\ & = & t.L^*.(r.p,1).i' \\ & = & t.r.i' \\ & = & i' \\ & = & i'.1.
\end{eqnarray*}

To prove $MH'.H' = \Gamma.H'$, we compute:
\begin{eqnarray*}
MH'.H' & = & M(L^*.(H.p,1)).L^*.(H.p,1) \\ & = & ML^*.M(H.p,1).L^*.(H.p,1) \\ & = & ML^*.(MH.Mp,1).L^*.(H.p,1) \\ & = & ML^*.(MH.Mp.L^*,L^*).(H.p,1) \\ & = & ML^*.(MH.p_1,L^*).(H.p,1) \\ & = & ML^*.( MH.H.p,L^*(H.p,1)) \\ & = & ML^*.( \Gamma.H.p,L^*.(H.p,1)) \\ & = & ML^*.(\Gamma.p_1,L^*).(H.p,1) \\ & = & \Gamma.L^*.(H.p,1) \\ & = & \Gamma.H'.
\end{eqnarray*}
Here we have used the identity of \reflemm{lemma-coconnection}.

It remains to check that the square is a morphism of HDRs. However, we have $p'.j' = j.p$, by construction, and 
\begin{eqnarray*}
Mp.H' & = & Mp.L^*.(H.p,1) \\
& = & p_1.(H.p,1) \\
& = & H.p. 
\end{eqnarray*}
\end{proof}

\begin{defi}{frobeniusmorphismofHDRs}
  A \emph{Frobenius morphism}\index{Frobenius morphism of HDRs} of HDRs is a
  morphism of HDRs as constructed in the proof of
  \refprop{pbkstabilityofSDRunderfibr}. So the domain is completely determined
  by an underlying HDR \((i : A \to B, j, H)\) (the codomain) and naive left
  fibration \(p: E \to B\). It will be seen in \reflemm{pullbckstoffrob} that
  Frobenius morphisms of HDRs are stable under pullback along morphisms of
  HDRs. This fact makes the proof of the main theorem of this chapter,
  \reftheo{symmetricPocProofOfPi}, much more manageable.
\end{defi}

\begin{lemm}{frobeniusStableUnderComposition}
 The Frobenius construction of \refprop{pbkstabilityofSDRunderfibr}
 is stable under composition of naive left fibrations $p$ as well as composition of
 HDRs $i$.
 \end{lemm}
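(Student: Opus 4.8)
The plan is to verify the two stability claims separately, in each case by reducing to the explicit formulas for composites already established in the excerpt and then checking that the Frobenius recipe from \refprop{pbkstabilityofSDRunderfibr} is respected. Recall that the Frobenius morphism attached to a pullback square
\diag{ A' \ar[d]_{i'} \ar[r]^{p'} & A \ar[d]^i \\ E \ar[r]_p & B }
with $(i,j,H)$ an HDR and $p$ a naive left fibration (structure $L^*$) has deformation $H' = L^* . (H.p, 1)$ and retraction $j'$ determined by $p'.j' = j.p$, $i'.j' = t.H'$. So in both cases I want to show that the HDR structure produced by iterating Frobenius once coincides with the one produced by applying Frobenius to the composite directly.

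\textbf{Stability under composition of naive left fibrations.} Suppose $p_0 : E_0 \to B$ and $p_1 : E_1 \to E_0$ are naive left fibrations, with structures $L_0^*$, $L_1^*$, and let $q = p_0 . p_1$. Form the two stacked pullback squares of $(i,j,H)$ along $p_0$ and then along $p_1$; the outer rectangle is the pullback along $q$. First I would note that by \refcoro{naiveLeftFibCatFibred} (or rather its analogue for left fibrations via \refprop{coHDRisHDR}) the composite $q$ carries a canonical naive left fibration structure, given by the left-fibration version of the composition formula for algebras from Section~\ref{ssec:naivefibrations}. Then the computation is: the Frobenius deformation for the outer square is $H'' = L_q^* . (H.q, 1)$, while iterating gives first $H_0' = L_0^* . (H.p_0, 1)$ on $E_0$ and then $H_1' = L_1^* . (H_0'.p_1, 1)$ on the domain. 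Expanding $H_0'.p_1 = L_0^*.(H.p_0.p_1, p_1) = L_0^*.(H.q.?,\ldots)$ and unfolding the composite algebra structure $L_q^*$, the two expressions should match by the unit and associativity axioms of \refdefi{colift} together with naturality of $\mu$ and $\alpha$. The retraction components match automatically since they are determined by the universal property of the relevant pullbacks and the equations $p'.j' = j.p$ compose on the nose.

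\textbf{Stability under composition of HDRs.} Here I would use \refcoro{vert-co-HDRs}: the composite of HDRs $(i_0,j_0,H_0)$, $(i_1,j_1,H_1)$ has deformation $H_1 * H_0 = \mu.(H_1, M i_1 . H_0 . j_1)$ and retraction $j_0 . j_1$. Given a naive left fibration $p$ and the two stacked pullback squares obtained by pulling $i_0$ then $i_1$ back along $p$ (equivalently, pulling the composite $i_1.i_0$ back along $p$), I want to show that the Frobenius deformation $L^* . ((H_1 * H_0).p, 1)$ agrees with the vertical composite of the two individually-produced Frobenius HDRs, using the composition formula \eqref{eq:composition-of-hdrs} for HDRs once more. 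Substituting $H_1 * H_0 = \mu.(H_1, Mi_1.H_0.j_1)$ and applying condition (iii) of \refdefi{colift} (which governs the interaction of $L^*$ with $\mu$), this reduces to a bookkeeping identity between the pullback-transported data; the retraction components again follow from the universal properties.

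\textbf{Main obstacle.} I expect the genuinely delicate part to be the naive-left-fibration composition case: one must be sure the composite left-fibration structure $L_q^*$ really is the one coming from vertical composition of algebras (as opposed to some ad hoc stacking), and then push the expansion of $\mu$ and $\alpha$ through correctly — this is the same flavour of manipulation as in the proof of \refprop{liftvstransport} and in Section~\ref{sssec:the-monad}, and the risk is purely in getting the projection gymnastics and the order of arguments to $\mu$ right. The HDR-composition case is comparatively routine given \refcoro{vert-co-HDRs} and \reflemm{lemma-coconnection}. A clean way to package both would be to observe that the Frobenius construction is literally the passage, under the equivalence $\cat{HDR} \cong \coalgcat{M}$ of \refprop{HDRsareMcoalgebras}, to the pullback of an $L$-coalgebra along an $R$-algebra for the dual AWFS; then stability under composition on both sides is inherited from the fact that pullback of coalgebras along algebras, in an AWFS with a distributive law, respects the vertical composition of each — which is exactly what \refprop{coalg-lifts} and the Garner equation buy us. If that abstract route goes through it makes the calculation above essentially formal, and I would present it that way, falling back on the explicit formulas only to pin down the few equations that the abstract argument leaves implicit.
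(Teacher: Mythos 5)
Your proposal follows essentially the same route as the paper's proof: in both cases one reduces, via \refprop{HDRsareMcoalgebras}, to checking that the two candidate deformations agree, and then verifies this by unfolding the step-wise (vertically composed) naive left fibration structure on $p_0.p_1$ in the first case, and by substituting $H_1 * H_0 = \mu.(H_1, Mi_1.H_0.j_1)$ and using condition (iii) of \refdefi{colift} (plus the fact that the Frobenius pullback square is itself a morphism of naive left fibrations) in the second, exactly as in the paper's explicit computations. One caution: do not lean on your closing ``abstract packaging'' --- the Frobenius construction mixes the $\Gamma$- and $\Gamma^*$-AWFSs, so stability does not follow formally from \refprop{coalg-lifts} and the Garner equation for a single AWFS, and the paper (like your fallback) proves it by the hands-on calculation you sketch.
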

\begin{proof}
 Consider a picture as follows:
  \diag{ A_2 \ar[d]_{i_2} \ar[r]^{q_1}  & A_1 \ar[d]_{i_1} \ar[r]^{q_0} & A_0 \ar[d]^{i_0} \\
E_2 \ar[r]_{p_1} & E_1 \ar[r]_{p_0} & E_0. }
Then we have
\begin{eqnarray*}
 H_1 & = & L^*_{p_0}.(H_0.p_0,1) \\
 H_2 & = & L^*_{p_1}.(H_1.p_1,1) \\
 H_2^* & = & L^*_{p_0.p_1}.(H_0.p_0.p_1,1)
\end{eqnarray*}
In view of \refprop{HDRsareMcoalgebras}, it suffices to show $H_2^* = H_2$, which we can do as follows:
\begin{eqnarray*}
 H_2^* & = & L^*_{p_0.p_1}.(H_0.p_0.p_1,1) \\
       & = & L^*_{p_1}.(L^*_{p_0}.(p_1,H_0.p_0.p_1),1) \\
       & = & L^*_{p_1}.(L^*_{p_0}.(1,H_0.p_0).p_1),1) \\
       & = & L^*_{p_1}.(H_1.p_1, 1) \\
       & = & H_2.
\end{eqnarray*}

Now consider a picture as follows:
\diag{ E_2 \ar[r]^{p_2} \ar[d]_{i_1'} & A_2 \ar[d]^{i_1} \\
E_1 \ar[r]^{p_1} \ar[d]_{i_0'} & A_1 \ar[d]^{i_0} \\
E_0 \ar[r]_{p_0} & A_0 }.
Then we have:
\begin{eqnarray*}
H_0' & = & L^*_{p_0}.(H_0.p_0,1) \\
H_1' & = & L^*_{p_1}.(H_0.p_1,1) \\
H_2' & = & H_0' * H_1' \\
H_2^* & = & L^*_{p_0}.((H_0 * H_1).p_0, 1)
\end{eqnarray*}
and we have to compare $H_2'$ and $H_2^*$. So here we go:
\begin{eqnarray*}
 H_2^* & = & L^*_{p_0}.((H_0 * H_1).p_0,1) \\
 & = & L^*_{p_0}.(\mu.(H_0.p_0,Mi_0.H_1.j_0.p_0), 1) \\
 & = & \mu.(L^*_{p_0}.(H_0.p_0,1), L^*_{p_0}.(Mi_0.H_1.j_0.p_0, t.L^*_{p_0}(H_0.p_0,1))) \\
 & = & \mu.(H_0', L^*_{p_0}.(Mi_0.H_1.j_0.p_0,t.H_0',)) \\
 & = & \mu.(H_0', L^*_{p_0}.(Mi_0.H_1.p_1.j_0',i_0'.j_0')) \\
 & = & \mu.(H_0', Mi_0'.L^*_{p_1}(H_1.p_1,1).j_0') \\
 & = & \mu.(H_0', Mi_0'.H_1'.j_0') \\
 & = & H_0' * H_1' \\
 & = & H_2'
\end{eqnarray*}
and the proof is finished.
\end{proof}

Recall from \refdefi{frobeniusmorphismofHDRs} that we call a morphism of HDRs
arising from the Frobenius construction a \emph{Frobenius morphism} of HDRs.
The following lemma shows that Frobenius morphisms are stable under pullbacks.
As mentioned above, the proof of \reftheo{symmetricPocProofOfPi} (specifically
\reflemm{uniformkanfibrandpi}) is greatly simplified by this result.  The
property stated below is analogous to the property (of a weak
factorisation system) of being \emph{functorially Frobenius} in Van den
Berg-Garner~\cite{vdBerg-Garner}. The lemma thus shows that this property 
also holds in the setting of an algebraic weak factorisation system for a
category with (two-sided) Moore structure.
\begin{lemm}{pullbckstoffrob}{\rm (Pullback stability of Frobenius construction) }
	The Frobenius construction of \refprop{pbkstabilityofSDRunderfibr} defines a 
	\emph{functor}:
	\[
                {(-)}^*(-) : \cat{NLFib} \times_{\E} \cat{HDR} \to \cat{HDR}
	\]
	where the domain is the pullback of the domain functors to \(\E\).
	
	As a consequence, the pullback (\refcoro{HDRhaspullbacks}) in \(\cat{HDR}\) of a
	Frobenius morphism along a morphism of HDRs is again a Frobenius
 	morphism of HDRs.
\end{lemm}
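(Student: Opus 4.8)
The plan is to establish this as a more or less formal consequence of the axiomatic setup already developed, rather than by any new calculation. First I would check that the purported functor is well-defined on objects: an object of $\cat{NLFib} \times_{\E} \cat{HDR}$ is a pair consisting of a naive left fibration $p: E \to B$ and an HDR $(i: A \to B, j, H)$ over the same base $B$, and \refprop{pbkstabilityofSDRunderfibr} constructs from this data a pullback square together with an HDR structure on $i': A' \to E$ (where $A' = E \times_B A$), namely $H' = L^*.(H.p,1)$, $j'$ determined by the universal property. So on objects the assignment is forced; I would just record that this is the Frobenius morphism of \refdefi{frobeniusmorphismofHDRs}.

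Next I would define the functor on morphisms. A morphism in the domain is a commuting cube: a morphism of naive left fibrations $(g,f): (p',L'^*) \to (p,L^*)$ together with a morphism of HDRs $(\varphi, f): (i_2, j_2, H_2) \to (i, j, H)$ sharing the same component $f$ on the common base. Pulling back, one gets an induced map $A_2' = E' \times_{B'} A_2 \to E \times_B A = A'$, and the claim is that this, together with $g$, constitutes a morphism of HDRs from the Frobenius HDR over $p'$ to the Frobenius HDR over $p$. The key identity to verify is $Mg.H'_2 = H'_1.g$ where $H'_1$ is the Frobenius homotopy over $p$ pulled back to $E'$ — equivalently one checks $Mg.L'^*.(H_2.p', 1) = L^*.(H.p,1).g$. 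This follows by combining the defining equation of a morphism of naive left fibrations, \eqref{eq:morphismOfNaiveRightFib} (in its $L^*$-version), with the HDR-morphism condition $Mf.H_2 = H.f$ and the commuting squares $p.g = f.p'$, $f.\varphi = i.(\text{pullback map})$. Functoriality (preservation of identities and composites) is then immediate, since everything in sight is defined by universal properties and the operations $L^*(-,-)$, $M(-)$ are functorial; the composition-stability already proved in \reflemm{frobeniusStableUnderComposition} is not even needed here, as that concerns vertical composition of the resulting HDRs rather than functoriality in the cube direction.

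For the final sentence — pullback stability — I would argue as follows. By \refcoro{HDRhaspullbacks} the category $\cat{HDR}$ has pullbacks, and by \refprop{domHDRfibration} the functor $\dom: \cat{HDR} \to \E$ is a (bi)fibration; the same is true for $\cod: \cat{HDR} \to \E$ via \refprop{HDRsareMcoalgebras}. Given a Frobenius morphism $(i': A' \to E) \to (i: A \to B)$ arising from $(p: E \to B, (i,j,H))$, and a morphism of HDRs $(k, l): (i_0: A_0 \to B_0, j_0, H_0) \to (i, j, H)$, I want to show the pullback in $\cat{HDR}$ of the Frobenius morphism along $(k,l)$ is again Frobenius. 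The point is that a Frobenius morphism is, up to the data determining it, the image under the functor ${(-)}^*(-)$ of the morphism $\bigl((p,L^*) \xrightarrow{\mathrm{id}} (p,L^*)\bigr)$ paired with $(k,l)$ in the second coordinate — more precisely, I would exhibit the pullback square in $\cat{HDR}$ as lying over a corresponding pullback square in $\cat{NLFib} \times_\E \cat{HDR}$, using that $\cat{NLFib}$ carries the pullback of $p$ along $l: B_0 \to B$ (which exists and is again a naive left fibration, since naive fibrations are algebras for an AWFS and hence closed under pullback, cf. \refprop{right-lifting-is-fibred-structure} and \refcoro{rfibCatFibredStructure}), and then apply the functor just constructed. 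Since a functor sends the image of a limiting cone to a cone and here the relevant square is genuinely sent to the pullback square in $\cat{HDR}$ (both projections to $\E$ being preserved), the pulled-back morphism is in the image of ${(-)}^*(-)$, i.e. is a Frobenius morphism.

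The main obstacle I anticipate is the bookkeeping in verifying the morphism-of-HDRs condition $Mg.H'_2 = H'_1.g$ cleanly, i.e. threading the naive-left-fibration morphism equation through the pullbacks correctly, and — more subtly — making precise the claim that the pullback square in $\cat{HDR}$ is the ${(-)}^*(-)$-image of a square in the domain category. One has to be careful that $\cat{NLFib} \times_\E \cat{HDR}$ has the relevant pullbacks (it does, being built from the fibration $\dom$ over $\E$ which has pullbacks, intersected with the fibration structure on naive left fibrations), and that the functor ${(-)}^*(-)$ together with $\dom$ to $\E$ exhibits the Frobenius HDR's domain functor as a pullback-preserving-enough situation; the slickest route is probably to observe directly that the construction in \refprop{pbkstabilityofSDRunderfibr} commutes with pulling back the base, which is a short diagram chase once the functoriality in the previous paragraph is in hand.
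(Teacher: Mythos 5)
Your proposal is correct and follows essentially the same route as the paper: the morphism part of the functor is obtained by verifying exactly the identity $Mg.H'_2 = H'.g$ via the naive-left-fibration morphism equation combined with the HDR-morphism condition (the paper then invokes \refcoro{retractscommute} to supply the compatibility of the retractions), and pullback stability is deduced, as in the paper, by observing that the resulting square of Frobenius and induced HDR morphisms is a pullback of morphisms of HDRs. The only differences are presentational: you sketch rather than carry out the five-step computation, and your final identification of the constructed square with the pullback in $\cat{HDR}$ is asserted in about the same level of detail as the paper's ``it is easy to see''.
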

\begin{proof}
	Suppose \((a, b) : i_1 \to i_0\) is a morphism of HDRs, \((q_0, p_0)
	: i_0' \to i_0\) is a Frobenius morphism of HDRs, and \((f, b) : p_1 \to p_0\) is
        a morphism of naive left fibrations, as in the solid part of
	the following diagram:
\begin{equation}\label{eq:frobenius-pullback}
  \begin{tikzpicture}[baseline={([yshift=-.5ex]current bounding box.center)}]
    \matrix (m) [matrix of math nodes, row sep=2em,
    column sep=3.5em]{
	    |(e1)| {E_1} &              & |(a1)| {A_1} & \\
		         & |(e0)| {E_0} &              & |(a0)| {A_0} \\ 
	    |(f1)| {F_1} &              & |(b1)| {B_1} & \\
			 & |(f0)| {F_0} & & |(b0)| {B_0} 
    \\};
    \begin{scope}[every node/.style={midway,auto,font=\scriptsize}]
      \path[->]
	      (e1) edge [dashed] (a1)
	(e1) edge [dashed] node [fill=white, anchor=center] {$e$} (e0)
	(e1) edge [dashed] node [left] {$i_1'$} (f1)
	(a1) edge node [above right] {$a$} (a0)
	(a1) edge node [pos=0.3, right] {$i_1$} (b1)
  (e0) edge [line width = 3pt, draw = white] (a0)
  (e0) edge node [pos=0.3, below] {$q_0$} (a0)
	(a0) edge node [right] {$i_0$} (b0)
	(f1) edge node [below left] {$f$} (f0)
	(f1) edge node [pos=0.4, below] {$p_1$} (b1)
  (e0) edge [line width = 3pt, draw = white] (f0)
	(e0) edge node [pos = 0.3, right] {$i_0'$} (f0)
	(b1) edge node [below] {$b$} (b0)
	(f0) edge node [below] {$p_0$} (b0);
      \end{scope}\end{tikzpicture}\end{equation}
      It is enough to prove that the Frobenius construction applied to the back square induces
      a unique morphism of HDRs \(i_1' \to i_0'\) on the left side of the cube.

      So all that needs to be verified is that \((e, f) : i_1' \to i_0'\)
      induced by the pullback is a morphism of HDRs. Denoting their respective
      HDR structure by \(H_1'\), \(H_0'\), and denoting the naive left fibrations by
      \((p_0,L_{p_0})\) and \((p_1, L_{p_1})\), we compute:
      \begin{align*}
	      H_0' . f &= L_{p_0}^*.( H_0.p_0,1).f = L_{p_0}^*.( H_0.p_0.f,f) \\
		       &= L_{p_0}^*.( H_0.b.p_1,f) = L_{p_0}^*.( Mb . H_1 . p_1,f)
		    \\ &= Mf . L_{p_1}^*.( H_1. p_1,1) = Mf . H_1'
      \end{align*}
      Where we have used that the bottom face is a morphism of naive left fibrations. 
      By \refcoro{retractscommute}, it follows that the left face is a morphism of
      HDRs.

      For the last statement, it is easy to see that the cube is a pullback square
      of morphisms of HDRs. 
 \end{proof}

 As a corollary, we can rephrase the Frobenius construction as follows for
 \emph{symmetric} Moore structures:
\begin{coro}{symmetricPocFrobenius}{\rm (Frobenius for symmetric Moore structures)}
In categories with symmetric Moore structure, there is a pullback functor
\[
              {(-)}^*{(-)} : \cat{NFib} \times_{\E} \cat{HDR} \to \cat{HDR}
\]
given by factoring the Frobenius construction of
\reflemm{pullbckstoffrob} through the isomorphism between naive left
and right of \refprop{coHDRisHDR}.
\end{coro}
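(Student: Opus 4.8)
The plan is to combine two facts that have already been established in the excerpt: first, \refcoro{symmetricPocFrobenius} provides a pullback functor
\[
  {(-)}^*{(-)} : \cat{NFib} \times_{\E} \cat{HDR} \to \cat{HDR}
\]
by transporting \reflemm{pullbckstoffrob} along the isomorphism of \refprop{coHDRisHDR}; so the statement of \refcoro{symmetricPocFrobenius} is already packaged for us. Hence a "proof" in the paper consists simply in pointing out that for a symmetric Moore structure the dual comonad $\Gamma^*$ of \eqref{eq:Gammastar} exists and satisfies the axioms (1')--(3) of a two-sided Moore structure, so that the results of the preceding subsection apply; and then noting that \refprop{coHDRisHDR} yields an isomorphism $\mathbb{N}\cat{Fib} \to \mathbb{N}\cat{LFib}$ (equivalently $\cat{NFib} \cong \cat{NLFib}$) over $\E$, along which we can transport the functor of \reflemm{pullbckstoffrob}.

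More concretely, the steps I would carry out are these. (i) Recall that a symmetric Moore structure comes with a twist map $\tau: M \Rightarrow M$ satisfying (1), \eqref{eq:twistalphamu}, the sandwich equation, and left/right cancellativity; define $\Gamma^* := \tau_M . M(\tau) . \Gamma . \tau$ as in \eqref{eq:Gammastar}. (ii) Verify that $(M, s, t, r, \mu, \Gamma^*, \alpha)$ (with $s$ and $t$ interchanged relative to $\Gamma$) is again a two-sided Moore structure — i.e.\ $\Gamma^*$ is a comonad dual to $\Gamma$, the sandwich equation relates $\Gamma^*$ and $\Gamma$, and composition is cancellative — but these are exactly the axioms (1')--(3) isolated at the start of Section~\ref{sec:frobenius}, so there is nothing new to prove here beyond what is already recorded. (iii) Apply \reflemm{pullbckstoffrob} to this two-sided Moore structure to get the pullback functor
\[
  {(-)}^*(-) : \cat{NLFib} \times_{\E} \cat{HDR} \to \cat{HDR}.
\]
(iv) Invoke \refprop{coHDRisHDR}, which gives an isomorphism of discretely fibred concrete double categories $\mathbb{N}\cat{Fib} \cong \mathbb{N}\cat{LFib}$ over $\E$ (and in particular an isomorphism $\cat{NFib} \cong \cat{NLFib}$ of the underlying categories compatible with the domain functors to $\E$). (v) Precompose the functor from step (iii) with this isomorphism on the first factor to obtain
\[
  {(-)}^*{(-)} : \cat{NFib} \times_{\E} \cat{HDR} \to \cat{HDR},
\]
which is the asserted functor.

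The only genuine verification hidden in this argument — and hence the step I expect to be the main (mild) obstacle — is checking that the isomorphism $\cat{NFib} \cong \cat{NLFib}$ of \refprop{coHDRisHDR} is compatible with the domain functors to $\E$, so that the fibre product over $\E$ on the left is carried to the fibre product over $\E$ on the right; and that under this identification the HDR structure that the left-fibration Frobenius construction builds on a pullback $i'$ agrees with the one the right-fibration construction would build. But this is immediate from the way \refprop{coHDRisHDR} is set up: the isomorphism of naive fibrations is precomposition with $\tau \times_X Y$, which fixes the underlying map $p$, hence commutes with $\dom$ and $\cod$; and the Frobenius HDR structure $H' = L^*.(H.p,1)$ depends only on the pair (underlying HDR $(i,j,H)$, naive left fibration $(p,L^*)$), which the isomorphism matches correctly. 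So the corollary follows formally, and the "proof" can be stated in one or two sentences: it is just the composite of \reflemm{pullbckstoffrob} with the isomorphism of \refprop{coHDRisHDR}, all of whose hypotheses hold because a symmetric Moore structure is in particular a two-sided one via $\Gamma^*$.
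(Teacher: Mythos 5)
Your proposal is correct and follows exactly the route the paper intends: the corollary is obtained by precomposing the functor of \reflemm{pullbckstoffrob} with the isomorphism $\cat{NFib} \cong \cat{NLFib}$ of \refprop{coHDRisHDR}, whose hypotheses hold because a symmetric Moore structure is in particular two-sided via $\Gamma^*$. Your additional check that the isomorphism commutes with the domain functors to $\E$ (being precomposition with $\tau \times_X Y$, which fixes the underlying map) is the right observation and matches the paper's implicit justification.
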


 \section{Mould squares and effective fibrations}\label{sec:unifKanFibr}

\subsection{Mould squares}
In this section, we connect the two algebraic weak factorisation systems coming
from a dominance and a Moore structure to define the notion of \emph{effective
fibration}. We assume that \(\E\) is a finitely cocomplete,
locally cartesian closed category with Moore structure (recall the remarks in
the introduction of Section~\ref{sec:preliminaries}). We assume further
that \(\E\) comes equipped with a dominance as in Section~\ref{sec:dominances},
such that:
\begin{itemize}
  \item \(\Sigma\) is closed under binary unions of subobjects;
  \item \(\Sigma\) contains every initial arrow \(0 \to A\) in \(\E\).
\end{itemize}
As in Section~\ref{sec:dominances}, the coalgebras coming from the dominance
are \emph{effective cofibrations} and referred to as such\index{effective cofibration}.  Lastly, we
also make the combining assumption that every trivial Moore path
 \[
   r_X : X \to MX
 \]
is an effective cofibration. This assumption can be thought of as the condition
that the proposition expressing that a Moore path is trivial is cofibrant,
i.e., contained in the dominance \(\Sigma\). When \(r\) is a \emph{cartesian}
natural transformation, the condition is equivalent to saying that \(r_1 : 1 \to M1\) is an effective cofibration. That is, it is enough that the proposition expressing
that a Moore path has trivial \emph{length} is cofibrant. In the case
of simplicial sets, these last two are the case\footnote{We will define
effective cofibrations in simplicial sets as locally decidable monomorphisms. It will
follow easily that a Moore path having trivial length is
locally decidable.}.

Given a Moore structure and a dominance subject to these requirements, we have
the following definition. 
\begin{defi}{mouldsq}\index{mould square}
A \emph{mould square} is a cartesian morphism of HDRs in the fibre above an
effective cofibration, as in the following diagram:
\begin{equation}\label{eq:mould-square}
  \begin{tikzpicture}[baseline={([yshift=-.5ex]current bounding box.center)}]
    \matrix (m) [matrix of math nodes, row sep=3em,
    column sep=3em]{ 
	    |(s)| {A'} & |(b')| {B'} \\
	    |(a)| {A} & |(b)| {B}
    \\};
\begin{scope}[every node/.style={midway,auto,font=\scriptsize}]
    \path[{Hooks[right]}->]
    (s) edge node [left] {$m$} (a)
    (b') edge node [right] {$m'$} (b);
    \path[->]
    (a) edge node [above] {$i$} (b)
    (s) edge node [above] {$i'$} (b');
    \end{scope}
    \end{tikzpicture}
    \end{equation}
where \(i\), \(i'\) are HDRs and \(m\) is an effective cofibration (so \(m'\) is also an effective cofibration).
\end{defi}
A typical illustrative example of a mould square is given in
Figure~\ref{fig:mould-square}. This example also serves to explain the choice
of terminology. In the category of simplicial sets, there is a special type of
mould square that we call \emph{horn squares}\index{horn square}. These are
discussed in Section~\ref{sec:hornsquares}.

The following Lemma will be used in \refdefi{effectiveRightFibration} to define
the \emph{triple category} (see below) against which the notion of effective 
fibration will be defined. 
\begin{lemm}{mouldSqPbStable}
The pullback of a mould square along an arbitrary morphism of HDRs is
again a mould square.
\end{lemm}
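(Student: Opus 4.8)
The plan is to recognize a mould square as a compound piece of structure—a cartesian morphism of HDRs that simultaneously lives in the fibre above an effective cofibration—and to check that both of these features are preserved under pullback along a morphism of HDRs. The category \cat{HDR} has pullbacks by \refcoro{HDRhaspullbacks}, and the domain functor \(\dom : \cat{HDR} \to \E\) is a bifibration by \refprop{domHDRfibration}, so the notion of ``pullback of a mould square'' is well-defined.

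First I would set up notation: suppose we are given a mould square, i.e.\ a cartesian morphism of HDRs from \(i' : A' \to B'\) to \(i : A \to B\) whose vertical legs \(m : A' \to A\) and \(m' : B' \to B\) are effective cofibrations, and suppose \((f, g) : i_0 \to i\) is an arbitrary morphism of HDRs. I form the pullback in \cat{HDR}, obtaining an HDR \(i_0' : A_0' \to B_0'\) together with a cartesian morphism \(i_0' \to i'\) and a morphism \(i_0' \to i_0\). The claim has two parts. Part one: the morphism \(i_0' \to i_0\) is a cartesian morphism of HDRs. This is exactly \refcoro{pbsqofHDRalongcartsq}: the pullback of a cartesian morphism of HDRs along a morphism of HDRs is again a cartesian morphism of HDRs. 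Part two: the vertical legs of the new square, say \(m_0 : A_0' \to A_0\) and \(m_0' : B_0' \to B_0\), are effective cofibrations.

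For part two, the key point is that the \(\dom\)-component of the pullback square in \cat{HDR} is itself a pullback square in \(\E\). Concretely, since \(\dom : \cat{HDR} \to \E\) preserves pullbacks (it creates limits preserved by \(M\), as in the proof of \refcoro{HDRhaspullbacks}), the square with corners \(A_0', A_0, A', A\) is a pullback in \(\E\), exhibiting \(m_0\) as the pullback of the effective cofibration \(m\) along \(f\); by stability of the dominance \(\Sigma\) under pullback (\refdefi{dominance}(2)), \(m_0\) is an effective cofibration. The same argument applied to the \(\cod\)-components shows \(m_0'\) is an effective cofibration—or alternatively one invokes the fact (noted in \refdefi{mouldsq}) that in a cartesian morphism of HDRs one leg being an effective cofibration forces the other, since the square for the retractions is also a pullback. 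Hence the pulled-back square satisfies all the conditions of \refdefi{mouldsq}.

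I do not expect a serious obstacle here; the lemma is essentially bookkeeping that packages \refcoro{pbsqofHDRalongcartsq} together with pullback-stability of the dominance. The one point requiring a little care is making sure that ``pullback of a mould square'' really is computed as a pullback in \cat{HDR} and that its \(\dom\)- and \(\cod\)-components are genuine pullbacks in \(\E\)—but this is immediate from \refprop{HDRsareMcoalgebras} and the fact that \(M\) preserves pullbacks, so the verification is routine.
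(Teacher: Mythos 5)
Your proof is correct and follows essentially the same route as the paper, which simply cites \refcoro{pbsqofHDRalongcartsq} together with pullback-stability of effective cofibrations; your extra remarks about \(\dom\) preserving pullbacks and the retraction square just spell out why that citation suffices.
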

\begin{proof}
This follows directly from \refcoro{pbsqofHDRalongcartsq} and the fact that effective cofibrations are stable
under pullback.
\end{proof}
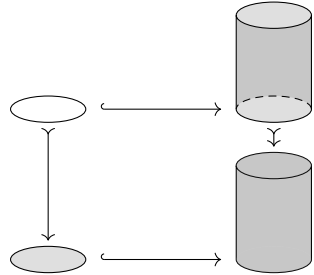
\begin{figure}
  \centering
\begin{tikzpicture}
  \def\front {
(180:5mm) coordinate (a)
-- ++(0,-12.5mm) coordinate (b)
arc (180:360:5mm and 1.75mm) coordinate (d)
-- (a -| d) coordinate (c) arc (0:-180:5mm and 1.75mm)}
  \def\back {
(180:5mm) coordinate (a)
-- ++(0,-12.5mm) coordinate (b)
arc (180:0:5mm and 1.75mm) coordinate (d)
-- (a -| d) coordinate (c) arc (0:180:5mm and 1.75mm)
}
\def\backoutline {
  (c) arc (0:180:5mm and 1.75mm)
}
\def\bottom {
   (0,-12.5mm) circle (5mm and 1.75mm);
}
\def \bottomoutline{
  (d) arc (0:180:5mm and 1.75mm);
}
\def \top {
  (0,0) circle (5mm and 1.75mm);
}
\begin{scope}[xshift = -30mm]
  \draw[yshift = 20mm] \bottom;
  \draw [fill = gray, fill opacity=.25] \bottom;
\end{scope}
\begin{scope}[yshift = 20mm]
\path [fill = gray, fill opacity = .25] \back;
\draw \backoutline;
\draw [fill = gray, fill opacity = .25] \front;
\draw [densely dashed] \bottomoutline;
\end{scope}
\path [fill = gray, fill opacity = .25] \back;
\draw \backoutline;
\draw [fill = gray, fill opacity = .25] \front;
\path [fill = gray, fill opacity = .25] \bottom;
\path [fill = gray, fill opacity = .25] \top;

\path [draw, >->] (-30mm, 5mm) -- (-30mm, -10 mm);
\path [draw, >->] (0mm, 5mm) -- (0mm, 2.5 mm);
\path [draw, {Hooks[right]}->] (-23mm, -12.5mm) -- (-7mm, -12.5 mm);
\path [draw, {Hooks[right]}->] (-23mm, 7.5 mm) -- (-7mm, 7.5 mm);
\end{tikzpicture}
\caption{An illustrative example of a mould square. The horizontal
  arrows indicate hyperdeformation retracts. On top the inclusion of a circle
  into the bottom end of a tube. On the bottom the inclusion of a disk into the
  bottom of a solid cylinder. The vertical arrows are
  cofibrations. The tube can be thought of as the upstanding walls of a 
  type of `mould' whose shape is the circle and whose base is the disk.
  The solid cylinder is the filled mould.\label{fig:mould-square}}
\end{figure}

\begin{textbox}{Triple categories\label{box:triple-categories}}
  \emph{Triple categories}\index{triple category|textbf} are not much harder to
  understand than double categories.  We can define a \emph{small} triple
  category as an internal category in the category of small double categories.
  This definition can be unfolded (and hence extended to include large triple
  categories) in the same way as in Section~\ref{ssec:double-categories}.
  Briefly, they extend double categories in that there is an additional type of
  \(1\)-dimensional morphisms, \emph{perpendicular} morphisms, and hence three
  different types of squares, between each pair of distinct \(1\)-dimensional
  morphism types.\index{perpendicular morphism} In addition, \emph{cubes} are
  morphisms between \(xy\)-squares which compose in the perpendicular
  direction, and come with additional `pointwise' compositions for the
  horizontal and vertical direction.  So a cube looks like:\index{cube (triple
  category)}
\[
  \begin{tikzpicture}[baseline={([yshift=0ex]current bounding box.center)}]
    \matrix (m) [matrix of math nodes, row sep=5em,
    column sep=5em]{ 
	    |(a')| {A'} & |(a)| {B'} \\
	    |(b')| {A} & |(b)| {B} 
    \\};
    \matrix (n) [matrix of math nodes, row sep=5em,
    column sep=5em, position=30:-1.7 from m]{ 
	    |(c')| {C'} & |(c)| {C} \\
	    |(d')| {D'} & |(d)| {D}
    \\};
    \node (i) at (barycentric cs:a'=1,b'=1,a=1,b=1) {$\overset{(xy)}{\Rightarrow}$};
    \node (i) [rotate=30] at (barycentric cs:a=1,b=1,c=1,d=1) {$\overset{(yz)}{\Rightarrow}$};
    \node (i) [rotate=60] at (barycentric cs:a'=1,a=1,c'=1,c=1) {$\overset{(xz)}{\Rightarrow}$};
    \begin{scope}[every node/.style={midway,auto,font=\scriptsize}]
    \path[->]
        (a') edge (a)
        (b') edge (b)
        (c') edge (c)
        (d') edge (d);
    \path[-{Stealth[open]}]
        (a') edge (b')
        (a) edge (b)
        (c') edge (d')
        (c) edge (d);
    \path[{Bar[]}->]
            (a') edge (c')
            (a) edge (c)
            (b') edge (d')
            (b) edge (d);
    \end{scope}
    \end{tikzpicture}\]
Note we are using the convention here that the three axes in 3-dimensional space are named as follows:
\[
\begin{tikzpicture}[baseline={([yshift=0ex]current bounding box.center)}]
  \draw [->] (0,0) -- (0:12mm) node [right] {$x$};
  \draw [->] (0,0) -- (90:12mm) node [left] {$y$};
  \draw [->] (0,0) -- (210:8mm) node [left] {$z$};
\end{tikzpicture}
\]
A cube admits composition from three different sides. The axioms guarantee that
composition of a given combination of squares and cubes is independent of the
order in which compositions are taken along the different dimensions. The standard
example of a triple category is the category \(\cube{\E}\) for a category \E, where objects are objects in \(\E\), all morphisms are given by morphisms in \E,
all squares are commutative squares, and cubes are commutative cubes.
\end{textbox}

\subsubsection{A new notion of fibred structure}
This subsection prepares the ground for \refdefi{effectiveRightFibration} in the
next subsection. We introduce a definition of a family of `fillers' for lifting
problems which deviates from the right lifting structures defined in
Section~\ref{ssec:double-categories} for double categories.\index{lifting problem!for a triple category}
Namely, we introduce the notion of a right lifting structure with respect to a
\emph{triple category}. For brevity, we have put an introduction to triple
categories in Box~\ref{box:triple-categories}. In the end, it is only the
definition of two specific triple categories (involving mould squares and
effective cofibrations) that we will need to define effective (trivial) fibrations.
The following definition spells out what it means to have a right lifting
structure with respect to a triple category over \(\cube{\E}\).
\begin{defi}{tripleCatRightLiftingStructure}\index{right lifting structure!with respect to a triple category}
        Suppose $\Lbb$ is a triple category and \(I : \Lbb \to \cube{\E}\) is a triple functor, and suppose
        \(p : Y \to X\) is a morphism in \E. Then a \emph{right lifting
        structure} for \(p\) with respect to \(I\) is a family of fillers for
        each diagram:
\[
          \begin{tikzpicture}[baseline={([yshift=-.5ex]current bounding box.center)}]
            \matrix (m) [matrix of math nodes, row sep=3em,
            column sep=6em]{
                |(a)| {A'} & & |(c)| {A} & |(c')| {Y} \\
                |(b)| {B'} & & |(d)| {B} & |(d')| {X} \\
            };
            \begin{scope}[every node/.style={midway,auto,font=\scriptsize}]
            \path[->]
                (a) edge node [left] {$I(v')$} (b)
                (a) edge node [above] {$I(f)$} (c)
                (b) edge node [below] {$I(g)$} (d)
                (c) edge node [pos=0.5, fill=white, anchor=center] {$I(v)$} (d)
                (c) edge node [above] {$a$} (c')
                (d) edge node [below] {$b$} (d')
                (c') edge node [right] {$p$} (d');
            \path[->]
         (b) edge node [pos=0.3, anchor=center, fill=white]
         {$h$} (c')
         (d) edge [dashed] node [anchor=center, fill=white] 
         {$\phi_{a, b} (f, g, h)$} (c');
 \end{scope}\end{tikzpicture}\text{,}
 \]
 where the left-hand square is the image of an \(xy\)-square (see
 Box~\ref{box:triple-categories}) under \(I\), such that the following
 compatibility conditions\index{compatibility condition} hold:
 \begin{description}
   \item[Horizontal]\index{horizontal condition} When \((f',g')\), \((f,g)\) is a horizontally composable pair of
                 \(xy\)-squares, and we are given a commutative diagram:
\[
          \begin{tikzpicture}[baseline={([yshift=-.5ex]current bounding box.center)}]
            \matrix (m) [matrix of math nodes, row sep=3em,
            column sep=6em]{
                    |(a'')| {A''} & |(a)| {A'} & |(c)| {A} & |(c')| {Y} \\
                    |(b'')| {B''} & |(b)| {B'} & |(d)| {B} & |(d')| {X} \\
            };
            \begin{scope}[every node/.style={midway,auto,font=\scriptsize}]
            \path[->]
                (a'') edge node [above] {$I(f')$} (a)
                (b'') edge node [below] {$I(g')$} (b)
                (a'') edge node [left] {$I(v'')$} (b'')
                (a) edge node [left] {$I(v')$} (b)
                (a) edge node [above] {$I(f)$} (c)
                (b) edge node [below] {$I(g)$} (d)
                (c) edge node [pos=0.7, fill=white, anchor=center] {$I(v)$} (d)
                (c) edge node [above] {$a$} (c')
                (d) edge node [below] {$b$} (d')
                (c') edge node [right] {$p$} (d')
         (b'') edge node [pos=0.5, anchor=center, fill=white]
         {$h$} (c')
         (b) edge [dashed] (c')
         (d) edge [dashed] (c');
 \end{scope}\end{tikzpicture}
 \]
 then we have:
 \[
         \phi_{a, b}(f.f', g.g', h) = \phi_{a,b}(f, g, \phi_{a.I(f),b.I(g)}(f',g',h)) 
 \]
\item[Vertical]\index{vertical condition} If we have a vertically composable pair of \(xy\)-squares and a diagram:
        \[
          \begin{tikzpicture}[baseline={([yshift=-.5ex]current bounding box.center)}]
            \matrix (m) [matrix of math nodes, row sep=3em,
            column sep=6em]{
                |(a')| {A'} & |(a)| {A} & |(y)| {Y} \\
                |(b')| {B'} &  |(b)| {B} &  \\
                |(c')| {C'} &  |(c)| {C} & |(x)| {X} \\
            };
            \begin{scope}[every node/.style={midway,auto,font=\scriptsize}]
            \path[->]
                (a') edge node [left] {$I(w')$} (b')
                (a) edge node [anchor=center, fill=white] {$I(w)$} (b)
                (b') edge node [left] {$I(v')$} (c')
                (c') edge node [below] {$I(k)$} (c)
                (a') edge node [above] {$I(f)$} (a)
                (b') edge node [below] {$I(g)$} (b)
                (b) edge node [pos=0.5, fill=white, anchor=center] {$I(v)$} (c)
                (a) edge node [above] {$a$} (y)
                (c) edge node [below] {$c$} (x)
                (y) edge node [right] {$p$} (x)
                (c') edge [bend right=5] node [pos=0.3, anchor=center, fill=white]
         {$h$} (y)
         (b) edge [dashed] (y)
         (c) edge [dashed] (y);
 \end{scope}\end{tikzpicture}\text{,}
 \]
 then we have:
 \[
         \phi_{a,c}(f, k , h) = \phi_{\phi_{a, c . I(v)} (f, g, h. I(v')) , c } (g, k , h) 
 \]
\item[Perpendicular]\index{perpendicular condition}
        For \emph{cubes}, the condition asks that for the image of a cube between \(xy\)-squares:
\[
  \begin{tikzpicture}[baseline={([yshift=0ex]current bounding box.center)}]
    \matrix (m) [matrix of math nodes, row sep=5em,
    column sep=5em]{ 
	    |(a')| {A'} & |(a)| {A} \\
	    |(b')| {B'} & |(b)| {B} 
    \\};
    \matrix (n) [matrix of math nodes, row sep=5em,
    column sep=5em, position=30:-1.7 from m]{ 
            |(c')| {C'} & |(c)| {C} & |(y)| {X} \\
            |(d')| {D'} & |(d)| {D} & |(x)| {X}
    \\};
    \begin{scope}[every node/.style={midway,auto,font=\scriptsize}]
    \path[->]
        (y) edge node [right] {$p$} (x)
        (c) edge node [above] {$c$} (y)
        (d) edge node [pos=0.3, anchor=center, fill=white] {$d$} (x)
        (d') edge node [pos=0.3, anchor=center,fill=white] {$h$} (y)
        (d) edge [dashed] (y)
        (b) edge [dashed, bend right] (y)
        (a') edge node [pos=0.3, anchor=center, fill=white] {$I(f)$} (a)
        (b') edge node [below] {$I(g)$} (b)
        (c') edge node [above] {$I(k)$} (c)
        (d') edge node [pos=0.6, anchor=center, fill=white] {$I(l)$} (d)
        (a') edge node [left] {$I(v')$}(b')
        (a) edge  node [anchor=center,fill=white, pos=0.6] {$I(v)$} (b)
        (c') edge node [anchor=center, fill=white, pos=0.3] {$I(w')$} (d')
        (c) edge node [anchor=center, fill=white, pos=0.2] {$I(w)$} (d)
        (a') edge (c')
        (a) edge node [anchor=center, fill=white] {$u'$} (c)
        (b') edge node [anchor=center, fill=white] {$r$} (d')
        (b) edge  node [anchor=center, fill=white] {$u$} (d);
    \end{scope}
    \end{tikzpicture}
\]
we have
\[
        u. \phi_{c, d}(k, l, h) = \phi_{d.u, c.u'} (f, g, h . r).
\]
\end{description}
\end{defi}

\begin{rema}{tripleCatSymmetry}
As for double categories, there is a symmetry in the definition of a triple
category, namely the choice of `top level' domain and codomain between cubes
(the same goes for squares), which could be any of the three
\(xy\),\(yz\),\(xz\) directions.  In \refdefi{tripleCatRightLiftingStructure}
it is assumed that cubes are morphisms between \(xy\)-squares, just like the
definition in Box~\ref{box:triple-categories}. The definition of lifting
structure takes this choice as a starting point. We do note that the definition
of right lifting structure is symmetric in \(x\) and \(y\), i.e., we could
swap the horizontal and vertical morphisms.
\end{rema}

\begin{rema}{mouldSquarePushout}
The fillers in \refdefi{tripleCatRightLiftingStructure} can also be given as
ordinary diagonal fillers with respect to arrows \(B' +_{A'} A \to B\) induced
by the pushout of the square. This is similar to the way generating trivial
cofibrations are defined by Gambino and Sattler~\cite{Gambino-Sattler}. Yet,
formulating the horizontal, vertical and perpendicular conditions in terms of
pushouts is very cumbersome. The contribution of mould squares (and the
surrounding triple category) is that they enable to express these conditions in
a straightforward way. This new form is used intensively in
Chapter~\ref{ch:simplicial-sets} of this paper on simplicial sets.
\end{rema}

The following definition is analogous to \refdefi{rlp-double-cat}. It is
left to the reader to spell out the details. For the notion of discretely
fibred concrete double category, see \refdefi{discretelyfibreddoublecat}.
\begin{defi}{rightLiftingTripleDoubleCat}\index{discretely (co)fibred concrete double category}
  There is a discretely fibred concrete double category over \(\E\) of right
  lifting structures with respect to a triple category.  In this category,
  squares are commutative squares satisfying the analogous compatibility
  condition (iii) of \refdefi{rlp-double-cat} with respect to the triple
  category.
\end{defi}

\subsection{Effective fibrations}\label{sec:effective-fibration}

We can now give the definition central to this paper, which combines all
previous sections.
\begin{defi}{effectiveRightFibration}\index{effective fibration|textbf}
An \emph{effective fibration} in a Moore category \(\E\) equipped with a
dominance is a morphism \(p: Y \to X\) equipped with a right lifting structure
with respect to the following triple category:
\begin{enumerate}[(i)]
\item Objects are the objects of \(\E\).
\item Horizontal morphisms are HDRs.
\item Vertical morphisms are effective cofibrations.
\item Perpendicular morphisms are morphisms in \(\E\).
\item \(xy\)-squares are mould squares.
\item \(xz\)-squares are morphisms of HDRs.
\item \(yz\)-squares are morphisms of effective cofibrations, i.e.\ pullback squares.
\item Cubes are pullback `squares' of a mould square along a morphism
        of HDRs (which always yields a mould square as per \reflemm{mouldSqPbStable}).
\end{enumerate}
Note that cubes in this triple category are unique for a given boundary of a
cube, which consists of six faces. 

We denote the discretely fibred concrete double category (over \(\E\)) of
effective fibrations by \EEffRFib{} (see
\refdefi{rightLiftingTripleDoubleCat}). We may also refer to it as a category
(\EffRFib{}) or notion of fibred structure (\effRFib) as explained in
Section~\ref{ssec:fibredstructurerevisited}.\index{discretely (co)fibred concrete double category!of effective fibrations} 
\end{defi}
\begin{figure}
  \centering
\begin{tikzpicture}
  \usepgfmodule{nonlineartransformations}
  \def\front {
(180:5mm) coordinate (a)
-- ++(0,-12.5mm) coordinate (b)
arc (180:360:5mm and 1.75mm) coordinate (d)
-- (a -| d) coordinate (c) arc (0:-180:5mm and 1.75mm)}
  \def\back {
(180:5mm) coordinate (a)
-- ++(0,-12.5mm) coordinate (b)
arc (180:0:5mm and 1.75mm) coordinate (d)
-- (a -| d) coordinate (c) arc (0:180:5mm and 1.75mm)
}
\def\backoutline {
  (c) arc (0:180:5mm and 1.75mm)
}
\def\bottom {
   (0,-12.5mm) circle (5mm and 1.75mm);
}
\def \bottomoutline{
  (d) arc (0:180:5mm and 1.75mm);
}
\def \top {
  (0,0) circle (5mm and 1.75mm);
}

\begin{scope}[xshift = 30mm, scale=0.5]
\begin{scope}[yshift = 30mm, yslant=0.5]
\draw (0, -5mm) circle (24mm and 12mm) coordinate (Y);
\node (Y) [xshift = 20mm, yshift = -2mm] {$Y$};
\path [fill = gray, fill opacity = .25] \back;
\draw \backoutline;
\draw [fill = gray, fill opacity = .25] \front;
\draw [densely dashed] \bottomoutline;
\path [fill = gray, fill opacity=.25] \bottom;
\end{scope}

\begin{scope}[yshift = -10mm]
\draw (0, -5mm) circle (20mm and 12mm) coordinate (X);
\node (X) [xshift = 20mm, yshift = -2mm] {$X$};
\path[->, draw] (Y) to node [right] {$p$} (X);
\path [fill = gray, fill opacity = .25] \back;
\draw \backoutline;
\draw [fill = gray, fill opacity = .25] \front;
\path [fill = gray, fill opacity = .25] \bottom;
\path [fill = gray, fill opacity = .25] \top;
\end{scope}
\end{scope}
\end{tikzpicture}
\caption{Illustrative example of an (effective) fibration. The mould square
  (Figure~\ref{fig:mould-square}) defines a tube with bottom in \(Y\) (a
  `mould'), lying above a solid cylinder in \(X\). The fibration structure
  prescribes a way to fill the mould, that is compatible with other fillers
(defined using a triple category).\label{fig:eff-fibration}}
\end{figure}
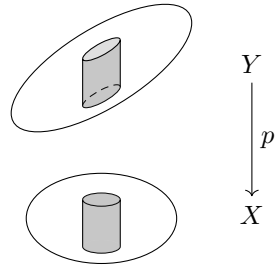

An illustration of an effective fibration, referring to the illustration of a
mould square on page~\pageref{fig:mould-square}, is given in
Figure~\ref{fig:eff-fibration}. For a concrete example of the compatibility
conditions, it is best to look at simplicial sets directly, for which see
Sections~\ref{sec:mouldsquaresinssets} and~\ref{sec:hornsquares}.  For
effective (Kan) fibrations in simplicial sets, the compatibility conditions
category ultimately come down to a certain compatibility with respect to
degeneracy maps (\reftheo{unifKanfibrinHornsquares}).

The following lemma can be used to motivate the terminology we have used for
effective fibrations in relation to naive fibrations -- we show that every
effective fibration is a naive fibration. For the notion of a discretely fibred
concrete double category, recall \refdefi{discretelyfibreddoublecat}. The lemma
implies that there is an induced natural transformation between notions of
fibred structure.
\begin{lemm}{fibrhaveliftstr}
  There is a double functor \(\EEffRFib \rightarrow \mathbb{N}\cat{Fib}\)
  between the discretely fibred concrete double categories (over \(\E\)) of
  effective fibrations and naive fibrations.
\end{lemm}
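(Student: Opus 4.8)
The plan is to show that every effective fibration is in particular a naive fibration, by restricting its triple-categorical right lifting structure (against mould squares) to a double-categorical right lifting structure against the double category \(\mathbb{H}\cat{DR}\) of hyperdeformation retracts, and then invoking the already-established correspondences between such lifting structures, \(R\)-algebras, and naive fibrations.

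First I would record the key device. For any HDR \(i\colon A\to B\) (with retraction \(j\) and homotopy \(H\)), the commutative square with top row \(1_0\colon 0\to 0\), bottom row \(i\colon A\to B\) and the evident initial maps \(0\to A\), \(0\to B\) as vertical sides is a mould square in the sense of \refdefi{mouldsq}: \(1_0\) and \(i\) are HDRs, \(0\to A\) is an effective cofibration by our standing assumption that \(\Sigma\) contains every initial arrow, and both the square for \(i\) and the square for \(j\) are pullbacks because \(0\) is a strict initial object in the locally cartesian closed category \(\E\); hence it is a cartesian morphism of HDRs above an effective cofibration. Call this the trivial mould square \(S_i\) attached to \(i\). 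Now, if \(p\colon Y\to X\) carries the structure \(\phi\) of an effective fibration and we are given an HDR \(i\colon A\to B\) together with a lifting problem \(a\colon A\to Y\), \(b\colon B\to X\) with \(p.a=b.i\), then applying \(\phi\) to the mould square \(S_i\), with the unique map \(0\to Y\) as the (trivial) partial filler, produces a diagonal filler \(\psi_{a,b}(i):=\phi_{a,b}(0\to A,\,0\to B,\,!)\colon B\to Y\) satisfying \(\psi_{a,b}(i).i=a\) and \(p.\psi_{a,b}(i)=b\). This defines the candidate \(\psi\).

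Next I would check that \(\psi\) is a genuine right lifting structure against the double category \(\mathbb{H}\cat{DR}\), i.e.\ that it satisfies the horizontal and vertical compatibility conditions of \refdefi{rlp-double-cat}. The point is that \(i\mapsto S_i\) is compatible with the relevant operations: vertical composition of HDRs (\refcoro{vert-co-HDRs}) corresponds to horizontal composition of trivial mould squares; a morphism of HDRs \(i'\to i\) exhibits \(S_{i'}\) as the pullback of \(S_i\) along that morphism, i.e.\ as a cube in the triple category (using \reflemm{mouldSqPbStable} and \refcoro{pbsqofHDRalongcartsq}); and the pullback action of \(\phi\) along cartesian squares in \(p\) restricts correctly. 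Feeding these through the horizontal, vertical and perpendicular compatibility conditions of \refdefi{tripleCatRightLiftingStructure} — always with the trivial \(h=!\) data, for which those conditions simplify drastically — yields precisely the double-categorical conditions on \(\psi\), as well as its naturality under pullback of \(p\). I expect this bookkeeping — matching up which triple-categorical condition feeds which double-categorical one, with the rotations between the ``horizontal/vertical/perpendicular'' directions in the triple category and the ``horizontal/vertical'' directions in \(\mathbb{H}\cat{DR}\) — to be the only genuine work in the proof; everything else is an application of earlier results.

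Finally, since \(\mathbb{H}\cat{DR}\) is the double category of coalgebras for the AWFS \((L,R)\) attached to the Moore structure (\refprop{MooreStructure->AWFS}, \refcoro{hdrscoalgsdoubleiso}), \reftheo{distrlaw<->iso-of-cofibred-str}(ii) identifies \(\Drl{\mathbb{H}\cat{DR}}\) with the notion of fibred structure of \(R\)-algebras, and \refprop{transpvsalgebras} together with \refprop{liftvstransport} further identify \(R\)-algebras with transport structures and hence with naive fibration structures, compatibly with vertical composition and pullback (\refcoro{rfibCatFibredStructure}). Composing, the assignment \(\phi\mapsto\psi\mapsto(\text{naive fibration structure})\) is the object part of the desired double functor \(\EEffRFib\to\mathbb{N}\cat{Fib}\); on squares it sends a morphism of effective fibrations — a commutative square in \(\E\) satisfying the compatibility condition of \refdefi{rightLiftingTripleDoubleCat} — to the same underlying square, which is a morphism of naive fibrations because the compatibility of the associated \(\psi\)'s with that square is exactly the morphism-of-naive-fibrations condition~\eqref{eq:morphismOfNaiveRightFib} under the identifications above, and vertical composition is preserved because it is preserved at each stage. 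The main obstacle, to repeat, is the combinatorial verification that \(\psi\) lies in \(\Drl{\mathbb{H}\cat{DR}}(p)\).
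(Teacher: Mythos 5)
Your proposal is correct and follows essentially the same route as the paper: lift against each HDR $i\colon A\to B$ via the mould square with $1_0\colon 0\to 0$ on top and the initial effective cofibrations as vertical maps (using that every $0\to A$ lies in the dominance), deduce the double-categorical horizontal/vertical conditions from the triple-categorical horizontal/perpendicular ones, and identify the resulting right lifting structure against $\mathbb{H}\cat{DR}$ with an $R$-algebra, i.e.\ naive fibration, structure, functorially in squares. No gaps.
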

\begin{proof}
We use the assumption from the beginning of this section, that every object
is cofibrant, i.e.\ every \(0 \to A\) is contained in the dominance.
Suppose \(p: Y \to X\) is an effective fibration.
We will show that \(p\) can be equipped with a right lifting structure
with respect to the double category of HDRs. Given an HDR \(i : A \to B\),
we can define the structure as the family of fillers:
\[
          \begin{tikzpicture}[baseline={([yshift=-.5ex]current bounding box.center)}]
            \matrix (m) [matrix of math nodes, row sep=4em,
            column sep=4em]{
                |(a)| {0} & |(c)| {0} & |(c')| {Y} \\
                |(b)| {A} & |(d)| {B} & |(d')| {X} \\
            };
            \begin{scope}[every node/.style={midway,auto,font=\scriptsize}]
            \path[->]
                (a) edge (b)
                (a) edge (c)
                (b) edge node [below] {$i$} (d)
                (c) edge (d)
                (c) edge (c')
                (d) edge node [below] {$v$} (d')
                (c') edge node [right] {$p$} (d');
            \path[->]
                    (b) edge [bend left=5] node [pos=0.3, anchor=center, fill=white]
         {$u$} (c')
         (d) edge [dashed] node [anchor=center, fill=white] 
         {$\phi_{u,v}(i)$} (c');
 \end{scope}\end{tikzpicture}\text{,}
 \]
 By our assumption, the square on the left is indeed a mould square.
 It is easy to see that the horizontal condition on effective fibrations implies the
 vertical condition of a right lifting structure of Section~\ref{ssec:double-categories}.
 Similarly, the perpendicular condition implies the vertical condition. It
 follows that the family \(\phi\) gives \(p\) the structure of an \(R\)-algebra.
 It is left to the reader to verify that this association defines a 
 double functor between concrete double categories.
 \end{proof}

\begin{rema}{trivialcofibrations}
  Having defined a \emph{double} category \EEffRFib{} (from a triple category),
  there must be a corresponding \emph{double} category (or cofibred structure)
  of arrows equipped with the left lifting property with respect to this double
  category, i.e. with respect to effective fibrations. These could be
  called \emph{trivial cofibrations}.  But note that it is not straightforward
  to give a presentation of this double category starting from the triple
  category (cf.  \refrema{mouldSquarePushout}).
\end{rema}

\subsubsection{Effective trivial fibrations}\label{ssec:eff-trivial-fibrations}
Recall from Section~\ref{sec:dominances} that we named the right class with
respect to the double category \(\mathbf{\Sigma}\) of effective cofibrations
(or the dominance) \emph{effective trivial fibrations}
(\refdefi{trivialfibrations}).  Alternatively, there is a right class of arrows\index{effective trivial fibration} induced by the following triple category:
\begin{enumerate}[(i)]
\item Objects are the objects of \(\E\);
\item Horizontal and vertical morphisms are effective cofibrations;
\item Perpendicular morphisms are morphisms in \(\E\);
\item \(xy\)-squares are pullback squares of effective cofibrations;
\item \(xz\)-squares are commutative squares of effective cofibrations and morphisms in \(\E\);
\item \(yz\)-squares are pullback squares of an effective cofibration along a morphism in \(\E\);
\item Cubes are formed by pulling back an \(yz\) square along an \(xz\) square.
\end{enumerate}
For the purposes of this section, we denote the double category of arrows with
a right lifting structure with respect to this triple category by \(\mathbb{E}\cat{ffTrivFib (triple)}\). In \refprop{trivFib->effTrivFib}, we will see that this double
category is isomorphic to \(\mathbb{E}\cat{ffTrivFib}\).

First, we observe the triple category of mould squares is a \emph{sub}triple category
of this triple category, since:
\begin{lemm}{HDRinDominance}
  Every HDR is a an effective cofibration.
\end{lemm}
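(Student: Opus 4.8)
The plan is to realise every HDR as a pullback of a trivial Moore path and then appeal to the standing assumptions on the dominance. Recall from \reflemm{pbksqforsqofHDRs} that if $(i,j,H)$ equips $i : A \to B$ with the structure of an HDR, then the square
\begin{displaymath}
\begin{tikzcd}
A \ar[r, "i"] \ar[d, "i"'] & B \ar[d, "H"] \\
B \ar[r, "r_B"'] & MB
\end{tikzcd}
\end{displaymath}
is a pullback; equivalently, $i$ is the equalizer of the pair $r_B, H : B \to MB$. Thus, up to isomorphism over $B$, the map $i$ is obtained by pulling back the trivial Moore path $r_B : B \to MB$ along $H$.

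I would then invoke the combining assumption made at the beginning of this section, namely that every trivial Moore path $r_X : X \to MX$ is an effective cofibration; in particular $r_B \in \Sigma$. Since $\Sigma$ is a dominance (\refdefi{dominance}), it is closed under pullback, so the pullback of $r_B$ along $H$ again lies in $\Sigma$. Hence $i \in \Sigma$, i.e.\ $i$ is an effective cofibration.

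There is essentially no obstacle here: the geometric content is already carried by \reflemm{pbksqforsqofHDRs}, and all that is added is the observation that the pullback appearing there is literally $i$. The only point worth a sentence is that being an effective cofibration is a property of the underlying map, independent of the chosen witnessing HDR data, so it is legitimate to conclude that the HDR \emph{is} an effective cofibration rather than merely that it carries such structure.
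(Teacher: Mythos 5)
Your proof is correct and follows exactly the paper's (one-line) argument: use \reflemm{pbksqforsqofHDRs} to exhibit the HDR as a pullback of $r_B$, then apply the standing assumption that $r_X \in \Sigma$ together with pullback-stability of the dominance.
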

\begin{proof}
  This is straightforward using \reflemm{pbksqforsqofHDRs}, under the ruling
  assumption that every \(r_X : X \to MX\) is an effective cofibration.
\end{proof}

In the next proposition, we have adopted the familiar notation for
effective trivial fibrations. The proposition is stated in order to
apply \refprop{fibreddoubleiso} directly.
\begin{prop}{trivFib->effTrivFib}
  There is a double functor between discretely fibred concrete double categories
  over \(\E\)
  \[
    \mathbb{E}\cat{ffTrivFib} \rightarrow \mathbb{E}\cat{ffTrivFib (triple)} 
  \]
  which is full on squares which induces an isomorphism between notions
  of fibred structure. Hence, this functor is an isomorphism. 
\end{prop}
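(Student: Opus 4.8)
The plan is to apply \refprop{fibreddoubleiso} (in the form suited to discretely fibred concrete double categories over \(\E\)), so the work splits into three tasks: (1) construct a double functor \(F : \mathbb{E}\cat{ffTrivFib} \to \mathbb{E}\cat{ffTrivFib (triple)}\) over \(\sq{\E}\); (2) show that \(F\) induces an isomorphism of notions of fibred structure \(\cat{effTrivFib} \cong \cat{effTrivFib (triple)}\); (3) show that \(F\) is full on squares. Since every effective cofibration is (uniquely) an \(L\)-coalgebra for the (effective cofibration, effective trivial fibration)-AWFS (\namedprop{coalgebrasfordominance}{Effective cofibrations are precisely coalgebras}), and since by \refcoro{algformonadfordominance} we may identify \(\mathbb{E}\cat{ffTrivFib}\) with \(\Drl{\mathbf{\Sigma}}\), an effective trivial fibration structure on \(p : Y \to X\) is the same as a choice of diagonal filler for every lifting problem against an effective cofibration \(m\), subject to the horizontal and vertical compatibility conditions of \refdefi{rlp-double-cat}. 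I would describe \(F\) on objects of \(\mathbb{E}\cat{ffTrivFib}_1\) by sending such a lifting structure \(\phi\) to the same family of fillers, now indexed by the \(xy\)-squares of the triple category in \refdefi{tripleCatRightLiftingStructure}; the point is that an \(xy\)-square in \(\mathbb{E}\cat{ffTrivFib (triple)}\) is merely a pullback square of effective cofibrations \(m' \to m\), and a filler for the corresponding lifting problem against the pushout corner map \(B' +_{A'} A \to B\) is determined by a filler against \(m\) alone (one restricts along \(m \hookrightarrow B\) and uses that the square is a pullback to see the two pieces agree); conversely a filler against \(m\) is the special case \(m' = \mathrm{id}\). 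This gives a bijection on underlying filler-families before imposing compatibility, which already handles most of task (2).

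The content of tasks (2) and (3) is then to match up the compatibility conditions. On the \(\mathbb{E}\cat{ffTrivFib}\) side one has the two conditions of \refdefi{rlp-double-cat}: the horizontal condition (lifts restrict correctly along pullback squares between effective cofibrations) and the vertical condition (lifts along a composite \(m_2 . m_1\) of effective cofibrations — available since \(\Sigma\) is closed under composition — agree with the two-step lift). On the \(\mathbb{E}\cat{ffTrivFib (triple)}\) side one has the horizontal, vertical and perpendicular conditions of \refdefi{tripleCatRightLiftingStructure}. I would argue: the \emph{vertical} condition of the triple category corresponds exactly to the vertical condition of \refdefi{rlp-double-cat} (both concern vertically composed effective cofibrations), and the \emph{perpendicular} condition of the triple category — lifts are stable under pulling a \(yz\)-square (a pullback of an effective cofibration along a morphism of \(\E\)) back along an \(xz\)-square — corresponds exactly to the horizontal condition of \refdefi{rlp-double-cat} together with the pullback action \((u,v)^*\phi\) defined in Section~\ref{ssec:double-categories}. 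Meanwhile the \emph{horizontal} condition of the triple category concerns horizontally composable \(xy\)-squares, i.e.\ horizontally composed \emph{effective cofibrations}; but in the dominance case the "horizontal" composite of effective cofibrations is again just a composite of monos in \(\Sigma\), so this condition is already subsumed by (indeed equivalent to) the vertical condition of \refdefi{rlp-double-cat}. The upshot is that the three triple-categorical conditions cut out precisely the image of \(\cat{effTrivFib}\) under the bijection on filler-families, giving the natural isomorphism of (i), and that a commutative square between effective trivial fibrations satisfying the square-condition of \refdefi{rlp-double-cat} automatically satisfies the analogous condition in \refdefi{rightLiftingTripleDoubleCat} (because that condition only tests against \(xy\)-squares, i.e.\ the old lifting problems), so \(F\) is full on squares, giving (ii). Here one uses \reflemm{lccFinCoImpliesCoherent} / coherence only if needed to manage unions, but in fact for effective \emph{trivial} fibrations (as opposed to effective fibrations) the binary-union hypothesis on \(\Sigma\) is not invoked; I would double-check that nothing in the triple category forces it.

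The main obstacle I anticipate is bookkeeping rather than conceptual: carefully verifying that the "two directions" of the filler bijection (fillers against \(m\) versus fillers against the pushout corner \(B' +_{A'} A \to B\)) really do carry the horizontal/vertical compatibility axioms of \refdefi{rlp-double-cat} to and from the horizontal/vertical/perpendicular axioms of \refdefi{tripleCatRightLiftingStructure}, including getting the indexing conventions and the direction of the pullback actions to match. In particular one must check that the perpendicular axiom, which quantifies over \emph{cubes} (pullbacks of \(yz\)-squares along \(xz\)-squares), does not impose anything strictly stronger than the horizontal axiom of \refdefi{rlp-double-cat} — this should follow because cubes in this triple category are unique for a given boundary, so the axiom is purely a naturality-under-pullback statement, which is exactly how \((u,v)^*\phi\) was defined. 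Once that matching is pinned down, \refprop{fibreddoubleiso} delivers the stated isomorphism immediately, and one records (as the proposition asserts) that \(\mathbb{E}\cat{ffTrivFib (triple)}\) is thereby identified with \(\mathbb{E}\cat{ffTrivFib}\), which is what is needed for Lemma~\ref{lemm:HDRinDominance} to exhibit the triple category of mould squares as a subtriple category.
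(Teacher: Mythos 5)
Your plan stalls at its very first step, the construction of the double functor $F$ on vertical morphisms. In a lifting problem against an $xy$-square of the triple category one is given, besides $a: A \to Y$ and $b: B \to X$, an \emph{arbitrary} partial lift $h: B' \to Y$, and the required filler $B \to Y$ must restrict to $h$ along $B' \to B$. The chosen filler $\phi_{a,b}(m)$ of the effective trivial fibration structure against the single cofibration $m: A \to B$ has no reason to agree with $h$ on $B'$: the pullback property of the $xy$-square only guarantees that $a$ and $h$ agree on $A'$, i.e.\ that they glue to a single map $[h,a]: A +_{A'} B' \to Y$, not that the lift chosen with respect to $m$ alone extends $h$. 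So the asserted ``bijection on underlying filler-families'' (corner-map fillers versus fillers against $m$, with $m'=\mathrm{id}$ as the converse) is not there, and the subsequent matching of compatibility axioms has nothing to attach to.

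The paper's proof produces the filler differently, and this is exactly where the hypotheses you propose to discard enter: since the $xy$-square is a pullback of effective cofibrations and $\E$ is coherent (\reflemm{lccFinCoImpliesCoherent}), the pushout $A +_{A'} B'$ is the union of the subobjects $A$ and $B'$ of $B$, and by the standing assumption that $\Sigma$ is closed under binary unions the corner map $A +_{A'} B' \hookrightarrow B$ is itself an effective cofibration; the triple-categorical filler is then the chosen lift of the effective trivial fibration structure for the problem $([h,a],b)$ against \emph{this} cofibration. So the binary-union hypothesis is indispensable here, contrary to your closing remark. Your dictionary between the compatibility conditions is also too coarse: the triple-categorical horizontal condition is not ``subsumed by'' the vertical condition of \refdefi{rlp-double-cat}; the paper verifies it (and the vertical condition) by factoring the composed lifting problem through an intermediate square that is simultaneously a pullback and a pushout, which uses \emph{both} the base-change and the composition compatibilities of the effective trivial fibration structure. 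Only your treatment of the perpendicular condition (it reduces to the base-change/horizontal condition) matches the paper's argument.
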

\begin{proof}
  The first goal is to show that every effective trivial fibration can be
  equipped with right lifting structure with respect to the above triple
  category in a functorial way.  Clearly, such a functor would be full on
  squares, since every morphism between such right lifting structures is a
  morphism between effective trivial fibrations.  It then remains to show that the
  induced natural transformation between notions of fibred structure is a
  natural isomorphism.

 Suppose \(p: Y \to X\) is an effective trivial fibration.  Every lifting
 problem with respect to an \(xy\)-square of effective cofibrations factors
 through a coproduct as follows:
  \[
          \begin{tikzpicture}[baseline={([yshift=-.5ex]current bounding box.center)}]
            \matrix (m) [matrix of math nodes, row sep=3em,
            column sep=4em]{
                |(a)| {A'} & |(c)| {B'} & |(c')| {Y} \\
                {}        &  |(e)| {A +_{A'} B} & {} \\
                |(b)| {A} & |(d)| {B} & |(d')| {X} \\
            };
            \begin{scope}[every node/.style={midway,auto,font=\scriptsize}]
            \path[->]
              (a) edge node [left] {$m$} (b)
              (e) edge (d);
            \path[->]
              (a) edge (c)
              (b) edge (e)
              (b) edge node [below] {$i$} (d)
              (c) edge (e)
              (e) edge node [anchor=center,fill=white] {$h+u$} (c')
              (c) edge (c')
              (d) edge [dashed] (c')
              (d) edge node [below] {$v$} (d')
              (c') edge node [right] {$p$} (d');
 \end{scope}\end{tikzpicture}
 \] 
Observe that \(A +_{A'} B' \hookrightarrow B\) is an effective cofibration under the
prevailing assumption that these are closed under binary unions, \(xy\) squares
are pullback squares and the fact that \(\E\) is
\emph{coherent}\index{coherent (category)} (see
\reflemm{lccFinCoImpliesCoherent}). Hence there exists a filler \(B \to Y\) as
drawn. We need to check that it satisfies the horizontal, vertical and
perpendicular conditions.  For the horizontal condition, the lifting problem
factors as follows:
\[
  \begin{tikzpicture}[baseline={([yshift=-.5ex]current bounding box.center)}]
    \matrix (m) [matrix of math nodes, row sep=3em,
    column sep=4em]{
      |(a')| {A'} & |(b')| {B'} & |(c')| {C'} & |(y)| {Y} \\
      {}   &  |(e)| {A+_{A'}B'}  &  |(f)| {A+_{B'}C'} & {} \\
      |(a)|  {A} & |(b)| {B} & |(g)| {B+_{B'}C'} & {} \\
                 &           & |(c)| {C} & |(x)| {X} \\
    };
    \begin{scope}[every node/.style={midway,auto,font=\scriptsize}]
    \node (i) at (barycentric cs:e=1,f=1,b=1,g=1) {$(*)$};
     \path[->]
       (e) edge (b)
       (f) edge (g)
       (g) edge (c)
       (a') edge (a);
      \path[->]
        (b) edge [dashed, bend right=15] (y)
        (c) edge [dashed] (y)
        (f) edge (y)
        (b') edge (e)
        (a') edge (b')
        (b') edge (c')
        (c') edge (y)
        (c') edge (f)
        (e) edge (f)
        (b) edge (g)
        (a) edge (b)
        (b) edge (c)
        (c) edge (x)
        (y) edge (x);
 \end{scope}\end{tikzpicture}
 \]
where we observe that the square \((*)\) is both a pullback and a pushout.
Hence the horizontal condition for trivial fibration applies,
and the lift \(B \to Y\) is determined by the lift \(B+_{B'}C' \to Y\).
By the vertical condition, the latter is in turn compatible with the
subsequent lift \(C \to Y\) and equal to the lift with respect to
\(A+_{B'} C' \hookrightarrow C\). It follows that the dashed fillers make
the whole diagram commute, which proves the horizontal condition.

The vertical condition can be proven similarly:
\[
 \begin{tikzpicture}[baseline={([yshift=-.5ex]current bounding box.center)}]
   \matrix (m) [matrix of math nodes, row sep=3em,
   column sep=4em]{
     |(a'')| {A''} & {} & |(b'')| {B''} & |(y)| {Y} \\
                   & |(e)| {A'+_{A''} B''} & {} & {} \\
     |(a')| {A'} &           & |(b')| {B'} & {} \\
     {}   &   |(f)| {A+_{A''}B''}      &  |(g)| {A+_{A'} B'} & {} \\
     |(a)| {A} &  {}       & |(b)| {B} & |(x)| {X} \\
   };
    \node (i) at (barycentric cs:e=1,f=1,b'=1,g=1) {$(*)$};
   \begin{scope}[every node/.style={midway,auto,font=\scriptsize},
                 on top/.style={preaction={draw=white,-,line width=#1}},
                 on top/.default=4pt]
      \path[->]
        (a'') edge (a')
        (a') edge (a)
        (e) edge (b')
        (f) edge (b)
        (f) edge (g)
        (g) edge (b)
        (b'') edge (b');
       \path[->]
         (a'') edge (b'')
         (a') edge (b')
         (a) edge (b)
         (a') edge (e)
         (b'') edge (e)
         (b') edge (g)
         (a) edge (f)
         (y) edge (x)
         (b'') edge (y)
         (b) edge (x)
         (e) edge [on top] (y)
         (e) edge [on top] (f)
         (b') edge [dashed] (y)
         (b) edge [bend right=15,dashed] (y);
 \end{scope}\end{tikzpicture}
 \]
 Again, the square \((*)\) is both pullback and pushout. The argument
 is now the same as in the horizontal case.

 The perpendicular case is easy and follows from the horizontal condition for
 effective trivial fibrations.

It is left to the reader to convince themselves that the defined lifting
structure is unique and thus induces an isomorphism of notions of fibred
structure.
\end{proof}

From the above, we obtain the following corollary.  Again, using
\refdefi{discretelyfibreddoublecat}, it follows that there is an induced
natural transformation of fibred structure:
\begin{coro}{trivFibareUniKanFib}
There is a double functor \(\mathbb{E}\cat{ffTrivFib} \rightarrow \EEffRFib\)
between discretely fibred concrete double categories (over \(\E\)).
\end{coro}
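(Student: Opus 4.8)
The goal is to produce a double functor $\mathbb{E}\cat{ffTrivFib} \to \EEffRFib$ between discretely fibred concrete double categories over $\E$, which amounts (by \refdefi{discretelyfibreddoublecat} and the fact that these are discretely fibred) to a natural transformation of the underlying notions of fibred structure $\cat{effTrivFib} \to \effRFib$ together with fullness on squares. The plan is to combine the three ingredients that have just been assembled: \refprop{trivFib->effTrivFib}, which identifies $\mathbb{E}\cat{ffTrivFib}$ with $\mathbb{E}\cat{ffTrivFib (triple)}$, i.e.\ with right lifting structures against the triple category of effective cofibrations; \reflemm{HDRinDominance}, which says every HDR is an effective cofibration, and more precisely that the triple category underlying mould squares is a \emph{sub}triple category of the one underlying effective trivial fibrations; and the general principle that restricting a right lifting structure along an inclusion of (triple) categories yields again a right lifting structure.

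Concretely, I would argue as follows. First, invoke \refprop{trivFib->effTrivFib} to replace an effective trivial fibration structure on $p : Y \to X$ by the corresponding right lifting structure $\phi$ with respect to the triple category of \refprop{trivFib->effTrivFib}: its $xy$-squares are pullback squares of effective cofibrations, its $xz$-squares are commutative squares of effective cofibrations, its $yz$-squares are pullback squares of an effective cofibration along a morphism in $\E$, and its cubes are obtained by pulling back a $yz$-square along an $xz$-square. Now observe — this is exactly the content of \reflemm{HDRinDominance} together with the fact that a cartesian morphism of HDRs is a pullback square of effective cofibrations (\refdefi{mouldsq}), that a morphism of HDRs over a pullback square is in particular a commutative square of effective cofibrations, that a morphism of effective cofibrations is a pullback square, and that a pullback of a mould square along a morphism of HDRs (the cubes of \refdefi{effectiveRightFibration}) is a particular case of pulling back a $yz$-square along an $xz$-square — that the triple category defining effective fibrations in \refdefi{effectiveRightFibration} maps into the triple category defining $\mathbb{E}\cat{ffTrivFib (triple)}$ via a triple functor which is injective on all cells. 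Restricting $\phi$ along this triple functor yields a family of fillers for mould squares, indexed by the data in \refdefi{tripleCatRightLiftingStructure}, and the horizontal, vertical and perpendicular conditions for this restriction are just instances of the corresponding conditions for $\phi$. Hence $p$ acquires an effective fibration structure, naturally in $p$, giving the natural transformation $\cat{effTrivFib} \to \effRFib$ and hence (discrete fibredness) the functor $\cat{EffTrivFib} \to \EffRFib$ on objects.

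To upgrade this to a double functor $\mathbb{E}\cat{ffTrivFib} \to \EEffRFib$ I would check compatibility with the square structure: a square in $\mathbb{E}\cat{ffTrivFib}$ is a commutative square $p' \to p$ of effective trivial fibrations satisfying the compatibility condition (iii) of \refdefi{rlp-double-cat} with respect to the triple category of effective cofibrations, and a square in $\EEffRFib$ is a commutative square satisfying the analogous condition with respect to the mould-square triple category (\refdefi{rightLiftingTripleDoubleCat}). Since the latter condition is quantified only over mould squares, which form a subclass of the cells of the larger triple category, condition (iii) for the larger triple category immediately implies condition (iii) for the sub-triple category; so every square in $\mathbb{E}\cat{ffTrivFib}$ maps to a square in $\EEffRFib$. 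Functoriality on horizontal morphisms is trivial since in both double categories horizontal morphisms are just arrows of $\E$, and compatibility with vertical composition follows because the restriction of a right lifting structure respects the step-wise composition of lifts (the recipe discussed around \refdefi{rlp-double-cat}), which is computed using the same pushout-corner fillers in both cases. The only genuinely non-routine point — and the step I would expect to spend the most care on — is verifying cleanly that the mould-square triple category really does embed as a sub-triple-category of the effective-cofibration triple category, i.e.\ checking that \emph{each} of the eight clauses of \refdefi{effectiveRightFibration} lands among the corresponding cells of the triple category in \refprop{trivFib->effTrivFib}, in particular that the cube clause (pullback of a mould square along a morphism of HDRs) is recognised as a pullback of a $yz$-square along an $xz$-square; once that bookkeeping is in place, the rest is formal, exactly as the proof sketch indicated ("follows from \refprop{trivFib->effTrivFib} and \reflemm{HDRinDominance}").
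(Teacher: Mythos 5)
Your proposal is correct and follows essentially the same route as the paper: the paper's proof simply notes that, by \reflemm{HDRinDominance}, the triple category of mould squares maps by a (unique, concrete) triple functor into the triple category underlying \(\mathbb{E}\cat{ffTrivFib (triple)}\), so that restricting right lifting structures along it — combined with \refprop{trivFib->effTrivFib} — yields the desired double functor. Your write-up just makes explicit the cell-by-cell bookkeeping and the square/vertical-composition compatibility that the paper leaves implicit.
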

\begin{proof}
  This follows directly from the existence of a unique (concrete) triple
  functor from the triple category of mould squares to the triple category in
  this subsection, by virtue of \reflemm{HDRinDominance}.
\end{proof}

\subsection{Right and left fibrations}
Having established the definition of effective fibration, we turn to the
situations of Section~\ref{sec:frobenius}. There, we studied the AWFS for
two-sided and symmetric Moore structures.  Naturally, the two-sided setting
admits a dual notion of fibred structure which we call \emph{effective left
fibration}\index{effective left fibration} (for the terminology, recall \refrema{terminology-left-right}). In
the symmetric case, we obtain:
\begin{coro}{fibrhavecoliftstr}
Suppose the Moore structure on \(\E\) is symmetric as in as in
Section~\ref{sec:frobenius}. 
  Then there is a double functor between discretely fibred concrete double categories
  over \(\E\)
  \[
    \EEffRFib \rightarrow \mathbb{E}\cat{ffLFib}
  \]
  which is full on squares which induces an isomorphism between notions
  of fibred structure. Hence, this functor is an isomorphism. 
\end{coro}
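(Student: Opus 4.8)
The plan is to mirror, step by step, the proof of \refprop{trivFib->effTrivFib}, which is the ``right'' analogue of this ``left'' statement. The key observation is that in a symmetric Moore structure, the isomorphism $\mathbb{N}\cat{Fib} \cong \mathbb{N}\cat{LFib}$ of \refprop{coHDRisHDR} is induced by reversing paths via the twist map $\tau$, and this isomorphism is compatible with the whole apparatus of HDRs, effective cofibrations, and mould squares. So the strategy is: construct the claimed double functor $\EEffRFib \to \mathbb{E}\cat{ffLFib}$ by transporting the right lifting structure along $\tau$, check that it is full on squares, check that it induces an isomorphism on notions of fibred structure, and then invoke \refprop{fibreddoubleiso} to conclude it is an isomorphism of double categories.

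First I would set up the dual triple category defining effective left fibrations: it is identical to the one in \refdefi{effectiveRightFibration}, except that horizontal morphisms are HDRs for the \emph{dual} Moore structure $\Gamma^*$ (``left-HDRs''), and $xy$-squares are the corresponding ``left mould squares''. By \refprop{coHDRisHDR}, in the symmetric case left-HDRs and HDRs coincide (via $H \mapsto \tau_B . H$), and this identification respects vertical composition, morphisms, and cartesian/cocartesian morphisms of HDRs; hence it also identifies mould squares with left mould squares, since mould squares are cartesian morphisms of HDRs lying over effective cofibrations (\refdefi{mouldsq}), and $\Sigma$ is unchanged. Therefore the two triple categories of \refdefi{effectiveRightFibration}, for $\Gamma$ and $\Gamma^*$ respectively, are isomorphic as triple categories over $\cube{\E}$, compatibly with the identity on $\E$.

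Next I would define the double functor. Given an effective (right) fibration $(p : Y \to X, \phi)$, I transport $\phi$ along the triple-category isomorphism just described: for each left mould square, apply the isomorphism to get a mould square, take the given filler $\phi$, and observe that this filler solves the left lifting problem as well, since the underlying $\E$-diagram is unchanged (only the HDR-data attached to the horizontal arrows is reinterpreted via $\tau$). The horizontal, vertical, and perpendicular compatibility conditions of \refdefi{tripleCatRightLiftingStructure} for the transported structure follow directly from those for $\phi$, because the triple-category isomorphism is strict (it respects all three composition operations and all three kinds of squares and cubes). On squares, a morphism of effective left fibrations is by definition a commutative square in $\E$ satisfying the compatibility condition of \refdefi{rightLiftingTripleDoubleCat}, and since the lifts on both sides are literally the same $\E$-morphisms as for the right fibration, the functor is full on squares; it is likewise bijective on objects and on horizontal morphisms. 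Thus it induces a bijection on notions of fibred structure, i.e.\ a natural isomorphism $\effRFib \cong \cat{effLFib}$, and \refprop{fibreddoubleiso} gives that the double functor is an isomorphism.

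The main obstacle, such as it is, is bookkeeping rather than conceptual: one must verify carefully that the identification of HDRs with left-HDRs in \refprop{coHDRisHDR} lifts to an identification of the \emph{entire} triple categories — in particular that it sends cubes (pullbacks of mould squares along morphisms of HDRs) to cubes and $xz$-squares (morphisms of HDRs) to $xz$-squares, and commutes with all the compositions. This amounts to checking that $\tau$-conjugation commutes with pullback of HDRs; but pullbacks of HDRs are computed by \refprop{HDRsareMcoalgebras} as limits created from $\coalgcat{M}$ and preserved by $M$, and $\tau : M \Rightarrow M$ is a natural transformation of pullback-preserving functors, so this is automatic. Everything else is formal, and I would leave the routine verification of the compatibility conditions to the reader, exactly as \refprop{trivFib->effTrivFib} does.
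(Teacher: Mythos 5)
Your proposal is correct and takes essentially the same route as the paper: the paper's (very terse) proof likewise reduces everything to \refprop{coHDRisHDR} — the $\tau$-induced identification of HDRs with left-HDRs in the symmetric case — ``restricted to effective (left) fibrations'', i.e.\ promoted to an identification of the two triple categories of mould squares, with \refprop{fibreddoubleiso} then yielding the isomorphism. Your write-up simply makes explicit the bookkeeping (preservation of cartesian morphisms, pullbacks and cubes, and the transport of the lifting structure) that the paper leaves to the reader.
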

\begin{proof}
 This essentially follows from \reflemm{fibrhaveliftstr} and \refprop{coHDRisHDR},
 but restricting to effective (left) fibrations.
\end{proof}

 \section{\texorpdfstring{\(\Pi\)}{Pi}-types}\label{sec:Pi-types}
This section contains the main result of this chapter.  Described in
more familiar terms, the result gives a constructive proof of the fact that
when \(X\) is effectively fibrant, and \(A\) is any other object, then the
exponential \(X^A\) is effectively fibrant (see
\refrema{everythingIsNaivelyFibrant}).  The statement of the proposition is a
more general, fibred version of this fact, which is a classic result on Kan
fibrations in simplicial sets (see e.g.~\cite{May-67}, Theorem~7.8).

We assume that \(\E\) is a finitely cocomplete, locally cartesian closed
category equipped with symmetric Moore structure and a dominance.  Further, it
satisfies the additional conditions from the beginning of
Section~\ref{sec:unifKanFibr}. We then have the following theorem.
\begin{theo}{symmetricPocProofOfPi}{\rm (Pushforward for effective fibrations)}
Suppose \(\E\) is given as above. Then the pushforward \(\Pi_g
f\)\index{pushforward} of an effective fibration \(f\) along an effective
fibration \(g\) is again an effective fibration. More precisely, there is a
functor
\[
        \Pi_{(-)}(-) :  \EffRFib \times_{\E} \EffRFib \to \EffRFib
\]
as in \reflemm{uniformkanfibrandpi} 
such that \(\Pi_{g}(-)\) is right adjoint to pullback along \(g\).        
\end{theo}
\begin{proof}
By precomposing the functor of
\reflemm{uniformkanfibrandpi} below with the equivalence in 
\refcoro{fibrhavecoliftstr} and the functor in 
\reflemm{fibrhaveliftstr}, the proof follows.
\end{proof}

The proof refers to the following lemma, which is stated in the generality of a
two-sided Moore structure on the category \(\E\) (see the introduction to
Section~\ref{sec:frobenius}). Apart from that, \(\E\) satisfies the same
assumptions as in the previous theorem.
\begin{lemm}{uniformkanfibrandpi}
  If $f: Y \to X$ is a naive left fibration and $g: Z \to Y$ is an effective
  fibration, then the pushforward $\Pi_f(g)$ is also an
  effective fibration.  More precisely, the pullback along a naive left
  fibration 
\(
        f^* : \EffRFib_{X} \to \EffRFib_{Y}
\)
which takes effective fibrations with codomain \(X\) to effective fibrations with
codomain \(Y\) has a right adjoint
\(
        f_* : \EffRFib_{Y} \to \EffRFib_{X}
\)
defined by a functor
\[
        \Pi_{(-)}{(-)} : \cat{NLFib} \times_{\E} \EffRFib \to \EffRFib
\]
where the domain is the pullback of the domain and codomain functors.
\end{lemm}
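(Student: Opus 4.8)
The plan is to prove \reflemm{uniformkanfibrandpi} by combining the Frobenius construction (\refprop{pbkstabilityofSDRunderfibr}, refined to a functor in \reflemm{pullbckstoffrob}) with the transposition of lifting problems under the adjunction $f^* \dashv f_*$. First I would set up the adjunction at the level of underlying maps: given $g \co Z \to Y$ with codomain $Y$, the pushforward $\Pi_f(g) \co \Pi_f(Z) \to X$ exists since $\E$ is locally cartesian closed, and pullback along $f$ preserves effective cofibrations, HDRs, and mould squares by \reflemm{mouldSqPbStable} together with \refcoro{pbsqofHDRalongcartsq}. The key observation is the standard transposition: a lifting problem for $\Pi_f(g)$ against a mould square
\[
  \begin{tikzcd}
    A' \ar[d, "m"'] \ar[r, "{(i',r')}"] & B' \ar[d, "m'"] \\
    A \ar[r, "{(i,r)}"'] & B
  \end{tikzcd}
\]
over $X$ corresponds, by pulling back the whole situation along $f$ and using the counit/unit of $f^* \dashv f_*$, to a lifting problem for $g$ against the \emph{pulled-back} square $f^*(\text{mould square})$, which is again a mould square --- but now only after we also \emph{deform} the left-hand vertical cofibration using the Frobenius construction. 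Concretely, transposing $A \to \Pi_f(Z)$ to a map $f^*A \to Z$ requires extending it along the pulled-back HDR on the cofibration side; this is exactly where \refprop{pbkstabilityofSDRunderfibr} enters, since $f$ is a naive left fibration and the relevant pullback square has $f$ on one side.

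The key steps, in order, would be: (1) Define $\Pi_{(-)}(-)$ on objects by the pushforward, checking via the axiomatic Frobenius apparatus that each mould-square lifting problem for $\Pi_f(g)$ transposes to one for $g$. The transposition uses that $f^*$ applied to a mould square is a mould square, and that the Frobenius morphism of HDRs (\refdefi{frobeniusmorphismofHDRs}) supplies the comparison needed to move the chosen filler back across the adjunction. (2) Verify the three compatibility conditions (horizontal, vertical, perpendicular) of \refdefi{tripleCatRightLiftingStructure} for the transposed lifting structure. Here I would lean heavily on \reflemm{frobeniusStableUnderComposition} (Frobenius is stable under composition of both naive left fibrations and HDRs) for the horizontal and vertical conditions, and on \reflemm{pullbckstoffrob} (Frobenius morphisms are stable under pullback along morphisms of HDRs) for the perpendicular condition --- this is precisely why those lemmas were proved in the generality of a \emph{functor} on $\cat{NLFib} \times_\E \cat{HDR}$. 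The naturality of the adjunction transposition, combined with the functoriality of the Frobenius construction, should make each condition for $\Pi_f(g)$ follow mechanically from the corresponding condition for $g$. (3) Assemble these into the functor $\Pi_{(-)}(-) \co \cat{NLFib} \times_\E \EffRFib \to \EffRFib$ over $\E$, and note that $\Pi_f(-)$ is right adjoint to $f^*$ by construction, since the underlying-map adjunction is just local cartesian closure.

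The main obstacle, I expect, is \textbf{bookkeeping in the transposition of the compatibility conditions}, rather than any single hard idea. The subtlety is that the Frobenius construction does not produce a \emph{cartesian} morphism of HDRs (see \refrema{notapbksqofHDRs}), so when we transpose a lifting problem the deformed cofibration on the domain side is genuinely different from a naive pullback; one must track carefully that the chosen fillers on both sides of the adjunction match up, and that the HDR structures appearing in the horizontal/vertical/perpendicular diagrams are exactly the ones produced by the Frobenius functor. The perpendicular condition is the most delicate: it involves a cube, i.e.\ a mould square pulled back along a morphism of HDRs, and checking that $f^*$ of this cube transposes correctly requires both \reflemm{mouldSqPbStable} and \reflemm{pullbckstoffrob} simultaneously, together with the Beck--Chevalley condition for the codomain bifibration (\reflemm{lcc-beck-chevalley}) to commute the pullback-along-$f$ with the pullback-along-the-HDR-morphism. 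I would isolate a single lemma stating ``$f^*$ of a mould square, together with a transposed filler, is a lifting structure datum'' and prove the three conditions for it once, so that the final assembly is purely formal.
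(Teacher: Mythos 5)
Your proposal follows essentially the same route as the paper: pull back $f$ to form a Frobenius cube whose front and back faces are mould squares and whose top and bottom are Frobenius morphisms of HDRs, transpose the lifting problem across $f^* \dashv \Pi_f$ to use the effective fibration structure on $g$, and verify the horizontal and vertical conditions via adjunction naturality plus \reflemm{frobeniusStableUnderComposition} and the perpendicular condition via \reflemm{pullbckstoffrob} --- exactly the ingredients the paper uses, including your correct identification of \reflemm{pullbckstoffrob} as the crucial point. One small wording slip: it is the horizontal HDRs, not the vertical cofibrations, that must be ``deformed'' (i.e.\ equipped with an HDR structure) by the Frobenius construction after pullback, since cofibrations pull back as such while HDRs need the naive left fibration structure of $f$.
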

\begin{proof} 
  Assume \(f\) is a naive left fibration and \(g: Z \to Y\) is an effective 
fibration.  We have to show that \(\Pi_f(g)\) is an effective fibration. So
imagine we have a situation like this:
\[
          \begin{tikzpicture}[baseline={([yshift=-.5ex]current bounding box.center)}]
\matrix (m) [matrix of math nodes, row sep=5em,
column sep=6em]{
    |(a')| {A'} & |(b')| {B'} & |(pf)| {W} \\
    |(a)| {A} & |(b)| {B} & |(x)| {X} \\
};
\matrix (n) [matrix of math nodes, row sep=5em,
column sep=6em, position=210:1 from pf]{
    |(c')| {} & |(d')| {} & |(z)| {Z} \\
    |(c)| {} & |(d)| {} & |(y)| {Y} \\
};
\begin{scope}[every node/.style={midway,auto,font=\scriptsize}]
        \path[{Hooks[right]}->]
    (a') edge (a)
    (b') edge (b);
    \path[-]
    (a) edge [line width = 3pt, draw = white] (pf);
    \path[->]
    (a') edge node [above] {$i'$} (b')
    (a) edge node [below] {$i$} (b)
    (b') edge (pf)
    (b) edge (x)
    (pf) edge node [right] {$\Pi_f(g)$} (x)
    (a) edge (pf);
    \path[-]
    (z) edge [line width = 3pt, draw = white] (y);
    \path[->>]
    (y) edge node [below right] {$f$} (x)
    (z) edge node [pos=0.4, right] {$g$} (y);
 \end{scope}
 \end{tikzpicture}
 \]
in which the left hand square is a mould square.  The construction starts by
taking the pullback of \(f\) along \(A \to X\), which yields a naive left fibration, and
subsequently constructing a `Frobenius' cube like
in~\eqref{eq:frobenius-pullback}:
\[
          \begin{tikzpicture}[baseline={([yshift=-.5ex]current bounding box.center)}]
            \matrix (m) [matrix of math nodes, row sep=5em,
            column sep=6em]{
                |(a')| {A'} & |(b')| {B'} & |(pf)| {W} \\
                |(a)| {A} & |(b)| {B} & |(x)| {X} \\
            };
            \matrix (n) [matrix of math nodes, row sep=5em,
            column sep=6em, position=210:1 from pf]{
                |(c')| {C'} & |(d')| {D'} & |(z)| {Z} \\
                |(c)| {C} & |(d)| {D} & |(y)| {Y} \\
            };
            \begin{scope}[every node/.style={midway,auto,font=\scriptsize}]
\path[{Hooks[right]}->]
(a') edge (a)
(b') edge (b);
\path[-]
(a) edge [line width = 2pt, draw = white] (pf);
\path[->]
(a) edge node [below] {$i$} (b)
(d) edge [dotted, bend right=20] node [pos=0.3, anchor=center, fill=white] {$\exists$} (z)
(b) edge [bend right=20, dotted] (pf)
(c) edge [line width=3pt, draw=white] (z)
(c) edge [dashed] (z)
(d) edge [dashed] (y)
(c) edge [dashed] (d)
(c') edge [line width = 3pt, draw = white] (d')
(c') edge [dashed] (d')
(a') edge node [above] {$i'$} (b')
(b') edge (pf)
(b) edge (x)
(pf) edge node [right] {$\Pi_f(g)$} (x)
(a) edge (pf)
(d') edge [line width = 3pt, draw = white] (z)
(d') edge [dashed] (z);
\path[{Hooks[right]}->]
(c') edge [dashed] (c)
(d') edge [line width = 3pt, draw = white] (d)
(d') edge [dashed] (d);
\path[->>]
(y) edge node [below right] {$f$} (x)
(z) edge [line width = 3pt, draw = white] (y)
(z) edge node [pos=0.4, right] {$g$} (y)
(d) edge [dashed] (b)
(c') edge [dashed] (a')
(d') edge [dashed] (b')
(c) edge [dashed] (a);
 \end{scope}
 \end{tikzpicture}
 \]
So the front and back squares are mould squares and the bottom and top are
Frobenius morphisms of HDRs. The maps $D' \to Z$ and $C \to Z$ are induced by the 
adjunction $f^* \ladj \Pi_f$. Since $g$ is an effective fibration, there is a
map $D \to Z$ making everything commute, which we can transpose back to a map
$B \to W$ giving a filler.

It remains to check the three compatibility conditions, horizontal, vertical
and perpendicular, for this filler.  The horizontal and vertical conditions
follow fairly directly from the naturality condition for adjunctions together
with \reflemm{frobeniusStableUnderComposition}. Indeed, for the horizontal
condition:
\[
          \begin{tikzpicture}[baseline={([yshift=-.5ex]current bounding box.center)}]
            \matrix (m) [matrix of math nodes, row sep=5em,
            column sep=6em]{
                    |(a')| {A'} & |(b')| {B'}& |(c')| {C'} & |(pf)| {W} \\
                    |(a)| {A} & |(b)| {B} & |(c)| {C} & |(x)| {X} \\
            };
            \matrix (n) [matrix of math nodes, row sep=5em,
            column sep=6em, position=200:1 from pf]{
                    |(d')| {D'} & |(e')| {E'} & |(f')| {F'} & |(z)| {Z} \\
                    |(d)| {D} & |(e)| {E} & |(f)| {F} & |(y)| {Y} \\
            };
            \begin{scope}[every node/.style={midway,auto,font=\scriptsize}]
                    \path[{Hooks[right]}->]
                (a') edge (a)
                (c') edge (c)
                (b') edge (b);
                \path[->>]
                (y) edge node [below right] {$f$} (x)
                (z) edge node [pos=0.4, right] {$g$} (y)
                (e) edge (b)
                (d') edge (a')
                (e') edge (b')
                (d) edge (a)
                (f) edge (c)
                (f') edge (c');
                \path[->]
                (c) edge [bend right=20, dotted] (pf)
                (a) edge [line width=1.8pt, draw=white] (pf)
                (a) edge (pf)
                (a') edge (b')
                (a) edge (b)
                (b') edge (c')
                (c') edge (pf)
                (c) edge (x)
                (b) edge (c)
                (pf) edge node [right] {$\Pi_f(g)$} (x);
                \path[->>]
                (z) edge [line width=2pt, draw=white] (y)
                (z) edge (y);
                \path[->]
                (f) edge [dotted, bend right=20] node [pos=0.3, anchor=center, fill=white] {$\exists$} (z)
                (f) edge (y)
                (e) edge (f)
                (f') edge [line width=2pt, draw=white] (z)
                (f') edge (z)
                (e') edge [line width=2pt, draw=white] (f')
                (e') edge (f')
                (d) edge (e)
                (d') edge [line width=2pt, draw=white] (e')
                (d') edge (e');
               \path[{Hooks[right]}->]
                (d') edge (d)
                (e') edge [line width=2pt, draw=white] (e)
                (e') edge (e)
                (f') edge [line width=2pt, draw=white] (f)
                (f') edge (f);
              \path[->]
                (d) edge [line width=1.8pt, draw=white] (z)
                (d) edge (z);
 \end{scope}\end{tikzpicture}
 \]
To find a map $C \to W$ we can either transpose $D \to Z$ and then push
forward in one go as $F \to Z$ and then transpose back. This should coincide
with: first obtaining $E \to Z$, transpose, then transpose back and compute
$F \to Z$ and then transpose. It is clear that this will be the same as the
other procedure.

The vertical condition is similar:
\[
\begin{tikzpicture}[baseline={([yshift=-.5ex]current bounding box.center)}]
  \matrix (m) [matrix of math nodes, row sep=5em,
  column sep=6em]{
          |(a'')| {A''} & |(b'')| {B''} & |(pf)| {W} \\
          |(a')| {A'} & |(b')| {B'} &  \\
          |(a)| {A} & |(b)| {B}  & |(x)| {X} \\
  };
  \matrix (n) [matrix of math nodes, row sep=5em,
  column sep=6em, position=220:1 from pf]{
          |(c'')| {C''} & |(d'')| {D''} & |(z)| {Z} \\
          |(c')| {C'} & |(d')| {D'} &  \\
          |(c)| {C} & |(d)| {D} &  |(y)| {Y} \\
  };
  \begin{scope}[every node/.style={midway,auto,font=\scriptsize}]
          \path[{Hooks[right]}->]
             (a'') edge (a')
             (a') edge (a)
             (b'') edge (b')
             (b') edge (b);
          \path[->>]
           (y) edge (x)
           (d) edge (b)
           (c) edge (a)
           (d') edge (b')
           (c') edge (a')
           (d'') edge (b'')
           (c'') edge (a'');
          \path[->]
          (a'') edge (b'')
          (a') edge (b')
          (a) edge (b)
          (b'') edge (pf)
          (b) edge (x)
          (a) edge [bend right=20, dashed] (pf)
          (a') edge [bend right=15, dashed] (pf)
          (c) edge [bend right=20, dashed] (z)
          (c') edge [bend right=10, dashed] (z)
          (y) edge node [below right] {$f$} (x)
          (pf) edge node [right] {$\Pi_f g$} (x);
          \path[-, line width=2pt, draw=white]
          (c'') edge (d'')
          (c') edge (d')
          (c) edge (d)
          (z) edge (y)
          (d'') edge (z)
          (d) edge (y);
          \path[->]
          (c'') edge (d'')
          (c') edge (d')
          (c) edge (d)
          (d'') edge (z)
          (z) edge node [pos=0.3, right] {$g$} (y)
          (d) edge (y);
          \path[-, line width=2pt, draw=white]
             (c'') edge (c')
             (c') edge (c)
             (d'') edge (d')
             (d') edge (d);
          \path[{Hooks[right]}->]
             (c'') edge (c')
             (c') edge (c)
             (d'') edge (d')
             (d') edge (d);
          \end{scope}
  \end{tikzpicture}
\]
Given a map \(A \to W\) one can transpose to \(C \to Z\), then push forward to \(D
\to W\) and then transpose back. Using the vertical condition on \(g\), this can also
be done in two steps. Namely, by first lifting the restricted arrow \(C' \to Z\) to
\(D' \to Z\), then transposing, and repeating the construction for the bottom cube.
This works because the square \(D'B'BD\) is a pullback, hence we return to the
same diagram (same arrows) in the two-step version.

For the perpendicular condition, it helps to reduce dimensions by one by
drawing HDRs as a point, as in the following diagram:
\[
  \begin{tikzpicture}[baseline={([yshift=-.5ex]current bounding box.center)}]
    \matrix (m) [matrix of math nodes, row sep=2em,
    column sep=3.5em]{
                     & |(i3)| {i_3} &  & |(i1)| {i_1} & & |(w)| {W} \\
      |(i3')| {i_3'} & & |(i1')| {i_1'} & & |(z)| {Z} & \\
                     & |(i2)| {i_2} & & |(i0)| {i_0} & & |(x)| {X} \\
      |(i2')| {i_2'} & & |(i0')| {i_0'} & & |(y)| {Y} & 
    \\};
    \begin{scope}[every node/.style={midway,auto,font=\scriptsize}]
        \path[-{Latex[open]}]
            (i1) edge (i0)
            (i3) edge (i2)
            (i3') edge [dashed] (i2');
      \path[->>]
          (i1') edge (i1)
          (i2') edge [dashed] node [above left] {$f''$} (i2)
          (i0') edge node [above left] {$f'$}  (i0)
          (i3') edge [dashed] (i3);
      \path[->]
          (i2) edge node [above, pos=0.3] {$b$} (i0)
          (i3) edge (i1)
          (i2') edge [dashed] node [above] {$b'$} (i0')
          (i0) edge (x)
          (i0') edge (y)
          (i1) edge (w)
          (y) edge node [below right] {$f$} (x)
          (w) edge node [right] {$\Pi_f(g)$} (x)
          (i2') edge [dotted] (z)
          (i0') edge [bend right, dotted] (z);
      \path[-, line width=3pt, draw=white]
          (i1') edge (z)
          (i1') edge (i0')
          (z) edge (y)
          (i3') edge (i1');
        \path[-{Latex[open]}]
          (i1') edge (i0');
      \path[->]
          (z) edge node [pos=0.3, right] {$g$} (y)
          (i1') edge (z)
          (i3') edge [dashed] (i1');
      \end{scope}\end{tikzpicture}\]
Here the open arrowtips indicate mould squares, and the double arrowtips
indicate Frobenius morphisms of HDRs. In this picture, lifting can be viewed as
completing a `partial' arrow \(i_0 \to W\) to a total arrow. Now suppose the
left square on the back is a cube between mould squares.

We can complete the diagram as drawn by forming a pullback cube in
the category \(\cat{HDR}\). Since cubes in the triple category are determined
by their boundary, the left square on the front is a cube between mould squares.
Hence the filler \(i_2' \to Z\) induced by \(g\) makes the triangle with the filler
\(i_0' \to Z\) commute. The former uniquely determines the filler \(i_2 \to W\),
and the latter uniquely determines the filler \(i_0 \to W\). So these also make
the triangle in the back commute and we are done.

Note that the crucial part of this argument is \reflemm{pullbckstoffrob} since
without it, it would not have been possible to work entirely inside the category
\cat{HDR}, and it would not have been clear that the two ways to construct the
diagram (starting with \(f'\) vs. starting with \(b\)) would yield the same
4-dimensional cube, with the same relevant HDR structures, in the underlying category.
\end{proof}

\begin{rema}{everythingIsNaivelyFibrant}
  To relate the \reftheo{symmetricPocProofOfPi} to the statement in the
  beginning of this section, consider that in a category with Moore structure,
  every object is naively fibrant. Indeed, any terminal arrow \(A \to 1\) can
  be trivially equipped with a transport structure \(A \times M1 \to A\) given
  by projection. So every \(A \to 1\) is a naive left fibration, and hence
  \[
    X^A := \Pi_{A \to 1} X \times A \to 1
  \]
  is an effective fibration whenever \(p_1 : X \times A \to X\) is.
\end{rema}

With the proof of \reftheo{symmetricPocProofOfPi}, we have concluded the first
part of this paper. In the second part, we will show that the Moore path
functor defined for simplicial sets in~\cite{vdBerg-Garner} satisfies the
conditions of a (symmetric) Moore structure, and that simplicial sets admit a
dominance of locally decidable monomorphisms. As a result, we obtain a notion
of effective fibration in simplicial sets, which is closed under pushforward.
In Section~\ref{sec:hornsquares}, we will show that effective fibrations in
simplicial sets are always Kan fibrations, and that under classical
assumptions, the notions coincide.

\chapter{Simplicial sets}\label{ch:simplicial-sets}

\section{Effective trivial Kan fibrations in simplicial sets}

Whereas in the first part we derived the existence of $\Pi$-types in an axiomatic setting based on a suitable combination of a dominance and a symmetric Moore structure, this second part will be entirely devoted to one particular example: simplicial sets. To show that the category of simplicial sets is indeed an example, we will first have to choose both a dominance and a symmetric Moore structure on simplicial sets, and the first two sections of this second part will do exactly that. Indeed, in this section we will choose a dominance and in the next section we will choose a symmetric Moore structure. After that, we will study the resulting HDRs and effective Kan fibrations in more detail. In particular, we will show that the effective Kan fibrations are a local notion of fibred structure and that in a classical metatheory the maps which can be equipped with the structure of an effective Kan fibration are precisely those maps which have the right lifting property against horn inclusions.

But before we get into that, let us first choose and study a suitable dominance on simplicial sets. As we have seen in Section 3, dominances induce AWFSs. We will call the coalgebras for the comonad of the induced AWFS \emph{effective cofibrations}, while the algebras for the monad will be called the \emph{effective trivial Kan fibrations}. The main results of this section will be that being an effective trivial Kan fibration is a local notion fibred structure, and that (in a classical metatheory) a map can be equipped with structure of an effective trivial Kan fibration if and only if it has the right lifting property against boundary inclusions.

\subsection{Effective cofibrations} Traditionally, the cofibrations in simplicial sets are simply the monomorphisms. In our constructive metatheory we believe it is important to add a decidability condition. We will further comment on this choice at the end of this section.

\begin{defi}{cofibration}
  In the category of simplicial sets we will call $m: B \to A$ an \emph{(effective) cofibration} \index{effective cofibration} if it is a locally decidable monomorphism \index{locally decidable monomorphism}: that is, each $m_n: B_n \to A_n$ is a complemented monomorphism in the subobject lattice of $A_n$. In other words, for each $a \in A_n$ we can decide whether there is an element $b \in B_n$ such that $m_n(b) = a$ or not.
\end{defi}

In the statement of the following lemma we will say that a set is finite if it can be put in bijective correspondence with some initial segment of the natural numbers. Moreover, we will say that a sieve $S$ is generated by a set of maps $I$ if $S$ is the closure of $I$ under precomposition with arbitrary maps.

\begin{lemm}{oncofsieves}
  The following are equivalent for a sieve $S \subseteq \Delta^n$:
  \begin{enumerate}
    \item[(1)] The inclusion $i: S \subseteq \Delta^n$ is an effective cofibration.
    \item[(2)] The sieve is generated by a finite set of monos $\Delta^m \to \Delta^n$.
    \item[(3)] The sieve is generated by a finite set of maps.
  \end{enumerate}
\end{lemm}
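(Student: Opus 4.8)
The plan is to route everything through the (epi, mono) factorisation in $\Delta$ together with the standard fact that every epimorphism in $\Delta$ — every composite of degeneracy operators — admits a section. The key observation I would record first is: if $S \subseteq \Delta^n$ is a sieve and $\alpha \colon [m] \to [n]$ factors as $\alpha = \iota \circ \sigma$ with $\sigma$ an epi and $\iota$ a mono, then $\alpha \in S_m$ if and only if $\iota \in S$. One direction is closure of $S$ under precomposition with $\sigma$; the other is closure under precomposition with a section $\tau$ of $\sigma$, using $\iota = \alpha \circ \tau$. Now the monomorphisms $\Delta^k \to \Delta^n$ are exactly the order-embeddings $[k] \hookrightarrow [n]$, hence are in bijection with the nonempty subsets $T \subseteq \{0, \dots, n\}$ — write $\iota_T \colon \Delta^{|T|-1} \to \Delta^n$ for the corresponding face — of which there are only finitely many. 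So a sieve is completely determined by the family of faces it contains, and constructively the whole content of the lemma concerns the \emph{decidability} of that family.

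\textbf{(1) $\Rightarrow$ (2).} If $i \colon S \hookrightarrow \Delta^n$ is locally decidable, then in particular it is decidable, for each of the finitely many $T \subseteq \{0, \dots, n\}$, whether $\iota_T \in S$; let $I$ be the collection of those $\iota_T$ that lie in $S$. Being a decidable subset of a finite set, $I$ is itself finite. Then $I \subseteq S$, so the sieve generated by $I$ is contained in $S$; conversely, for $\alpha \in S$ with mono part $\iota$, the observation gives $\iota \in I$ and $\alpha = \iota \circ \sigma$ lies in the sieve generated by $I$. Hence $S$ is generated by the finite set $I$ of monos. The implication \textbf{(2) $\Rightarrow$ (3)} is immediate, as monos are maps.

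\textbf{(3) $\Rightarrow$ (1).} Suppose $S$ is generated by finitely many maps $\alpha_1, \dots, \alpha_k$ with $\alpha_j \colon [m_j] \to [n]$; set $T_j = \operatorname{im}(\alpha_j)$ and $\iota_j = \iota_{T_j}$. Factoring $\alpha_j = \iota_j \circ \sigma_j$ and applying the observation shows that $\{\iota_1, \dots, \iota_k\}$ generates the same sieve $S$. For an arbitrary $\beta \colon [m] \to [n]$ one then has $\beta \in S_m$ iff $\beta$ factors through some $\iota_j$, iff $\operatorname{im}(\beta) \subseteq T_j$ for some $j$ — the nontrivial direction defining the factor by inverting the order-isomorphism $[m_j] \cong T_j$. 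Since $\operatorname{im}(\beta)$ and each $T_j$ are explicitly computable subsets of the finite set $\{0, \dots, n\}$, each inclusion $\operatorname{im}(\beta) \subseteq T_j$ is decidable, and a finite disjunction of decidable statements is decidable; thus membership in $S_m$ is decidable uniformly in $\beta$, i.e.\ $i \colon S \hookrightarrow \Delta^n$ is a locally decidable monomorphism. This closes the cycle $(1) \Rightarrow (2) \Rightarrow (3) \Rightarrow (1)$.

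The only genuinely delicate point is the constructive bookkeeping in $(1) \Rightarrow (2)$ and $(3) \Rightarrow (1)$: one must be explicit that the image of a simplex $[m] \to [n]$ is a \emph{decidable} subset of $\{0, \dots, n\}$ (a finite computation), that inclusions between such subsets are decidable, and that a decidable subfamily of the finite set of faces of $\Delta^n$ is again a genuine finite set — so that the generating set $I$ produced in $(1) \Rightarrow (2)$ is legitimately finite and not merely a subset of a finite set. All of this is routine and available in the assumed metatheory, and I do not expect any obstacle beyond it; it is precisely the place where the decidability hypothesis of (1) is consumed and, in the converse direction, produced.
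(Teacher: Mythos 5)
Your proof is correct and follows essentially the same route as the paper: the epi--mono factorisation in $\mathbf{\Delta}$ plus split epis show every sieve is generated by its monos, and since there are only finitely many monos with codomain $\Delta^n$, decidability makes the generating set finite. The only difference is that you spell out the $(3)\Rightarrow(1)$ direction (decidability via image containment), which the paper simply declares obvious.
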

\begin{proof}
    The implications (2) $\Rightarrow$ (3) $\Rightarrow$ (1) are obvious, so it remains to show that (1) $\Rightarrow$ (2).

  But since every map in $\mathbf{\Delta}$ factors as an epi followed by a mono (in a unique way), and every epi splits, every sieve is generated by its monomorphisms. But since there are only finitely many monos with codomain $\Delta^n$, a cofibrant sieve contains only finitely many monos.
\end{proof}

\begin{defi}{cofibrantsieve} We will refer to the sieves satisfying any of the equivalent conditions in the previous lemma as the \emph{cofibrant sieves}. \index{sieve!cofibrant}
\end{defi}

Because a monomorphism in $\mathbf{\Delta}$ with codomain $[n]$ is completely determined by its image, a cofibrant sieve can be thought of as a subsimplicial complex of $\Delta^n$, that is, a collection of inhabited and decidable subsets of $\{0,1, \ldots, n \}$ which is itself closed under inhabited and decidable subsets.

\begin{theo}{cofinsimplsetsdominance}
The effective cofibrations in simplicial sets form a dominance.
\end{theo}
\begin{proof}
  The cofibrations are clearly closed under pullback and composition, so we only need to prove that there is a cofibration $1 \to \Sigma$ such that any other can be obtained as a pullback of that one in a unique way. We put
  \[ \Sigma_n := \{ \, S \subseteq \Delta^n \, : \, S \mbox{ cofibrant sieve} \, \}. \]
  (Note that $\Sigma_n$ is finite, so that $\Sigma_n$ is a set even in a predicative metatheory like {\bf CZF}.) Since cofibrant sieves are stable under pullback along $\alpha: \Delta^m \to \Delta^n$, this defines a simplicial set. That is, we define the action on $\Sigma$ by the following formula:
  \[ S \cdot \alpha = \{ \beta: [k] \to [m] \, : \, \alpha.\beta \in S \, \}. \]
  In addition, there is a natural transformation $\top: 1 \to \Sigma$ obtained by picking the maximal sieve at each level. This map classifies the cofibrations in that for any cofibration $m: B \to A$ the map $\mu: A \to \Sigma$ defined by
  \[ \mu_n(a) = \{ \alpha: [m] \to [n] \, : \, (\exists b \in B_m) \, a \cdot \alpha = m(b) \, \} \]
  turns
  \begin{displaymath}
      \begin{tikzcd}
    B \ar[d, "m"'] \ar[r] & 1 \ar[d, "\top"] \\
      A \ar[r, "\mu"'] & \Sigma
    \end{tikzcd}
  \end{displaymath}
  into a pullback. Also, the map $\mu$ is easily seen to be unique with this property.
\end{proof}

\begin{rema}{cofpropandlogicalop}
Note that the cofibrant subobjects form a sub-Heyting algebra of the full subobject lattice. In particular, the cofibrant sieves are closed under all the propositional operations: not only $\land,\top$, but also $\bot,\lor$ and $\to$. To see that they are closed under implication, for instance, note that for sieves $S, T \subseteq \Delta^n$, we have
\[ \alpha: [m] \to [n] \in (S \to T) \Longleftrightarrow (\forall \beta: [k] \to [m]) \, ( \, \alpha.\beta \in S \Rightarrow \alpha.\beta \in T \,). \]
Because maps in $\mathbf{\Delta}$ factor as an epi followed by a mono, and epis split, we only need to check the condition on the right for monos $\beta$. So if $S$ and $T$ are cofibrant, the condition on the right is decidable and $S \to T$ is cofibrant as well.
\end{rema}

\subsection{Effective trivial Kan fibrations}

Since the effective cofibrations in simplicial sets form a dominance, they are the left class in an algebraic weak factorisation system. The members of the right class will be referred to as the \emph{effective trivial Kan fibrations}\index{trivial Kan fibration!effective}. From the work by Bourke and Garner (recapitulated in Section 2), we know that these can be defined as the maps which come with a compatible system of lifts against a large double category: one where the vertical maps are the cofibrations and the squares are pullback squares. Our first goal in this subsection is to show that we can restrict attention to a small subdouble category.

Indeed, let $\mathbb{C}$ be the following small double category:
\begin{itemize}
 \item Objects are cofibrant sieves $S \subseteq \Delta^n$.
 \item Horizontal maps from $S \subseteq \Delta^n$ to $T \subseteq \Delta^m$ are maps $\alpha: \Delta^n \to \Delta^m$ such that $T \cdot \alpha= S$.
 \item Vertical maps are inclusions of cofibrant sieves $S_0 \subseteq S_1 \subseteq \Delta^n$.
 \item Squares are pullback diagrams of the form
    \begin{displaymath}
      \begin{tikzcd}
        S_0 \ar[d] \ar[r, "\alpha"] & T_0 \ar[d] \\
           S_1 \ar[r, "\alpha"'] & T_1
      \end{tikzcd}
    \end{displaymath}
  such that both horizontal maps are labelled with the same $\alpha$.
\end{itemize}
Clearly, there is an inclusion of double categories from $\mathbb{C}$ to the large double category of cofibrations.

\begin{prop}{cofsivesgentrivfib}
The following notions of fibred structure are isomorphic:
\begin{itemize}
  \item Having the right lifting property against the large double category of cofibrations (that is, to be an effective trivial Kan fibration).
  \item Having the right lifting property against the small double category $\mathbb{C}$.
\end{itemize}
In fact, if ${\rm Cof}$ is the large double category of cofibrations, then the morphism of discretely fibred concrete double categories $\Drl{\rm Cof} \to \Drl{\mathbb{C}}$ induced by the inclusion is an isomorphism.
\end{prop}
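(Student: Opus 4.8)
The plan is to deduce the statement from \refprop{fibreddoubleiso}. Since $\Drl{\rm Cof}$ and $\Drl{\mathbb{C}}$ are both discretely fibred concrete double categories over $\E$ and the inclusion $\mathbb{C} \hookrightarrow {\rm Cof}$ induces a double functor $\Drl{\rm Cof} \to \Drl{\mathbb{C}}$ over $\sq{\E}$, it suffices to verify two things: (a) that the induced morphism of notions of fibred structure $\Drl{\rm Cof}(p) \to \Drl{\mathbb{C}}(p)$ is a natural bijection, i.e.\ every right lifting structure against $\mathbb{C}$ extends uniquely to one against all cofibrations; and (b) that this double functor is full on squares. Both come down to a single gluing principle: every cofibration $m : B \to A$ in simplicial sets is canonically assembled, over the category of elements of $A$, from the cofibrant sieves $S \hookrightarrow \Delta^n$ which are precisely the vertical maps of $\mathbb{C}$.

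For the extension statement, let $p : Y \to X$ carry a lifting structure $\psi$ against $\mathbb{C}$, let $m : B \to A$ be an arbitrary cofibration, and let $(f,g) : m \to p$ be a lifting problem. For each simplex $a : \Delta^n \to A$ form the pullback $S_a \hookrightarrow \Delta^n$ of $m$ along $a$; this is a monomorphism into a representable, hence a sieve, and it is locally decidable since cofibrations are stable under pullback, so by \reflemm{oncofsieves} it is an object of $\mathbb{C}$ with $S_a \hookrightarrow \Delta^n$ one of its vertical maps. Restricting $(f,g)$ along $a$ gives a lifting problem for $S_a \hookrightarrow \Delta^n$, and $\psi$ supplies a filler $\ell_a : \Delta^n \to Y$. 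For any $\alpha \colon [k] \to [n]$ (a morphism in the category of elements of $A$) one has $S_{a\cdot\alpha} = \alpha^{*}S_a$, so the evident square $S_{a\cdot\alpha}\hookrightarrow\Delta^k \to S_a\hookrightarrow\Delta^n$ is a pullback square in $\mathbb{C}$, and the horizontal compatibility condition for $\psi$ (condition (ii) of \refdefi{rlp-double-cat}) forces $\ell_a\cdot\alpha = \ell_{a\cdot\alpha}$. Since $A = \operatorname{colim}\Delta^n$ and $B = \operatorname{colim} S_a$ over the category of elements of $A$, the compatible family $(\ell_a)$ glues to a unique $\ell : A \to Y$, which is a filler for $(f,g)$ because this is checked on each $\Delta^n$, where it holds by construction. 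Set $\psi^{+}_{f,g}(m) := \ell$. That $\psi^{+}$ is a right lifting structure against ${\rm Cof}$, that it is natural for the pullback action, and that it is the \emph{unique} extension of $\psi$, all follow by the same mechanism: every vertical composite $B_0 \subseteq B_1 \subseteq A$, every morphism of cofibrations, and every pullback square restricts, along each $a : \Delta^n \to A$, to the corresponding data inside $\mathbb{C}$, so the required identities hold after restriction along every element of $A$ (where they are instances of the identities for $\psi$), and hence hold outright by uniqueness of gluing. Uniqueness of the extension in particular: given any $\phi \in \Drl{\rm Cof}(p)$ restricting to $\psi$, the square $(a,a)$ is a morphism of cofibrations $(S_a\hookrightarrow\Delta^n)\to m$, so the horizontal condition gives $\phi_{f,g}(m)\cdot a = \psi_{f\cdot a,\,g\cdot a}(S_a\hookrightarrow\Delta^n) = \ell_a$ for all $a$, whence $\phi_{f,g}(m) = \psi^{+}_{f,g}(m)$. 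Injectivity of $\Drl{\rm Cof}(p)\to\Drl{\mathbb{C}}(p)$ is the special case of this last observation where two ${\rm Cof}$-structures share a restriction.

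For fullness on squares, recall that a square of $\Drl{\rm Cof}$ from $(p',\phi')$ to $(p,\phi)$ is a commutative square $(l,k) : p' \to p$ in $\E$ satisfying $\phi_{l.f,k.g}(v) = l\cdot\phi'_{f,g}(v)$ for every vertical $v$ of ${\rm Cof}$ and every lifting problem, while a square of $\Drl{\mathbb{C}}$ demands this only for $v$ in $\mathbb{C}$. So what must be shown is that if the identity holds for all $v \in \mathbb{C}$ it holds for all cofibrations $m$; but both $\phi_{l.f,k.g}(m)$ and $l\cdot\phi'_{f,g}(m)$ restrict along each element $a : \Delta^n \to A$ to the $\mathbb{C}$-instance of the identity (applying the uniqueness-of-gluing description above once to $\phi$ and once to $\phi'$), so they agree after restriction along every $a$ and are therefore equal. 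With (a) and (b) in hand, \refprop{fibreddoubleiso} gives the desired isomorphism.

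I expect the main obstacle to be not a hard idea but the careful bookkeeping behind the gluing principle: verifying that $m : B \to A$ really is the colimit, over the category of elements of $A$, of the cofibrant sieves $S_a \hookrightarrow \Delta^n$ with the pullback squares as structure maps — in particular the cofinality argument identifying $\operatorname{colim}_a S_a$ with $B$ — and then matching the horizontal and vertical compatibility conditions of $\mathbb{C}$ with exactly the coherence needed both to glue the fillers and to transport the structural identities back up from the representable level. Once that is set up, the rest is formal. One should note that no choices are made anywhere: each filler $\ell_a$ is uniquely prescribed by $\psi$ and gluing is a colimit, so the argument is constructive.
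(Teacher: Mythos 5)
Your proposal is correct and follows essentially the same route as the paper: define the extended lift by pulling the cofibration back along each simplex of the base and applying the $\mathbb{C}$-structure, use the horizontal condition for naturality and for the forcedness/uniqueness of the extension, and get fullness on squares by the same restriction-to-representables argument. Your colimit-over-the-category-of-elements gluing is just a repackaging of the paper's levelwise definition $l_n(a):=y$, so the two arguments coincide in substance.
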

\begin{proof}
Assume $p: Y \to X$ has the right lifting property against the small double category $\mathbb{C}$, and imagine that we have a lifting problem of the form:
\begin{displaymath}
  \begin{tikzcd}
    B \ar[d, "m"'] \ar[r] & Y \ar[d, "p"] \\
    A \ar[r] \ar[ur, dotted, "l"] & X.
  \end{tikzcd}
\end{displaymath}
Suppose $a \in A_n$ is arbitrary and we pull back $m$ along $a: \Delta^n \to A$:
\begin{equation} \label{aries}
  \begin{tikzcd}
    S \ar[r] \ar[d] & B \ar[d, "m"] \ar[r] & Y \ar[d, "p"] \\
    \Delta^n \ar[r, "a"'] \ar[urr, dotted] & A \ar[r] & X.
  \end{tikzcd}
\end{equation}
Since the pullback is a cofibrant sieve, we find an element $y \in Y_n$ filling the outer rectangle, and we put $l_n(a) := y$. Note that this definition is forced, because the left hand square in the diagram \ref{aries} is a square in the large double category. Note also that $l$ is going to be a natural transformation because of the horizontal condition coming from the pullback squares in $\mathbb{C}$:
\begin{displaymath}
  \begin{tikzcd}
    S \cdot \alpha \ar[d] \ar[r] & S \ar[r] \ar[d] & B \ar[d, "m"] \ar[r] & Y \ar[d, "p"] \\
    \Delta^m \ar[r, "\alpha"'] \ar[urrr, dotted] & \Delta^n \ar[r, "a"'] \ar[urr, dotted] & A \ar[r] & X.
  \end{tikzcd}
\end{displaymath}
(Here the words horizontal and vertical condition refer to the conditions for being a right lifting structure, as can be found after \refexam{double-cat-of-arrows}.)

Next, let us check that in case $m$ is a vertical map coming from the small double category, the new lift $l$ agrees with the one coming from the fact that $p$ has the right lifting property against $\mathbb{C}$: for note that if $S \subseteq T \subseteq \Delta^n$ are cofibrant sieves and $\alpha: \Delta^m \to \Delta^n \in T$, then $T \cdot \alpha = \Delta^m$ and the left hand square in
\begin{displaymath}
  \begin{tikzcd}
    S \cdot \alpha \ar[r] \ar[d] & S \ar[r] \ar[d] & Y \ar[d, "p"] \\
    \Delta^m \ar[r] \ar[urr, dotted, "y",near start] & T \ar[r] & X
  \end{tikzcd}
\end{displaymath}
is a square in the double category $\mathbb{C}$. So both the lift $T \to Y$ we have constructed and the one coming from the fact that $p$ has the right lifting property against $\mathbb{C}$ send $\alpha$ to the lift $y$.

It is now easy to see that the constructed lifts satisfy both the horizontal and vertical conditions with respect to the large double category, thus showing that on the level of notions of fibred structure we have an isomorphism. The fullness condition for squares follows from the fact that the left hand square in diagram \ref{aries} belongs to the large double category, showing that we have an isomorphism of discretely fibred concrete double categories.
\end{proof}

We will now cut down things even further. In fact, the lifting structure against $\mathbb{C}$ is completely determined by its lifts against the boundary inclusions, as we will now show.

\begin{lemm}{liftstrdetermbyboundarymaps}
  Suppose $p: Y \to X$ has two lifting structures against the small double category $\mathbb{C}$. If these two lifting structures agree on the lifts against the boundary inclusions, then they agree on all vertical maps.
\end{lemm}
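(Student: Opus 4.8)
The plan is to show by induction on the dimension $n$ that the two lifting structures agree on any inclusion of cofibrant sieves $S_0 \subseteq S_1 \subseteq \Delta^n$. The key observation is that a cofibrant sieve $S_1 \subseteq \Delta^n$ is, by \reflemm{oncofsieves}, generated by a finite set of monomorphisms $\Delta^m \to \Delta^n$, i.e.\ it is a finite union of representable subobjects of $\Delta^n$. One can therefore attempt a double induction: an outer induction on $n$, and an inner induction on the number of top-dimensional faces in $S_1$ that are not already in $S_0$. The base case of the inner induction is $S_0 = S_1$, where the lifting structures trivially agree. For the inductive step, pick a maximal face $\Delta^k \hookrightarrow \Delta^n$ contained in $S_1$ but not in $S_0$ (maximal in the sense that no larger face of $\Delta^n$ containing it lies in $S_1 \setminus S_0$), and let $S_0' = S_0 \cup \langle \Delta^k \rangle$, the sieve generated by $S_0$ together with this face.

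First I would observe that the inclusion $S_0 \subseteq S_0'$ fits into a pushout square in $\mathbb{C}$ of the form
\begin{displaymath}
  \begin{tikzcd}
    \partial \Delta^k \ar[d] \ar[r] & S_0 \ar[d] \\
    \Delta^k \ar[r] & S_0'
  \end{tikzcd}
\end{displaymath}
where $\partial \Delta^k \subseteq \Delta^k$ is the boundary inclusion and the top map sends $\partial \Delta^k$ into $S_0$ (this uses maximality of $\Delta^k$: every proper face of $\Delta^k$ is already in $S_0$, since a proper face that is not in $S_0$ would, being of strictly smaller dimension, contradict our choice, or rather can be handled by the outer induction on $n$ --- here care is needed, and I would phrase the inner induction so that it is on sieves whose complement in $\Delta^n$ consists of faces handled in order of decreasing dimension). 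Given a lifting problem against $S_0 \subseteq S_0'$, the horizontal compatibility condition for lifting structures against $\mathbb{C}$ (applied to the horizontal map $\Delta^k \to S_0' $ realising $\Delta^k$ as a subobject, composed with the inclusion $S_0' \hookrightarrow \Delta^n$ or whatever ambient object is present) forces the restriction of the chosen lift along $\Delta^k$ to be the lift against the boundary inclusion $\partial \Delta^k \subseteq \Delta^k$ with the induced data. Since the two lifting structures agree on boundary inclusions by hypothesis, they produce the same lift on $\Delta^k$; since they already agree on $S_0$ by the inductive hypothesis; and since $S_0'$ is the pushout $\Delta^k \cup_{\partial \Delta^k} S_0$, and the horizontal condition says lifts restrict compatibly along the two legs of the pushout, the lifts against $S_0 \subseteq S_0'$ coincide.

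Next, I would use the vertical compatibility condition to bootstrap from the single-face extensions to arbitrary inclusions $S_0 \subseteq S_1$. Writing $S_1$ as an increasing chain $S_0 = T_0 \subseteq T_1 \subseteq \cdots \subseteq T_r = S_1$ where each $T_{i+1}$ adds exactly one maximal face to $T_i$, the vertical condition $\phi_{\phi_{f, I(w).g}(v), g}(w) = \phi_{f,g}(w.v)$ expresses the lift against $S_0 \subseteq S_1$ as the composite of the lifts against the successive one-step inclusions $T_i \subseteq T_{i+1}$. Each of these agrees between the two structures by the previous paragraph (together with the inductive hypothesis that guarantees agreement on the intermediate data feeding into each step), hence so does their composite. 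This handles the inner induction; the outer induction on $n$ is what licenses the claim that proper faces of $\Delta^k$, being representable sieves of dimension $< n$, are treated correctly --- although in fact I expect the cleanest bookkeeping avoids a separate outer induction by ordering \emph{all} the faces of $\Delta^n$ missing from $S_0$ by decreasing dimension and peeling them off one at a time, so that when $\Delta^k$ is peeled off, $\partial \Delta^k$ is automatically contained in what remains.

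\textbf{Main obstacle.} The hard part will be getting the bookkeeping of the induction exactly right: one must choose the order in which faces are added to $S_0$ so that at each stage the face being added has its entire boundary already present, and one must check that the pushout square exhibiting each one-step extension genuinely lives in the double category $\mathbb{C}$ (in particular that the relevant maps are labelled by a common $\alpha$ and that the square is a pullback, which for an added maximal face amounts to the boundary-versus-face combinatorics in $\mathbf{\Delta}$). Once the combinatorial setup is in place, the actual argument is a short application of the horizontal and vertical compatibility conditions plus the hypothesis on boundary inclusions; the subtlety is entirely in organising the decomposition of a cofibrant sieve into a sequence of boundary-filling steps.
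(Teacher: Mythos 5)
Your proposal is essentially the paper's own proof: decompose $S_0\subseteq S_1$ into one-step inclusions each adding a single face whose boundary is already present, observe that each such step sits in a square of $\mathbb{C}$ over the boundary inclusion $\partial\Delta^m\subseteq\Delta^m$ which is simultaneously a pushout of simplicial sets (so the horizontal condition plus the hypothesis on boundary lifts forces agreement at each step), and then use the vertical condition to conclude for the composite. The only caveat is that your initial choice of a \emph{maximal} missing face does not guarantee its boundary lies in $S_0$; the correct bookkeeping is exactly the one you settle on at the end (adding missing faces in order of increasing dimension, equivalently peeling in decreasing dimension), which is also what the paper implicitly uses.
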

\begin{proof}
Let $S \subseteq T \subseteq \Delta^n$ be cofibrant sieves. Then this inclusion can be decomposed as
\[ S = S_0 \subseteq S_1 \subseteq S_2 \subseteq \ldots \subseteq S_k = T \subseteq \Delta^n \]
where each $S_{i+1}$ contains precisely one face of $\Delta^n$ more than $S_i$ (for $0 \leq i \lt k$). By the vertical condition, the lift against $S \subseteq T$ is completely determined by the lifts against the $S_i \subseteq S_{i+1}$. But if $\Delta^m \to S_{i+1}$ is the one face which belongs to $S_{i+1}$ but not $S_i$, then its entire boundary lies in $S_i$ and we have a pullback diagram as follows:
\begin{displaymath}
  \begin{tikzcd}
    \partial \Delta^m \ar[r] \ar[d] & S_i \ar[d] \\
         \Delta^m  \ar[r] & S_{i+1}.
  \end{tikzcd}
\end{displaymath}
Since this diagram is both a square in the double category $\mathbb{C}$ and a pushout in simplicial sets, the lift against the map on the right is completely determined by the lift against the map on the left.
\end{proof}

In the remainder of this subsection we will try to answer the following question: suppose we have a map $p: Y \to X$ and we have chosen lifts against all boundary inclusions
\begin{displaymath}
  \begin{tikzcd}
    \partial \Delta^m \ar[r] \ar[d] & Y \ar[d, "p"] \\
         \Delta^m  \ar[r] \ar[ur, "f_i", dotted] & X.
  \end{tikzcd}
\end{displaymath}
What conditions do these lifts $f_i$ have to satisfy in order for them to extend to a lifting structure against $\mathbb{C}$?

First of all, because any inclusion of sieves $S \subseteq T \subseteq \Delta^n$ can be seen as a composition of pushouts of boundary inclusions, as in the previous lemma, we can solve any lifting problem of the form
\begin{displaymath}
  \begin{tikzcd}
    S \ar[r] \ar[d] & Y \ar[d, "p"] \\
     T \ar[ur, dotted] \ar[r] & X.
  \end{tikzcd}
\end{displaymath}
The first worry is that the decomposition of the inclusion $S \subseteq T$ as a sequence of inclusions where the next sieve contains one face more than the previous is in no way unique, and it could be that the lift we construct depends on the decomposition. As a matter of fact, it does not depend on this: imagine that we choose two different decompositions of the inclusion $S \subseteq T$ and they determine lifts $f$ and $g$, respectively. Now we can prove by induction on $n \in \mathbb{N}$ that $f$ and $g$ agree on all $n$-simplices, using that they agree on their boundaries in the induction step.

The next worry is that these lifts need to satisfy both the horizontal and the vertical condition coming from $\mathbb{C}$. The vertical condition is, in fact, automatically satisfied, because of the way we constructed the lifts and the fact that the way we decompose the vertical maps in $\mathbb{C}$ is irrelevant.

Therefore we only need to consider the horizontal condition: that is, we need to determine which conditions the chosen lifts $f_i$ need to satisfy in order that in any diagram of the form
\begin{displaymath}
  \begin{tikzcd}
    \alpha^*S \ar[r] \ar[d] & S \ar[r] \ar[d] & Y \ar[d, "p"] \\
    \alpha^* T \ar[r] \ar[rru, dotted] & T \ar[r] \ar[ur, dotted] & X
  \end{tikzcd}
\end{displaymath}
with the square on the left a square in $\mathbb{C}$, the induced lifts make the resulting triangle commute. The horizontal condition can be split in two: because every map in $\mathbf{\Delta}$ is the composition of face and degeneracy maps, we only need to worry about squares where the map $\alpha$ is either a face or degeneracy map. In fact, the case where $\alpha$ is a face map is unproblematic. The reason is this: imagine that we have a decomposition
\[ S_0 \subseteq S_1 \subseteq S_2 \subseteq \ldots \subseteq S_k \subseteq \Delta^n \]
of $S \subseteq T$ and each $S_{j+1}$ contains precisely one face more than $S_j$. If we pull this back along $d_i: \Delta^{n-1} \to \Delta^n$, we get either that $d_i^* S_j = d_i^* S_{j+1}$ if the face that gets added to $S_j$ in this step does not belong to $d_i$, or that $d_i^* S_j \not= d_i^* S_{j+1}$ in case the face that gets added to $S_j$ in this step does belong to $d_i$. But in the latter case, $d_i^* S_{j+1}$ contains one face more than $d_i^* S_j$, so if we ignore all the first cases we obtain a decomposition of $d_i^*S_0 \subseteq d_i^* S_k$. If we use this decomposition to compute the lift against $d_i^*S_0 \subseteq d_i^* S_k$, then by pullback pasting
\begin{displaymath}
  \begin{tikzcd}
    \partial \Delta^m \ar[r] \ar[d] & d_i^* S_j \ar[r] \ar[d] & S_j \ar[d] \\
    \Delta^m \ar[r] & d_i^* S_{j+1} \ar[r] & S_j
  \end{tikzcd}
\end{displaymath}
it is computed in exactly the same way as the lift against $S_0 \subseteq S_k$ is computed on the simplices which belong to the $i$th face.

So the upshot of the discussion so far is that we only need to worry about the horizontal condition for squares with $\alpha$ being a degeneracy map. Here, in view of the decomposition, we can restrict attention to the situation where the map on the right in the square is an inclusion $S \subseteq T$ where $T$ contains precisely one face more than $S$. In fact, we claim that we only need to worry about the situation where the map on the right is a boundary inclusion, as in:
\begin{displaymath}
  \begin{tikzcd}
    s_i^*(\partial \Delta^n) \ar[r] \ar[d] & \partial \Delta^n \ar[r] \ar[d] & Y \ar[d, "p"] \\
    \Delta^{n+1} \ar[r, "s_i"'] \ar[urr, dotted] & \Delta^n \ar[ur, dotted] \ar[r] & X
  \end{tikzcd}
\end{displaymath}
Indeed, assume that $S \subseteq T \subseteq \Delta^n$ is an inclusion where $T$ contains precisely one more face than $S$, which happens to be $\Delta^m \to T$; also assume that we have some lifting problem of $S \subseteq T$ against $p$, and $s_i: \Delta^{n+1} \to \Delta^n$ is some degeneracy. Using that pullbacks of monos along epis exist in the simplicial category, we can create a diagram as follows:
\begin{displaymath}
  \begin{tikzcd}
    & s_i^*S \ar[dd] \ar[rr] & & S \ar[dd] \ar[rr] & & Y \ar[dd, "p"] \\
    U \ar[rr, crossing over] \ar[dd] \ar[ur] & & \partial \Delta^m \ar[ur] \\
    & s_i^* T \ar[dd] \ar[rr] & & T \ar[dd] \ar[rr] & & X \\
    \Delta^{m'} \ar[rr, crossing over] \ar[dd] \ar[ur] & & \Delta^m \ar[from = uu, crossing over] \ar[ur] \\
    & \Delta^{n+1} \ar[rr] & & \Delta^n \\
    \Delta^{m'} \ar[rr] \ar[ur] & & \Delta^m \ar[from = uu, crossing over] \ar[ur] \\
  \end{tikzcd}
\end{displaymath}
Note that in the top cube, the left, right, front and back faces are squares in the double category $\mathbb{C}$, and the right face is a pushout of simplicial sets. Therefore the left face is a pushout as well and the horizontal condition for the back face of that cube is equivalent to the horizontal condition for the front of that cube. But the map $\Delta^{m'} \to \Delta^m$ is either the identity if $i$ does not belong to the image of $\Delta^m \to \Delta^n$, or some degeneracy $s_j: \Delta^{m+1} \to \Delta^m$ if it does.

This means that we can restrict attention to the following situation:
\begin{displaymath}
  \begin{tikzcd}
    s_i^*(\partial \Delta^n) \ar[r] \ar[d] \ar[rr, "u", bend left = 20] & \partial \Delta^n \ar[r] \ar[d] & Y \ar[d, "p"] \\
    \Delta^{n+1} \ar[r, "s_i"'] \ar[urr, dotted] & \Delta^n \ar[ur, dotted, "f"'] \ar[r] & X
  \end{tikzcd}
\end{displaymath}
and let $f$ be our chosen filler. Note that $\partial \Delta^n = \bigcup d^n_k$ and the pullback of $d_k$ along $s_i$ is $d_k$ if $k \lt i$, $d_i.d_i$ if $k = i$ and $d_{k+1}$ if $k \gt i$; in other words, $s_i^*(\partial \Delta^n)$ is $\Delta^{n+1}$ with the interior and $i$th and $(i+1)$st faces missing. So to find the dotted filler in the diagram above we first need to find the filler on the faces $i$ and $i+1$. So we pull back the left hand arrow along $d_i$ and $d_{i+1}$ and choose our chosen filler, which is, actually, $f$, because $d_i.s_i = d_{i+1}.s_i = 1$. So we are left with the following filling problem:
\begin{displaymath}
  \begin{tikzcd}
    \partial \Delta^{n+1} \ar[d] \ar[r, "u \cup f \cup f"] & Y \ar[d, "p"] \\
    \Delta^{n+1} \ar[ur, dotted] \ar[r] & X.
  \end{tikzcd}
\end{displaymath}
So what we need is that the chosen solution of this problem will be $f \cdot s_i$. (Note that $f \cdot s_i$ will always be a solution. Indeed, we have $f \cdot s_i \cdot d_k = (u \cup f \cup f) \cdot d_k$ for any face $d_k$: it is true on the faces that we added ($d_i$ and $d_{i+1}$), but also on the faces that were already there, because the original picture commutes.)

So, to summarise our discussion, we have:
\begin{theo}{uniformtrivfibintermsofboundaryincl} The following notions of fibred structure are naturally isomorphic:
  \begin{itemize}
    \item To assign to each $p: Y \to X$ all effective trivial Kan fibration structures on it.
    \item To assign to each $p: Y \to X$ all systems of lifts of $p$ against boundary inclusions such that if $f$ is our chosen filler of
 \begin{displaymath}
   \begin{tikzcd}
     \partial \Delta^n \ar[r, "y"] \ar[d] & Y \ar[d, "p"] \\
     \Delta^n \ar[ur, dotted, "f"] \ar[r, "x"'] & X,
   \end{tikzcd}
 \end{displaymath}
 then $f.s_i$ is our chosen solution of the problem
 \begin{displaymath}
   \begin{tikzcd}
     \partial \Delta^{n+1} \ar[d] \ar[r, "y'"] & Y \ar[d, "p"] \\
     \Delta^{n+1} \ar[ur, dotted] \ar[r, "x.s_i"'] & X,
   \end{tikzcd}
 \end{displaymath}
 where $y'$ is the composition $s_i^*(\partial \Delta^n) \to \partial \Delta^n \to Y$ on $s_i^*(\partial \Delta^n)$ and $f$ on $d_i$ and $d_{i+1}$.
\end{itemize}
\end{theo}

\begin{rema}{evenisoofdoublecats} Clearly, both the effective trivial fibrations and the maps which carry a lifting structure against all boundary inclusions satisfying the condition in the theorem above are the vertical maps in concrete double categories. The statement of the theorem can be strengthened to say that these concrete double categories are isomorphic. Indeed, it suffices to check the fullness condition for squares for the obvious forgetful functor of double categories: but that can be shown as in \reflemm{liftstrdetermbyboundarymaps}.
\end{rema}

\subsection{Local character and classical correctness} The characterisation given in \reftheo{uniformtrivfibintermsofboundaryincl} can both be used to show that our notion of an effective trivial Kan fibration is local and that it is classically correct. Let us first discuss local character.

\begin{coro}{trivfiblocal}
 The notion of an effective trivial Kan fibration is a local notion of fibred structure.
\end{coro}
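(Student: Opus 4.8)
The plan is to deduce this from the explicit description of effective trivial Kan fibration structures in \reftheo{uniformtrivfibintermsofboundaryincl} together with the characterisation of locality for presheaf categories in \refprop{localityforpresh}. Since simplicial sets are presheaves on $\mathbf{\Delta}$, it is enough to verify the condition of \refprop{localityforpresh}: given $p : Y \to X$ and, for every $n$ and every $x \in X_n$, an effective trivial Kan fibration structure $s_x$ on a chosen pullback $p_x : Y_x \to \Delta^n$, strictly compatible under the action of $\mathbf{\Delta}$ (so that $s_{x\cdot\alpha}$ is the pullback of $s_x$ along any $\alpha : \Delta^m \to \Delta^n$), we must produce a unique effective trivial Kan fibration structure $s$ on $p$ restricting to each $s_x$.

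By \reftheo{uniformtrivfibintermsofboundaryincl} such an $s$ is the same thing as a coherent choice of fillers for boundary-inclusion lifting problems. I would define $s$ as follows: given a lifting problem consisting of a bottom map $x : \Delta^m \to X$ (an $m$-simplex of $X$) and a top map $\partial\Delta^m \to Y$ lying over the restriction $\partial\Delta^m \to X$ of $x$, transpose the top map across the pullback square defining $p_x$ to obtain a boundary-inclusion lifting problem for $p_x : Y_x \to \Delta^m$ with identity base, apply the corresponding component of $s_x$, and post-compose the resulting filler $\Delta^m \to Y_x$ with $Y_x \to Y$. This value is forced: any $s$ restricting to all the $s_x$ must agree with it, because pulling the whole lifting problem back along $x : \Delta^m \to X$ turns it into precisely the problem for $p_x$ that we used; this gives uniqueness at once. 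Using the $\mathbf{\Delta}$-compatibility of the family $(s_x)$, the same bookkeeping shows that the assignment is well defined and that it restricts to each $s_x$.

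It then remains to check that the $s$ just defined is a legitimate effective trivial Kan fibration structure, i.e.\ that it satisfies the degeneracy-compatibility clause of \reftheo{uniformtrivfibintermsofboundaryincl}. For a simplex $x \in X_n$ and a degeneracy $s_i : \Delta^{n+1} \to \Delta^n$ one has $s_{x\cdot s_i} = (s_i)^* s_x$ by hypothesis, and the required identity between the chosen filler over $\Delta^{n+1}$ with base $x\cdot s_i$ and the $s_i$-degeneracy of the chosen filler over $\Delta^n$ with base $x$ reduces, after transposing along the defining pullbacks, to the degeneracy-compatibility clause for the single fibration structure $s_x$ on $p_x : Y_x \to \Delta^n$, applied to the identity simplex of $\Delta^n$ and to $s_i$ --- which holds because $s_x$ is a fibration structure. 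I expect this last translation to be the main obstacle: it is entirely formal, but keeping the nested pullbacks, the transpositions across them, and the bookkeeping of the simplicial action aligned requires care. Everything else is immediate from \reftheo{uniformtrivfibintermsofboundaryincl} and \refprop{localityforpresh}.
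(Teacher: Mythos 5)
Your proposal is correct and follows essentially the same route as the paper: both define the structure on $p$ by transposing each boundary-inclusion lifting problem through the chosen pullback $Y_x \to \Delta^n$, observe that this choice is forced (giving uniqueness), and verify the degeneracy clause of \reftheo{uniformtrivfibintermsofboundaryincl} by combining the hypothesis $s_{x\cdot s_i} = s_i^* s_x$ with the degeneracy condition already satisfied by $s_x$. The remaining bookkeeping you flag is exactly the (routine) computation the paper also leaves implicit, so there is no gap.
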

\begin{proof}
  As said, we use the characterisation in \reftheo{uniformtrivfibintermsofboundaryincl}. So assume $p: Y \to X$ is a map of simplicial sets such that any pullback of it along a map $x: \Delta^n \to X$ is an effective trivial Kan fibration. If we have a lifting problem of the form
  \begin{displaymath}
    \begin{tikzcd}
      \partial \Delta^n \ar[r, "y"] \ar[d] & Y \ar[d, "p"] \\
      \Delta^n \ar[r, "x"'] & X,
    \end{tikzcd}
  \end{displaymath}
  then we can decompose it as follows
  \begin{displaymath}
    \begin{tikzcd}
      \partial \Delta^n \ar[r, "y"] \ar[d] & Y_x \ar[r] \ar[d] & Y \ar[d, "p"] \\
      \Delta^n \ar[r,"1"'] & \Delta^n \ar[r, "x"'] & X,
    \end{tikzcd}
  \end{displaymath}
  with a pullback on the right. Since the lifting problem on the right has a solution, by assumption, we also get a filler $f$ for the outer rectangle. Since this definition is forced, we only need to check the condition for the degeneracies. So then we are looking at a situation like this:
  \begin{displaymath}
    \begin{tikzcd}
      \partial \Delta^{n+1} \ar[r] \ar[d] \ar[rr, bend left, "{y'}"]& Y_{x \cdot s_i} \ar[r] \ar[d] & Y_x \ar[r] \ar[d] & Y \ar[d, "p"] \\
      \Delta^{n+1} \ar[r, "1"] & \Delta^{n+1} \ar[r, "s_i"] & \Delta^n \ar[r, "x"] & X
    \end{tikzcd}
  \end{displaymath}
  The lift against $p$ is induced by the left hand square, but, by assumption, it is compatible with the one coming from the composition of the two squares on the left, which is $f.s_i$, as desired.
\end{proof}

It remains to check classical correctness, for which we use the following lemma, whose proof can be found in the appendix (see \refprop{degeneratehornfillerunique}).

\begin{lemm}{atmostonenondegsolution}
 A lifting problem of the form
 \begin{displaymath}
   \begin{tikzcd}
     \partial \Delta^n \ar[r] \ar[d] & X \\
     \Delta^n \ar[ur, dotted]
   \end{tikzcd}
 \end{displaymath}
 has at most one degenerate solution (that is, if both $x_0 \cdot \sigma_0$ and $x_1 \cdot \sigma_1$ fill this triangle with both $\sigma_i$ epis in $\mathbf{\Delta}$ different from the identity, then $x_0 \cdot \sigma_0 = x_1 \cdot \sigma_1$).
\end{lemm}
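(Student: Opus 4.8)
The plan is to exploit the unique epi–mono factorisation in $\mathbf{\Delta}$ together with the Eilenberg–Zilber lemma to reduce the claim to an essentially combinatorial statement about surjections $[n] \to [k]$. Suppose $x_0 \cdot \sigma_0 = x_1 \cdot \sigma_1$ restricted to $\partial\Delta^n$ agree, where each $\sigma_i \colon [n] \twoheadrightarrow [k_i]$ is an epi in $\mathbf{\Delta}$ different from the identity, so $k_i < n$. Applying the Eilenberg–Zilber lemma we may assume without loss of generality that each $x_i$ is non-degenerate in $X_{k_i}$. The first step is to observe that since $\sigma_i$ is a non-identity epi, the simplex $x_i \cdot \sigma_i$ is degenerate, hence the image of $x_0\cdot\sigma_0 = x_1\cdot\sigma_1$ on \emph{every} face of $\Delta^n$ carries information: in particular, for each $j$, $(x_0\cdot\sigma_0)\cdot d_j = (x_1\cdot\sigma_1)\cdot d_j$.

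The key step is to recover the data $(k_i, \sigma_i, x_i)$ from the restriction to the boundary. Because $\sigma_i$ is a surjection $[n]\to[k_i]$ with $k_i < n$, there is some index $j$ with $\sigma_i \cdot d_j = \mathrm{id}$ (namely, $j$ chosen so that $d_j$ is a section of one of the elementary degeneracies appearing in $\sigma_i$ — concretely, if $\sigma_i$ identifies $j$ and $j+1$, then $\sigma_i \cdot d_j = \sigma_i \cdot d_{j+1} = \mathrm{id}_{[k_i]}$ after suitable reindexing, or more precisely $\sigma_i$ restricted along the appropriate face is an identity). Then $(x_i \cdot \sigma_i)\cdot d_j = x_i$, so $x_i$ is determined as a specific face of the common boundary datum; and once $x_i$ is known, $\sigma_i$ is the unique surjection with $x_i \cdot \sigma_i$ equal to that boundary-compatible simplex, by non-degeneracy of $x_i$ and uniqueness of epi–mono factorisation. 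I would make this precise by showing: if $x_0 \cdot \sigma_0$ and $x_1 \cdot \sigma_1$ agree on all faces, then they have the same Eilenberg–Zilber decomposition, hence are equal.

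The main obstacle I anticipate is the bookkeeping in locating a face $d_j$ along which \emph{both} $\sigma_0$ and $\sigma_1$ simultaneously restrict to identities (or at least along which the argument can be run), since a priori $\sigma_0$ and $\sigma_1$ may collapse different pairs of vertices. The way around this is not to demand a common such face, but rather to argue one side at a time: pick $j$ adapted to $\sigma_0$, recover $x_0$ and hence $x_0 \cdot \sigma_0$ as a face-expression in the boundary data, then do the same with a possibly different index $j'$ adapted to $\sigma_1$ to recover $x_1 \cdot \sigma_1$ from the \emph{same} boundary data; since both expressions compute the same simplicial set element out of identical inputs, they coincide. A clean way to organise this is by induction on $n$: the restriction of the lifting problem to each face $\Delta^{n-1} \hookrightarrow \partial\Delta^n$ is again a boundary lifting problem, its solutions $(x_i\cdot\sigma_i)\cdot d_j$ are degenerate whenever $d_j$ is not a section of $\sigma_i$, so by the inductive hypothesis these agree, and then one handles the finitely many remaining ``section'' faces directly. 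I expect the induction to go through smoothly once the base case $n=1$ (where the only non-identity epi is $s_0\colon[1]\to[0]$, forcing $x_0 = x_1$ to be the common vertex) is dispatched. Since the excerpt says the proof is in the appendix (\refprop{degeneratehornfillerunique}), I would simply reference that and only sketch the reduction above if a self-contained argument is wanted here.
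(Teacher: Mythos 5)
Your proposal has a genuine gap at its central step. Recovering $x_i$ from the boundary is fine (any section of the non-identity epi $\sigma_i$ is a non-identity mono, hence factors through $\partial\Delta^n$, so $x_i = (x_i\cdot\sigma_i)\cdot\delta$ is determined by the common boundary datum) — although note your more specific claim that some single face $d_j$ satisfies $\sigma_i\cdot d_j = \mathrm{id}$ is only true when $\sigma_i$ is an elementary degeneracy, so you should first reduce to $\sigma_0 = s_a$, $\sigma_1 = s_b$ by splitting off one elementary degeneracy from each epi. The real problem is the next move: you cannot ``recover $x_i\cdot\sigma_i$ as a face-expression in the boundary data,'' because $\sigma_i$ is not a function of the boundary — it is part of the datum of the filler you are trying to pin down. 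So the two expressions for $x_0\cdot\sigma_0$ and $x_1\cdot\sigma_1$ are not ``the same computation applied to identical inputs,'' and the conclusion does not follow. The proposed induction on $n$ does not repair this: the faces of the two fillers agree \emph{by hypothesis}, so the inductive hypothesis is never doing any work; all the content sits in the top dimension. What is actually needed (for the boundary version) is the comparison of the two degeneracy presentations: with $\sigma_0 = s_a$, $\sigma_1 = s_b$, $a<b$, use the faces $d_a$ and $d_{b+1}$ together with the identities $s_b d_a = d_a s_{b-1}$ and $s_a d_{b+1} = d_b s_a$ to show $x_0 = w\cdot s_{b-1}$ and $x_1 = w\cdot s_a$ for the common $(n-2)$-simplex $w = x_0\cdot d_b = x_1\cdot d_a$, and then conclude $x_0\cdot s_a = w\cdot(s_{b-1}s_a) = w\cdot(s_a s_b) = x_1\cdot s_b$ by the degeneracy identity. (The case $a=b$ is immediate from $x_0 = y\cdot d_a = x_1$.) This is the missing computation; your sketch never produces it.

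For comparison, the paper does not argue at the level of the boundary at all: it deduces this lemma from the stronger statement \refprop{degeneratehornfillerunique} in the appendix, where the two fillers are only assumed to agree on a horn $\Lambda^n_k$. There the elementary argument above breaks down because one of the two required faces may be the missing one, and the paper's proof (due to Sattler) instead reduces to $\sigma_0 = s_i$, $\sigma_1 = s_j$, observes that since $i\neq j+1$ at least one of $d_i$, $d_{j+1}$ factors through the horn, and then invokes the retract lemma (\reflemm{swanslemmadual}) to show the square $s_i s_j = s_{j-1} s_i$ is an absolute pushout, which forces the two fillers to coincide. Citing the appendix, as you suggest at the end, is indeed the paper's route; but if you want the self-contained boundary argument you sketched, you must supply the simplicial-identity computation above rather than the ``recover the Eilenberg--Zilber decomposition from the boundary'' step, which as stated is circular.
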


\begin{theo}{trivfibOKinZFC} Classically, any morphism which has the right lifting property with respect to boundary inclusions $\partial \Delta^n \subseteq \Delta^n$ can be equipped with the structure of an effective trivial Kan fibration.
\end{theo}
\begin{proof}
 Suppose $p: Y \to X$ is a map for which we have a choice of fillers $f_i$ for every lifting problem of the form
 \begin{displaymath}
   \begin{tikzcd}
     \partial \Delta^n \ar[r] \ar[d] & Y \ar[d, "p"] \\
     \Delta^n \ar[ur, dotted, "f_i"] \ar[r] & X.
  \end{tikzcd}
\end{displaymath}
(This uses the axiom of choice, depending on how one reads the assumption.) Then, using the Principle of Excluded Middle and the previous lemma, we may assume that $f_i$ is the unique degenerate solution, if it exists. Then the compatibility condition from \reftheo{uniformtrivfibintermsofboundaryincl} is automatically satisfied, because it says that under certain conditions we should choose the (unique) degenerate solution.
\end{proof}

\begin{rema}{decdegeneranyhelp} The fact that cofibrations are pointwise complemented plays an important role in the work on cubical sets in the proof of the univalence axiom (the axiomatic treatment in \cite{ortonpitts18} makes this quite clear). We expect that we will need this assumption for similar reasons and that is our main reason for including it. An additional benefit is that this ensures that, even in a predicative metatheory like ${\bf CZF}$, the effective cofibrations are a dominance and the effective trivial Kan fibrations are cofibrantly generated by a small double category, as we have seen in this section.

In their work Gambino, Henry, Sattler and Szumilo \cite{gambinohenry19,gambinoetal19,henry19} work with a stronger notion of cofibration, where the question whether an $n$-simplex is degenerate or not is decidable for those $n$-simplices outside the image of the cofibration. For us this choice would have several unfortunate consequences: one consequence will be that HDRs will not be cofibrations and effective trivial Kan fibrations might not be effective Kan fibrations (see \refcoro{trivFibareUniKanFib} and \refprop{onecofibr} below). Additionally, it will have the consequence that not every simplicial set is cofibrant.

On the other hand, one may wonder whether the previous result can be made more constructive when degeneracy is decidable (in $Y$ for instance). We fail to see how it would, and for that reason the relationship with the work of Gambino et al.~is far from clear to us.
\end{rema}

\section{Simplicial sets as a symmetric Moore category}\index{Moore structure!in simplicial sets|textbf}

The purpose of this section is to show that the category of simplicial sets can be equipped with symmetric Moore structure. As we already mentioned in the introduction, the structure that we choose was already defined in the paper by Van den Berg \& Garner \cite{vdBerg-Garner} using the notion of simplicial Moore paths. However, since the notion of a symmetric Moore structure that we work with in this paper is stronger than that of a path object category as in \cite{vdBerg-Garner}, we have to verify some additional equations. For checking that these hold, we use a new characterisation of the Moore path functor $M$, namely, as a polynomial functor. For that reason we will first give a brief introduction to the theory of polynomial functors. Then we will define $M$ as a polynomial functor and use this definition to check that it is a symmetric Moore structure in the sense of this paper. Finally, we will prove that our new definition of $M$ is equivalent to the one given in \cite{vdBerg-Garner}. We will also isolate an interesting two-sided Moore structure on simplicial sets, which will give us effective left and right fibrations.

\subsection{Polynomial yoga} We start by recapping some general facts about polynomial functors. (Some useful references are: \cite{VonGlehnPhD, AbbottAG03, Gambino-Kock}.) Throughout this section we will work in a category \ct{E} which is locally cartesian closed and has finite colimits. (We assume all this structure is chosen.)

\begin{defi}{polycat}
A \emph{polynomial}\index{polynomial} in \ct{E} is a morphism $f: B \to A$ in \ct{E}. For reasons that will become clear soon, we will also write such morphisms as $(B_a)_{a \in A}$. We will refer to $A$ as the \emph{base} and $B_a$ as a \emph{fibre}. A \emph{morphism $\alpha$ of polynomials} from $f: B \to A$ to $g: D \to C$ is a pair $(\alpha^+, \alpha^-)$ consisting of a morphism $\alpha^+: A \to C$ and a morphism $\alpha^-: A \times_C D \to B$ making
\begin{center}
\begin{tikzcd}
 B \ar[dr, "f"'] & A \times_C D \ar[d] \ar[r] \ar[l, "\alpha^-"'] & D \ar[d, "g"] \\
 & A \ar[r, "\alpha^+"'] & C
\end{tikzcd}
\end{center}
commute. We will refer to $\alpha^+$ as the \emph{positive} or \emph{forward direction} of the morphism $\alpha$, while $\alpha^-$ is its \emph{negative} or \emph{backward direction}. So, basically, a morphism $\alpha$ from $(B_a)_{a \in A}$ to $(D_c)_{c \in C})$ consists of a map $\alpha^+: A \to C$ and a family of morphisms $(\alpha^{-}_a: D_{\alpha^+(a)} \to B_a)_{a \in A}$. With this notation, composition of morphisms is given by
\begin{eqnarray*}
(\beta^+: C \to E, \beta^-_c: F_{\gamma^+(c)} \to D_c) \circ (\alpha^+: A \to C, \alpha^-_a: D_{\alpha^+(a)} \to B_a)  & = \\ (\beta^+.\alpha^+: A \to E, \alpha^-_a.\beta^-_{\alpha^+(a)}: F_{\beta^+(\alpha^+(a))} \to D_{\alpha^+(a)} \to B_a).
\end{eqnarray*}
The result is a category which we will denote ${\rm Poly}(\ct{E})$.
\end{defi}

In addition, let us write ${\rm FEnd}(\ct{E})$ for the category of fibred endofunctors on \ct{E} and fibred natural transformation between them (with respect to the codomain fibration on \ct{E}; see \cite{streicher18}). There is a functor $P: {\rm Poly}(\ct{E}) \to {\rm FEnd}(\ct{E})$ sending a polynomial $f: B \to A$ to its associated \emph{polynomial functor}\index{polynomial functor} $P_f$:
\begin{center}
\begin{tikzcd}
P_f: \ct{E} \ar[r, "B^*"] & \ct{E}/B \ar[r, "\Pi_f"] & \ct{E}/A \ar[r, "\Sigma_A"] & \ct{E}.
\end{tikzcd}
\end{center}
Written differently:
\[ P_f(X) = \sum_{a \in A} \prod_{b \in B_a} X = \sum_{a \in A} X^{B_a} =  \{ (a \in A, t: B_a \to X) \}.\]
On morphisms $\alpha: (f: B \to A) \to (g: D \to C)$, the functor $P$ acts as follows:
\[ P(\alpha)_X: P_f(X) \to P_g(X): (a \in A, t: B_a \to X) \mapsto (\alpha^+(a) \in C, t.\alpha^-_a: D_{\alpha^+(a)} \to B_a \to X). \]

Note that $P(\alpha)$ is a cartesian natural transformation (meaning: all naturality squares are pullbacks) if $\alpha^-$ is iso.

The following proposition will not be used in this paper, but explains the choice of morphisms in the category ${\rm Poly}(\ct{E})$.

\begin{prop}{polyasendofunctor}
The functor $P: {\rm Poly}(\ct{E}) \to {\rm FEnd}(\ct{E})$ is full and faithful.
\end{prop}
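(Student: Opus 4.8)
\textbf{Proof plan for \refprop{polyasendofunctor}.}

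The plan is to show that for polynomials $f : B \to A$ and $g : D \to C$, the map
\[
\mathrm{Poly}(\ct{E})(f,g) \longrightarrow \mathrm{FEnd}(\ct{E})(P_f, P_g)
\]
given by $\alpha \mapsto P(\alpha)$ is a bijection. Faithfulness and fullness will both be extracted from the Yoneda-style observation that the polynomial functor $P_f$ is determined by its values together with the ``universal'' element it carries. First I would recall the standard fact (see \cite{Gambino-Kock}) that evaluating $P_g$ at the base $A$ of $f$, or more precisely looking at the component of a fibred natural transformation $P_f \Rightarrow P_g$ at a suitable generic object, recovers the data $(\alpha^+,\alpha^-)$. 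Concretely, the identity map on $B$, viewed as an object $1_B \in \ct{E}/B$, gives a ``generic point'' of $P_f$: applying $P_f$ to the terminal map $B \to 1$ is not quite what we want, so instead one works fibrewise. Since we are comparing \emph{fibred} endofunctors over the codomain fibration, the data of $P_f$ as a fibred functor includes its action on every slice $\ct{E}/Z$, and in particular on $\ct{E}/A$ the object $f : B \to A$ itself is a canonical element; tracking where a fibred natural transformation sends the pair $(1_A, f)$, i.e. the generic element of $P_f$ over $A$, pins down $\alpha^+$ (the base component) and $\alpha^-$ (the reindexed fibre map) uniquely.

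The key steps, in order, would be: (1) make precise the ``generic element'' of $P_f$: over the base object $A$, the object $P_f(A)$ contains the element corresponding to $(1_A \in A, f : B \to A^{(B_a)})$ — more carefully, one uses that $P_f$ as a fibred functor applied in the slice $\ct{E}/A$ sends the terminal object to an object with a distinguished section classifying $f$; (2) given a fibred natural transformation $\theta : P_f \Rightarrow P_g$, apply $\theta$ to this generic element to obtain a map $A \to C$ (its forward part), which we declare to be $\alpha^+$, together with a map $A \times_C D \to B$ (coming from the backward direction of the image), which we declare to be $\alpha^-$; (3) check that $(\alpha^+,\alpha^-)$ is a morphism of polynomials, i.e. the square in \refdefi{polycat} commutes — this is a diagram chase using that $\theta$ respects the projection to the base; (4) verify $P(\alpha) = \theta$ by naturality: any element $(a, t : B_a \to X)$ of $P_f(X)$ is the image of the generic element under the map $A \to X$ classified by $t$ (in the appropriate slice), so $\theta_X$ is forced to agree with $P(\alpha)_X$ by naturality of $\theta$; (5) faithfulness is the same computation run backwards — if $P(\alpha) = P(\beta)$ then evaluating at the generic element gives $\alpha^+ = \beta^+$ and $\alpha^- = \beta^-$.

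The main obstacle is bookkeeping the fibred structure correctly: ``generic element'' arguments for polynomial functors are cleanest when one has either a genuine terminal object and global elements, or a well-chosen site of generators, and here we only know \ct{E} is locally cartesian closed with finite colimits, so one must phrase everything internally / fibrewise over the codomain fibration. The delicate point is that $P_f$ is being regarded as a \emph{fibred} endofunctor, and the natural transformations are fibred natural transformations; the bijection genuinely relies on this fibredness (an unfibred version would see fewer transformations and the statement could fail), so the argument must invoke the generic element in the slice $\ct{E}/B$ or $\ct{E}/A$ rather than in \ct{E} itself. Once that setup is in place the verification is a routine — if somewhat lengthy — chase, and since the proposition is explicitly flagged as not being used elsewhere in the paper, I would keep the write-up brief and refer to \cite{Gambino-Kock} or \cite{AbbottAG03} for the analogous statement in the standard setting.
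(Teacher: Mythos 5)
The paper does not actually give an argument here: its ``proof'' is a pointer to Theorem~3.4 of Abbott--Altenkirch--Ghani \cite{AbbottAG03}, so the relevant comparison is with the generic-element proof in that literature (see also \cite{Gambino-Kock}), which is exactly what you are reconstructing. Your overall strategy --- determine a fibred natural transformation $\theta\colon P_f \Rightarrow P_g$ by its value on a universal datum, then use naturality plus fibredness to force $\theta = P(\alpha)$, with faithfulness as the same computation run backwards --- is the right one, and your remark that fibredness is essential (an unfibred version sees too many transformations to cut down from, or rather the correspondence fails) is correct.

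However, as written the key step has a slip: the element you evaluate at, $(1_A, f) \in P_f(A)$ at stage $A$, is not generic. By naturality $(1_A, f) = P_f(f)(1_A, 1_B)$, so $\theta_A(1_A, f) = (\alpha^+, f.\alpha^-)$; this determines $\alpha^+$ but only the composite $f.\alpha^-$, not $\alpha^-$ itself (unless $f$ is monic), so $\alpha^-$ cannot be read off from it. The correct universal datum is the element $(1_A, 1_B)$ of $P_f(B)$ at stage $A$ --- equivalently, the universal pair consisting of the projection $u\colon P_f(X) \to A$ and the evaluation $u^*B \to X$ exhibiting the representing property of $P_f(X)$ --- and it is $\theta_B$ applied to this element that yields the pair $(\alpha^+\colon A \to C,\ \alpha^-\colon A \times_C D \to B)$. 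Relatedly, your step (4), ``every element $(a,t)$ is the image of the generic element under the map classified by $t$,'' does not literally typecheck: $t$ is defined only on $a^*B$, not on $B$, so the reduction to the generic element must be carried out fibrewise in the slice over the stage (this is precisely where the fibredness of $\theta$ is used), as you yourself anticipate in your caveat about working over the codomain fibration. With the generic element corrected and the slicewise formulation carried through, the remaining steps (the diagram chase showing $(\alpha^+, \alpha^-)$ is a morphism of polynomials, and faithfulness) go through as in \cite{AbbottAG03,Gambino-Kock}.
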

\begin{proof}
See \cite[Theorem 3.4]{AbbottAG03}.
\end{proof}

The category ${\rm FEnd}(\ct{E})$ has finite limits and these are inherited by ${\rm Poly}(\ct{E})$. The terminal object is the polynomial $0 \to 1$. The product of $(f: B \to A) \times (g: D \to C)$ is
\[ [1_A \times f, g \times 1_B]: A \times D + B \times C \to A \times C. \]
In other words, it has $A \times C$ as base, with fibre $D_a + B_c$ over $(a,c) \in A \times C$. The pullback of $\delta: (g: D \to C) \to (f: B \to A)$ and $\varphi: (h: F \to E) \to (f: B \to A)$ has $C \times_A E$ as base, with the fibre $P_{(c,e)}$ over $(c, e)$ being the pushout:
\begin{center}
\begin{tikzcd}
B_{\delta^+(c)} = B_{\varphi^+(e)} \ar[r, "\delta^-_c"] \ar[d, "\varphi^-_e"'] & D_c \ar[d] \\
F_e \ar[r] & P_{(c,e)}.
\end{tikzcd}
\end{center}

In addition, the category ${\rm FEnd}(\ct{E})$ carries a (non-symmetric) monoidal structure given by composition: $F \otimes G = F \circ G$. This is inherited by ${\rm Poly}(\ct{E})$ as well: indeed, it carries a monoidal structure as follows:
\[ (B_a)_{a \in A} \otimes (D_c)_{c \in C} = \{(b \in B_a, d \in D_{t(b)}) \}_{(a \in A, t: B_a \to C)}. \]
Imagine that we have a morphism $\alpha: (B_a)_{a \in A} \to (B'_{a'})_{a' \in A'}$ and a morphism $\gamma: (D_c)_{c \in C} \to (D'_{c'})_{c' \in C'}$ then $\alpha \otimes \gamma = \eta$ with:
\begin{eqnarray*}
\eta^+(a \in A, t: B_a \to C) & = & (\alpha^+(a) \in A',  \gamma^+.t.\alpha^-_a: B'_{\alpha^+(a)} \to B_a \to C \to C') \\
\eta^-_{(a \in A, t: B_a \to C)}(b' \in B'_{\alpha^+(a)}, d' \in D'_{(\gamma^+.t.\alpha^-_a)(b')}) & = & (\alpha^-_a(b') \in B_a, \gamma^-_{t(\alpha^-_a(b'))}(d') \in D_{t(\alpha^-_a(b'))} )
\end{eqnarray*}
The monoidal unit $I$ is $1 \to 1$ (corresponding to the identity functor).

\begin{prop}{tensorcommwithpbk}
The tensor $\otimes$ on ${\rm Poly}(\ct{E})$ preserves pullbacks in both coordinates.
\end{prop}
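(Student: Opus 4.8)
The plan is to exploit the explicit description of the pullback of polynomials and of the tensor product that was given in the text just before the statement, and to reduce the claim to the fact (\reflemm{lcc-beck-chevalley}) that $\E$ is locally cartesian closed with finite colimits, so that the codomain bifibration satisfies the Beck--Chevalley condition. Concretely, suppose we are given polynomials $(D_c)_{c \in C}$ and $(F_e)_{e \in E}$ together with morphisms $\delta$, $\varphi$ into a common polynomial $(B_a)_{a \in A}$, and a further polynomial $(G_z)_{z \in Z}$; we must compare $\bigl((D_c)_{c\in C} \times_{(B_a)_{a\in A}} (F_e)_{e\in E}\bigr) \otimes (G_z)_{z\in Z}$ with the pullback of $(D_c)_{c\in C}\otimes (G_z)_{z\in Z}$ and $(F_e)_{e\in E}\otimes(G_z)_{z\in Z}$ over $(B_a)_{a\in A}\otimes (G_z)_{z\in Z}$ (and symmetrically for the other coordinate).

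First I would unwind the base objects on both sides and check they agree: the base of a pullback is the pullback of the bases, the base of $(B_a)_{a\in A}\otimes(G_z)_{z\in Z}$ is $\{(b \in B_a, t: B_a \to Z)\}$, and the base of the tensor of a pullback is again the pullback of the relevant bases, so matching these up is a routine diagram chase using that pullbacks commute with pullbacks. The real content is the comparison of fibres. Fix a point $(c,e)$ of the pullback base $C\times_A E$ lying over $a := \delta^+(c) = \varphi^+(e)$, together with a map $t$ out of the corresponding fibre $P_{(c,e)}$. Recall $P_{(c,e)}$ is the pushout of $\delta_c^-: B_a \to D_c$ and $\varphi_e^-: B_a \to F_e$. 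The fibre of the left-hand side over such a point is $\{(p \in P_{(c,e)}, g \in G_{t(p)})\}$, i.e.\ a coproduct/pushout-shaped object built by summing $G$-fibres indexed over the pushout $P_{(c,e)}$. The fibre of the right-hand side is, by the pushout formula for pullbacks of polynomials, the pushout of the corresponding $G$-summed objects over $D_c$, over $F_e$, and over $B_a$. So what must be shown is that ``$\Sigma_{(-)} G$'' applied to a pushout is the pushout of the pieces — equivalently, that the functor sending a map $u: S \to Z$ to $\Sigma_{s\in S} G_{u(s)}$ (which is exactly $u^*$ followed by $\Sigma_S$, i.e.\ pullback along $u$ composed with a dependent sum) takes the pushout square defining $P_{(c,e)}$ to a pushout square.

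The key step, then, is: \emph{pullback along a morphism preserves pushouts}, which holds in $\E$ because pullback functors $u^*: \E/Z \to \E/S$ are left adjoints (to $\Pi_u$) in a locally cartesian closed category, hence preserve all colimits; and $\Sigma_S$ preserves colimits as well, being a left adjoint to base change. This is precisely the ingredient isolated in the proof of \reflemm{lcc-beck-chevalley}. Applying it to the pushout square with corners $B_a$, $D_c$, $F_e$, $P_{(c,e)}$, pulled back and summed along the map $t$ (or rather along its restrictions), produces the required pushout square of fibres, and one checks the induced comparison morphism of polynomials is the identity on bases and an isomorphism on each fibre, hence an isomorphism. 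The symmetric argument for the second coordinate is entirely analogous: there the base manipulations are even simpler since $G$ sits on the left, and again the pullback of the bases and the pushout-of-fibres formula combine with preservation of colimits by base change.

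I expect the main obstacle to be purely bookkeeping rather than conceptual: keeping the two directions $\alpha^+$, $\alpha^-$ of morphisms of polynomials straight while tracking which fibres are indexed over which bases, and making sure the mediating isomorphism one writes down is natural and really respects both components of the polynomial structure. The genuinely mathematical input — preservation of pushouts (indeed all colimits) by base change and dependent sum in a locally cartesian closed category with finite colimits — is already available from \reflemm{lcc-beck-chevalley} and its proof, so no new ideas are needed there; one simply has to apply it fibrewise and assemble the result.
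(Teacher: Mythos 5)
Your argument is correct, but it is not the route the paper takes, and the difference is worth noting. The paper's proof is a two-line reduction through the embedding $P\colon {\rm Poly}(\E) \to {\rm FEnd}(\E)$: since limits in ${\rm Poly}(\E)$ are inherited from ${\rm FEnd}(\E)$, where pullbacks are computed pointwise, and since $\otimes$ corresponds to composition of the associated polynomial functors, preservation of pullbacks in one coordinate is trivial (precomposition with a fixed functor does nothing to pointwise pullbacks) and in the other coordinate it follows from the fact that polynomial functors preserve pullbacks. You instead stay inside the combinatorial description of ${\rm Poly}(\E)$, match bases and fibres explicitly, and reduce everything to the fact that pullback (being a left adjoint in a locally cartesian closed category) and dependent sum preserve pushouts -- the ingredient already isolated in \reflemm{lcc-beck-chevalley}. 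Both proofs are sound; the paper's buys brevity and hides the colimit-preservation inside the already-established facts about $P$ and about $P_f$ preserving pullbacks, while yours makes the fibrewise mechanism visible, which is in the spirit of how the paper later manipulates $\mu$, $\Gamma$ and $\delta$ for the simplicial Moore functor. One small imprecision in your write-up: in the coordinate where the pullback sits in the first tensor slot, identifying the base of $\bigl(D \times_B F\bigr) \otimes G$ with the pullback of the bases of $D\otimes G$, $F\otimes G$ over $B\otimes G$ is not a matter of "pullbacks commuting with pullbacks"; it is the universal property of the pushout $P_{(c,e)} = D_c +_{B_a} F_e$ applied to maps into the base $Z$ of $G$ (internally: local exponentials send pushouts to pullbacks). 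That fact is standard and available in the present setting, so it is a gloss rather than a gap, but the justification you cite for it is the wrong one.
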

\begin{proof}
  Because pullbacks in functor categories are computed pointwise and polynomial functors preserve pullbacks.
\end{proof}

We will also be interested in comonoids for this tensor: so, this consists of an object $M$ in ${\rm Poly}(\ct{E})$ together with maps $\varepsilon: M \to I$ (the counit) and $\delta: M \to M \otimes M$ (the comultiplication) making the following
\begin{displaymath}
\begin{array}{cc}
\begin{tikzcd}
& M \ar[dr,"1_M"] \ar[dl,"1_M"'] \ar[d,"\delta"] \\
M & M \otimes M \ar[r,"1_M \otimes \varepsilon"'] \ar[l,"\varepsilon \otimes 1_M"] & M
\end{tikzcd} &
\begin{tikzcd}
M \ar[r, "\delta"] \ar[d, "\delta"'] & M \otimes M \ar[d, "\delta \otimes 1_M"] \\
M \otimes M \ar[r, "1_M \otimes \delta"'] & M \otimes M \otimes M
\end{tikzcd}
\end{array}
\end{displaymath}
commute. We will refer to such objects as \emph{polynomials comonads}.\index{polynomial comonad}

Every internal category in \ct{E} determines such a comonad. Indeed, let $\mathbb{C}$ be an internal category and ${\rm cod}: \mathbb{C}_1 \to \mathbb{C}_0$ be the codomain map. Then there is a counit:
\begin{center}
\begin{tikzcd}
\mathbb{C}_1 \ar[dr, "{\rm cod}"']  & \mathbb{C}_0 \ar[l, "\varepsilon^- = {\rm id}"'] \ar[d] \ar[r] & 1 \ar[d] \\
& \mathbb{C}_0 \ar[r, "\varepsilon^+ = !"'] & 1
\end{tikzcd}
\end{center}
and a comultiplication $(\delta^+,\delta^-): {\rm cod} \to {\rm cod} \otimes {\rm cod}$ with $\delta^+: \mathbb{C}_0 \to \sum_{C \in \mathbb{C}_0} \mathbb{C}_0^{{\rm cod}^{-1}(C)}$ given by sending an object $C$ in $\mathbb{C}$ to the pair $(C, \lambda \alpha \in {\rm cod}^{-1}(C).{\rm dom}(\alpha))$, whilst $(\delta^-)_C$ sends a pair $(\alpha: D \to C, \beta: E \to D)$ to $\alpha.\beta$.

This construction has a converse: indeed, one can show that every polynomial comonad is induced in this way by an internal category (see \cite{AhmanU17}).

Note that such a polynomial comonad is in particular a comonad (in the usual sense) on \ct{E} and that the coalgebras for this comonad are precisely the internal presheaves on $\mathbb{C}$ in \ct{E}. Note also that such a polynomial comonad is automatically strong. Indeed, because a polynomial functor preserves pullbacks, we can think of a strength on $P_{\rm cod}$ as a natural transformation $\alpha_X: X \times P_{\rm cod}(1) \to P_{\rm cod}(X)$, or, in other words, as a map of polynomials $(1: \mathbb{C}_0 \to \mathbb{C}_0) \cong (1 \to 1) \times (0 \to \mathbb{C}_0) \to ({\rm cod}: \mathbb{C}_1 \to \mathbb{C}_0)$. And there is a canonical such map:
\begin{center}
\begin{tikzcd}
\mathbb{C}_0 \ar[dr, "1"']  & \mathbb{C}_1 \ar[l, "\alpha^- = {\rm cod}"'] \ar[d, "{\rm cod}"] \ar[r, "1"] & \mathbb{C}_1 \ar[d, "{\rm cod}"] \\
& \mathbb{C}_0 \ar[r, "\alpha^+ = 1"'] & \mathbb{C}_0
\end{tikzcd}
\end{center}
One readily checks this is indeed a strength and that with respect to this strength the induced comonad is strong.

\subsection{A simplicial poset of traversals}

Let us define an internal poset $\mathbb{T}$ in simplicial sets.

The object of objects $\mathbb{T}_0$ has as its $n$-simplices the \emph{$n$-dimensional traversals}. An $n$-dimensional traversal \index{traversal|textbf} is a finite sequence of elements from $[n] \times \{ +, - \}$, that is, a function $\theta: \{ 1, \ldots, l \} \to [n] \times \{ +, - \}$ for some $l \in \mathbb{N}$ (including the empty traversal for $l = 0$). A good way to picture a traversal is as follows. An $n$-dimensional traversal is like a zigzag:
\diag{ \bullet & \ar[l]_{p_1} \bullet \ar[r]^{p_2} & \bullet \ar[r]^{p_3} & \bullet & \bullet \ar[l]_{p_4} \ar[r]^{p_5} & \bullet, }
a (possibly empty) sequence of edges pointing either to the left (-) or right (+), with a label $p_i \in [n]$. The collection of such traversals is a simplicial set: the face map $d_i$ acts on such a traversal by removing all the edges labelled with $i$ and relabelling the other edges (meaning: if an edge is labelled with $j \gt i$, replace that label by $j-1$). The degeneracy $s_i$ acts on such a traversal by duplicating edges labelled with $i$ (with the copies pointing in the same direction as the original edge) and labelling the first copy $i+1$ and the second $i$ in case the edge points to the right, and labelling the first copy $i$ and the second $i+1$ if the edge point to the left. Other edges are relabelled accordingly (meaning: if an edge was labelled $j \gt i$, then it now has the label $j+1$). In general, the action by some $\alpha: [m] \to [n]$ on such a traversal $\theta$ is given as follows: if the label of some edge is $i$, then replace it by $\# \alpha^{-1}(i)$ many edges pointing in the same direction as the original edge, labelled by the elements of $\alpha^{-1}(i)$ in decreasing order if the edge points to the right and in increasing order if the edge points to the left. In short, $\theta \cdot \alpha$ is the unique map fitting into a pullback square
\begin{displaymath}
 \begin{tikzcd}
    \{ 1, \ldots, l \} \ar[r, "\theta \cdot \alpha"] \ar[d, "v"] & {[m]} \times \{ +, - \} \ar[d, "\alpha \times 1"] \\
   \{ 1, \ldots, k \} \ar[r, "\theta"] & {[n]} \times \{ +, - \}
 \end{tikzcd}
\end{displaymath}
with ${\rm proj}_{[m]}.(\theta \cdot \alpha): \{ 1, \ldots, l \} \to [m]$ decreasing on those fibres $v^{-1}(i)$ with $\theta(i)$ positive, and increasing on those fibres $v^{-1}(i)$ with $\theta(i)$ negative.

A \emph{position} in an $n$-dimensional traversal $\theta: \{ 1, \ldots, l \} \to [n] \times \{ +, - \}$ is a choice of one of the vertices: formally, it is an element $p \in \{ 0, 1, \ldots, l \}$. The elements of $(\mathbb{T}_1)_n$ are pairs consisting of an $n$-dimensional traversal $\theta$ together with a position in this traversal (a \emph{pointed traversal}\index{pointed traversal}). The action of $\alpha$ on the traversals is as before, while it acts on the choice of vertex as follows: if $\theta' = \theta \cdot \alpha$, and $v$ is some vertex in $\theta$, then we choose that vertex in $\theta'$ which is the rightmost vertex in $\theta'$ which is either the source or target of an edge which is a copy of an edge which was to the left of $v$ (choosing the leftmost vertex if no such edge exists).

There are two maps ${\rm cod}, {\rm dom}: \mathbb{T}_1 \to \mathbb{T}_0$ with ${\rm cod}$ being the obvious forgetful map (forgetting the choice of position), while ${\rm dom}$ removes the part of the traversal \emph{before} the position. That means that we think of $\mathbb{T}$ as a simplicial poset with the final segment ordering ($\theta_0 \leq \theta_1$ if $\theta_0$ is a final segment of $\theta_1$: in that case, there is a position $p$ in $\theta_1$ such that after that point we see $\theta_0$, and $p$ can be thought of as the morphism from $\theta_0$ to $\theta_1$).

To see that this is an internal poset, note that there is a map ${\rm id}: \mathbb{T}_0 \to \mathbb{T}_1$ given by choosing the position at the start of the traversal. Finally, we need a map
\[ {\rm comp}: \mathbb{T}_1 \times_{\mathbb{T}_0} \mathbb{T}_1 \to \mathbb{T}_1.\]
That is, we start with pointed traversals $(\theta_1,p_1)$ and $(\theta_0,p_0)$ such that $\theta_0$ is the final segment we obtain from $\theta_1$ by removing everything before position $p_1$. Then comp takes $\theta_1$ with position $p_0$ (which is a position in $\theta_0$, and, because $\theta_0$ is a final segment of $\theta_1$, in $\theta_1$ as well).

In view of the correspondence between internal categories and polynomial comonads, this internal category induces a polynomial comonad, whose counit we denote $s: {\rm cod} \to I$ and whose comultiplication we call $\Gamma: {\rm cod} \to {\rm cod} \otimes {\rm cod}$. Therefore the \emph{simplicial Moore functor} $M = P_{\rm cod}$ \index{simplicial Moore functor} defined as
\[ (MX)_n = \sum_{\theta \in (\mathbb{T}_0)_n} X^{{\rm cod}^{-1}(\theta)} \] carries the structure of a strong comonad with counit $s: M \Rightarrow 1$ and comultiplication $\Gamma: M \Rightarrow MM$.

\begin{rema}{onM1andT0}
  Note that $M1 \cong \mathbb{T}_0$. In fact, the object $\mathbb{T}_0$ was introduced as $M1$ in \cite{vdBerg-Garner}.
\end{rema}

\subsection{Simplicial Moore paths}

$M$ has more structure: in fact, we have an internal category in ${\rm Poly}(\ct{E})$. To see this, note that we can also equip $\mathbb{T}_0$ with the initial segment ordering. In that case, we take the same codomain map, but now as domain map we take ${\rm dom}^*: \mathbb{T}_1 \to \mathbb{T}_0$ which removes the part of the traversal \emph{after} the chosen position. In addition, we have a map ${\rm id}^*: \mathbb{T}_0 \to \mathbb{T}_1$ which chooses the endpoint of the given traversal as its chosen position, as well as an appropriate composition
\[  {\rm comp}^*: \mathbb{T}_1 \times_{\mathbb{T}_0} \mathbb{T}_1 \to \mathbb{T}_1.\]
This means that $M$ carries a second strong comonad structure with counit $t: M \Rightarrow 1$ and comultiplication $\Gamma^*: M \Rightarrow MM$.

Note that with either ordering, the poset $\mathbb{T}$ has an initial object $0: 1 \to \mathbb{T}_0$ (the unique traversal of length 0), and with the final segment ordering, the map ${\rm id}^*: \mathbb{T}_0 \to \mathbb{T}_1$ points to the unique map from the initial traversal to the given traversal (and similarly for ${\rm id}$ and the initial segment ordering). This means that we also have a map $r: I \to {\rm cod}$ given by:
\begin{displaymath}
\begin{tikzcd}
1 \ar[dr, "1"'] & 1 \ar[d] \ar[r] \ar[l, "r^- = !"'] & \mathbb{T}_1 \ar[d, "{\rm cod}"] \\
& 1 \ar[r, "r^+ = 0"'] & \mathbb{T}_0
\end{tikzcd}
\end{displaymath}
Note that because $r^-$ iso, the natural transformation induced by $r$ is cartesian.

At this point one readily checks all the axioms for a two-sided Moore structure which do not involve the multiplication $\mu$. In fact, all of these follow simply from the fact that we are working in an internal category with an initial object 0 with the property that the only map $C \to 0$ is the identity on 0.
\begin{enumerate}
\item The equation $s.r = t.r = 1$ follows immediately from the fact that there is only polynomial map $I \to I$.
\item $r$ is strong: $\alpha.(1 \times r) = r.p_1: X \times 1 \to MX$. In this we have to compare two maps $(1 \to 1) \times (0 \to 1) \to ({\rm cod}: \mathbb{T}_1 \to \mathbb{T}_0)$. In the forwards direction they are both $0: 1 \to \mathbb{T}_0$, while in the backwards direction they are both equal as well, because they both have codomain 1.
\item $\Gamma.r = rM.r$, or: $\Gamma.r = (r \otimes 1_{\rm cod}).r$. We have $(\Gamma.r)^+ = \Gamma^+.r^+ = (0, \lambda \alpha: C \to 0.C)$, while $(r \otimes 1_{\rm cod}.r)^+ = (0, \lambda: C \to 0.0)$, which coincide, while in the backwards direction we again have to compare two maps which terminate in $1$: so these are again equal.
\item $tM.\Gamma = r.t$, or $(t \otimes 1_{\rm cod}).\Gamma = r.t$. Note $(t \otimes 1_{\rm cod}.\Gamma)^+(C) = {\rm dom}(0 \to C) = 0$, while $(r.t)^+(C) = 0$. Also, $(r.t)^-_C(\alpha: D \to 0) = !: 0 \to C$ and $((t \otimes 1_{\rm cod}).\Gamma)_C^-(\alpha: D \to 0) =  {\rm comp}(!: 0 \to C, \alpha: D \to 0)$.
\item $Mt.\Gamma = \alpha.(t,M!): {\rm cod} \to {\rm cod}$. Here $(t,M!): {\rm cod} \to (1: \mathbb{T}_0 \to \mathbb{T}_0)$ is given by $(1_{\mathbb{T}_0}, {\rm id}^*)$, so that the right hand side is $(1,\lambda \alpha: D \to C. !: 0 \to C)$. The left hand side, however, is given by $(Mt.\Gamma)^+(C) = (1_{\rm cod} \otimes t)(C, \lambda \alpha: D \to C.D) = C$, while $(Mt.\Gamma)_C^-(\alpha: D \to C) = \Gamma^-.(1_{\rm cod} \otimes t)^-(\alpha: D \to C) = {\rm comp}(\alpha: D \to C, !: 0 \to D) = !: 0 \to C$, as desired.
\item Equations similar to those in (3--5) have to be verified for $\Gamma^*$ as well: but since also with the initial segment ordering, $\mathbb{T}$ has a strong initial object, the same arguments will work.
\end{enumerate}
What is still needed, then, is to define $\mu_X: MX^t \times_X^s MX \to MX$ and to verify that it satisfies all the expected equations.

Using the formula for computing pullbacks of polynomials, we see that in order to define $\mu$ we need maps
\begin{displaymath}
 \begin{tikzcd}
  \mathbb{T}_1 \times \mathbb{T}_0 \sqcup_{\mathbb{T}_0 \times \mathbb{T}_0} \mathbb{T}_0 \times \mathbb{T}_1 \ar[dr,"{[{\rm cod} \times 1, 1 \times {\rm cod}]}"']  & \bullet \ar[r] \ar[d] \ar[l, "\mu^-"] & \mathbb{T}_1 \ar[d, "{\rm cod}"] \\
  & \mathbb{T}_0 \times \mathbb{T}_0 \ar[r,"\mu^+"] & \mathbb{T}_0
 \end{tikzcd}
\end{displaymath}
Note that the fibre over $(\theta_0,\theta_1)$ of the map on the left is the collection of positions in $\theta_0$ and $\theta_1$, with the final position in $\theta_0$ identified with the initial position in $\theta_1$. So what we can do is define $\mu^+(\theta_0,\theta_1) = \theta_0 * \theta_1$, the concatenation of the two sequences with $\theta_0$ put before $\theta_1$. Since the positions in $\theta_0 * \theta_1$ are precisely the positions in either $\theta_0$ or $\theta_1$, with the final position in $\theta_0$ coinciding with the initial position in $\theta_1$, we have a pullback square
\begin{displaymath}
 \begin{tikzcd}
  \mathbb{T}_1 \times \mathbb{T}_0 \sqcup_{\mathbb{T}_0 \times \mathbb{T}_0} \mathbb{T}_0 \times \mathbb{T}_1 \ar[d,"{[{\rm cod} \times 1, 1 \times {\rm cod}]}"']  \ar[r, "\mu^*"] & \mathbb{T}_1 \ar[d, "{\rm cod}"] \\
  \mathbb{T}_0 \times \mathbb{T}_0 \ar[r,"\mu^+"] & \mathbb{T}_0.
 \end{tikzcd}
\end{displaymath}
So we can choose $\mu^-$ to be an isomorphism and $\mu$ will be a cartesian natural transformation.

We will leave it to the reader to verify that $\mu$ is strong and combines with $r,s,t$ to yield a category structure, which is both left and right cancellative. The most difficult axioms to check are the distributive laws and the sandwich equation (see \refdefi{pathobjcat} and \refdefi{twosidedpathobjcat}, respectively), which we will discuss here in some detail, also because they were not part of \cite{vdBerg-Garner}.

\begin{lemm}{distributivelawOK}
 The distributive law \[ \Gamma.\mu = \mu.(M\mu.\nu_X.(\Gamma.p_1,\alpha_{MX}.(p_2,M!.p_1)),\Gamma.p_2): MX \times_X MX \to MMX \] holds, as does the corresponding law for $\Gamma^*$.
\end{lemm}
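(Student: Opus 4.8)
The plan is to verify the distributive law by working entirely in the category $\mathrm{Poly}(\cat{E})$ of polynomials, exploiting the fact that all the natural transformations involved ($\Gamma$, $\mu$, $\alpha$, $\nu$) are induced by morphisms of polynomials, and in fact that $\mu$ and $\nu$ are \emph{cartesian} (their backward directions are isomorphisms). Since a cartesian natural transformation has invertible backward component, the bulk of the work reduces to checking equality of the \emph{forward} directions, which are concrete combinatorial operations on traversals (concatenation $*$, the contraction $\Gamma^+$, constant-path insertion, etc.), together with a bookkeeping check that the backward directions — all of which are composites of the canonical isomorphisms coming from pullback-preservation — agree.

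First I would unfold both sides of the claimed identity as morphisms of polynomials from the polynomial whose functor is $MX \times_X MX$ (computed via the pullback formula in $\mathrm{Poly}(\cat{E})$, with base $\mathbb{T}_0 \times_{1} \mathbb{T}_0 \cong \mathbb{T}_0 \times \mathbb{T}_0$ since the maps $s,t \co \mathbb{T}_0 \to 1$ are trivial) to the polynomial whose functor is $MMX$ (base $\sum_{\theta \in \mathbb{T}_0} \mathbb{T}_0^{\mathrm{cod}^{-1}(\theta)}$). On the forward direction: the left-hand side sends a pair $(\theta_0, \theta_1)$ to $\Gamma^+(\theta_0 * \theta_1)$, i.e.\ to $\theta_0 * \theta_1$ equipped with the function assigning to each position $p$ the final segment after $p$. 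The right-hand side, by the recipe for $\mu^+$, $M\mu$, $\Gamma^+$ and $\alpha^+$, produces the traversal obtained by concatenating, over positions $p$ in $\theta_0 * \theta_1$: for positions lying in $\theta_0$, the final segment of $\theta_0$ after $p$ followed by (the appropriate constant path of the length of) $\theta_1$, composed then with $\Gamma^+(\theta_1)$ for positions in $\theta_1$. One then checks that ``final segment of $\theta_0$ after $p$, then constant path of length $\theta_1$'' glued with ``final segment of $\theta_1$ after $q$'' reproduces exactly ``final segment of $\theta_0 * \theta_1$ after the corresponding position'' — this is precisely the content of Box~\ref{box:Gamma} (``each transversal fibre is a final segment of $\gamma$''), now made literal at the level of traversals. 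This is the step I expect to be the main obstacle: it is not a deep argument, but it requires setting up careful notation for positions in a concatenated traversal and for the identification of $\mathrm{cod}^{-1}(\theta_0 * \theta_1)$ with the amalgam of $\mathrm{cod}^{-1}(\theta_0)$ and $\mathrm{cod}^{-1}(\theta_1)$, and then matching the two descriptions position-by-position without sign or relabelling errors.

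For the backward directions, I would argue more structurally: every backward component appearing on either side is a composite of backward components of $\mu$, $\nu$, $\Gamma$ and $\alpha$, and since $\mu^-$, $\nu^-$ are isomorphisms (as established in the construction just above) and the backward component of $\Gamma$ is ``$\mathrm{comp}$'' while that of $\alpha$ is $\mathrm{cod}$, the two composites are forced to agree once the forward directions agree and once one knows $MX$ preserves the relevant pullbacks (\refprop{tensorcommwithpbk}). Concretely, both sides are morphisms of polynomials whose underlying functors are manifestly the same (both are built from $M$ applied to pullbacks, and $M$ preserves pullbacks), and $P \co \mathrm{Poly}(\cat{E}) \to \mathrm{FEnd}(\cat{E})$ is faithful (\refprop{polyasendofunctor}), so it suffices to check equality of the natural transformations $P(\mathrm{LHS})$ and $P(\mathrm{RHS})$; this is a diagram chase using naturality and the comonad/category axioms for $(\mathbb{T}, \mathrm{cod}, \mathrm{dom}, \mathrm{id}, \mathrm{comp})$ already available. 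Finally, the corresponding law for $\Gamma^*$ follows verbatim by the same argument, replacing the final-segment ordering on $\mathbb{T}_0$ by the initial-segment ordering and $\Gamma, \mathrm{dom}, \mathrm{comp}$ by $\Gamma^*, \mathrm{dom}^*, \mathrm{comp}^*$ throughout, exactly as the analogous dualizations were handled for items (3)--(5) in the list preceding the lemma.
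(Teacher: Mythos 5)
Your overall strategy---unfolding both sides as morphisms of polynomials ${\rm cod}\times_I{\rm cod}\to{\rm cod}\otimes{\rm cod}$ and comparing components, with the forward direction settled by the combinatorial fact that ``final segment of $\theta_0$ after $p$, padded by a constant path of the length of $\theta_1$'' glued with ``final segment of $\theta_1$'' reproduces ``final segment of $\theta_0*\theta_1$''---is indeed how the paper proceeds, and your forward-direction analysis matches its computation. The genuine gap is your treatment of the backward (negative) directions. It is simply not true that they are ``forced to agree once the forward directions agree'': a morphism of polynomials is a pair $(\alpha^+,\alpha^-)$, and two morphisms with equal forward components may well have different backward components. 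Invertibility of $\mu^-$ and $\nu^-$ does not rescue this, because both composites also involve $\Gamma^-={\rm comp}$ and $\alpha^-={\rm cod}$, which are not invertible and occur in different orders on the two sides. Nor does the fallback via faithfulness of $P$ (\refprop{polyasendofunctor}) buy anything: checking $P(\mathrm{LHS})=P(\mathrm{RHS})$ amounts to checking both components anyway, and the identity is emphatically not a formal consequence of naturality together with the internal category/comonad axioms for $\mathbb{T}$---the distributive law is an extra axiom of (two-sided) Moore structure, which holds for the traversal poset only because of specific combinatorial facts about how ${\rm dom}$ of a position in $\theta_0*\theta_1$ decomposes and how positions of $\theta_0*\theta_1$ are the amalgam of positions of $\theta_0$ and $\theta_1$; compare \refrema{onconnectiontodistributivelaw}, where a closely related ``interchange'' identity is explicitly said to possibly fail in general Moore categories.

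What you must add, and what constitutes roughly half of the paper's actual proof, is the explicit backward-direction check. On the left-hand side, a pair consisting of a position $p_0$ of $\theta_0*\theta_1$ and a position $p_1$ of ${\rm dom}(p_0)$ is sent to the position corresponding to $p_1$ in either $\theta_0$ or $\theta_1$; on the right-hand side one traces the composite of backward maps through the intermediate polynomials (whose fibre functions assign ${\rm dom}(p)*\theta_1$ to positions $p$ of $\theta_0$, and ${\rm dom}(q)$ to positions $q$ of $\theta_1$), making a case distinction on whether $p_0$ lies in $\theta_0$ or in $\theta_1$, and verifies that the result is again the position corresponding to $p_1$ in $\theta_0$ or $\theta_1$. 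This verification is of exactly the same combinatorial nature as your forward check and needs the same care with the identification of ${\rm cod}^{-1}(\theta_0*\theta_1)$ as an amalgam; once it is supplied (together with its mirror image for $\Gamma^*$, which does dualize verbatim as you say), your argument is complete.
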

\begin{proof}
 We only show the distributive law for $\Gamma$ as the corresponding statement for $\Gamma^*$ is proved similarly.

 We have to compare two maps \[ {\rm cod} \times_I {\rm cod} \to {\rm cod} \otimes {\rm cod}. \] The left hand side goes via ${\rm cod}$ and in the positive direction sends $(\theta_0, \theta_1)$ to $(\theta_0 * \theta_1, \lambda p \in {\rm cod}^{-1}(\theta_0 * \theta_1).{\rm dom}(p))$, and in the negative direction sends a pair of positions $p_0$ in $\theta_0 * \theta_1$ and $p_1$ in ${\rm dom}(p_0)$ to the position corresponding to $p_1$ in either $\theta_0$ or $\theta_1$.

 Let us now try to decompose the right hand side. The map \[ \alpha M.(p_2, M!.p_1): {\rm cod} \times_I {\rm cod} \to {\rm cod} \otimes {\rm cod} \] is in the forwards direction a map $\mathbb{T}_0 \times \mathbb{T}_0 \to P_{\rm cod}(\mathbb{T}_0)$ which sends $(\theta_0, \theta_1)$ to $(\theta_0, \lambda p. \theta_1)$, while the backwards direction sends a pair of positions $(p_0 \in \theta_0, p_1 \in \theta_1)$ to $p_1$.

 Then the map $M\mu.\nu.(\Gamma.p_1,\alpha.(p_2,M!.p_1))$ can be seen as a composition:
 \begin{displaymath}
   \begin{tikzcd}
     {\rm cod} \times_I {\rm cod} \ar[r] & ({\rm cod} \otimes {\rm cod}) \times_{{\rm cod} \otimes I} ({\rm cod} \otimes {\rm cod}) \ar[d, "\cong"] \\ & {\rm cod} \otimes ({\rm cod} \times_I {\rm cod}) \ar[r, "1_{\rm cod} \otimes \mu"] & {\rm cod} \otimes {\rm cod}
   \end{tikzcd}
 \end{displaymath}
 where in the forwards directions these maps send $(\theta_0,\theta_1)$ first to \[ ((\theta_0, \lambda p \in {\rm cod}^{-1}(\theta_0). {\rm dom}(p)), (\theta_0, \lambda p \in {\rm cod}^{-1}(\theta_0).\theta_1), \] which gets rewritten to
 $(\theta_0, \lambda p \in {\rm cod}^{-1}(\theta_0).({\rm dom}(p),\theta_1),$
and then sent to \[ (\theta_0, \lambda p \in {\rm cod}^{-1}(\theta_0). {\rm dom}(p) * \theta_1). \] In the backwards direction it sends a pair of positions in $p_0$ in $\theta_0$ and $p_1$ in ${\rm dom}(p_0) * \theta_1$ first to the pair $p_0$ and the position corresponding to $p_1$ either in ${\rm dom}(p_0)$ or $\theta_1$, and then to the position corresponding to $p_1$ in $\theta_0$ if it belongs to ${\rm dom}(p_0)$ or to the position corresponding to $p_1$ in $\theta_1$ if it belongs to $\theta_1$. In short, it sends $p_0$ and $p_1$ to the position corresponding to $p_1$ in either $\theta_0$ or $\theta_1$.

In the final step we look at the whole right hand side as a composition
\begin{displaymath}
  \begin{tikzcd}
    {\rm cod} \times_1 {\rm cod} \ar[r] & ({\rm cod} \otimes {\rm cod}) \times_{1 \otimes {\rm cod}} ({\rm cod} \otimes {\rm cod}) \ar[d, "\cong"] \\ & ({\rm cod} \times_1 {\rm cod}) \otimes {\rm cod} \ar[r] & {\rm cod} \otimes {\rm cod}.
  \end{tikzcd}
\end{displaymath}
In the positive direction this takes $(\theta_0, \theta_1)$ first to \[ ((\theta_0, \lambda p \in {\rm cod}^{-1}(\theta_0). {\rm dom}(p) * \theta_1), (\theta_1, \lambda p \in {\rm cod}^{-1}(\theta_1).{\rm dom}(p))), \]
and then to $(\theta_0 * \theta_1, \lambda p \in {\rm cod}^{-1}(\theta_0 * \theta_1).{\rm dom}(p))$, as before. In the backwards direction we are given a pair consisting of a position $p_0$ in $\theta_0 * \theta_1$ and a position $p_1$ in ${\rm dom}(p_0)$ and we start by making a case distinction on whether $p_0$ lies in $\theta_0$ or $\theta_1$. If it lies in $\theta_1$, the pair gets mapped to the position $p_1$ in $\theta_1$. If it lies in $\theta_0$, the pair gets mapped to the position corresponding to $p_1$ in either $\theta_0$ or $\theta_1$. So in either case it gets mapped to the position corresponding to $p_1$ in either $\theta_0$ or $\theta_1$, as before.
\end{proof}

\begin{lemm}{faberequationOK}
The sandwich equation $M\mu.\nu.(\Gamma^*,\Gamma) = \alpha M.(1,M!): M \to MM$ holds.
\end{lemm}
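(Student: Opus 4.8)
The sandwich equation asserts that the pointwise composite of the reversed contraction $\Gamma^*$ with the contraction $\Gamma$ collapses to the constant path of the same length, reflecting the geometric picture in Figure~\ref{fig:Gammastarcomposition} where the two triangles glue along $\gamma$ to give a degenerate square. My plan is to verify it directly in the polynomial presentation, working at the level of the polynomial $\mathrm{cod}: \mathbb{T}_1 \to \mathbb{T}_0$, exactly as was done for the distributive law in \reflemm{distributivelawOK}. Both sides are maps of polynomials $\mathrm{cod} \to \mathrm{cod} \otimes \mathrm{cod}$, and a map of polynomials is determined by its forward and backward directions, so it suffices to check these two components agree.

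First I would unwind the right-hand side $\alpha M.(1, M!)$. The pair $(1, M!): \mathrm{cod} \to \mathrm{cod}$ composed with the strength sends, in the forward direction, a traversal $\theta$ to $(\theta, \lambda p \in \mathrm{cod}^{-1}(\theta).\,\theta)$ — that is, the constant family at $\theta$ itself, of length $M!(\theta)$; in the backward direction it sends a position $p$ in $\theta$ together with a position in the (constant) fibre $\theta$ to that latter position in $\theta$. Next I would decompose the left-hand side $M\mu.\nu.(\Gamma^*, \Gamma)$. Reasoning as in the proof of \reflemm{distributivelawOK}, I can write this as the composite through $(\mathrm{cod} \otimes \mathrm{cod}) \times_{\mathrm{cod} \otimes I} (\mathrm{cod} \otimes \mathrm{cod})$, using pullback-preservation of $\otimes$ (\refprop{tensorcommwithpbk}) to identify this with $\mathrm{cod} \otimes (\mathrm{cod} \times_I \mathrm{cod})$, followed by $1_{\mathrm{cod}} \otimes \mu$. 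In the forward direction, $(\Gamma^*, \Gamma)$ sends $\theta$ to the pair consisting of $\Gamma^*(\theta) = (\theta, \lambda p.\, \theta_{\leq p})$, the family of initial segments, and $\Gamma(\theta) = (\theta, \lambda p.\, \theta_{\geq p})$, the family of final segments, both indexed over positions $p$ in $\theta$; the rewrite and then $1_{\mathrm{cod}} \otimes \mu$ concatenates each initial segment with the corresponding final segment, giving $(\theta, \lambda p.\, \theta_{\leq p} * \theta_{\geq p})$. The crucial internal-poset fact is that $\theta_{\leq p} * \theta_{\geq p} = \theta$: decomposing a traversal at a position and reconcatenating in the same order returns the original traversal. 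Hence the forward direction of the left-hand side is also $(\theta, \lambda p.\, \theta)$, matching the right.

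For the backward direction, the left-hand side takes a position $p_0$ in $\theta$ and a position $p_1$ in the fibre $\theta_{\leq p_0} * \theta_{\geq p_0} = \theta$, and I need to trace through the two $\otimes$-decompositions and the backward direction of $\mu$ (an isomorphism, sending a position in a concatenation to the position in whichever summand it came from) and of $\Gamma^*, \Gamma$. The upshot should be that $p_1$, viewed as a position in $\theta$, is returned unchanged; this matches the right-hand side's backward direction. The identification of the position-sets under the concatenation $\theta_{\leq p_0} * \theta_{\geq p_0} = \theta$ (with the shared middle vertex $p_0$) is the bookkeeping that makes this work, and it is the same kind of identification already used in \reflemm{distributivelawOK}.

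The main obstacle is purely combinatorial bookkeeping: keeping track of the various position-sets of subtraversals under the final-segment and initial-segment orderings simultaneously, and checking that the canonical isomorphisms arising from pullback-preservation of $\otimes$ respect these identifications on the nose (not merely up to the ambient iso). I do not expect any conceptual difficulty — the equation is a formal consequence of the fact that $\mathbb{T}$ is an internal poset in which cutting a traversal at a vertex and regluing is the identity — but writing it out carefully requires patience with the indexing. Since the corresponding law for $\Gamma^*$ (if needed) is symmetric, and the $\Gamma^*$-versions of the axioms in (3)--(5) were already dispatched by the same "strong initial object" argument, I would only spell out the $\Gamma$-side in detail and remark that the rest is analogous.
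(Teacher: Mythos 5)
Your proposal is correct and follows essentially the same route as the paper's proof: both sides are decomposed as polynomial maps $\mathrm{cod} \to \mathrm{cod} \otimes \mathrm{cod}$, the left-hand side via $\mathrm{cod} \otimes (\mathrm{cod} \times_I \mathrm{cod})$ followed by $1_{\mathrm{cod}} \otimes \mu$, and the verification reduces in the forward direction to the fact that cutting a traversal at a position and reconcatenating (initial segment followed by final segment) returns the original traversal, and in the backward direction to the observation that a pair of positions $(p_0, p_1)$ is simply sent to $p_1$. The only difference is presentational (you keep the $\mathbb{T}$-free "segment" notation while the paper writes $\mathrm{dom}^*$ and $\mathrm{dom}$), so nothing further is needed beyond writing out the backward-direction bookkeeping you sketch.
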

\begin{proof}
 We have to compare two morphisms ${\rm cod} \to {\rm cod} \otimes {\rm cod}$. The right hand side can be seen as a composition:
 \begin{displaymath}
  \begin{tikzcd}
    {\rm cod} \ar[r] & {\rm cod} \times 1_{\mathbb{T}_0} \ar[r, "\cong"] & 1_{\mathbb{T}_0} \otimes {\rm cod} \ar[r, "\alpha \otimes 1_{\rm cod}"] & {\rm cod} \otimes {\rm cod}.
  \end{tikzcd}
 \end{displaymath}
In the positive direction these maps are the diagonal $\mathbb{T}_0 \to \mathbb{T}_0 \times \mathbb{T}_0$, a map $\mathbb{T}_0 \times \mathbb{T}_0 \to \mathbb{T}_0 \times \mathbb{T}_0$ swapping the two arguments and a map $\mathbb{T}_0 \times \mathbb{T}_0 \to P_{\rm cod}(\mathbb{T}_0)$ sending $(\theta_0,\theta_1)$ to $(\theta_0,\lambda p.\theta_1)$. In short, in the positive direction this maps sends $\theta$ to $(\theta, \lambda p.\theta)$. In the negative direction, a pair of positions $(p, p')$ in $\theta$ is sent to $p'$.

 The left hand side can be seen as a composition
  \begin{displaymath}
   \begin{tikzcd}
    {\rm cod} \ar[r] & ({\rm cod} \otimes {\rm cod}) \times_{\rm cod} ({\rm cod} \otimes {\rm cod}) \ar[d, "\cong"] \\ & {\rm cod} \otimes ({\rm cod} \times_{I} {\rm cod}) \ar[r, "1_{\rm cod} \otimes \mu"] & {\rm cod} \otimes {\rm cod}.
   \end{tikzcd}
  \end{displaymath}
  In the positive direction this first sends $\theta$ to $(\theta, {\rm dom}^*, {\rm dom})$ and then it sends $(\theta, t, t')$ to $(\theta, \lambda p.t(p)*t'(p))$. So the composition is $(\theta, \lambda p.\theta)$: the reason is that ${\rm dom}^*$ removes the part after the position, dom removes the part before the position, and * concatenates the results: so we just get the original traversal back. In the backwards direction a pair of positions $(p, p')$ is first sent to the position corresponding to $p'$ in either the part before or after $p$, and then it is sent to the corresponding position in the whole traversal. In short, it is sent to $p'$.
\end{proof}

This finishes the verification of the axioms for a two-sided Moore structure. Note that all the proofs that we have given so far would still work if we restricted the traversals in $\mathbb{T}_0$ to those which only move towards the right (that is, those traversals $\theta: \{ 1, \ldots, l \} \to [n] \times \{ +, - \}$ for which $\theta(i)$ for any $i \in \{ 1, \ldots, l \}$ is always of the form $(k,+)$ for some $k \in [n]$). We will refer to the version of $M$ that we get in this way as $M_+$. Of course, similar remarks apply if we restrict the traversals to those that only move to the left; the version of $M$ that we would have obtained in that way will be referred to as $M_-$.

\begin{theo}{twosidedpathobjcatwithM+}
  The endofunctors $M$, $M_+$ and $M_-$ equip the category of simplicial sets with three distinct two-sided Moore structures.
\end{theo}

It remains to check that $M$ equips the category of simplicial sets with the structure of a symmetric Moore category. This means that we should be able to construct a twist map\index{twist map!for simplicial Moore structure} $\tau$ if we work with two different orientations. Indeed, in that case there is a map of polynomials
\begin{displaymath}
  \begin{tikzcd}
    \mathbb{T}_1 \ar[dr, "{\rm cod}"'] & \bullet \ar[d] \ar[r] \ar[l, "\tau^-"', "\cong"] & \mathbb{T}_1 \ar[d, "{\rm cod}"] \\
    & \mathbb{T}_0 \ar[r, "\tau^+"'] & \mathbb{T}_0
  \end{tikzcd}
\end{displaymath}
with $\tau^+$ sending a traversal $\theta: \{ 1, \ldots, l \} \to [n] \times \{ +, - \}$ to a traversal $\tau^+(\theta)$ with the same length $l$ and $\tau^+(\theta)(i) = \tau(\theta(l + 1 -i))$, where $\tau(k,+) = (k,-)$ and $\tau(k,-) = (k, +)$ for any $k \in [n]$. So $\tau^+$ reverses the order and orientation of the traversal. Finally, $\tau^-$ sends a position $p$ in such a traversal to the position $l - p$. Note that $\tau^-$ is an isomorphism and $\tau$ is a cartesian natural transformation.

Most of the equations for $\tau$ are easy to verify, except perhaps for $\Gamma^*  = \tau M.M\tau.\Gamma.\tau$, which is equivalent to $\Gamma^*.\tau = \tau M.M \tau.\Gamma$, or $\Gamma^*.\tau = (\tau \otimes \tau).\Gamma$. In the positive direction we have that $\Gamma^*.\tau$ is the function sending a traversal $\theta$ to the pair where the first component is this traversal reversed, while the second component is the function which takes a position in this reversed traversal and removes the part in this reversed traversal after this position. In the backwards direction this takes a position in the reversed traversal and a position in the reversed traversal before the first position and produces the position corresponding to the second position in the original traversal. The map $\tau \otimes \tau$ is the function which in the positive direction takes a pair $(\theta, t: {\rm cod}^{-1} \to \mathbb{T}_0)$ to $(\tau^+(\theta), \tau^+ \circ t \circ \tau^-)$, so if $t$ is the domain function (removing the part of the traversal before the given position), this agrees with the other function in the positive direction. In the backwards direction, $(\tau \otimes \tau).\Gamma$ takes a position in the reversed traversal and a position in the reversed traversal before the given position, first reverses both and then takes the position corresponding to the second position in the original traversal, as before.

This means that we have the following result, as promised:
\begin{theo}{Mpathobjcatonsimpsets}
  The endofunctor $M$ equips the category of simplicial sets with the structure of a symmetric Moore category.
\end{theo}

In view of the results of Section 4.2, \reftheo{twosidedpathobjcatwithM+} implies that there are \emph{a priori} six AWFSs on the category of simplicial sets. However, up to isomorphism, there are only three, because
\begin{itemize}
\item the twist map $\tau$ induces an isomorphism between the AWFS determined by $(M_+, \Gamma_+, s)$ and the one determined by $(M_-,\Gamma_-^*, t)$.
\item the twist map $\tau$ induces an isomorphism between the AWFS determined by $(M_+, \Gamma^*_+, t)$ and the one determined by $(M_-,\Gamma_-, s)$.
\item the Moore structure determined by $M$ is symmetric, so the AWFS determined by $(M, \Gamma, s)$ is isomorphic to the one determined by $(M,\Gamma^*, t)$.
\end{itemize}

In view of this we will make the following definition:

\begin{defi}{naivefibrinsimpset}
We will refer to the algebras for the monads of the three AWFSs above as \emph{naive right fibrations}\index{naive right fibration}, \emph{naive left fibrations}\index{naive left fibration} and \emph{naive Kan fibrations}\index{naive Kan fibration}, respectively. We will refer to the coalgebras for the comonad of the third AWFS as \emph{HDRs}\index{HDR}.
\end{defi}

\subsection{Geometric realization of a traversal} In this section we have defined $M$ as the polynomial functor associated to ${\rm cod}: \mathbb{T}_1 \to \mathbb{T}_0$. In \cite{vdBerg-Garner}, the Moore path functor was defined differently (as a parametric right adjoint). The main goal of this subsection is to show that the two descriptions are equivalent. Our proof here will be fairly combinatorial; a more conceptual proof can be found in the second author's PhD thesis \cite{faber19} (see \refrema{compwithEricsPhDthesis} below).

Our combinatorial argument makes us of the ``geometric realization'' of a traversal, a construction which can already be found in \cite{vdBerg-Garner}, and will also play an important role in the later sections.

\begin{defi}{geometricrealizationoftraversal} For an element of the form $(k, \pm) \in [n] \times \{ +, - \}$, let us define: $(k,+)^s = k+1$, $(k,+)^ t = k, (k,-)^s = k, (k,-)^t = k+1$. If $\theta$ is a $n$-dimensional traversal of length $k$, then we define its \emph{geometric realization}\index{geometric realization} $\widehat{\theta}$ to be the colimit of the diagram
\begin{displaymath}
  \begin{tikzcd}
      \Delta^n \ar[dr, "d_{\theta(1)^s}"] & & \Delta^n \ar[dl, "d_{\theta(1)^t}"] \ar[dr, "d_{\theta(2)^s}"] & & \ldots \ar[dl, "d_{\theta(2)^t}"] \ar[dr, "d_{\theta(k)^s}"] & & \ar[dl, "d_{\theta(k)^t}"] \Delta^n \\
      & \Delta^{n+1} & & \Delta^{n+1} & & \Delta^{n+1}
  \end{tikzcd}
\end{displaymath}
in simplicial sets. In words: we turn an $n$-dimensional traversal into a simplicial set, by replacing its vertices by $n$-simplices and its edges by $(n+1)$-simplices, in such a way that if an $(n+1)$-simplex comes from the $i$th edge, then the $n$-simplices coming from the vertices connected by that edge are its $\theta(i)^s$-th and $\theta(i)^t$-th faces, respectively.
\end{defi}

\begin{theo}{geometricrealizinpbk} The geometric realization $\widehat{\theta}$ of an $n$-dimensional traversal $\theta$ fits into a pullback square
\begin{equation}\label{eq:geometricrealizinpbk}
  \begin{tikzcd}
    \widehat{\theta} \ar[d, "j_\theta"] \ar[r, "k_\theta"] & \mathbb{T}_1 \ar[d, "{\rm cod}"] \\
    \Delta^n \ar[r, "\theta"] & \mathbb{T}_0.
  \end{tikzcd}
\end{equation}
\end{theo}
\begin{proof}
We have to construct two maps $j_\theta: \widehat{\theta} \to \Delta^n$ and $k_\theta: \widehat{\theta} \to \mathbb{T}_1$, which we will do using that $\widehat{\theta}$ is a colimit. If $\theta(i) = (k, \pm)$, let us write $\overline{\theta(i)} = k$. Then
\begin{displaymath}
  \begin{tikzcd}
      \Delta^n \ar[dr, "d_{\theta(1)^s}"] \ar[dddrrr, bend right, "1"']& & \Delta^n \ar[dl, "d_{\theta(1)^t}"'] \ar[dr, "d_{\theta(2)^s}"] \ar[dddr, "1", bend right]& & \ldots \ar[dl, "d_{\theta(2)^t}"'] \ar[dr, "d_{\theta(k)^s}"] \ar[dddl, "1"', bend left]& & \ar[dl, "d_{\theta(k)^t}"'] \Delta^n \ar[dddlll, "1", bend left]\\
      & \Delta^{n+1} \ar[ddrr, "s_{\overline{\theta(1)}}"'] & & \Delta^{n+1} \ar[dd, "s_{\overline{\theta(2)}}"] & & \Delta^{n+1} \ar[ddll, "s_{\overline{\theta(k)}}"] \\ \\
      & & & \Delta^n
  \end{tikzcd}
\end{displaymath}
commutes, so determines a map $j_\theta: \widehat{\theta} \to \Delta^n$. In addition, we can construct cocone with vertex $\mathbb{T}_1$:
\begin{displaymath}
  \begin{tikzcd}
      \Delta^n \ar[dr, "d_{\theta(1)^s}"] \ar[dddrrr, bend right, "{(\theta,0)}"']& & \Delta^n \ar[dl, "d_{\theta(1)^t}"'] \ar[dr, "d_{\theta(2)^s}"] \ar[dddr, "{(\theta,1)}", bend right]& & \ldots \ar[dl, "d_{\theta(2)^t}"'] \ar[dr, "d_{\theta(k)^s}"] \ar[dddl, "{(\theta,2)}"', bend left]& & \ar[dl, "d_{\theta(k)^t}"'] \Delta^n \ar[dddlll, "{(\theta, k)}", bend left]\\
      & \Delta^{n+1} \ar[ddrr, "a_1"'] & & \Delta^{n+1} \ar[dd, "a_2"] & & \Delta^{n+1} \ar[ddll, "a_k"] \\ \\
      & & & \mathbb{T}_1
  \end{tikzcd}
\end{displaymath}
In this diagram the $n$-simplices along the top correspond to positions $p$ in $\theta$, which correspond to maps $(\theta, p): \Delta^n \to \mathbb{T}_1$. Furthermore, the $i$th $(n+1)$-simplex in the second row comes  from the $i$th edge in $\theta$ and for this $(n+1)$-simplex we choose a position in $\theta \cdot s_{\overline{\theta(i)}}$: note that that in $\theta \cdot s_{\overline{\theta(i)}}$ the edge in question gets duplicated and the position we choose is the one inbetween the two copies (we will refer to this as a ``special position''). This determines maps $a_i: \Delta^{n+1} \to \mathbb{T}_1$ which make the diagram above commute, and hence we obtain a map $k_\theta: \widehat{\theta} \to \mathbb{T}_1$. Again using that $\widehat{\theta}$ is a colimit, it is not hard to see that with the resulting maps the square in the statement of the theorem commutes. It remains to show that it is a pullback.

Imagine that we start with a map $\alpha: \Delta^m \to \Delta^n$ and a position $p$ in $\theta \cdot \alpha$. Our first task is to show there is some $m$-simplex in $\widehat{\theta}$ which gets mapped to $\alpha$ and $p$ by the maps we have just constructed. Let us partition the edges in $\theta \cdot \alpha$ by grouping together those edges which come from the same edge in $\theta$: we will call these groups ``blocks''. In other words, they are the fibres of the map $v$ as in the pullback square
\begin{displaymath}
 \begin{tikzcd}
    \{ 1, \ldots, l \} \ar[r, "\theta \cdot \alpha"] \ar[d, "v"] & {[m]} \times \{ +, - \} \ar[d, "\alpha \times 1"] \\
   \{ 1, \ldots, k \} \ar[r, "\theta"] & {[n]} \times \{ +, - \}
 \end{tikzcd}
\end{displaymath}
determining $\theta \cdot \alpha$. For the position $p$ there are now two possibilities: the first is that $p$ is the boundary between two blocks (that is, the edges to the right and left of $p$ come from different edges in $\theta$; this is meant to include the case where $p$ is one of the outer positions). In that case there is some position $q$ in $\theta$ such that $p$ is the restriction of $q$ along $\alpha$. Then we have the map $(\theta, q): \Delta^n \to \mathbb{T}_1$ corresponding to the position $q$, which we can also regard as an $n$-simplex in $\widehat{\theta}$ lying over the identity in $\Delta^n$. Restricting this one along $\alpha$, we get an element in $\widehat{\theta}$ of the form we want. The other case is that $p$ belongs to the interior of one of the blocks with the edges to the left and right mapping to the same edge with label $i$ in $\theta$. In that case we can use that if $\alpha: [m] \to [n]$ is some map in $\mathbf{\Delta}$, and $\alpha(k) = \alpha(k+1) = i$, then there is a map $\beta: [m] \to [n+1]$ such that $\alpha = s_i.\beta$ and $\beta(k) \not= \beta(k+1)$. (Indeed, we can put $\beta(j) = \alpha(j)$ if $j \leq k$ and $\beta(j) = \alpha(j) + 1$ if $j \gt k$.) Then there is some special position $q$ in  $\theta \cdot s_i$ such that $p$ is the restriction of $q$ along $\beta$. This determines a map $a_j: \Delta^{n+1} \to \mathbb{T}_1$, which we can also regard as an $(n+1)$-simplex in $\widehat{\theta}$ lying over $s_i$ in $\Delta^n$. By restricting this $(n+1)$-simplex in $\widehat{\theta}$ along $\beta$, we get an $m$-simplex of the form we want.

It remains to show that the two maps $\widehat{\theta} \to \Delta^n$ and $\widehat{\theta} \to \mathbb{T}_1$ we constructed are jointly monic. Since every element in $\widehat{\theta}$ is a restriction of some $a_i$, it suffices to prove the following statement: if $a_u$ and $a_v$ with $u \leq v$ are $(n+1)$-simplices in $\widehat{\theta}$  corresponding to $(\theta \cdot s_i, p)$  and $(\theta \cdot s_j, q)$, respectively, and $\alpha, \beta: \Delta^m \to \Delta^{n+1}$ are such that $s_i.\alpha = s_j.\beta$ and $p \cdot \alpha = q \cdot \beta$, then $a_u \cdot \alpha = a_v \cdot \beta$ in $\widehat{\theta}$.

Let us first consider the case where $i \not= j$. For convenience, we will assume that $i \lt j$, and both edges $i$ and $j$ point to the right. Then we can take the following pullback:
\begin{displaymath}
 \begin{tikzcd}
  \Delta^{n+2} \ar[d, "s_{j+1}"] \ar[r, "s_i"] & \Delta^{n+1} \ar[d, "s_j"] \\
\Delta^{n+1} \ar[r, "s_i"] & \Delta^n.
 \end{tikzcd}
\end{displaymath}
So there is some map $\gamma$ such that $\alpha = s_{j+1}.\gamma$ and $\beta = s_i.\gamma$ and we obtain the equation $p \cdot s_{j+1}. \gamma = q \cdot s_i. \gamma$. Note that both $p \cdot s_{j+1}$ and $q \cdot s_i$ are distinct positions in the same traversal and what the equation is saying is that they become identified after restricting along $\gamma$. The crucial observation is that this can only happen if $\gamma$ removes the edge to the right of $p$ and the one to the left of $q$ and everything else inbetween. In particular, $\gamma$ factors through $d_i$ and $d_{j+2}$ and we can write $\gamma = d_{j+2}.d_i.\gamma' = d_i.d_{j+1}.\gamma'$, so that $\alpha = d_i.\gamma'$ and $\beta = d_{j+1}.\gamma'$. Since $\gamma'$ must omit all the labels of edges between inbetween $u$ and $v$, the following diagram commutes:
\begin{displaymath}
\begin{tikzcd}
& & &  \Delta^m \ar[dd, "\gamma'"] \ar[ddll, "\gamma'"] \ar[ddrr, "\gamma'"] \\ \\
& \Delta^n \ar[dl, "d_{p^t_u}"] \ar[dr, "d_{p^s_{u +1}}"] & & \Delta^n \ar[dl, "d_{p^t_{u+1}}"] \ar[dr, "d_{p^s_{u+2}}"] & & \ldots \ar[dr, "d_{p^s_v}"] \ar[dl, "d_{p^t_{u+3}}"] \\
 \Delta^{n+1} & & \Delta^{n+1} & & \Delta^{n+1} & & \Delta^{n+1}
\end{tikzcd}
\end{displaymath}
Since the composite along the left is $\alpha$ and composite along the right is $\beta$, this shows that $a_u \cdot \alpha = a_v \cdot \beta$ in $\widehat{\theta}$, as desired. (There are other cases to be considered: different directions and $i \gt j$, but it all works out.)

Let us now consider the case where $i = j$, but $u \lt v$. From $s_i.\alpha = s_i.\beta$, it follows that $\alpha^{-1}\{ i, i + 1 \} = \beta^{-1} \{ i, i + 1 \} = [k, l]$ for some $k$ and $l$, whilst on inputs outside the interval $[k, l]$ the functions $\alpha$ and $\beta$ are identical. In addition, we have the equation $p \cdot \alpha = q \cdot \beta$, which implies that $\alpha$ must omit $i$ and $\beta$ must omit $i+1$ (if, for simplicity, we assume that both edges point to the right; other cases are again similar). The reason is that outside the $i$-blocks (that is, outside the pairs of consecutive edges in $\theta \cdot s_i$ of the form $v^{-1}(e)$ with $\theta(e) = (i, \pm)$) restricting along $\alpha$ and $\beta$ acts in the same way. But also on these $i$-blocks $\alpha$ and $\beta$ act in very similar ways: both replace them by strings of edges of length $l-k$ with identical labels. The only difference is that they may disagree on how to shift the special position. Hence to make the two positions coincide one therefore has to shift the chosen position to the endpoints of the blocks and to eliminate the stuff inbetween. So $\alpha = d_i.\gamma$ and $\beta = d_{i+1}.\delta$ and hence $\gamma = \delta$. Since $\gamma = \delta$ must also omit every label inbetween the edges $u$ and $v$, we can again show (as in the previous case) that $a_u \cdot \alpha = a_v \cdot \beta$ in $\widehat{\theta}$.

Finally, if $u = v$, then $s_i.\alpha = s_i.\beta$ and $p \cdot \alpha = p \cdot \beta$. The former equation again implies that $\alpha^{-1}\{ i, i + 1 \} = \beta^{-1} \{ i, i + 1 \} = [k, l]$ for some $k$ and $l$, whilst on inputs outside the interval $[k, l]$ the functions $\alpha$ and $\beta$ are identical. But then the second equation implies that also on the $i$-blocks $\alpha$ and $\beta$ act in the same way and hence $\alpha$ and $\beta$ also agree on the interval $[k,l]$. Hence $\alpha = \beta$, and $a_u \cdot \alpha = a_v \cdot \beta$.
\end{proof}

\begin{coro}{geometricrealizfunctor}
Geometric realization is part of a functor
\[ \widehat{(-)}: \int_{\Delta} \mathbb{T}_0 \to \widehat{\Delta}. \]
In fact, writing $U: \int_{\Delta} \mathbb{T}_0 \to \widehat{\Delta}$ for the functor sending $(n, \theta)$ to $\Delta_n$, we may regard the $j_\theta$ from the previous theorem as the components of a cartesian natural transformation $j: \widehat{(-)} \to U$.
\end{coro}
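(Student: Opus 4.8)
The statement to prove is \refcoro{geometricrealizfunctor}, the claim that geometric realization extends to a functor $\widehat{(-)}: \int_\Delta \mathbb{T}_0 \to \widehat{\Delta}$, and that the maps $j_\theta$ of \reftheo{geometricrealizinpbk} assemble into a cartesian natural transformation $j: \widehat{(-)} \to U$.

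The plan is to obtain essentially everything for free from the pullback square~\eqref{eq:geometricrealizinpbk} established in \reftheo{geometricrealizinpbk}, using the universal property of pullbacks. First I would recall that the category of elements $\int_\Delta \mathbb{T}_0$ has as objects pairs $(n,\theta)$ with $\theta \in (\mathbb{T}_0)_n$ and as morphisms $(m,\theta\cdot\alpha) \to (n,\theta)$ the maps $\alpha: [m]\to[n]$ in $\mathbf{\Delta}$; it comes with the projection $U$ sending $(n,\theta)$ to $\Delta^n$ and $\alpha$ to the induced map $\alpha: \Delta^m \to \Delta^n$. The key observation is that for fixed $\theta$, the square~\eqref{eq:geometricrealizinpbk} exhibits $\widehat\theta$ as the pullback of the fixed map ${\rm cod}: \mathbb{T}_1 \to \mathbb{T}_0$ along $\theta: \Delta^n \to \mathbb{T}_0$; so if we are willing to choose pullbacks functorially (which we may, since $\widehat\Delta$ is a presheaf category and pullbacks are computed pointwise as genuine set-theoretic pullbacks), then the assignment $\theta \mapsto \widehat\theta$ is literally the pullback functor $\theta^*$ applied along the various components, and functoriality is the standard functoriality of pullback. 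Concretely: given a morphism $\alpha: (m,\theta\cdot\alpha) \to (n,\theta)$, the outer rectangle
\begin{displaymath}
  \begin{tikzcd}
    \widehat{\theta\cdot\alpha} \ar[r] \ar[d] & \widehat\theta \ar[r, "k_\theta"] \ar[d, "j_\theta"] & \mathbb{T}_1 \ar[d, "{\rm cod}"] \\
    \Delta^m \ar[r, "\alpha"'] & \Delta^n \ar[r, "\theta"'] & \mathbb{T}_0
  \end{tikzcd}
\end{displaymath}
has the composite bottom row equal to $\theta\cdot\alpha: \Delta^m \to \mathbb{T}_0$, and the right square is a pullback by \reftheo{geometricrealizinpbk} applied to $\theta$, while the outer rectangle is a pullback by \reftheo{geometricrealizinpbk} applied to $\theta\cdot\alpha$; hence by the pullback pasting lemma the left square is a pullback, and there is a unique map $\widehat{(-)}(\alpha): \widehat{\theta\cdot\alpha} \to \widehat\theta$ over $\alpha$ (i.e.\ with $j_\theta \circ \widehat{(-)}(\alpha) = \alpha \circ j_{\theta\cdot\alpha}$) and compatible with the $k$'s. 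Uniqueness of this induced map immediately gives both that $\widehat{(-)}$ preserves identities and composites (two composable morphisms induce, by uniqueness, the same map as their composite), so $\widehat{(-)}$ is a functor; and the very equation defining $\widehat{(-)}(\alpha)$ says precisely that $j = (j_\theta): \widehat{(-)} \Rightarrow U$ is a natural transformation. That $j$ is cartesian is exactly the statement that the left square above is a pullback, which we have just observed.

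The only genuine point that needs a word of care — and which I expect to be the main (minor) obstacle — is to check that the map $\widehat{(-)}(\alpha)$ produced by the universal property coincides with the ``hands-on'' restriction operation one would naively write down on the geometric realizations (restricting the simplices $(\theta,p)$ and $a_i$ that generate $\widehat\theta$ along $\alpha$, in the manner already described in the surjectivity part of the proof of \reftheo{geometricrealizinpbk}). This is really a bookkeeping matter: one verifies that the naive restriction map makes the relevant triangles commute over $\Delta^m$ and lies over ${\rm cod}$, and then invokes the joint monicity of $(j_{\theta\cdot\alpha}, k_{\theta\cdot\alpha})$ — also established in the proof of \reftheo{geometricrealizinpbk} — to conclude it is the unique such map. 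Since we only ever need the functor abstractly (via the pullback characterization), we can in fact dispense with the concrete description altogether and simply define $\widehat{(-)}$ by choosing pullbacks; I would phrase the proof that way and merely remark that it agrees with the concrete picture. No new estimates or constructions are required beyond \reftheo{geometricrealizinpbk} and the elementary pasting/uniqueness properties of pullbacks in a presheaf category.
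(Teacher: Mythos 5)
Your proposal is correct and follows essentially the same route as the paper: the paper also obtains $\widehat{\alpha}$ from the universal property of the pullback square of \reftheo{geometricrealizinpbk} for $\theta$, pastes it against the pullback for $\theta\cdot\alpha$ to see the left square is cartesian, and lets uniqueness take care of functoriality and naturality. Your additional remark about reconciling the abstract induced map with the hands-on restriction is harmless but, as you note yourself, not needed.
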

\begin{proof}
If $\theta$ is an $n$-dimensional traversal and $\alpha: \Delta^m \to \Delta^n$ is some map in $\Delta$, then we have two pullbacks
\begin{displaymath}
  \begin{tikzcd}
    \widehat{\theta \cdot \alpha} \ar[rr, bend left, "k_{\theta \cdot \alpha}"] \ar[r, dotted, "\widehat{\alpha}"] \ar[d, "j_{\theta \cdot \alpha}"] & \widehat{\theta} \ar[d, "j_\theta"] \ar[r, "k_\theta"] & \mathbb{T}_1 \ar[d, "{\rm cod}"] \\
    \Delta^m \ar[r, "\alpha"] & \Delta^n \ar[r, "\theta"] & \mathbb{T}_0.
  \end{tikzcd}
\end{displaymath}
Therefore there exists a dotted arrow, turning the left hand square into a pullback as well.
\end{proof}

\begin{coro}{alternativedescriptionofM}
 For the simplicial Moore path functor $M$ we have
 \[ (MX)_n \cong \sum_{\theta \in \mathbb{T}_0(n)} {\rm Hom}_{\widehat{\Delta}}(\widehat{\theta}, X). \]
 Therefore the description given in this paper is equivalent to the one given in \cite{vdBerg-Garner}.
\end{coro}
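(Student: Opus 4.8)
The plan is to unwind the definition of $M = P_{\rm cod}$ and apply the pullback square of \reftheo{geometricrealizinpbk}. By the Yoneda lemma together with the explicit description of polynomial functors recalled above (namely $P_f(X) = \{(a\in A, t\co B_a\to X)\}$, where a fibre $B_a$ at stage $[n]$ means the pullback $\Delta^n\times_A B$), an $n$-simplex of $MX = P_{\rm cod}X$ is exactly a pair $(\theta, t)$ where $\theta \in \mathbb{T}_0(n)$ and $t\co {\rm cod}^{-1}(\theta)\to X$ is a map of simplicial sets, ${\rm cod}^{-1}(\theta)$ denoting the pullback of ${\rm cod}\co\mathbb{T}_1\to\mathbb{T}_0$ along $\theta\co\Delta^n\to\mathbb{T}_0$. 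Thus
\[ (MX)_n \;\cong\; \sum_{\theta\in\mathbb{T}_0(n)} {\rm Hom}_{\widehat{\Delta}}\!\bigl({\rm cod}^{-1}(\theta),\,X\bigr). \]

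Now \reftheo{geometricrealizinpbk} identifies ${\rm cod}^{-1}(\theta)$ with the geometric realization $\widehat{\theta}$ (the comparison isomorphism being $k_\theta$, with $j_\theta$ recording the structure map to $\Delta^n$). Substituting this identification gives the formula in the statement. It then remains to verify that the resulting bijection is natural in $[n]$ and in $X$, so that it is an isomorphism of functors $\widehat{\Delta}\to\widehat{\Delta}$. Naturality in $X$ is immediate, since on both sides the action on a map $X\to X'$ is post-composition. For naturality in $[n]$ one invokes \refcoro{geometricrealizfunctor}: the simplicial structure on the right-hand side is by definition $(\theta,t)\cdot\alpha = (\theta\cdot\alpha,\, t\circ\widehat{\alpha})$ for $\alpha\co[m]\to[n]$, and the commuting square in the proof of that corollary exhibits $\widehat{\theta\cdot\alpha}$ as the pullback of $k_\theta$ along $\alpha$, compatibly with ${\rm cod}$; since the simplicial structure on $(MX)_n$ coming from the polynomial functor $P_{\rm cod}$ is precisely restriction along these pullback projections (this is the functoriality of the codomain fibration underlying $P_{\rm cod}$), the two actions coincide.

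Finally, for the comparison with \cite{vdBerg-Garner}: there the simplicial Moore path functor is constructed as a parametric right adjoint whose generic factorisation runs through $\mathbb{T}_0 = M1$ (\refrema{onM1andT0}), with fibres obtained by homming out of exactly the shapes $\widehat{\theta}$; concretely its value at $X$ is, at level $n$, the coproduct $\sum_{\theta\in\mathbb{T}_0(n)}{\rm Hom}_{\widehat{\Delta}}(\widehat{\theta},X)$ with simplicial action induced by $\widehat{(-)}$. This is precisely the formula just established, so the two definitions of the functor $M$ agree. Tracing the identification $\widehat{\theta}\cong{\rm cod}^{-1}(\theta)$ through the various structure maps one also recovers the natural transformations $r,s,t,\mu,\Gamma,\Gamma^*$ of \cite{vdBerg-Garner}, but for the present statement only the agreement of the underlying functors is required.

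The step I expect to need the most care is the second paragraph: checking that the identification ${\rm cod}^{-1}(\theta)\cong\widehat{\theta}$ is compatible with the simplicial action on both sides. Everything hinges on \refcoro{geometricrealizfunctor}, which packages exactly this compatibility; the rest is routine bookkeeping with the polynomial-functor formalism and presents no genuine obstacle.
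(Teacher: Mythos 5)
Your proof is correct and takes essentially the same route as the paper: the paper simply cites the standard presheaf-level description of polynomial functors (from Moerdijk--Palmgren) where you re-derive it via Yoneda and the $\Pi_f \dashv f^*$ adjunction, and then both arguments conclude by identifying the fibre ${\rm cod}^{-1}(\theta)$ with $\widehat{\theta}$ via \reftheo{geometricrealizinpbk}. Your additional check that the identification respects the simplicial action, using \refcoro{geometricrealizfunctor}, is left implicit in the paper's one-line proof but is a sound piece of bookkeeping.
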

\begin{proof}
 This is immediate from the following description of polynomial functors in presheaf categories (see \cite{moerdijkpalmgren00}, for instance): if $f: B \to A$ is a morphism of presheaves over $\mathbb{C}$, then
 \[ P_f(X)(C) = \sum_{a \in A(C)} {\rm Hom}_{\widehat{\mathbb{C}}}(B_a, X), \]
 where $B_a$ is the pullback
 \begin{displaymath}
   \begin{tikzcd}
     B_a \ar[r] \ar[d] & B \ar[d, "f"] \\
     yC \ar[r, "a"'] & A.
   \end{tikzcd}
 \end{displaymath}
\end{proof}

\begin{rema}{compwithEricsPhDthesis} As said, a more abstract proof of the previous theorem
  and two corollaries can be found in the second author's PhD thesis
  \cite{faber19}. It relies on proving the categorical fact that when 
  \(P: \C^{\op} \to \Sets\) is a presheaf, \(U: \E \to \Sets\) is its category of elements,
  and \(F : \E \to [\C^{\op}, \Sets]\) is a functor with a \emph{cartesian} natural transformation
  \(\gamma : F \Rightarrow y_{U\circ (-)}\), then the square
\[\begin{tikzpicture}[baseline={([yshift=-.5ex]current bounding box.center)}]
    \matrix (m) [matrix of math nodes, row sep=2em,
    column sep=3em]{ 
      |(f)| {F(e)} & |(c)| {\operatorname{colim} F} \\
      |(y)| {y_C}  & |(p)| {P \cong \operatorname{colim} y_{U \circ (-)}}
    \\};
    \begin{scope}[every node/.style={midway,auto,font=\scriptsize}]
      \path[->]
        (f) edge (c)
        (f) edge (y)
        (c) edge (p)
        (y) edge (p);
      \end{scope}
    \end{tikzpicture}
  \]
  induced by the cocones (horizontal) and \(\gamma\) (vertical) is a pullback square.
  This square corresponds to the square~\eqref{eq:geometricrealizinpbk}
  in \reftheo{geometricrealizinpbk}.
\end{rema}

For this reason we can think of the $n$-simplices in $MX$ as pairs consisting of an $n$-dimensional traversal $\theta$ and a morphism $\pi: \widehat{\theta} \to X$ of presheaves. We think of these objects as \emph{Moore paths} in $X$. These Moore path generalise ordinary paths in $X$, that is, maps $\mathbb{I} \to X$, in the following way.

We have the 0-dimensional traversals $\iota^+ = <(0,+)>$ and $\iota^- = <(0,-)>$ which correspond to two global sections
\[ \iota^+, \iota^-: 1 \cong \Delta^0 \to \mathbb{T}_0. \]
Since $\mathbb{I} = \Delta^1$ is the geometric realization of both of these traversals, the previous theorem tells us that we have pullback squares
\begin{displaymath}
	\begin{tikzcd}
		\mathbb{I} \ar[d] \ar[r] & \mathbb{T}_1 \ar[d, "{\rm cod}"] \\
		1 \ar[r, "\iota^+ / \iota^-"'] & \mathbb{T}_0.
	\end{tikzcd}
\end{displaymath}
Regarding these pullback squares as morphisms of polynomials, we obtain two monic cartesian natural transformations $\iota^+, \iota^-: X^\mathbb{I} \to MX$. Furthermore, if we write
\[ s,t: X^\mathbb{I} \to X \]
for the maps induced by $d_1: \Delta^0 \to \Delta^1$ and $d_0: \Delta^0 \to \Delta^1$, respectively, then we have that the following diagrams serially commute:
\begin{displaymath}
	\begin{array}{cc}
		\begin{tikzcd}
			(-)^\mathbb{I} \ar[rr, "\iota^+"] \ar[dr, shift left =  1ex, "s"] \ar[dr, shift right = 1ex, "t"'] & & M \ar[dl, shift left = 1ex, "t"] \ar[dl, shift right = 1ex, "s"'] \\
			& I
		\end{tikzcd} &
		\begin{tikzcd}
		(-)^\mathbb{I} \ar[rr, "\iota^-"] \ar[dr, shift left = 1ex, "t"] \ar[dr, shift right = 1ex, "s"'] & & M \ar[dl, shift left = 1ex, "t"] \ar[dl, shift right = 1ex, "s"'] \\
		& I
		\end{tikzcd}
	\end{array}
\end{displaymath}
(So $\iota^+$ preserves source and target, while $\iota^-$ reverses them.)

\begin{rema}{onprisms} Another way of seeing that the usual path object $X^\mathbb{I}$ is a subobject of $MX$ is as follows. We can take the pullback of the square above along a map from a representable $\Delta^n \to 1$:
\begin{displaymath}
\begin{tikzcd}
\Delta^n \times \mathbb{I} \ar[r] \ar[d] & \mathbb{I} \ar[d] \ar[r] & \mathbb{T}_1 \ar[d, "{\rm cod}"] \\
\Delta^n \ar[r] & 1 \ar[r, "\iota^+ / \iota^-"'] & \mathbb{T}_0.
\end{tikzcd}
\end{displaymath}
Indeed, what this says is that $\Delta^n \times \Delta^1$ is the geometric realisation of the traversals
\[ <(n,+), (n-1,+), \ldots, (2,+), (1,+), (0,+) > \mbox{ and } <(0,-), (1,-), (2,-), \ldots, (n-1,-), (n,-) >. \]
Indeed, this reflects the well-known decomposition of the ``prism'' $\Delta^n \times \Delta^1$ as the union of $n+1$ many $(n+1)$-simplices; from our present point of view, this means that  $\Delta^n \times \Delta^1$ occurs as the geometric realisation of these traversals. From this and the description of $M$ in \refcoro{alternativedescriptionofM}, one can also see that $X^\mathbb{I}$ embeds in $MX$.
\end{rema}

\section{Hyperdeformation retracts in simplicial sets}

In the previous section we have shown that the endofunctor $M$ equips the category of simplicial sets with symmetric Moore structure. Consequently, the category of simplicial sets carries an AWFS, with the coalgebras for the comonad being called the \emph{HDRs} and the algebras for the monad being called the \emph{naive Kan fibrations}. The purpose of this section is to take a closer look at this AWFS.

By definition, the naive Kan fibrations are generated by the large double category of HDRs. One important result in this section is that they are also generated by a small (countable) double category of HDRs, and that the naive fibrations Kan form a local notion of fibred structure. It should be apparent from the proofs that similar results would be true for naive left and right fibrations as well (see \refdefi{naivefibrinsimpset}).

\subsection{HDRS are effective cofibrations} Let us start by proving that HDRs are effective cofibrations. As we have seen in \reflemm{HDRinDominance}, for this it suffices to prove the following result:

\begin{prop}{onecofibr}
The map $r_X: X \to MX$ is always an effective cofibration.
\end{prop}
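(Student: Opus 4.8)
The plan is to show that for each $n$, the map $(r_X)_n : X_n \to (MX)_n$ is a complemented monomorphism in the subobject lattice of $(MX)_n$, i.e.\ that for an arbitrary $n$-simplex of $MX$ we can decide whether it lies in the image of $r_X$. Recall from \refcoro{alternativedescriptionofM} that an $n$-simplex of $MX$ is a pair $(\theta, \pi)$ consisting of an $n$-dimensional traversal $\theta$ together with a morphism $\pi : \widehat{\theta} \to X$. Under this description, $r_X$ sends $x \in X_n$ to the pair whose traversal is the empty traversal $0$ of length $0$ (whose geometric realisation is $\Delta^n$, so that $\pi$ corresponds to $x$ itself). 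So the image of $(r_X)_n$ consists exactly of those pairs $(\theta, \pi)$ for which $\theta$ is the empty $n$-dimensional traversal.

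First I would observe that the statement ``$\theta$ is the empty traversal'' is decidable: a traversal is by definition a function $\{1,\dots,l\} \to [n]\times\{+,-\}$ for some $l \in \mathbb{N}$, and whether $l = 0$ or $l > 0$ is a decidable property. Hence for a given $(\theta,\pi) \in (MX)_n$ we can decide whether it is of the form $(r_X)_n(x)$, and when it is, $x = \pi$ is uniquely determined because $(r_X)_n$ is a monomorphism (this last fact is already recorded in the excerpt: $r$ is a cartesian natural transformation since $r^- = {!}$ is iso, and in particular $s \cdot r = 1$ forces it to be monic). This shows each $(r_X)_n$ is complemented in $(MX)_n$, i.e.\ $r_X$ is a locally decidable monomorphism, which is the definition of an effective cofibration (\refdefi{cofibration}).

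There is one subtlety I would need to address: for this to define an element of the subobject lattice we must check naturality, i.e.\ that the subset ``traversal is empty'' of $(MX)_n$ is stable under the simplicial operators $(MX)(\alpha)$ for $\alpha : [m]\to[n]$. But the action of $\alpha$ on a traversal, as described in the section on the simplicial poset of traversals, replaces each edge labelled $i$ by $\#\alpha^{-1}(i)$ many edges; in particular the empty traversal maps to the empty traversal, and conversely if $\theta\cdot\alpha$ is empty then $\theta$ itself was empty (since the set of edges of $\theta\cdot\alpha$ maps onto the set of edges of $\theta$ via the map $v$ in the pullback square defining $\theta\cdot\alpha$). So the predicate is natural and we genuinely have a complemented subobject. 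Alternatively, and perhaps more cleanly, one can simply invoke the fact that $r$ is cartesian: $r_X$ is the pullback of $r_1 : 1 \to M1$ along $X \to 1$, and since locally decidable monomorphisms are stable under pullback (they form a dominance, \reftheo{cofinsimplsetsdominance}), it suffices to check that $r_1 : 1 \to M1 \cong \mathbb{T}_0$ is an effective cofibration; but the image of $r_1$ at level $n$ is the singleton consisting of the empty $n$-traversal, whose complement ``$l > 0$'' is decidable. I expect no real obstacle here; the only thing requiring a moment's care is making the decidability argument at the level of traversals precise, and the cartesianness of $r$ makes even that optional.
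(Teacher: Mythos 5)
Your main argument is correct: since the effective cofibrations in simplicial sets are by definition the locally decidable monomorphisms (\refdefi{cofibration}), it suffices to check levelwise that the image of $(r_X)_n$ --- the pairs $(\theta,\pi)$ whose traversal is empty --- is a complemented subset of $(MX)_n$, and decidability of ``$\theta$ has length $0$'' gives exactly that. This is a more direct route than the paper's. The paper first uses cartesianness of $r$ to reduce to $X=1$ (your ``cleaner alternative'' is precisely this reduction), but then, rather than merely observing decidability, it exhibits an explicit classifying map $\rho\colon M1\cong\mathbb{T}_0\to\Sigma$ with $\rho_n(\theta)=\{e\subseteq\{0,\dots,n\} \, : \, e\cap\mathrm{Im}(\theta)=\emptyset\}$, i.e.\ the sieve of faces on which $\theta$ restricts to the empty traversal. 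The reason the paper does this is naturality of the classifying map: the naive assignment ``maximal sieve if $\theta$ is empty, empty sieve otherwise'' is not simplicial, because restriction along a face map can turn a nonempty traversal into the empty one, so the classifier must record the whole extent of triviality. Your version sidesteps this by leaning on \reftheo{cofinsimplsetsdominance}, which already provides the classifier for all locally decidable monos, so levelwise decidability is all you owe; what the paper's computation buys in exchange is the concrete classifying map.

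One small correction: in your naturality aside, the converse claim --- that $\theta\cdot\alpha$ empty implies $\theta$ empty --- is false, and the justification fails because the comparison map $v$ in the pullback defining $\theta\cdot\alpha$ need not be surjective: edges of $\theta$ whose label lies outside the image of $\alpha$ are simply deleted (for instance $\theta=\langle (0,+)\rangle$ and $\alpha=d_0$ give $\theta\cdot\alpha=\langle\rangle$). Fortunately you never need this direction: for the image of the natural split mono $r_X$ to be a subpresheaf only the forward implication (empty restricts to empty) is relevant, and that is automatic, so the error does not affect your proof.
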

\begin{proof}
In fact, since
\diag{ X \ar[r]^{!} \ar[d]_{r_X} & 1 \ar[d]^{r_1} \\
MX \ar[r]_{M!} & M1 }
is a pullback ($r$ is cartesian), it suffices to prove this statement for $X = 1$. In other words, we have to define a map $\rho: \mathbb{T}_0 \cong M1 \to \Sigma$ such that
\diag{ 1 \ar[r]^{!} \ar[d]_{r_1} & 1 \ar[d]^{\top} \\
M1 \ar[r]_\rho & \Sigma }
is a pullback. We set
\[ \rho_n(\theta) = \{ e \subseteq \{ 0,1,\ldots,n\} \, : \, e \cap {\rm Im}(\theta) = \emptyset \}. \]
(So we take those subsets $e$ for which no $i \in e$ occurs as $(i,+)$ or $(i, -)$ in the image of the traversal $\theta$. This happens precisely when the restriction of the traversal $\theta$ along $e$ is the unique traversal of length 0.) This is easily seen to be correct.
\end{proof}

\subsection{HDRs as internal presheaves} In the sequel we will often have to prove that certain maps are HDRs. It turns out that for this purpose it is often convenient to use an equivalent description of the category of HDRs and morphisms of HDRs. Indeed, we have:

\begin{theo}{HDRpreshofT}
The category of HDRs in simplicial sets is equivalent to the category of internal presheaves on $\mathbb{T}$.
\end{theo}
\begin{proof}
  This is immediate from the fact that ${\rm cod}: {\rm HDR} \to M{\rm -Coalg}$ is an equivalence (see \refprop{HDRsareMcoalgebras}) and the fact that $M$-coalgebras are equivalent to internal presheaves over $\mathbb{T}$.
\end{proof}

Let us unwind a bit more what this means and explain how one passes from HDRs to internal presheaves over $\mathbb{T}$ and back. If $(i: A \to B, j, H)$ is an HDR, then we can transpose
\[ B \to MB = \sum_{\theta \in \mathbb{T}_0} B^{{\rm cod}^{-1}(\theta)} \]
to a pair of maps $d: B \to \mathbb{T}_0$ and $\rho: B \prescript{d}{}{\times}_{\mathbb{T}_0}^{\rm cod} \mathbb{T}_1 \to B$ which satisfy the axioms for an internal presheaf over $\mathbb{T}$. Conversely, suppose one is given an internal presheaf over $\mathbb{T}_0$, that is, a pair of maps $d: B \to \mathbb{T}_0$ and $\rho: B \times_{\mathbb{T}_0} \mathbb{T}_1 \to B$ satisfying the right equations. The map $H: B \to MB$ is the transpose of $\rho$, while the inclusion $i: A \to B$ is the pullback
\begin{displaymath}
 \begin{tikzcd}
    A \ar[r, "i"] \ar[d, "!"] & B \ar[d, "d"] \\
    1 \ar[r, "0"] & \mathbb{T}_0,
 \end{tikzcd}
\end{displaymath}
so $A$ is the fibre over the initial object in $\mathbb{T}$. Finally, $t.H: B \to B$ sends an element in $B$ over $\theta$ to its restriction along the unique map $0 \to \theta$, and hence $j$ does the same.

In the sequel we will often use this internal presheaf perspective on HDRs (see in particular the proofs of \refprop{genericHDR} and \refprop{inclusionsmallHDRsintobigHDRs}). The reason for this is that presheaves are easier to manipulate than HDRs, because they get rid of the exponential in the definition of $MB$ and they (seemingly) contain less data. This becomes quite apparent when one considers the generic HDR from \refprop{genericHDR} which we will heavily exploit to construct more examples of HDRs. Therefore we will now translate the main constructions on HDRs (pullback, pushout, and vertical composition) into the language of internal presheaves.

\begin{description}
  \item[Pullback] Suppose $d: B \to \mathbb{T}_0$ and $\rho: B \times_{\mathbb{T}_0} \mathbb{T}_1 \to B$ is an internal presheaf and $A$ is the fibre over 0 (that is, the pullback as above). If we are given a map $a: A' \to A$, then we obtain a new presheaf $(B', d', \rho')$ by pullback as follows. First of all we take the pullback
  \begin{displaymath}
    \begin{tikzcd}
      B' \ar[r, "b"] \ar[d, "j'"] & B \ar[d, "j"] \\
      A' \ar[r, "a"] & A.
    \end{tikzcd}
  \end{displaymath}
  If $H: B \to MB$ is the map induced by $d,\rho$, then $H'$ is the unique map filling
  \begin{displaymath}
    \begin{tikzcd}
      B' \ar[dr, "H'", dotted] \ar[dd, "{(j',M!.H.b)}"'] \ar[rr, "b"] & & B \ar[dr, "H"] \\
      & MB' \ar[dd, "Mj'"] \ar[rr, "Mb"] & & MB \ar[dd, "Mj"] \\
      A' \times M1 \ar[dr, "\alpha"] \\
      & MA' \ar[rr, "Ma"] & & MA.
    \end{tikzcd}
  \end{displaymath}
  So $d' := M!.H'= M!.H.b=d.b: B' \to \mathbb{T}_0$. Moreover, $\rho'$ will be the unique dotted arrow filling
  \begin{displaymath}
    \begin{tikzcd}
      B' \times_{\mathbb{T}_0} \mathbb{T}_1 \ar[r, dotted, "\rho'"'] \ar[rr, bend left, "j'.\pi_1"] \ar[d,"b \times 1"] & B' \ar[d, "b"] \ar[r, "j'"'] & A' \ar[d, "a"] \\
      B \times_{\mathbb{T}_0} \mathbb{T}_1 \ar[r, "\rho"] \ar[rr, bend right, "j.\pi_1"] & B \ar[r, "j"] & A.
    \end{tikzcd}
  \end{displaymath}
  \item[Pushout] Again, suppose $d: B \to \mathbb{T}_0$ and $\rho: B \times_{\mathbb{T}_0} \mathbb{T}_1 \to B$ is an internal presheaf and $A$ is the fibre over 0. If we are given a map $a: A \to A'$, we obtain a presheaf $(B',d',\rho')$ by pushout, as follows. First, we take the pushout:
  \begin{displaymath}
    \begin{tikzcd}
      A \ar[r, "a"] \ar[d, "i"] & A' \ar[d, "i'"] \\
      B \ar[r, "b"] & B'.
    \end{tikzcd}
  \end{displaymath}
  If $H: B \to MB$ is the map induced by $d, \rho$, then $H': B' \to MB'$ is the unique dotted arrow filling
  \begin{displaymath}
    \begin{tikzcd}
      A \ar[r, "a"] \ar[d, "i"] & A' \ar[d, "i'"] \ar[dr, "i'"] \\
      B \ar[r, "b"] \ar[dr, "H"] & B' \ar[dr, "H'", dotted] & B' \ar[d, "r"] \\
      & MB \ar[r, "Mb"] & MB'.
    \end{tikzcd}
  \end{displaymath}
  This means that $d': B' \to \mathbb{T}_0$ is the unique dotted arrow filling
  \begin{displaymath}
    \begin{tikzcd}
      A \ar[r, "a"] \ar[d, "i"] & A' \ar[d, "i'"] \ar[dr, "!"] \\
      B \ar[r, "b"] \ar[drr, "d", bend right] & B' \ar[dr, "d'", dotted] & 1 \ar[d, "0"] \\
      & & \mathbb{T}_0,
    \end{tikzcd}
  \end{displaymath}
  whilst $\rho'$ is the unique map filling
  \begin{displaymath}
    \begin{tikzcd}
      A \ar[r, "a"] \ar[d, "\cong"] & A' \ar[d, "\cong"] \ar[rddd, "i'", bend left] \\
      A \times_{\mathbb{T}_0} \mathbb{T}_1 \ar[r, "a \times 1"] \ar[d, "i \times 1"] & A' \times_{\mathbb{T}_0} \mathbb{T}_1 \ar[d, "i' \times 1"]  \\
      B \times_{\mathbb{T}_0} \mathbb{T}_1 \ar[r, "b \times 1"] \ar[dr, "\rho"] & B' \times_{\mathbb{T}_0} \mathbb{T}_1 \ar[dr, "\rho'", dotted] \\
      & B \ar[r, "b"] & B'.
    \end{tikzcd}
  \end{displaymath}
  \item[Vertical composition] Suppose $(i_0: A \to B, j_0, H_0)$ and $(i_1: B \to C, j_1, H_1)$ are HDRs coming from presheaf structures $(d_0,\rho_0)$ and $(d_1, \rho_1)$. Then the vertical composition is given by \[ (i_1.i_0: A \to C, j_0.j_1, \mu.(H_1, Mi_1.H_0.j_1)). \]
  So this means we have a function $d_2: C \to \mathbb{T}_0$, which is:
  \begin{eqnarray*}
    d_2 & = & M!.\mu.(H_1, Mi_1.H_0.j_1) \\
    & = & \mu.(M!.H_1, M!.Mi_1.H_0.j_1) \\
    & = & \mu.(d_1,d_0.j_1) \\
    & = & d_1 * d_0.j_1.
  \end{eqnarray*}
  In addition, we need a morphism $\rho_2: C \times_{\mathbb{T}_0} \mathbb{T}_1 \to C$. Here the domain can also be computed in two steps by taking the following two pullbacks:
  \begin{displaymath}
    \begin{tikzcd}
       C \times_{\mathbb{T}_0} \mathbb{T}_1 \cup C \times_{\mathbb{T}_0} \mathbb{T}_1 \ar[rr, "{[\pi_1,\pi_1]}"] \ar[d, "{(\pi_2,d_0.j_1.\pi_1) \cup (d_1 \times 1)}"'] & & C \ar[d, "{(d_1, d_0.j_1)}"]  \\
      \mathbb{T}_1 \times \mathbb{T}_0 \cup \mathbb{T}_0 \times \mathbb{T}_1 \ar[d, "\mu^*"] \ar[rr, "{[{\rm cod} \times 1, 1 \times {\rm cod}]}"] & & \mathbb{T}_0 \times \mathbb{T}_0 \ar[d, "*"] \\
      \mathbb{T}_1 \ar[rr, "{\rm cod}"] & & \mathbb{T}_0.
    \end{tikzcd}
  \end{displaymath}
  Hence we can define $\rho_2$ as $[\rho_1,i_1.\rho_0.(j_1 \times 1)] : C \times_{\mathbb{T}_0} \mathbb{T}_1 \cup C \times_{\mathbb{T}_0} \mathbb{T}_1 \to C$.
\end{description}

We finish this subsection with the proof that the category of HDRs contains a ``generic'' element.
\index{generic HDR}

\begin{prop}{genericHDR} The triple $({\rm id}^*: \mathbb{T}_0 \to \mathbb{T}_1, {\rm cod}, \widehat{\rm comp})$ is an HDR, which is generic in the following sense: for any HDR $(i: A \to B, j, H)$ there exists a pullback  $(i', j', H')$ of the generic one together with a morphism of HDRs $(i',j',H') \to (i, j, H)$ which is an epimorphism on the level of presheaves.
\end{prop}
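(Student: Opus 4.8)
The plan is to verify the claim directly using the internal presheaf picture of HDRs established in \reftheo{HDRpreshofT}. First I would check that $({\rm id}^*: \mathbb{T}_0 \to \mathbb{T}_1, {\rm cod}, \widehat{\rm comp})$ is indeed an HDR. Under the correspondence from \refprop{HDRsareMcoalgebras}, an HDR with codomain $B$ is the same as a $\mathbb{T}$-presheaf structure on $B$, i.e.\ a pair $(d: B \to \mathbb{T}_0, \rho: B \times_{\mathbb{T}_0} \mathbb{T}_1 \to B)$ satisfying the internal presheaf axioms; here $B = \mathbb{T}_1$, the structure map is $d = {\rm cod}: \mathbb{T}_1 \to \mathbb{T}_0$ (the initial-segment ordering would give ${\rm dom}^*$, but for $\Gamma$ we use ${\rm cod}$ and the final-segment order), and the action $\rho = \widehat{\rm comp}$ is composition in the internal category $\mathbb{T}$. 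That this tuple satisfies the presheaf axioms is exactly the statement that $\mathbb{T}_1$ is the \emph{representable} internal presheaf on $\mathbb{T}$ (the internal analogue of the Yoneda embedding applied to the codomain fibration), and the corresponding inclusion $i' = {\rm id}^*: \mathbb{T}_0 \to \mathbb{T}_1$ is the fibre over the initial object $0: 1 \to \mathbb{T}_0$, using that $\mathbb{T}$ has $0$ as a strong initial object (the only map into $0$ is the identity), as exploited repeatedly in Section~\ref{sec:AWFSFromM}. The retraction $j' = t.H'$ sends a pointed traversal to its restriction along the unique map $0 \to \theta$.

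Next I would establish genericity. Given an arbitrary HDR $(i: A \to B, j, H)$, translate it to a presheaf structure $(d: B \to \mathbb{T}_0, \rho: B \times_{\mathbb{T}_0} \mathbb{T}_1 \to B)$ with $A$ the fibre of $d$ over $0$. The map $d: B \to \mathbb{T}_0$ is a morphism into the object of objects of $\mathbb{T}$, so I would pull back the generic HDR along $d$ — more precisely, along the map it induces on fibres-over-$0$. Concretely, form the pullback
\[
  \begin{tikzcd}
    B' \ar[r] \ar[d, "i'_d"'] & \mathbb{T}_0 \ar[d, "{\rm id}^*"] \\
    ? \ar[r] & \mathbb{T}_1,
  \end{tikzcd}
\]
but it is cleaner to note that $(d, \rho)$ being a presheaf morphism means $d$ itself is a morphism of $\mathbb{T}$-presheaves from $B$ (with its structure) to the representable presheaf $\mathbb{T}_1$. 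Pulling the generic HDR back along the restriction $d|_A: A \to \mathbb{T}_0$ produces, by the pullback formula for HDRs spelled out in Section~\ref{sec:AWFSFromM} (and \refcoro{pbsqofHDRalongcartsq}), a pullback HDR $(i', j', H')$ whose codomain presheaf is $\mathbb{T}_1 \times_{\mathbb{T}_0} (\text{something})$; and then the map $d$ itself furnishes a morphism of presheaves from $B$ to this pullback which is compatible with all the structure, hence a morphism of HDRs. The content to verify is that this comparison map is epi levelwise: this is where I expect the real work, and the argument should be that every element of $B$ over a traversal $\theta$ is the $\rho$-image of the pair consisting of that element together with the identity pointed traversal on $\theta$, i.e.\ $\rho(b, {\rm id}_{d(b)}) = b$ by the unit law for the presheaf action, so the comparison map (which on the pullback side has the form ``element of $A$ plus a pointed traversal'') is surjective by sending $b \in B$ over $\theta$ to the pair $(j(b) \in A,\ (\theta, 0))$ mapped appropriately.

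The main obstacle, I expect, is bookkeeping rather than a deep difficulty: one must correctly identify which pullback of the generic HDR is meant (the base of the pullback is $A$, not $B$, since the generic HDR's fibre-over-$0$ is $\mathbb{T}_0$, not a point), and then check carefully that the canonical comparison $B \to B'$ is both a morphism of HDRs and levelwise surjective, unwinding the pushout/pullback formulas for $H$ and $\rho$. A clean way to organise this is: (1) observe that giving an HDR on $B$ is giving a $\mathbb{T}$-presheaf structure; (2) observe that $\mathbb{T}_1$, with ${\rm cod}$ and $\widehat{\rm comp}$, is the representable $\mathbb{T}$-presheaf, so every $\mathbb{T}$-presheaf $B$ admits a (generally non-mono) map to a copower/pullback of the representable that is epi — this is just the standard fact that every presheaf is covered by representables, internalised; (3) translate ``covered by the representable'' back through \reftheo{HDRpreshofT} into the stated assertion about pullbacks of the generic HDR. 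Steps (1) and (3) are immediate from results already in the paper; step (2) is the one routine verification, and I would present it by exhibiting the explicit section-wise surjection described above using the unit law $\rho(b, {\rm id}) = b$.
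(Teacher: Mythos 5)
Your overall strategy --- translate HDRs into internal presheaves over $\mathbb{T}$ via \reftheo{HDRpreshofT}, pull back the generic HDR along data extracted from $(d,\rho)$, and get the epimorphism from the unit law of the action --- is the same route the paper takes, but two of your concrete identifications break the argument. First, the presheaf structure map of the generic HDR is ${\rm dom}$, not ${\rm cod}$: in the triple $({\rm id}^*,{\rm cod},\widehat{\rm comp})$ the map ${\rm cod}$ is the \emph{retraction} $j$, while transposing $H=\widehat{\rm comp}$ gives $d=M!.H={\rm dom}$, whose fibre over $0$ is exactly the image of ${\rm id}^*$ (traversals pointed at their endpoint), i.e.\ a copy of $\mathbb{T}_0$, as it must be; the fibre of ${\rm cod}$ over $0$ is just a point. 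Second, and decisively, you pull back along $d|_A=d.i:A\to\mathbb{T}_0$. But $A$ is by definition the fibre of $d$ over $0$, so $d.i$ is the constant map at $0$, and the pullback HDR you obtain degenerates: following the pullback recipe, its codomain is $A\times_{\mathbb{T}_0}\mathbb{T}_1$ formed against the retraction ${\rm cod}$, and ${\rm cod}^{-1}(0)\cong 1$, so this is just $A$; such an HDR cannot map epimorphically onto $B$ in general. The point of the construction is that the fibre-over-$0$ of the generic HDR is \emph{all} of $\mathbb{T}_0$, so one pulls back along the whole structure map $d:B\to\mathbb{T}_0$; this gives the HDR $B\to B\times_{\mathbb{T}_0}\mathbb{T}_1$ (the free internal presheaf on $d:B\to\mathbb{T}_0$), whose fibre over $0$ is $B$ itself, not $A$.

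With that corrected, the required morphism of HDRs from the pullback to $(i,j,H)$ is, on codomains, the action $\rho:B\times_{\mathbb{T}_0}\mathbb{T}_1\to B$ --- the counit of the free--forgetful adjunction --- and on domains the retraction $j$; it is split epi because $\rho(b,{\rm id}(d(b)))=b$ by the unit law you quote, and note that this section uses $b$ itself: your proposed assignment $b\mapsto (j(b),(\theta,0))$ does not even land in the pullback, since $j(b)$ lies over the empty traversal rather than over $\theta$. Two further slips in the same passage: the statement asks for an epimorphism \emph{from} the pullback HDR \emph{to} $(i,j,H)$, whereas your ``canonical comparison $B\to B'$'' points the wrong way; and $d$ is not ``a morphism of $\mathbb{T}$-presheaves from $B$ to $\mathbb{T}_1$'' (the only candidate over $\mathbb{T}_0$, $b\mapsto{\rm id}(d(b))$, fails equivariance for the action, which is why the counit $\rho$, not anything induced by $d$ in the other direction, is the comparison map). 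So the plan is salvageable and close to the paper's proof, but as written the pullback is the wrong one and the epimorphism is not produced.
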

\begin{proof}
As a presheaf, the generic HDR is given by ${\rm dom}: \mathbb{T}_1 \to \mathbb{T}_0$ with ${\rm comp}: \mathbb{T}_1 \times_{\mathbb{T}_0} \mathbb{T}_1 \to \mathbb{T}_1$. Now imagine that we have some HDR, considered as a presheaf $d: B \to \mathbb{T}_0$ with $\rho: B \times_{\mathbb{T}_0} \mathbb{T}_1 \to B$. Then pulling back the generic HDR along $d$ gives as presheaf the pullback
\begin{displaymath}
 \begin{tikzcd}
   B \times_{\mathbb{T}_0} \mathbb{T}_1 \ar[d, "p_2"] \ar[r, "p_1"] & B \ar[d, "d"] \\
  \mathbb{T}_1 \ar[r, "{\rm cod}"] & \mathbb{T}_0,
 \end{tikzcd}
\end{displaymath}
together with $d' = {\rm dom}.p_2$ and $\rho'$ the unique filler of
\begin{displaymath}
  \begin{tikzcd}
    (B \times_{\mathbb{T}_0} \mathbb{T}_1) \times_{\mathbb{T}_0} \mathbb{T}_1 \ar[r, dotted, "\rho'"] \ar[rr, bend left, "p_1.\pi_1"] \ar[d,"p_2 \times 1"] & B \times_{\mathbb{T}_0} \mathbb{T}_1 \ar[d, "p_2"] \ar[r, "p_1"] & B \ar[d, "d"] \\
    \mathbb{T}_1 \times_{\mathbb{T}_0} \mathbb{T}_1 \ar[r, "{\rm comp}"] \ar[rr, bend right, "{\rm cod}.\pi_1"] & \mathbb{T}_1 \ar[r, "{\rm cod}"] & \mathbb{T}_0.
  \end{tikzcd}
\end{displaymath}
One can interpret this presheaf as follows: the category of internal presheaves has a forgetful functor to the slice category over $\mathbb{T}_0$ and this forgetful functor has a left adjoint. The presheaf ($B \times_{\mathbb{T}_0} \mathbb{T}_1, d', \rho'$) is the free presheaf on $d: B \to \mathbb{T}_0$. Therefore there is an epic morphism of presheaves
\begin{displaymath}
 \begin{tikzcd}
    B \times_{\mathbb{T}_0} \mathbb{T}_1 \ar[rr,"\rho"] \ar[dr, "d'"'] & & B \ar[dl, "d"] \\
    & \mathbb{T}_0,
 \end{tikzcd}
\end{displaymath}
namely the counit of the adjunction.
\end{proof}

\subsection{A small double category of HDRs}
\label{ssec:small-double-category} Our next goal is to show that the
naive Kan fibrations in simplicial sets are generated by a small double
category. We do this by showing that the large double category of HDRs in
simplicial sets contains a small double category such that a system of lifts
against the small double category can always be extended in a unique way to a
system of lifts again the entire double category of HDRs.

Let $\mathbb{H}$ be the following double category.
\begin{itemize}
  \item Objects are pairs $(n, \theta)$ with $n \in \mathbb{N}$ and $\theta$ an $n$-dimensional traversal.
  \item There is a unique vertical map $(n_0,\theta_0) \to (n_1,\theta_1)$ if $n_0 = n_1$ and $\theta_0$ is a final segment of $\theta_1$.
  \item A horizontal map $(m, \psi) \to (n, \theta)$ is a pair consisting of a map $\alpha: [m] \to [n]$ together with an $m$-dimensional traversal $\sigma$ such that $\psi * \sigma = \theta \cdot \alpha$. The formula for horizontal composition is $(\alpha,\sigma).(\beta,\tau) = (\alpha.\beta, \tau * (\sigma \cdot \beta))$.
  \item A square is any picture of the form
  \begin{displaymath}
    \begin{tikzcd}
      (m, \psi_0) \ar[d] \ar[r, "{(\alpha,\sigma)}"] & (n, \theta_0) \ar[d] \\
      (m, \psi_1) \ar[r, "{(\alpha, \sigma)}"] & (n, \theta_1)
    \end{tikzcd}
  \end{displaymath}
  in which the horizontal arrows have the same label.
\end{itemize}

Our first goal will be to argue that there is a double functor $\mathbb{H} \to {\rm HDR}(\widehat{\mathbf{\Delta}})$ which on the level of objects assigns to every $(n, \theta)$ the geometric realization of $\theta$. Recall from the previous section that the geometric realization of $\theta$ is, by definition, the colimit of the following diagram:
\begin{displaymath}
  \begin{tikzcd}
      \Delta^n \ar[dr, "d_{\theta(1)^s}"] & & \Delta^n \ar[dl, "d_{\theta(1)^t}"] \ar[dr, "d_{\theta(2)^s}"] & & \ldots \ar[dl, "d_{\theta(2)^t}"] \ar[dr, "d_{\theta(k)^s}"] & & \ar[dl, "d_{\theta(k)^t}"] \Delta^n \\
      & \Delta^{n+1} & & \Delta^{n+1} & & \Delta^{n+1}
  \end{tikzcd}
\end{displaymath}
From now we will denote this colimit simply as $\theta$, rather than as $\widehat{\theta}$. Note that $\theta$ comes with two maps from $\Delta^n$, corresponding to the outer maps $\Delta^n \to \Delta^{n+1}$ in this diagram. We will refer to the map $\Delta^n \to \theta$ induced by the inclusion on the left as $s_\theta$ and the map of the same shape induced by the inclusion on the right as $t_\theta$.

In fact, $s_\theta$ and $t_\theta$ occur as pullbacks of ${\rm id}$ and ${\rm id}^*$, as follows:
\begin{displaymath}
  \begin{array}{cc}
    \begin{tikzcd}
      \Delta^n \ar[d, "s_\theta"] \ar[r, "\theta"] & \mathbb{T}_0 \ar[d, "{\rm id}"] \\
      \theta \ar[d, "j_{\theta}"] \ar[r, "k_\theta"] & \mathbb{T}_1 \ar[d, "{\rm cod}"] \\
      \Delta^n \ar[r, "\theta"] & \mathbb{T}_0
    \end{tikzcd} &
    \begin{tikzcd}
      \Delta^n \ar[d, "t_\theta"] \ar[r, "\theta"] & \mathbb{T}_0 \ar[d, "{\rm id}^*"] \\
      \theta \ar[d, "j_{\theta}"] \ar[r, "k_\theta"] & \mathbb{T}_1 \ar[d, "{\rm cod}"] \\
      \Delta^n \ar[r, "\theta"] & \mathbb{T}_0
    \end{tikzcd}
  \end{array}
\end{displaymath}
Indeed, using the notation of \refcoro{geometricrealizfunctor}, we may regard $s$ and $t$ as cartesian natural transformations $U \to \widehat{(-)}$ and sections of the cartesian natural transformation $j: \widehat{(-)} \to U$. Also, since ${\rm id}^*$ is the generic HDR, we may regard $t_\theta$ as an HDR. Moreover, the picture on the right as well as the naturality squares of the natural transformation $t$ are cartesian morphisms of HDRs.

A typical vertical morphism in $\mathbb{H}$ is of the form $\psi \to \theta * \psi$. Such a morphism we can equip with an HDR-structure, because the top square in
\begin{displaymath}
  \begin{tikzcd}
    \Delta^n \ar[r, "s_\psi"] \ar[d, "t_\theta"] & \psi \ar[d, "\iota_2"] \\
    \theta \ar[r, "\iota_1"] \ar[d, "j_\theta"] & \theta * \psi \ar[d, "{[s_\psi.j_\theta, 1]}"] \\
    \Delta^n \ar[r, "s_\psi"] & \psi
  \end{tikzcd}
\end{displaymath}
is a pushout. Note that because both squares are pullbacks as well, the diagram will become a bicartesian morphism of HDRs.

This explains how we map vertical morphisms to HDRs. Let us now explain where we map the horizontal maps to. Note that the horizontal maps are of the form $(\alpha,\sigma)$ which we can write as a composition $(\alpha, <>).(1,\sigma)$, so we only need to explain where we map these composites to. However, the map $(1,\sigma): \psi \to \psi * \sigma$ is just $\iota_1$, while $(1, \alpha) = \widehat{\alpha}: \psi \cdot \alpha \to \psi$ comes from the functoriality of geometric realization (see \refcoro{geometricrealizfunctor}; from now we will also simply write $\alpha$).

To explain where we map the squares to, we use the same decomposition. Let us first look at a square coming from $(1,\sigma)$:
\begin{displaymath}
  \begin{tikzcd}
    \Delta^n \ar[d, "t_\theta"] \ar[r, "s_\psi"]  & \psi \ar[d, "\iota_2"] \ar[r, "\iota_1"] & \psi * \sigma \ar[d, "\iota_2"] \\
    \theta \ar[r, "\iota_1"] \ar[d, "j_\theta"] & \theta * \psi \ar[r, "\iota_1"] \ar[d] & \theta * \psi * \sigma \ar[d] \\
    \Delta^n \ar[r, "s_\psi"] \ar[rr, "s_{\psi * \sigma}"', bend right] & \psi \ar[r, "\iota_1"] & \psi * \sigma.
  \end{tikzcd}
\end{displaymath}
The HDR-structures on the middle and right arrow are defined by pushout from the left arrow: this automatically makes the right hand square a cocartesian morphism of HDRs, and, in particular, a morphism of HDRs. Note that, as before, all squares in the diagram above are pullbacks as well, so the morphism is actually bicartesian.

If the square comes from $(\alpha, <>)$, consider a double cube of the form:
\begin{displaymath}
  \begin{tikzcd}
    & \psi \cdot \alpha \ar[rr, "\alpha"] \ar[dd, near start, "\iota_2"] & & \psi \ar[dd, "\iota_2"] \\
    \Delta^m \ar[ur, "s_{\psi \cdot \alpha}"] \ar[rr, "\alpha", near start, crossing over] \ar[dd, "t_{\theta \cdot \alpha}"] & & \Delta^n \ar[ur, "s_\psi"]  \\
    & \theta \cdot \alpha * \psi \cdot \alpha  \ar[rr, "\alpha", near start] \ar[dd, "{[s_{\psi \cdot \alpha}.j_{\psi \cdot \alpha}, 1]}", near start] & &  \theta * \psi \ar[dd, "{[s_{\psi}.j_\theta,1]}"] \\
    \theta \cdot \alpha \ar[rr, "\alpha", crossing over, near start] \ar[dd, "j_{\theta \cdot \alpha}"] \ar[ur, "\iota_1"] & & \theta \ar[ur, "\iota_1"] \ar[from=uu, crossing over, "t_\theta", near start] \\
    & \psi \cdot \alpha \ar[rr, near start, "\alpha"] & & \psi \\
    \Delta^m \ar[rr, "\alpha"] \ar[ur, "s_{\psi \cdot \alpha}"] & & \Delta^n \ar[ur, "s_{\psi}"] \ar[from=uu, crossing over, "j_\theta", near start]
  \end{tikzcd}
\end{displaymath}
Note that the left and right hand side of this double cube are cocartesian morphisms of HDRs, while the front is a cartesian morphism (since both come with a cartesian morphism to the generic HDR). Since the bottom square is a pullback, the back is a cartesian morphism of HDRs, by Beck-Chevalley (see \refprop{domHDRfibration}).

This finishes the \emph{construction} of a potential double functor $\mathbb{H} \to {\rm HDR}({\widehat{\mathbf{\Delta}}})$: the \emph{verification} that it actually is a double functor turns out to be a lot of work and will keep us occupied for the next couple of pages.

\begin{rema}{imagesqcartesian}
  Note that all the squares in the image of this (potential) double functor are cartesian morphisms of HDRs.
\end{rema}

\begin{lemm}{horizontalcompforH}
  The potential double functor $\mathbb{H} \to {\rm HDR}(\widehat{\mathbf{\Delta}})$ just constructed preserves horizontal composition of morphisms.
\end{lemm}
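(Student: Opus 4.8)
\textbf{Proof plan for \reflemm{horizontalcompforH}.}

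The plan is to verify that the assignment $\mathbb{H} \to {\rm HDR}(\widehat{\mathbf{\Delta}})$ respects the horizontal composition formula $(\alpha,\sigma).(\beta,\tau) = (\alpha.\beta, \tau * (\sigma \cdot \beta))$. Since every horizontal map factors as $(\alpha,\sigma) = (\alpha, <>).(1,\sigma)$, it suffices to check compatibility for three kinds of composites separately: two composites of ``pure inclusion'' type $(1,\sigma)$; two composites of ``pure reindexing'' type $(\alpha,<>)$; and a mixed composite $(\alpha,<>).(1,\sigma)$ against the factorization $(1,\sigma').(\beta,<>)$ that must be exhibited on the other side (using that horizontal composition in $\mathbb{H}$ forces a rewriting $\psi \cdot \alpha = \psi \cdot (\alpha) $ etc.). First I would treat the inclusion-with-inclusion case: the relevant maps are all of the form $\iota_1$, and the claim $\iota_1 . \iota_1 = \iota_1 : \psi \to \psi * \sigma \to \psi * \sigma * \tau$ is immediate from the universal property defining the iterated concatenation $\psi * \sigma * \tau$ as an iterated pushout of geometric realizations; associativity of $*$ on traversals and the compatibility of geometric realization with $*$ (from the colimit description in \refdefi{geometricrealizationoftraversal}) give this with no real content.

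Next I would treat the reindexing-with-reindexing case. Here the two horizontal maps are $\widehat{\alpha} : \psi \cdot \alpha \to \psi$ and $\widehat{\beta} : (\psi \cdot \alpha) \cdot \beta \to \psi \cdot \alpha$, and the composite formula in $\mathbb{H}$ reduces to $(\alpha.\beta, <>)$ since $\tau * (\sigma \cdot \beta) = <>$ when $\sigma = \tau = <>$. So what is needed is $\widehat{\alpha} . \widehat{\beta} = \widehat{\alpha.\beta}$, i.e.\ functoriality of geometric realization, together with the observation that $(\psi \cdot \alpha)\cdot\beta = \psi \cdot (\alpha.\beta)$ (functoriality of the $\mathbb{T}_0$-action on traversals). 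This is exactly \refcoro{geometricrealizfunctor}, which says $\widehat{(-)}$ is a functor; I would simply cite it. For the mixed case, the composite $(\alpha,<>).(1,\sigma) : \psi \to \psi * \sigma$ then $\widehat{\alpha}$-style restriction, must be matched, via the $\mathbb{H}$-composition formula, with a decomposition through $\psi \cdot \alpha$; one checks that $(\psi * \sigma)\cdot \alpha = (\psi \cdot \alpha) * (\sigma \cdot \alpha)$ as traversals (distributivity of reindexing over concatenation, which is how $\mathbb{H}$'s composite was defined), and that the geometric realization carries the corresponding pushout square to the pushout square exhibiting $(\psi \cdot \alpha) * (\sigma \cdot \alpha)$; this reduces the claim to the two previous cases plus naturality of $\iota_1$ and of $s_{(-)}$ with respect to $\widehat{\alpha}$, which hold because $s$ is a cartesian natural transformation $U \to \widehat{(-)}$ (noted just before the lemma).

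The main obstacle is not any single identity but the bookkeeping: one must confirm that the HDR structures (not merely the underlying simplicial maps) agree on both sides of each composite. For this I would lean heavily on the fact, recorded in \refrema{imagesqcartesian}, that all squares in the image are cartesian morphisms of HDRs, together with \refprop{HDRsareMcoalgebras} (the codomain functor $\cat{HDR} \to \coalgcat{M}$ is an equivalence, in particular faithful): two HDR structures on the same map whose codomains agree as $M$-coalgebras are equal. Since in each case the underlying simplicial-set maps are visibly equal and the codomain $M$-coalgebra structures are pulled back along the same map from the generic HDR $\mathrm{id}^* = \widehat{\mathrm{comp}}$ (via \refprop{genericHDR} and the pullback squares exhibiting $t_\theta$ as an HDR), uniqueness of cartesian lifts in the fibration $\dom : \cat{HDR} \to \E$ (\refprop{domHDRfibration}) forces the two composite HDR structures to coincide. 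So the proof is: decompose into the three cases; in each case check the underlying maps agree by functoriality of $\widehat{(-)}$ and associativity/distributivity of $*$ on traversals; then invoke cartesianness plus \refprop{HDRsareMcoalgebras} to upgrade equality of underlying maps to equality of HDR structures.
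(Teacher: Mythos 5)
Your argument is correct and essentially the paper's: the paper reduces the whole statement to the single interchange square $\iota_1.\widehat{\alpha} = \widehat{\alpha}.\iota_1$ (your ``mixed'' case) and proves it by pulling the pushout defining $\psi * \sigma$, together with $j_{\psi * \sigma}$, back along $\alpha: \Delta^m \to \Delta^n$, while the two pure cases are, as you say, functoriality of geometric realization and associativity of concatenation. Note only that your final structure-matching step is superfluous for this particular lemma: horizontal morphisms in ${\rm HDR}(\widehat{\mathbf{\Delta}})$ are bare simplicial maps, so equality of the underlying maps is all that needs checking (comparison of HDR structures only enters for squares and vertical composition, which the paper treats in the subsequent lemmas).
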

\begin{proof}
To prove that our double functor preserves horizontal composition it suffices to check that
\begin{displaymath}
  \begin{tikzcd}
    \psi \cdot \alpha \ar[r, "\alpha"] \ar[d, "\iota_1"] & \psi \ar[d, "\iota_1"] \\
    \psi \cdot \alpha * \sigma \cdot \alpha \ar[r, "\alpha"] & \psi * \sigma
  \end{tikzcd}
\end{displaymath}
commutes. However, we have a commutative diagram
\begin{displaymath}
  \begin{tikzcd}
    & \Delta^n \ar[dr, "s_\sigma"] \ar[dl, "t_\psi"'] \\
    \psi \ar[dr, "\iota_1"] \ar[ddr, bend right, "j_\psi"'] & & \sigma \ar[dl, "\iota_2"] \ar[ddl, bend left, "j_\sigma"]\\
    & \psi * \sigma  \ar[d, "j_{\psi * \sigma}"] \\
    & \Delta^n
  \end{tikzcd}
\end{displaymath}
in which the square is a pushout, and by pulling it back along $\alpha: \Delta^m \to \Delta^n$ we get the commutativity of the previous square.
\end{proof}

Since our potential double functor trivially preserves horizontal identities and horizontal composition of squares, it remains to consider the vertical structure. Also here preservation of identities and vertical composition of squares will be immediate once we show vertical composition of morphisms is preserved. For that, it is convenient to use an alternative construction of the HDR-structure on $\psi \to \theta * \psi$, via the ``generic inclusion of HDRs''. Indeed, consider the following picture:
\begin{displaymath}
  \begin{tikzcd}
    \mathbb{T}_0 \ar[d, "{\rm id}^*"] & \mathbb{T}_0 \times \mathbb{T}_0 \ar[d, "{\rm id}^* \times 1"] \ar[r, "1 \times {\rm id}"] \ar[l, "\pi_1"] & \mathbb{T}_0 \times \mathbb{T}_1 \ar[d, "\iota_2"] \\
    \mathbb{T}_1 \ar[d, "{\rm cod}"] & \mathbb{T}_1 \times \mathbb{T}_0 \ar[d, "{\rm cod} \times 1"] \ar[r, "\iota_1"] \ar[l, "\pi_1"] & \mathbb{T}_1 \times \mathbb{T}_0 \cup_{\mathbb{T}_0 \times \mathbb{T}_0} \mathbb{T}_0 \times \mathbb{T}_1 \ar[d, "{[{\rm cod} \times {\rm id}, 1]}"] \\
    \mathbb{T}_0 & \mathbb{T}_0 \times \mathbb{T}_0 \ar[l, "\pi_1"] \ar[r, "1 \times {\rm id}"] & \mathbb{T}_0 \times \mathbb{T}_1
  \end{tikzcd}
\end{displaymath}
We can give $\iota_2$ the structure of an HDR, by first pulling the generic structure on ${\rm id}^*$ back along $\pi_1$ and then pushing the result forward along $1 \times {\rm id}$. By pulling this HDR back along the horizontal arrow at the centre of
\begin{displaymath}
  \begin{tikzcd}
    \Delta^n \ar[r, "{(\theta, \psi)}"] \ar[d, "s_\psi"] & \mathbb{T}_0 \times \mathbb{T}_0 \ar[d, "1 \times {\rm id}"] \\
    \psi \ar[r] \ar[d, "j_\psi"] & \mathbb{T}_0 \times \mathbb{T}_1 \ar[d, "1 \times {\rm cod}"] \\
    \Delta^n \ar[r, "{(\theta, \psi)}"] & \mathbb{T}_0 \times \mathbb{T}_0
  \end{tikzcd}
\end{displaymath}
we obtain $\psi \to \theta * \psi$. The reason is that we can apply Beck-Chevalley to the top square in the diagram above and the HDR ${\rm id}^* \times 1$ obtained by pulling the generic HDR along $\pi_1$.

Let us translate the generic inclusion in presheaf language. First, the pullback of the generic HDR along $\pi_1: \mathbb{T}_0 \times \mathbb{T}_0 \to \mathbb{T}_0$ is $\mathbb{T}_1 \times \mathbb{T}_0$ with $d = {\rm dom}.\pi_1$ and
\[ \rho = ({\rm comp}.(\pi_1.\pi_1,\pi_2), \pi_2.\pi_1): (\mathbb{T}_1 \times \mathbb{T}_0) \times_{\mathbb{T}_0} \mathbb{T}_1 \to \mathbb{T}_1 \times \mathbb{T}_0. \]
Then we need to push this forward along $1 \times {\rm id}: \mathbb{T}_0 \times \mathbb{T}_0 \to \mathbb{T}_0 \times \mathbb{T}_1$, which results in:
\begin{displaymath}
  \begin{tikzcd}
    \mathbb{T}_0 \times \mathbb{T}_0 \ar[r, "1 \times {\rm id}"] \ar[d, "{\rm id}^* \times 1"] & \mathbb{T}_0 \times \mathbb{T}_1 \ar[d, "\iota_2"] \ar[dr, "!"] \\
    \mathbb{T}_1 \times \mathbb{T}_0 \ar[r, "\iota_1"] \ar[drr, bend right, "{\rm dom}.\pi_1"] & \mathbb{T}_1 \times \mathbb{T}_0 \cup \mathbb{T}_0 \times \mathbb{T}_1 \ar[dr, dotted, "d_1"] & 1 \ar[d, "0"] \\
    & & \mathbb{T}_0
  \end{tikzcd}
\end{displaymath}
So the generic inclusion of HDRs is $\mathbb{T}_1 \times \mathbb{T}_0 \cup \mathbb{T}_0 \times \mathbb{T}_1$ with $d_1 = [{\rm dom}.\pi_1, 0.!]$. It also comes equipped with an action $\rho_1$, which we find as follows:
\begin{displaymath}
  \begin{tikzcd}
    \mathbb{T}_0 \times \mathbb{T}_0 \ar[d, "{({\rm id}^* \times 1, {\rm id}^*.0.!)}"] \ar[r, "1 \times {\rm id}"] & \mathbb{T}_0 \times \mathbb{T}_1 \ar[ddr, bend left, "\iota_2"] \ar[d, "\iota_2"] \\
    (\mathbb{T}_1 \times \mathbb{T}_0) \times_{\mathbb{T}_0} \mathbb{T}_1 \ar[dr, "\rho"] \ar[r, "\iota_1"] & (\mathbb{T}_1 \times \mathbb{T}_0) \times_{\mathbb{T}_0} \mathbb{T}_1 \cup \mathbb{T}_0 \times \mathbb{T}_1 \ar[dr, dotted, "\rho_1"] \\
    & \mathbb{T}_1 \times \mathbb{T}_0 \ar[r, "\iota_1"] & \mathbb{T}_1 \times \mathbb{T}_0 \cup \mathbb{T}_0 \times \mathbb{T}_1
  \end{tikzcd}
\end{displaymath}
Hence $\rho_1 = ({\rm comp}.(\pi_1.\pi_1,\pi_2), \pi_2.\pi_1) \cup 1$.

We will compare this with two other HDRs: the pullbacks of the generic HDR along the maps $\pi_2, *: \mathbb{T}_0 \times \mathbb{T}_0 \to \mathbb{T}_0$. In terms of presheaves, the pullback of the generic HDR along $\pi_2: \mathbb{T}_0 \times \mathbb{T}_0 \to \mathbb{T}_0$ is $d_0 = {\rm dom}.\pi_2: \mathbb{T}_0 \times \mathbb{T}_1 \to \mathbb{T}_0$, while $\rho_0$ is the unique map filling
\begin{displaymath}
  \begin{tikzcd}
    \mathbb{T}_0 \times (\mathbb{T}_1 \times_{\mathbb{T}_0} \mathbb{T}_1) \ar[d, "\pi_2"] \ar[r, dotted, "\rho_0"] \ar[rr, bend left, "{(\pi_1,{\rm cod}.\pi_1.\pi_2)}"]& \mathbb{T}_0 \times \mathbb{T}_1 \ar[r, "1 \times {\rm cod}"] \ar[d, "\pi_2"] & \mathbb{T}_0 \times \mathbb{T}_0 \ar[d, "\pi_2"] \\
    \mathbb{T}_1 \times_{\mathbb{T}_0} \mathbb{T}_1 \ar[r, "{\rm comp}"] & \mathbb{T}_1 \ar[r, "{\rm cod}"] &  \mathbb{T}_0.
  \end{tikzcd}
\end{displaymath}
Hence $\rho_0 = 1 \times {\rm comp}$.

To compute the pullback of the generic HDR along $*: \mathbb{T}_0 \times \mathbb{T}_0 \to \mathbb{T}_0$, we first compute the pullbacks
\begin{displaymath}
  \begin{tikzcd}
    \mathbb{T}_0 \times \mathbb{T}_0 \ar[d, "\iota_2.(1 \times {\rm id}^*)"] \ar[rr, "*"] & &  \mathbb{T}_0 \ar[d, "{\rm id}^*"] \\
    \mathbb{T}_1 \times \mathbb{T}_0 \cup \mathbb{T}_0 \times \mathbb{T}_1 \ar[rr, "\mu^*"] \ar[d, "{[{\rm cod} \times 1, 1 \times {\rm cod}]}"] & &  \mathbb{T}_1 \ar[d, "{\rm cod}"] \\
    \mathbb{T}_0 \times \mathbb{T}_0 \ar[rr, "*"] & & \mathbb{T}_0
  \end{tikzcd}
\end{displaymath}
Hence the presheaf we are looking for is $\mathbb{T}_1 \times \mathbb{T}_0 \cup \mathbb{T}_0 \times \mathbb{T}_1$ with $d_2 = {\rm dom}.\mu^* = [*.({\rm dom}.\pi_1, \pi_2), {\rm dom}.\pi_2]$, while $\rho_2$ is the unique arrow filling
\begin{displaymath}
  \begin{tikzcd}
     (\mathbb{T}_1 \times \mathbb{T}_0 \cup \mathbb{T}_0 \times \mathbb{T}_1)\times_{\mathbb{T}_0} \mathbb{T}_1 \ar[d, "\mu^* \times 1"] \ar[r, dotted, "\rho_2"] \ar[rrr, bend left, "{[{\rm cod} \times 1, 1 \times {\rm cod}].\pi_1}"]& \mathbb{T}_1 \times \mathbb{T}_0 \cup \mathbb{T}_0 \times \mathbb{T}_1 \ar[rr, "{[{\rm cod} \times 1, 1 \times  {\rm cod}]}"'] \ar[d, "\mu^*"] & & \mathbb{T}_0 \times \mathbb{T}_0 \ar[d, "*"] \\
    \mathbb{T}_1 \times_{\mathbb{T}_0} \mathbb{T}_1 \ar[r, "{\rm comp}"] & \mathbb{T}_1 \ar[rr, "{\rm cod}"] & & \mathbb{T}_0
  \end{tikzcd}
\end{displaymath}

\begin{lemm}{lemmaonverticcomp}
  The vertical composition of $(d_1,\rho_1)$ after $(d_0,\rho_0)$ equals $(d_2,\rho_2)$.
\end{lemm}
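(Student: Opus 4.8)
The statement \reflemm{lemmaonverticcomp} asserts an equality of internal presheaves over $\mathbb{T}$, namely that the vertical composite of $(d_1, \rho_1)$ with $(d_0, \rho_0)$ coincides with $(d_2, \rho_2)$, where all three structures live on the simplicial set $\mathbb{T}_1 \times \mathbb{T}_0 \cup \mathbb{T}_0 \times \mathbb{T}_1$ (the generic inclusion of HDRs pulled back along, respectively, the three maps $\pi_1, \pi_2, *: \mathbb{T}_0 \times \mathbb{T}_0 \to \mathbb{T}_0$ and assembled according to the recipe for vertical composition spelled out earlier in this subsection). The plan is to verify this equality in two halves: first at the level of the display maps $d_i: B \to \mathbb{T}_0$, and then at the level of the actions $\rho_i: B \times_{\mathbb{T}_0} \mathbb{T}_1 \to B$.

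For the display maps, I would invoke the formula derived for vertical composition of HDRs in presheaf language, namely $d_2 = d_1 * (d_0 . j_1)$ where $j_1$ is the retraction of the outer HDR. Concretely, on the component $\mathbb{T}_1 \times \mathbb{T}_0$ we computed $d_1 = {\rm dom}.\pi_1$ for the ``outer'' generic-inclusion HDR and $d_0 = {\rm dom}.\pi_2$ for the ``inner'' pullback along $\pi_2$; the retraction $j_1$ restricts a pointed traversal to the initial object, which in the product presheaf has the effect of sending a pair $(\alpha: D \to C, \theta')$ to $(\iota_0: 0 \to C, \theta')$. Plugging in and using the explicit description of $*$ (concatenation of traversals) and ${\rm dom}$, one obtains $d_1 * (d_0 . j_1) = [*.({\rm dom}.\pi_1, \pi_2), {\rm dom}.\pi_2]$, which is exactly the formula for $d_2$ recorded just before the lemma. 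The two components (over $\mathbb{T}_1 \times \mathbb{T}_0$ and over $\mathbb{T}_0 \times \mathbb{T}_1$) are checked separately, matching the copairing in the formula for $d_2$.

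For the actions, I would use the formula for vertical composition of presheaf actions obtained earlier, namely $\rho_2 = [\rho_1, i_1.\rho_0.(j_1 \times 1)]$ relative to the decomposition of the domain $C \times_{\mathbb{T}_0} \mathbb{T}_1$ into two copies glued along the appropriate sub-object (the domain pullback computed in two steps via the double pullback square in the ``Vertical composition'' item above). Here $\rho_1 = ({\rm comp}.(\pi_1.\pi_1, \pi_2), \pi_2.\pi_1) \cup 1$ and $\rho_0 = 1 \times {\rm comp}$ were computed explicitly, and one substitutes these together with the explicit forms of $i_1$ (the inclusion), $j_1$ (the retraction) and the concatenation map $\mu^*$. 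The target expression $\rho_2$, given as the unique filler of the displayed square over $\mu^*$ and ${\rm comp}$, is characterized by its two components under $[{\rm cod} \times 1, 1 \times {\rm cod}].\pi_1$, so the verification amounts to checking that the composite $[\rho_1, i_1.\rho_0.(j_1 \times 1)]$ makes that same square commute and lands in the same two components. This is essentially a bookkeeping computation with the traversal-level maps ${\rm comp}$, $\mu^*$, ${\rm dom}$ and the projections.

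The main obstacle, I expect, is keeping the identifications along the pushout/pullback gluings straight: the object $\mathbb{T}_1 \times \mathbb{T}_0 \cup \mathbb{T}_0 \times \mathbb{T}_1$ is a union of two pieces glued along $\mathbb{T}_0 \times \mathbb{T}_0$, and the domain of each action is itself obtained by a further pullback along ${\rm cod}$, so one must be careful that the ``two-step'' domain pullback in the vertical-composition recipe agrees with the ``one-step'' domain pullback defining $\rho_2$. Once the cocartesian/cartesian square $\mathbb{T}_1 \times \mathbb{T}_0 \cup_{\mathbb{T}_0 \times \mathbb{T}_0} \mathbb{T}_0 \times \mathbb{T}_1$ over $\mu^*$ and ${\rm cod}$ is identified as the same square appearing in both constructions (which follows from the explicit description of $\mu^*$ as concatenation and the fact that positions in $\theta_0 * \theta_1$ are precisely positions in $\theta_0$ or $\theta_1$ with the middle point identified), the remaining calculation reduces to comparing maps out of a coproduct component by component, which is routine. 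I would therefore organize the proof so that this identification of the gluing squares is isolated as the one genuinely structural point, with everything else following by substitution of the explicit formulas already at hand.
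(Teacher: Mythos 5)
Your plan is correct and is essentially the paper's own proof: the display maps are compared by checking $d_2 = *.(d_1, d_0.j_1)$ on the two summands $\iota_1,\iota_2$ with the explicit formulas, and the actions by identifying the one-step domain of $\rho_2$ with the two-step pushout-of-pullbacks and then verifying $\rho_2 = [\rho_1, i_1.\rho_0.(j_1 \times 1)]$ summand by summand, using that the pair $\bigl(\mu^*, [{\rm cod}\times 1, 1\times{\rm cod}]\bigr)$ is jointly monic (the defining pullback square of $C$ over $*$), which is exactly the characterisation of $\rho_2$ you invoke. Only a cosmetic slip: $j_1$ is $[{\rm cod}\times{\rm id},1]$, sending $(\alpha\colon D\to C,\theta')$ to $(C,{\rm id}_{\theta'})$ (identified in the pushout with $(0\to C,\theta')$), but this does not affect the computation since $d_0.j_1 = [\pi_2,{\rm dom}.\pi_2]$ as you use it.
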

\begin{proof}
First of all, we have to prove that \[ d_2 = *.(d_1,d_0.j_1): \mathbb{T}_1 \times \mathbb{T}_0 \cup \mathbb{T}_0 \times \mathbb{T}_1 \to \mathbb{T}_0. \]
This holds, because:
\begin{eqnarray*}
  *.(d_1,d_0.j_1).\iota_1 & = & *.(d_1.\iota_1, d_0.j_1.\iota_1) \\
  & = & *.({\rm dom}.\pi_1, {\rm dom}.\pi_2.({\rm cod} \times {\rm id})) \\
  & = & *.({\rm dom}.\pi_1,{\rm dom}.{\rm id}.\pi_2) \\
  & = & *.({\rm dom}.\pi_1,\pi_2) \\
  & = & d_2.\iota_1
\end{eqnarray*}
and
\begin{eqnarray*}
  *.(d_1, d_0.j_1).\iota_2 & = & *.(d_1.\iota_2,d_0) \\
  & = & *.(0.!,{\rm dom}.\pi_2) \\
  & = & {\rm dom}.\pi_2 \\
  & = & d_2.\iota_2.
\end{eqnarray*}

Writing $C := \mathbb{T}_1 \times \mathbb{T}_0 \cup \mathbb{T}_0 \times \mathbb{T}_1$, this means that we can also think of the domain of $\rho_2$ as the pullback
\begin{displaymath}
  \begin{tikzcd}
    C \times_{\mathbb{T}_0} \mathbb{T}_1 \cup C \times_{\mathbb{T}_0} \mathbb{T}_1 \ar[d, "p"] \ar[rr, "q"] & & C \ar[d, "{(d_1,d_0.j_1)}"] \\
    \mathbb{T}_1 \times \mathbb{T}_0 \cup \mathbb{T}_0 \times \mathbb{T}_1 \ar[d, "\mu^*"] \ar[rr, "{[{\rm cod} \times 1, 1 \times {\rm cod}]}"] & & \mathbb{T}_0 \times \mathbb{T}_0 \ar[d,"*"] \\
    \mathbb{T}_1 \ar[rr, "{\rm cod}"] & & \mathbb{T}_0.
  \end{tikzcd}
\end{displaymath}
where
\begin{eqnarray*}
  q & := & [\pi_1,\pi_1], \\
  p & := & (\pi_2,d_0.j_1.\pi_1) \cup (d_1 \times 1).
\end{eqnarray*}
Therefore the domain of $\rho_2$ can be seen as the pushout of two pullbacks, the pullback of $d_1 = [{\rm dom}.\pi_1,0.!]: C \to \mathbb{T}_0$ and ${\rm cod}$ as well as the pullback of \[ d_0.j_1 = {\rm dom}.\pi_2.[{\rm cod} \times {\rm id},1] = [{\rm dom}.{\rm id}.\pi_2, {\rm dom}.\pi_2] = [\pi_2,{\rm dom}.\pi_2]: C \to \mathbb{T}_0 \]
and ${\rm cod}$. In these terms, $\rho_2$ is the unique arrow making
\begin{displaymath}
  \begin{tikzcd}
    C \times_{\mathbb{T}_0} \mathbb{T}_1 \cup C \times_{\mathbb{T}_0} \mathbb{T}_1 \ar[dr, dotted, "\rho_2"] \ar[drrr, bend left, "{[{\rm cod} \times 1, 1 \times {\rm cod}].q}"] \ar[ddr, bend right, "{{\rm comp}.(\mu^*.q,\mu^*.p)}"'] \\
    & C \ar[d, "\mu^*"] \ar[rr, "{[{\rm cod} \times 1, 1 \times {\rm cod}]}"] & & \mathbb{T}_0 \times \mathbb{T}_0 \ar[d, "*"] \\
    & \mathbb{T}_1 \ar[rr, "{\rm cod}"] & & \mathbb{T}_0
  \end{tikzcd}
\end{displaymath}
commute.

We have to prove $\rho_2 = [\rho_1, i_1.\rho_0.(j_1 \times 1)]$. Our strategy will be to prove that these morphisms agree on both summands, and that in turn we will do by showing that on these summands the maps become equal upon postcomposing with both $\mu^*$ and $[{\rm cod} \times 1, 1 \times {\rm cod}]$. Note that the first summand $C \prescript{d_1}{}{\times}_{\mathbb{T}_0}^{\rm cod} \mathbb{T}_1$ is isomorphic to
\[ [(\mathbb{T}_1 \times \mathbb{T}_0) \prescript{{\rm dom}.\pi_1}{}{\times}^{\rm cod}_{\mathbb{T}_0} \mathbb{T}_1] \cup [\mathbb{T}_0 \times \mathbb{T}_1], \]
and in these terms we have
\begin{align*}
  & {\rm comp}.(\mu^*.q, \mu^*.p).\iota_1 & = \\
  & {\rm comp}.(\mu^*.\pi_1,\mu^*.\iota_1.(\pi_2,d_0.j_1.\pi_1)) & = \\
  & {[{\rm comp}.(\mu^*.\iota_1.\pi_1,\mu^*.\iota_1.(\pi_2,\pi_2.\pi_1), {\rm comp}.(\mu^*.\iota_2,\mu^*.\iota_1.({\rm id}^*.0.!,{\rm dom}.\pi_2))]} & = \\
  & {[\mu^*.\iota_1.({\rm comp}.(\pi_1.\pi_1,\pi_2), \pi_2.\pi_1),\mu^*.\iota_2]} & = \\
  & \mu^*.\rho_1
\end{align*}
as well as
\begin{align*}
    & [{\rm cod} \times 1, 1 \times {\rm cod}].q.\iota_1 & = \\
   & [{\rm cod} \times 1, 1 \times {\rm cod}].\pi_1 & = \\
    & [{\rm cod} \times 1.\pi_1,1 \times {\rm cod}] & = \\
   & [({\rm cod}.\pi_1.\pi_1,\pi_2.\pi_1), 1 \times {\rm cod}] & = \\
    & [{\rm cod} \times 1.({\rm comp}.(\pi_1.\pi_1,\pi_2),\pi_2.\pi_1), 1 \times {\rm cod}] & = \\
    & [{\rm cod} \times 1, 1 \times {\rm cod}].\rho_1.
\end{align*}

We now turn to the second summand, which we can write as
\[ C \prescript{d_0.j_1}{}{\times}_{\mathbb{T}_0}^{\rm cod} \mathbb{T}_1 \cong (\mathbb{T}_1 \times \mathbb{T}_0) \times_{\mathbb{T}_0} \mathbb{T}_1 \cup (\mathbb{T}_0 \times \mathbb{T}_1) \times_{\mathbb{T}_0} \mathbb{T}_1 \cong \mathbb{T}_1 \times \mathbb{T}_1 \cup \mathbb{T}_0 \times (\mathbb{T}_1 \times_{\mathbb{T}_0} \mathbb{T}_1), \]
because $d_0.j_1 = [\pi_2,{\rm dom}.\pi_2]$. In these terms we have
\begin{align*}
  & {\rm comp}.(\mu^*.q, \mu^*.p).\iota_2 & = \\
  & {\rm comp}.(\mu^*.\pi_1,\mu^*.\iota_2.d_1 \times 1) & = \\
  & [{\rm comp}.(\mu^*.\iota_1.(\pi_1,{\rm cod}.\pi_2),\mu^*.\iota_2.({\rm dom}.\pi_1,\pi_2)),{\rm comp}.(\mu^*.\iota_2.(\pi_1,\pi_1.\pi_2), \mu^*.\iota_2.(0.!,\pi_2.\pi_2))] & = \\
  & [\mu^*.\iota_2.({\rm cod}.\pi_1,\pi_2),\mu^*.\iota_2.1 \times {\rm comp}] & =
\end{align*}
which equals:
\begin{align*}
  & \mu^*.i_1.\rho_0.(j_1 \times 1) & = \\
  & \mu^*.\iota_2.(1 \times {\rm comp}).([{\rm cod} \times {\rm id}, 1] \times 1) & = \\
  & \mu^*.\iota_2.[({\rm cod}.\pi_1, {\rm comp}.({\rm id}.{\rm cod}.\pi_2, \pi_2)),1 \times {\rm comp}] & = \\
  & \mu^*.\iota_2.[({\rm cod}.\pi_1,\pi_2), 1 \times {\rm comp}].
\end{align*}
In a similar fashion we have:
\begin{align*}
  & [{\rm cod} \times 1, 1 \times {\rm cod}].i_1.\rho_0.(j_1 \times 1) & = \\ & [{\rm cod} \times 1, 1 \times {\rm cod}].\iota_2.(1 \times {\rm comp}).([{\rm cod} \times {\rm id}, 1] \times 1) & = \\
  & 1 \times {\rm cod}.[({\rm cod}.\pi_1,\pi_2), 1 \times {\rm comp}] & = \\
  &  [({\rm cod}.\pi_1,{\rm cod}.\pi_2),(\pi_1,{\rm cod}.\pi_1.\pi_2)] & = \\
  &  [{\rm cod} \times 1, 1 \times {\rm cod}].\pi_1 & = \\
  &  [{\rm cod} \times 1, 1 \times {\rm cod}].q.\iota_2.
\end{align*}
This finishes the proof.
\end{proof}

\begin{rema}{onconnectiontodistributivelaw}
  The previous lemma is equivalent to the distributive law in the form:
  \[ \Gamma.\mu = M\mu.(\mu M.(\Gamma.p_1, Mr.p_2), \mu M.(\alpha.(p_2,M!.p_1), \Gamma.p_2)). \]
  To prove that this reformulation is equivalent to the way we usually state the distributive law, one would need an interchange law of the following form: suppose we have elements $\alpha,\beta,\gamma,\delta \in MMX$ with $t(\alpha) = s(\beta), t(\gamma) = s(\delta), Mt(\alpha) = Ms(\gamma), Mt(\beta) = Ms(\gamma)$, then:
  \[ M\mu.(\mu M(\alpha,\beta), \mu M(\gamma,\delta)) = \mu M.(M\mu.(\alpha,\gamma), M\mu.(\beta,\delta)). \]
  This interchange law may not hold in all Moore categories, but it can be shown to hold in the example at hand.

  In any case, to see the equivalence between the previous lemma and the reformulation of the distributive law, one would have to think about how one would prove the latter, and for that we have to go back to the proof of \reflemm{distributivelawOK}. The description of $\Gamma.\mu$ is still correct of course, but note that in the negative direction we can think of it as a map $C \times_{\mathbb{T}_0} \mathbb{T}_1 \to C$:
  \begin{displaymath}
    \begin{tikzcd}
      C \ar[dr] & C \times_{\mathbb{T}_0} \mathbb{T}_1 \ar[d] \ar[r] \ar[l] & \{ (\theta \in \mathbb{T}_0, t: {\rm cod}^{-1}(\theta) \to \mathbb{T}_0, p_0 \in \theta, p_1 \in t(p_0)) \} \ar[d] \\
      & \mathbb{T}_0 \times \mathbb{T}_0 \ar[r] & \{ (\theta, t: {\rm cod}^{-1}(\theta) \to \mathbb{T}_0) \}
    \end{tikzcd}
  \end{displaymath}
  where the arrow along the bottom sends $(\theta_0,\theta_1)$ to $(\theta_0 * \theta_1, d_2 = d_1.\mu^*: C \to \mathbb{T}_0)$. With some effort one can recognise the map $C \times_{\mathbb{T}_0} \mathbb{T}_1 \to C$ as $\rho_2$.

  Let us first look at the map $\mu M.(\Gamma.p_1, Mr.p_2)$, which can be thought of as a composite:
  \begin{displaymath}
    \begin{tikzcd}
      {\rm cod} \times_I {\rm cod} \ar[r] & ({\rm cod} \otimes {\rm cod}) \times_{I \otimes {\rm cod}} ({\rm cod} \otimes {\rm cod}) \ar[d, "\cong"] \\
      &  ({\rm cod} \times_I {\rm cod}) \otimes {\rm cod} \ar[r, "\mu \otimes 1_{\rm cod}"] & {\rm cod} \otimes {\rm cod}.
    \end{tikzcd}
  \end{displaymath}
  In the positive direction this sends $(\theta_0,\theta_1)$ first to $((\theta_0, \lambda p.{\rm dom}(p),(\theta_1,\lambda p.0))$ and then to what is essentially $(\theta_0 * \theta_1, d_1 = [{\rm dom}.\pi_1,0.!]: C \to \mathbb{T}_0)$. Then in the negative direction we have a map $C^{d_1} \times_{\mathbb{T}_0}^{\rm cod} \mathbb{T}_1 \to C$ which with some effort one can recognise as $\rho_1$.

  Let us now have a look at $\mu M.(\alpha.(p_2,M!.p_1), \Gamma.p_2)$. We can think of it as a morphism
  \begin{displaymath}
    \begin{tikzcd}
      {\rm cod} \times_I {\rm cod} \ar[r] & ({\rm cod} \otimes {\rm cod}) \times_{I \otimes {\rm cod}} ({\rm cod} \otimes {\rm cod}) \ar[d, "\cong"] \\
      &  ({\rm cod} \times_I {\rm cod}) \otimes {\rm cod} \ar[r, "\mu \otimes 1_{\rm cod}"] & {\rm cod} \otimes {\rm cod}.
    \end{tikzcd}
  \end{displaymath}
  In the positive direction this sends a pair $(\theta_0, \theta_1)$ first to $((\theta_0,\lambda p.\theta_1), (\theta_1,\lambda p.{\rm dom}(p))$ which then gets sent to what is essentially $(\theta_0 * \theta_1, d_0.j_1 = [\pi_2,{\rm dom}.\pi_2]: C \to \mathbb{T}_0)$. This means that in the negative direction we should have a map $C \prescript{d_0.j_1}{} {\times}_{\mathbb{T}_0}^{\rm cod} \mathbb{T}_1 \to C$: again with some effort one can recognise this as $i_1.\rho_0.(j_1 \times 1)$.

  Finally, the total right hand side is a map
  \begin{displaymath}
    \begin{tikzcd}
      {\rm cod} \times_I {\rm cod} \ar[r] & ({\rm cod} \otimes {\rm cod}) \times_{{\rm cod} \otimes I} ({\rm cod} \otimes {\rm cod}) \ar[d, "\cong"] \\
      &  {\rm cod} \otimes ({\rm cod} \times_I {\rm cod}) \ar[r, "1_{\rm cod} \otimes \mu"] & {\rm cod} \otimes {\rm cod}.
    \end{tikzcd}
  \end{displaymath}
  In the positive direction this will be a map sending $(\theta_0, \theta_1)$ to $(\theta_0 * \theta_1, *.(d_1,j_0.d_0):C \to \mathbb{T}_0)$; in the backwards direction this is a map $C \times_{\mathbb{T}_0} \mathbb{T}_1 \cup C \times_{\mathbb{T}_0} \mathbb{T}_1 \to C$ which is $[\rho_1,i_1.\rho_0.(j_1 \times 1)]$. This shows that the reformulated distributive law is equivalent to the previous lemma.
\end{rema}

\begin{prop}{inclusionsmallHDRsintobigHDRs}
  There is a double functor $\mathbb{H} \to {\rm HDR}(\widehat{\mathbf{\Delta}})$ which on the level of objects assigns to every traversal its geometric realization.
\end{prop}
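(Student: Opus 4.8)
\textbf{Proof proposal for Proposition \ref{prop:inclusionsmallHDRsintobigHDRs}.}

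The plan is to verify that the assignment constructed in the paragraphs immediately preceding the statement — sending an object $(n,\theta)$ of $\mathbb{H}$ to the geometric realization $\widehat{\theta}$, sending the generating vertical morphism $\psi \to \theta * \psi$ to the HDR-structure obtained by pushing forward the generic HDR (equivalently, pushing out from $t_\theta$), and sending the two kinds of horizontal morphisms $(1,\sigma)$ and $(\alpha, \langle\rangle)$ to $\iota_1$ and $\widehat{\alpha}$ respectively — genuinely assembles into a double functor $\mathbb{H} \to \mathrm{HDR}(\widehat{\mathbf{\Delta}})$. First I would recall that a double functor over $\sq{\E}$ is the same as a compatible triple of functors on horizontal morphisms, vertical morphisms, and squares, respecting pointwise (vertical) composition; so the proof reduces to checking exactly four things: (a) functoriality on horizontal morphisms; (b) functoriality on vertical morphisms (i.e.\ preservation of vertical composition of HDRs, which by \refcoro{vert-co-HDRs} is the composition of coalgebras for the AWFS); (c) that squares are sent to squares in a way compatible with horizontal composition; and (d) that squares are sent in a way compatible with vertical composition.

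For (a), preservation of horizontal composition is exactly \reflemm{horizontalcompforH}; preservation of horizontal identities is immediate since $\iota_1$ for the empty traversal and $\widehat{\mathrm{id}}$ are identities. For the squares, the two cases — squares coming from $(1,\sigma)$ and squares coming from $(\alpha,\langle\rangle)$ — have already been analysed in the text: in the first case the square is sent to a cocartesian (indeed bicartesian) morphism of HDRs by construction via pushout, and in the second case one applies \refprop{domHDRfibration} (Beck--Chevalley) to the double cube displayed above to conclude the back face is a cartesian morphism of HDRs. Compatibility of these with horizontal composition of squares, and with horizontal identities, is a diagram-chase that follows from (a) together with the functoriality of geometric realization (\refcoro{geometricrealizfunctor}) and the universal properties of the pushouts involved; I would simply record that it reduces to \reflemm{horizontalcompforH} applied one dimension up. The main work, and the main obstacle, is (b): showing that the HDR-structures assigned to the vertical morphisms compose correctly. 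Here I would use the alternative presheaf-theoretic construction of the HDR-structure on $\psi \to \theta * \psi$ via the ``generic inclusion of HDRs'' $\mathbb{T}_1 \times \mathbb{T}_0 \cup \mathbb{T}_0 \times \mathbb{T}_1$, and reduce the claim, by pulling back along the appropriate map into $\mathbb{T}_0 \times \mathbb{T}_0$ and using Beck--Chevalley to commute the pullback past the composite HDR-structure, to the purely internal statement \reflemm{lemmaonverticcomp}: that the vertical composite of $(d_1,\rho_1)$ after $(d_0,\rho_0)$ equals $(d_2,\rho_2)$, where $(d_0,\rho_0)$, $(d_1,\rho_1)$, $(d_2,\rho_2)$ are the pullbacks of the generic HDR along $\pi_2$, $1 \times \mathrm{id}$ (pushed forward), and $*$ respectively. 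That lemma — whose proof in turn amounts to checking the distributive law in the reformulation of \refrema{onconnectiontodistributivelaw} — is where all the combinatorial content sits.

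Once (b) is in hand, (d) follows formally: a general vertical morphism in $\mathbb{H}$ is a composite of the generating ones $\psi \to \theta * \psi$, the decomposition of such a composite in $\mathbb{H}$ is associative by inspection (concatenation of traversals), and on the HDR side vertical composition of coalgebras is associative by \refprop{HDRsareMcoalgebras}; preservation of vertical identities is clear since the empty traversal realizes to $\Delta^n$ with the trivial (identity) HDR-structure, which by \refrema{connisHDR} is exactly $r$. Compatibility of the square assignment with vertical composition of squares then follows because, as noted in \refrema{imagesqcartesian}, all squares in the image are cartesian morphisms of HDRs, and cartesian morphisms compose; moreover by \refcoro{pbsqofHDRalongcartsq} and the general theory of fibred categories, vertical composites of cartesian morphisms over composable vertical arrows are again cartesian and are uniquely determined, so there is nothing to choose. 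Finally I would remark that the constructed double functor lands in $\mathrm{HDR}(\widehat{\mathbf{\Delta}})$ rather than merely in $\sq{\widehat{\mathbf{\Delta}})}$ because every $\widehat{\theta}$ genuinely carries an HDR-structure (being a pullback of the generic HDR $\mathrm{id}^*$, by \reftheo{geometricrealizinpbk}), which is the observation recorded when $t_\theta$ was identified as an HDR. This completes the verification; the reader is referred to \reflemm{horizontalcompforH} and \reflemm{lemmaonverticcomp} for the two computations that carry the argument.
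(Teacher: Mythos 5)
Your overall route is the paper's: the construction of the candidate double functor is the one set up before the statement, horizontal composition is \reflemm{horizontalcompforH}, the square assignments are handled exactly as in the text (pushout for $(1,\sigma)$, Beck--Chevalley for $(\alpha,\langle\rangle)$), and the substance of vertical functoriality is pushed into the generic-inclusion computation \reflemm{lemmaonverticcomp}. So in spirit you and the paper agree; the difference is in how the reduction to that lemma is completed, and there you have a real gap.

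Pulling the triangle of \reflemm{lemmaonverticcomp} back along a map $\Delta^n \to \mathbb{T}_0 \times \mathbb{T}_0$ (the map $(\theta,\psi)$) only produces the triangle $t_\psi \colon \Delta^n \to \psi$, $\iota_2 \colon \psi \to \theta * \psi$, $t_{\theta*\psi} \colon \Delta^n \to \theta*\psi$, i.e.\ it proves preservation of composition only for composable pairs whose common bottom object is $(n,\langle\rangle)$. A general composable pair in $\mathbb{H}$ has the form $(n,\rho) \to (n,\psi*\rho) \to (n,\theta*\psi*\rho)$ with $\rho$ nonempty, and your two devices do not reach it: ``Beck--Chevalley'' (in the sense of \refprop{domHDRfibration}) identifies a pushout-defined HDR with a pullback of the generic inclusion, but it does not by itself transport the composition identity to these pairs; and your remark in the last paragraph that ``a general vertical morphism in $\mathbb{H}$ is a composite of the generating ones'' is beside the point, since \emph{every} vertical morphism of $\mathbb{H}$ already has the generating form $\psi \to \theta*\psi$ -- the problem is precisely about composable pairs of such morphisms, so no decomposition argument applies. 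The paper closes exactly this gap with one more step: it pushes the pulled-back triangle forward along the source inclusion of the realization of the bottom traversal and uses that pushout preserves vertical composition of coalgebras (\refprop{HDRiscofibredstructure}, \refcoro{coalgebras-cofibred}); since the assigned HDR structures are by construction pushouts of the $t_\theta$'s, this yields the identity $I_3 = I_2 \circ I_1$ for an arbitrary composable pair. Adding that pushforward step (or an equivalent argument exhibiting all three HDRs of a general pair simultaneously as pullbacks of the lemma's triangle, which you would then have to construct explicitly) is what your sketch is missing.
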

\begin{proof} As said, it remains to check that vertical composition of morphisms is preserved. We start by pulling back the vertical composition from the previous lemma along $(\theta, \psi): \Delta^n \to \mathbb{T}_0 \times \mathbb{T}_0$ and obtain the following picture:
\begin{displaymath}
  \begin{tikzcd}
    \Delta^n \ar[dd, "t_{\theta * \psi}"] \ar[dr, "t_{\psi}"] \ar[rr] & & \mathbb{T}_0 \times \mathbb{T}_0  \ar[dr, "1 \times {\rm id}^*"] \ar[dd, "{\iota_2.(1 \times {\rm id}^*)}"', near start] \\
    & \psi \ar[rr, crossing over] \ar[dl, "\iota_2"] & & \mathbb{T}_0 \times \mathbb{T}_1 \ar[dl, "\iota_2"] \\
    \theta * \psi \ar[d] \ar[rr] & & \mathbb{T}_1 \times \mathbb{T}_0 \cup \mathbb{T}_0 \times \mathbb{T}_1 \ar[d, "{[{\rm cod} \times {\rm id},1]}"]  \\
    \psi \ar[rr] \ar[d] & & \mathbb{T}_0 \times \mathbb{T}_1 \ar[d, "1 \times {\rm cod}"] \\
    \Delta^n \ar[rr, "{(\theta,\psi)}"] & & \mathbb{T}_0 \times \mathbb{T}_0
  \end{tikzcd}
\end{displaymath}
Since vertical composition is preserved by pullback (see part 1) and we have a commutative triangle of HDRs on the right, the same is true on the left. Since push forward also preserves vertical composition, we can push this triangle forward along $s_\psi: \Delta^n \to \psi$ to see that our double functor does indeed preserve vertical composition.
\end{proof}

This lengthy verification was only the first step towards proving the main result of this subsection, which is:
\begin{theo}{HgenMoorefib}
The following notions of fibred structure are isomorphic:
\begin{itemize}
  \item Having the right lifting property against the large double category of HDRs in simplicial sets (that is, to be a naive Kan fibration).
  \item Having the right lifting property against the small double category $\mathbb{H}$.
\end{itemize}
In addition, the morphism of discretely fibred concrete double categories $\Drl{HDR} \to \Drl{\mathbb{H}}$ induced by the inclusion of $\mathbb{H}$ into the double category of HDRs in simplicial sets satisfies fullness on squares and is therefore an isomorphism.
\end{theo}
\begin{proof} First, we explain how one can construct a morphism of notions of fibred structure in the other direction. So imagine that we have a map $p: Y \to X$ which has the right lifting property against $\mathbb{H}$ and we are given a lifting problem
\begin{displaymath}
  \begin{tikzcd}
    A \ar[r, "g"] \ar[d, "i"] & Y \ar[d, "p"] \\
    B \ar[r, "f"] \ar[ur, "l", dotted] & X
  \end{tikzcd}
\end{displaymath}
where $(i: A \to B, j, H)$ is an HDR.

In the proof of the existence of a generic HDR, we have seen that for any HDR $(i,j,H)$ there are two morphisms of HDRs, as in
\begin{displaymath}
  \begin{tikzcd}
    \mathbb{T}_0 \ar[d, "{\rm id}^*"] & B \ar[d, "{(1,{\rm id}^*.d)}"] \ar[r, "j"] \ar[l, "d"'] & A \ar[d, "i"] \\
    \mathbb{T}_1 & B \times_{\mathbb{T}_0} \mathbb{T}_1 \ar[l, "p_2"] \ar[r, "\rho"'] & B,
  \end{tikzcd}
\end{displaymath}
where the one on the left is cartesian. So if $b \in B_n$, we can pull the middle HDR back along $b: \Delta^n \to B$ and we obtain a commutative diagram in which the left square is a morphism of HDRs, as follows:
\begin{displaymath}
  \begin{tikzcd}
    \Delta^n \ar[d, "t_\theta"] \ar[r, "j.b"] & A \ar[d, "i"] \ar[r, "g"] & Y \ar[d, "p"] \\
    \theta \ar[r, "\pi"'] \ar[urr, "L_b", dotted, near start] & B \ar[r, "f"'] & X,
  \end{tikzcd}
\end{displaymath}
if $d(b) = \theta$. By assumption, we have a dotted filler $L_b$, as indicated. Now we put $l(b) := L_b.s_\theta$.

Let us first check that this defines a natural transformation $l: B \to Y$. If we consider $b \cdot \alpha$ for some $\alpha: \Delta^m \to \Delta^n$, then we have a picture as follows:
\begin{displaymath}
  \begin{tikzcd}
    \Delta^m \ar[r, "\alpha"] \ar[d, "t_{\theta \cdot \alpha}"] & \Delta^n \ar[d, "t_\theta", near start] \ar[r, "j.b"] & A \ar[d, "i"] \ar[r, "g"] & Y \ar[d, "p"] \\
    \theta \cdot \alpha \ar[r, "\widehat{\alpha}"'] \ar[urrr, "L_{b \cdot \alpha}", dotted, near start] & \theta \ar[r, "\pi"'] \ar[urr, "L_b"', near end, dotted] & B \ar[r, "f"'] & X,
  \end{tikzcd}
\end{displaymath}
Hence \[ l(b \cdot \alpha) = L_{b \cdot \alpha}.s_{\theta \cdot \alpha} = L_b.\widehat{\alpha}.s_{\theta \cdot \alpha} = L_b.s_\theta.\alpha = l(b).\alpha, \]
which shows that $l$ is indeed a natural transformation.

Let us now check that $l$ fills the original square. Because $\pi.s_\theta = b$, we have
\[ p.l(b) = p.L_b.s_\theta = f.\pi.s_\theta = f(b), \]
hence $l$ makes the lower triangle commute. Also, if $b = i(a)$ for some $a \in A_n$, then $d(b) = \langle \rangle$ (the empty traversal), and
\[  l.i(a)=L_b.s_{\langle \rangle} = L_b.t_{\langle \rangle} = g.j(b) = g.j.i(a) = g(a). \]

It remains to check that these lifts satisfy both the horizontal and vertical compatibility conditions. We start by looking at the horizontal one. Imagine we have a commutative diagram of the form:
\begin{displaymath}
  \begin{tikzcd}
    A' \ar[r, "\alpha"] \ar[d, "i'"] & A \ar[r] \ar[d, "i", near start] & Y \ar[d, "p"] \\
    B' \ar[r, "\beta"'] \ar[urr, dotted, "l'", near start] & B \ar[r] \ar[ur, "l"', dotted] & X
  \end{tikzcd}
\end{displaymath}
where the square on the left is a morphism of HDRs and $b' \in B'_n$. Then this fits into a larger picture:
\begin{displaymath}
  \begin{tikzcd}
    & & A' \ar[rr, "\alpha"] \ar[dd, "i'", near start] & & A \ar[rr] \ar[dd, "i"] & & Y \ar[dd, "p"] \\
    & B' \ar[ur, "j'"] \ar[rr, "\beta", crossing over, near start, shift left] \ar[dd, "{(1, {\rm id}^*.d')}"', near end] & & B \ar[ur, "j"]  \\
    \Delta^n \ar[ur, "b'"] \ar[urrr,  "\beta(b')", shift right, crossing over] \ar[dd, "t_\theta"] & & B' \ar[rr, "\beta"', near start] & & B \ar[rr] & & X \\
    & B' \times_{\mathbb{T}_0} \mathbb{T}_1 \ar[rr, "\beta \times 1"] \ar[ur, "\rho'"] & & B \times_{\mathbb{T}_0} \mathbb{T}_1 \ar[ur, "\rho"] \ar[from = uu, "{(1, {\rm id}^*.d)}", near start, crossing over] \\
    \theta \ar[ur] \ar[urrr, shift right]
    \end{tikzcd}
\end{displaymath}
with $d' = d.\beta$. So from left to right we obtain a lift $\pi: \theta \to Y$ and $l'(b') = \pi.s_\theta$. But because the front face of the cube is a cartesian morphism of HDRs (as one easily checks), $l(\beta(b'))$ is computed in the same way. This shows the horizontal compatibility condition.

For checking the vertical compatibility condition, imagine that we have a lifting problem of the form:
\begin{displaymath}
  \begin{tikzcd}
    A \ar[rr] \ar[d, "i_0"] \ar[dd, bend right, "i_2"'] & & Y \ar[dd, "p"] \\
    B \ar[d, "i_1"] \ar[urr, "l_0"] \\
    C \ar[rr] \ar[uurr, "l_1"] & & X
  \end{tikzcd}
\end{displaymath}
with the HDR $i_2$ the vertical composition of $i_0$ and $i_1$, $l_0$ and $l_1$ the lifts induced by $i_0$ and $i_1$, respectively, and $c \in C_n$. The aim is to show that for the induced lift $l_2: C \to Y$ induced by $i_2$ we have $l_2(c) = l_1(c)$. Using the formulas for the vertical composition of HDRs (in the language of presheaves) we obtain a commutative diagram, as follows:
\begin{displaymath}
  \begin{tikzcd}
    & \Delta^n \ar[rr, "c"] \ar[dd, "t_{\theta_0}"] & & C \ar[rr, "j_1"] \ar[dd, "{(1,{\rm id}^*.d_0.j_1)}"] & & B \ar[dd, "{(1, {\rm id}^*.d_0)}"] \ar[rr, "j_0"] & & A \ar[dd, "i_0"] \\ \\
    & \theta_0 \ar[rr] \ar[dd, "\iota_2", near start] & & C \times_{\mathbb{T}_0} \mathbb{T}_1 \ar[rr, "j_1 \times 1"] \ar[dd, "\iota_2"] & & B \times_{\mathbb{T}_0} \mathbb{T}_1 \ar[rr, "\rho_0", shift left] & & B \ar[dd, "i_1"] \\
    \Delta^n \ar[ur, "s_{\theta_0}"] \ar[rr, crossing over] \ar[dd, "t_{\theta_1}"] & & C  \ar[ur, "{(1, {\rm id}.d_0.j_1)}"] \ar[urrrrr, "j_1", crossing over, shift right] \\
    & \theta_1 * \theta_0 \ar[rr] & & C \times_{\mathbb{T}_0} \mathbb{T}_1 \cup C \times_{\mathbb{T}_0} \mathbb{T}_1 \ar[rrrr, "{\rho_2 = [\rho_1, i_1.\rho_0.(j_1 \times 1)]}", shift left] & & & & C \\
    \theta_1 \ar[ur, "\iota_1"] \ar[rr] & & C \times_{\mathbb{T}_0} \mathbb{T}_1 \ar[ur, "\iota_1"] \ar[urrrrr, "\rho_1"] \ar[from=uu, "{(1, {\rm id}^*.d_1)}", near start, crossing over, shift right]
  \end{tikzcd}
\end{displaymath}
Here $d_2(c) = \theta_1 * \theta_0$ with $\theta_0 = (d_0.j_1)(c)$ and $\theta_1 = d_1(c)$. Note that $l_2(c) = \pi_2.s_{\theta_1 * \theta_0}$ where $\pi_2: \theta_1 * \theta_0 \to Y$ is the lift coming from the lifting structure of $p$ against $\mathbb{H}$. However, $\pi_2.s_{\theta_1 * \theta_0}$ can be computed in two steps: we first compute the lift $\pi_0: \theta_0 \to Y$. Then we use $\pi_0.s_{\theta_0}$ to compute $\pi_1: \theta_1 \to Y$ and then we have $\pi_2.s_{\theta_1 * \theta_0} = \pi_1.s_{\theta_1}$. But it follows from the diagram that $\pi_0.s_{\theta_0} = l_0(j_1.c)$ and $\pi_1.s_{\theta_1} = l_1(c)$: so $l_2(c) = l_1(c)$, as desired.

We have constructed two operations between two notions of fibred structure: now it remains to show that they are mutually inverse. It is easy to see that if we start from a map having the RLP against all HDRs, then only remember the lifts against the vertical maps in $\mathbb{H}$ and then use the operation defined above to compute a lift against a general HDR, we return at our starting point. The reason is simply that the left hand square in
\begin{displaymath}
  \begin{tikzcd}
    \Delta^n \ar[d, "t_\theta"] \ar[r, "j.b"] & A \ar[d, "i"] \ar[r, "g"] & Y \ar[d, "p"] \\
    \theta \ar[r, "\pi"'] \ar[urr, "L", dotted, near start] & B \ar[r, "f"'] \ar[ur, "l"', dotted] & X,
  \end{tikzcd}
\end{displaymath}
is a morphism of HDRs, so we must have $L = l.\pi$. So if $b = \pi.s_\theta$, then $L.s_\theta = l.\pi.s_\theta = l(b)$. This argument also shows fullness on squares.

The converse turns out to be a lot harder. Suppose that we start with a map $p$ having the right lifting property against all maps in $\mathbb{H}$, and that we are given a lifting problem of $p$ against a vertical map from $\mathbb{H}$. Now we can find a solution in two different ways: first, we can use the lifting structure of $p$ directly. Alternatively, we can use that vertical maps in $\mathbb{H}$ are HDRs and use this to find a lift, following the procedure explained above. The question is: are both solutions necessarily the same? We claim that the answer is yes.

To prove the claim, it suffices to check that the lifts against traversals of length 1 are identical. The reason is that any inclusion of traversals can be written as the vertical composition of inclusions where each next traversal has length one longer than the previous. And if we have a traversal of the form $\sigma \to <(i,\pm)> * \sigma$, then there is a bicartesian square in $\mathbb{H}$ of the form:
\begin{displaymath}
  \begin{tikzcd}
    <> \ar[r] \ar[d] & \sigma \ar[d] \\
    <(i,\pm)> \ar[r] & <(i,\pm)> * \sigma
  \end{tikzcd}
\end{displaymath}
So the lift against the map on the right is completely determined by the lift against the map on the left.

So let us imagine that we have an inclusion of traversals of the form $<> \to <(i,+)>$ (we will only look at the positive case, for simplicity). Then its geometric realisation is:
\begin{displaymath}
  \begin{tikzcd}
    \Delta^n \ar[d, "d_i"] \ar[r, "{<(i,+)>}"] & \mathbb{T}_0 \ar[d, "{\rm id}^*"] \\
    \Delta^{n+1} \ar[d, "s_i"] \ar[r, "u"] & \mathbb{T}_1 \ar[d, "{\rm cod}"] \\
    \Delta^n \ar[r, "{<(i,+)>}"] & \mathbb{T}_0
  \end{tikzcd}
\end{displaymath}
where $u = k_{<i,+>}$ picks out the traversal $<(i,+)> \cdot s_i = <(i+1,+),(i,+)>$ with the special position (so the position in the middle). Note that this means that face maps $d_i: \Delta^n \to \Delta^{n+1}$ are HDRs: let us see what its HDR-structure is in presheaf language. First of all, we have $d = {\rm dom}.u = <(i,+)>: \Delta^{n+1} \to \mathbb{T}_0$. Secondly, we have to determine $\rho: \Delta^{n+1} \times_{\mathbb{T}_0} \mathbb{T}_1 \to \Delta^{n+1}$. But note that the domain of $\rho$ also arises as the centre left object in
\begin{displaymath}
  \begin{tikzcd}
    \Delta^{n+1} \ar[d, "d_i"] \ar[r, "d"] & \mathbb{T}_0 \ar[d, "{\rm id}^*"] \\
    \Delta^{n+2} \ar[r, "v"] \ar[d, "s_i"] & \mathbb{T}_1 \ar[d, "{\rm cod}"] \\
    \Delta^{n+1} \ar[r, "{d = <(i,+)>}"'] & \mathbb{T}_0
  \end{tikzcd}
\end{displaymath}
where $v$ chooses $<(i,+)> \cdot s_i$ with the special position; in other words, it is isomorphic to $\Delta^{n+2}$. This means that $\rho$ is the unique map filling
\begin{displaymath}
  \begin{tikzcd}
    \Delta^{n+2} \ar[d, "{(u \cdot s_i,v)}"'] \ar[r, "\rho", dotted] \ar[rr, bend left, "s_i.s_i"] \ar[dr, "q"] & \Delta^{n+1} \ar[d, "u"] \ar[r, "s_i"] & \Delta^n \ar[d, "{(<i,+>)}"] \\
    \mathbb{T}_1 \times_{\mathbb{T}_0} \mathbb{T}_1 \ar[r, "{\rm comp}"'] & \mathbb{T}_1 \ar[r, "{\rm cod}"'] & \mathbb{T}_0.
  \end{tikzcd}
\end{displaymath}
Note that $u \cdot s_i$ is $<(i+2,+), (i+1,+), (i,+)>$ with the position between the first and second element, and therefore $q$ is $<(i+2,+), (i+1,+), (i,+)>$ with the position between the second and third elements. We conclude that $\rho$ must be $s_{i+1}$.

What we have to prove, then, is that if we have a lifting problem of the form
\begin{displaymath}
  \begin{tikzcd}
    \Delta^n \ar[r] \ar[d, "d_i"] & Y \ar[d, "p"] \\
    \Delta^{n+1} \ar[ur, dotted] \ar[r] & X
  \end{tikzcd}
\end{displaymath}
where we think of the arrow on the left as an HDR coming from the inclusion of traversals $<> \to <(i,+)>$, then the lift coming from the fact that it is an HDR coincides with the one coming from the fact that $p$ has the right lifting property against $\mathbb{H}$. The former lift is computed by choosing an arbitrary element $\alpha \in \Delta^{n+1}$ and pulling the map on the left back along $s_i.\alpha$. But because $\Delta^{n+1}$ is representable it suffices to do this for $\alpha = 1$, which means that what we have to prove is that if we have a picture as follows:
\begin{displaymath}
  \begin{tikzcd}
    \Delta^{n+1} \ar[r, "s_i"] \ar[d, "d_i"] & \Delta^n \ar[r] \ar[d, "d_i"] & Y \ar[d, "p"] \\
    \Delta^{n+2} \ar[r, "s_{i+1}"'] \ar[urr, "l_2", dotted, near start]& \Delta^{n+1} \ar[ur, "l_1"', dotted] \ar[r] & X
  \end{tikzcd}
\end{displaymath}
then for the lifts coming from the fact that $p$ has the right lifting property against $\mathbb{H}$, we have $l_1 = l_2.d_{i+1}$. For that it suffices to prove $l_1.s_{i+1} = l_2$. (Just to be clear, the left and centre vertical arrows are to be thought of as HDRs coming from the inclusion of traversals $<> \to <(i,+)>$ in dimensions $n+1$ and $n$, respectively.)

To that end, note that we have a commutative diagram of the form:
\begin{displaymath}
 \begin{tikzcd}
   \Delta^{n+1} \ar[r, "s_i"] \ar[d, "d_i"] & \Delta^n \ar[r] \ar[dd, "d_i"] & Y \ar[dd, "p"] \\
   <(i,+)> \ar[dr, "w_2"] \ar[d, "\iota_2"] \ar[urr, dotted, "l_2"] \\
   <(i+1,+),(i,+)> \ar[r, "w"] \ar[d, "{[s_{i+1},s_i]}"] \ar[uurr, dotted, "l_3"] & <(i,+)> \ar[r] \ar[d, "s_i"] \ar[uur, dotted, "l_1"] & X \\
   \Delta^{n+1} \ar[r, "s_i"] & \Delta^n
 \end{tikzcd}
\end{displaymath}
Here we have pulled $d_i$ back along $s_i$ and then decomposed it vertically. This means that the map $w = \widehat{s_i}$ is the unique map making
\begin{displaymath}
 \begin{tikzcd}
  <(i+1,+),(i,+)> \ar[r, "w", dotted] \ar[d, "{[s_{i+1},s_i]}"] \ar[rr, bend left, "k_{<(i+1,+),(i,+)>}"] & <(i,+)> \ar[d, "s_i"] \ar[r, "k_{<(i,+)>}"'] & \mathbb{T}_1 \ar[d, "{\rm cod}"] \\
  \Delta^{n+1} \ar[r, "s_i"] & \Delta^n \ar[r, "{<(i,+)>}"] & \mathbb{T}_0
 \end{tikzcd}
\end{displaymath}
commute. Here $k_{<(i+1,+),(i,+)>} = [z_1,z_2]$ with $z_1$ choosing the position between the first and second element in $<(i+2,+), (i+1,+), (i,+)>$ and $z_2$ choosing the position between the second and third. Therefore $w = [s_i, s_{i+1}]$ and $w_2 = s_{i+1}$. This means that the top left square in the last but one diagram coincides with the left hand square in the diagram before that. Therefore also the lifts $l_1$ and $l_2$ in both diagram must coincide and since they are compatible with $l_3$, we deduce $l_1.s_{i+1} = l_2$, as desired. This completes the proof for the positive case: the negative case is similar.
\end{proof}

\subsection{Naive Kan fibrations in simplicial sets}
From the previous theorem, we get two new descriptions of the naive Kan fibrations. Both start by observing that the entire lifting structure against $\mathbb{H}$ is already determined by a subclass of the vertical maps. First of all, we can consider those inclusions $<> \to \theta$ with empty domain: any other lift is completely determined by these, because
\begin{displaymath}
 \begin{tikzcd}
   <> \ar[d, "t_\theta"] \ar[r, "s_\psi"] & \psi \ar[d, "\iota_2"] \\
   \theta \ar[r, "\iota_1"] & \theta * \psi
 \end{tikzcd}
\end{displaymath}
is a cocartesian square. Therefore the lift against the arrow on the right has to be the pushout of the lift against the arrow of the right. So one can equivalently define a naive Kan fibration structure in terms of lifts against arrows of the form $<> \to \theta$. If one does so, the horizontal compatibility condition for maps of the form $(1,\sigma)$ drops out and we are left with the horizontal compatibility condition for maps of the form $(\alpha,<>)$. In other words, we have:
\begin{coro}{nocondition4}
 The following notions of fibred structure are equivalent:
\begin{itemize}
 \item To assign to a map $p: Y \to X$ all its naive Kan fibration structures.
 \item To assign to a map $p: Y \to X$ a function which given any $n$-dimensional traversal $\theta$ and commutative square
 \begin{displaymath}
   \begin{tikzcd}
     \Delta^n \ar[d, "t_\theta"] \ar[r] & Y \ar[d, "p"] \\
     \theta \ar[r] \ar[ur, dotted] & X
   \end{tikzcd}
 \end{displaymath}
  chooses a lift $\theta \to Y$. Moreover, these chosen lifts should satisfy two conditions:
  \begin{enumerate}
   \item[(i)] If $\alpha: \Delta^m \to \Delta^n$, then the chosen lifts
    \begin{displaymath}
     \begin{tikzcd}
     \Delta^m \ar[r, "\alpha"] \ar[d, "t_{\theta \cdot \alpha}"] & \Delta^n \ar[d, "t_\theta"] \ar[r] & Y \ar[d, "p"] \\
     \theta \cdot \alpha \ar[r] \ar[urr, dotted] & \theta \ar[r] \ar[ur, dotted] & X
   \end{tikzcd}
    \end{displaymath}
  are compatible.
  \item[(ii)] If $\theta = \theta_1 * \theta_0$, then the chosen lift $l: \theta \to Y$ can be computed in two steps: we can first compute the lift
  \begin{displaymath}
     \begin{tikzcd}
     \Delta^n \ar[d, "t_\theta"] \ar[rr] & & Y \ar[d, "p"] \\
     \theta_0 \ar[r] \ar[urr, dotted, "l_0"] & \theta \ar[r] & X
   \end{tikzcd}
    \end{displaymath}
    and then compute the chosen lift $l_1$ for
  \begin{displaymath}
     \begin{tikzcd}
     \Delta^n \ar[d, "t_{\theta_1}"] \ar[r, "s_{\theta_0}"] & \theta_0 \ar[r, "l_0"]  \ar[d] & Y \ar[d, "p"] \\
     \theta_1 \ar[r] \ar[urr, "l_1", dotted] & \theta \ar[r] & X
   \end{tikzcd}
    \end{displaymath}
   and push this forward to obtain a map $l: \theta \to Y$ (so $l = [l_1,l_0]$).
  \end{enumerate}
\end{itemize}
In fact, if one sees the second notion of fibred structure can be seen as the vertical structure in a discretely fibred concrete double category in the obvious way, then this concrete double category is isomorphic to the concrete double category of naive Kan fibrations. 
\end{coro}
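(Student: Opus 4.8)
The plan is to derive Corollary~\ref{coro:nocondition4} directly from \reftheo{HgenMoorefib} by "cutting down" the double category $\mathbb{H}$ to the subcategory generated by the vertical maps with empty domain, and then showing that a lifting structure against this restricted data is equivalent to a lifting structure against all of $\mathbb{H}$. First I would isolate the relevant subdata of $\mathbb{H}$: keep all objects $(n,\theta)$, keep the horizontal maps $(\alpha,\sigma)$, but only remember the vertical maps of the form $(n,\langle\rangle)\to(n,\theta)$. The key structural observation, already stated in the paragraph preceding the corollary, is that every vertical map $\psi\to\theta*\psi$ fits into a cocartesian square with the vertical map $\langle\rangle\to\theta$ along the top map $s_\psi\colon\langle\rangle\to\psi$ and $\iota_1\colon\theta\to\theta*\psi$; since the image of this square under the double functor $\mathbb{H}\to\mathrm{HDR}(\widehat{\mathbf{\Delta}})$ is a bicartesian (in particular cocartesian) morphism of HDRs, and $p$ has a right lifting structure against HDRs, the lift against $\psi\to\theta*\psi$ is forced to be the pushout of the lift against $\langle\rangle\to\theta$. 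Hence a right lifting structure on $p$ against $\mathbb{H}$ is uniquely determined by its restriction to the vertical maps $\langle\rangle\to\theta$, and conversely any such restricted family extends (uniquely) by pushout.

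Next I would trace through what the horizontal, vertical and (trivial) identity conditions of \refdefi{rlp-double-cat} become under this restriction. A general horizontal map $(\alpha,\sigma)$ factors as $(\alpha,\langle\rangle)\,.\,(1,\sigma)$, exactly as in the construction of the double functor in Section~\ref{ssec:small-double-category}; the factor $(1,\sigma)$ corresponds on the geometric-realization side to the coprojection $\iota_1\colon\psi\to\psi*\sigma$, and — because the lifts against $\psi\to\theta*\psi$ are now \emph{defined} to be pushouts — the horizontal condition for squares involving only $(1,\sigma)$-type horizontal maps becomes automatically satisfied (it is precisely the statement that pushout of lifts is compatible with further pushout, which is a formal consequence of the universal property). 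This leaves exactly the horizontal condition for squares whose horizontal maps are of the form $(\alpha,\langle\rangle)=\widehat\alpha$, which is condition~(i) in the corollary: naturality of the chosen lifts under restriction along $\alpha\colon\Delta^m\to\Delta^n$. The vertical condition of \refdefi{rlp-double-cat}, transported across the pushout description, is exactly condition~(ii): a lift against $\theta=\theta_1*\theta_0$ must decompose as the two-step lift-then-pushout described in the corollary; here one uses the explicit formula for vertical composition of HDRs translated into presheaf language (the $[\rho_1,i_1.\rho_0.(j_1\times 1)]$ formula), precisely as in the proof of \reftheo{HgenMoorefib}. Finally, identity conditions are trivial because $\mathbb{H}$ has only trivial vertical composition of squares over the structural data and the restricted lifts against $\langle\rangle\to\langle\rangle$ are forced.

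For the last sentence of the corollary — that the second notion of fibred structure, viewed as the vertical part of a discretely fibred concrete double category, is isomorphic to $\mathbb{N}\mathbf{Fib}$ — I would invoke the (dual of the) machinery of \refprop{fibreddoubleiso} together with \reftheo{HgenMoorefib}: we already have an isomorphism $\Drl{\mathrm{HDR}}\cong\Drl{\mathbb{H}}$ of discretely fibred concrete double categories which is full on squares and induces an isomorphism of fibred structures, and by \refcoro{rfibCatFibredStructure} $\Drl{\mathrm{HDR}}\cong\mathbb{N}\mathbf{Fib}$. The restriction/extension correspondence above is manifestly the action on objects of a double functor between the second double category of the corollary and $\Drl{\mathbb{H}}$; it is a bijection on vertical morphisms (restriction is injective by the forcing argument, surjective by the extension argument) and full on squares because the squares in both cases are just commutative squares in $\E$ satisfying the same compatibility condition (namely compatibility with the pushout-determined lifts, which reduces to compatibility on the $\langle\rangle\to\theta$). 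So by \refprop{fibreddoubleiso} it is an isomorphism of double categories, and composing the isomorphisms gives the claim. The main obstacle I anticipate is not any one of these steps individually but the bookkeeping of verifying that the horizontal condition genuinely splits cleanly into "the $(1,\sigma)$ part is automatic" and "the $(\alpha,\langle\rangle)$ part is condition~(i)"; this requires carefully re-running the factorization argument of Section~\ref{ssec:small-double-category} and checking that no cross-terms survive, i.e.\ that the compatibility for a general square $(\alpha,\sigma)$ really is implied by conditions (i) and (ii) plus the pushout definition, with no independent content left over. Everything else is a routine transport of already-established facts (the cocartesian-square observation, the presheaf formulas for pullback/pushout/vertical composition of HDRs, and \reftheo{HgenMoorefib} itself).
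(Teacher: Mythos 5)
Your proposal is correct and follows essentially the same route as the paper: the paper's argument preceding the corollary is exactly the reduction to vertical maps $\langle\rangle \to \theta$ via the cocartesian square (lifts against $\psi \to \theta * \psi$ forced by pushout), after which the horizontal condition for $(1,\sigma)$ drops out, the horizontal condition for $(\alpha,\langle\rangle)$ becomes condition (i), and the vertical condition becomes condition (ii), with the double-category upgrade handled by the same fullness-on-squares machinery you cite.
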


Using \refcoro{alternativedescriptionofM}, the second bullet in the previous condition can be seen as an externalisation of conditions (1) -- (3) in the definition of a naive fibration structure on $p$ (see \refdefi{strlift}). So the previous proposition says that there is a isomorphism of notions of fibred structure from maps carrying a naive fibration structure satisfying conditions (1) -- (4) to those which only satisfy (1) -- (3). It turns out that this isomorphism is just the forgetful map:

\begin{coro}{onweakliftstrinsimplicialsets}
 If $p: Y \to X$ is a map in simplicial sets, then any map \[ L: Y \times_X MX \to MY \] 
 which satisfies conditions (1) -- (3) for being a naive fibration structure on $p$ as in \refdefi{strlift} also satisfies the remaining fourth condition.
\end{coro}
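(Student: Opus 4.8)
The plan is to deduce \refcoro{onweakliftstrinsimplicialsets} directly from \refcoro{nocondition4} together with \refprop{liftvstransport} and \refcoro{alternativedescriptionofM}. The key observation is that \refcoro{nocondition4} already establishes that the forgetful map from ``naive fibration structures satisfying (1)--(4)'' to ``lifting structures against $\mathbb{H}$ via inclusions $<> \to \theta$ satisfying only conditions (i) and (ii)'' is an isomorphism of notions of fibred structure. So what remains is to identify the latter notion with ``maps $L : Y \times_X MX \to MY$ satisfying (1)--(3) of \refdefi{strlift}'', after which the result follows because an isomorphism of notions of fibred structure which is (on one side) the forgetful map must be, on that side, the identity: in particular every structure satisfying (1)--(3) is in the image, hence comes from a genuine naive fibration structure, hence satisfies (4).

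\textbf{Key steps.} First I would spell out, using \refcoro{alternativedescriptionofM}, that an $n$-simplex of $MX$ is a pair $(\theta, \pi \co \widehat{\theta} \to X)$, and that an $n$-simplex of $Y \times_X MX$ (the pullback of $t$ and $p$) is a pair consisting of a Moore path $(\theta, \pi)$ in $X$ together with a point $y \in Y_n$ with $p(y) = t(\theta,\pi) = \pi . t_\theta$. Via the pullback square~\eqref{eq:geometricrealizinpbk} of \reftheo{geometricrealizinpbk}, giving $\pi \co \widehat\theta \to X$ over $\theta \co \Delta^n \to \mathbb{T}_0$ amounts precisely to giving a commuting square from $t_\theta \co \Delta^n \to \widehat\theta$ to $p$ whose bottom edge is the composite $\widehat\theta \xrightarrow{k_\theta} \mathbb{T}_1 \to X$ — which is exactly the shape of lifting problem appearing in \refcoro{nocondition4}. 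Second, I would check that a map $L \co Y \times_X MX \to MY$ with $(Mp, t).L = 1$ is, under this translation, the same data as a choice of lift $\widehat\theta \to Y$ for each such square — naturality of $L$ corresponds to condition (i) (compatibility under $\alpha \co \Delta^m \to \Delta^n$), and condition (iii) of \refdefi{strlift} (compatibility with $\mu$, i.e. concatenation of Moore paths $\theta = \theta_1 * \theta_0$) corresponds to condition (ii) of \refcoro{nocondition4}. Condition (ii) of \refdefi{strlift}, $L.(r.p, 1) = r$, matches the requirement (already built into the lifting-problem formulation) that the lift against the empty traversal $<>$ in dimension $n$ is forced to be the constant map, since $\widehat{<>} = \Delta^n$ and $t_{<>} = 1$. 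Third, having matched the two notions of fibred structure, I would invoke \refcoro{nocondition4}: the isomorphism there identifies structures satisfying (1)--(3) with full naive fibration structures, and since (by \refprop{liftvstransport}) the latter are precisely maps satisfying (1)--(4), the forgetful map from (1)--(4) to (1)--(3) is a bijection on each $p$, so any $L$ satisfying (1)--(3) automatically satisfies (4).

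\textbf{The main obstacle.} The genuinely delicate point is the bookkeeping in the second step: verifying that condition (iii) of \refdefi{strlift} translates exactly into condition (ii) of \refcoro{nocondition4} under the identification $MX_n \cong \sum_\theta \operatorname{Hom}(\widehat\theta, X)$. This requires unwinding how concatenation $\mu$ of Moore paths interacts with geometric realization — specifically that $\widehat{\theta_1 * \theta_0}$ is the pushout $\widehat{\theta_0} \sqcup_{\Delta^n} \widehat{\theta_1}$ glued along $t_{\theta_0} = s_{\theta_1}$, which is essentially the content of the discussion preceding \refprop{inclusionsmallHDRsintobigHDRs} — and then tracking that the two-step lift recipe in \refcoro{nocondition4}(ii) is precisely the externalisation of $L.(\mu.(p_1.p_1, p_2.p_1), p_2) = \mu.(L.(p_1.p_1, s.L.(p_2.p_1, p_2)), L.(p_2.p_1, p_2))$. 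Everything else is routine once this correspondence is pinned down, but this identification is where one must be careful about which position in the concatenated traversal is being used and in which order the fibrewise lifts are formed. I would present this as a short lemma ("the notion of fibred structure sending $p$ to the set of $L$ satisfying (1)--(3) is naturally isomorphic to the one described in the second bullet of \refcoro{nocondition4}") and then state that the corollary is immediate.
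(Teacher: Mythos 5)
Your overall strategy---identifying conditions (1)--(3) with the external lifting structures of \refcoro{nocondition4} and then playing this off against \refprop{liftvstransport}---is the same circle of ideas the paper uses, but the final step has a genuine gap. From your lemma (weak structures $\cong$ the second bullet of \refcoro{nocondition4}) together with \refcoro{nocondition4} itself (naive Kan fibration structures $\cong$ the second bullet) you only obtain an abstract natural bijection between the set of maps $L$ satisfying (1)--(4) and the set of maps $L$ satisfying (1)--(3). Since the former is a subset of the latter, this does not yet imply that the inclusion is surjective, i.e.\ that every weak structure satisfies (4), unless you also know that the composite isomorphism is the forgetful map. That is precisely what the corollary asserts (the paper says as much immediately before stating it: ``it turns out that this isomorphism is just the forgetful map''), and it is not something \refcoro{nocondition4} establishes: the map there passes through the chain algebra $\to$ right lifting structure against HDRs (via the general AWFS filler formula) $\to$ restriction to $\mathbb{H}$ and to the inclusions $\langle\rangle \to \theta$, and one must check that, under your externalization, this agrees with simply reading the lifts off from $L$. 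Your proposal asserts this compatibility (``the forgetful map \dots is an isomorphism'') without proof, and locates the only delicate point elsewhere, in the matching of condition (iii) with condition (ii) of \refcoro{nocondition4}---which the paper in fact dispatches in a one-sentence remark before the corollary.

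The paper closes exactly the hole you leave open by a computation your proposal omits: starting from a weak structure $L$, it extends the corresponding external lifting structure to all HDRs, solves the lifting problem against the HDR $(1,r.p)\colon Y \to Y\times_X MX$ using $\delta_p$, and computes that the transport structure obtained in this way is $s.L$; since by \refprop{liftvstransport} the assignment $L' \mapsto s.L'$ is already a bijection from full structures to transport structures, and the weak-to-transport map is a bijection given by the same formula, every weak structure must be full. Some verification of this kind (equivalently: that the lift against $t_\theta$ induced by the algebra structure $s.L$ coincides with the map $\widehat{\theta} \to Y$ encoded by $L(y,(\theta,\pi))$ under the polynomial description of $M$) is unavoidable, and it is the real crux of the statement.
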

\begin{proof}
 Let us call a map $L: Y \times_X MX \to MY$ a weak naive fibration structure if it only satisfies conditions (1) -- (3). Then we know that there is an isomorphism between the transport structures on $p$ and the weak naive fibration structures on $p$ obtained by the operations studied in this section. Let us see how we get a transport structure from a weak naive fibration structure in this way. We start by extending the weak naive fibration structure to a right lifting structure against $\mathbb{H}$, which can then be extended to all HDRs. This can be used to find the transport structure $t$ by solving the problem
  \begin{displaymath}
    \begin{tikzcd}
      Y \ar[r, "1"] \ar[d, "{(1,r.p)}"] & Y \ar[d, "p"] \\
      Y \times_X MX \ar[r, "s.p_2"] & X
    \end{tikzcd}
  \end{displaymath}
  using that the map on the left is an HDR, via
  \[ \delta_p = (\alpha.(p_1,M!.p_2), \Gamma.p_2): Y \times_X MX \to M(Y \times_X MX) \cong MY \times_{MX} MMX. \]
  This means that on an arbitrary element $(y, (\theta, \pi: \theta \to X)): \Delta^n \to Y \times_X MX$, the value $t(y,(\theta,\pi))$ is the solution to a lifting problem:
  \begin{displaymath}
    \begin{tikzcd}
      \Delta^n \ar[r, "y"] \ar[d, "t_\theta"] & Y \ar[r, "1"] \ar[d, "{(1,r.p)}"] & Y \ar[d, "p"] \\
      \theta \ar[r, "z"'] \ar[urr, dotted] & Y \times_X MX \ar[r, "s.p_2"'] & X
    \end{tikzcd}
  \end{displaymath}
  where $z$ is the transpose of $\delta_p(y,(\theta,\pi))$. But that means that $s.p_2.z$ is the transpose of $Ms.p_2.(\alpha.(p_1,M!.p_2), \Gamma.p_2)(y, (\theta,\pi)) = (\theta, \pi)$; in other words, $s.p_2.z = \pi$ and the induced lift is $L(y,(\theta, \pi))$. Therefore the induced transport structure is defined by $t(y,(\theta,\pi)) := s.L(y,(\theta,\pi))$.

  So the upshot is that $L \mapsto s.L$ is the isomorphism of notion of fibred structure from the weak naive fibration structures to the transport structures. But we have seen in \refprop{liftvstransport} that this also defines an isomorphism of notions of fibred structure between ordinary naive fibration structures and transport structures. We conclude that every weak naive fibration structure already satisfies condition (4).
\end{proof}

\begin{rema}{oncondition4forMoorefibrationsinsimplicialsets}
  We believe that the previous corollary can also be shown directly. Very roughly, the reason is the following. One can think of $\Gamma$ as being built from path composition and degeneracies, and since any weak naive fibration structure $L$ is in particular a morphism of simplicial sets, it will automatically respect degeneracies. So if $L$ respects path composition, it must also respect $\Gamma$.
\end{rema}

If $p$ is a naive Kan fibration, its lifting structure against $\mathbb{H}$ is also completely determined by its lifts against the inclusions of traversals of the form $<> \to <(i,\pm)>$. Indeed, we already used and explained this in the proof of \reftheo{HgenMoorefib}: any vertical map in $\mathbb{H}$ is a vertical composition of inclusions of traversals where the next traversal has length one more than the previous and each such inclusion is a pushout of one of the form $<> \to <(i,\pm)>$. In the remainder of this section we will determine which compatibility conditions the lifts against these maps have to satisfy in order to extend to a (unique) lifting structure against $\mathbb{H}$. This description will also allow us to prove that the notion of a being a naive Kan fibration is a local notion of fibred structure.

If we are given the lifts against the maps $<> \to <(i,\pm)>$ and we extend them to the entire double category $\mathbb{H}$ in the manner described above, then both the vertical compatibility condition as well as horizontal compatibility condition for maps of form $(1,\sigma)$ are automatically satisfied. So we only need to ensure the horizontal compatibility condition for maps of the form $(\alpha,<>)$. To ensure that, we only need to consider squares where the horizontal map $\alpha$ is either a face or degeneracy maps and the vertical maps on the right is one of the form $<> \to <(i,\pm)>$.

We obtain the following cases:
\begin{enumerate}
  \item[(i)] For the face maps, we have compatibility conditions for the case $k \lt i$ (left) and the case $k \gt i$ (right):
  \begin{displaymath}
    \begin{array}{cc}
      \begin{tikzcd}
        \Delta^{n-1} \ar[r, "d_k"] \ar[d, "d_{(i-1,\pm)^t}"] & \Delta^n \ar[d, "d_{(i,\pm)^t}"] \\
        \Delta^n \ar[d, "s_{i-1}"] \ar[r, "d_k"] & \Delta^{n+1} \ar[d, "s_i"] \\
        \Delta^{n-1} \ar[r, "d_k"] & \Delta^n
      \end{tikzcd} &
      \begin{tikzcd}
        \Delta^{n-1} \ar[r, "d_k"] \ar[d, "d_{(i,\pm)^t}"] & \Delta^n \ar[d, "d_{(i,\pm)^t}"] \\
        \Delta^n \ar[d, "s_{i}"] \ar[r, "d_{k+1}"] & \Delta^{n+1} \ar[d, "s_i"] \\
        \Delta^{n-1} \ar[r, "d_k"] & \Delta^n
      \end{tikzcd}
    \end{array}
  \end{displaymath}
  What we mean here is that we have a horizontal compatibility condition for the top squares in both diagrams below, in that if $p: Y \to X$ is a naive Kan fibration, and we have lifting problem as in
  \begin{displaymath}
    \begin{array}{cc}
      \begin{tikzcd}
        \Delta^{n-1} \ar[r, "d_k"] \ar[d, "d_{(i-1,\pm)^t}"] & \Delta^n \ar[d, "d_{(i,\pm)^t}"] \ar[r] & Y \ar[d, "p"] \\
        \Delta^n \ar[d, "s_{i-1}"] \ar[r, "d_k"] \ar[urr, dotted] & \Delta^{n+1} \ar[d, "s_i"] \ar[r] \ar[ur, dotted] & X \\
        \Delta^{n-1} \ar[r, "d_k"] & \Delta^n
      \end{tikzcd} &
      \begin{tikzcd}
        \Delta^{n-1} \ar[r, "d_k"] \ar[d, "d_{(i,\pm)^t}"] & \Delta^n \ar[d, "d_{(i,\pm)^t}"] \ar[r] & Y \ar[d, "p"] \\
        \Delta^n \ar[d, "s_{i}"] \ar[r, "d_{k+1}"] \ar[urr, dotted] & \Delta^{n+1} \ar[d, "s_i"] \ar[ur, dotted] \ar[r] & X\\
        \Delta^{n-1} \ar[r, "d_k"] & \Delta^n
      \end{tikzcd}
    \end{array}
  \end{displaymath}
  then the dotted lifts have to be compatible. (Note that there is also a case $k = i$ , but it is trivially satisfied, because in that case we get the identity inclusion $<> \to <>$ on the left.)
  \item[(ii)] For the degeneracy maps, we have compatibility condition for the case $k \lt i$ (left) and $k \gt i$ (right):
  \begin{displaymath}
    \begin{array}{cc}
      \begin{tikzcd}
        \Delta^{n-1} \ar[r, "s_k"] \ar[d, "d_{(i+1,\pm)^t}"] & \Delta^n \ar[d, "d_{(i,\pm)^t}"] \\
        \Delta^n \ar[d, "s_{i+1}"] \ar[r, "s_k"] & \Delta^{n+1} \ar[d, "s_i"] \\
        \Delta^{n-1} \ar[r, "s_k"] & \Delta^n
      \end{tikzcd} &
      \begin{tikzcd}
        \Delta^{n-1} \ar[r, "s_k"] \ar[d, "d_{(i,\pm)^t}"] & \Delta^n \ar[d, "d_{(i,\pm)^t}"] \\
        \Delta^n \ar[d, "s_{i}"] \ar[r, "s_{k+1}"] & \Delta^{n+1} \ar[d, "s_i"] \\
        \Delta^{n-1} \ar[r, "s_k"] & \Delta^n
      \end{tikzcd}
    \end{array}
  \end{displaymath}
  as in (i).
  \item[(iii)] Pulling back $<(i,\pm)>$ along $s_i$ is a rather special case, which we split in both a positive and negative case (on the left and right, respectively).
  \begin{displaymath}
    \begin{array}{cc}
      \begin{tikzcd}
        & \Delta^{n+1} \ar[r, "s_i"] \ar[d, "d_i"] & \Delta^n \ar[dd, "d_i"] \\
        \Delta^{n+1} \ar[d, "d_{i+1}"] \ar[r, "d_{i+1}"] & \Delta^{n+2} \ar[dr, "s_{i+1}"] \ar[d, "\iota_2"] \\
        \Delta^{n+2} \ar[r, "\iota_1"] & \Delta^{n+2} \cup_{\Delta^{n+1}} \Delta^{n+2} \ar[r, "{[s_i,s_{i+1}]}"'] \ar[d, "{[s_{i+1}, s_i]}"] & \Delta^{n+1} \ar[d, "s_i"] \\
        & \Delta^{n+1} \ar[r, "s_i"] & \Delta^n
      \end{tikzcd} &
      \begin{tikzcd}
        & \Delta^{n+1} \ar[r, "s_i"] \ar[d, "d_{i+2}"] & \Delta^n \ar[dd, "d_{i+1}"] \\
        \Delta^{n+1} \ar[d, "d_{i+1}"] \ar[r, "d_{i+1}"] & \Delta^{n+2} \ar[dr, "s_i"] \ar[d, "\iota_2"] \\
        \Delta^{n+2} \ar[r, "\iota_1"] & \Delta^{n+2} \cup_{\Delta^{n+1}} \Delta^{n+2} \ar[r, "{[s_{i+1}, s_i]}"'] \ar[d, "{[s_i,s_{i+1}]}"] & \Delta^{n+1} \ar[d, "s_i"] \\
        & \Delta^{n+1} \ar[r, "s_i"] & \Delta^n
      \end{tikzcd}
    \end{array}
  \end{displaymath}
  Therefore we obtain compatibility conditions as follows:
  \begin{enumerate}
    \item[(a)] In the positive case for:
    \begin{displaymath}
      \begin{array}{cc}
        \begin{tikzcd}
          \Delta^{n+1} \ar[r, "s_i"] \ar[d, "d_i"] & \Delta^n \ar[d, "d_i"] \ar[r, "y"] & Y \ar[d, "p"] \\
          \Delta^{n+2} \ar[r, "s_{i+1}"'] \ar[d, "s_i"] \ar[urr, "l_2", dotted, near start] & \Delta^{n+1} \ar[d, "s_i"] \ar[r, "x"'] \ar[ur, "l_1"', dotted] & X \\
          \Delta^{n+1} \ar[r, "s_i"] & \Delta^n
        \end{tikzcd} &
        \begin{tikzcd}
          \Delta^{n+1} \ar[d, "d_{i+1}"] \ar[rr, bend left, "l_2.d_{i+1}"] & \Delta^n \ar[d, "d_i"] \ar[r, "y"] & Y \ar[d, "p"] \\
          \Delta^{n+2} \ar[r, "s_i"'] \ar[d, "s_{i+1}"] \ar[urr, "l_3", dotted, near start] & \Delta^{n+1} \ar[d, "s_i"] \ar[r, "x"'] \ar[ur, "l_1"', dotted] & X \\
          \Delta^{n+1} \ar[r, "s_i"] & \Delta^n
        \end{tikzcd}
      \end{array}
    \end{displaymath}
    The diagram on the left expresses a compatibility condition similar to the previous ones (even in that the top left square is a morphism of HDRs: see the proof of \reftheo{HgenMoorefib}). The one on the right is different, because there is no map $\Delta^{n+1} \to \Delta^n$ making the top left hand square commute. Note that the diagram on the left implies that $l_2.d_{i+1} = l_1.s_{i+1}.d_{i+1} = l_1$, so the reference to $l_2$ in the diagram on the right can be eliminated.
    \item[(b)] In the negative case we have similar compatibility conditions:
    \begin{displaymath}
      \begin{array}{cc}
        \begin{tikzcd}
          \Delta^{n+1} \ar[r, "s_i"] \ar[d, "d_{i+2}"] & \Delta^n \ar[d, "d_{i+1}"] \ar[r, "y"] & Y \ar[d, "p"] \\
          \Delta^{n+2} \ar[r, "s_{i}"'] \ar[d, "s_{i+1}"] \ar[urr, "l_2", near start, dotted] & \Delta^{n+1} \ar[d, "s_i"] \ar[r, "x"'] \ar[ur, "l_1"', dotted] & X \\
          \Delta^{n+1} \ar[r, "s_i"] & \Delta^n
        \end{tikzcd} &
        \begin{tikzcd}
          \Delta^{n+1} \ar[d, "d_{i+1}"] \ar[rr, bend left, "l_2.d_{i+1} = l_1"] & \Delta^n \ar[d, "d_{i+1}"] \ar[r, "y"] & Y \ar[d, "p"] \\
          \Delta^{n+2} \ar[r, "s_{i+1}"'] \ar[d, "s_i"] \ar[urr, "l_3", dotted, near start] & \Delta^{n+1} \ar[d, "s_i"] \ar[r, "x"'] \ar[ur, "l_1"', dotted] & X \\
          \Delta^{n+1} \ar[r, "s_i"] & \Delta^n
        \end{tikzcd}
      \end{array}
    \end{displaymath}
  \end{enumerate}
\end{enumerate}

\begin{rema}{onhowtoreadtheselifts} Note that in this notion of fibred structure we do not just choose lifts for each commutative square with some $d_i: \Delta^n \to \Delta^{n+1}$ on the right: we are also given as input a retraction of $d_i$ (which has to be either $s_{i-1}$ or $s_i: \Delta^{n+1} \to \Delta^n$). So although the lifting problem in no way refers to this retraction, the lifting structure may choose different solutions if $d_i$ comes equipped with a different retraction. Also, the compatibility condition is formulated not for the $d_i$ as such, but for the $d_i$ together with a choice of retraction: indeed, the compatibility condition takes this choice into account in a crucial way.
\end{rema}

From this characterisation we immediately get:
\begin{coro}{Moorefibrationslocal}
 In the category of simplicial sets being a naive Kan fibration is a local notion of fibred structure.
\end{coro}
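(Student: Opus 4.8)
The plan is to deduce this corollary directly from the explicit characterisation of naive Kan fibration structures in terms of lifts against the maps $<> \to <(i,\pm)>$ (i.e.\ against the face maps $d_i : \Delta^n \to \Delta^{n+1}$ together with a choice of retraction) worked out in the preceding pages, combined with \refprop{localityforpresh}. Recall that \refprop{localityforpresh} says that for a notion of fibred structure on simplicial sets, locality is equivalent to the following: given $p : Y \to X$ and, for every $x \in X(C)$, a fibration structure on a chosen pullback $p_x : Y_x \to \Delta^C$ compatible under restriction along maps $\alpha : \Delta^D \to \Delta^C$, there is a unique fibration structure on $p$ restricting to all of them. So the task is to show that a naive Kan fibration structure on $p$ is determined by, and can be assembled from, such a family.

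First I would use the characterisation just established (in the discussion culminating in cases (i)--(iii) above, together with \refcoro{nocondition4}): a naive Kan fibration structure on $p : Y \to X$ is the same as a system of chosen lifts, one for each commutative square of the form
\begin{displaymath}
  \begin{tikzcd}
    \Delta^n \ar[d, "d_i"'] \ar[r, "y"] & Y \ar[d, "p"] \\
    \Delta^{n+1} \ar[r, "x"'] & X
  \end{tikzcd}
\end{displaymath}
equipped moreover with a choice of retraction ($s_{i-1}$ or $s_i$) of $d_i$, subject to the compatibility conditions (i), (ii) (for degeneracies), and (iii). The key observation is that \emph{all} the data here is given over representables: each lifting square, and each compatibility condition, lives entirely in some slice $\widehat{\mathbf{\Delta}}/\Delta^{n+1}$ after pulling $p$ back along $x : \Delta^{n+1} \to X$, and the input of a lift against $d_i$ with a chosen retraction, applied to $(x, y)$, factors through the pullback $Y_x \to \Delta^{n+1}$. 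Concretely: given compatible fibration structures $s_x$ on the pullbacks $p_x$, I would \emph{define} a lift for the square above to be the lift chosen by $s_x$ for the corresponding square over $\Delta^{n+1}$ against $d_i$ with the given retraction; this is forced, and well-defined, since the left-hand square in
\begin{displaymath}
  \begin{tikzcd}
    \Delta^n \ar[d, "d_i"'] \ar[r] & Y_x \ar[d, "p_x"] \ar[r] & Y \ar[d, "p"] \\
    \Delta^{n+1} \ar[r, "1"'] & \Delta^{n+1} \ar[r, "x"'] & X
  \end{tikzcd}
\end{displaymath}
exhibits the lift as a restriction, exactly as in the proof of \refcoro{trivfiblocal}.

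Next I would verify that this definition produces a genuine naive Kan fibration structure, i.e.\ that compatibility conditions (i)--(iii) hold for $p$. Each such condition is a statement about a diagram built from face/degeneracy maps between representables $\Delta^m, \Delta^n, \Delta^{n+1}, \Delta^{n+2}$; pulling everything back along the relevant map to $X$, the condition becomes precisely the corresponding instance of (i)--(iii) for some pullback $p_{x}$, which holds because $p_x$ is a naive Kan fibration, and the instances agree under the $\alpha$-compatibility of the family $(s_x)$. This is the same bookkeeping as in \refcoro{trivfiblocal} but with a longer list of conditions; the condition (iii) (the ``special position'' squares, where the top-left square is not filled by a map between representables) is the most delicate, but it too is pulled back from a representable base and so reduces to the known condition on $p_x$. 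Uniqueness is automatic, since the lift against each $d_i$-with-retraction is forced to be the restriction of the chosen lift on the corresponding pullback. Finally I would invoke \refprop{localityforpresh} to conclude.

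The main obstacle I anticipate is purely organisational rather than mathematical: one must be careful that the ``chosen retraction'' data, which is part of the input of a lifting structure against $\mathbb{H}$ (see \refrema{onhowtoreadtheselifts}), is correctly tracked when pulling back along maps into $X$, and that the compatibility conditions of case (iii) — which genuinely involve this retraction data and the non-representable pushout $\Delta^{n+2} \cup_{\Delta^{n+1}} \Delta^{n+2}$ — restrict correctly. Since pullback of simplicial sets along a map $\Delta^C \to X$ preserves colimits and all the relevant squares, this works, but writing it out requires unwinding the maps $[s_i, s_{i+1}]$ etc.\ carefully. Apart from this, the argument is a direct transcription of the proof of \refcoro{trivfiblocal} to the characterisation of naive Kan fibrations given just above, so I would keep the written proof short and refer back to that characterisation and to \refprop{localityforpresh}.
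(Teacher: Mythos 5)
Your proposal is correct and follows essentially the same route as the paper: the paper derives this corollary immediately from the characterisation of naive Kan fibration structures as systems of lifts against the face maps $d_i$ (with chosen retraction) subject to the compatibility conditions (i)--(iii), all of which are formulated over representable bases, and your write-up simply fills in this verification in the style of the proof of \refcoro{trivfiblocal} together with \refprop{localityforpresh}. The subtleties you flag (tracking the retraction data and the case-(iii) conditions under pullback) are handled exactly as you describe, so no gap remains.
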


Another thing which this definition of a naive Kan fibration makes clear is that the traversals with positive and negative orientation live in parallel universes and there are no compatibility conditions relating the two. Indeed, to equip a map with the structure of a naive Kan fibration means equipping it with the structure of a naive right fibration and with the structure of a naive left fibration, with no requirements on how these two structures should relate. Put differently, we have:

\begin{coro}{allisprodleftanright}
In the category of notions of fibred structure, the notion of being a naive Kan fibration is the categorical product of the notion of being a naive right fibration and the notion of being a naive left fibration.
\end{coro}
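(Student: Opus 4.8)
The statement to be proved is \refcoro{allisprodleftanright}: in the category of notions of fibred structure on simplicial sets, $\cat{nKanFib}$ is the categorical product of $\cat{nRFib}$ and $\cat{nLFib}$. The plan is to exhibit the canonical comparison map and show it is an isomorphism of presheaves on $\ct{E}^{\to}_{\rm cart}$, i.e.\ a natural isomorphism of notions of fibred structure, by arguing pointwise over each arrow $p\colon Y\to X$. First I would write down the two forgetful morphisms $\cat{nKanFib}\to\cat{nRFib}$ and $\cat{nKanFib}\to\cat{nLFib}$. These come from the fact that the AWFS for naive Kan fibrations ($M$ with $\Gamma,s$) is, up to isomorphism of AWFSs, constructed from the \emph{same} symmetric Moore structure that also underlies the naive right and naive left fibrations (via $\Gamma$ and $\Gamma^*$); more concretely, they are induced by the obvious inclusions of generating small double categories $\mathbb{H}_+\hookrightarrow\mathbb{H}\hookleftarrow\mathbb{H}_-$ of one-sided traversals into all traversals. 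Pairing these gives a comparison morphism $\Phi\colon\cat{nKanFib}\to\cat{nRFib}\times\cat{nLFib}$, and the task is to prove $\Phi$ is a bijection on each fibre, naturally in $p$.

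The key technical input is the description of naive Kan fibration structures obtained at the end of Section~\ref{sec:AWFSFromM} in simplicial sets, in particular \refcoro{nocondition4} together with the explicit list of compatibility conditions (i)--(iii) worked out just before \refcoro{Moorefibrationslocal}. By \reftheo{HgenMoorefib} a naive Kan fibration structure on $p$ is equivalently a system of lifts against the small double category $\mathbb{H}$, and by the reduction preceding \refcoro{Moorefibrationslocal} such a system is completely and freely determined by its lifts against the one-simplex inclusions of traversals $\langle\rangle\to\langle(i,+)\rangle$ and $\langle\rangle\to\langle(i,-)\rangle$, subject exactly to the compatibility conditions enumerated there. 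The decisive observation is that in that enumeration \emph{every condition involves either only positively oriented traversals or only negatively oriented traversals}: conditions (i), (ii), (iii.a) only mention $\langle(i,+)\rangle$ and the degeneracies/faces relevant to the $+$ side, and symmetrically for the $-$ side; there is no condition that mixes a $+$-lift with a $-$-lift. Hence the data-plus-axioms splits as a product: a naive Kan fibration structure on $p$ is precisely a pair consisting of a compatible family of $+$-lifts and a compatible family of $-$-lifts. Spelling this out, I would identify the $+$-part with a naive right fibration structure and the $-$-part with a naive left fibration structure, invoking \refprop{transpvsalgebras}, \refprop{liftvstransport}, and the analogous statements for left fibrations (together with \refcoro{onweakliftstrinsimplicialsets} to handle condition (4) automatically on each side). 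This shows $\Phi_p$ is a bijection.

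For naturality, I would check that the pullback action on all three notions of fibred structure is computed fibrewise in the same traversal-indexed way (the pullback formulas for HDRs in the internal-presheaf language, and the fact that $\mathbb{H}$, $\mathbb{H}_+$, $\mathbb{H}_-$ sit compatibly over $\sq{\widehat{\mathbf{\Delta}}}$), so that the bijection $\Phi_p$ commutes with reindexing along any cartesian square $p'\to p$. Since a natural transformation of presheaves that is a pointwise bijection is an isomorphism, this yields $\cat{nKanFib}\cong\cat{nRFib}\times\cat{nLFib}$, and by construction $\Phi$ is the comparison map induced by the two forgetful morphisms, so the product is the categorical product. As a strengthening, one could note—exactly as in \refrema{evenisoofdoublecats} and \refcoro{rfibCatFibredStructure}—that the splitting is realised by a double functor, giving an isomorphism of the corresponding discretely fibred concrete double categories, but for the stated corollary the notion-of-fibred-structure version suffices.

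\textbf{Main obstacle.} The hard part is not the product decomposition itself, which is essentially a bookkeeping observation once the right characterisation is in hand, but rather making rigorous the claim that the enumerated conditions (i)--(iii) genuinely do \emph{not} couple the two orientations. One has to be careful about the cases in (iii): the inclusions $\langle\rangle\to\langle(i,\pm)\rangle$ pulled back along degeneracies produce pushouts $\Delta^{n+2}\cup_{\Delta^{n+1}}\Delta^{n+2}$ whose two legs are $s_i$ and $s_{i+1}$, and one must verify that the induced lifting condition in the $+$ case refers only to $+$-lifts (in particular that the auxiliary lift $l_2$ there, and its elimination via $l_2.d_{i+1}=l_1$, stays on the positive side), and symmetrically for $-$. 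This is exactly the computation carried out in the proof of \reftheo{HgenMoorefib} for the $\langle\rangle\to\langle(i,+)\rangle$ case, which I would cite and mirror for $\langle\rangle\to\langle(i,-)\rangle$; the risk is a hidden interaction through a shared degenerate simplex, and the resolution is that the traversal of a degenerate lift is itself a one-sided traversal of the same orientation, so no crossing occurs.
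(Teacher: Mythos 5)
Your proposal is correct and follows essentially the same route as the paper: the corollary is drawn directly from the characterisation of naive Kan fibration structures as lifts against the inclusions $\langle\rangle\to\langle(i,\pm)\rangle$ subject to the compatibility conditions (i)--(iii), noting that none of those conditions couples the positive and negative orientations, so the structure splits as a pair of a naive right and a naive left fibration structure. Your extra bookkeeping (the comparison map into the product and the naturality check) merely makes explicit what the paper leaves implicit.
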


\section{Mould squares in simplicial sets}\label{sec:mouldsquaresinssets}\index{mould square!in simplicial sets}

In this and the next section we will study \emph{effective Kan fibrations} in simplicial sets. By definition, they are those maps which have the right lifting property against the large triple category of mould squares, with mould squares coming from the simplicial Moore path functor $M$ and the pointwise decidable monomorphism as the cofibrations. The main aim of this section is to show that there is a small triple category of mould squares which generates the same class. In the next section we will use this to show that the effective Kan fibrations in simplicial sets form a local notion of fibred structure.

\begin{rema}{onleftandrightfibr}
The attentive reader will notice that results similar to the ones we derive here hold for the mould squares coming from the two other Moore structures on simplicial sets (see \reftheo{twosidedpathobjcatwithM+} and \refdefi{naivefibrinsimpset}). We will refer to the maps having the right lifting property against the triple category of mould squares coming from $(M_+, \Gamma_+, s)$ as the \emph{effective right fibrations} \index{effective right fibration} and the maps having the right lifting property against the triple category of mould squares coming from $(M_+, \Gamma^*_+, t)$ as the \emph{effective left fibrations} \index{effective left fibration}. Implicitly we will show that these are also generated by suitable small triple categories of mould squares.
\end{rema}

\subsection{Small mould squares} We will define a triple category $\mathbb{M}$ as follows.

\begin{itemize}
\item Objects are triples $(n, S, \theta)$, usually just written $(S, \theta)$, consisting of a natural number $n$, a cofibrant sieve $S \subseteq \Delta^n$ and an $n$-dimensional traversal $\theta$.
\item There is a unique horizontal morphism $(S_0, \theta_0) \to (S_1, \theta_1)$ if $S_0 = S_1$ and $\theta_0$ is a final segment of $\theta_1$.
\item There is a unique vertical morphism $(S_0, \theta_0) \to (S_1,\theta_1)$ if $\theta_0 = \theta_1$ and $S_0 \subseteq S_1$ is an inclusion of cofibrant sieves.
\item Perpendicular morphisms $(T \subseteq \Delta^m, \psi) \to (S \subseteq \Delta^n, \theta)$ are pairs $(\alpha, \sigma)$ with $\alpha: \Delta^m \to \Delta^n$ and $\sigma$ an $m$-dimensional traversal such that $\alpha^* S = T$ and $\psi * \sigma = \theta \cdot \alpha$. Perpendicular composition is given by $(\alpha,\sigma).(\beta,\tau) = (\alpha.\beta, \tau * (\sigma \cdot \beta))$, as before.
\item The triple category is codiscrete in the $xy$-plane in that whenever pairs of horizontal and vertical arrows fit together as in
\begin{displaymath}
  \begin{tikzcd}
    (S_0, \theta_0) \ar[d] \ar[r] & (S_0, \theta_1) \ar[d] \\
    (S_1, \theta_0) \ar[r] & (S_1, \theta_1),
  \end{tikzcd}
\end{displaymath}
then this is the boundary of a unique square. We will refer to such a square as a \emph{small mould square}.\index{small mould square}
\item In the $yz$- and $xz$-plane squares exist as soon as the perpendicular arrows have the same label $(\alpha,\sigma)$ (and the domains and codomains match up), and any two such which are ``parallel'' (have identical boundaries) are identical.
\item The triple category is codiscrete in the third dimension, in that any potential boundary of a cube contains a unique cube filling it.
\end{itemize}

\begin{prop}{triplefunctorfromsmalltolarge}
There is a triple functor $\mathbb{M} \to {\rm MSq}(\widehat{\Delta})$ from the triple category $\mathbb{M}$ to the large triple category of mould squares in simplicial sets.
\end{prop}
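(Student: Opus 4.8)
The plan is to define the triple functor $\mathbb{M} \to \mathrm{MSq}(\widehat{\Delta})$ by building on the work already done for the double functor $\mathbb{H} \to \mathrm{HDR}(\widehat{\mathbf\Delta})$ in \refprop{inclusionsmallHDRsintobigHDRs}, together with the treatment of cofibrant sieves from \reflemm{oncofsieves} and \reftheo{cofinsimplsetsdominance}. First I would specify the action on objects: an object $(n,S,\theta)$ of $\mathbb{M}$ is sent to the pullback of the HDR $t_\theta : \Delta^n \to \widehat\theta$ (the geometric realization, viewed as an HDR via \reftheo{geometricrealizinpbk} and the genericity of $\mathrm{id}^*$) along the cofibration $S \hookrightarrow \Delta^n$. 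Concretely, if $\widehat\theta \to \Delta^n$ is the codomain leg $j_\theta$, then we form $S' := j_\theta^{-1}(S) \subseteq \widehat\theta$, and the object is sent to the cartesian morphism of HDRs $S' \to \widehat\theta$ living over the effective cofibration $S \hookrightarrow \Delta^n$ --- i.e.\ precisely a mould square in the sense of \refdefi{mouldsq}. Here I would invoke \reflemm{mouldSqPbStable} to know that pulling back the HDR-structure along an effective cofibration indeed yields an HDR, and \refcoro{pbsqofHDRalongcartsq}.

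Next I would handle the three types of $1$-morphisms. The horizontal morphisms of $\mathbb{M}$ (inclusions of final segments of traversals with the sieve held fixed) go to HDRs: this is exactly the assignment already built in \refprop{inclusionsmallHDRsintobigHDRs} for the double category $\mathbb{H}$, restricted to the fibre over $S$; one forms $\widehat\psi \to \widehat{\theta * \psi}$ and pulls back along the cofibration. The vertical morphisms (inclusions $S_0 \subseteq S_1$ of cofibrant sieves, traversal fixed) go to effective cofibrations --- the induced map of pullbacks $j_\theta^{-1}(S_0) \hookrightarrow j_\theta^{-1}(S_1)$, which is a pointwise-decidable mono because pullbacks of effective cofibrations are effective cofibrations (\reftheo{cofinsimplsetsdominance}, or just stability under pullback of the dominance). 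The perpendicular morphisms $(\alpha,\sigma)$ go to morphisms of simplicial sets, using the functoriality of geometric realization from \refcoro{geometricrealizfunctor} together with the $\iota_1$-maps from the traversal-concatenation; this is again exactly the behaviour of the double functor $\mathbb{H}$ on horizontal maps (which in the triple-category language have become perpendicular maps), and here I would re-use the decomposition $(\alpha,\sigma) = (\alpha,\langle\rangle) \circ (1,\sigma)$ that was used in the proof of \refprop{inclusionsmallHDRsintobigHDRs}.

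Then I would check the three kinds of squares and the cubes. The $xy$-squares (small mould squares) must land on mould squares: a small mould square has as boundary a horizontal HDR-inclusion and two vertical effective-cofibration inclusions, and its image is the evident morphism between the two cartesian morphisms of HDRs obtained by pulling the generic HDR back along the larger and smaller sieves --- one checks it is again a cartesian morphism of HDRs in the fibre over an effective cofibration, hence a mould square, using that pullback of a cartesian morphism of HDRs along a morphism of HDRs is cartesian (\refcoro{pbsqofHDRalongcartsq}) and that binary unions/intersections of cofibrant sieves stay cofibrant (\refrema{cofpropandlogicalop}). The $xz$-squares (which carry the same perpendicular label $(\alpha,\sigma)$) should become morphisms of HDRs: this follows from the compatibility already established for $\mathbb{H}$, in particular the cartesian-morphism-of-HDRs structure built via Beck--Chevalley (\refprop{domHDRfibration}) in the proof of \refprop{inclusionsmallHDRsintobigHDRs}. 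The $yz$-squares should become morphisms of effective cofibrations, i.e.\ pullback squares, which is immediate from the pullback-pasting arguments. Finally the cubes of $\mathbb{M}$ (unique for a given boundary) must go to the unique cubes of $\mathrm{MSq}(\widehat\Delta)$ --- pullbacks of a mould square along a morphism of HDRs --- and here the essential point is that the large triple category of mould squares is itself codiscrete in the cube direction (cubes unique for a given boundary, as noted after \refdefi{effectiveRightFibration}), so once the six faces are checked to be sent correctly the cube is sent correctly automatically.

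The main obstacle, as with \refprop{inclusionsmallHDRsintobigHDRs}, will be verifying that the assignment genuinely respects all the compositions --- in particular that perpendicular composition of the $(\alpha,\sigma)$'s is respected and that the various squares compose compatibly. However, almost all of this reduces to facts already proved: preservation of horizontal/vertical composition and the crucial distributive-law-flavoured vertical-composition identity (\reflemm{lemmaonverticcomp}, \refprop{inclusionsmallHDRsintobigHDRs}) are inherited directly from $\mathbb{H}$ fibrewise, and the only genuinely new checks are the ones involving the cofibration (vertical) direction, which are comparatively easy because that direction is codiscrete on both sides. So I expect the proof to consist largely of the sentence ``restrict the double functor of \refprop{inclusionsmallHDRsintobigHDRs} fibrewise over cofibrant sieves, pull back along the cofibrations, and observe that all the new coherences hold trivially by codiscreteness of the $xy$- and cube-directions,'' with the effective-cofibration stability facts (\reftheo{cofinsimplsetsdominance}, \refrema{cofpropandlogicalop}, \reflemm{mouldSqPbStable}, \refcoro{pbsqofHDRalongcartsq}) supplying the remaining routine verifications; I would leave the bulk of the diagram-chasing to the reader, as the paper does elsewhere.
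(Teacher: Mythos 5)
Your construction coincides with the paper's: objects are sent to the pullbacks $\theta \cdot S$ of the geometric realization along the cofibrant sieve, the horizontal, vertical and perpendicular morphisms are obtained by pulling back the $\mathbb{H}$-assignments of \refprop{inclusionsmallHDRsintobigHDRs} (using the decomposition $(\alpha,\sigma)=(\alpha,\langle\rangle).(1,\sigma)$), and the square and cube verifications run on Beck--Chevalley, pullback pasting and \refcoro{pbsqofHDRalongcartsq}, exactly as in the paper. The only phrasing to correct is the object assignment: an object of the large triple category of mould squares is just the simplicial set $\theta \cdot S$, not the cartesian morphism of HDRs over $S \hookrightarrow \Delta^n$ (that morphism is where the $xy$-squares go), and for the cubes one needs the existence of the image cube -- i.e.\ that the front face is genuinely the pullback of the back face, which is what \refcoro{pbsqofHDRalongcartsq} supplies -- rather than uniqueness alone; with those adjustments your argument is the paper's proof.
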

\begin{proof}
Perhaps it is good to remind the reader of the structure of the large triple category of mould squares in simplicial sets:
\begin{itemize}
  \item The objects are simplicial sets.
  \item The horizontal morphisms are HDRs.
  \item The vertical morphisms are cofibrations.
  \item The perpendicular morphisms are arbitrary maps of simplicial sets.
  \item The squares in the $xy$-plane are mould squares (morphisms of HDRs which are cartesian over a cofibration).
  \item The squares in the $xz$-plane are morphisms of HDRs.
  \item The squares in $yz$-plane are morphisms of cofibrations (that is, pullbacks).
  \item The cubes are pullback squares of HDRs (of a mould square along an arbitrary morphism of HDRs).
\end{itemize}

The idea is to send the object $(n, S, \theta)$ to the pullback $\theta \cdot S$ in simplicial sets:
\begin{displaymath}
  \begin{tikzcd}
    \theta \cdot S \ar[r] \ar[d] & \theta \ar[d, "j_\theta"] \\
    S \ar[r] & \Delta^n.
  \end{tikzcd}
\end{displaymath}
In the $x$-direction we send $(S, \theta_0) \to (S, \theta_1)$ to the HDR we obtain by pullback:
\begin{displaymath}
  \begin{tikzcd}
    S \ar[r] \ar[d] & \Delta^n \ar[d] \\
    \theta_0 \cdot S \ar[r] \ar[d] & \theta_0 \ar[d] \\
    \theta_1 \cdot S \ar[r] & \theta_1.
  \end{tikzcd}
\end{displaymath}
Note that both squares become cartesian squares of HDRs. Because pullback preserves composition of HDRs, this operation preserves composition in the $x$-direction. Similarly, in the $y$-direction we send $(S_0,\theta) \to (S_1,\theta)$ to the cofibration we obtain by pullback, as follows:
\begin{displaymath}
  \begin{tikzcd}
    \theta \cdot S_0 \ar[r] \ar[d] & \theta \cdot S_1 \ar[r] \ar[d] & \theta \ar[d] \\
    S_0 \ar[r] & S_1 \ar[r] & \Delta^n.
  \end{tikzcd}
\end{displaymath}
From this it immediately follows that the squares in the $xy$-plane are sent to mould squares in simplicial sets.

The next step should be that squares on the left are sent to morphisms of HDRs and squares on the right to morphisms of cofibrations:
\begin{displaymath}
  \begin{array}{cc}
    \begin{tikzcd}
      (T, \psi) \ar[d] \ar[r, "{(\alpha, \sigma)}"] & (S, \theta) \ar[d] \\
      (T, \psi') \ar[r, "{(\alpha,\sigma)}"] & (S, \theta')
    \end{tikzcd} &
    \begin{tikzcd}
      (T, \psi) \ar[d] \ar[r, "{(\alpha, \sigma)}"] & (S, \theta) \ar[d] \\
      (T', \psi) \ar[r, "{(\alpha,\sigma)}"] & (S', \theta)
    \end{tikzcd}
  \end{array}
\end{displaymath}
We will again split this up in the case where $\alpha = 1$ and the case where $\sigma = <>$. If $\alpha = 1$, then $S = T$, $S' = T'$, $\theta = \psi * \sigma$ and $\psi' = \tau * \psi$ and $\theta' = \tau * \theta$ for some traversal $\tau$. In this case, the square on the left is sent to pullback of the right hand square
\begin{displaymath}
  \begin{tikzcd}
    \Delta^n \ar[d, "t_\tau"] \ar[r, "s_\psi"] \ar[rr, "s_{\psi * \sigma}", bend left] & \psi \ar[d, "\iota_2"] \ar[r, "\iota_1"] & \psi * \sigma \ar[d, "\iota_2"] \\
    \tau \ar[r, "\iota_1"] & \tau * \psi \ar[r, "\iota_1"] & \tau * \psi * \sigma.
  \end{tikzcd}
\end{displaymath}
along $S \to \Delta^n$. Since pullback preserves bicartesian morphisms of HDRs (Beck-Chevalley!), the result is a bicartesian morphism of HDRs. In addition, since the outer rectangle and the right hand square in
\begin{displaymath}
  \begin{tikzcd}
    \psi \cdot S \ar[r] \ar[d] & \theta \cdot S \ar[r] \ar[d] & S \ar[d] \\
    \psi \cdot S' \ar[r] & \theta \cdot S' \ar[r] & S'
  \end{tikzcd}
\end{displaymath}
are pullbacks, the square on the left is as well. Therefore the right hand square in the earlier diagram will be sent to a morphism of cofibrations when $\alpha = 1$.

Let us now consider the case $\sigma = <>$; now $\psi = \theta \cdot \alpha$ and  $\psi' = \theta' \cdot \alpha$. Then we need to show that the front face of the bottom cube in
\begin{displaymath}
  \begin{tikzcd}
    & \Delta^m \ar[rr, "\alpha"] \ar[dd] & & \Delta^n \ar[dd] \\
    T \ar[rr, crossing over] \ar[dd] \ar[ur] & & S \ar[ur] \\
    & \psi \ar[rr] \ar[dd] & & \theta \ar[dd] \\
    \psi \cdot T \ar[rr, crossing over] \ar[dd] \ar[ur] & & \theta \cdot S \ar[from = uu, crossing over] \ar[ur] \\
    & \psi' \ar[rr] & & \theta' \\
    \psi' \cdot T \ar[rr] \ar[ur] & & \theta' \cdot S \ar[ur] \ar[from = uu, crossing over]
  \end{tikzcd}
\end{displaymath}
is a morphism of HDRs. But since all the other faces in this cube (besides the left and right one) are cartesian morphisms of HDRs, so must be the front face. Similarly, we need to show that the top of the left cube in
\begin{displaymath}
  \begin{tikzcd}
    & \theta \cdot S \ar[rr] \ar[dd] & & \theta \cdot S' \ar[dd] \ar[rr] & & \theta \ar[dd] \\
    \psi \cdot T \ar[dd] \ar[rr, crossing over] \ar[ur] & & \psi \cdot T' \ar[rr, crossing over] \ar[ur] & & \psi \ar[ur] \\
    & S \ar[rr] & & S' \ar[rr] & & \Delta^n \\
    T \ar[rr] \ar[ur] & & T' \ar[rr] \ar[ur] \ar[from = uu, crossing over] & & \Delta^m \ar[ur, "\alpha"] \ar[from = uu, crossing over]
  \end{tikzcd}
\end{displaymath}
is a pullback. But it is not hard to see that all faces in both cubes must be pullbacks.

From the fact that the the squares in the $yz$-plane are pullbacks, it follows from \refcoro{pbsqofHDRalongcartsq} that the cubes are sent to pullback squares of HDRs.
\end{proof}

\begin{rema}{cartmorphismsinimageofM}
Note that it follows from the proof that the morphisms of HDRs that occur as images of squares in the $xy$-plane are cartesian.
\end{rema}

\begin{theo}{unifKanfibrintermsofsmmouldsqrs}
  The following notions of fibred structure in simplicial sets are isomorphic:
  \begin{itemize}
    \item Having the right lifting property against the large triple category of mould squares (that is, to be an effective Kan fibration).
    \item To have the right lifting property against the small triple category $\mathbb{M}$.
  \end{itemize}
  Indeed, the triple functor from $\mathbb{M}$ to the large triple category of mould squares in simplicial sets induces a morphism of discretely fibred concrete double categories by right lifting properties: this induced morphism of concrete double categories satisfies fullness on squares and is therefore an isomorphism.
\end{theo}
\begin{proof}
For reasons that will become clear later, we will first prove that both notions of fibred structure are equivalent if we ignore the vertical condition on both sides (so on both sides we have lifts satisfying only the horizontal and perpendicular conditions). After we have done that, we will show that the equivalence restricts to one where on both sides the vertical condition is satisfied as well.

So suppose $p: Y \to X$ has the right lifting property against the small mould squares satisfying the horizontal and perpendicular conditions, and assume we are given a lifting problem of the form
\begin{displaymath}
  \begin{tikzcd}
    C \ar[r] \ar[d] & D \ar[r] \ar[d] & Y \ar[d, "p"] \\
    A \ar[r, "i"'] \ar[urr, dotted] & B \ar[r] \ar[ur, dotted, "l"'] & X
  \end{tikzcd}
\end{displaymath}
where the square on the left is a mould square. We wish to find a map $l: B \to Y$ making everyting commute; for that, assume that we are given some $b \in B_n$. Let us write $(i: A \to B, j, H)$ for the HDR-structure on $i$. As we have seen in the previous section, we can construct a morphism of HDRs
\begin{displaymath}
  \begin{tikzcd}
    \Delta^n \ar[r] \ar[d] & A \ar[d, "i"] \\
    \theta \ar[r, "\pi"] & B
  \end{tikzcd}
\end{displaymath}
with $\pi.s_\theta = b$ and $j(b) = \theta$. By pulling back the mould square along this morphism of HDRs, we obtain a picture as follows:
\begin{displaymath}
  \begin{tikzcd}
    & C \ar[rr] \ar[dd] & & D \ar[rr] \ar[dd] & & Y \ar[dd, "p"] \\
    S \ar[ur] \ar[dd] \ar[rr, crossing over] & & \theta \cdot S \ar[ur] \\
    & A \ar[rr, "i"', near start] \ar[uurrrr, dotted] & & B \ar[rr] & & X \\
    \Delta^n \ar[ur] \ar[rr] & & \theta \ar[ur] \ar[from = uu, crossing over]
  \end{tikzcd}
\end{displaymath}
Since the mould square at the front of the cube belongs to $\mathbb{M}$, the picture induces a map $L_b: \theta \to Y$ making everything commute. We put $l(b) := L_b.s_\theta$, as in the previous section. At this point we need to verify a number of things: that this defines a natural transformation $B \to Y$, that this map fills the square and is compatible with the map $A \to Y$ that we were given. Also, we need to verify that if we choose these lifts for the mould squares, then together these lifts satisfy the horizontal and perpendicular compatibility conditions. Finally, we also need to verify fullness on squares. All of these things are just very minor extensions of results proved in the previous section, so we will omit the proofs here.

We will now show that the operation we have just defined and the one induced by the triple functor from the previous proposition are each other's inverses.
One composite is clearly the identity: if we are given a map $p: Y \to X$ which has the right lifting property against mould squares satisfying the horizontal and perpendicular conditions, restrict it to $\mathbb{M}$ and then extend it all mould squares in the manner described above, then we end up where we started. The reason is simply that the cube in the diagram above is a ``mould cube'' (belongs to the large triple category of mould squares).

The converse is the hard bit: so imagine that we have a map $p: Y \to X$ which has the right lifting property against the small mould squares satisfying the horizontal and perpendicular conditions. This means that if we have a lifting problem of the form:
\begin{displaymath}
  \begin{tikzcd}
    (S_0, \theta_0) \ar[d] \ar[r] & (S_0,\theta_1) \ar[d] \ar[r] & Y \ar[d, "p"] \\
    (S_1,\theta_0) \ar[r] & (S_1, \theta_1) \ar[r] & X,
  \end{tikzcd}
\end{displaymath}
we can solve this problem in two different ways. First of all, we can use the lifting structure of $p$ directly; but we can also observe that the square on the left is a large mould square and use the procedure outlined above to find the lift. The task is to show that both lifts are the same. Again, we argue as in the previous section, by first observing that we can reduce this problem to the situation where $\theta_0 = <>$ and $\theta_1 = <i, \pm>$. Indeed, we can write the mould square on the left as a horizontal composition of small mould squares where the traversal on the right has one entry more than the one on the left. Moreover, we have a mould cube
\begin{displaymath}
  \begin{tikzcd}
    & (S_0, \theta) \ar[dd] \ar[rr] & & (S_0,<i,\pm> * \theta) \ar[dd] \\
    (S_0, <>) \ar[ur, "{(1,\theta)}"] \ar[rr, crossing over] \ar[dd] & & (S_0, <i,\pm>) \ar[ur, "{(1,\theta)}"] \\
    & (S_1, \theta) \ar[rr] & & (S_1, <i,\pm> * \theta) \\
    (S_1,<>) \ar[rr] \ar[ur,  "{(1,\theta)}"] & & (S_1, <i,\pm>) \ar[ur,  "{(1,\theta)}"] \ar[from = uu, crossing over]
  \end{tikzcd}
\end{displaymath}
in which the top and bottom faces are cocartesian. Therefore the lift against the back is completely determined by that the lift against the front face. In fact, we can take it one step further: if $\alpha: \Delta^m \to \Delta^n \in S_1$, then
\begin{displaymath}
  \begin{tikzcd}
    & (S_0, <>) \ar[dd] \ar[rr] & & (S_0,<i, \pm>) \ar[dd] \\
    (\alpha^* S_0, <>) \ar[ur, "{(\alpha,1)}"] \ar[rr, crossing over] \ar[dd] & & (\alpha^* S_0, <i,\pm>) \ar[ur, "{(\alpha,1)}"] \\
    & (S_1, <>) \ar[rr] & & (S_1, <i,\pm>) \\
    (\Delta^m,<>) \ar[rr] \ar[ur,  "{(\alpha,1)}"] & & (\Delta^m, <i,\pm>) \ar[ur,  "{(\alpha,1)}"] \ar[from = uu, crossing over]
  \end{tikzcd}
\end{displaymath}
is a mould cube as well. This means that the lift against the back face is completely determined by the lifts against the front faces if we let $\alpha$ range over $S_1$. In short, we only have to compare lifts against small mould squares of the form:
\begin{displaymath}
  \begin{tikzcd}
    (S, <>) \ar[d] \ar[r] & (S_, <i,\pm>) \ar[d] \ar[r] & Y \ar[d, "p"] \\
    (\Delta^n,<>) \ar[r] \ar[urr, dotted] & (\Delta^n, <i, \pm>) \ar[r] & X.
  \end{tikzcd}
\end{displaymath}
But this can be argued for just as in the previous section, so we again omit the proof.

It remains to check that this equivalence of notions of fibred structure restricts to one where the vertical condition is satisfied one both sides. In fact, we only need to show that if $p: Y \to X$ comes equipped with lifts against the small mould squares (satisfying the vertical condition as well), and we extend this to all mould squares in the manner explained above, then the lifts against all the mould squares satisfy the vertical condition. Before we do that, we make the important point that in this extension of the lifting structure to all mould squares as in
\begin{displaymath}
  \begin{tikzcd}
    & C \ar[rr] \ar[dd] & & D \ar[rr] \ar[dd] & & Y \ar[dd, "p"] \\
    S \ar[ur] \ar[dd] \ar[rr, crossing over] & & \theta \cdot S \ar[ur]  \\
    & A \ar[rr, "i"', near start] \ar[uurrrr, dotted] & & B \ar[rr] \ar[uurr, "l", dotted] & & X \\
    \Delta^n \ar[ur] \ar[rr] & & \theta \ar[ur, "\pi"] \ar[uuurrr, "L_b"', dotted, bend right, near start] \ar[from = uu, crossing over]
  \end{tikzcd}
\end{displaymath}
we must have $l.\pi = L_b$. Indeed, this follows from the fact that the two ways of computing of lifts against small mould squares coincide.

So imagine we have a lifting problem of the form:
\begin{displaymath}
  \begin{tikzcd}
    E \ar[d] \ar[r] & F \ar[d] \ar[r] & Y \ar[dd, "p"] \\
    C \ar[d] \ar[r] & D \ar[d] \\
    A \ar[r, "i"'] \ar[uurr, dotted, bend right = 20] & B \ar[r] & X
  \end{tikzcd}
\end{displaymath}
in which the two squares on the left are mould squares. Imagine that we have chosen some $b \in B_n$ and constructed our morphism of HDRs from $\Delta^n \to \theta$ to $i$, as before. Then we can pull this vertical composition of mould squares back along this morphism of HDRs, and pull that back along some arbitrary morphism $\alpha \in S_1$, as follows:
\begin{displaymath}
  \begin{tikzcd}
    & & E \ar[rrr] \ar[ddd] & & & F \ar[ddd] \ar[rr] & &  Y \ar[dddddd, "p"] \\
    & S_0 \ar[ddd] \ar[rrr, crossing over] \ar[ur] & & & \theta \cdot S_0 \ar[ur]  \\
    \alpha^* S \ar[ddd] \ar[rrr, crossing over] \ar[ur] & & & (\theta \cdot \alpha) \cdot (\alpha^* S_0) \ar[ur] \\
    & & C \ar[rrr] \ar[ddd] & & & D \ar[ddd] \\
    & S_1 \ar[rrr, crossing over] \ar[ddd] \ar[ur] & & & \theta \cdot S_1 \ar[from = uuu, crossing over] \ar[ur] \\
    \Delta^m \ar[ddd] \ar[rrr, crossing over] \ar[ur] & & & \theta \cdot \alpha \ar[ur] \ar[from = uuu, crossing over] \\
    & & A \ar[rrr] & & & B \ar[rr] & &  X \\
    & \Delta^n \ar[ur] \ar[rrr] & & & \theta \ar[ur] \ar[from = uuu, crossing over] \\
    \Delta^m \ar[ur] \ar[rrr] & & & \theta \cdot \alpha \ar[ur] \ar[from = uuu, crossing over]
  \end{tikzcd}
\end{displaymath}
What this amounts to is saying that our lift $L_b: \theta \to Y$ can be computed by first computing the map $L_{b \cdot S_1}: \theta \cdot S_1 \to Y$. But that map is completely determined by the maps $L_{b \cdot \alpha}: \theta \cdot \alpha \to Y$ with $\alpha$ ranging over $S_1$. From this the vertical condition for the large mould squares at the back follows.
\end{proof}

\subsection{Effective Kan fibrations in terms of ``filling''} If $p: Y \to X$ has the right lifting property against $\mathbb{M}$, then it comes equipped with a choice of lifts against every small mould square, where these lifts satisfy several compatibility conditions. Because of these compatibility conditions some of the lifts are completely determined by the choices we made for other lifts. What we can do is try to identify a suitable subclass and express the compatibility conditions purely in terms of lifts against elements in this smaller subclass. This is the game we have played already a number of times. For the small mould squares, we will take this to the limit in the next section, but here we can already note that the lifts general mould squares are completely determined by those of the form:
\begin{displaymath}
	\begin{tikzcd}
		(S, <>) \ar[r] \ar[d] & (S, \theta) \ar[d] \\
		(\Delta^n, <>) \ar[r] & (\Delta^n, \theta).
	\end{tikzcd}
\end{displaymath}
(Here $\Delta^n$ stands for the maximal sieve on $\Delta^n$.) Indeed, we have already implicitly argued for this in the previous proof. Indeed, if we have a small mould square of the form
\begin{displaymath}
\begin{tikzcd}
(S_0, \psi) \ar[r] \ar[d] & (S_0, \theta * \psi) \ar[d] \\
(S_1, \psi) \ar[r] & (S_1, \theta * \psi)
\end{tikzcd}
\end{displaymath}
then there is a mould cube of the form
\begin{displaymath}
\begin{tikzcd}
& (S_0, \psi) \ar[dd] \ar[rr] & & (S_0,\theta * \psi) \ar[dd] \\
(S_0, <>) \ar[ur, "{(1,\psi)}"] \ar[rr, crossing over] \ar[dd] & & (S_0, \theta) \ar[ur, "{(1,\psi)}"]  \\
& (S_1, \psi) \ar[rr] & & (S_1, \theta * \psi) \\
(S_1,<>) \ar[rr] \ar[ur,  "{(1,\psi)}"] & & (S_1, \theta) \ar[ur,  "{(1,\psi)}"] \ar[from = uu, crossing over]
\end{tikzcd}
\end{displaymath}
in which the top and bottom faces are cocartesian: therefore the lifts against the back in completely determined by the lift against the front. Furthermore, if we have a mould square as in the front of this mould cube, it occurs at the back of a mould cube
\begin{displaymath}
\begin{tikzcd}
& (S_0, <>) \ar[dd] \ar[rr] & & (S_0,\theta) \ar[dd] \\
(\alpha^* S_0, <>) \ar[ur, "{(\alpha,1)}"] \ar[rr, crossing over] \ar[dd] & & (\alpha^* S_0, \theta) \ar[ur, "{(\alpha,1)}"]  \\
& (S_1, <>) \ar[rr] & & (S_1, \theta) \\
(\Delta^m,<>) \ar[rr] \ar[ur,  "{(\alpha,1)}"] & & (\Delta^m, \theta) \ar[ur,  "{(\alpha,1)}"] \ar[from = uu, crossing over]
\end{tikzcd}
\end{displaymath}
where $\alpha: \Delta^m \to \Delta^n \in S_1$. Since the $(\alpha,1): (\Delta^m, \theta) \to (S_1,\theta)$ collectively cover $(S_1,\theta)$, any compatible system of lifts against the front squares (while $\alpha$ ranges over $S_1$) descends to a unique lift against the front. Let us call the lift against the back that we obtain in this way \emph{the induced lift}. Then we have the following result, whose proof we omit because it is a variation on a type of argument we have already seen a number of times.

\begin{prop}{mouldfibrintermsofgenmouldincl} The following notions of fibred structure are equivalent:
\begin{itemize}
	\item To assign to each map $p: Y \to X$ all its effective Kan fibration structures.
	\item To assign to each map $p: Y \to X$ all functions which given a natural number $n \in \mathbb{N}$, a cofibrant sieve $S \subseteq \Delta^n$, an $n$-dimensional traversal $\theta$ and a commutative square
\begin{displaymath}
\begin{tikzcd}
\Delta^n \cup \theta \cdot S \ar[r, "m"] \ar[d, "{[t_\theta,i_\theta]}"] & Y \ar[d, "p"] \\
\theta \ar[r, "n"] & X
\end{tikzcd}
\end{displaymath}
choose a filler $\theta \to Y$. Moreover, these chosen fillers should satisfy the following three compatibility conditions:
\begin{enumerate}
\item[(1)] for each $\alpha: \Delta^m \to \Delta^n$ the choice of filler for the composed square
\begin{displaymath}
\begin{tikzcd}
\Delta^m \cup (\theta \cdot \alpha) \cdot (\alpha^* S) \ar[r] \ar[d] & \Delta^n \cup \theta \cdot S \ar[r, "m"] \ar[d] & Y \ar[d, "p"] \\
\theta \cdot \alpha \ar[r] & \theta \ar[r, "n"] & X
\end{tikzcd}
\end{displaymath}
is the composition of $\theta \cdot \alpha \to \theta$ with the chosen filler $\theta \to Y$ for the right hand square.
\item[(2)] if $\theta = \theta_1 * \theta_0$, then the chosen filling for
\begin{displaymath}
\begin{tikzcd}
\Delta^n \cup \theta \cdot S \ar[rr, "{[y_0,[m_1,m_0]]}"] \ar[d] & & Y \ar[d, "p"] \\
\theta \ar[rr, "n"] & & X
\end{tikzcd}
\end{displaymath}
coincides with the one we obtain in the following manner. One can first compute the filler for the composed square
\begin{displaymath}
\begin{tikzcd}
\Delta^n \cup \theta_0 \cdot S \ar[r] \ar[d] \ar[rrr, bend left, "{[y_0,m_0]}"]& \Delta^n \cup\theta \cdot S \ar[rr, "{[y_0,[m_1,m_0]]}"'] \ar[d] & & Y \ar[d, "p"] \\
\theta_0 \ar[r] & \theta \ar[rr, "n"] & & X,
\end{tikzcd}
\end{displaymath}
from which we get an element $y_1: \Delta^n \to Y$ by precomposition with the source map $s_{\theta_0}: \Delta^n \to \theta_0$. Then we can compute the filler for the square
\begin{displaymath}
\begin{tikzcd}
\Delta^n \cup \theta_1 \cdot S \ar[rr, "{[y_1,m_1]}"] \ar[d] & & Y \ar[d, "p"] \\
\theta_1 \ar[r] & \theta \ar[r, "n"] & X.
\end{tikzcd}
\end{displaymath}
By amalgamating the two maps $\theta_i \to Y$ we just constructed, we obtain another map $\theta \to Y$, which is the one which should coincide with the filler for the original square.
\item[(3)] if $S_0 \subseteq S_1 \subseteq \Delta^n$ then the chosen filler for
\begin{displaymath}
 \begin{tikzcd}
   \Delta^n \cup \theta \cdot S_0 \ar[r] \ar[d, "a"'] & Y \ar[dd, "p"] \\
\Delta^n \cup \theta \cdot S_1 \ar[d, "b"']  \\
\theta \ar[r] & X
 \end{tikzcd}
\end{displaymath}
coincides with the one we obtain by first taking the induced lift $\Delta^n \cup \theta \cdot S_1 \to Y$ of $p$ against $a$ and then the chosen lift $\theta \to Y$ of $p$ against $b$.
\end{enumerate}
\end{itemize}
\end{prop}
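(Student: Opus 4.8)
The plan is to reduce everything to \reftheo{unifKanfibrintermsofsmmouldsqrs}, which already identifies the notion of an effective Kan fibration structure on a map $p : Y \to X$ with that of a right lifting structure against the small triple category $\mathbb{M}$. Thus it suffices to show that a right lifting structure against $\mathbb{M}$ carries exactly the same information as a family of fillers for the squares $[t_\theta, i_\theta] : \Delta^n \cup \theta \cdot S \to \theta$ subject to the conditions (1)--(3). The whole argument is a variation on the passage from \reftheo{HgenMoorefib} to \refcoro{nocondition4}, and on the ``the two ways of computing a lift coincide'' technique used repeatedly in this chapter; I will only indicate the steps.

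First I would cut the lifting structure against $\mathbb{M}$ down to its values on the small mould squares of the form $(S, <>) \to (S, \theta)$ over $(\Delta^n, <>) \to (\Delta^n, \theta)$, whose geometric realisations are precisely the squares $\Delta^n \cup \theta \cdot S \to \theta$ appearing in the statement. That these generate all of $\mathbb{M}$ has already been spelled out in the run-up to the proposition: an arbitrary small mould square $(S_0, \psi) \to (S_0, \theta * \psi)$ over $(S_1, \psi) \to (S_1, \theta * \psi)$ sits at the back of a mould cube with cocartesian top and bottom faces and front face $(S_0, <>) \to (S_0, \theta)$ over $(S_1, <>) \to (S_1, \theta)$, and the latter in turn sits at the back of a mould cube whose front faces are the squares over $(\Delta^m, <>) \to (\Delta^m, \theta)$ obtained by pulling back along the $\alpha : \Delta^m \to \Delta^n$ in $S_1$, which jointly cover $(S_1, \theta)$. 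So the restriction loses no information, and a right lifting structure against $\mathbb{M}$ amounts to a family of fillers for $\Delta^n \cup \theta \cdot S \to \theta$ together with the compatibility inherited from $\mathbb{M}$.

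Next I would identify that inherited compatibility with conditions (1)--(3), matching the three directions of \refdefi{tripleCatRightLiftingStructure}. The perpendicular condition for the cubes with perpendicular leg $(\alpha, 1)$, together with the naturality forced by the $xz$-squares, becomes condition (1) (compatibility along $\alpha : \Delta^m \to \Delta^n$), exactly as naturality of the filler $l$ was derived in \reftheo{HgenMoorefib}. The horizontal condition, in which horizontal composition in $\mathbb{M}$ glues successive traversal segments, becomes condition (2) (the two-step computation of the filler when $\theta = \theta_1 * \theta_0$). The vertical condition, in which vertical composition in $\mathbb{M}$ glues an inclusion of cofibrant sieves $S_0 \subseteq S_1$, becomes condition (3); here one must first express the intermediate lift against the non-generic mould square $a : \Delta^n \cup \theta \cdot S_0 \to \Delta^n \cup \theta \cdot S_1$ in terms of the generating data, which is exactly the \emph{induced lift} defined before the proposition (amalgamated, over the $\alpha$ in the relevant sieve, from the fillers of the pulled-back generating squares $\Delta^m \cup (\theta \cdot \alpha) \cdot (\alpha^* S_0) \to \theta \cdot \alpha$).

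The remaining work is to check that the two assignments are mutually inverse and that the passage is a genuine isomorphism of notions of fibred structure (indeed of discretely fibred concrete double categories). Extending a family of fillers satisfying (1)--(3) back to all of $\mathbb{M}$ via the two mould cubes above produces a right lifting structure whose definition is \emph{forced} and independent of the chosen traversal decompositions, by induction on the dimension of simplices (two candidate lifts agreeing on all boundaries agree); restricting it again returns the original family, and conversely an $\mathbb{M}$-structure is recovered from its restriction because the generating squares determine all the rest. This last argument also yields fullness on squares. I expect the main obstacle to be precisely the bookkeeping of the previous paragraph: matching the genuine vertical condition of an $\mathbb{M}$-structure against condition (3) phrased via the induced lift, and verifying independence of the extension from the decompositions — but both are replays of arguments already carried out in the proofs of \reftheo{unifKanfibrintermsofsmmouldsqrs} and \refcoro{nocondition4}.
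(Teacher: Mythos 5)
Your outline matches the argument the paper intends: the paper in fact omits the proof of this proposition, noting it is ``a variation on a type of argument we have already seen a number of times,'' and the intended route is exactly yours --- reduce via \reftheo{unifKanfibrintermsofsmmouldsqrs} to the right lifting structure against $\mathbb{M}$, cut down to the generating squares $(S,\langle\rangle)\to(S,\theta)$ over $(\Delta^n,\langle\rangle)\to(\Delta^n,\theta)$ using the two mould cubes and the induced lifts described just before the statement, and identify conditions (1)--(3) with the perpendicular, horizontal and vertical compatibilities. Your only imprecision is the aside about ``naturality forced by the $xz$-squares'' (there is no separate such clause in \refdefi{tripleCatRightLiftingStructure}; it is all subsumed in the cube condition), which does not affect the correctness of the argument.
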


Using the description of $M$ as a polynomial functor (see \refcoro{alternativedescriptionofM}), one can also express the second item in the previous corollary as follows: to equip a map $p: Y \to X$ with the structure of an effective Kan fibration means choosing a map
\[ L: \sum_{(y, \theta) \in Y \times_X MX} \sum_{\sigma \in \Sigma} MY_{(y, \theta)}^\sigma \to MY \]
such that:
\begin{enumerate}
\item $L$ exhibits $(t,Mp)$ as an effective trivial fibration, that is, $L$ fills
\begin{displaymath}
	\begin{tikzcd}
		MY \ar[r, "1"] \ar[d] & MY \ar[d, "{(t,Mp)}"] \\
		\sum_{(y, \theta) \in Y \times_X MX} \sum_{\sigma \in \Sigma} MY_{(y, \theta)}^\sigma \ar[r] \ar[ur, "L"] & Y \times_X MX
	\end{tikzcd}
\end{displaymath}
and is an algebra map (for the AWFS coming from the dominance).
\item $L(y, \theta_1 * \theta_0, (\sigma, \rho_1 * \rho_0)) = L(y,\theta_1, (\sigma, \rho_1)) * L(s.L(y,\theta_1,(\sigma,\rho_1)),\theta_0,(\sigma,\rho_0))$ for all generalised elements $y \in Y$, $\theta_1, \theta_0 \in MX$, $\sigma \in \Sigma$ and $\rho_1,\rho_0 \in MY^\sigma$.
\end{enumerate}

From this we immediately obtain:
\begin{coro}{uniftrivfibfromunifKanfibr}
	If $p: Y \to X$ is an effective Kan fibration, then
	\[(t, Mp): MY \to Y \times_X MX \] is an effective trivial fibration.
\end{coro}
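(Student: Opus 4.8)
The plan is to deduce the statement directly from the second, ``filling'', description of effective Kan fibration structures in \refprop{mouldfibrintermsofgenmouldincl}, together with the identification of effective trivial fibrations with algebras for the AWFS induced by the dominance (\refdefi{trivialfibrations}, via \refcoro{algformonadfordominance}). First I would set up the dictionary: using the polynomial description of $M$ (\refcoro{alternativedescriptionofM}) together with the explicit formula for the factorisation associated to a dominance (\refprop{WFSfromdominance}), the mid-object of the factorisation of $(t, Mp) : MY \to Y \times_X MX$ coming from the effective cofibrations is exactly
\[ M_{(t,Mp)} \;=\; \sum_{(y,\theta) \in Y \times_X MX} \sum_{\sigma \in \Sigma} MY_{(y,\theta)}^{\sigma}, \]
where $MY_{(y,\theta)}$ denotes the fibre of $(t, Mp)$ over $(y,\theta)$. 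Under this identification, and by \refcoro{algformonadfordominance} and \refdefi{trivialfibrations}, an effective trivial fibration structure on $(t, Mp)$ is precisely a map $L : M_{(t,Mp)} \to MY$ which fills the unit square displayed as item (1) in the discussion following \refprop{mouldfibrintermsofgenmouldincl} and which satisfies the unit and associativity laws of an $R$-algebra for the dominance AWFS (``is an algebra map'').

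Next I would invoke \refprop{mouldfibrintermsofgenmouldincl} itself. As is spelled out in the paragraph immediately after that proposition, an effective Kan fibration structure on $p$ translates, through the dictionary above, into a single map $L$ of the shape just described satisfying two conditions: condition (1), which is exactly the conjunction of the filler condition and the algebra-map condition for $(t, Mp)$, and condition (2), compatibility with concatenation of Moore paths. Therefore, starting from an effective Kan fibration structure on $p$, I would read off the associated $L$ and simply forget condition (2); what remains, condition (1), is by the previous paragraph an effective trivial fibration structure on $(t, Mp)$. This establishes the corollary and in fact exhibits a morphism of notions of fibred structure from effective Kan fibration structures on $p$ to effective trivial fibration structures on $(t,Mp)$.

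There is no real obstacle here once \refprop{mouldfibrintermsofgenmouldincl} is available: the corollary is essentially the observation that an effective Kan fibration structure on $p$ contains, among its data, an effective trivial fibration structure on $(t,Mp)$. The only point that needs some care is verifying the dictionary, namely that the domain arrow $M_{(t,Mp)} \to Y \times_X MX$ appearing in condition (1) of \refprop{mouldfibrintermsofgenmouldincl} is literally the factorisation mid-object for the dominance applied to $(t, Mp)$ (combine \refprop{WFSfromdominance} with \refcoro{alternativedescriptionofM}, so that the $\Sigma$-indexed sum over $MY^{\sigma}_{(y,\theta)}$ matches classifying maps of cofibrant sieves $S \subseteq \Delta^n$ against the restricted Moore path $\theta \cdot S$), and that ``being an algebra map'' in that condition coincides with the $R$-algebra conditions which \refcoro{algformonadfordominance} identifies with effective trivial fibration structure. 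Having checked this, the proof of \refcoro{uniftrivfibfromunifKanfibr} is complete.
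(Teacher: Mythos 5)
Your proposal is correct and follows essentially the same route as the paper: the corollary is stated there as an immediate consequence of the reformulation (via \refcoro{alternativedescriptionofM} and \refprop{mouldfibrintermsofgenmouldincl}) of an effective Kan fibration structure as a map $L$ whose condition (1) literally says that $L$ exhibits $(t,Mp)$ as an effective trivial fibration, so one simply forgets the concatenation condition (2). Your extra care in matching the mid-object of the dominance factorisation with the domain of $L$ is exactly the dictionary implicit in the paper's formulation.
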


The fact that there are cartesian natural transformations $\iota^+, \iota^-: X^\mathbb{I} \to MX$ as in the previous section, means that we have pullback squares of the form
\begin{displaymath}
	\begin{tikzcd}
		Y^\mathbb{I} \ar[d, "{(s/t,p^\mathbb{I})}"] \ar[r] & MY \ar[d, "{(t,Mp)}"] \\
		Y \times_X X^\mathbb{I} \ar[r] & Y \times_X MX.
	\end{tikzcd}
\end{displaymath}
And since effective trivial fibrations are stable under pullback, we can deduce:
\begin{coro}{comparisonwithunifKanfibrGS}
	If $p: Y \to X$ is an effective Kan fibration, then
	\[ (s/t,p^\mathbb{I}): Y^\mathbb{I} \to Y \times_X X^\mathbb{I} \]
  are effective trivial fibrations. Therefore effective Kan fibrations are uniform Kan fibrations\index{uniform Kan fibration} in the sense of~\cite{Gambino-Sattler}.
\end{coro}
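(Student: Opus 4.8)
The statement to prove is \refcoro{comparisonwithunifKanfibrGS}: if $p: Y \to X$ is an effective Kan fibration, then $(s/t, p^{\mathbb{I}}): Y^{\mathbb{I}} \to Y \times_X X^{\mathbb{I}}$ is an effective trivial fibration, and hence $p$ is a uniform Kan fibration in the sense of Gambino--Sattler. The plan is to combine three ingredients that are already in place: the preceding corollary \refcoro{uniftrivfibfromunifKanfibr} (which says $(t, Mp): MY \to Y \times_X MX$ is an effective trivial fibration), the existence of the monic cartesian natural transformations $\iota^+, \iota^-: (-)^{\mathbb{I}} \Rightarrow M$ constructed in Section~\ref{sec:mouldsquaresinssets} (and already recalled in the text as fitting into the pullback squares of the form displayed just above the statement), and the pullback-stability of effective trivial fibrations, which follows from \refcoro{algformonadfordominance} --- effective trivial fibrations are algebras for the AWFS coming from the dominance of effective cofibrations, and such algebras form a fibred structure (\refprop{right-lifting-is-fibred-structure}, \reftheo{distrlaw<->iso-of-cofibred-str}), hence are closed under pullback with a canonically induced structure.

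The key steps, in order, are as follows. First I would invoke \refcoro{uniftrivfibfromunifKanfibr} to equip $(t, Mp): MY \to Y \times_X MX$ with the structure of an effective trivial fibration; this is where the hypothesis that $p$ is an effective Kan fibration is used (via the description of effective Kan fibrations in terms of the ``filling'' map $L$ in \refprop{mouldfibrintermsofgenmouldincl}, whose first clause literally says that $L$ exhibits $(t, Mp)$ as an algebra for the dominance AWFS). Second, I would recall from the discussion at the end of the previous subsection that the pullback squares
\begin{displaymath}
  \begin{tikzcd}
    Y^{\mathbb{I}} \ar[d, "{(s/t, p^{\mathbb{I}})}"] \ar[r] & MY \ar[d, "{(t, Mp)}"] \\
    Y \times_X X^{\mathbb{I}} \ar[r] & Y \times_X MX
  \end{tikzcd}
\end{displaymath}
are genuine pullbacks in simplicial sets --- this is precisely the content of the fact that $\iota^+$ (respectively $\iota^-$) is a \emph{cartesian} natural transformation, so the naturality square at $Y \to X$ is a pullback, and then base-changing along $Y \to Y \times_X X$ (or just taking the evident pullback over $X$) produces the square above. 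Third, since effective trivial fibrations are pullback-stable as a notion of fibred structure, the left-hand vertical map $(s/t, p^{\mathbb{I}})$ inherits a canonical effective trivial fibration structure from the one on $(t, Mp)$. Finally, for the last sentence of the statement, I would unwind the definition of a uniform Kan fibration as recalled in Box~\ref{box:fromuniformtoeffectives}: a map is a uniform Kan fibration precisely when $(s/t, p^{\mathbb{I}}): Y^{\mathbb{I}} \to Y \times_X X^{\mathbb{I}}$ has the right lifting property against the category of cofibrations --- and an effective trivial fibration certainly has that, since by \refcoro{algformonadfordominance} effective trivial fibrations are exactly the maps with (a coherent choice of) lifts against the double category $\mathbf{\Sigma}$ of effective cofibrations, and forgetting the coherence gives the plain right lifting property.

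I do not expect any serious obstacle here; this is a genuine corollary. The only point that requires a little care is the second step: one must check that the natural transformations $\iota^+, \iota^-$ of the previous section, which were defined as maps $(-)^{\mathbb{I}} \Rightarrow M$ of \emph{polynomial functors}, really do have the property that the naturality square at a map $p: Y \to X$ (base-changed appropriately over $X$) is a pullback of simplicial sets, and that the resulting map on the pullback corners is exactly $(s/t, p^{\mathbb{I}})$ rather than something isomorphic to it only up to a coherence one would then have to track. But this is exactly what ``$\iota^{\pm}$ is a monic cartesian natural transformation'' buys us --- cartesianness of a natural transformation between polynomial functors means all naturality squares are pullbacks, and the identification of the corner map with $(s/t, p^{\mathbb{I}})$ is immediate from the fact that $s, t: X^{\mathbb{I}} \to X$ are induced by $d_1, d_0: \Delta^0 \to \Delta^1$, matching the source and target maps on $MX$ along $\iota^{\pm}$ as recorded in the serially-commuting triangles at the end of the previous section. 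So the proof is essentially a three-line citation-and-pullback argument, and I would write it as such.
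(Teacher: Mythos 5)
Your proposal is correct and follows essentially the same route as the paper: the text deduces the corollary from \refcoro{uniftrivfibfromunifKanfibr}, the pullback squares supplied by the cartesian natural transformations $\iota^{\pm}: (-)^{\mathbb{I}} \Rightarrow M$, and pullback-stability of effective trivial fibrations. Your extra care in identifying the corner map with $(s/t, p^{\mathbb{I}})$ is a reasonable elaboration of a point the paper leaves implicit.
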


In view of this result the comment in Box~\ref{box:fromuniformtoeffectives} should now make sense.

\section{Horn squares}\label{sec:hornsquares}

The purpose of this section is to show that our notion of an effective Kan fibration in simplicial sets is both local and classically correct. By the latter we mean that, in a classical metatheory, a map can be equipped with the structure of an effective Kan fibration precisely when it has the right lifting property against horn inclusions (the traditional notion of a Kan fibration). To prove both these statements we will use a characterisation of the effective Kan fibrations in terms of what we will call horn squares.

\subsection{Effective Kan fibrations in terms of horn squares} Recall that the small mould squares are the squares in the $yz$-plane in the triple category $\mathbb{M}$ (see previous section).

\begin{defi}{hornsquare} A small mould square will be called a \emph{one-step mould square}\index{one-step mould square} if in the horizontal direction the length of the traversal increases by one and in the vertical direction the sieve increases by one $m$-simplex which was not yet present, but all whose faces were. Among these one-step squares are the \emph{horn squares}\index{horn square|textbf}
\begin{displaymath}
  \begin{tikzcd}
    (\partial \Delta^n, \langle \rangle) \ar[d] \ar[r] & (\partial \Delta^n, \langle (i, \pm) \rangle ) \ar[d] \\
    (\Delta^n, \langle \rangle) \ar[r] & (\Delta^n, \langle (i, \pm) \rangle)
  \end{tikzcd}
\end{displaymath}
which start from the empty traversal in the horizontal direction and end with the maximal sieve in the vertical direction.
\end{defi}

The reason for the name horn square is the following: a lifting problem for $p$ against a horn square
\begin{displaymath}
  \begin{tikzcd}
    (\partial \Delta^n, \langle \rangle) \ar[d] \ar[r] & (\partial \Delta^n, \langle (i, \pm) \rangle ) \ar[d] \ar[r] & Y \ar[d, "p"]\\
    (\Delta^n, \langle \rangle) \ar[r] \ar[urr, dotted] & (\Delta^n, \langle (i, \pm) \rangle) \ar[r] & X
  \end{tikzcd}
\end{displaymath}
is equivalent to a lifting problem for $p$ against the map from the inscribed pushout of the left hand square to its bottom right corner:
\begin{displaymath}
  \begin{tikzcd}
    \Delta^n \cup \Delta^{n+1} \cdot \partial \Delta^n \ar[d] \ar[r] & Y \ar[d, "p"] \\
    \Delta^{n+1} \ar[r] \ar[ur, dotted] & X.
  \end{tikzcd}
\end{displaymath}
Here $\Delta^{n+1} \cdot \partial \Delta^n = \Lambda^{n+1}_{i, i+1} \cup (d_i \cap d_{i+1})$, that is, $\Delta^{n+1}$ with the interior and the $i$th and $(i+1)$st faces missing. Therefore in the previous square the map on left is the horn inclusion $\Lambda^{n+1}_{i+1} \to \Delta^{n+1}$ in the positive case and the horn inclusion $\Lambda^{n+1}_i \to \Delta^{n+1}$ in the negative case. Note that it follows from this that effective Kan fibration have the right lifting property against horn inclusions, so are Kan fibrations in the usual sense.

\begin{rema}{savingonsomebrackets}
To make notation less cluttered, we will, from now on, write the traversal $<(i,\pm)>$ as $<i, \pm>$.
\end{rema}

\begin{lemm}{hornsqrsdetermineall}
If a map is an effective Kan fibration in that it has the right lifting property against the triple category $\mathbb{M}$ of small mould squares, then this structure is completely determined by its lifts against the horn squares.
\end{lemm}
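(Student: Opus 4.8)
The plan is to peel the lifting structure against the full triple category $\mathbb{M}$ of small mould squares down to the horn squares in two stages — first collapsing the traversal direction, then collapsing the sieve direction — using the three compatibility conditions recorded in \refprop{mouldfibrintermsofgenmouldincl} together with the skeletal filtration of a cofibrant sieve. It is cleanest to read the statement as a uniqueness assertion: if two effective Kan fibration structures on $p$ induce the same lifts against all horn squares, then they coincide. By \reftheo{unifKanfibrintermsofsmmouldsqrs} and \refprop{mouldfibrintermsofgenmouldincl}, such a structure is the same thing as a system of fillers for the generating mould inclusions $[t_\theta, i_\theta] \colon \Delta^n \cup \theta \cdot S \to \theta$, indexed by an $n$-dimensional traversal $\theta$ and a cofibrant sieve $S \subseteq \Delta^n$, subject to the naturality condition (1), the traversal–concatenation condition (2) and the sieve–inclusion condition (3) of that proposition. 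So it suffices to show that two such systems which agree on the horn squares already agree on every generating mould inclusion.

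First I would reduce the traversal to length one. The base case $|\theta| = 0$ is vacuous, since the generating inclusion is then the identity on $\Delta^n$. For the inductive step write $\theta = \theta_1 * \theta_0$ with $|\theta_0| = 1$; condition (2) expresses the filler against $\theta$ through the filler against $\theta_0$ and — after restriction along the source map $s_{\theta_0} \colon \Delta^n \to \theta_0$ — the filler against $\theta_1$, by a recipe that does not depend on the chosen structure. Hence if the two systems agree on all generating inclusions with a length-one traversal and an \emph{arbitrary} cofibrant sieve, they agree on all traversals. This leaves the small mould squares with horizontal step $\langle \rangle \to \langle i, \pm \rangle$ and arbitrary vertical sieve $S$.

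Next I would reduce the sieve. By \reflemm{oncofsieves} (and the decomposition used in the proof of \reflemm{liftstrdetermbyboundarymaps}), any cofibrant sieve admits a filtration $S = S_0 \subsetneq S_1 \subsetneq \cdots \subsetneq S_k = \Delta^n$ in which each $S_{j+1}$ is obtained from $S_j$ by adjoining a single non-degenerate simplex $\beta_j \colon \Delta^{m_j} \hookrightarrow \Delta^n$ all of whose proper faces already lie in $S_j$. Iterating condition (3) along this filtration writes the filler against $(S, \langle i,\pm\rangle)$ in terms of the fillers against the one-step sieve mould squares $(S_j, \langle i,\pm\rangle) \to (S_{j+1}, \langle i,\pm\rangle)$ — which are exactly the one-step mould squares of \refdefi{hornsquare} — together with the trivial filler at the top of the filtration, and again the recipe is common to both structures. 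Finally, for the one-step square adjoining $\beta_j$: if $m_j = n$ it is a horn square outright; if $m_j < n$, pulling it back along the perpendicular morphism $(\beta_j, \langle \rangle)$ produces, by codiscreteness of $\mathbb{M}$ in the perpendicular direction, a mould cube whose front $xy$-square is — using $\beta_j^* S_j = \partial \Delta^{m_j}$, $\beta_j^* S_{j+1} = \Delta^{m_j}$ and $\langle i,\pm\rangle \cdot \beta_j = \langle \ell, \pm \rangle$ for the unique $\ell$ with $\beta_j(\ell) = i$ (the traversal step degenerating to $\langle \rangle$, hence carrying no lifting content, when $i \notin \operatorname{im} \beta_j$) — precisely the horn square over $\Delta^{m_j}$ with index $\ell$. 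The perpendicular condition of \refdefi{tripleCatRightLiftingStructure} then pins down the filler against the one-step square from that of the horn square, once one observes that the perpendicular morphisms $(\beta, \langle \rangle)$ into $(S_{j+1}, \langle i,\pm\rangle)$ with $\beta$ ranging over the simplices of $S_{j+1}$ are jointly covering. Running the filtration back down, the filler against $(S, \langle i,\pm\rangle)$ is determined by horn-square fillers, and combined with the first reduction this gives equality of the two structures.

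I expect the main obstacle to be the bookkeeping of the perpendicular-pullback step: arranging the mould cube so that its front face lands on the nose on a horn square, tracking the reindexing $\langle i,\pm\rangle \cdot \beta_j$ and the degenerate subcase, and — throughout the traversal and sieve reductions — making the repeated ``jointly covering / descent'' arguments precise enough to upgrade ``is expressed through'' to ``is determined by'', which is exactly where the fullness-on-squares (uniqueness) half of \refprop{mouldfibrintermsofgenmouldincl} is doing the work. All of these are variations on arguments already carried out in the proofs of \reftheo{HgenMoorefib} and \reftheo{unifKanfibrintermsofsmmouldsqrs}, but assembling them in the correct order for the three-dimensional category $\mathbb{M}$ requires care.
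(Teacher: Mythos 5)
Your proposal is correct and takes essentially the same route as the paper: the paper's proof likewise combines a one-simplex-at-a-time filtration of the sieve into one-step mould squares, the concatenation condition to reduce the traversal to a single step, and a perpendicular pullback (mould cube) along the newly added simplex, whose front face is either a horn square or horizontally trivial according to whether the label lies in its image. The only differences are cosmetic, namely that you perform the traversal reduction before the sieve filtration and argue the final descent via the jointly covering family of simplices rather than via the pushout right-hand face of the mould cube.
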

\begin{proof}
The proof combines three reductions, each of which we have already seen before.

First of all, any inclusion $S \subseteq T$ of sieves can be written as a sequence $S = S_0 \subseteq S_1 \subseteq S_2 \subseteq \ldots \subseteq S_n = T$ where at each point $S_{i+1}$ is obtained from $S_i$ by adding one new $m$-simplex whose boundary was already present in $S_i$. Therefore any small mould square can be decomposed into a grid
\begin{displaymath}
  \begin{tikzcd}
    A_{0,0} \ar[d] \ar[r] & A_{1,0} \ar[d] \ar[r] & A_{2,0} \ar[d] \ar[r] & \ldots \ar[d] \ar[r] & A_{n,0} \ar[d]  \\
    A_{0,1} \ar[d] \ar[r] & A_{1,1} \ar[d] \ar[r] & A_{2,1} \ar[d] \ar[r] & \ldots \ar[d] \ar[r] & A_{n,1} \ar[d] \\
    A_{0,2} \ar[d] \ar[r] & A_{1,2} \ar[d] \ar[r] & A_{2,2} \ar[d] \ar[r] & \ldots \ar[d] \ar[r] & A_{n,2} \ar[d] \\
    \ldots \ar[d] \ar[r] & \ldots \ar[d] \ar[r] & \ldots \ar[d] \ar[r] & \ldots \ar[d] \ar[r] & \ldots \ar[d]  \\
    A_{0,m} \ar[r] & A_{1,m} \ar[r] & A_{2,m} \ar[r] & \ldots \ar[r] & A_{n,m}
  \end{tikzcd}
\end{displaymath}
of one-step mould squares. Therefore the lifts against the one-step mould squares determine everything.

Secondly, as we have already seen in the proof of \reftheo{unifKanfibrintermsofsmmouldsqrs} the lifts against the one-step mould squares are determined by the one-step mould squares starting from the empty traversal. The reason, once again, is that there is a mould cube
\begin{displaymath}
  \begin{tikzcd}
    & (S_0, \theta) \ar[dd] \ar[rr] & & (S_0,<i,\pm> * \theta) \ar[dd] \\
    (S_0, <>) \ar[ur, "{(1,\theta)}"] \ar[rr, crossing over] \ar[dd] & & (S_0, <i,\pm>) \ar[ur, "{(1,\theta)}"]  \\
    & (S_1, \theta) \ar[rr] & & (S_1, <i,\pm> * \theta) \\
    (S_1,<>) \ar[rr] \ar[ur,  "{(1,\theta)}"] & & (S_1, <i,\pm>) \ar[ur,  "{(1,\theta)}"] \ar[from = uu, crossing over]
  \end{tikzcd}
\end{displaymath}
whose bottom face is a pushout.

Thirdly, suppose we have a one-step mould square starting from an empty traversal in the horizontal direction and suppose that in the vertical direction we have the inclusion $S \subseteq T$, where $\alpha: \Delta^m \to \Delta^n$ is the $m$-simplex that has been added to $S$ to obtain $T$. Then
\begin{displaymath}
  \begin{tikzcd}
    \partial \Delta^m \ar[d] \ar[r] & S \ar[d] \\
    \Delta^m \ar[r, "\alpha"] & T
  \end{tikzcd}
\end{displaymath}
is bicartesian and we get a small mould cube of the form:
\begin{displaymath}
  \begin{tikzcd}
    & (S, <>) \ar[dd] \ar[rr] & & (S,<i,\pm>) \ar[dd] \\
    (\partial \Delta^m, <>) \ar[ur, "{(\alpha,<>)}"] \ar[rr, crossing over] \ar[dd] & & (\partial \Delta^m, <i,\pm> \cdot \alpha) \ar[ur, "{(\alpha,<>)}"]  \\
    & (T, <>) \ar[rr] & & (T, <i,\pm>) \\
    (\Delta^m,<>) \ar[rr] \ar[ur,  "{(\alpha,<>)}"] & & (\Delta^m, <i,\pm> \cdot \alpha). \ar[ur,  "{(\alpha,<>)}"] \ar[from = uu, crossing over]
  \end{tikzcd}
\end{displaymath}
Since in this cube the right hand face is a pushout, the lift against the back face is determined by its front face. But because $\alpha$ is monic, the front face is either a horn square or trivial in the horizontal direction, depending on whether $i$ is in the image of $\alpha$ or not.
\end{proof}

The remainder of this section will almost exclusively be devoted to answering the following question: suppose we are given a map $p$ together with chosen lifts against the horn squares. Which conditions do these lifts have to satisfy in order for them to extend to a (necessarily unique) effective Kan fibration structure on $p$? We will answer this question in \reftheo{defunifkanfibrhornsqrs}.

Throughout the following discussion we assume that we have fixed a map $p: Y \to X$ together with a choice of lifts (or pushforwards) with respect to all horn squares. We have seen how these lifts can be extended first to lifts against one-step mould squares and then to lifts against small mould squares. The worry we have to address is whether the reductions in \reflemm{hornsqrsdetermineall} determine these lifts in unambiguous manner. For the one-step mould squares there is no such problem, but for the small mould squares this is far from clear. Indeed, imagine that we have a lifting problem as follows:
\begin{displaymath}
  \begin{tikzcd}
    A_{0,0} \ar[d] \ar[r] & A_{1,0} \ar[d] \ar[r] & A_{2,0} \ar[d] \ar[r] & \ldots \ar[d] \ar[r] & A_{n,0} \ar[d] \ar[r] & Y \ar[dddd]^p \\
    A_{0,1} \ar[d] \ar[r] & A_{1,1} \ar[d] \ar[r] & A_{2,1} \ar[d] \ar[r] & \ldots \ar[d] \ar[r] & A_{n,1} \ar[d] \\
    A_{0,2} \ar[d] \ar[r] & A_{1,2} \ar[d] \ar[r] & A_{2,2} \ar[d] \ar[r] & \ldots \ar[d] \ar[r] & A_{n,2} \ar[d] \\
    \ldots \ar[d] \ar[r] & \ldots \ar[d] \ar[r] & \ldots \ar[d] \ar[r] & \ldots \ar[d] \ar[r] & \ldots \ar[d]  \\
    A_{0,m} \ar[r] & A_{1,m} \ar[r] & A_{2,m} \ar[r] & \ldots \ar[r] & A_{n,m} \ar[r] & X,
  \end{tikzcd}
\end{displaymath}
in which all the little squares are one-step mould squares. We have unambiguous pushforwards for every square $(A_{i,j},A_{i+1,j},A_{j+1,i},A_{i+1,j+1})$ in that we have chosen for every pair of maps $A_{i+1,j} \to Y$ and $A_{i,j+1} \to Y$ over $X$ and under $A_{i,j}$ an extension to a map $A_{i+1,j+1} \to Y$. Then for every map $A_{0,m} \to Y$ we can build a push forward to a map $A_{n,m} \to Y$ by repeatedly taking our chosen push forwards for the one-step mould squares. The first worry is that we can travel through the grid in many different ways and that it is not immediately obvious that we will always end up with the same map $A_{n,m} \to Y$. Still, this is the case, because if both $f_{i,j}: A_{i,j} \to Y$ and $g_{i,j}: A_{i,j} \to Y$ are obtained by repeatedly taking our favourite pushforwards for these little squares, in some order, then one easily proves that $f_{i,j} = g_{i,j}$ by induction on $n = i + j$.

The second (and final) worry is that the grid decomposing a small mould square into one-step mould squares is not uniquely determined. Clearly, we have no choice in how to travel in the horizontal direction, but in the vertical direction we have some choice, coming from the following fact. If $S \subseteq T \subseteq \Delta^n$ is an inclusion of cofibrant sieves and we write it as a sequence $S = S_0 \subseteq S_1 \subseteq S_2 \subseteq \ldots \subseteq S_n = T$ of cofibrant sieves where each $S_{i+1}$ is obtained from $S_i$ by adding a single $m$-simplex whose faces belonged to $S_i$, then this sequence is far from unique. However, any two such sequences can be obtained from each other by repeatedly applying permutations of the following form: if we have such a sequence and somewhere in this sequence we have $U \subseteq V \subseteq W$ where $V$ is obtained from $U$ by adding some $k$-simplex and $W$ is obtained from $W$ by adding some $l$-simplex and the $k$-simplex is not a face of the $l$-simplex (so that the boundaries of both the $l$-simplex and $k$-simplex were already present in $U$), then we can replace this by $U \subseteq V' \subseteq W$ where $V'$ is obtained from $U$ by adding the $l$-simplex and $W$ is obtained from $V'$ by adding the $k$-simplex. Since our answer to the first worry tells us that we may always assume that the way one finds the lift against a grid as above is by computing all the lifts $A_{i,j} \to Y$ in lexicographic order, we end up with the following statement that we need to prove:

\begin{lemm}{unambiguousliftsforsmallmouldsq}
 Suppose $U,V,V',W$ are cofibrant sieves as above and we have a lifting problem of the form:
\begin{displaymath}
  \begin{tikzcd}
    (U, \theta) \ar[r] \ar[d] & (U, <i, \pm> * \theta) \ar[r] \ar[d] & Y \ar[d, "p"] \\
    (W, \theta) \ar[r] \ar[urr, dotted] & (W, <i, \pm> * \theta) \ar[r] & X
  \end{tikzcd}
\end{displaymath}
then the solutions obtained by decomposing the left hand square as in the diagram below on the left or as in the one below on the right coincide.
\begin{displaymath}
  \begin{array}{cc}
    \begin{tikzcd}
      (U, \theta) \ar[r] \ar[d] & (U, <i, \pm> * \theta) \ar[r] \ar[d] & Y \ar[dd, "p"] \\
      (V, \theta) \ar[r] \ar[d] & (V, <i,\pm> * \theta) \ar[d] \\
      (W, \theta) \ar[r] \ar[uurr, dotted, bend right = 25] & (W, <i, \pm> * \theta) \ar[r] & X
    \end{tikzcd} &
    \begin{tikzcd}
      (U, \theta) \ar[r] \ar[d] & (U, <i, \pm> * \theta) \ar[r] \ar[d] & Y \ar[dd, "p"] \\
      (V', \theta) \ar[r] \ar[d] & (V', <i,\pm> * \theta)  \ar[d] \\
      (W, \theta) \ar[r] \ar[uurr, dotted, bend right = 25] & (W, <i, \pm> * \theta) \ar[r] & X
    \end{tikzcd}
  \end{array}
\end{displaymath}
\end{lemm}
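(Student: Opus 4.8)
The plan is to run, one last time, the bookkeeping argument used repeatedly above, but now the payoff is genuine agreement of the two solutions, because the $k$-simplex $\sigma$ added to pass from $U$ to $V$ and the $l$-simplex $\tau$ added to pass from $V$ to $W$ interact with disjoint parts of the realization. Write $\bar\theta := {<i,\pm>} * \theta$. Since neither of $\sigma,\tau$ is a face of the other, both $\partial\sigma$ and $\partial\tau$ already lie in $U$; hence, as subobjects of $\Delta^n$, $V\cap V'=U$ and $V\cup V'=W$, with $V'$ equal to $U$ together with $\tau$ and $W$ equal to $V'$ together with $\sigma$. Applying the pullback functor along $j_{\bar\theta}\colon\bar\theta\to\Delta^n$, which preserves finite unions and intersections of subobjects (the category of simplicial sets being coherent, cf.\ \reflemm{lccFinCoImpliesCoherent}), the monomorphisms $\bar\theta\cdot V\hookrightarrow\bar\theta\cdot W\hookleftarrow\bar\theta\cdot V'$ meet in $\bar\theta\cdot U$ and are jointly epic; likewise with $\theta$ in place of $\bar\theta$. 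So to prove that the solution $l$ obtained via $V$ and the solution $l'$ obtained via $V'$ coincide, it suffices to show that they restrict to the same map on $\bar\theta\cdot V$ and on $\bar\theta\cdot V'$.

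First I would record two facts about the filler of a one-step mould square. If the filler of the mould square adding $\sigma$ is fed a compatible pair of maps defined on $\theta\cdot V$ and on $\bar\theta\cdot U$, then the resulting map $g\colon\bar\theta\cdot V\to Y$ extends exactly those maps (it is a filler of a mould square whose domain pushout contains both pieces); in particular $g|_{\bar\theta\cdot U}$ and $g|_{\theta\cdot V}$ are the data one started with. The same holds with $\tau,V'$ in place of $\sigma,V$. Now, in the decomposition via $V$, step one produces $g\colon\bar\theta\cdot V\to Y$ from the restricted data, and step two fills the mould square adding $\tau$, fed by the original map on $\theta\cdot W$ together with $g$ on $\bar\theta\cdot V$. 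Since $\bar\theta\cdot V$ sits inside the domain pushout of that second mould square, $l|_{\bar\theta\cdot V}=g$ tautologically. For $l|_{\bar\theta\cdot V'}$: using the compatibility conditions of the lifting structure against $\mathbb{M}$ — concretely, conditions~(1) and~(3) of \refprop{mouldfibrintermsofgenmouldincl} together with the small mould cubes of \reflemm{hornsqrsdetermineall} — restricting the filler of the $\tau$-mould square over $W$ along the sieve inclusion $V'\subseteq W$ is computed by that same mould square pulled back over $V'$, which, because $V\cap V'=U$, is precisely the mould square adding $\tau$ to pass from $U$ to $V'$; and the data feeding this restricted problem are the original map on $\theta\cdot V'$ together with $g|_{\bar\theta\cdot U}$, which by the first fact is the original map on $\bar\theta\cdot U$. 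But this is exactly the data defining step one of the decomposition via $V'$. Hence $l|_{\bar\theta\cdot V'}=g'$, the map produced by that first step.

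The same computation, with the roles of $(V,\sigma)$ and $(V',\tau)$ interchanged, gives $l'|_{\bar\theta\cdot V'}=g'$ and $l'|_{\bar\theta\cdot V}=g$. Therefore $l$ and $l'$ agree on the jointly epic family $\{\bar\theta\cdot V\hookrightarrow\bar\theta\cdot W,\ \bar\theta\cdot V'\hookrightarrow\bar\theta\cdot W\}$, so $l=l'$, as required.

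I expect the only real work to lie in the sentence identifying $l|_{\bar\theta\cdot V'}$ with the filler of the restricted mould square: it requires producing the relevant cube in $\mathbb{M}$ relating the $\tau$-mould square over $W$ to the one over $V'$ (using $V\cap V'=U$), invoking the perpendicular and horizontal conditions of \refdefi{tripleCatRightLiftingStructure} — equivalently conditions~(1) and~(3) of \refprop{mouldfibrintermsofgenmouldincl} — to see that restricting a chosen filler along a sieve inclusion is computed by the induced filler of the restricted mould square, and then checking that the two bundles of data really match up. This is routine given everything set up in this section, but it is the step where one must be attentive; the remaining ingredients (the coherence facts about $V\cap V'$, $V\cup V'$ and their preservation under the pullback functors $j_{(-)}^\ast$, and the tautological identification $l|_{\bar\theta\cdot V}=g$) are immediate.
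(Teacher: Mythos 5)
Your skeleton is sound and in fact close to the paper's: the reduction to agreement on $\bar\theta\cdot V$ and $\bar\theta\cdot V'$ (using $V\cap V'=U$, $V\cup V'=W$ and coherence), the tautological identity $l|_{\bar\theta\cdot V}=g$, and the observation that the data feeding the $\tau$-step only involve the original map on $\bar\theta\cdot U$ because $\partial\tau\subseteq U$ are all correct and are the real content. The gap is in how you justify the central step, $l|_{\bar\theta\cdot V'}=g'$. You appeal to ``the compatibility conditions of the lifting structure against $\mathbb{M}$'' (the perpendicular and horizontal/vertical conditions of \refdefi{tripleCatRightLiftingStructure}, equivalently conditions (1) and (3) of \refprop{mouldfibrintermsofgenmouldincl}). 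But in this lemma $p$ is only equipped with chosen lifts against horn squares; the lifts against small mould squares are precisely what is being shown to be well defined, their vertical condition is deduced only \emph{after} (and from) this well-definedness, and their perpendicular condition is not automatic at all --- it is the subject of the rest of the section and holds only under the degeneracy-stability hypothesis of \reftheo{unifKanfibrinHornsquares}, which is not assumed here. So citing those conditions is circular (and, for the vertical condition, literally equivalent to the statement being proved). There is also a structural mismatch: the inclusion $V'\subseteq W$ is a \emph{vertical} morphism of $\mathbb{M}$, not a perpendicular one, so there is no ``cube in $\mathbb{M}$'' pulling the $\tau$-square over $W$ back to the one over $V'$; the perpendicular/base-change condition simply does not speak about restriction along sieve inclusions.

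What closes the gap is to prove the restriction claim directly from how one-step fillers are \emph{defined} in \reflemm{hornsqrsdetermineall}: the $\tau$-step over $V\subseteq W$ and the $\tau$-step over $U\subseteq V'$ are both computed by restricting the available data along $(\tau,\langle\rangle)$ to the pair $(\partial\Delta^l,\Delta^l)$, taking the chosen front-face lift there, and gluing along the pushout on the right of the corresponding cube. Since $\partial\tau\subseteq U$ and the $\sigma$-step filler $g$ extends the original data on $(U,\bar\theta)$, the two restricted front-face problems are \emph{identical}, so the chosen front lifts coincide, and the glued maps agree on $\bar\theta\cdot V'$ (and symmetrically with $\sigma$, $V$). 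This is exactly the paper's one-line argument that the four mould cubes pair up with equal front faces; your ``checking that the two bundles of data really match up'' is not a routine afterthought but the whole proof, and it must replace, not supplement, the appeal to the triple-category axioms.
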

\begin{proof}
Without loss of generality we may assume that $\theta$ is the empty traversal, the reason being that the solutions for general $\theta$ are obtained by pushing forward the solutions for $\theta = <>$.

Let us write $\alpha: [k] \to [n]$ for the $k$-simplex in $V$ but not in $U$ and $\beta: [l] \to [n]$ for the $l$-simplex in $W$ but not in $V$. Then the solutions for the lifting problem in the left hand squares in both diagrams above are determined by the following mould cubes:
\begin{displaymath}
  \begin{array}{c}
    \begin{tikzcd}
      & (U, <>) \ar[dd] \ar[rr] & & (U,<i,\pm>) \ar[dd] \\
      (\partial \Delta^k, <>) \ar[ur, "{(\alpha,<>)}"] \ar[rr, crossing over] \ar[dd] & & (\partial \Delta^k, <i,\pm> \cdot \alpha) \ar[ur, "{(\alpha,<>)}"] \\
      & (V, <>) \ar[rr] & & (V, <i,\pm>) \\
      (\Delta^k,<>) \ar[rr] \ar[ur,  "{(\alpha,<>)}"] & & (\Delta^k, <i,\pm> \cdot \alpha) \ar[ur,  "{(\alpha,<>)}"] \ar[from = uu, crossing over]
    \end{tikzcd} \\
    \begin{tikzcd}
      & (U, <>) \ar[dd] \ar[rr] & & (U,<i,\pm>) \ar[dd] \\
      (\partial \Delta^l, <>) \ar[ur, "{(\beta,<>)}"] \ar[rr, crossing over] \ar[dd] & & (\partial \Delta^l, <i,\pm> \cdot \beta) \ar[ur, "{(\beta,<>)}"] \\
      & (V', <>) \ar[rr] & & (V', <i,\pm>) \\
      (\Delta^l,<>) \ar[rr] \ar[ur,  "{(\beta,<>)}"] & & (\Delta^l, <i,\pm> \cdot \beta) \ar[ur,  "{(\beta,<>)}"] \ar[from = uu, crossing over]
    \end{tikzcd} \\
    \begin{tikzcd}
      & (V, <>) \ar[dd] \ar[rr] & & (V,<i,\pm>) \ar[dd] \\
      (\partial \Delta^l, <>) \ar[ur, "{(\beta,<>)}"] \ar[rr, crossing over] \ar[dd] & & (\partial \Delta^l, <i,\pm> \cdot \beta) \ar[ur, "{(\beta,<>)}"]  \\
      & (W, <>) \ar[rr] & & (W, <i,\pm>) \\
      (\Delta^l,<>) \ar[rr] \ar[ur,  "{(\beta,<>)}"] & & (\Delta^l, <i,\pm> \cdot \beta) \ar[ur,  "{(\beta,<>)}"] \ar[from = uu, crossing over]
    \end{tikzcd}
    \end{array}
  \end{displaymath}
  \begin{displaymath}
    \begin{array}{c}
    \begin{tikzcd}
      & (V', <>) \ar[dd] \ar[rr] & & (V',<i,\pm>) \ar[dd] \\
      (\partial \Delta^k, <>) \ar[ur, "{(\alpha,<>)}"] \ar[rr, crossing over] \ar[dd] & & (\partial \Delta^k, <i,\pm> \cdot \alpha) \ar[ur, "{(\alpha,<>)}"]  \\
      & (W, <>) \ar[rr] & & (W, <i,\pm>) \\
      (\Delta^k,<>) \ar[rr] \ar[ur,  "{(\alpha,<>)}"] & & (\Delta^k, <i,\pm> \cdot \alpha) \ar[ur,  "{(\alpha,<>)}"] \ar[from = uu, crossing over]
    \end{tikzcd}
  \end{array}
\end{displaymath}
But since the second and third as well as the first and fourth have the same front face, the solutions will coincide. (What this says is that because the $k$-simplex and $l$-simplex have at most parts of their boundary in common, the the lifting problems are independent from each other and can be solved in either order.)
\end{proof}

To summarise the discussion so far, any map $p$ which has the right lifting property with respect to horn squares, has unambiguous lifts against general small mould squares. Note that these lifts will automatically satisfy the horizontal and vertical conditions for having the right lifting property against the triple category $\mathbb{M}$, because the lifts do not depend on the way we divide a small mould square into a grid of one-step mould squares or on the way we traverse that grid. That means that any requirements on the lifts against the horn squares needed for them to extend to a unique effective Kan fibration structure should come from the perpendicular condition. So we will now have a look at this condition.

\begin{rema}{onsmallmouldcubes} From now on, we will often think of the perpendicular condition on the lifts against the small mould squares in $\mathbb{M}$ as expressing as a stability condition with respect to base change or pullback. Indeed, we will often refer to it as a \emph{base change condition}. The reason is that in $\mathbb{M}$ both the morphisms of HDRs in the $yz$-plane as well as in the $xz$-plane are cartesian. This means that in a small mould cube like
  \begin{displaymath}
    \begin{tikzcd}
      & D \ar[dd] \ar[rr] & & C \ar[dd]  \\
      D' \ar[ur] \ar[dd] \ar[rr, crossing over] & & C'  \ar[ur] \\
      & B \ar[rr] & & A  \\
      B' \ar[rr] \ar[ur, "\beta"] & & A' \ar[ur] \ar[from = uu, crossing over]
    \end{tikzcd}
  \end{displaymath}
we can think of the front face of the cube as the result of pulling back the face at the back along $\beta: B' \to B$. Indeed, from now on we will often draw such a situation as follows
\begin{displaymath}
  \begin{tikzcd}
    D' \ar[r] \ar[d] & C' \ar[d] & & D \ar[d] \ar[r] & C \ar[d] \\
    B' \ar[r] \ar[rrr, bend right, dotted, "\beta"] & A' & & B \ar[r] & A.
  \end{tikzcd}
\end{displaymath}
The reader is supposed to keep in mind that there is a cube connecting the two squares, but we will only draw a dotted arrow to prevent our diagrams from becoming too cluttered.
\end{rema}

Let us call a small mould square \emph{stable} if it its induced lift is compatible with the induced lift of any base change of that same square. In fact, it will be convenient to have a relativised notion of stability. So assume $\smallmap{S}$ is a class of morphisms in $\mathbf{\Delta}$ such that if
\diag{ [n'] \ar[d]_{\beta'} \ar[r]^{\alpha'} & [m'] \ar[d]^{\beta} \\
[n] \ar[r]_{\alpha} & [m] }
is a pullback diagram in $\mathbf{\Delta}$ with $\beta$ monic, then $\alpha \in \smallmap{S}$ implies $\alpha' \in \smallmap{S}$ (think $\smallmap{S} = \{ \mbox{ face maps } \} \cup \{ \mbox{ identities} \}$ or $\smallmap{S} = \{ \mbox{ degeneracy maps } \} \cup \{ \mbox{ identities} \}$). Then a small mould square
\begin{displaymath}
  \begin{tikzcd}
     D \ar[r] \ar[d] & C \ar[d] \\
     B \ar[r] & A
  \end{tikzcd}
\end{displaymath}
will be called \emph{$\smallmap{S}$-stable} if for any commutative diagram of the form
\begin{displaymath}
  \begin{tikzcd}
    & D \ar[dd] \ar[rr] & & C \ar[rr] \ar[dd] & & Y \ar[dd, "p"] \\
    D' \ar[ur] \ar[dd] \ar[rr, crossing over] & & C' \ar[ur] \\
    & B \ar[rr] \ar[dotted, uurrrr] & & A \ar[rr] & & X \\
    B' \ar[rr] \ar[ur] & & A' \ar[ur] \ar[from = uu, crossing over]
  \end{tikzcd}
\end{displaymath}
in which the cube is a ``small'' base change cube along $(\alpha,\tau): B' \to B$ with $\alpha \in \smallmap{S}$, the induced map $A' \to Y$ can be obtained by composing the induced map $A \to Y$ with $A' \to A$. The next step is to find necessary and sufficient conditions on the fillers for the horn squares to ensure that any small mould square is $\smallmap{S}$-stable.

First of all, it is clearly necessary and sufficient if every one-step mould square is $\smallmap{S}$-stable. Indeed, if in a grid any little square is $\smallmap{S}$-stable, then so is the entire square. But note that the pullback of a one-step mould square need no longer be a one-step mould square: both in the horizontal and the vertical direction the number of steps may increase. (By the way, it may also become 0 in one of the two directions, in which case the stability condition is vacuously satisfied. So without loss of generality we may always assume this does not happen.)

\begin{lemm}{indliftswithemptytraversalnice}
Suppose $\psi = \tau * \theta$ and we have situation as follows:
\begin{displaymath}
  \begin{tikzcd}
    & (S,\theta) \ar[dd] \ar[rr] & & (S,\psi) \ar[rr] \ar[dd] & & Y \ar[dd, "p"] \\
    (S, \langle \rangle) \ar[ur] \ar[dd] \ar[rr, crossing over] & &  (S,\tau)  \ar[ur] \\
    & (T,\theta) \ar[rr] \ar[dotted, uurrrr] & & (T,\psi) \ar[rr] & & X \\
    (T, \langle \rangle) \ar[rr] \ar[ur] & & (T, \tau), \ar[ur] \ar[from = uu, crossing over]
  \end{tikzcd}
\end{displaymath}
where the cube is a base change cube along $(1,\tau): (T, \langle \rangle) \to (T, \theta)$.
Then the induced lifts $(T, \tau) \to Y$ and $(T, \psi) \to Y$ are compatible. This means that, since the bottom of the cube is a pushout, the induced lifts determine each other by composition and pushout, respectively.
\end{lemm}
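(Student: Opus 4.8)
The plan is to reduce the claimed compatibility of the two induced lifts to a \emph{single} application of the vertical (path-composition) condition for effective Kan fibrations, as packaged in \refprop{mouldfibrintermsofgenmouldincl}~(2), together with the already-established fact that the lifts against small mould squares do not depend on how a mould square is decomposed. Recall that the induced lift against the mould square $(S,\langle\rangle)\to(S,\psi)$ over $(T,\langle\rangle)\to(T,\psi)$ is, by definition, assembled from lifts against the one-step mould squares obtained by adding the $m$-simplices of $T\setminus S$ one at a time (horizontally the traversal goes from $\langle\rangle$ to $\psi$ in one block), and likewise for the square with $\psi$ replaced by $\tau$ and for the one with $\psi$ replaced by $\theta$. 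So the content of the lemma is really a statement about each one-step mould square separately: it suffices to prove that for a single $m$-simplex $\alpha\colon\Delta^m\to\Delta^n$ added to a sieve $S$ to obtain $T$, the induced lift against $(S,\tau)\to(T,\tau)$ and the induced lift against $(S,\psi)\to(T,\psi)$ are compatible with the induced lift against $(S,\theta)\to(T,\theta)$ in the way the factorisation $\psi=\tau*\theta$ dictates.

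First I would invoke \reflemm{hornsqrsdetermineall} together with the small mould cube appearing in its proof (the one built from $\partial\Delta^m\subseteq\Delta^m$ along $\alpha$) to further reduce to the case where the one-step mould square in the vertical direction \emph{is} a horn square: i.e.\ $S=\partial\Delta^n$, $T=\Delta^n$, and we must compare the chosen filler against the horn square with traversal $\psi$ to the fillers against the horn squares with traversals $\tau$ and $\theta$. Next I would translate the statement into the language of \refprop{mouldfibrintermsofgenmouldincl}. The mould square with traversal $\psi=\tau*\theta$ corresponds to a lifting problem of $\Delta^n\cup\theta'\cdot S\to\theta'$ against $p$ where $\theta'$ denotes the geometric realisation of $\psi$; since $\psi=\tau*\theta$, the geometric realisation decomposes as a union $\widehat\tau\cup_{\Delta^n}\widehat\theta$ (the endpoint of $\widehat\tau$ glued to the start of $\widehat\theta$), and condition (2) of \refprop{mouldfibrintermsofgenmouldincl} says precisely that the chosen filler against the total realisation is computed by first filling over $\widehat\theta$ (using the source map $s_\theta$), restricting along $s_\theta$ to obtain a new $n$-simplex of $Y$, and then filling over $\widehat\tau$. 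That two-step recipe is, read off diagrammatically, exactly the assertion that the induced lift $(T,\psi)\to Y$ restricted appropriately is the pushout-amalgamation of the induced lift $(T,\theta)\to Y$ and the induced lift $(T,\tau)\to Y$ computed relative to that first lift. Finally, since the base change in the statement is along $(1,\tau)$, which only modifies the traversal and leaves the sieve direction untouched, the hypotheses guaranteeing that these are genuine mould cubes are immediate from \reflemm{mouldSqPbStable} (and its simplicial incarnation \refprop{triplefunctorfromsmalltolarge}); no new cube needs to be constructed by hand. The last sentence of the lemma — that the bottom face being a pushout forces the induced lifts to determine one another — is then a formal consequence of the universal property of pushouts, exactly as in the analogous reductions in \reftheo{unifKanfibrintermsofsmmouldsqrs} and \reflemm{hornsqrsdetermineall}.

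The main obstacle I anticipate is bookkeeping rather than conceptual: one has to match up, on the nose, the \emph{order} in which condition (2) of \refprop{mouldfibrintermsofgenmouldincl} composes its two fillers (first $\theta_0=\theta$, then $\theta_1=\tau$, with the intermediate $n$-simplex extracted via $s_{\theta_0}$) with the order in which the cube in the statement of this lemma presents its two sub-squares (the cube sits over $(1,\tau)\colon(T,\langle\rangle)\to(T,\theta)$, so $\theta$ plays the role of the ``already-present'' part and $\tau$ the role of the ``base-changed'' part). Getting the source/target conventions of the simplicial Moore path functor straight here — which end of $\widehat\tau$ is glued to which end of $\widehat\theta$, and correspondingly whether one uses $s_\theta$ or $t_\theta$ — is the only delicate point, and it is settled by the explicit descriptions of $s_\theta$, $t_\theta$ as pullbacks of $\mathrm{id}$, $\mathrm{id}^*$ recorded in Section~\ref{ssec:small-double-category} and the formula $\mu^+(\theta_0,\theta_1)=\theta_0*\theta_1$ for concatenation. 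Once those conventions are pinned down, the verification is a direct comparison of the two prescriptions, and, as in all the earlier instances of this kind of argument in the paper, I would simply state that it ``works out'' rather than write out every arrow.
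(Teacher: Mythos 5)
There is a genuine gap, and it is the central step of your argument: the appeal to \refprop{mouldfibrintermsofgenmouldincl}~(2). That proposition characterises \emph{effective Kan fibration structures}, i.e.\ lifting structures that already satisfy all three compatibility conditions against the triple category of mould squares. In the setting of this lemma, however, $p$ is only equipped with an arbitrary choice of lifts against horn squares, which have then been extended to induced lifts against one-step and small mould squares by the grid procedure; nothing guarantees at this point that these induced lifts satisfy the composition condition (2), nor the perpendicular condition that your base-change cube would otherwise settle immediately. Indeed, this lemma is one of the ingredients (feeding into \reflemm{firstredforstability}, \refprop{1sthornsqdef} and ultimately \reftheo{unifKanfibrinHornsquares}) whose purpose is precisely to determine which such compatibilities hold automatically for the induced lifts and which must be imposed as conditions on the horn-square lifts. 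Invoking the effective-Kan-fibration characterisation here is therefore circular. (Even setting circularity aside, condition (2) is not literally the statement you need: in the lemma the map $(T,\theta)\to Y$ is \emph{arbitrary} given data, whereas condition (2) composes with the \emph{induced} filler over the first traversal.)

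A secondary problem is your opening reduction: one-step mould squares are one step in \emph{both} directions (one letter of the traversal and one new simplex of the sieve), so the lemma is not ``really a statement about each one-step mould square separately''; its essential content is the compatibility along the horizontal (traversal) direction when $\psi=\tau*\theta$ has arbitrary length, and that is exactly the part your proposal offloads onto the unavailable condition (2). The paper instead proves the lemma directly by induction on the length of $\tau$: for $\tau$ of length one the claim is true by the very definition of the induced lifts for one-step mould squares (they are defined as pushforwards of the corresponding empty-traversal lifts), and the inductive step writes $\tau=\sigma*\rho$ and rearranges the computation using the already-established independence of the grid decomposition and of the order in which pushforwards are taken. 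A correct proof along your lines would have to replace the appeal to \refprop{mouldfibrintermsofgenmouldincl}~(2) by such an argument from the construction of the induced lifts itself.
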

\begin{proof}
We prove the statement of the lemma by induction on the length of the traversal $\tau$. Note that the case $\tau = \langle \rangle$ is vacuously true.

In case where $\tau$ has length 1, we can regard both the front and the back of the cube as a vertical composition of one-step mould squares. In that case the statement follows from the definition of the induced lifts for one-step mould squares.

Now write $\tau = \sigma * \rho$ where $\sigma$ has length 1 and consider the following situation:
\begin{displaymath}
  \begin{tikzcd}
    & (S, \langle \rangle)  \ar[d] \ar[r]  & (S, \sigma)   \ar[d]  \\
    & (T, \langle \rangle) \ar[r] \ar[dd, dotted, bend right = 45, pos =.3, "{(1,\rho)}"'] & (T, \sigma)   \\
    (S, \langle \rangle)  \ar[d] \ar[r] & (S, \rho) \ar[d] \ar[r]  & (S, \tau) \ar[d]  \\
    (T, \langle \rangle) \ar[r] \ar[dotted, bend right = 45, dd, "{(1,\theta)}"', pos =.3] & (T, \rho) \ar[dd, dotted, bend right = 45, "{(1,\theta)}"', pos =.3] \ar[r] &(T, \tau) \\
    (S, \theta) \ar[d] \ar[r] & (S, \rho * \theta) \ar[d] \ar[r] & (S, \psi) \ar[r] \ar[d] & Y \ar[d, "p"] \\
    (T, \theta) \ar[r] & (T, \rho * \theta) \ar[r] & (T, \psi) \ar[r] & X
  \end{tikzcd}
\end{displaymath}
We should imagine that we are given a map $(T,\theta) \to Y$ and we want to push it forward to a map $(T, \psi) \to Y$. Now, by induction hypothesis, the fact that the statement holds in case $\tau$ has length 1, and the earlier lemmas about grids, we can compute this as follows: take the induced lift $(T, \rho) \to Y$, push that down to map $(T, \rho * \theta) \to Y$, restrict that to a map $(T, \langle \rangle) \to Y$, take the induced lift $(T, \sigma) \to Y$ and then push that all the way down to a map $(T, \psi) \to Y$. But the latter can be done in two steps: push the map $(T, \sigma) \to Y$ down to $(T, \tau) \to Y$ and then push it further down to $(T, \psi) \to Y$. This means the end result $(T, \psi) \to Y$ coincides with taking the induced lift $(T, \tau) \to Y$ and pushing that down.
\end{proof}

\begin{lemm}{firstredforstability}
If all one-step mould squares starting from the empty traversal are $\smallmap{S}$-stable, then so are all one-step mould squares.
\end{lemm}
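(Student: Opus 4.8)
The plan is to transfer the $\smallmap{S}$-stability of an arbitrary one-step mould square to that of its ``empty base traversal'' companion, using \reflemm{indliftswithemptytraversalnice} together with the fact that pullback preserves the pushouts involved. Fix a one-step mould square $Q_\theta$, with sieve inclusion $S\subseteq T$ (the simplex adjoined to $S$ having its boundary already in $S$) and horizontal growth from the base traversal $\theta$ to $\langle i,\pm\rangle\ast\theta$, and write $Q_{\langle\rangle}$ for the one-step mould square with the same sieve inclusion but empty base traversal, so with horizontal growth $\langle\rangle\to\langle i,\pm\rangle$. For a small mould square $Q$ let $\overline{Q}$ denote its bottom-right corner and $\ell(Q)\colon\overline{Q}\to Y$ the induced lift; for a base change $\kappa^{\ast}Q$ let $\pi_{\kappa}\colon\overline{\kappa^{\ast}Q}\to\overline{Q}$ be the induced map on bottom-right corners, so that $\smallmap{S}$-stability of $Q$ asserts exactly $\ell(\kappa^{\ast}Q)=\ell(Q)\circ\pi_{\kappa}$ for every perpendicular morphism $\kappa$ whose simplex-part lies in $\smallmap{S}$. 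We are given that each such $Q_{\langle\rangle}$ is $\smallmap{S}$-stable, and we must prove the same for $Q_\theta$.

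First I would extract the consequence of \reflemm{indliftswithemptytraversalnice}. Applied to $Q_\theta$ (with $\tau=\langle i,\pm\rangle$ there) it gives a base change cube $C_\theta\colon Q_{\langle\rangle}\Rightarrow Q_\theta$ in $\mathbb{M}$ whose perpendicular morphism $\lambda$ has identity simplex-part and whose bottom face presents $\overline{Q_\theta}$ as a pushout, with the property that $\ell(Q_\theta)$ is the unique map out of that pushout restricting to $\ell(Q_{\langle\rangle})$ on one leg and to the given lifting datum of $Q_\theta$ on the other. Since $\lambda$ has identity simplex-part, the same lemma applies to every base change $\kappa^{\ast}Q_\theta$: using that the traversal action distributes over concatenation, so that the horizontal growth $(\langle i,\pm\rangle\ast\theta)\cdot\alpha$ splits as $(\langle i,\pm\rangle\cdot\alpha)\ast(\theta\cdot\alpha)$, pulling $C_\theta$ back along $\kappa$ produces a cube $C'\colon Q'_{\langle\rangle}\Rightarrow\kappa^{\ast}Q_\theta$ with identity simplex-part and pushout bottom face, in which $Q'_{\langle\rangle}$ is itself a base change of $Q_{\langle\rangle}$ along a perpendicular morphism whose simplex-part is the pullback of the simplex-part of $\kappa$ along the identity, hence still in $\smallmap{S}$. (The pulled-back cube exists because $\mathbb{M}$ is codiscrete in the third dimension and the pullback of $\lambda$ along $\kappa$ exists in $\mathbb{M}$, the simplex-part of $\lambda$ being invertible.)

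Now fix $\kappa=(\alpha,\sigma)$ with $\alpha\in\smallmap{S}$; assume the sieve step and traversal step of $\kappa^{\ast}Q_\theta$ do not collapse, since otherwise stability is vacuous. By the previous paragraph, $\ell(\kappa^{\ast}Q_\theta)$ is the map out of the pushout $\overline{\kappa^{\ast}Q_\theta}$ of $C'$ restricting to $\ell(Q'_{\langle\rangle})$ on the first leg and to the pulled-back lifting datum on the second. By $\smallmap{S}$-stability of $Q_{\langle\rangle}$, applied to the base change $Q'_{\langle\rangle}$, we have $\ell(Q'_{\langle\rangle})=\ell(Q_{\langle\rangle})\circ\pi$ for the projection $\pi\colon\overline{Q'_{\langle\rangle}}\to\overline{Q_{\langle\rangle}}$. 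On the other hand, the bottom face of $C_\theta$ is a pushout, and by Beck--Chevalley for the codomain bifibration (\reflemm{lcc-beck-chevalley}) its pullback along $\kappa$ is again a pushout, which is precisely the bottom face of $C'$; moreover $\pi_{\kappa}$ restricts on the two legs of $\overline{\kappa^{\ast}Q_\theta}$ to $\pi$ and to the projection between the two ``datum'' corners. Hence $\ell(Q_\theta)\circ\pi_{\kappa}$ is the map out of $\overline{\kappa^{\ast}Q_\theta}$ restricting to $\ell(Q_{\langle\rangle})\circ\pi$ on the first leg and to the pulled-back datum on the second --- the same two restrictions as $\ell(\kappa^{\ast}Q_\theta)$. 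By uniqueness of the map out of the pushout, $\ell(\kappa^{\ast}Q_\theta)=\ell(Q_\theta)\circ\pi_{\kappa}$, which is the assertion.

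The step I expect to be the real obstacle is the compatibility of the ``compose-and-amalgamate-over-a-pushout'' recipe of \reflemm{indliftswithemptytraversalnice} with the base change $\kappa$: conceptually this is forced, since pullback is a left adjoint and so preserves the pushout in question (\reflemm{lcc-beck-chevalley}) while HDR- and cofibration-structures transport correctly along pullbacks (\refcoro{pbsqofHDRalongcartsq}, \reflemm{mouldSqPbStable}), but carrying it out cleanly means tracking, through the four-dimensional diagram obtained by stacking $C_\theta$, its pullback $C'$ and the base change cube $Q_\theta\Rightarrow\kappa^{\ast}Q_\theta$, the retraction data of the relevant HDRs, the map from the bottom-left corner of $Q_\theta$ to $Y$ furnished by the lifting problem, and the gluing maps of the two pushouts, and checking that all the faces in sight commute. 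The remaining ingredients --- distributivity of the traversal action over concatenation and the harmless collapsing cases --- are routine.
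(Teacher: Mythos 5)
Your proof is correct and follows essentially the same route as the paper: both arguments reduce the one-step square and its base change along $(\alpha,\sigma)$ to their empty-traversal companions via \reflemm{indliftswithemptytraversalnice}, note that those companions are related by a base change along $(\alpha,\langle\rangle)$ with $\alpha\in\smallmap{S}$ so the hypothesis applies, and then conclude by uniqueness of the map out of the (pullback-stable) pushout at the bottom of the base-change cubes. The only nitpick is notational: the base traversal of $\kappa^{\ast}Q_\theta$ is $\psi$ with $\psi\ast\sigma=\theta\cdot\alpha$ rather than $\theta\cdot\alpha$ itself, but since the ``new'' segment is still $\langle i,\pm\rangle\cdot\alpha$ this does not affect your argument.
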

\begin{proof}
Imagine that we have a one-step mould square
\begin{displaymath}
  \begin{tikzcd}
    (S, \theta) \ar[d] \ar[r] & (S, \langle i, \pm \rangle * \theta) \ar[d] \\
    (T, \theta) \ar[r] & (T, \langle i, \pm \rangle * \theta)
  \end{tikzcd}
\end{displaymath}
and we pull it back along $(\alpha, \sigma)$ with $\alpha \in \smallmap{S}$. Writing $\theta \cdot \alpha = \psi * \sigma$, we get four small mould squares:
\begin{displaymath}
  \begin{tikzcd}
    (\alpha^* S, \langle \rangle) \ar[d] \ar[r] & (\alpha^*S, \langle i, \pm \rangle \cdot \alpha) \ar[d] & & (S, \langle \rangle) \ar[d] \ar[r] & (S, \langle i, \pm \rangle) \ar[d] \\
    (\alpha^* T, \langle \rangle) \ar[r] \ar[dd, dotted, bend right = 50, "{(1,\psi)}"'] \ar[rrr, dotted, bend right = 20, "{(\alpha,1)}"] & (\alpha^*T, \langle i, \pm \rangle \cdot \alpha) & & (T, \langle \rangle)  \ar[r] \ar[dd, bend right = 50, dotted, "{(1,\theta)}"'] & (T, \langle i, \pm \rangle )   \\
    (\alpha^*S, \psi) \ar[d] \ar[r] & (\alpha^*S, \langle i, \pm \rangle \cdot \alpha * \psi) \ar[d] & & (S, \theta) \ar[d] \ar[r] & (S,  \langle i, \pm \rangle * \theta) \ar[d] \\
    (\alpha^*T, \psi)  \ar[rrr, dotted, bend right, "{(\alpha,\sigma)}"] \ar[r] & (\alpha^*T, \langle i, \pm \rangle \cdot \alpha * \psi)  & & (T, \theta) \ar[r] & (T, \langle i, \pm \rangle * \theta)
  \end{tikzcd}
\end{displaymath}
Recall from \refrema{onsmallmouldcubes} that the dotted arrows indicate that the squares are connected by small mould cubes and note that the small mould cubes determined by the dotted arrows going down have pushouts at their bottom faces. We are given a map $(T, \theta) \to Y$ and asked to compare the induced maps $(T, \langle i, \pm \rangle * \theta) \to Y$ and $(\alpha^* T, \langle i, \pm \rangle \cdot \alpha * \psi) \to Y$. The previous lemma tells us that both induced maps can be computed by taking the induced maps $(\alpha^*T, \langle i, \pm \rangle \cdot \alpha) \to Y$ and $(T, \langle i, \pm \rangle) \to Y$ and then pushing these down. So if the square on the top right is $\smallmap{S}$-stable, then so is the square on the bottom right.
\end{proof}

\begin{lemm}{indliftwithmaximalsievecodomainnice} Suppose $S \subseteq T \subseteq \Delta^m$ are cofibrant sieves, $\alpha: \Delta^n \to \Delta^m$ is monic and
\begin{displaymath}
  \begin{tikzcd}
    R \ar[d] \ar[r] & S \ar[d] \\
    \Delta^n \ar[r, "\alpha"] & T
  \end{tikzcd}
\end{displaymath}
is bicartesian. Suppose moreover that we have situation as follows:
\begin{displaymath}
  \begin{tikzcd}
    & (S,\langle \rangle) \ar[dd] \ar[rr] & & (S,\theta) \ar[rr] \ar[dd] & & Y \ar[dd, "p"] \\
    (R, \langle \rangle) \ar[ur] \ar[dd] \ar[rr, crossing over] & &  (R,\theta \cdot \alpha) \ar[ur] \\
    & (T, \langle \rangle) \ar[rr] \ar[dotted, uurrrr] & & (T,\theta) \ar[rr] & & X \\
    (\Delta^n, \langle \rangle) \ar[rr] \ar[ur] & & (\Delta^n, \theta \cdot \alpha), \ar[ur] \ar[from = uu, crossing over]
  \end{tikzcd}
\end{displaymath}
where the cube is a base change cube along $(\alpha,\langle \rangle): (\Delta^n, \langle \rangle) \to (T, \langle \rangle)$. Then the induced lifts $(T, \theta) \to Y$ and $(\Delta^n, \theta \cdot \alpha) \to Y$ are compatible. This means that, since the left and right hand faces of the cube are pushouts, the induced lifts determine each other by composition and pushout, respectively.
\end{lemm}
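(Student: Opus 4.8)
The plan is to mirror the proof of \reflemm{indliftswithemptytraversalnice}, which handled the analogous statement for base changes along perpendicular morphisms of the form $(1,\tau)$ (adding extra entries to the traversal in the horizontal direction), but now for base changes along $(\alpha,\langle\rangle)$ with $\alpha$ monic (restricting the sieve to a subsieve in the vertical direction). The key structural input is the same: in the triple category $\mathbb{M}$ the morphisms of HDRs in the $xz$- and $yz$-planes are cartesian (\refrema{cartmorphismsinimageofM}), so base change cubes have pushouts on their relevant faces, and the lifts against small mould squares are unambiguous and automatically satisfy the horizontal and vertical conditions once they come from fillers against horn squares.

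First I would reduce to the case where $T \setminus S$ consists of a single $m$-simplex $\beta$ whose boundary lies in $S$ (a one-step situation in the vertical direction), by decomposing the inclusion $S \subseteq T$ into a chain of such steps and using that the induced lift is computed stepwise down a grid; compatibility for each one-step piece then assembles to the general case. Pulling back along $\alpha$, the sieve $R = \alpha^* S$ sits inside $\Delta^n$ and $\Delta^n \setminus R$ consists of those simplices of $\Delta^n$ mapping onto $\beta$ under $\alpha$; since $\alpha$ is monic there is at most one such (namely $\beta$ itself, when $\beta$ factors through $\alpha$), so the base change cube is again one-step in the vertical direction — or trivial there, in which case the statement is vacuous. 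In the non-trivial case both the front face $(\Delta^n,\langle\rangle) \to (\Delta^n,\theta\cdot\alpha)$ over $(R,\langle\rangle) \to (R, \theta\cdot\alpha)$ and the back face $(S,\langle\rangle) \to (S,\theta)$ over $(T,\langle\rangle)\to(T,\theta)$ are built, via the bicartesian square $\partial\Delta^{m'} \subseteq \Delta^{m'}$ with $m' = \dim\beta$, from a common cube whose front face is either a horn square or horizontally trivial, exactly as in the third reduction in the proof of \reflemm{hornsqrsdetermineall}. Hence the induced lifts agree because they are computed from the \emph{same} lift against that common (horn or trivial) square.

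For the inductive step on the length of $\theta$ I would write $\theta = \sigma * \rho$ with $\sigma$ of length one and stack the relevant small mould squares into a grid, as in \reflemm{indliftswithemptytraversalnice}: compute the induced lift $(\Delta^n, \sigma\cdot\alpha) \to Y$ and $(T,\sigma) \to Y$ using the length-one case just established, then propagate downward through the $\rho$-part using the induction hypothesis, invoking the grid lemmas (\reflemm{unambiguousliftsforsmallmouldsq} and the remarks preceding it) to see that the order of pushforwards does not matter. The statement about pushouts at the left and right faces of the cube — which lets one recover either induced lift from the other by composition and pushout — then follows because the morphisms of HDRs in the $xz$-plane of $\mathbb{M}$ are cartesian and the vertical inclusions $\partial\Delta^{m'}\subseteq\Delta^{m'}$ are bicartesian, so the cube restricts a pushout square to a pushout square.

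\textbf{Main obstacle.} The bookkeeping is the real difficulty: one must be careful that, after pulling back along $\alpha$, the "sieve increases by one $m$-simplex" step may genuinely turn into a zero-step (empty) or one-step change, and verify in each case that the common cube identified above really does have a horn square or a horizontally trivial square as its front face — this is where monicity of $\alpha$ is used crucially. I expect no new ideas beyond those already deployed in \reflemm{indliftswithemptytraversalnice}, \reflemm{firstredforstability}, and \reflemm{hornsqrsdetermineall}; the work is in combining the vertical one-step reduction with the horizontal induction without circularity, which is why the excerpt (sensibly) omits the routine parts.
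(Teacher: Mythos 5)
Your proposal is correct and follows essentially the same route as the paper's proof: a one-simplex-at-a-time reduction in the vertical direction (where, by bicartesianness, each one-step piece is forced to be of the form $\partial \Delta^{m'} \subseteq \Delta^{m'}$ with the added simplex equal to the monic map itself, so back and front lifts are driven by the same horn-square data), combined with an induction on the length of $\theta$ handled by the grid lemmas, exactly as in the $k=1$ case of the paper. The only difference is organizational: the paper runs an explicit induction on the number of simplices in $T \setminus S$, peeling off one simplex via $S \subseteq S' \subseteq T$ and invoking the $k=1$ case plus the inductive hypothesis, which amounts to the same stepwise assembly you describe.
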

\begin{proof}
We prove this by induction on the number $k$ of simplices in $T$ but not in $S$ (which coincides with the number of simplices in $\Delta^n$ but not in $R$). Note that the case $k = 0$ is trivial.

In case $k = 1$, we have $R = \partial \Delta^n$. We prove the desired statement by induction on the length of $\theta$. Note that because $\alpha$ is monic, $\theta \cdot \alpha$ cannot have greater length than $\theta$. The case $\theta = \langle \rangle$ is again trivial, while the case where $\theta$ has length 1 follows immediately from the way the lifts for horn squares induce lifts for one-step mould square starting from the empty traversal. Now write $\theta = \tau * \sigma$ where $\tau$ has length 1 and consider:
\begin{displaymath}
  \begin{tikzcd}
    & (R, \langle \rangle)  \ar[d] \ar[r]  & (R, \tau \cdot \alpha) \ar[d] \\
    & (\Delta^n, \langle \rangle) \ar[r] \ar[bend right = 50, dd, pos =.3, dotted, "{(1, \sigma \cdot \alpha)}"'] & (\Delta^n, \tau \cdot \alpha)  \\
    (R, \langle \rangle)  \ar[d] \ar[r] & (R, \sigma \cdot \alpha) \ar[d] \ar[r]  & (R, \theta \cdot \alpha) \ar[d]  \\
    (\Delta^n, \langle \rangle) \ar[r] \ar[bend right = 50, dd, dotted, "{(\alpha, \langle \rangle)}"'] & (\Delta^n, \sigma \cdot \alpha)  \ar[r] & (\Delta^n, \theta \cdot \alpha)  \\
    (S, \langle \rangle) \ar[d] \ar[r] & (S, \sigma) \ar[d] \ar[r] & (S, \theta) \ar[d] \ar[r] & Y \ar[d, "p"] \\
    (T, \langle \rangle) \ar[r] & (T,  \sigma) \ar[r] & (T, \theta) \ar[r] & X
  \end{tikzcd}
\end{displaymath}
We should imagine that we are given a map $(T,\langle \rangle) \to Y$ and we want to push it forward to a map $(T, \theta) \to Y$. Now, by induction hypothesis, the fact that the statement holds in case $\theta$ has length 1 (and 0), and the earlier lemmas about grids, we can compute this by taking the induced lift $(\Delta^n, \sigma \cdot \alpha) \to Y$, pushing it down to a map $(T, \sigma) \to Y$, then taking the induced lift $(\Delta^n, \tau \cdot \alpha) \to Y$ and then pushing that down to a map $(T, \theta) \to Y$. But the latter can be done in two steps: pushing it down to $(\Delta^n, \theta \cdot \alpha) \to Y$ and then pushing it further down to $(T, \theta) \to Y$. This means it coincides with taking the induced lift $(\Delta^n, \theta \cdot \alpha) \to Y$ and pushing that down.

Having proved the statement for $k = 1$, we now do the induction step. So write $S \subseteq S' \subseteq T$ where $S'$ is obtained from $S$ by adding one simplex, so that we have a picture as follows:
\begin{displaymath}
  \begin{tikzcd}
    \partial \Delta^{n'} \ar[r] \ar[d] & R \ar[r] \ar[d] & S \ar[d] \\
    \Delta^{n'} \ar[r]  \ar[dr, "{\alpha'}"'] & R' \ar[r] \ar[d]  & S' \ar[d] \\
    &  \Delta^n \ar[r, "\alpha"] & T
  \end{tikzcd}
\end{displaymath}
in which all squares are bicartesian. This gives us the following situation:
\begin{displaymath}
  \begin{tikzcd}
    (\partial \Delta^{n'}, \langle \rangle) \ar[d] \ar[r] & (\partial \Delta^{n'}, \theta \cdot \alpha \alpha') \ar[d]  \\
    (\Delta^{n'}, \langle \rangle) \ar[r] \ar[dd, dotted, bend right = 50, "{(\alpha', \langle \rangle)}"'] & (\Delta^{n'}, \theta \cdot \alpha\alpha') \\
    (R, \langle \rangle) \ar[d] \ar[r] & (R, \theta \cdot \alpha) \ar[d]  \\
    (R', \langle \rangle) \ar[d] \ar[r] & (R',\theta \cdot \alpha) \ar[d] \\
    (\Delta^n, \langle \rangle) \ar[r] \ar[bend right = 50, ddd, dotted, "{(\alpha, \langle \rangle)}"']  & (\Delta^n, \theta \cdot \alpha)  \\
    (   S, \langle \rangle) \ar[d]  \ar[r] & (S, \theta)  \ar[d]  \ar[rr] & & Y \ar[dd, "p"]  \\
    (S', \langle \rangle) \ar[d] \ar[r] & (S', \theta) \ar[d] \\
    (T, \langle \rangle) \ar[r] & (T, \theta)  \ar[rr] & & X
  \end{tikzcd}
\end{displaymath}
So if we have a map $(T, \langle \rangle) \to Y$ and we wish to push it forward to $(T, \theta) \to Y$, then we can do this in two steps: first we can compute $(S', \theta) \to Y$ and then compute $(T, \theta) \to Y$. The former we can compute by finding the lift $(\Delta^{n'}, \theta \cdot \alpha \alpha') \to Y$ and then pushing it down, which also can be done in two steps, yielding a map $(R', \theta \cdot \alpha) \to Y$ and then a map $(S', \theta) \to Y$. Given this map $(S', \theta) \to Y$, the induction hypothesis tells us that we can compute the desired map $(T, \theta) \to Y$ from the map $(R',\theta \cdot \alpha) \to Y$ we computed along the way by taking its induced lift $(\Delta^n, \theta \cdot \alpha) \to Y$ and then pushing that down. In other words, we take the induced map $(\Delta^n, \theta \cdot \alpha) \to Y$ and then push that down, thus showing the induction step.
\end{proof}

\begin{lemm}{secondredforstability}
If all horn squares are $\smallmap{S}$-stable, then so are all one-step mould squares starting from the empty traversal.
\end{lemm}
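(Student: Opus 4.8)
The plan is to reduce a one-step mould square starting from the empty traversal $(S,\langle\rangle)\to(S,\langle i,\pm\rangle)$, $(S,\langle\rangle)\to(T,\langle\rangle)$, to a situation where the vertical direction is a single added $m$-simplex $\alpha\colon\Delta^m\to\Delta^n$ with boundary already in $S$. As established in the proof of \reflemm{hornsqrsdetermineall}, such a square sits at the back of a small mould cube whose right-hand face is a pushout, with front face the image under $\mathbb{M}$ of either a horn square (when $i$ lies in the image of $\alpha$) or a square that is trivial in the horizontal direction (when $i\notin\mathrm{Im}(\alpha)$). In the trivial case there is nothing to prove, since the lift is determined by the evident degeneracy-free data. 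So I would assume we are in the horn-square case, and the one-step mould square in question is (up to the pushout reduction) a horn square.

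Next I would analyse what happens when we base-change such a one-step mould square along an arrow $(\beta,\tau)$ with $\beta\in\smallmap{S}$. The key point is that the base change of a horn square need not be a horn square: both the sieve inclusion and the traversal can increase by more than one step, and the sieve codomain need not remain maximal. However, using \reflemm{indliftswithemptytraversalnice} and \reflemm{indliftwithmaximalsievecodomainnice}, the induced lift against any small mould square of the relevant shape factors through the induced lifts against the corresponding squares that do start from the empty traversal and do have maximal-sieve codomain after an appropriate pullback along a monomorphism. Concretely, I would take the base-change cube along $(\beta,\tau)$, decompose $\tau$ as $\psi*(\text{the part contributed by the codomain traversal})$, factor $\beta$ as a monomorphism followed by the relevant structure, and invoke \reflemm{indliftswithemptytraversalnice} to move the traversal part out and \reflemm{indliftwithmaximalsievecodomainnice} to move the sieve part out. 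This reduces the compatibility of the induced lift of a horn square with the induced lift of its base change to the compatibility of the lifts against horn squares under the residual monomorphism-indexed pullbacks — which is precisely the hypothesis that all horn squares are $\smallmap{S}$-stable.

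The main obstacle I expect is bookkeeping: when we pull back a horn square along $(\beta,\tau)$ with $\beta\in\smallmap{S}$, we must carefully track how the horn inclusion $\Lambda^{n+1}_{i\text{ or }i+1}\subseteq\Delta^{n+1}$ transforms, and verify that the front face of the resulting base-change cube decomposes — via a grid of one-step mould squares — into pieces each of which is either again a horn square (pulled back along the monos that the hypothesis already covers) or is trivial in one of the two directions. This is the same pattern of ``pullback of a horn inclusion is a union of horn inclusions and identities'' that appeared in the proof of \reftheo{uniformtrivfibintermsofboundaryincl}; here it is complicated by the extra traversal coordinate, but the two lemmas just cited are exactly designed to absorb that complication. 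I would organise the argument by first handling $\beta$ a face map and then $\beta$ a degeneracy map (since every map in $\mathbf{\Delta}$ factors as a composite of these, and $\smallmap{S}$-stability is visibly closed under such composites by pasting base-change cubes), and within each case reduce step by step to horn squares as above.

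Finally, with $\smallmap{S}$-stability of one-step mould squares starting from the empty traversal in hand, \reflemm{firstredforstability} upgrades this to $\smallmap{S}$-stability of all one-step mould squares, and the grid argument (together with \reflemm{unambiguousliftsforsmallmouldsq}, which guarantees the induced lifts against small mould squares are unambiguous) upgrades it to all small mould squares. Thus, once we know the chosen fillers for the horn squares are $\smallmap{S}$-stable, the perpendicular (base change) condition of \refdefi{tripleCatRightLiftingStructure} is satisfied for all of $\mathbb{M}$, which — combined with the automatic horizontal and vertical conditions noted just before this lemma — will give the desired extension to a unique effective Kan fibration structure in the subsequent theorem.
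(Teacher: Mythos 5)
Your overall strategy is the paper's: relate the one-step mould square starting from the empty traversal to the horn square attached to the single $m$-simplex $\beta\colon\Delta^m\to\Delta^n$ added to $S$, use \reflemm{indliftwithmaximalsievecodomainnice} to compute both the original and the base-changed induced lifts from representable-level lifts by pushing down, and then invoke the hypothesis. However, the proposal fudges exactly the step that makes this work: identifying the residual base change to which the hypothesis is applied. When the square is pulled back along $(\alpha,\langle\rangle)$ with $\alpha\in\smallmap{S}$ (note the traversal component of the perpendicular map is forced to be empty here, so your decomposition of $\tau$ and the appeal to \reflemm{indliftswithemptytraversalnice} belong to \reflemm{firstredforstability}, not to this lemma), the correct move is to form the pullback in $\mathbf{\Delta}$ of the monomorphism $\beta$ against $\alpha$, obtaining $\alpha'\colon[m']\to[m]$ which again lies in $\smallmap{S}$ --- this is the one place where the standing assumption that $\smallmap{S}$ is closed under pullback along monos is actually used --- and then to observe, via \reflemm{indliftwithmaximalsievecodomainnice}, that the induced lift for the original square is the pushforward of the chosen lift for the horn square over $\Delta^m$, while the induced lift for the base-changed square is the pushforward of the lift for that same horn square base-changed along $(\alpha',\langle\rangle)$; $\smallmap{S}$-stability of horn squares then gives exactly the required compatibility. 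Your phrase ``the residual monomorphism-indexed pullbacks --- which is precisely the hypothesis'' misidentifies this: the residual map $\alpha'$ is an $\smallmap{S}$-map and in general not a monomorphism (for $\smallmap{S}$ the degeneracies, the case that matters for \reftheo{unifKanfibrinHornsquares}, it is a degeneracy), and the hypothesis says nothing about stability along monomorphisms as such.

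A related structural slip: you propose to case-split on the base-change map being a face or a degeneracy and to compose the two cases. The lemma is stated relative to one fixed class $\smallmap{S}$ closed under pullback along monos, and its proof is uniform in $\smallmap{S}$; the factorisation of a general map of $\mathbf{\Delta}$ into faces and degeneracies, together with the pasting of base-change cubes, belongs to the application after \refprop{1sthornsqdef}, not to this lemma. None of this requires a new idea to repair --- the ingredients you cite are the right ones --- but as written the proposal never exhibits the $\smallmap{S}$-base change of a horn square to which the hypothesis is applied, and that identification is the substance of the proof.
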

\begin{proof}
Suppose we have a one-step mould square starting from the empty traversal, like
\begin{displaymath}
  \begin{tikzcd}
    (S, \langle \rangle) \ar[d] \ar[r] & (S, \langle i, \pm \rangle) \ar[d] \\
    (T, \langle \rangle) \ar[r] & (T, \langle i, \pm \rangle)
  \end{tikzcd}
\end{displaymath}
which we want to pull it back along some map, say $(\alpha, \langle \rangle)$ with $\alpha \in \smallmap{S}$ (note that the second component has to be the empty traversal). Let $\beta: [m] \to [n]$ be the $m$-simplex which we need to add to $S$ to obtain $T$, and consider the pullback:
\begin{displaymath}
  \begin{tikzcd}
    \Delta^{m'} \ar[r, "{\alpha'}"] \ar[d, "{\beta'}"] \ar[dr, "\gamma"] & \Delta^m \ar[d, "\beta"] \\
    \Delta^{n'} \ar[r, "\alpha"] & \Delta^n
  \end{tikzcd}
\end{displaymath}
(note that $\beta$ is monic, so this pullback exists provided the images of $\alpha$ and $\beta$ have some overlap: which we may assume without loss of generality, because otherwise $\alpha^* S = \alpha^* T$ and the stability condition is trivially satisfied). Note that $\alpha' \in \smallmap{S}$.

We again get four mould squares, where the dotted arrows again indicate some base changes:
\begin{displaymath}
  \begin{tikzcd}
    (\gamma^* S, \langle \rangle) \ar[d] \ar[r] & (\gamma^*S, \langle i, \pm \rangle \cdot \gamma) \ar[d] & & (\partial \Delta^m, \langle \rangle) \ar[d] \ar[r] & (\partial \Delta^m, \langle i, \pm \rangle \cdot \beta) \ar[d] \\
    (\Delta^{m'}, \langle \rangle) \ar[r] \ar[dd, bend right = 50, dotted, "{(\beta',\langle \rangle)}"'] \ar[rrr, dotted, bend right = 20, "{(\alpha',\langle \rangle)}"] & (\Delta^{m'}, \langle i, \pm \rangle \cdot \gamma) & & (\Delta^m, \langle \rangle)  \ar[r] \ar[dd, bend right = 50, dotted, "{(\beta,\langle \rangle)}"'] & (\Delta^m, \langle i, \pm \rangle \cdot \beta)   \\
    (\alpha^*S, \langle \rangle) \ar[d] \ar[r] & (\alpha^*S, \langle i, \pm \rangle \cdot \alpha) \ar[d] & & (S, \langle \rangle) \ar[d] \ar[r] & (S, \langle i, \pm \rangle) \ar[d] \\
    (\alpha^*T, \langle \rangle)  \ar[rrr, bend right = 20, dotted, "{(\alpha,\langle \rangle)}"] \ar[r] & (\alpha^*T, \langle i, \pm \rangle \cdot \alpha)  & & (T, \langle \rangle) \ar[r] & (T,  \langle i, \pm \rangle)
  \end{tikzcd}
\end{displaymath}
Note that the base change cubes for the arrows going down have pushouts as their left and right faces. We are given a map $(T, \langle \rangle) \to Y$ and asked to compare the induced maps $(T, \langle i, \pm \rangle) \to Y$ and $(\alpha^* T,  \langle i, \pm \rangle \cdot \alpha) \to Y$. The previous lemma tells us that both induced maps can be computed by first taking the induced maps $(\Delta^{m'}, \langle i, \pm \rangle \cdot \gamma) \to Y$ and $(\Delta^{m}, \langle i, \pm \rangle \cdot \beta) \to Y$ and then pushing them down. So if the square on the top right is $\smallmap{S}$-stable, then so is the square on the bottom right.
\end{proof}

From \reflemm{firstredforstability} and \reflemm{secondredforstability} we deduce:

\begin{prop}{1sthornsqdef}
If we equip a map $p: Y \to X$ with lifts against horn squares which are $\smallmap{S}$-stable, then all the induced lifts against small mould squares will be $\smallmap{S}$-stable.
\end{prop}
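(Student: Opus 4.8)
The proof will essentially be an assembly of the reduction lemmas already proved in this section, so the plan is to first settle the one-step mould squares and then transport $\smallmap{S}$-stability through grids. The first step is to observe that combining \reflemm{secondredforstability} with \reflemm{firstredforstability} immediately gives that, as soon as the chosen lifts against horn squares are $\smallmap{S}$-stable, every one-step mould square is $\smallmap{S}$-stable: \reflemm{secondredforstability} upgrades $\smallmap{S}$-stability from horn squares to one-step mould squares starting from the empty traversal, and \reflemm{firstredforstability} upgrades it from those to all one-step mould squares, with the class $\smallmap{S}$ unchanged throughout. Note that the notion of $\smallmap{S}$-stability of a one-step mould square already refers to its (possibly multi-step) base changes, both in the sieve and in the traversal direction, so no extra bookkeeping is needed for that here, and the base changes occurring in those lemmas are general small mould squares, whose induced lifts are well defined by the discussion preceding \reflemm{unambiguousliftsforsmallmouldsq}.

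Next I would reduce the general case to the one-step case. Given an arbitrary small mould square $\mathcal{M}$, with vertical inclusion of cofibrant sieves $S \subseteq T \subseteq \Delta^n$ and horizontal traversal extension, decompose $S \subseteq T$ as a chain of single-simplex additions and the traversal extension as a chain of single-entry additions, obtaining a grid decomposition of $\mathcal{M}$ into one-step mould squares $\mathcal{M}_{a,b}$, exactly as in the proof of \reflemm{hornsqrsdetermineall}. By the well-definedness results summarised above \reflemm{unambiguousliftsforsmallmouldsq} together with \reflemm{unambiguousliftsforsmallmouldsq} itself, the induced lift against $\mathcal{M}$ is obtained by taking, in lexicographic order through the grid, the chosen lifts against the $\mathcal{M}_{a,b}$, independently of the chosen decomposition.

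Now take a base-change cube over $\mathcal{M}$ along a perpendicular morphism $(\alpha,\tau)$ with $\alpha \in \smallmap{S}$, producing a small mould square $\mathcal{M}'$. Using that $\smallmap{S}$ is closed under the relevant pullbacks (its defining hypothesis) and that $\theta \cdot \alpha$ and $\alpha^{*}S$ are computed fibrewise, one checks that a grid decomposition of $\mathcal{M}'$ can be chosen whose cells are one-step mould squares each obtained, along a perpendicular morphism whose $\mathbf{\Delta}$-component again lies in $\smallmap{S}$, as an (iterated) base change of one of the $\mathcal{M}_{a,b}$ — this is exactly the cell-by-cell matching already performed inside the proofs of \reflemm{firstredforstability} and \reflemm{secondredforstability}, now done simultaneously for all cells. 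Since each $\mathcal{M}_{a,b}$ is $\smallmap{S}$-stable by the first step and base changes compose, the chosen lift against each cell of $\mathcal{M}'$ is the composite of the chosen lift against the corresponding $\mathcal{M}_{a,b}$ with the appropriate structure map; an induction over the grid in lexicographic order, comparing induced lifts across the pushout faces of the base-change cubes as in \reflemm{indliftswithemptytraversalnice} and \reflemm{indliftwithmaximalsievecodomainnice}, then shows that the induced lift against $\mathcal{M}'$ is the composite of the induced lift against $\mathcal{M}$ with $\mathcal{M}' \to \mathcal{M}$, which is precisely $\smallmap{S}$-stability of $\mathcal{M}$.

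The main obstacle is this last matching: base change along $(\alpha,\tau)$ may split a single vertical step (the addition of one simplex) into several or collapse it entirely, and in the traversal direction it replaces a length-one entry $\langle i,\pm\rangle$ by $\langle i,\pm\rangle\cdot\alpha$, which can have several or zero entries, so one must be sure the grid of $\mathcal{M}'$ genuinely refines the pulled-back grid of $\mathcal{M}$ rather than interleaving with it. This is exactly the subtlety that \reflemm{firstredforstability} and \reflemm{secondredforstability} were designed to isolate; with them in hand the remaining argument is a routine induction over the grid, of the same flavour as the grid arguments used repeatedly earlier in this section, and I would not expect to spell out the combinatorics again.
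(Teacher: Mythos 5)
Your proposal is correct and follows essentially the same route as the paper: the paper's proof is exactly the combination of \reflemm{secondredforstability} and \reflemm{firstredforstability} to get $\smallmap{S}$-stability of all one-step mould squares, together with the grid argument (stated just after the definition of $\smallmap{S}$-stability, and resting on the well-definedness and compositionality of induced lifts) showing that stability of every cell in a grid gives stability of the whole small mould square. Your extra worry about matching the grid of the pulled-back square with the pulled-back grid is not really needed: one can work directly with the pulled-back grid, whose cells are general small mould squares with well-defined induced lifts, so the cell-by-cell comparison goes through exactly as the paper takes for granted.
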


What does this mean for effective Kan fibrations? To equip a map $p: Y \to X$ with the structure of an effective Kan fibration, it will be (necessary and) sufficient to find lifts against horn squares so that the induced lifts against small mould squares are $\smallmap{S}$-stable for both $\smallmap{S} = \{ \mbox{ face maps } \} \cup \{ \mbox{ identities} \}$ and $\smallmap{S} = \{ \mbox{ degeneracy maps } \} \cup \{ \mbox{ identities} \}$. So the proposition tells us that we need to find lifts against horn squares which are stable relative to both classes. But lifts against horn squares are always stable relative to the first class (faces plus identities), because $d_i^* \partial \Delta^n$ is always the maximal sieve. So we have:

\begin{theo}{defunifkanfibrhornsqrs}
  The following notions of fibred structure are isomorphic:
  \begin{itemize}
    \item Being an effective Kan fibration.
    \item To assign to each map all systems of lifts against horn squares which are stable along degeneracy maps.
  \end{itemize}
\end{theo}

\begin{rema}{isomfordfcdoublecatsagain}
  It should be clear that the second notion of fibred structure also naturally arises as the vertical maps in a discretely fibred concrete double category. Then this concrete double category is isomorphic to the one of given by the effective Kan fibrations: indeed, fullness on squares can be shown as in \reflemm{hornsqrsdetermineall}.
\end{rema}

Let us now try to unwind what that means concretely: lifts for horn squares which are stable along degeneracies map. First of all, for each $n$ there are $2(n+1)$ horn squares as follows:
\begin{displaymath}
  \begin{tikzcd}
    ( \partial \Delta^n, \langle \rangle) \ar[d] \ar[r] & (\partial \Delta^n, \langle i, \pm \rangle) \ar[d] \\
    (\Delta^n, \langle \rangle) \ar[r] & (\Delta^n, \langle i, \pm \rangle),
  \end{tikzcd}
\end{displaymath}
and these can be pulled back along $s_j: \Delta^{n+1} \to \Delta^n$. The case where $j = i$ is special and we will postpone discussion of that case.

In case $j \not= i$ we have the following cartesian morphism of HDRs:
\begin{displaymath}
  \begin{tikzcd}
    \Delta^{n+1} \ar[d, "{s_j}"] \ar[rr, "{d_{i^*}/d_{i^*+1}}"] & & \Delta^{n+2} \ar[d, "{s_{j^*}}"] \ar[r, "{s_{i^*}}"] & \Delta^{n+1} \ar[d, "{s_j}"] \\
    \Delta^n \ar[rr, "{d_i/d_{i+1}}"] & & \Delta^{n+1} \ar[r, "{s_i}"] & \Delta^n
  \end{tikzcd}
\end{displaymath}
where $i^* = i + 1$ if $j \lt i$ and $i^* = i$ if $j \gt i$, while $j^* = j$ if $j \lt i$ and $j^* = j + 1$ if $j \gt i$. This means that if we pull back the horn square above along $s_j$, then we obtain a vertical composition of three one-step mould squares, as follows:
\begin{displaymath}
  \begin{tikzcd}
    (\partial \Delta^n, \langle \rangle) \ar[r] \ar[d] & (\partial \Delta^n, \langle i, \pm \rangle) \ar[d] & (S^{n+1}_j, \langle \rangle) \ar[d] \ar[r] & (S^{n+1}_j, \langle i^*, \pm \rangle) \ar[d]  \\
    (\Delta^n, \langle \rangle) \ar[r] \ar[rr, dotted, bend right = 13, "{d_{j+1}}"'] \ar[ddddr, dotted, bend right = 80, "1"'] & (\Delta^n, \langle i, \pm \rangle) & (\Lambda^{n+1}_j, \langle \rangle) \ar[r] \ar[d] & (\Lambda^{n+1}_j, \langle i^*, \pm \rangle) \ar[d] \\
    (\partial \Delta^n, \langle \rangle) \ar[r] \ar[d] & (\partial \Delta^n, \langle i, \pm \rangle) \ar[d] & (\partial \Delta^{n+1}, \langle \rangle) \ar[d] \ar[r] & (\partial \Delta^{n+1}, \langle i^*, \pm \rangle) \ar[d] \\
    (\Delta^n, \langle \rangle) \ar[r] \ar[urr, dotted, bend right = 30, "{d_j}"'] \ar[bend right, dotted, ddr, "1"] & (\Delta^n, \langle i, \pm \rangle) & (\Delta^{n+1}, \langle \rangle) \ar[r] \ar[dotted, ddl, pos = .3, "{s_j}"'] & (\Delta^{n+1}, \langle i^*, \pm \rangle) \\
    & ( \partial \Delta^n, \langle \rangle) \ar[d] \ar[r] & (\partial \Delta^n, \langle i, \pm \rangle) \ar[d] \ar[r] & Y \ar[d, "p"] \\
    & (\Delta^n, \langle \rangle) \ar[r] & (\Delta^n, \langle i, \pm \rangle) \ar[r] & X.
  \end{tikzcd}
\end{displaymath}
Here we have used the abbreviation $S^{n+1}_j := s_j^* \partial \Delta^n = \Lambda^{n+1}_{j, j+1} \cup (d_j \cap d_{j+1})$ (that is, it is $\Delta^{n+1}$ with the interior as well as the $j$th and $(j+1)$st faces missing). Note that our recipe for computing the lifts against the first two squares in the vertical composition on the top right in the diagram tells us to solve the original lifting problem (because $s_j.d_j = s_j.d_{j+1} = 1$). In other words, we can phrase the compatibility condition\index{compatibility condition} for the case $j \not= i$ as follows: if $f: (\Delta^n, <i, \pm>) \to Y$ is our chosen solution to the lifting problem
\begin{displaymath}
  \begin{tikzcd}
    ( \partial \Delta^n, \langle \rangle) \ar[d] \ar[r] & (\partial \Delta^n, \langle i, \pm \rangle) \ar[d] \ar[r, "y"] & Y \ar[d, "p"] \\
    (\Delta^n, \langle \rangle) \ar[r] \ar[dotted, urr, "g", near start] & (\Delta^n, \langle i, \pm \rangle) \ar[r, "x"]  & X,
  \end{tikzcd}
\end{displaymath}
then our chosen solution $(\Delta^{n+1}, <i^*, \pm>) \to Y$ to the lifting problem
\begin{displaymath}
  \begin{tikzcd}
    ( \partial \Delta^{n+1}, \langle \rangle) \ar[d] \ar[r] & (\partial \Delta^{n+1}, \langle i^*, \pm \rangle) \ar[d] \ar[r, "{y'}"] & Y \ar[d, "p"] \\
    (\Delta^{n+1}, \langle \rangle) \ar[r] \ar[urr, dotted, "{g.s_j}", near start] & (\Delta^{n+1}, \langle i^*, \pm \rangle) \ar[r, "{x.s_{j^*}}"] & X
  \end{tikzcd}
\end{displaymath}
should be $f.s_{j^*}$, where $y'$ is the map which is $y \cdot s_{j^*}$ on $S^{n+1}_{j}$ and $f$ on both $d_{j}$ and $d_{j+1}$.

The case $j = i$ is special, because then the pullback of the horn square grows in a horizontal direction as well. In this case it will be convenient to treat the positive and negative case separately. So let do the positive case first. As we have seen in the proof of \reftheo{HgenMoorefib}, we have the following cartesian morphism of HDRs:
\begin{displaymath}
  \begin{tikzcd}
    \Delta^{n+1} \ar[d, "{s_i}"] \ar[r, "{d_i}"] & \Delta^{n+2} \ar[r, "{\iota_2}"] & <(i+1,+),(i,+)> \ar[d, "{[s_i,s_{i+1}]}"] \ar[rr, "{[s_{i+1},s_i]}"] & & \Delta^{n+1} \ar[d, "{s_i}"] \\
    \Delta^n \ar[rr, "{d_i}"] & & \Delta^{n+1} \ar[rr, "{s_i}"] & & \Delta^n.
  \end{tikzcd}
\end{displaymath}
This means that we have a picture as follows in which we have pulled back the horn square below along $s_i$ and decomposed the result into a grid of six one-step mould squares.
\begin{displaymath}
  \begin{tikzcd}
    (S^{n+1}_i, \langle \rangle) \ar[d] \ar[r] & (S^{n+1}_i, \langle i, + \rangle) \ar[d] \ar[r] & (S^{n+1}_i, \langle (i+1, +), (i,+) \rangle) \ar[d] \\
    (\Lambda^{n+1}_i, \langle \rangle) \ar[r] \ar[d] & (\Lambda^{n+1}_i, \langle i, + \rangle) \ar[d] \ar[r] & (\Lambda^{n+1}_i, \langle (i+1, +), (i, +) \rangle) \ar[d] \\
    (\partial \Delta^{n+1}, \langle \rangle) \ar[d] \ar[r] & (\partial \Delta^{n+1}, \langle i, + \rangle) \ar[r] \ar[d] & (\partial \Delta^{n+1}, \langle (i+1, +), (i, +) \rangle) \ar[d] \\
    (\Delta^{n+1}, \langle \rangle) \ar[r] \ar[dd, bend right = 60, dotted, "{s_i}"'] & (\Delta^{n+1}, \langle i, + \rangle) \ar[r] & (\Delta^{n+1}, \langle (i+1, +), (i, +) \rangle) \\
    ( \partial \Delta^n, \langle \rangle) \ar[d] \ar[r] & (\partial \Delta^n, \langle i, + \rangle) \ar[d] \ar[r, "y"] & Y \ar[d, "p"] \\
    (\Delta^n, \langle \rangle) \ar[r] \ar[urr, dotted, "g", near start] & (\Delta^n, \langle i, + \rangle) \ar[r, "x"] & X.
  \end{tikzcd}
\end{displaymath}
Let us first consider the left column in the grid above. Note that the if we pull back the first square along $d_{i+1}$, we get the original square back, while if we pull back the second square along $d_i$, we get a map which trivialises in the horizontal direction. For that reason the left column gives us the following compatibility condition: if $f: (\Delta^n, <i,+>) \to Y$ is our chosen solution to the lifting problem
\begin{displaymath}
  \begin{tikzcd}
    ( \partial \Delta^n, \langle \rangle) \ar[d] \ar[r] & (\partial \Delta^n, \langle i, + \rangle) \ar[d] \ar[r, "y"] & Y \ar[d, "p"] \\
    (\Delta^n, \langle \rangle) \ar[r] \ar[urr, dotted, "g", near start] & (\Delta^n, \langle i, + \rangle) \ar[r, "x"]  & X,
  \end{tikzcd}
\end{displaymath}
then our chosen solution $(\Delta^{n+1},<i,+>) \to Y$ to the lifting problem
\begin{displaymath}
  \begin{tikzcd}
    ( \partial \Delta^{n+1}, \langle \rangle) \ar[d] \ar[r] & (\partial \Delta^{n+1}, \langle i, + \rangle) \ar[d] \ar[r, "{y'}"] & Y \ar[d, "p"] \\
    (\Delta^{n+1}, \langle \rangle) \ar[r] \ar[urr, dotted, "{g.s_i}", near start] & (\Delta^{n+1}, \langle i, + \rangle) \ar[r, "{x.s_{i+1}}"]  & X
  \end{tikzcd}
\end{displaymath}
should be $f.s_{i+1}$, where $y'$ is the map which is $y \cdot s_{i+1}$ on $S^{n+1}_{i}$ and $f$ on $d_{i+1}$.

We now turn to the column on the right. Note that if we pull back the first square along $d_{i+1}$, the square trivialises in the horizontal direction, while if we pull back the second square along $d_i$, we get the original horn square back. Therefore the right hand column gives us the following compatibility condition: if $f: (\Delta^n, <i,+>) \to Y$ is our chosen solution to the lifting problem
\begin{displaymath}
  \begin{tikzcd}
    ( \partial \Delta^n, \langle \rangle) \ar[d] \ar[r] & (\partial \Delta^n, \langle i, + \rangle) \ar[d] \ar[r, "y"] & Y \ar[d, "p"] \\
    (\Delta^n, \langle \rangle) \ar[r] \ar[urr, dotted, "g", near start] & (\Delta^n, \langle i, + \rangle) \ar[r, "x"]  & X,
  \end{tikzcd}
\end{displaymath}
then our chosen solution to the lifting problem
\begin{displaymath}
  \begin{tikzcd}
    ( \partial \Delta^{n+1}, \langle \rangle) \ar[d] \ar[r] & (\partial \Delta^{n+1}, \langle i + 1, + \rangle) \ar[d] \ar[r, "{y'}"] & Y \ar[d, "p"] \\
    (\Delta^{n+1}, \langle \rangle) \ar[r] \ar[urr, dotted, "{g.s_i}", near start] & (\Delta^{n+1}, \langle i+1, + \rangle) \ar[r, "{x.s_{i}}"] & X
  \end{tikzcd}
\end{displaymath}
should be $f.s_{i}$, where $y'$ is the map which is $y \cdot s_{i}$ on $S^{n+1}_i$ and $f$ on $d_i$.

Now let us do the negative case. In that case we have following cartesian morphism of HDRs:
\begin{displaymath}
  \begin{tikzcd}
    \Delta^{n+1} \ar[d, "{s_i}"] \ar[r, "{d_{i+2}}"] & \Delta^{n+2} \ar[r, "{\iota_2}"] & <(i,-),(i+1,-)> \ar[d, "{[s_{i+1},s_{i}]}"] \ar[rr, "{[s_{i},s_{i+1}]}"] & & \Delta^{n+1} \ar[d, "{s_i}"] \\
    \Delta^n \ar[rr, "{d_{i+1}}"] & & \Delta^{n+1} \ar[rr, "{s_i}"] & & \Delta^n.
  \end{tikzcd}
\end{displaymath}
The situation we now have to look at is the one where we pull the horn square at the bottom of diagram below back along $s_i$ and decompose the result into six one-step mould squares.
\begin{displaymath}
  \begin{tikzcd}
    (S^{n+1}_i, \langle \rangle) \ar[d] \ar[r] & (S^{n+1}_i, \langle i+1, - \rangle) \ar[d] \ar[r] & (S^{n+1}_i, \langle (i, -), (i+1,-) \rangle) \ar[d] \\
    (\Lambda^{n+1}_i, \langle \rangle) \ar[r] \ar[d] & (\Lambda^{n+1}_i, \langle i+1, - \rangle) \ar[d] \ar[r] & (\Lambda^{n+1}_i, \langle (i, -), (i + 1, -) \rangle) \ar[d] \\
    (\partial \Delta^{n+1}, \langle \rangle) \ar[d] \ar[r] & (\partial \Delta^{n+1}, \langle i+1, - \rangle) \ar[r] \ar[d] & (\partial \Delta^{n+1}, \langle (i, -), (i+1, -) \rangle) \ar[d] \\
    (\Delta[n+1], \langle \rangle) \ar[r] \ar[dd, bend right = 60, dotted, "{s_i}"'] & (\Delta^{n+1}, \langle i+1, - \rangle) \ar[r] & (\Delta^{n+1}, \langle (i, -), (i+1, -) \rangle) \\
    ( \partial \Delta^n, \langle \rangle) \ar[d] \ar[r] & (\partial \Delta^n, \langle i, - \rangle) \ar[d] \ar[r] & Y \ar[d, "p"] \\
    (\Delta^n, \langle \rangle) \ar[r] & (\Delta^n, \langle i, - \rangle) \ar[r] & X.
  \end{tikzcd}
\end{displaymath}
Both columns in the grid will again determine a compatibility condition and to see what they are, we start of by considering the left hand column. Note that the pullback of the first square along $d_{i+1}$ is trivial in the horizontal direction, while if we pull back the second square along $d_i$, we get our original horn square back. So the compatibility condition becomes this: if $f: (\Delta^n, <i,->) \to Y$ is our chosen solution to the lifting problem
\begin{displaymath}
  \begin{tikzcd}
    ( \partial \Delta^n, \langle \rangle) \ar[d] \ar[r] & (\partial \Delta^n, \langle i, - \rangle) \ar[d] \ar[r, "y"] & Y \ar[d, "p"] \\
    (\Delta^n, \langle \rangle) \ar[r] \ar[urr, dotted, "g", near start] & (\Delta^n, \langle i, - \rangle) \ar[r, "x"] & X,
  \end{tikzcd}
\end{displaymath}
then our chosen solution to the lifting problem
\begin{displaymath}
  \begin{tikzcd}
    ( \partial \Delta^{n+1}, \langle \rangle) \ar[d] \ar[r] & (\partial \Delta^{n+1}, \langle i+1, - \rangle) \ar[d] \ar[r, "{y'}"] & Y \ar[d, "p"] \\
    (\Delta^{n+1}, \langle \rangle) \ar[r] \ar[urr, dotted, "{g.s_i}", near start] & (\Delta^{n+1}, \langle i+1, - \rangle) \ar[r, "{x.s_{i}}"] & X
  \end{tikzcd}
\end{displaymath}
should be $f.s_{i}$, where $y'$ is the map which is $y \cdot s_{i}$ on $S^{n+1}_{i}$ and $f$ on $d_{i}$.

Finally, if we consider the right hand column, then the pullback of the first square along $d_{i+1}$ gives us the original horn square back, while the pullback of the second square along $d_i$ trivialises in the horizontal diagram. Therefore this column yields the following compatibility condition: if $f$ is our chosen solution to the lifting problem
\begin{displaymath}
  \begin{tikzcd}
    ( \partial \Delta^n, \langle \rangle) \ar[d] \ar[r] & (\partial \Delta^n, \langle i, - \rangle) \ar[d] \ar[r, "y"] & Y \ar[d, "p"] \\
    (\Delta^n, \langle \rangle) \ar[r] \ar[urr, dotted, "g", near start] & (\Delta^n, \langle i, - \rangle) \ar[r, "x"] & X,
  \end{tikzcd}
\end{displaymath}
then our chosen solution to the lifting problem
\begin{displaymath}
  \begin{tikzcd}
    ( \partial \Delta^{n+1}, \langle \rangle) \ar[d] \ar[r] & (\partial \Delta^{n+1}, \langle i , - \rangle) \ar[d] \ar[r, "{y'}"] & Y \ar[d, "p"] \\
    (\Delta^{n+1}, \langle \rangle) \ar[r] \ar[urr, dotted, "{g.s_i}", near start] & (\Delta^{n+1}, \langle i, - \rangle) \ar[r, "{x.s_{i+1}}"] & X
  \end{tikzcd}
\end{displaymath}
should be $f.s_{i+1}$, where $y'$ is the map which is $y \cdot s_{i+1}$ on $S^{n+1}_i$ and $f$ on $d_{i+1}$.

To summarise the entire discussion, let us write for each $n \in \mathbb{N}$:
\begin{eqnarray*}
 \mathcal{A}_n & = & \big\{ (i,j, i+1, j) \, : \, i, j \leq n, j \lt i \big\} \\
& \cup & \big\{ (i,j,i,j+1) \, : \, i, j \leq n, j \gt i \big \} \\
& \cup & \big\{ (i,i,i,i+1) \, : \, i \leq n \big\} \\
& \cup & \big\{ (i,i,i+1,i) \, : \, i \leq n \big\}
\end{eqnarray*}

\begin{theo}{unifKanfibrinHornsquares}
  The following notions of fibred structure are isomorphic:
  \begin{itemize}
    \item To be an effective Kan fibration.
    \item To assign to a map $p: Y \to X$ lifts against horn squares, in such a way that for any $n \in \mathbb{N}$, $(i, j, i^*, j^*) \in \mathcal{A}_n$ and $\pm \in \{ +, - \}$: if $f$ is our chosen solution to the lifting problem
    \begin{displaymath}
      \begin{tikzcd}
        ( \partial \Delta^n, \langle \rangle) \ar[d] \ar[r] & (\partial \Delta^n, \langle i, \pm \rangle) \ar[d] \ar[r, "y"] & Y \ar[d, "p"] \\
        (\Delta^n, \langle \rangle) \ar[r] \ar[urr, dotted, "g", near start] & (\Delta^n, \langle i, \pm \rangle) \ar[r, "x"] & X,
      \end{tikzcd}
    \end{displaymath}
    then our chosen solution to the lifting problem
    \begin{displaymath}
      \begin{tikzcd}
        ( \partial \Delta^{n+1}, \langle \rangle) \ar[d] \ar[r] & (\partial \Delta^{n+1}, \langle i^*, \pm \rangle) \ar[d] \ar[r, "{y'}"] & Y \ar[d, "p"] \\
        (\Delta^{n+1}, \langle \rangle) \ar[r] \ar[urr, dotted, "{g.s_j}", near start] & (\Delta^{n+1}, \langle i^*, \pm \rangle) \ar[r, "{x.s_{j^*}}"] & X
      \end{tikzcd}
    \end{displaymath}
    should be $f.s_{j^*}$, where $y'$ is the map which is $y \cdot s_{j^*}$ on $S^{n+1}_{j}$ and $f$ on the faces $d_k$ with $k \in \{ j, j+1 \} - \{ i^* \}$.
  \end{itemize}
\end{theo}

\begin{rema}{hornsqrsnothorns}
The second bullet in the theorem above is really a lifting condition against horn squares, not horns inclusions. We have seen at the beginning of the section that every horn inclusion is induced by some horn square, but for inner horns this horn square is not unique (it is for outer horns). Indeed, if $\Lambda^n_i$ is an inner horn, then it is induced by two horn squares, one coming from $(i, +)$ and one coming from $(i-1,-)$, and our notion of an effective Kan fibration may choose different lifts for these two horn squares.
\end{rema}

\begin{rema}{strengtheningtoisoofdfconcretedoublecat}
  The isomorphism above can again be upgraded to one of discretely fibred concrete double categories in a manner which we will not make precise here.
\end{rema}

A result similar to \reftheo{unifKanfibrinHornsquares} holds for effective right (left) fibrations: this notion of fibred structure is equivalent to having the right lifting property with respect to horn squares with positive (negative) orientation, with the lifts satisfying the compatibility in the second item of that theorem. In fact, since there are no compatibility conditions relating the horn squares with different polarity, we obtain:

\begin{prop}{catprodeffrighthandeffleftiseffKan}
In the category of notions of fibred structure being an effective Kan fibration is the categorical product of being a effective left fibration and being an effective right fibration.
\end{prop}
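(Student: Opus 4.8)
The plan is to exploit the characterisation of effective Kan fibrations in terms of horn squares established in \reftheo{unifKanfibrinHornsquares}, together with the analogous characterisations for effective left and right fibrations that the paper mentions right afterwards. Recall that, by that theorem, to equip a map $p \colon Y \to X$ with the structure of an effective Kan fibration is the same as to choose lifts against \emph{all} horn squares (i.e.\ horn squares of both polarities $+$ and $-$) satisfying, for every $n \in \mathbb{N}$ and every $(i,j,i^*,j^*) \in \mathcal{A}_n$ and every polarity, the stated degeneracy compatibility condition. Meanwhile, the structure of an effective right fibration corresponds to the same data restricted to the horn squares of positive polarity (with the compatibility condition for the positive instances only), and an effective left fibration structure corresponds to the data for the negative polarity. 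The point I would make is that the index set $\mathcal{A}_n$ splits as a disjoint union of a ``positive part'' and a ``negative part'', and moreover the compatibility condition at a positive instance $(i,j,i^*,j^*)$ involves only lifts against positive horn squares, and likewise for negative instances. This is exactly what it means for there to be no compatibility conditions relating horn squares of different polarity, which is recorded in \refrema{hornsqrsnothorns} and in the discussion preceding \reftheo{unifKanfibrinHornsquares}.

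Concretely, I would first set up the three notions of fibred structure as discretely fibred concrete double categories (or equivalently as presheaves on $\E^\to_{\rm cart}$ for $\E = \widehat{\mathbf{\Delta}}$): $\effRFib$ for effective Kan fibrations, and $\mathbf{effLFib}$, $\mathbf{effRFib}$ (in the sense of \refrema{onleftandrightfibr}) for the left and right variants. The claim to prove is that $\effRFib \cong \mathbf{effLFib} \times \mathbf{effRFib}$ in the category of notions of fibred structure. I would construct the obvious morphisms: the two projection-like ``forgetful'' maps $\effRFib \to \mathbf{effLFib}$ and $\effRFib \to \mathbf{effRFib}$ obtained by discarding, respectively, the positive or the negative half of the horn-square data, and conversely a pairing map $\mathbf{effLFib} \times \mathbf{effRFib} \to \effRFib$ which, given a negative-polarity lifting structure $\ell^-$ and a positive-polarity lifting structure $\ell^+$, produces the combined lifting structure $\ell$ assigning to a positive horn square the lift prescribed by $\ell^+$ and to a negative horn square the lift prescribed by $\ell^-$. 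The verification that these are natural transformations of notions of fibred structure is routine (naturality is pullback-compatibility of the chosen lifts, which holds componentwise for the two polarities).

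The heart of the argument is to check that the pairing map lands in $\effRFib$ and is inverse to the pair of projections. For the former: given $\ell^\pm$ each satisfying their respective degeneracy compatibility conditions, the combined $\ell$ satisfies the compatibility condition of \reftheo{unifKanfibrinHornsquares} for \emph{every} $(i,j,i^*,j^*) \in \mathcal{A}_n$, because $\mathcal{A}_n$ is the disjoint union of its positive and negative sublists (the first two and the third bullet in the definition of $\mathcal{A}_n$ being ``positive or negative according to polarity'', and the fourth likewise), and the condition at a given instance only ever refers to $y$, $g$, $x$ and the chosen solutions $f$, all at the single polarity $\pm$ appearing in that instance — it never mixes a $+$ lift with a $-$ lift. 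Here I would write out, for one representative instance of each of the four shapes in $\mathcal{A}_n$, that the data appearing in the condition is polarity-homogeneous; this is essentially a bookkeeping check against the four families of diagrams displayed before \reftheo{unifKanfibrinHornsquares}. That the pairing and the projections are mutually inverse is then immediate: a horn square has a definite polarity, so splitting the lifting structure by polarity and recombining is the identity, and conversely.

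The main obstacle I anticipate is purely organisational rather than mathematical: one must be careful that the three ``horn-square'' characterisations are set up over exactly the same triple categories of small mould squares (with the positive/negative restriction of traversals as in \reftheo{twosidedpathobjcatwithM+} and \refdefi{naivefibrinsimpset}), so that the positive horn squares occurring in the definition of $\mathbf{effRFib}$ are literally the positive horn squares occurring in the definition of $\effRFib$, and similarly on the negative side — only then is the disjoint-union decomposition of the generating data (and of $\mathcal{A}_n$) legitimate. The paper flags that ``implicitly we will show that these are also generated by suitable small triple categories of mould squares'' in \refrema{onleftandrightfibr}, so I would make this explicit at the start of the proof, or at least point to where it follows by the same arguments as \reftheo{unifKanfibrintermsofsmmouldsqrs} and \reftheo{unifKanfibrinHornsquares} applied to $M_+$ and $M_+$ with the dual contraction. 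Once that alignment is in place, the product decomposition falls out of the combinatorics with no further input. If one wants the stronger statement, the isomorphism can moreover be upgraded to an isomorphism of discretely fibred concrete double categories by the usual fullness-on-squares argument, as in \refrema{strengtheningtoisoofdfconcretedoublecat}.
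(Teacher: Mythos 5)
Your proposal is correct and follows essentially the same route as the paper: the paper derives the product decomposition immediately from the horn-square characterisation of \reftheo{unifKanfibrinHornsquares} and the observation that no compatibility condition mixes horn squares of different polarity, which is exactly your argument (the paper leaves the pairing/projection maps and the alignment of the small triple categories implicit, as flagged in \refrema{onleftandrightfibr}). One cosmetic point: it is not $\mathcal{A}_n$ itself that splits — the conditions are indexed by $\mathcal{A}_n \times \{+,-\}$ and each instance is polarity-homogeneous — but this does not affect the substance of your verification.
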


\subsection{Local character and classical correctness} From the characterisation of effective Kan fibrations in \reftheo{unifKanfibrinHornsquares} we can deduce that our notion of being an effective Kan fibration is both local and classically correct.

\begin{coro}{unifKanfibrlocal}\index{local notion of fibred structure}
The notion of being an effective Kan fibration is a local notion of fibred structure.
\end{coro}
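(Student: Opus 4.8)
The plan is to apply \refprop{localityforpresh}, which reduces locality of a notion of fibred structure on simplicial sets to a statement about lifts against representables. Concretely, I would start with a map $p \colon Y \to X$ of simplicial sets together with, for each $x \in X(C)$, a chosen effective Kan fibration structure $s_x$ on a chosen pullback $p_x \colon Y_x \to \Delta^C$, these structures being compatible under restriction along maps $\alpha \colon \Delta^D \to \Delta^C$. The goal is to produce a unique effective Kan fibration structure on $p$ itself which restricts to the given $s_x$. My strategy is to work not with the triple category of mould squares directly, but through the characterisation of \reftheo{unifKanfibrinHornsquares}: an effective Kan fibration structure on $p$ is the same thing as a system of chosen lifts against horn squares satisfying the degeneracy-compatibility condition formulated there.

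First I would use the fact that a horn square has representable codomain: a lifting problem of $p$ against a horn square based at $\Delta^n$, with bottom map $x \colon (\Delta^n, \langle i, \pm\rangle) \to X$, can be pulled back along $x$ (composed with the projection $(\Delta^n,\langle i,\pm\rangle) \to \Delta^n$) to a lifting problem of $p_x$ against the same horn square. Since $p_x$ comes equipped with the effective Kan fibration structure $s_x$, this determines a canonical lift, which we then push forward to a lift for $p$. Naturality of this assignment — that is, the fact that the resulting family of lifts is a genuine morphism of presheaves and not just a level-wise choice — follows from the compatibility of the $s_x$ under restriction along the $\alpha \colon \Delta^D \to \Delta^C$, exactly as in the proof of \reftheo{HgenMoorefib} and \reftheo{trivfiblocal}. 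Uniqueness is automatic: any effective Kan fibration structure on $p$ restricting to $s_x$ must assign the lift coming from the pullback square (because the pullback of a horn square along a map of representables is a square in the relevant triple category, so the lift is forced).

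The substantive step is to check that the lifts constructed this way satisfy the degeneracy-compatibility condition of \reftheo{unifKanfibrinHornsquares}. Here I would observe that this condition is itself a statement about lifting problems whose codomains are representable — it compares the chosen lift for a horn square based at $\Delta^n$ with the chosen lift for a horn square based at $\Delta^{n+1}$, both having representable bottom objects $(\Delta^n,\langle i,\pm\rangle)$ and $(\Delta^{n+1},\langle i^*,\pm\rangle)$. So one can pull the entire situation back along $x \colon \Delta^{n+1} \to X$ (or rather along $x \cdot s_{j^*}$ and $x$ appropriately) and reduce the assertion to the corresponding assertion for the effective Kan fibration $p_x$, where it holds by hypothesis. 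The compatibility of the $s_x$ under the degeneracy $s_{j^*}$ is what makes the two pullbacks match up correctly.

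I expect the main obstacle to be bookkeeping rather than anything conceptually deep: one must be careful that the various pullbacks (of $p$ along $x$, of the horn squares along maps of representables, and the degeneracy relations among them) are set up so that the degeneracy-compatibility condition for $p$ genuinely reduces to that for $p_x$, and in particular that the map $y'$ appearing in \reftheo{unifKanfibrinHornsquares} — which is glued from $y \cdot s_{j^*}$ on $S^{n+1}_j$ and $f$ on certain faces — pulls back correctly. This is the same kind of diagram-chase that appears in \reflemm{secondredforstability} and the proof of \reftheo{unifKanfibrinHornsquares} itself, so the techniques are already in place; the proof is essentially an application of \refprop{localityforpresh} together with \reftheo{unifKanfibrinHornsquares}, and most of the work is verifying that the constructed structure is natural and satisfies the one non-trivial compatibility condition.
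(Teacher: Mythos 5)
Your proposal is correct and follows essentially the same route as the paper: reduce via \reftheo{unifKanfibrinHornsquares} to horn-square lifts, define the lift for $p$ by pulling back along the (representable) bottom map $x$ and using the given structure on $p_x$, and verify the degeneracy-compatibility condition by factoring the pullback along $x.s_{j^*}$ as a pullback along $s_{j^*}$ followed by one along $x$, invoking the compatibility of the $s_x$ under degeneracies together with the condition holding for $p_x$. The paper's proof is exactly this argument, with the uniqueness point left implicit as you observe it to be forced.
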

\begin{proof}
Suppose $p: Y \to X$ is a map and for every pullback of $p$ along a map $x: \Delta^n \to X$ we have a stable choice of a structure as in \reftheo{unifKanfibrinHornsquares}. Then, if we are given a lifting problem as in
\begin{displaymath}
  \begin{tikzcd}
    ( \partial \Delta^n, \langle \rangle) \ar[d] \ar[r] & (\partial \Delta^n, \langle i, \pm \rangle) \ar[d] \ar[r, "y"] & Y \ar[d, "p"] \\
    (\Delta^n, \langle \rangle) \ar[r] \ar[urr, dotted, "g", near start] & (\Delta^n, \langle i, \pm \rangle) \ar[r, "x"] & X,
  \end{tikzcd}
\end{displaymath}
then we may pull $p$ back along $x$ and we get:
\begin{displaymath}
  \begin{tikzcd}
    ( \partial \Delta^n, \langle \rangle) \ar[d] \ar[r] & (\partial \Delta^n, \langle i, \pm \rangle) \ar[d] \ar[r, "y"] & Y_x \ar[d, "{x^* p}"] \ar[r] & Y \ar[d, "p"] \\
    (\Delta^n, \langle \rangle) \ar[r] \ar[urr, dotted, "g", near start] & (\Delta^n, \langle i, \pm \rangle) \ar[r, "1"] & \Delta^{n+1} \ar[r, "x"] & X.
  \end{tikzcd}
\end{displaymath}
So, using the lifting structure of $x^*p$, we obtain a map $(\Delta^n, <i, \pm>) \to Y_x$ which we may compose with $Y_x \to Y$. In this way we obtain a lift against $p$. We still have to check that such lifts satisfy the condition in \reftheo{unifKanfibrinHornsquares}.

So imagine that we wish to solve
\begin{displaymath}
  \begin{tikzcd}
    ( \partial \Delta^{n+1}, \langle \rangle) \ar[d] \ar[r] & (\partial \Delta^{n+1}, \langle i^*, \pm \rangle) \ar[d] \ar[r, "{y'}"] & Y \ar[d, "p"] \\
    (\Delta^{n+1}, \langle \rangle) \ar[r] \ar[urr, dotted, "{g.s_j}", near start] & (\Delta^{n+1}, \langle i^*, \pm \rangle) \ar[r, "{x.s_{j^*}}"] & X
  \end{tikzcd}
\end{displaymath}
with $(i, j, i^*, j^*) \in \mathcal{A}_n$ and where $y'$ is the map which is $y \cdot s_{j^*}$ on $S^{n+1}_{j}$ and $f$ on the faces $d_k$ with $k \in \{ j, j+1 \} - \{ i^* \}$. The recipe we were given is that we write this as
\begin{displaymath}
  \begin{tikzcd}
    ( \partial \Delta^{n+1}, \langle \rangle) \ar[d] \ar[r] & (\partial \Delta^{n+1}, \langle i^*, \pm \rangle) \ar[d] \ar[r, "{y'}"] & Y_{x.s_{j^*}} \ar[r] \ar[d, "{(x.s_{j^*})^*p}"] & Y \ar[d, "p"] \\
    (\Delta^{n+1}, \langle \rangle) \ar[r] \ar[urr, dotted, "{g.s_j}", near start] & (\Delta^{n+1}, \langle i^*, \pm \rangle) \ar[r, "1"] & \Delta^{n+2} \ar[r, "{x.s_{j^*}}"] & X,
  \end{tikzcd}
\end{displaymath}
find the induced lift $(\Delta^{n+1}, <i^*, +>) \to Y_{x.s_{j^*}}$ and compose with $Y_{x.s_{j^*}} \to Y$. But we may write the pullback in the previous diagram as the composition of two pullbacks, as follows:
\begin{displaymath}
  \begin{tikzcd}
    ( \partial \Delta^{n+1}, \langle \rangle) \ar[d] \ar[r] & (\partial \Delta^{n+1}, \langle i^*, \pm \rangle) \ar[d] \ar[r, "{y'}"] & Y_{x.s_{j^*}} \ar[r] \ar[d, , "{(x.s_{j^*})^*p}"] & Y_x \ar[r] \ar[d, "{x^*p}"] & Y \ar[d, "p"] \\
    (\Delta^{n+1}, \langle \rangle) \ar[r] \ar[urr, dotted, "{g.s_j}", near start] & (\Delta^{n+1}, \langle i^*, \pm \rangle) \ar[r, "1"] & \Delta^{n+2} \ar[r, "{s_{j^*}}"] & \Delta^{n+1}\ar[r, "x"] & X,
  \end{tikzcd}
\end{displaymath}
By our stability assumption, this means that the composition of the induced lift $(\Delta^{n+1}, <i^*, +>) \to Y_{x.s_{j^*}}$ with $Y_{x.s_{j^*}} \to Y_x$ is the induced lift $(\Delta^{n+1}, <i^*, +>) \to Y_x$ against $x^* p$. But the latter is $f.s_{j^*}$, because $x^*p$ has lifts satisfying the condition in \reftheo{unifKanfibrinHornsquares}. This means that $p$ has lifts satisfying that condition as well, finishing the proof.
\end{proof}

\begin{coro}{KanfibruniformKanfibrclassically}
In a classical metatheory, every map which has the right lifting property against horns (a Kan fibration in the usual sense) can be equipped with the structure of an effective Kan fibration.
\end{coro}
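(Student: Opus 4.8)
The plan is to mimic the proof of \reftheo{trivfibOKinZFC}, using the characterisation of effective Kan fibration structures in terms of horn squares from \reftheo{unifKanfibrinHornsquares} together with the uniqueness of degenerate horn fillers proved in the appendix. Suppose $p\colon Y\to X$ has the right lifting property against all horn inclusions. As observed at the start of Section~\ref{sec:hornsquares}, every lifting problem for $p$ against a horn square is equivalent to a lifting problem for $p$ against a horn inclusion: the inscribed pushout of the left-hand side of a horn square is, up to isomorphism, a horn $\Lambda^{n+1}_i$ or $\Lambda^{n+1}_{i+1}$ inside $\Delta^{n+1}$, the orientation $\pm$ selecting which. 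Hence, invoking the axiom of choice (depending on how one reads the hypothesis), we may pick for each lifting problem of $p$ against a horn square a filler $f$, and thereby obtain a system of lifts against all horn squares. By \reftheo{unifKanfibrinHornsquares} it then remains only to arrange that this system is \emph{stable along degeneracies}, i.e.\ that it satisfies the compatibility condition formulated there.

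To that end I would, still working classically, refine the choice: for a horn-square lifting problem that admits a \emph{degenerate} solution, declare the chosen filler to be that degenerate solution; here one uses the Principle of Excluded Middle to decide whether such a solution exists, and the horn-filler counterpart of \reflemm{atmostonenondegsolution} — the result in the appendix referenced as \refprop{degeneratehornfillerunique} — to know that the degenerate solution is then unique, so that this prescription is well defined. For all remaining horn-square lifting problems keep an arbitrary filler. With this choice the compatibility condition of \reftheo{unifKanfibrinHornsquares} is automatic: given $(i,j,i^*,j^*)\in\mathcal{A}_n$ and a chosen filler $f$ for a horn square in dimension $n$, the associated dimension-$(n+1)$ lifting problem described in the theorem always has $f.s_{j^*}$ as a solution — this is exactly the bookkeeping carried out in the paragraphs leading up to \reftheo{unifKanfibrinHornsquares}, in the same spirit as the remark before \reftheo{uniformtrivfibintermsofboundaryincl} — and $f.s_{j^*}$ factors through a degeneracy, hence is degenerate. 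So that dimension-$(n+1)$ problem admits a degenerate solution, its chosen filler is therefore the unique degenerate solution, and since $f.s_{j^*}$ is one, the chosen filler equals $f.s_{j^*}$. Thus the degeneracy-stability condition holds and $p$ carries an effective Kan fibration structure.

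The only real work is to traverse the dictionary between horn squares and horn inclusions and to confirm, case by case over $\mathcal{A}_n$ (the families $j<i$, $j>i$, and $j=i$ in both polarities), that the map $f.s_{j^*}$ genuinely is a solution of the relevant dimension-$(n+1)$ lifting problem and genuinely lies in the image of $s_{j^*}$, so that the appendix uniqueness result applies; but this is precisely the content already unwound just before \reftheo{unifKanfibrinHornsquares}, so no new combinatorics is required. The one genuinely new ingredient, and the crux of the argument, is the appendix statement that degenerate horn fillers are unique, which here plays the role that \reflemm{atmostonenondegsolution} plays in the trivial-fibration case. I do not anticipate any obstruction beyond this careful bookkeeping.
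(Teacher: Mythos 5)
Your proposal is correct and follows essentially the same route as the paper: classically refine an arbitrary choice of horn-square lifts so that the (unique, by \refprop{degeneratehornfillerunique}) degenerate solution is chosen whenever one exists, and then observe that the degeneracy-stability condition of \reftheo{unifKanfibrinHornsquares} is automatic because it only ever demands a degenerate solution, namely $f.s_{j^*}$. The paper's proof is just a terser version of this, leaving the verification that $f.s_{j^*}$ solves the dimension-$(n+1)$ problem implicit in the discussion preceding \reftheo{unifKanfibrinHornsquares}, exactly as you indicate.
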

\begin{proof}
Suppose that we have a map which has the right lifting property against all horns. Because a lifting problem against a horn has at most one degenerate solution (see \refprop{degeneratehornfillerunique}), we may always choose the degenerate solution if it exists. In that case our lifts will satisfy the condition in \reftheo{unifKanfibrinHornsquares}, because it says that under certain circumstances we should choose a degenerate solution. But by always choosing the unique degenerate solution (if it exists), this will automatically be satisfied.
\end{proof}

\begin{rema}{onleftandrightagain} 
        We again have similar results for effective left and right fibrations.
        Indeed, proofs which are almost identical to the ones of
        \refcoro{unifKanfibrlocal} and
        \refcoro{KanfibruniformKanfibrclassically} yield:
\begin{itemize}
  \item Being an effective right (left) fibration is a local notion of fibred structure.
  \item In a classical metatheory, a map can be equipped with the structure of
          an effective right (left) fibration if and only if it has the right
          lifting property against horn inclusions $\Lambda_i^n \to \Delta^n$
          with $i \not= 0$ (with $i \not= n$), that is, if and only if it is a
          right (left) fibration in the usual sense.
\end{itemize}
\end{rema}

In addition, the definition of effective Kan fibrations in terms of horn squares allows us to prove that they have the following properties:

\begin{coro}{additionalpropofeffKanfibr} Effective Kan fibrations have the following properties:
  \begin{enumerate}
      \item If each of $f_i: A_i \to B_i $ is effectively Kan, then so is their sum $\coprod f_i: \coprod A_i \to \coprod_{i \in I} B_i$.
      \item If each of $f_i: A_i \to B$ is effectively Kan, then so is their copairing $\coprod f_i: \coprod A_i \to B$.
      \item Discrete presheaves, such as the terminal object and the natural numbers object $\mathbb{N}$, are effectively Kan.
  \end{enumerate}
\end{coro}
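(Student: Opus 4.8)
The plan is to exploit the characterisation of effective Kan fibrations via lifts against horn squares (\reftheo{unifKanfibrinHornsquares}), since all three statements concern how such lifting structures behave under coproducts and for discrete presheaves. The key observation throughout is that a lifting problem of a horn square against $p$ with codomain a coproduct (or against the terminal map, etc.) decomposes componentwise: a map $\Delta^n \to \coprod B_i$ factors through exactly one summand $B_i$ (since $\Delta^n$ is connected, being representable), so any lifting problem against $\coprod f_i$ is really a lifting problem against a single $f_i$. Thus one transfers the chosen lifts and chosen solutions componentwise.

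For (1), given effective Kan structures on each $f_i : A_i \to B_i$, I would define the structure on $\coprod f_i$ as follows. A horn square lifting problem
\begin{displaymath}
  \begin{tikzcd}
    ( \partial \Delta^n, \langle \rangle) \ar[d] \ar[r] & (\partial \Delta^n, \langle i, \pm \rangle) \ar[d] \ar[r, "y"] & \coprod A_i \ar[d, "{\coprod f_i}"] \\
    (\Delta^n, \langle \rangle) \ar[r] \ar[urr, dotted] & (\Delta^n, \langle i, \pm \rangle) \ar[r, "x"] & \coprod B_i
  \end{tikzcd}
\end{displaymath}
has $x$ factoring through a unique summand $B_j$; then $y$ (defined on $\partial \Delta^n$) must also land in $A_j$ (since $\coprod f_i$ is injective on the relevant components, and $A_j$ is the pullback of $A$ along $B_j \to \coprod B_i$). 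So the whole diagram lives over $f_j$ and we use the chosen solution for $f_j$. The compatibility condition in \reftheo{unifKanfibrinHornsquares} involves only the simplicial operators $s_j$, $s_{j^*}$, face maps $d_k$, and the traversals — all of which respect the componentwise decomposition because degeneracies and faces of a simplex in a fixed component stay in that component. Hence the condition for $\coprod f_i$ reduces to the already-known condition for each $f_j$. Statement (2) is handled identically: the copairing $\coprod A_i \to B$ still has the property that over any $x : \Delta^n \to B$, a lift $\partial \Delta^n \to \coprod A_i$ factors through a unique $A_j$ (because $\partial \Delta^n$ — no, wait: $\partial \Delta^n$ need not be connected — here one should be more careful: one uses instead that the lift we are constructing is determined by where the simplex of $\Delta^n$ itself goes, and one may take as chosen solution for $\coprod f_i$ over $x$ the image under $A_j \hookrightarrow \coprod A_i$ of the chosen solution for $f_j$, where $j$ is determined by the outer filler's image, once one checks $y$ factors appropriately through $A_j$). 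I would verify this careful point — that both the input data and the compatibility diagrams still localise to a single component — as the main content of (2).

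For (3), a discrete presheaf $X$ has the property that $X(\alpha)$ is a bijection for every $\alpha$ in $\mathbf{\Delta}$, so $X \to 1$ has a unique lift against any horn square: the filler $\Delta^n \to X$ is forced by the restriction to any vertex (equivalently, it is the unique simplex restricting correctly). One then simply checks the compatibility condition of \reftheo{unifKanfibrinHornsquares} holds automatically, which is immediate since both the chosen solution $f$ and the required solution $f.s_{j^*}$ are the unique fillers of their respective problems. Since $1$ and $\mathbb{N}$ (as a constant presheaf) are discrete, they are effectively Kan. I expect the main obstacle to be (2): unlike the codomain coproduct in (1), the domain coproduct requires a slightly more delicate argument to see that the data of a horn-square lifting problem — and crucially the auxiliary maps $y'$ appearing in the compatibility condition, which are built from $y \cdot s_{j^*}$ on a horn and $f$ on certain faces — still decompose into a single component; I would need to check that the faces $d_k$ with $k \in \{j, j+1\} \setminus \{i^*\}$ of the simplex in question, and the horn $S^{n+1}_j$, all map into the same summand $A_j$ that the top-dimensional simplex does, which holds because all these are faces or degeneracies of simplices already constrained to $A_j$.
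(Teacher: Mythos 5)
Your proposal is correct and takes essentially the same route as the paper: the paper likewise derives (1) and (2) from \reftheo{unifKanfibrinHornsquares} together with the fact that any map from a representable (hence from the connected horn data) into a coproduct factors through a single coproduct inclusion, which is exactly your componentwise localisation, including for the degenerated data in the compatibility condition. The only minor divergence is (3), where the paper deduces it from (2) by writing a discrete presheaf as $\coprod_{x} 1$, whereas you argue directly via uniqueness of horn-square fillers for discrete presheaves; both arguments are fine.
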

\begin{proof}
  Points (1) and (2) follow from \reftheo{unifKanfibrinHornsquares} in combination with the fact that any map $\Delta^n \to \coprod_{i \in I} A_i$ factors through one of the coproduct inclusions $A_j \to \coprod_{i \in I} A_i$.

  Point (3) follows immediately from point (2) since discrete presheaves are coproducts $\coprod_{x \in X} 1$ of the terminal object.
\end{proof}

\section{Conclusion}

In this final section we would like to take stock of the properties of effective Kan fibrations that we have established and outline some directions for future research.

\subsection{Properties of effective Kan fibrations} The properties of effective Kan fibrations that we have established in this paper are:
\begin{enumerate}
    \item Effective Kan fibrations are closed under composition and pullback.
    \item Isomorphisms are effective Kan fibrations.
    \item If $f: Z \to Y$ and $g: Y \to X$ are effective Kan fibrations, then so is $g_*f$, the pushforward\index{pushforward} of $f$ along $g$.
    
    \item The notion of an effective Kan fibration is local and hence universal effective Kan fibrations exist.
    \item The notion of an effective Kan fibration is classically correct in that, in a classical metatheory, a map of simplicial sets can be equipped with the structure of an effective Kan fibration precisely when it has the right lifting property against horn inclusions.
    \item If each of $f_i: A_i \to B$ is effectively Kan, then so is their copairing $\coprod f_i: \coprod A_i \to B$. In particular, discrete presheaves, such as the terminal object and the natural numbers object $\mathbb{N}$, are effectively Kan.
\end{enumerate}   
Properties (1) and (2) are immediate from the fact that effective Kan fibration are defined via a lifting property against a triple category. Property (3) was \reftheo{symmetricPocProofOfPi}. Property (4) was \refcoro{unifKanfibrlocal} in combination with \reftheo{localfsimpliesuniverse}. Property (5) was \refcoro{KanfibruniformKanfibrclassically}. Property (6) was \refcoro{additionalpropofeffKanfibr}.

The main disadvantage of the notion of an effective Kan fibration is that it is not clear (constructively!) that they are closed under retracts. That is, it is unclear how from a diagram
\begin{displaymath}
    \begin{tikzcd}
        \bullet \ar[r] \ar[d, "f"] & \bullet \ar[r] \ar[d, "g"] & \bullet \ar[d, "f"] \\
        \bullet \ar[r] & \bullet \ar[r] & \bullet
    \end{tikzcd}
\end{displaymath}
showing that $f$ is a retract of $g$ and an effective Kan fibration structure on $g$, one is supposed to construct an effective Kan fibration structure on $f$. (Of course, one can in a classical metatheory, because the notion of an effective Kan fibration is classically correct and the usual Kan fibrations are closed under retracts.) This leads to:
\begin{quote}
{\bf Open question:} Can it be proved in a constructive metatheory that the effective Kan fibrations are closed under retracts?
\end{quote}

\subsection{Directions for future research} In future work we plan to show that the effective Kan fibrations are the right class in an algebraic weak factorisation system. At the point of writing we have a classical proof of this fact; we are working on transforming this into a constructive one.

This would also be the first step towards a constructive proof showing that the effective Kan fibrations and the effective trivial Kan fibration yield an algebraic model structure (as in \cite{Riehl-11}) on the category of simplicial sets.

Ultimately we hope to build a model of homotopy type theory in the category of simplicial sets based on our notion of an effective Kan fibration. Here the main challenge will be to construct univalent universes. Other structure that one might hope to be present are certain inductive types, like W-types, as in \cite{vdbergmoerdijk15}. All of this is left to future work.

\printindex

\begin{appendices}

\section{Axioms}\label{sec:axiomsformoore}

In this appendix we will collect the axioms for a Moore category and a dominance that play a role in this paper. The reader can think of these as our version of the Orton-Pitts axioms \cite{ortonpitts18} (see also \cite{Gambino-Sattler,Frumin-vdBerg}).

\subsection{Moore structure} Our first ingredient is a suitable notion of Moore paths.

\begin{defi}{pathobjcat} Let \ct{E} be a category with finite limits. A \emph{Moore structure}~\index{Moore structure} on \ct{E} consists of the following data:
\begin{enumerate}
 \item We have a pullback-preserving endofunctor $M$ on \ct{E} together with natural transformations $r: 1_\ct{E} \to M$, $s, t: M \to 1_\ct{E}$, and $\mu: M_t \times_s M \to M$ turning every object $X$ in \ct{E} in the object of objects of an internal category, with $MX$ as the object of arrows. Note the order in which $\mu$ takes its arguments: it is not in the way categorical composition is usually written. The reason is that we think of $\mu$ as concatenation of paths rather than as categorical composition and we write it as such.
 \item There is a natural transformation $\Gamma: M \to MM$ making $(M,s,\Gamma)$ into a comonad.
 \item There is a strength $\alpha_{X,Y}: X \times MY \to M(X \times Y)$, that is, $\alpha$ is a natural transformation making
\begin{displaymath}
 \begin{array}{c}
 \begin{tikzcd}
 (X \times Y) \times MZ \ar[d, "\cong"] \ar[rr, "\alpha_{X \times Y,Z}"] &  & M((X \times Y) \times Z) \ar[d, "\cong"] \\
 X \times (Y \times MZ) \ar[r, "1_X \times \alpha_{Y,Z}"'] & X \times M(Y \times Z) \ar[r, "\alpha_{X, Y \times Z}"'] & M(X \times (Y \times Z))
 \end{tikzcd} \\
 \begin{tikzcd}
  X \times MY \ar[r, "\alpha_{X,Y}"] \ar[dr, "p_2"'] & M(X \times Y) \ar[d, "Mp_2"] \\
   & MY
 \end{tikzcd}
 \end{array}
\end{displaymath}
 commute. In addition, all the previous structure is strong, so the following diagrams commute as well:
  \begin{displaymath}
  \begin{array}{cc}
   \begin{tikzcd}
    X \times MY \ar[r, "\alpha"] \ar[d, "1 \times s"', shift right = 4ex] \ar[d, "1 \times t", shift left = 2ex] & M(X \times Y) \ar[d, "s"', shift right = 2ex] \ar[d, "t", shift left = 2ex] \\
    X \times Y \ar[r, "1"] \ar[u, "1 \times r"] & X \times Y \ar[u, "r"]
   \end{tikzcd} &
   \begin{tikzcd}
    X \times MY_t \times_s MY \ar[d, "1 \times \mu"] \ar[rrr, "{(\alpha.(p_1,p_2),\alpha.(p_1,p_3))}"] & & & M(X \times Y)_t \times_s M(X \times Y) \ar[d, "\mu"] \\ X \times MY \ar[rrr, "\alpha"] & & & M(X \times Y)
   \end{tikzcd}
  \end{array}
  \end{displaymath}
  \begin{displaymath}
   \begin{tikzcd}
    X \times MY \ar[rrr, "\alpha"] \ar[d, "1 \times \Gamma"] & & & M(X \times Y) \ar[d, "\Gamma"] \\
   X \times MMY \ar[rr, "\alpha"'] & & M(X \times MY) \ar[r, "M\alpha"'] &  MM(X \times Y).
   \end{tikzcd}
  \end{displaymath}
 \item We have the following axioms for the connection $\Gamma$ (interaction with $r,t$):
  \[ \Gamma.r = rM.r, \qquad tM.\Gamma = r.t, \qquad Mt.\Gamma = \theta_X.\alpha_{X,1}.(t,M!), \]
    with $\theta_X$ being the iso $Mp_1: M(X \times 1) \to MX$.
  \item And, finally, we have the following \emph{distributive law}\index{distributive law} (interaction between $\Gamma$ and $\mu$):
    \begin{align*}
      \Gamma.\mu &=
  \mu.(M\mu.\nu.(\Gamma.p_1,\theta_{MX}.\alpha_{MX,1}.(p_2,M!.p_1)),\Gamma.p_2) \\
                 &\thinspace : \thinspace
      MX \times_X MX \to MMX
\end{align*}
   with $\nu$ being the natural transformation
   (in this case $MMX \times_{MX} MMX \to M(MX \times_X MX)$) induced by
   preservation of pullbacks. This condition can be visualized as follows. When
   \(p, q \in MX\) are composable Moore paths as in the left-hand size of the diagram, then
   \(\Gamma. \mu (p, q)\) is defined by \(\Gamma(p)\) and \(\Gamma(q)\) in the following way:
   \begin{equation}\label{eq:distributive-law-viz}
    \begin{tikzpicture}[xscale=1.2,baseline={([yshift=-.5ex]current bounding box.center)}]
      \node (z) at (0,2) {$z$};
      \node (y) at (0,1) {$y$};
      \node (x) at (0,0) {$x$};
      \node (y') at (1,0) {$y$};
      \node (z') at (2,0) {$z$};
      \node (s) at (0,-0.6) {};
      \node (t) at (2,-0.6) {};
      \path [->,draw] (x) -- node [left] {$p$} (y);
      \path [->,draw] (y) -- node [left] {$q$} (z);
      \path [->,draw] (x) -- (y');
      \path [->,draw] (y') -- (z');
      \path [->,draw] (z) -- (z');
      \path [->,draw] (s) -- node [below=5pt] {$\Gamma . \mu(p,q)$} (t);
    \end{tikzpicture}
    \quad= \qquad
    \begin{tikzpicture}[xscale=1.3,baseline={([yshift=-.5ex]current bounding box.center)}]
      \node (z) at (0,2) {$z$};
      \node (y) at (0,1) {$y$};
      \node (x) at (0,0) {$x$};
      \node (y') at (1,0) {$y$};
      \node (z') at (2,0) {$z$};
      \node (z'') at (1,1) {$z$};
      \node (s2) at (1,-0.8) {};
      \node (t2) at (2,-0.8) {};
      \node (s1) at (0,-1.1) {};
      \node (t1) at (1,-1.1) {};
      \node (s3) at (0,-0.4) {};
      \node (t3) at (1,-0.4) {};
      \path [->,draw] (x) -- node [left] {$p$} (y);
      \path [->,draw] (y) -- node [left] {$q$} (z);
      \path [->,draw] (x) -- (y');
      \path [->,draw] (y') -- (z');
      \path [->,draw] (y) -- (y');
      \path [->,draw] (y') -- (z'');
      \path [->,draw] (z) -- (z'');
      \path [->,draw] (z'') -- (z');
      \path [->,draw] (s2) -- node [below=5pt] {$\Gamma(q)$} (t2);
      \path [->,draw] (s1) -- node [below] {$\Gamma(p)$} (t1);
      \path [->,draw] (s3) -- node [below] {$\alpha(q, M!p)$} (t3);
    \end{tikzpicture}
\end{equation}
\end{enumerate}
Whenver a category \ct{E} is equipped with structure thus described, we call
\ct{E} a \emph{category with Moore structure}, or a \emph{Moore category} for
short. \index{category with Moore structure} \index{Moore category}
\end{defi}

\begin{rema}{compwithpathobjcat} The notion of a path object category from \cite{vdBerg-Garner} can be obtained from this by dropping the coassociativity axiom for $\Gamma$ as well as the distributive law, whilst adding a symmetry $\tau$ (see below).
\end{rema}

\begin{rema}{onthestrength}
As observed in \cite{vdBerg-Garner}, the fact that $M$ preserves pullbacks means that the entire strength is determined by the maps
\begin{displaymath}
 \begin{tikzcd}
  \alpha_X := X \times M1 \ar[r, "\alpha_{X,1}"] & M(X \times 1) \ar[r, "\theta_X"] & MX.
 \end{tikzcd}
\end{displaymath}
The reason for this is that the outer rectangle and right hand square in
 \begin{displaymath}
   \begin{tikzcd}
   X \times MY \ar[r, "\alpha_{X,Y}"'] \ar[d, "1_X \times M!"'] \ar[rr, bend left = 20, "p_2"] & M(X \times Y) \ar[d, "Mp_1"] \ar[r, "Mp_2"'] & MY \ar[d, "M!"] \\
   X \times M1 \ar[r, "\alpha_X"] \ar[rr, bend right = 20, "p_2"'] & MX \ar[r, "M!"] & M1
   \end{tikzcd}
   \end{displaymath}
are pullbacks. And, if we wish, axioms (4-6) can also be formulated as follows: there is a natural transformation \[ \alpha_X: X \times M1 \to MX \]
with $M!.\alpha_X = p_2: X \times M1 \to M1$, and, in addition, the following diagrams commute:
 \begin{displaymath}
  \begin{array}{cc}
   \begin{tikzcd}
    X \times M1 \ar[r, "\alpha"] \ar[d, "1 \times s"', shift right = 4ex] \ar[d, "1 \times t", shift left = 2ex] & MX \ar[d, "s"', shift right = 2ex] \ar[d, "t", shift left = 2ex] \\
    X \times 1 \ar[r, "\cong", "p_1"'] \ar[u, "1 \times r"] & X \ar[u, "r"]
   \end{tikzcd} &
   \begin{tikzcd}
    X \times M1 \times M1 \ar[d, "1 \times \mu"] \ar[rrr, "{(\alpha.(p_1,p_2),\alpha.(p_1,p_3))}"] & & & MX \times_X MX \ar[d, "\mu"] \\ X \times M1 \ar[rrr, "\alpha"] & & & MX
   \end{tikzcd}
  \end{array}
  \end{displaymath}
  \begin{displaymath}
   \begin{tikzcd}
    X \times M1 \ar[rrrr, "\alpha"] \ar[d, "1 \times \Gamma"] & & & & MX \ar[d, "\Gamma"] \\
   X \times MM1 \ar[rr, "{(\alpha.(p_1,M!.p_2),p_2)}"'] & & MX \times_{M1} MM1 \ar[r,"\cong"] & M(X \times M1) \ar[r, "M\alpha"'] &  MMX.
   \end{tikzcd}
  \end{displaymath}
Finally, we have the following axioms for the interaction between the connection $\Gamma$ and the category structure:
\begin{eqnarray*}
 \Gamma.r & = & rM.r, \\
 tM a.\Gamma & = & r.t, \\
 Mt.\Gamma & = & \alpha.(t,M!), \\
 \Gamma.\mu & = & \mu.(M\mu.\nu_X.(\Gamma.p_1,\alpha_{MX}.(p_2,M!.p_1)),\Gamma.p_2).
\end{eqnarray*}
\end{rema}

\begin{defi}{twosidedpathobjcat}
We will call a Moore structure \emph{two-sided}\index{two-sided Moore
structure|textbf} if it also comes equipped with a map $\Gamma^*: M \to MM$
turning $(M, t, \Gamma^*)$ into a strong comonad, and such that the following
equations hold:
\begin{equation} \label{fiets}
  \begin{array}{rcl}
 \Gamma^*.r & = & rM.r, \\
 s.\Gamma^* & = & r.s, \\
 Ms.\Gamma^* & = & \alpha.(s,M!), \\
 \Gamma^*.\mu & = &\mu.(\Gamma^*.p_1,M\mu.\nu.(\alpha_M.(p_1,M!.p_2),\Gamma^*.p_2)).
\end{array}
\end{equation}
This has the effect that if we switch $s$ and $t$ and define $\mu^* := \mu.(p_2,p_1)$, then we get a second Moore structure. We will also require that $\mu$ is both left and right cancellative, and that we have the sandwich equation:\index{sandwich equation}
\[ M\mu.\nu.(\Gamma^*,\Gamma) = \alpha.(1,M!): M \to MM \]
(which also implies $M\mu^*.\nu.(\Gamma,\Gamma^*) = \alpha.(1,M!)$).
\end{defi}

\begin{defi}{symmetricpathobjcat}
A two-sided Moore structure will be called \emph{symmetric}\index{symmetric
Moore structure|textbf} if it also comes equipped with a natural transformation
$\tau: M \to M$ (sometimes referred to as a `twist map'\index{twist
map|textbf}) such that
\begin{eqnarray*}
\tau.\tau & = & 1, \\
\tau.r & = & r, \\
s.\tau & = & t, \\
t.\tau & = & s, \\
\Gamma^* & = & \tau M.M\tau.\Gamma.\tau,
\end{eqnarray*}
while also the following diagrams commute:
\begin{displaymath}
 \begin{array}{cc}
  \begin{tikzcd}
   X \times M1 \ar[r,"\alpha"] \ar[d, "1 \times \tau"'] & MX \ar[d, "\tau"] \\
   X \times M1 \ar[r, "\alpha"] & MX
  \end{tikzcd} &
  \begin{tikzcd}
   MX_t \times_s MX \ar[r, "\mu"] \ar[d,"{(\tau.p_2,\tau.p_1)}"'] & MX \ar[d,"\tau"] \\
   MX_t \times_s MX \ar[r, "\mu"] & MX
  \end{tikzcd}
 \end{array}
\end{displaymath}
\end{defi}

\begin{rema}{fietsequationsfollow}
 There is a bit of redundancy in the previous definition, in that $\Gamma^*  = \tau M.M\tau.\Gamma.\tau$ implies the equations (\ref{fiets}) above.
\end{rema}

\begin{exam}{examplesofsymmMoorecats} The following examples from \cite{vdBerg-Garner} all satisfy the axioms for a symmetric Moore category.
  \begin{enumerate}
    \item The category of topological spaces with
    \[ MX = \sum_{r \in \mathbb{R}_{\geq 0}} X^{[0,r]}, \]
    the space of Moore paths.
    \item The category of small groupoids with
    \[ MX = X^{\mathbb{I}}, \]
    where $\mathbb{I}$ is the interval groupoid containing two objects and one arrow $x \to y$ for any pair of objects $(x,y)$. In fact, this also defines a symmetric Moore structure on the category of small categories.
    \item The category of chain complexes over a ring $R$.
  \end{enumerate}
  For more details we refer to \cite[Section 5]{vdBerg-Garner}.
\end{exam}

\subsection{Dominance} The second ingredient is a dominance\cite{rosolini86}.

\begin{defi}{dominanceagain}
  A \emph{dominance}\index{dominance} on a category \ct{E} is a class of monomorphism $\Sigma$ in \ct{E} satisfying the following three properties:
  \begin{enumerate}
    \item every isomorphism is in $\Sigma$ and $\Sigma$ is closed under composition.
    \item every pullback of a map in $\Sigma$ again belongs to $\Sigma$.
    \item the category $\Sigma_{cart}$ of maps in $\Sigma$ and pullback squares between them has a terminal object $1 \to \Sigma$.
  \end{enumerate}
\end{defi}

For some of our arguments it will be convenient to assume the following two additional axioms:
\begin{enumerate}
  \item The elements in $\Sigma$ are closed under finite unions; that is, $0 \to X$ always belongs to $\Sigma$ and whenever $A \to X$ and $B \to X$ belong to $\Sigma$, then so does $A \cup B \to X$.
  \item The morphism $r_X: X \to MX$ belongs to $\Sigma$ for any object $X$.
\end{enumerate}

\section{Cubical sets}\label{sec:cubicalsets}\index{connection (cubical sets)|textbf}
In this section, we show that the category of \emph{cubical sets}
(\cite{CCHM17}, also~\cite{Gambino-Sattler}) can be equipped with parts of the
definition of a Moore structure. This makes it possible to highlight the
relationship between the notion of Moore structure introduced in this paper and
the related notion of \emph{connections} in cubical sets.  The conclusion is
that, for the standard choice of path object, all structure except from path
composition is present. The path contraction \(\Gamma\) corresponds to the
notion of \emph{connection} in cubical sets.

The \emph{category of cubes} \(\mathcal{C}\) has as objects finite subsets
\(I\) of a fixed and countable set of \emph{names} \(\{x,y,z,\dots\}\), and
morphisms \(I \to J\) are set maps \(J \to \mathsf{dM}(I)\) into the \emph{free
de Morgan algebra} generated by \(I\).  The free de Morgan algebra is the
bounded distributive lattice generated by the set of constants and binary
operations \(\{0, 1, \land, \lor\}\), and the involutive function \(\neg\)
representing negation, which needs to satisfy de Morgan's laws.

The category of cubical sets\index{cubical sets|textbf} is the category of presheaves
on \(\mathcal{C}\).  A cubical set \(X\) can be regarded as a set
\(X(x_1,\dots,x_n)\) for every given tuple of variables \(x_1, \dots, x_n\),
whose elements support several substitution operations, e.g.:
\begin{equation}
  x_i = 0 \colon X(x_1, \dots, x_n) \to X(x_1, \dots, \widehat{x_i}, \dots, x_n)
\end{equation}
The operations \(\land, \lor\) give rise to \emph{connections}, while the
operation \(\neg\) gives rise to a \emph{symmetry}. Therefore this category is
sometimes referred to as cubical sets with connections and symmetries, to
distinguish it from possible other (more restricted) definitions. For an
introduction to cubical sets, we refer to~\cite{CCHM17}.

An element \(\gamma \in X(y_1,\dots,y_n, x)\) can be regarded as a `path'
between \(X(x = 0)(\gamma)\) and \(X(x = 1)(\gamma)\). This suggests to define
a `path object functor' \(M\) as follows:
\begin{align}\label{eq:cubicalmoorepath}
  MX(I) &:= X(I \cup \{x\}) \text{ where } x \notin I \\
  M\eta (I) &:= \eta_{I \cup \{x\}}
\end{align}
with source, target, and identity maps induced by:
\begin{align*}
  s_X(I) := X(x = 0) &\colon X(I \cup \{x\}) \to X(I) \\
  t_X(I) := X(x = 1) &\colon X(I \cup \{x\}) \to X(I) \\
  r_X(I) := X(\id) &\colon X(I) \to X(I \cup \{x\})
\end{align*}

Using connections, we can define a contraction \(\Gamma\):
\begin{equation*}
  \Gamma_X(I) := X(x = x \lor y) \colon X(I \cup \{x\}) \to X(I \cup \{x\} \cup \{y\}).
\end{equation*}

We can readily verify that \(M\) is a comonad:
\begin{align*}
  s_{MX}.\Gamma_X &= X(y = 0) . X(x = x \lor y) = 1_{MX} \\
  M(s_X). \Gamma_X &= M(X(x = 0)) . X(x = x \lor y) = X(y = 0). X(x = x \lor y) \\
                   &= 1_{MX} \\
  \Gamma_{MX}.\Gamma_X &= X(y = y \lor z) . X(x = x \lor y) \\
                       &=  M(\Gamma_X) . \Gamma_X
\end{align*}

Similarly, it is easy to define a strength and show that requirement (4) of \refdefi{pathobjcat} holds.

Further, the \(\land\) operation would define a dual \(\Gamma^*\) as in the
definition of two-sided Moore structure (\refdefi{twosidedpathobjcat}), whereas
negation \(\neg\) can be used to define a `twist map' as in
\refdefi{symmetricpathobjcat}. The fact that the \(\Gamma^*\) derived from the
twist map is the same as the former follows from the fact that negation is required
to be an involution, i.e., \(\neg \neg x = x\).

All that is missing is a definition of multiplication, or path concatenation:
\begin{equation}
  \mu_X(I) : X(I \times \{x\}) {}_{x = 1} \times_{x = 0} X(I \times \{x\}) 
  \to X(I \times \{x\})
\end{equation}
To define this, it would be needed to `compose' a pair of paths \(\gamma,
\gamma'\) which satisfy \(X(x = 1)(\gamma) = X(x = 0)(\gamma')\).  Yet, given
that all paths have only single length, this is not straightforward.

\section{Degenerate horn fillers are unique}

The purpose of this appendix is to show that horn filling problems have at most one degenerate filler, in the following sense:

\begin{prop}{degeneratehornfillerunique}
If both $x_0 \cdot \sigma_0$ and $x_1 \cdot \sigma_1$ are fillers for
\begin{displaymath}
\begin{tikzcd}
  \Lambda_i^n \ar[r] \ar[d] & X \\
  \Delta^n \ar[ur, dotted]
\end{tikzcd}
\end{displaymath}
where $\sigma_0: \Delta^n \to \Delta^k$ and $\sigma_1: \Delta^n \to \Delta^l$ are epimorphisms in $\mathbf{\Delta}$ different from the identity, then $x_0 \cdot \sigma_0 = x_1 \cdot \sigma_1$.
\end{prop}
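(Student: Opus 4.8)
The plan is to exploit the rigidity of epimorphisms in $\mathbf{\Delta}$ together with the fact that the data of a degenerate filler $x \cdot \sigma$ is already determined by its restriction to the horn. First I would reduce to understanding when a degenerate simplex restricts compatibly to the horn $\Lambda^n_i$. Recall that every epimorphism $\sigma : \Delta^n \to \Delta^k$ in $\mathbf{\Delta}$ collapses exactly some set of adjacent pairs; concretely $\sigma = s_{j_1} \cdots s_{j_{n-k}}$ for a (weakly increasing, suitably normalized) sequence of codegeneracies, and $\sigma$ is not the identity iff $n - k \geq 1$, i.e.\ iff $\sigma$ identifies at least one pair of adjacent vertices $\{m, m+1\}$. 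The key combinatorial observation I would establish is: if $x \cdot \sigma$ fills the horn $\Lambda^n_i$ (meaning $x \cdot \sigma \cdot d_j = (\text{the given horn data on the } j\text{th face})$ for all $j \neq i$), then $x$, and in fact $x \cdot \sigma$ itself, is forced by the horn data, because enough faces $d_j$ ($j \neq i$) survive the degeneracy $\sigma$ to pin down $x$.

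The main step is a counting/pigeonhole argument. A face $d_j : \Delta^{n-1} \to \Delta^n$ composed with $\sigma : \Delta^n \to \Delta^k$ factors as $\sigma \cdot d_j = d_{j'} \cdot \sigma'$ for some face $d_{j'}$ (possibly absent, when $j$ is ``absorbed'' by $\sigma$) and some epi $\sigma'$. The face $d_j$ is absorbed by $\sigma$ precisely when $\sigma$ collapses the pair $\{j-1, j\}$ or $\{j, j+1\}$ in a way that makes $d_j$ a section of a relevant codegeneracy; since $\sigma$ is a composite of at most $n-k$ basic codegeneracies, at most $n - k + 1 \leq n$ of the $n+1$ faces $d_0, \dots, d_n$ can be absorbed — but more carefully, the vertices of $\Delta^n$ that get identified by $\sigma$ form a union of intervals, and this limits which faces become degenerate after composing with $\sigma$. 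I would argue that at least one face $d_j$ with $j \neq i$ has $\sigma \cdot d_j$ an epimorphism onto $\Delta^k$ that is still non-trivial, OR — and this is the cleaner route — that the collection $\{\sigma \cdot d_j : j \neq i\}$ jointly covers $\Delta^k$ in the sense that the images of these composites generate $\Delta^k$ as a simplicial subset. The latter is what I would actually aim for: once the maps $\sigma \cdot d_j$ ($j \neq i$) are jointly epimorphic onto $\Delta^k$, the value of $x \in X_k$ is determined by the horn data (since $x \cdot (\sigma \cdot d_j) = x_0 \cdot \sigma_0 \cdot d_j = x_1 \cdot \sigma_1 \cdot d_j$ are all fixed), hence $x \cdot \sigma$ is determined, so $x_0 \cdot \sigma_0 = x_1 \cdot \sigma_1$.

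So the strategy concretely: (1) write $\sigma_0, \sigma_1$ via their collapsed-interval structure on vertices; (2) show that because $\sigma_0$ is not the identity it collapses some interval, and since a horn $\Lambda^n_i$ contains all faces $d_j$ for $j \neq i$, there is enough of the horn surviving under $\sigma_0$ to recover $x_0$ — more precisely, the subpresheaf of $\Delta^n$ generated by $\{d_j : j \neq i\}$ is all of $\Lambda^n_i$, and its image under $\sigma_0$ is all of $\Delta^{k}$ (here one uses that $\sigma_0$ is surjective on vertices and that removing a single vertex-face $d_i$ from $\partial \Delta^n$ still leaves a simplicial set whose $\sigma_0$-image is everything, provided $\sigma_0$ collapses at least one adjacent pair — if $\sigma_0$ collapsed nothing it would be the identity and $\Lambda^n_i$'s image would miss the interior, which is exactly why we need $\sigma_0 \neq \mathrm{id}$); (3) conclude $x_0 \cdot \sigma_0$ is completely determined by the horn data, and likewise $x_1 \cdot \sigma_1$, so they agree. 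The main obstacle I anticipate is step (2): making precise and correct the claim ``$\sigma_0(\Lambda^n_i) = \Delta^k$ whenever $\sigma_0$ is a non-identity epi,'' which requires a careful case analysis of where the degenerate pair(s) sit relative to the omitted index $i$, and handling the edge cases where $i$ is itself an endpoint of a collapsed interval. I expect this to come down to checking that every nondegenerate simplex of $\Delta^k$ — in particular every vertex and every edge — is hit by $\sigma_0 \cdot d_j$ for some $j \neq i$, which is a finite, if slightly fiddly, verification.
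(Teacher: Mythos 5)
Your anticipated ``main obstacle'' (step (2)) is in fact the easy part, and it is correct: since $\sigma_0$ is a non-identity epi, two adjacent vertices of $[n]$ have the same image, at least one of them, say $j$, differs from $i$, and already the single composite $\sigma_0\circ d_j$ is a split epi onto $\Delta^k$; hence $\Lambda^n_i\to\Delta^k$ is an epimorphism of simplicial sets and $x_0$ is uniquely determined by the horn data, and likewise $x_1$. The genuine gap is the final inference ``hence $x\cdot\sigma$ is determined, so $x_0\cdot\sigma_0=x_1\cdot\sigma_1$''. What your argument establishes is only: for each \emph{fixed} non-identity epi $\sigma$, there is at most one $x$ with $x\cdot\sigma$ restricting to the given horn data. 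The proposition compares fillers that are degenerate \emph{in different directions}: $\sigma_0$ and $\sigma_1$ may be distinct epis, even with different codomains ($k\neq l$). Knowing that $x_0$ is pinned down by the horn data (given $\sigma_0$) and that $x_1$ is pinned down (given $\sigma_1$) does not yield $x_0\cdot\sigma_0=x_1\cdot\sigma_1$; ``$A$ is determined by the same data via route one and $B$ via route two'' is not an argument that $A=B$. As written, your proof covers only the case $\sigma_0=\sigma_1$.

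The missing content is exactly a comparison across degeneracy directions, and that is what the paper's proof supplies. One first reduces to elementary degeneracies: factor $\sigma_0=\tau\circ s_a$, so $x_0\cdot\sigma_0=(x_0\cdot\tau)\cdot s_a$, and similarly for $\sigma_1$, so one may assume the fillers are $y_0\cdot s_a$ and $y_1\cdot s_b$ with $a\le b$; the case $a=b$ is the one your argument handles. For $a<b$, since $a\neq b+1$, not both of the sections $d_a$ of $s_a$ and $d_{b+1}$ of $s_b$ can be the missing face, so at least one lifts through $\Lambda^n_i$; plugging this lift into the retract lemma (\reflemm{swanslemmadual}) shows that the square formed by $s_a$ and $s_b$ over $\Delta^{n-2}$ remains an (absolute) pushout when its top-left corner $\Delta^n$ is replaced by $\Lambda^n_i$. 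Applied to $X$, this pushout produces a single $z\in X_{n-2}$ through which both $y_0$ and $y_1$ factor, so both fillers equal the common doubly degenerate simplex obtained by restricting $z$ along $\Delta^n\to\Delta^{n-2}$. Some step of this kind, exhibiting a common lower-dimensional simplex from which both fillers are degenerate, is unavoidable; joint surjectivity of $\{\sigma\circ d_j\}_{j\neq i}$ alone cannot supply it, so your proposal needs this additional argument before the conclusion can be drawn.
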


The proof strategy that we will follow here was suggested to us by Christian Sattler. The (constructive) argument relies on the following lemma (see \cite[Lemma 5.6]{vdbergmoerdijk15}):

\begin{lemm}{swanslemmadual} Suppose that we have a diagram of the form
  \begin{displaymath}
    \begin{tikzcd}
      \bullet \ar[d, "f"] \ar[r] & \bullet \ar[d, "g"] \ar[r] & \bullet \ar[d, "f"] \\
      \bullet \ar[r] & \bullet \ar[r] & \bullet
    \end{tikzcd}
  \end{displaymath}
  exhibiting $f$ as a retract of $g$, while $g$ has a section. Then the right hand square is an absolute pushout.
\end{lemm}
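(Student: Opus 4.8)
The plan is to prove the lemma by verifying the universal property of the pushout \emph{equationally}: every step will be an identity between composites of the morphisms appearing in the diagram, so it survives application of an arbitrary functor, and this is precisely what makes the pushout absolute.

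First I would fix notation for the retract diagram in the statement. Write the top row as $A \xrightarrow{i} C \xrightarrow{j} A$ and the bottom row as $B \xrightarrow{k} D \xrightarrow{l} B$, so that $ji = 1_A$ and $lk = 1_B$; commutativity of the two squares gives $gi = kf$ (left) and $fj = lg$ (right). Let $s\colon D \to C$ be the given section of $g$, so $gs = 1_D$. The square to be shown an absolute pushout is the right-hand one, i.e.\ the pushout of the span $A \xleftarrow{j} C \xrightarrow{g} D$ with candidate cocone $(f\colon A\to B,\ l\colon D\to B)$ (which commutes since $fj = lg$).

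The key observation — and the only place the section is used — is the identity $l = fjs$, obtained by postcomposing $fj = lg$ with $s$ and using $gs = 1_D$. Now let $(p\colon A\to T,\ q\colon D\to T)$ be any cocone, so $pj = qg$. One first notes $q = q(gs) = (qg)s = (pj)s = pjs$, so the cocone is already determined by $p$. Define $h := qk\colon B \to T$. Then $hf = q(kf) = q(gi) = (qg)i = (pj)i = p$ (using $kf = gi$, $pj = qg$, $ji = 1_A$), and $hl = qk(fjs) = q(kf)js = q(gi)js = (qg)(ij)s = (pj)(ij)s = p(ji)js = pjs = q$ (using $l = fjs$, $kf = gi$, $pj = qg$, $ji = 1_A$, $q = pjs$). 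For uniqueness, if $h'\colon B\to T$ satisfies $h'l = q$ then $h' = h'(lk) = (h'l)k = qk = h$, using $lk = 1_B$. Since every equation used is an identity of composites of the data $i,j,k,l,f,g,s$ and the cocone legs, applying any functor $F$ reproduces all of them, and the same argument shows $F$ of the right-hand square is a pushout; hence it is an absolute pushout.

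I do not anticipate a real obstacle: the content is entirely in (a) spotting that the induced map must be $qk$ — forced by the uniqueness computation via $lk = 1_B$ — and (b) the auxiliary identity $l = fjs$. Everything else is the short equational bookkeeping above. This also clarifies the remark that this is ``dual'' to \reflemm{swanslemma}: the mirror-image argument proves that version, except that there the hypothesis ``$n$ is a monomorphism'', rather than ``$n$ has a retraction'', is invoked once (to cancel $n$), which is why that statement yields an ordinary pullback rather than an absolute one.
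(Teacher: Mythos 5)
Your proof is correct: the identities $l=fjs$, $h=qk$, and the cancellations via $ji=1$, $lk=1$, $gs=1$ are exactly the equational argument needed, and since every step is an identity of composites of the given data it is preserved by any functor, which gives absoluteness. The paper itself supplies no proof here but defers to \cite[Lemma 5.6]{vdbergmoerdijk15}, whose argument is this same split-data computation, so your proposal matches the intended proof.
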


\begin{proof} (Of \refprop{degeneratehornfillerunique}.) It suffices to consider the case where $\sigma_0 = s_i$ and $\sigma_1 = s_j$ with $i \lt j$.

Because $i \not= j + 1$ in at least one of the following diagrams the dotted arrow exists:
  \begin{displaymath}
  \begin{array}{cc}
    \begin{tikzcd}
      & \Lambda^n_k \ar[d] \\
      \Delta^{n-1} \ar[d, "s_{j-1}"] \ar[r, "d_{i}"]  \ar[ur, dotted, bend left]     & \Delta^n \ar[d, "s_j"] \ar[r, "s_i"] & \Delta^{n-1} \ar[d, "s_{j-1}"] \\
      \Delta^{n-2} \ar[r, "d_{i}"] & \Delta^{n-1} \ar[r, "s_{i}"] & \Delta^{n-2}.
    \end{tikzcd} &
  \begin{tikzcd}
     & \Lambda^n_k \ar[d] \\
     \Delta^{n-1} \ar[d, "s_i"] \ar[r, "d_{j+1}"]  \ar[ur, dotted, bend left]    & \Delta^n \ar[d, "s_i"] \ar[r, "s_j"] & \Delta^{n-1} \ar[d, "s_i"] \\
     \Delta^{n-2} \ar[r, "d_{j}"]" & \Delta^{n-1} \ar[r, "s_{j-1}"] & \Delta^{n-2}
   \end{tikzcd}
 \end{array}
 \end{displaymath}
In either case, the previous lemma implies that both the inner and outer square in
\begin{displaymath}
\begin{tikzcd}
  \Lambda^n_k \ar[dr] \ar[drr, bend left] \ar[ddr, bend right] \\
        & \Delta^n \ar[d, "s_i"] \ar[r, "s_j"] & \Delta^{n-1} \ar[d, "s_i"] \\
   & \Delta^{n-1} \ar[r, "s_{j-1}"] & \Delta^{n-2}
\end{tikzcd}
\end{displaymath}
are pushouts, from which the proposition follows.
\end{proof}

\section{Uniform Kan fibrations}\label{sec:ongambinosattler}

This section is devoted to a proof by Christian Sattler \cite{sattler18b} showing that the notion of a uniform Kan fibration as used in \cite{Gambino-Sattler} is not a local notion of fibred structure. In this connection it is interesting to note that their definition is the same as the one used by Coquand and others \cite{CCHM17} for the case of cubical sets, where this notion of fibred structure is local.

Their definition makes use of the \emph{pushout product} (or \emph{Leibniz product}) $f \hat{\otimes} g$ \index{pushout product} \index{Leibniz product} of two maps $f: A \to B$ and $g: C \to D$, which is the inscribed map from the pushout as in
\begin{displaymath}
    \begin{tikzcd}
        A \times C \ar[r, "f \times 1"] \ar[d, "1 \times g"] & B \times C \ar[d, "\iota_2"] \ar[ddr, bend left, "1 \times g"] \ar[d] \\
        A \times D \ar[drr, bend right, "f \times 1"] \ar[r, "\iota_1"] & A \times D \sqcup_{A \times C} B \times C \ar[dr, dotted, "f \hat{\otimes} g"] \\
        & & B \times D.
    \end{tikzcd}
\end{displaymath}
For the cofibrations they take the same class as maps as us (or as in the standard theory): the (pointwise decidable) monomorphisms. The generating trivial cofibrations are then the maps $m \hat{\otimes} \delta_i$ where $m$ is cofibration and $\delta_i: 1 \to \mathbb{I}$ is one of the end point inclusions. It is a classical result \cite[Chap. IV, Sec. 2]{gabrielzisman67} that these maps, like the horn inclusions, generate the Kan fibrations. In fact, this class would already be generated by the maps $m \hat{\otimes} d_i$ where $d_i$ is an end point inclusion and $m$ is a cofibrant sieve $S \subseteq \Delta^n$ (see \emph{loc.cit.}).

The innovation of Gambino and Sattler is to add a uniformity condition to this definition, where this uniformity condition was suggested by the work of Coquand and others. The cofibrations can be seen as the objects in a category where the morphisms $m \to n$ are pullback squares
\begin{displaymath}
    \begin{tikzcd}
        A \ar[r] \ar[d, "m"] & C \ar[d, "n"] \\
        B \ar[r] & D
    \end{tikzcd}
\end{displaymath}
with composition and identities as in the arrow category on simplicial sets. If we have such a pullback square and $\delta_i: 1 \to \mathbb{I}$ is an endpoint inclusion, then we obtain another pullback square
\begin{displaymath}
    \begin{tikzcd}
        B \times \{ i \} \cup A \times \mathbb{I} \ar[r] \ar[d, "m \hat{\otimes} d_i"] & D \times \{ i \} \cup C \times \mathbb{I} \ar[d, "n \hat{\otimes} d_i"] \\
        B \times \mathbb{I} \ar[r] & D \times \mathbb{I}.
    \end{tikzcd}
\end{displaymath}
This suggest the following notion of fibred structure on simplicial sets:

\begin{defi}{uniformKanfibration} \cite{Gambino-Sattler}
    A \emph{uniform Kan fibration structure} \index{uniform Kan fibration} on a map $p: Y \to X$ of simplicial sets is a function which, given a cofibration $m: A \to B$, an endpoint inclusion $\delta_i: 1 \to \mathbb{I}$ and a solid commutative square
    \begin{displaymath}
        \begin{tikzcd}
            B \times \{ i \} \cup A \times \mathbb{I} \ar[r] \ar[d, "m \hat{\otimes} \delta_i"'] & Y \ar[d, "p"] \\
            B \times \mathbb{I} \ar[r] \ar[ur, dotted] & X
        \end{tikzcd}
    \end{displaymath}
    chooses a dotted filler as shown. These chosen fillers are supposed to satisfy the following \emph{uniformity condition}: for any pullback square
    \begin{displaymath}
        \begin{tikzcd}
            A \ar[r] \ar[d, "m"] & C \ar[d, "n"] \\
            B \ar[r] & D,
        \end{tikzcd}
    \end{displaymath}
    the chosen fillers should make the inscribed triangle in
    \begin{displaymath}
        \begin{tikzcd}
            B \times \{ i \} \cup A \times \mathbb{I} \ar[r] \ar[d, "m \hat{\otimes} d_i"] & D \times \{ i \} \cup C \times \mathbb{I} \ar[d, "n \hat{\otimes} d_i"] \ar[r] & Y \ar[d, "p" ]\\
            B \times \mathbb{I} \ar[r] \ar[urr, dotted] & D \times \mathbb{I} \ar[r] \ar[ur, dotted] & X
        \end{tikzcd}
    \end{displaymath}
    commute.
\end{defi}  

In \cite{Gambino-Sattler} Gambino and Sattler show that their notion of uniform Kan fibration satisfies the following properties:
\begin{enumerate}
    \item[(1)] One can show, constructively, that the uniform Kan fibrations are closed under pushforward: so if $f$ and $g$ are uniform Kan fibration, then so is $\Pi_f(g)$.
    \item[(2)] Their notion is classically correct, in that one can show in a classical metatheory that every map which has the right lifting property against horn inclusions can be equipped with the structure of a uniform Kan fibration.
\end{enumerate}   
In this way they circumvent the BCP-obstruction as discussed in the introduction. What is left open, however, is whether it is also local. It turns out that it is not and in this appendix we will give Christian Sattler's proof of this fact. (It should be noted that the proof makes use of classical logic.

\begin{rema}{howdoesthatlooktous?} In terms of mould squares the uniform Kan fibrations can be understood as follows. Note that $p: Y \to X$ having the right lifting property against a map $m \hat{\otimes} \delta_i$ means that given any solid diagram as in
    \begin{displaymath}
        \begin{tikzcd}
            A \times \{ i \} \ar[r] \ar[d] & A \times \mathbb{I} \ar[d] \ar[r] & Y \ar[d, "p"] \\
            B \times \{ i \} \ar[r] \ar[urr, dotted] & B \times \mathbb{I} \ar[r] \ar[ur, dotted] & X
        \end{tikzcd}
    \end{displaymath}
    and dotted arrow $B \times \{ i \} \to Y$ we can find a dotted arrow (pushforward) $B \times \mathbb{I} \to Y$ as shown. Since $\delta_i: 1 \to \mathbb{I}$ are HDRs (they are maps of the form $\Delta^n \to \widehat{\theta}$ with $\theta$ being the 1-dimensional traversal $<0, \pm>$), the square on the left in the diagram above is a mould square. The uniformity condition says the following: suppose $m \to n$ is a pullback square and we are given a solid diagram
    \begin{displaymath}
        \begin{tikzcd}
            & C \times \{ i \} \ar[dd] \ar[rr] & & C \times \mathbb{I} \ar[rr] \ar[dd] &  & Y \ar[dd, "p"] \\
            A \times \{ i \} \ar[rr] \ar[dd] \ar[ur] &  & A \times \mathbb{I} \ar[dd] \ar[ur] \\ & D \times \{ i \} \ar[rr] & & D \times \mathbb{I} \ar[rr] & & X \\
            B \times \{ i \} \ar[ur] \ar[rr] & &  B \times \mathbb{I} \ar[ur]
        \end{tikzcd}
    \end{displaymath}
    Then any arrow $D \times \{ i \} \to Y$ making everything commute induces two pushforwards: one of the form $D \times \mathbb{I} \to Y$ using that the back of the cube is a mould square and one of the form $B \times \mathbb{I} \to Y$ using that the front of the cube is a mould square. We demand that these commute via the map $B \times \mathbb{I} \to D \times \mathbb{I}$. Since the cube in the diagram above is a mould cube (a cube in the triple category of mould squares), we see that this uniformity condition is also part of our definition of an effective Kan fibration. In particular, effective Kan fibrations are uniform Kan fibrations; or, more precisely, there is a morphism of notions of fibred structure from the effective Kan fibrations to the uniform Kan fibrations. So what the notion of an effective Kan fibration adds to that of a uniform Kan fibration is the following:
    \begin{enumerate}
        \item[(1)] Instead of requiring the existence of pushforwards for HDRs of the form $B \times \{ i \} \to B \times \mathbb{I}$ only, we demand their existence for all HDRs.
        \item[(2)] Since HDRs can be composed, mould squares can be composed horizontally leading to an additional horizontal compatibility condition.
        \item[(3)] Since cofibrations can be composed, mould squares can be composed vertically leading to an additional vertical compatibility condition.
        \item[(4)] There is more general notion of mould cube leading to a more general perpendicular compatibility condition.
    \end{enumerate}   
    As we have seen, the result is a local notion of fibred structure.
    \end{rema} 

In order to show that the uniform Kan fibrations are not a local notion of fibred structure, we make a number of useful observations. First of all, in the definition of a uniform Kan fibration we can restrict attention to cofibrations of the form $S \subseteq \Delta^n$ and pullback squares of the form:
\begin{displaymath}
    \begin{tikzcd}
        \alpha^* S \ar[d] \ar[r] & S \ar[d] \\
        \Delta^m \ar[r, "\alpha"] & \Delta^n.
    \end{tikzcd}
\end{displaymath}
To be a bit more precise: if we would define another notion of fibred structure by assiging to each map $p: Y \to X$ all the solutions to lifting problems against maps of the form $m \hat{\otimes} \delta_i$ where $m$ is a cofibrant sieve satisfying the uniformity condition, then the morphism of notions of fibred structure from the uniform Kan fibrations to this notion of fibred structure induced by the embedding of the full subcategory of cofibrant sieves into the category of al cofibrations and pullback squares between them, is an isomorphism. This can be using methods which are very similar to those used in Section 8. 

What this implies is that there is also a more ``internal'' way of stating what a uniform Kan fibration is. If $p: Y \to X$ is a map, then we can define simplicial sets:
\begin{eqnarray*}
    Probl(p)_n & = & \{ (\pi: \Delta^n \times \mathbb{I} \to X, S \subseteq \Delta^n, y \in Y_n, \rho: S \times \mathbb{I} \to Y) \, : \\
    & & \, \pi(-, i) = p(y), p \circ \rho = \pi \upharpoonright S \times \mathbb{I}, \rho(-,i) = y \upharpoonright S \} \\
    Sol(p)_n & = & \{ (\pi, S, y, \rho, \lambda: \Delta^n \times \mathbb{I} \to Y) \, : \, \\
    & & 
    (\pi,S,y,\rho) \in Probl(p)_n, p \circ \lambda = \pi, \lambda(-,i) = y, \lambda \upharpoonright S \times \mathbb{I} = \rho \}
\end{eqnarray*}
There is a forgetful map of simplicial maps $U_p: Sol(p) \to Probl(p)$. Saying that $p$ has right lifting property against all maps of the form $m \hat{\otimes} \delta_i$ (with $m$ a cofibrant sieve) is the same thing as saying that this map $U_p$ is epi. For $p$ to be a uniform Kan fibration means that this map has a section: indeed, a uniform Kan fibration structure on $p$ can be understood as a section of this map.

At this point it is good to remember that in a classical metatheory sections of a map $f: B \to A$ of simplicial sets can be constructed by induction on the dimension. That is, we construct maps $s_n: A_n \to B_n$ by induction on $n$, where we make a case distinction on whether an element $a \in A_n$ is degenerate or not. Indeed, if $a \in A_n$ is degenerate, then $a = a' \cdot \sigma$ for some unique non-degenerate $a'$ and epi $\sigma: [n] \to [k]$. This means that the value $s_n(a)$ is determined by $s_k(a')$. If $a$ is non-degenerate, then any face of $s(a)$ is already determined $s_{n-1}$; this means that $s(a)$ has to get the correct boundary as determined by $s_{n-1}$ and be such that $p(s(a)) = a$. However, beyond these requirements we are free to choose $s(a)$ in any manner we like; meaning that any such choice will yield a map of simplicial sets. In particular, if we choose our solutions following this recipe the lifts will automatically satisfy the uniformity condition.

Note that a notion of fibred structure ${\rm Fib}$ could fail to be local in two different ways:
\begin{enumerate}
    \item[(1)] There is a map $p: Y \to X$ and distinct elements $a,b \in {\rm Fib}(p)$ such that for any pullback square $\sigma$ of the form 
    \begin{displaymath}
        \begin{tikzcd}
            Y_x \ar[r] \ar[d] & Y \ar[d, "p"] \\
            \Delta^n \ar[r, "x"] & X  
        \end{tikzcd}
    \end{displaymath}
    we have ${\rm Fib}(\sigma)(a) = {\rm Fib}(\sigma)(b)$.
    \item[(2)] There is a map $p: Y \to X$ as well as a function $a$ which assigns to each $x \in X_n$ an element $a_x \in {\rm Fib}(Y_x \to \Delta^n)$ in such a way that for any pullback square $\tau$ of the form
    \begin{displaymath}
        \begin{tikzcd}
            Y_{x \cdot \alpha} \ar[r] \ar[d] & Y_x \ar[d] \\
            \Delta^m \ar[r, "\alpha"] & \Delta^n
        \end{tikzcd}
    \end{displaymath}
    we have ${\rm Fib}(\tau)(a_x) = a_{x \cdot \alpha}$, without there being an element $a \in {\rm Fib}(p)$ such that for any pullback square $\sigma$ as in (1) we have ${\rm Fib}(\sigma)(a) = a_x$.
\end{enumerate}  
It turns out that the notion of a uniform Kan fibration fails to be local in both ways.

\begin{prop}{unifKanfibrnotlocal1}
    The notion of a uniform Kan fibration fails to be local for the reason explained in (1). 
\end{prop}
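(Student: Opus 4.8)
The plan is to exhibit a single map $p : Y \to X$ together with two distinct uniform Kan fibration structures $a, b$ on it that become equal after pulling back along every $x : \Delta^n \to X$. The natural place to look is a map whose total space $Y$ has, in some dimension, a non-degenerate simplex $x$ over which $p$ is not injective, so that there is genuine choice in how one fills a lifting problem sitting over $x$; but where after pulling back along any representable, every simplex of the base involved in such a lifting problem becomes degenerate (so the inductive recipe for sections is forced to pick the degenerate solution and no choice remains). Concretely, I would take $X$ to be a suitable small simplicial set — something like $\Delta^1/\partial\Delta^1$ or a pushout of simplices that has a non-degenerate $1$-simplex which is a loop — and let $p$ be a projection whose fibre is a two-point set, or more precisely I would engineer $p$ so that $\mathrm{Probl}(p)$ has a non-degenerate simplex with at least two preimages under $U_p$, while for each $x : \Delta^n \to X$ the pulled-back map $U_{x^*p}$ has the property that every non-degenerate simplex of $\mathrm{Probl}(x^*p)$ of the relevant form is forced.

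The key steps, in order, would be: (i) recall the reformulation from the discussion above that a uniform Kan fibration structure on $p$ is precisely a section of $U_p : \mathrm{Sol}(p) \to \mathrm{Probl}(p)$, and that (classically) such sections are built by induction on dimension with a free choice only at non-degenerate simplices; (ii) choose the concrete $X$ and $p$; here the design goal is that $\mathrm{Probl}(p)$ has a non-degenerate simplex $\pi$ with $|U_p^{-1}(\pi)| \geq 2$, so $U_p$ admits two distinct sections $a \neq b$; (iii) for each representable $x : \Delta^n \to X$, analyse $\mathrm{Probl}(x^*p)$ and show that every simplex of it over which $U_{x^*p}$ is non-trivial (i.e.\ has more than one lift) is \emph{degenerate} in $\mathrm{Probl}(x^*p)$ — the point being that $\Delta^n$ is representable and a prism $\Delta^n \times \mathbb{I} \to \Delta^n$ is highly degenerate, so the pulled-back lifting problems collapse; (iv) conclude that $\mathrm{Fib}(\sigma)(a) = \mathrm{Fib}(\sigma)(b)$ for every pullback square $\sigma$ along a representable, because on the pullback both $a$ and $b$ restrict to \emph{the} unique lift of each relevant simplex (they agree on degenerate simplices automatically since sections are determined there, and they agree on the non-degenerate ones because there is only one choice). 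This establishes non-locality of type (1).

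I expect the main obstacle to be step (ii)–(iii): finding the right explicit $X$ and $p$ so that the two phenomena coexist. One wants $p$ to be \emph{non-monic} over some non-degenerate simplex of $X$ (to get two distinct structures) yet such that pulling back along any representable destroys exactly that non-degeneracy. The tension is real because pulling back along $x : \Delta^n \to X$ when $x$ is itself the non-degenerate simplex in question does \emph{not} obviously degenerate things — so the construction must be arranged so that the problematic non-degenerate simplex of $\mathrm{Probl}(p)$ lives in a dimension strictly higher than anything that can be hit non-degenerately by a map out of a representable into $X$ of the form needed for a lifting problem, or so that its image in $X$ is always a degenerate simplex. A clean way to force this is to take $X$ with very few non-degenerate simplices in low dimensions (e.g.\ $X = \Delta^1/\partial\Delta^1$, which has a single non-degenerate $0$-simplex and a single non-degenerate $1$-simplex, but all higher non-degenerate simplices absent), and exploit that a lifting problem $\Delta^n \times \mathbb{I} \to X$ for such an $X$ has its structure map factor through something degenerate; then the only room for choosing lifts differently is in dimensions where, after pulling back along any $x : \Delta^m \to X$, the simplex becomes a degenerate simplex of $\Delta^m$, eliminating the choice. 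Verifying this carefully — in particular checking that $U_p$ really does have two distinct sections and that the restriction maps genuinely identify them over all representables — is where the bulk of the (routine but fiddly) work lies, and I would expect to lean on the dimension-by-dimension description of sections and on a case analysis of which simplices of $X$ are degenerate.
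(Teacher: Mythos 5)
Your reduction in step (i) is fine (the paper sets up exactly this $\mathrm{Probl}(p)$/$\mathrm{Sol}(p)$ reformulation and the dimension-by-dimension construction of sections, which automatically yields uniformity), but the crucial step (iii) contains a genuine gap: the hope that after pulling back along every representable $x\colon \Delta^n \to X$ all non-degenerate simplices of $\mathrm{Probl}(x^*p)$ have a \emph{unique} lift (so that ``no choice remains'') cannot be arranged for the kind of example you need. If $p$ is non-monic over a non-degenerate edge $e$ of $X$ (say $e$ has two preimages $e,f$, as in any doubled-edge construction), then you may pull back along a representable whose image contains $e$ --- for instance $e\colon \Delta^1 \to \Delta^1/\partial\Delta^1$ itself in your suggested example --- and consider the lifting problem whose base is $\Delta^1\times\mathbb{I}\to\Delta^1$ constant in the prism direction and whose partial lift uses $e$ on one side $\{0\}\times\mathbb{I}$ and $f$ on the other side $\{1\}\times\mathbb{I}$. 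The base map is highly degenerate, but the lifting problem is \emph{not} a degenerate simplex of $\mathrm{Probl}(x^*p)$, because the partial-lift data is non-degenerate; and with the coskeletal doubling it has two distinct fillers (the diagonal edge can be lifted to either $e$ or $f$). So degeneracy of the base of the square does not collapse the choice, your ``forced after pullback'' mechanism fails, and the argument in (iv) that any two sections of $U_{x^*p}$ coincide is false.

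The paper's proof does not need (and does not have) uniqueness after pullback; the pulled-back maps $Y_x \to \Delta^n$ carry many uniform structures. Instead it uses a coordination device: take $X=\Delta^1\times\Delta^1$, double the edge $e\colon(1,0)\to(1,1)$ coskeletally, and let $a$ and $b$ make \emph{the same} choice (lift $e$ to $e$) on every lifting problem whose base $\Delta^n\times\mathbb{I}\to X$ factors through the unique $2$-simplex $u\colon\Delta^2\to X$ containing $e$, while differing ($e$ versus $f$) on problems that do not so factor. The geometry of $X$ guarantees that every representable $x\colon\Delta^n\to X$ whose image contains $e$ factors through $u$, hence every problem arising from a representable pullback factors through $u$ and $a,b$ restrict identically; yet $a\neq b$, witnessed by the open-box problem whose base is the identity of $\Delta^1\times\mathbb{I}=X$ --- which does not factor through $u$ precisely because the prism is not representable. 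This ``does the problem factor through a fixed representable simplex'' dichotomy, together with a base $X$ for which containing $e$ forces such a factorisation, is the idea your proposal is missing; with $X=\Delta^1/\partial\Delta^1$ it is not even clear that any two-solution problem fails to factor through the loop $\Delta^1\to X$, so that choice of base is also problematic.
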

\begin{proof}
    Let $X = \Delta^1 \times \Delta^1$ and $Y = {\rm Cosk}^1(X \cup \{ f \})$; that is, $Y$ is the 1-coskeleton of $X \cup \{ f \}$ where $f$ is an additional copy of the edge $e: (1,0) \to (1,1)$. The map $p: Y \to X$ is the unique extension of the identity on $X$. 

    \begin{displaymath}
        \begin{array}{ccc}
            Y & X \\
            \begin{tikzcd}
                \bullet \ar[r] & \bullet \\
                \bullet \ar[u] \ar[ur] \ar[r] & \bullet \ar[u, shift left, "e"] \ar[u, shift right, "f"']
            \end{tikzcd}  & 
            \begin{tikzcd}
                \bullet \ar[r] & \bullet \\
                \bullet \ar[u] \ar[ur] \ar[r] & \bullet \ar[u, "e"']
            \end{tikzcd}
        \end{array}
    \end{displaymath}

    We wish to construct two uniform Kan fibration structures on $p$, which we do following the recipe outlined above. So suppose we have a lifting problem as follows:
    \begin{displaymath}
        \begin{tikzcd}
            \Delta^n \times \{ i \} \cup S \times \mathbb{I} \ar[r] \ar[d] & Y \ar[d, "p"] \\
            \Delta^n \times \mathbb{I} \ar[r] \ar[ur, dotted] & X.
        \end{tikzcd}
    \end{displaymath}
    Note for any $x \in X_n$ we have:
    \begin{eqnarray*}
        p_n^{-1}(x) & = & \left\{ \begin{array}{ll}
            \{ e, f \} & \mbox{if } x = e \\
            \{ \bullet \} & \mbox{else}
        \end{array} \right. 
    \end{eqnarray*}
    Therefore the number of solutions to a lifting problem as the one above is typically 1, except when the image of the map $\Delta^n \times \mathbb{I} \to X$ includes the edge $e$, while the image of the map $\Delta^n \times \{ i \} \cup S \times \mathbb{I} \to X$ excludes it: in this case the number of solutions is 2, where each such is completely determined by the lift of the edge $e$ (whether it is $e$ or $f$).

    So to define our uniform Kan fibration structures $a, b$ we assume we are given a lifting problem as above where the image of the map $\Delta^n \times \mathbb{I} \to X$ includes the edge $e$, while the image of the map $\Delta^n \times \{ i \} \cup S \times \mathbb{I} \to X$ excludes it. We do this by making a case distinction on whether the
    map $\Delta^n \times \mathbb{I} \to X$ factors through $u$ where $u: \Delta^2 \to X$ if the unique inclusion whose image includes $e$. If it does, both $a$ and $b$ choose the lift which map $e$ to $e$; if it does not, $a$ lifts $e$ to $e$, while $b$ lifts $e$ to $f$.

    First of all, note that $a$ and $b$ are distinct, because they provide different solutions to problem of filling the ``open box'' 
    \begin{displaymath}
        \begin{tikzcd}
            \Delta^1 \times \{ 0 \} \cup \{ 0, 1 \} \times \mathbb{I} \ar[r] \ar[d] & Y \ar[d, "p"] \\
            \Delta^1 \times \mathbb{I} \ar[r] \ar[ur, dotted] & X
        \end{tikzcd}
    \end{displaymath}
    where the map at the bottom is the identity.

    It remains to check that for any $x: \Delta^n \to X$ both $a$ and $b$ induce the same uniform Kan fibration structure on $Y_x \to \Delta^n$. If $e$ is not in the image of $x$, then this is clear. Otherwise there is a map $\sigma: \Delta^n \to \Delta^2$ such that $u\sigma = x$. Thus, it suffices to check that $a$ and $b$ pull back to the same element in ${\rm Fib}(Y_u \to \Delta^2)$. But this is clear by construction.
\end{proof}

\begin{prop}{unifKanfibrnotlocal2}
    The notion of a uniform Kan fibration fails to be local for the reason explained in (2). 
\end{prop}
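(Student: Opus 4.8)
The plan is to exhibit a map $p \colon Y \to X$ together with a coherent family of uniform Kan fibration structures on its pullbacks along representables which does not glue to a uniform Kan fibration structure on $p$ itself. The obstruction in \refprop{unifKanfibrnotlocal1} was that two global structures can agree on all representable slices; here we need the complementary failure, namely a locally consistent choice with no global witness. The natural strategy, following the same circle of ideas, is to take a map whose fibres over representables are always so small that the lifting problems there have \emph{unique} solutions (so the locally representable data is forced and automatically coherent), but such that globally there is a genuine choice to be made that cannot be made in a way compatible with the uniformity condition. Concretely I would again work with a map built as a $1$-coskeleton: let $X$ be a simplicial set containing two non-degenerate $2$-simplices $u, u'$ that share the same boundary, let $Y$ be obtained from $X$ by adjoining, above that common boundary, a single extra $2$-cell, and let $p \colon Y \to X$ be the evident projection. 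One arranges that $p$ restricted to any $\Delta^n \to X$ is a \emph{trivial} fibration in the strong sense that $U_{p_x}$ is an isomorphism (each fibre is a singleton), so $Y_x \to \Delta^n$ carries a unique uniform Kan fibration structure, and these unique structures are trivially coherent under pullback. The point is then that $p$ itself admits no uniform Kan fibration structure at all: any putative structure must solve some specific open-box lifting problem whose domain maps into $X$ avoiding the ambiguous cell but whose filler is forced to hit the part of $Y$ over $u$ and over $u'$ simultaneously in incompatible ways, so that the uniformity condition relating the two $2$-simplices $u, u'$ (both of which receive lifts) cannot be satisfied.

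In more detail, the key steps are: (i) Construct $X$ and $Y$ explicitly and verify $Y \to X$ is a map. Here one wants the two $2$-simplices $u, u'$ to be linked by a configuration forcing a compatibility — for instance arrange them as the two triangles of a prism $\Delta^1 \times \Delta^1$, or glue so that a single face inclusion $\Delta^1 \hookrightarrow X$ factors through both $u$ and $u'$. (ii) Check that for every $x \colon \Delta^n \to X$ the map $U_{p_x} \colon Sol(p_x) \to Probl(p_x)$ from the internal description given just before \refprop{unifKanfibrnotlocal1} is an isomorphism, hence has a unique section $a_x$; this should reduce to the observation that each non-degenerate simplex of $Y$ over a given simplex of $\Delta^n$ is unique, because pulling back along a representable cannot ``see'' both $u$ and $u'$ at once (a single $n$-simplex of $X$ factors through at most one of $u, u'$). (iii) Verify that the family $(a_x)$ is natural in $x$, i.e.\ stable under all pullback squares $\tau$ — this is automatic from uniqueness. (iv) Show that $p$ itself carries no uniform Kan fibration structure: assume one existed, feed it a carefully chosen lifting problem of the form $\Delta^n \times \{i\} \cup S \times \mathbb{I} \to Y$, $\Delta^n \times \mathbb{I} \to X$ whose solid data maps into the ``ambiguous'' region of $X$, produce from it — via the uniformity condition applied to a pullback square comparing the cell $u$ with the cell $u'$ — two fillers that are forced to disagree, a contradiction. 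Then (v) conclude that the notion of uniform Kan fibration fails to be local for reason (2): there is a coherent family $(a_x)$ of fibred structures on the representable slices admitting no $a \in {\rm Fib}(p)$ restricting to it.

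The main obstacle is step (iv): finding the precise lifting problem and the precise pullback square that expose the incompatibility. One has to design $X$ so that (a) there really is no global uniform structure — which needs the two $2$-cells $u, u'$ to be ``tied together'' strongly enough by the uniformity constraint — while simultaneously (b) the representable pullbacks remain rigid, with singleton fibres and hence unique (coherent) structures. These two requirements pull in opposite directions, so the geometry of $X$ has to be chosen with some care; I expect the cleanest realization uses a prism or a pair of $2$-simplices sharing an edge together with an extra degenerate-looking cell above that edge, mimicking the $Y = {\rm Cosk}^1(X \cup \{f\})$ construction of \refprop{unifKanfibrnotlocal1} but now with the extra cell placed so that it must be lifted consistently across both $2$-simplices. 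Once the right $X$ is in hand, steps (i)–(iii) and (v) are routine bookkeeping with the internal description of uniform Kan structures and the classical dimension-by-dimension construction of sections recalled in the text.
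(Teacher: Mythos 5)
Your overall design cannot work, and the obstruction is not a matter of choosing the geometry of $X$ with more care: requirements (a) and (b) of your plan are contradictory in a classical metatheory (and both your argument and the paper's are classical). If every representable pullback $Y_x \to \Delta^n$ admits a uniform Kan fibration structure, then each $Y_x \to \Delta^n$ has the right lifting property against horn inclusions; and since any lifting problem for $p$ against $\Lambda^n_i \to \Delta^n$ has representable base, it factors through the pullback $Y_x$ along its bottom map, so $p$ itself is an ordinary Kan fibration. By the classical correctness of uniform Kan fibrations recalled at the start of this appendix, $p$ then \emph{does} carry some uniform Kan fibration structure, so your step (iv) --- a map with forced local structures but no global structure at all --- is impossible. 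Worse, the uniqueness you engineer in steps (ii)--(iii) defeats the purpose even for the weaker statement that failure mode (2) actually requires: any global structure on $p$ (which, as just argued, exists) restricts to \emph{some} coherent family over the representables, and if the local structures are unique that family is necessarily yours, so your family glues after all. (As a side remark, if ``each fibre is a singleton'' is meant literally, i.e.\ every $Y_x \to \Delta^n$ is an isomorphism, then $p$ is an isomorphism, because the fibre of $p_n$ over $x \in X_n$ is the set of sections of $Y_x \to \Delta^n$.)

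A failure of type (2) must therefore come from a map that admits plenty of uniform structures, together with a coherent family over representables that no global structure induces; in particular the local structures must be non-unique. This is what the paper does: glue two $3$-simplices $E$ and $F$ along a copy of $\Delta^1 \times \Delta^1$ to form $X$, double the edge $e$ by setting $Y = \mathrm{Cosk}^1(X \cup \{f\})$ exactly as in \refprop{unifKanfibrnotlocal1}, and equip $Y_E \to E$ with the structure $a$ that always lifts $e$ to $e$, and $Y_F \to F$ with the structure $b$ that lifts $e$ to $f$ whenever possible, \emph{except} on problems factoring through a triangle of the overlap, where it lifts $e$ to $e$. Every simplex of the overlap factors through one of its two triangles, so $a$ and $b$ agree on all representables landing there, and since $E$ and $F$ are themselves representable one obtains a coherent family over all of $X$. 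The open-box problem whose base is the whole prism $\Delta^1 \times \mathbb{I} \to X$ does not factor through a single triangle, hence is assigned $e$ by $a$ and $f$ by $b$; a global structure restricting to both would have to solve this one problem in two different ways. Note that the incompatibility is hidden on a non-representable base, which is precisely the kind of phenomenon your rigid, singleton-fibre setup is designed to exclude.
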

\begin{proof}
    Let $E$ and $F$ be 3-simplices glued together along inclusions $\Delta^1 \times \Delta^1 \to S$ and $\Delta^1 \times \Delta^1 \to T$ to yield a simplicial set $X$. We name an edge $e: (1,0) \to (1,1)$. For $Y$ we again take the 1-coskeleton of $X \cup \{ f \}$ where $f$ is an additional copy of the edge $e$ and $p: Y \to X$ is the unique map extending the identity on $X$. Let $Y_E$ and $Y_F$ be the restriction of $Y$ to $E$ and $F$, respectively.
    \begin{displaymath}
        \begin{array}{cc}
            \begin{tikzcd}
                Y_E \ar[d] \ar[r] & Y \ar[d, "p"] \\
                E \ar[r] & X 
            \end{tikzcd} & 
            \begin{tikzcd}
                Y_F \ar[d] \ar[r] & Y \ar[d, "p"] \\
                F \ar[r] & X 
            \end{tikzcd}
        \end{array}
    \end{displaymath}
    We will define uniform Kan fibration structures $a$ and $b$ on $Y_E \to E$ and $Y_F \to F$, respectively, which restrict to the same uniform Kan fibration structure on any triangle in their overlap $\Delta^1 \times \Delta^1 \to X$. In doing so we will prove that any pullback of $p$ along a representable carries a uniform Kan fibration structure, where these uniform Kan fibration structures are compatible as made precise in (2). We build these uniform Kan fibration structures as before, where for $Y_E \to E$ the fibration structure $a$ will always lift $e$ to $e$ if possible. For $Y_F \to F$ the fibration structure $b$ will always lift $e$ to $f$ if possible; however, if the lifting problem factors through a triangle of $\Delta^1 \times \Delta^1 \to X$ the fibration structure $b$ will lift $e$ to $e$. In this way we ensure that the uniform Kan fibration structures on $Y_E \to E$ and $Y_F \to F$ restrict to the same uniform Kan fibration structure on any triangle of their overlap $\Delta^1 \times \Delta^1 \to X$. 

    However, there is no uniform Kan fibration structure on $p$ which restrict to both $a$ and $b$. Indeed, there is open box filling problem on the overlap $\Delta^1 \times \Delta^1 \to X$ on which $a$ and $b$ want to lift $e$ to $e$ and $f$, respectively.
\end{proof}

\end{appendices}

\printbibliography{}

\end{document}